\setlist[enumerate,1]{leftmargin=*}
\setlist[itemize,1]{leftmargin=*}
\newtheorem{Lemma}{Lemma}[section]
\newtheorem{Theorem}[Lemma]{Theorem}
\newtheorem{Proposition}[Lemma]{Proposition}
\newtheorem{Corollary}[Lemma]{Corollary}
\newtheorem{Claim}[Lemma]{Claim}
\newtheorem*{lem:longHammingSep}{Lemma  \ref{lem:longHammingSep}}
\newtheorem*{prop:cauchySeq}{Proposition  \ref{prop:cauchySeq}}
\newtheorem*{cor:sl2ergodic}{Corollary  \ref{cor:sl2ergodic}}
\theoremstyle{definition}
\newtheorem{Definition}[Lemma]{Definition}
\newtheorem{Remark}[Lemma]{Remark}
\numberwithin{equation}{section}
\newcommand {\C} {{\mathbb C}}
\newcommand {\N} {{\mathbb N}}
\newcommand {\R} {{\mathbb R}}
\newcommand {\G} {{\mathbb G}}
\newcommand {\Z} {{\mathbb Z}}
\newcommand {\absolute}[1] {\left| {#1} \right|}
\newcommand {\norm}[1] {\left\| {#1} \right\|}
\newcommand {\innproduct}[1] {\left\langle {#1} \right\rangle}
\DeclareMathOperator {\SL} {SL}
\DeclareMathOperator {\Kak} {Kak}
\DeclareMathOperator {\id} {id}
\DeclareMathOperator {\ad} {ad}
\DeclareMathOperator {\Ad} {Ad}
\DeclareMathOperator {\Bow} {Bow}
\DeclareMathOperator {\inj} {inj}
\newcommand {\Btr}{B^{\operatorname{Tr}}}
\newcommand {\bu} {\mathbf{u}}
\newcommand{\bou} {\hat{\mathbf{u}}}
\newcommand {\ba} {\mathbf{a}}
\newcommand {\bx} {\mathbf{x}}
\newcommand {\bw} {\mathbf{w}}
\newcommand {\by} {\mathbf{y}}
\newcommand {\tr} {\mathrm{tr}}
\DeclareMathSymbol{\shortminus}{\mathbin}{AMSa}{"39}
\newcommand {\ou} {\widehat u}
\newcommand {\oua} {{\widehat u}^{\,(1)}}
\newcommand {\oub} {{\widehat u}^{\,(2)}}
\newcommand {\OU} {\widehat{\mathbf{U}}}
\DeclareMathOperator {\neta} {\eta}
\newcounter{c}
\newcounter{C}
\newcommand{\Cnew}{%
\refstepcounter{C}%
\ensuremath{C_{\theC}}}
\newcommand{\Cold}[1]{\ensuremath{C_{\ref{#1}}}}
\newcounter{e}
\newcommand{\enew}{%
\refstepcounter{e}%
\ensuremath{\epsilon_{\thee}}}
\newcommand{\eold}[1]{\ensuremath{\epsilon_{\ref{#1}}}}
\newcounter{de}
\newcommand{\denew}{%
\refstepcounter{de}%
\ensuremath{\delta_{\thede}}}
\newcommand{\deold}[1]{\ensuremath{\delta_{\ref{#1}}}}
\newcounter{d}
\newcounter{R}
\newcommand{\Rnew}{%
\refstepcounter{R}%
\ensuremath{R_{\theR}}}
\newcommand{\Rold}[1]{\ensuremath{R_{\ref{#1}}}}
\newcounter{r}
\newcounter{K}
\newcommand{\Knew}{%
\refstepcounter{K}%
\ensuremath{{\mathcal K}_{\theK}}}
\newcommand{\Kold}[1]{\ensuremath{{\mathcal K}_{\ref{#1}}}}
\newcounter{O}
\newcommand{\Onew}{%
\refstepcounter{O}%
\ensuremath{O_{\theO}}}
\newcommand{\Oold}[1]{\ensuremath{O_{\ref{#1}}}}
\newcounter{Z}
\newcommand{\Znew}{%
\refstepcounter{Z}%
\ensuremath{{\mathcal Z}_{\theZ}}}
\newcommand{\Zold}[1]{\ensuremath{{\mathcal Z}_{\ref{#1}}}}
\newcounter{k}
\newcounter{w}
\newcommand{\wnew}{%
\refstepcounter{w}%
\ensuremath{w_{\thew}}}
\newcommand{\wold}[1]{\ensuremath{w_{\ref{#1}}}}
\newcounter{ps}
\newcommand{\psnew}{%
\refstepcounter{ps}%
\ensuremath{\psi_{\theps}}}
\newcommand{\psold}[1]{\ensuremath{\psi_{\ref{#1}}}}
\newcounter{th}
\newcounter{n}
\begin{document}
\title{Time change rigidity for unipotent flows}
\author{Elon Lindenstrauss}
\thanks{E.L. acknowledges support by ERC 2020 grant HomDyn (grant no.\ 833423)}
\address{E.L.: School of Mathematics, Institute for Advanced Study, 1 Einstein Drive
Princeton, New Jersey, 08540, USA \newline\textrm{\emph{and}}\newline Einstein Institute of Mathematics, Edmond J. Safra Campus,
The Hebrew University of Jerusalem,
Givat Ram, Jerusalem, 9190401, Israel
}
\email{elonl@ias.edu}
\author{Daren Wei}
\thanks{D.W. acknowledges support by ERC 2020 grant HomDyn (grant no.\ 833423), NUS startup grants A-0009806-00-00 and A-0009806-01-00}
\address{D.W.: Department of Mathematics, National University of Singapore, 119076, Singapore}
\email{darenwei@nus.edu.sg}
\date{\today}

\dedicatory{Dedicated to Benjamin Weiss with great admiration}

\begin{abstract}
We prove a dichotomy regarding the behavior of one-parameter unipotent flows on quotients of semisimple lie groups under time change. We show that if $u^{(1)}_t$ acting on $\mathbf{G}_{1}/\Gamma_1$ is such a flow it satisfies exactly one of the following:
\begin{enumerate}
    \item The flow is loosely Kronecker, and hence measurably isomorphic after an appropriate time change to any other loosely Kronecker system.
    \item The flow exhibits the following rigid behavior: if the one-parameter unipotent flow $u^{(1)} _ t$ on $\mathbf{G}_1/\Gamma_1$ is measurably isomorphic after time change to another  such flow $u^{(2)} _ t$ on $\mathbf{G}_2/\Gamma _ 2$, then $\mathbf{G}_1/\Gamma_1 $ is isomorphic to $\mathbf{G}_2/ \Gamma_2$ with the isomorphism taking $u^{(1)}_t$ to $u^{(2)}_t$ and moreover the time change is cohomologous to a trivial one up to a renormalization. 
\end{enumerate}

\end{abstract}

\maketitle

\setcounter{tocdepth}{1}
\tableofcontents
\section{Introduction}

The main subject of this paper is the notion of \textbf{monotone equivalence}\footnote{We shall also use interchangeably the term Kakutani equivalence.} between ergodic probability measure preserving flows. The study of this natural equivalence relation has a long and rich history, starting with the pioneering works of Kakutani in the 1940s. Works of Feldman, Katok, Ornstein, Rudolph, Weiss and others, motivated by Ornstein's work on isomorphism theory of Bernoulli systems, brought new life to the subject in the 1970s; we mention in particular  \cites{feldman1976new,
katok1977monotone,
ornstein1982equivalence}.

We say that two flows $(X_1,\mathcal{B}_1,\mu_1,u_t^{(1)})$
and $(X_2,\mathcal{B}_2,\mu_2,u_t^{(2)})$
are monotone equivalent if there is a 1-1 invertible measurable map $\psi$ (one that takes
measurable sets to measurable sets and sets of measure 0 to sets of measure
0 but does not necessarily preserve measure) and
$\psi$ maps orbits of $ u _ t ^ {(1)}$ in $X _ 1$ onto
orbits of $ u _ t ^ {(2)}$ in $X_2$
in an order preserving way. We shall see below an equivalent 
way to say the same thing using time change maps.

From the point of view of the theory of monotone equivalence, the simplest flows are the collection of \textbf{loosely Kronecker flows} (which are also known as the class of loosely Bernoulli systems of zero entropy (cf.~\cites{feldman1976new,ornstein1982equivalence}) or standard flows (cf.~\cite{katok1977monotone})). This is a fairly large class of systems, including the linear flow on a 2 (or higher) dimensional torus in an irrational direction as well as any system induced from an interval exchange map using the flow under a function construction; all of these systems are monotone equivalent to each other though they are certainly not isomorphic as measure preserving flows.

\medskip

We now restricted our attention to a special kind of dynamical systems: unipotent flows on homogeneous spaces. We take for the space a quotient of the identity component of a real linear algebraic group $\mathbf  G$ by a lattice $\Gamma$, equipped with the standard $\mathbf  G$-invariant measure $m$ (and Borel $\sigma$-algebra $\mathcal B$, and consider the action of a one parameter subgroup $u_t=\exp (t \mathbf u)$ of $\mathbf{G}$ with $\mathbf u$ a nilpotent element of the Lie algebra of $\mathbf G$ (we consider $\mathbf G$ as embedded in some $\SL_n(\R)$, hence we may consider $\mathbf u$ as a $n\times n$ real matrix). Such a group $u_t$ will be said to be a \textbf{one parameter unipotent group} and the system $(\mathbf  G/\Gamma, \mathcal B, m, u_t)$ a \textbf{unipotent flow}. We will restrict ourselves even further to the case of $\mathbf G$ semisimple.

\medskip

A simple case of this setup is when $\mathbf G = \SL _ 2 (\R)$ and $u _ t = \left(\begin{smallmatrix}
1 & t \\
0 & 1
\end{smallmatrix}\right)$, in which case the action of $u_t$ on $\mathbf G / \Gamma$ can be identified with a double cover of the horocycle flow on a hyperbolic surface of constant negative curvature. By a mild abuse of terminology, this type of unipotent flow is often called a horocycle flow.

In the late 1970s, Ratner proved two remarkable theorems. Firstly, in her paper \cite{ratner1978horocycle}
Ratner showed that horocycle flows are loosely Kronecker. Shortly thereafter,  Ratner showed in \cite{ratner1979cartesian} that the product of two horocycle flows, that is the space $X=\SL_2(\R)/\Gamma\times\SL_2(\R)/\Gamma$ equipped with the unipotent flow $u_t=\left(\left(\begin{smallmatrix}
    1 & t \\
    0 & 1
\end{smallmatrix}\right),\left(\begin{smallmatrix}
    1 & t\\
    0 & 1
\end{smallmatrix}\right)\right)$ and the uniform measure $m$, is \textbf{not} loosely Kronecker.\footnote{It was already known at the time that a product of two loosely Kronecker systems may fail to be loosely Kronecker, but this was considered an exotic property. That such a natural system as the horocycle flow has this property was a big surprise.
}
Somewhat later, by defining an invariant for dynamical system that remains unchanged under monotone equivalence, Ratner \cite{ratner1981some} proved that a product of $k$-copies of horocycle flows is not Kakutani equivalent to a product of $\ell$-copies of horocycle flows if $k\neq \ell$.

Ratner's invariant was calculated in the generality we consider in this paper by Kanigowski, Vinhage and the second named author in \cite{kanigowski2021kakutani}. It turns out that Ratner's invariant depends only on the group $\mathbf{G}$ and the one parameter unipotent subgroup $u_t$ but \textbf{not} on the lattice $\Gamma<\mathbf{G}$. In particular in \cite{kanigowski2021kakutani} the authors completely classify for which systems of the type we consider here is Ratner's invariant equal to zero, and show that this holds if and only if $(\mathbf G / \Gamma, \mathcal B, m, u _ t)$ is loosely Kronecker.

It follows that all unipotent flows with Ratner invariant zero are monotone equivalent to each other; in particular (as shown by Ratner in \cite{ratner1978horocycle}) for any two lattices $\Gamma _ 1, \Gamma _ 2 < \SL _ 2 (\R)$ the horocycle flow $ u _ t$ on $\SL _ 2 (\R) / \Gamma _ 1$ is monotone equivalence to the flow of $ u _ t$ on $\SL _ 2 (\R) / \Gamma _ 2$. In contrast to this, Ratner proved in \cite{ratner1982rigidity} that these systems are in general not isomorphic to each other as measure preserving flows: if $(\SL _ 2 (\R) / \Gamma _ 1, \mathcal B, m,  u _ t)$ is isomorphic to $(\SL _ 2 (\R) / \Gamma _ 1, \mathcal B, m,  u _ t)$ then there is an element $c$ in the centralizer of the group $\left\{  u _ t: t \in \R \right\}$ so that $\Gamma _ 1 = c ^{-1} \Gamma _ 2 c $ and $\psi (g \Gamma _ 1) = g c \Gamma _ 2$. Note that this is an instance of rigidity: a hypothetical measurable isomorphism between two such systems can only be of a very nice and algebraic type.
More generally, for~$i=1,2$, let $\mathbf{G}_i$ be real semisimple linear algebraic groups, $\Gamma_i< \mathbf{G}_i$ lattices (we assume that $\mathbf{G}_i$ acts faithfully on $\mathbf{G}_i/\Gamma_i$, otherwise replace $\mathbf{G}_i$ by a quotient group). Let $\mathcal{B}_i$ be the Borel $\sigma$-algebra for $\mathbf{G}_i/\Gamma_i$, $m_i$ the normalized Haar measure on~$\mathbf{G}_i/\Gamma_i$, and $u_t^{(i)}=\exp(t\bu_i)$ a one parameter unipotent subgroup of $\mathbf G _ i$. We assume (this is again a mild assumption that is easy to reduce to) that $u_t^{(i)}$ acts ergodically on $\mathbf G _ i / \Gamma _ i$. Under these conditions Ratner and Witte Morris showed in~\cites{ratner1983horocycle,witte1985rigidity,witte1987zero}, in increasing levels of generality, that if $\psi$ is a \emph{measurable} isomorphism between the two systems $(\mathbf{G}_1/\Gamma_1,\mathcal{B}_1,m_1,u_t^{(1)})$ and $(\mathbf{G}_2/\Gamma_2,\mathcal{B}_2,m_2,u_t^{(2)})$ then there is an isomorphism of algebraic groups $\Psi:\mathbf G _ 1 \to \mathbf G_2$ and an element $c \in \mathbf G_2$ so that
\begin{gather*}
\psi(g \Gamma_1)=c \Psi(g)\Gamma_2,\quad \text{$m_1$-a.e. $g \Gamma_1 \in \mathbf{G}_1/\Gamma_1$},\\
\Gamma_2=\Psi(\Gamma_1),\qquad u_t^{(2)}=c \Psi(u_t^{(1)})c^{-1}
.\end{gather*}
The general form of isomorphism rigidity of unipotent flows was deduced by Ratner from her landmark measure classification results in \cite{ratner1990measure}*{Corollary 6}; for more details and background  cf.\ eg.\ Witte Morris book \cite{Morris05Ratner}.

For $u_t=\left(\left(\begin{smallmatrix}
1 & t \\
0 & 1
\end{smallmatrix}\right),\left(\begin{smallmatrix}
1 & t\\
0 & 1
\end{smallmatrix}\right)\right)$, the action of $u_t$ on $\SL (2, \R) \times \SL (2, \R) / \Gamma$ for any lattice $\Gamma < \SL (2, \R) \times \SL (2, \R)$, whether irreducible or a product $\Gamma _1 \times \Gamma _2$ of two $\SL_2(\R)$-lattices,  all share the same Ratner invariant. Are they all monotone equivalent?

This natural question appears in Ratner's contribution to the 1994 ICM proceedings  \cite{ratner1995interactions}*{p.~179}. More recently it was raised by Gerber and Kunde in their paper \cite{gerber2021anti}, where they obtained an ``anti-classification'' result for monotone equivalence, showing that the monotone equivalence relation is not Borel, which in particular guarantees that one cannot classify measure preserving flows up to monotone equivalence by using any single or countably many invariants. In their paper, they ask about the special case of unipotent flows and Ratner's invariant. In this paper we answer the question, showing that unless they are loosely Kronecker, unipotent flows are \textbf{monotone equivalence rigid}, in the sense that monotone equivalence implies isomorphism.
Since isomorphism rigidity holds for unipotent flows, it follows that in the non loosely Kronecker case monotone equivalence in fact implies algebraic isomorphism.

 Before stating our main result, it will be useful to define more carefully the notions of monotone equivalence and time change. For both~$i=1$ and $i=2$ 
 let $(X_i,\mathcal{B}_i,\mu_i,u_t^{(i)})$ be ergodic measure preserving flows on standard Borel probability spaces. We say that they are \textbf{monotone equivalent}, if there exists a $1$-$1$ and onto measurable map $\psi:X'_1\to X'_2$ with $\mu_i(X'_i)=1$ for $i=1,2$ so that $\mu_2$ and $\psi_*\mu_1$ are equivalent and so that if~$x, u_t^{(1)}.x\in X'_1$ with $t>0$, then $\psi(u_t^{(1)}.x)=u_{\tau}^{(2)}.\psi(x)$ for some $\tau>0$. The term Kakutani equivalent is also commonly used to describe this equivalence relation, though sometimes this term is used to denote a closely related equivalence condition for $\Z$-actions.
Such a map $\psi$ will be said to be a \textbf{monotone} (or Kakutani) \textbf{equivalence}.  

Monotone equivalence can be characterized through time changes. Given an ergodic measure preserving flow $(X,\mathcal{B},\mu,u_t)$ on a standard Borel probability space and a function $\alpha\in L_+^1(X,\mathcal{B},\mu)$, we define the corresponding \textbf{time change} of $u_t$ as the new flow $u_{\alpha,t}$ determined as follows:
\[
u_{\alpha,t}(x)=u_{\rho(x,t)}(x),\qquad \text{$\rho$ and $t$ satisfy $\int_0^{\rho(x,t)}\alpha(u_s.x)ds=t$.}
\]
It is clear that the new flow $u_{\alpha,t}$ is monotonely equivalent to the original flow $u_t$. Moreover, $u_{\alpha,t}$ also preserves a measure $\mu^{\alpha}$ which is equivalent to $\mu$ and satisfies $d\mu^{\alpha}=\alpha d\mu/(\int\alpha d\mu)$. It can be shown that  $(X_1,\mathcal{B}_1,\mu_1,u_t^{(1)})$ is monotone equivalent to $(X_2,\mathcal{B}_2,\mu_2,u_t^{(2)})$ iff there is an $\alpha\in L_+^1(X_1,\mathcal{B}_1,\mu_1)$ such that the time changed system $(X_1,\mathcal{B}_1,\mu_1^{\alpha},u_{\alpha,t}^{(1)})$ is measurably isomorphic to $(X_2,\mathcal{B}_2,\mu_2,u_t^{(2)})$.

In the case that $\int\alpha d\mu=1$, we say $\psi$ is an \textbf{even monotone (Kakutani) equivalence}, which in particular implies that
\[
\psi(u_t^{(1)}.x)=u_{t+o_x(t)}^{(2)}.\psi(x),\qquad \text{a.s. as }t\to\infty.
\]
Note that if we omit the condition that $\alpha$ is integrable\footnote{To make sense, we then need to allow a time change to both flows, and of course we can no longer normalize the resulting measures.} the situation changes dramatically and becomes less interesting, as was shown by Ornstein and Weiss in \cite{OrnsteinWeiss84}.

Let $\psi:(X_1,\mathcal{B}_1,\mu_1,u_t^{(1)})\to(X_2,\mathcal{B}_2,\mu_2,u_t^{(2)})$ be a monotone equivalence. Then we may associate to $\psi$ a cocycle $\tau(x,t)$ on $(X_1,\mathcal{B}_1,\mu_1,u_t^{(1)})$ so that the following holds:
\[\psi(u_t^{(1)}.x)=u_{\tau(x,t)}^{(2)}.\psi(x).\] 
Two cocycles $\tau$ and $\tau'$ on $X_1$ are said to be \textbf{cohomologous} if there exists a measurable (but not necessarily integrable) $w:X_1\to\R$ such that
\[
\tau'(x,t)=\tau(x,t)+w(u_t^{(1)}.x)-w(x).
\]
If $\psi$ and $\psi'$ are two monotone equivalences between $(X_1,\mathcal{B}_1,\mu_1,u_t^{(1)})$ and $(X_2,\mathcal{B}_2,\mu_2,u_t^{(2)})$ we say that they are cohomologous if the corresponding cocycles are.

\medskip

A special feature of unipotent flows on quotients of semisimple groups that is an immediate corollary of the Jacobson-Morozov theorem \cite{Knapp96Lie}*{Thm. 10.3} is that any monotone equivalence between these flows can be easily modified by translating it using a diagonal element normalizing $u^{(2)}_t$ to be an even monotone equivalence. Indeed, if $u^{(2)}_t = \exp(t\bu_2)$, we can complete $\bu_2$ to an $\mathfrak{sl}_2$ triplet $\bu_2,\ba_2,\bou_2$ and apply $\exp(t\ba_2)$ to $\psi$ to fix the average speed of the equivalence.

 This observation allows us to state our main theorem only for even equivalences:
\begin{Theorem}\label{thm:main}
For $i=1,2$, let $\mathbf{G}_i$ be the identity component of a real semisimple linear algebraic group without compact factors, $\Gamma_i<\mathbf{G}_i$ a lattice, \ $m_i$ the probability measure on $\mathbf{G}_i/\Gamma_i$ induced by Haar measure on $\mathbf{G}_i$, \ $u_t^{(i)}$ a one-parameter unipotent subgroup of $\mathbf{G}_i$, and $\mathfrak{g}_i$ the Lie algebra of $\mathbf{G}_i$. Assume for $i=1,2$, the group $u_t^{(i)}$ acts ergodically on $(\mathbf{G}_i/\Gamma_i,m_i)$ and that $\mathbf{G}_i$ acts faithfully on $\mathbf{G}_i/\Gamma_i$. Then $(\mathbf{G}_1/\Gamma_1,m_1,u_t^{(1)})$ and $(\mathbf{G}_2/\Gamma_2,m_2,u_t^{(2)})$ are monotone equivalent if and only if one of the following holds
\begin{enumerate}[label=\textup{(A\arabic*)}]
    \item \label{i:LB} for both $i=1,2$, we have that $\mathfrak{g}_i=\mathfrak{sl}_2(\R)\oplus \mathfrak{g}'_i$ and the generator of $u_t^{(i)}$ is of the form 
    \[                     \begin{pmatrix}
                       0 & 1 \\
                       0 & 0 
                     \end{pmatrix}
                  \times 0 \in \mathfrak{g}_i;\]
   \item There exist an isomorphism $\phi:\mathbf{G}_1\to \mathbf{G}_2$ and $c\in \mathbf{G}_2$ such that $\phi(\Gamma_1)=\Gamma_2$ and $\phi(u^{(1)}_t) = c u^{(2)}_t c^{-1}$. Moreover, any even Kakutani equivalence between these systems is cohomologous to an actual isomorphism.
\end{enumerate}
\end{Theorem}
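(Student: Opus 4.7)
The ``if'' direction is formal: in case \ref{i:LB} both systems lie in the loosely Kronecker equivalence class, which by the classical theory of Feldman--Katok--Ornstein--Rudolph--Weiss is a single monotone equivalence class, while in case (A2) the claimed algebraic isomorphism is in particular a measurable isomorphism, and hence a (trivial) monotone equivalence. For the ``only if'' direction, by the Jacobson--Morozov observation preceding the theorem we may assume we are handed an \emph{even} Kakutani equivalence $\psi$ with associated cocycle $\tau$ satisfying
\[
\psi(u_t^{(1)}.x)=u_{\tau(x,t)}^{(2)}.\psi(x),\qquad \tau(x,t)=t+o_x(t)\ \text{ a.s.}
\]

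The first step is a dichotomy. Ratner's Kakutani invariant is preserved under monotone equivalence and, by Kanigowski--Vinhage--Wei \cite{kanigowski2021kakutani}, it vanishes precisely on loosely Kronecker systems; for the flows considered here this occurs exactly when $\mathfrak g_i=\mathfrak{sl}_2(\R)\oplus \mathfrak g_i'$ and $\bu_i$ takes the form prescribed in \ref{i:LB}. Thus either both systems satisfy \ref{i:LB} and we are done, or neither does. Henceforth I assume the latter, \emph{rigid}, case.

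The key structural reduction is a cocycle rigidity statement: the plan is to prove the existence of a measurable $w:\mathbf{G}_1/\Gamma_1\to\R$ with $\tau(x,t)=t+w(u_t^{(1)}.x)-w(x)$. Granting this, the twist $\tilde\psi(x):=u_{-w(x)}^{(2)}.\psi(x)$ intertwines $u_t^{(1)}$ and $u_t^{(2)}$, so $\tilde\psi_*m_1$ is an ergodic $u_t^{(2)}$-invariant probability measure on $\mathbf{G}_2/\Gamma_2$; by Ratner's measure classification it is algebraic, and since $\psi_*m_1$ is equivalent to $m_2$ and $\tilde\psi$ only shifts $\psi$ along $u_t^{(2)}$-orbits, the closed orbit supporting it must have full $m_2$-measure, forcing $\tilde\psi_*m_1=m_2$. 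Hence $\tilde\psi$ is a genuine measure preserving isomorphism, and the Ratner--Witte Morris isomorphism rigidity theorem quoted in the introduction upgrades it to an algebraic isomorphism, producing the data $\phi$ and $c$ required in (A2); the ``moreover'' assertion then holds since $\psi$ is cohomologous to $\tilde\psi$ via $w$.

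The main obstacle is thus proving cocycle rigidity. The plan is to consider the graph joining $\mu_\psi$ on $\mathbf{G}_1/\Gamma_1\times \mathbf{G}_2/\Gamma_2$, which is invariant under the time-changed diagonal flow $(x,y)\mapsto (u_t^{(1)}.x,\, u_{\tau(x,t)}^{(2)}.y)$, and to exploit the polynomial shearing of nearby $u_t^{(1)}$-orbits. The non loosely Kronecker hypothesis guarantees the existence of directions in $\mathfrak g_1$ transverse to $\bu_1$ along which the polynomial shearing strictly dominates the linear rate available inside a single $\mathfrak{sl}_2$-factor, and this super-linear drift should allow one, via a Ratner-style H-property comparison for pairs of shadowing orbits, to pin down the deviation $\tau(x,t)-t$ up to a coboundary on long orbit segments. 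Lemma~\ref{lem:longHammingSep} should supply the quantitative Hamming-type control ruling out spurious matchings of orbit pieces, Proposition~\ref{prop:cauchySeq} should assemble the locally defined transfer functions into a globally defined measurable $w$, and Corollary~\ref{cor:sl2ergodic} should furnish the ergodicity input needed to propagate rigidity into the $\mathfrak{sl}_2(\R)$-summands of $\mathfrak g_i$---exactly the pieces on which time changes are a priori most flexible and where the interaction with the rigid non loosely Kronecker directions constitutes the subtlest point of the entire argument.
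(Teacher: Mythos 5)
Your reductions at the two ends are fine: the ``if'' direction, the dichotomy via Ratner's invariant and \cite{kanigowski2021kakutani}, the passage to an even equivalence, and the final step (an intertwining map pushes $m_1$ to a $u_t^{(2)}$-invariant measure, Ratner/Witte Morris rigidity then yields $\phi$ and $c$, and the cohomology statement follows because the modification moves points only along $\mathbf{U}_2$-orbits) all match the paper. But the entire core of the theorem --- producing the measurable $w$ (equivalently, the intertwining map) in the rigid case --- is only gestured at, and the gesture does not constitute an argument. Saying that the graph joining is invariant under the time-changed diagonal flow and that super-linear shearing in the non-loosely-Kronecker directions ``should allow one to pin down $\tau(x,t)-t$ up to a coboundary'' skips exactly the difficulty the paper spends \S\ref{sec:polynomialDivergence}--\S\ref{sec:renormalization} on: an arbitrary measurable time change has no regularity transverse to the flow, so no H-property comparison applied directly to $\tau$ can localize it. The paper's route is different in an essential way: it first proves the Main Lemma (Lemma~\ref{lem:main}, that Kakutani--Bowen balls are respected by $\psi$), then uses it in \S\ref{sec:comgeo} to establish compatibility with the \emph{diagonal} subgroups --- $\psi(a_\delta^{(1)}.x)=c(\delta)a_\delta^{(2)}u^{(2)}_{w(x,\delta)}.\psi(x)$, with the delicate elimination of the $C_{\mathbf{G}_2}(\mathbf{H}_2)$-component of $c(\delta)$ --- and only then obtains the coboundary a posteriori, as a by-product of showing that the renormalizations $\psi_n(x)=a_{-n}^{(2)}\psi(a_n^{(1)}.x)$ converge along a full-density subsequence. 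This diagonal-compatibility step is absent from your plan, and without it there is no mechanism to renormalize away the unknown measurable shift.

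You also misassign the roles of the auxiliary results you invoke. Proposition~\ref{prop:cauchySeq} is not a gluing statement for locally defined transfer functions; it asserts convergence of $\psi_{n_i(x)}(x)$ for a well-behaved subsequence, and its proof is where the real work lies: the shift $t_x$ in $\psi(a_1^{(1)}.x)=a_1^{(2)}u^{(2)}_{t_x}\psi(x)$ is merely measurable, so $\sum e^{-2j}t_{a_j^{(1)}.x}$ cannot be summed naively, and Corollary~\ref{cor:sl2ergodic} is used precisely to find, for \emph{every} $j$, a nearby point $h.x$ with $a_j^{(1)}h.x$ in a good set --- not to ``propagate rigidity into the $\mathfrak{sl}_2$-summands.'' Likewise Lemma~\ref{lem:longHammingSep} feeds into Lemma~\ref{lem:longHammingNew} and the Main Lemma, rather than directly controlling $\tau$. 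As written, your proposal reproduces the paper's soft outer layer but leaves the hard middle --- Lemma~\ref{lem:main}, Proposition~\ref{prop:CompatibleGeoMain}, and the renormalization convergence --- unproved, so it does not constitute a proof of the theorem.
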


While there is no monotone equivalence rigidity for horocycle flows as they are all loosely Kronecker, Ratner in \cites{ratner1986rigidity,ratner1987rigid} showed that if the time change function $\alpha$ satisfies some regularity assumptions, e.g. $C^1$ or H\"{o}lder along certain directions, then a monotone equivalence obtained this way must arise from an isomorphism --- i.e.\ we have monotone equivalence rigidity for H\"{o}lder time changes. 

Recently, there have been several works extending Ratner's results  monotone equivalence rigidity for H\"{o}lder time changes. One motivation for these works is that time changes of unipotent flows form an interesting class of non homogeneous parabolic flows with interesting properties. For example, Marcus
proved in \cite{marcus1977ergodic} that smooth time-changes are mixing, and the rate
of mixing was studied by Forni and Ulcigrai in \cite{forni2012time}.

By extending the methods of \cite{ratner1986rigidity}, Tang \cite{tang2020new} obtained monotone equivalence rigidity for H\"{o}lder time changes for one-parameter unipotent subgroup actions on $\operatorname{SO}(n,1)$ with cocompact lattice. Following Tang's work, Artigiani, Flaminio and Ravotti \cite{afr22} extended Ratner's result to the general situation, under similar regularity assumptions for time change, and an additional non-convergence assumption on graph joinings.  In the case that the unipotent flow is loosely Kronecker, \cite{afr22} give a rigidity result similar to \cite{ratner1986rigidity} under a regularity assumption for the time change, but without the non-convergence assumption on graph joinings.

\subsection*{Acknowledgement}
The authors would like to thank Benjamin Weiss for encouragements and helpful discussions. D.W. would also like to thank Svetlana Katok for her encouragements. We thank and Adam Kanigowski, Akshay Venkatesh, Kurt Vinhage and Andreas Wieser for helpful discussions and comments. In particular, suggestions by Akshay Venkatesh allowed us to give a much cleaner (and shorter) proof of the results in Appendix \ref{sec:sl2ergodic}.

\smallskip

This paper is dedicated to Benjamin Weiss, with admiration and gratitude. Weiss is a true giant in ergodic theory, and his wide ranging and deep contribution to the subject during his ongoing long and productive career is an inspiration to us.

\section{Overview of the argument}
Our starting point is a key definition introduced by Feldman to study Kakutani equivalence --- the $\bar f$-metric, or more precisely a closely related notion for flows we call $(\delta, \epsilon, R)$-two-sided matching. If $u _ t$ is a flow on some space $X$, we say that $x,x' \in X$ are \emph{$(\delta, \epsilon, R)$-two-sides} matchable if after restricting to a subset of $[-R,R]$ of density $\geq (1-\epsilon)$, one can find a continuous map $h$ with derivative $\in (1-\epsilon, 1+ \epsilon)$ so that $u_t x$ and $u_{h(t)}.x'$ are within $\delta$ of each other for every $t$ in this set (see Definition~\ref{def:twoSidesMatching}).

The importance of this notion to study monotone equivalence stems from the fact that if we are given two flows
$u _ t ^ {(i)}$ on two spaces $X _ i$ (for $i=1,2$), and if $\psi: X _ 1 \to X _ 2$ is an even Kakutani equivalence, then on a compact set $K \subset X_1$ of large measure if $x,y \in X_1$ are $(\delta, \epsilon, R)$-two-sides matchable then $\psi(x),\psi(y)$ are $(\delta',20\epsilon,R)$-two sides matchable ($\delta'$ can be as small as we want if $\delta$ is chosen appropriately).

The orbits of two nearby points $x,y$ in either $X_1$ or $X_2$ initially do not diverge much so can be $(\delta, \epsilon, R)$-matched; we think of $\epsilon$ and $\delta$ as small but fixed and $R$ as large (measuring the `quality' of the match). Consider for $\mathbf G= \mathbf G_1$ or $\mathbf G_2$
\[
\operatorname{Bow}(R,\delta)=\{g\in \mathbf{G},d_\mathbf{G}(u_{t}gu_{-t},e)<\delta\textup{ for }\forall t\in[-R,R]\}
\]
Let $\mathfrak h = \mathfrak h_i$ ($i=1$ or $2$) be the sub-Lie algebra of $\operatorname{Lie} \mathbf{G_i}$ spanned by an $\mathfrak{sl}_2$-triplet containing $\mathbf u_i$ (say $\bu_i$, $\ba_i$ and $\bou_i$), and finally let $a_t=a_t^{(i)} = \exp (t\ba_i)$ and $\widehat{u}_s=\widehat{u}_s^{(i)} =\exp (s\bou_i)$.
If $y \in \operatorname{Bow}(R,\delta).x$ then $x$ and $y$ can be $(\delta, \epsilon, R)$ matched for any $\epsilon>0$ even without any time change. Moreover this is true also for the lifts of $x$ and $y$ to~$\mathbf G$. Allowing time change allows to accommodate deviation between $x$ and $y$ in the  $\exp(\mathfrak{h})$ direction significantly beyond what is contained in $\operatorname{Bow}(R,\delta)$. This allows us to define a larger subset of $\mathbf G$ we call Kakutani-Bowen balls $\Kak(R,\epsilon)$ so that if we have two nearby point $x,y$ with $y \in \Kak(R,\delta).x$ the two points can be lifted to points in $\mathbf G$ that are $(10\delta, \delta, R)$-matched. 
In  $\mathbf G$, this is essentially an if and only if condition: up to minor details in the exact choice of parameters, two nearby points $g,g' \in \mathbf G$ are $(\delta, \epsilon, R)$-matched iff $g' \in \Kak(\delta',R).g$. But in $\mathbf G/\Gamma$ two points may well be matchable even if the lifts to $\mathbf G$ are not.
The definition of Kakutani-Bowen balls is taken from \cite{kanigowski2021kakutani}, and in essence is used already in \cite{ratner1979cartesian}.

Our main lemma (Lemma~\ref{lem:longHammingNew}) shows\footnote{For technical reasons Lemma~\ref{lem:longHammingNew} requires that two point be $(\delta, \epsilon, T)$-two sided matchable for every $T$ between some fixed $R_0$ and $R$; we ignore this issue in this description.} that when $u _ t ^ {(2)}$ acting on $\mathbf G _ 2 / \Gamma _ 2$ is not loosely Kronecker, there is a compact subset $\Kold{036Klh1} \subset \mathbf G _ 2 / \Gamma _ 2$ of arbitrarily large measure so that if $x \in \mathbf G _ 2 / \Gamma _ 2$ and $y \in \Kold{036Klh1}$ are $(\delta, \epsilon, R)$-two sided matchable for sufficiently small $\delta, \epsilon$ then after shifting $y$ by a suitable element of controlled size in $\mathbf{U}_2$, say $u_{t_2}^{(2)}$, the points $x$ and $u_{t_2}^{(2)}y$ satisfy that
$x \in \Kak(R,C\delta).u_{t_2}^{(2)}y$. This lemma is essentially a sharper form of \cite{kanigowski2021kakutani}*{Theorem~6.1}. Combining this with the fact that (when restricted to a set of large measure) Kakutani equivalences map $(\delta', \epsilon', R)$-matchable points to $(\delta, \epsilon, R)$-matchable points one obtains (cf.~Lemma~\ref{lem:main}) that there is a compact set $\Kold{035Klm1} \subset \mathbf G _ 1 / \Gamma _ 1$ of large measure so that for all large enough $R$, fixed but small suitably chosen $\epsilon, \delta>0$,
\begin{equation}\label{eq: from Lemma 8.1}
\psi \left(\Kak\left(R,\epsilon,y\right) \cap \Kold{035Klm1}\right) \subset \Kak\left(R,\delta,\psi(y)\right).
\end{equation}
It is important that both $\Kold{035Klm1},\Kold{036Klh1}$ do not depend on $R$.

We next observe that for $t$ very small, the points $x, a _ t ^ {( 1 )} x$ are $(\delta, \epsilon, R)$-two-sided matchable for \emph{every} $R > 0$. It follows that if $x, a _ t ^ {(1 )}x$ are both in $\Kold{035Klm1}$, then $\psi (a _ t ^ {(1 )}.x) \in u_{t(R)}^{(2)}\Kak\left(R,\delta\right)x$ for all $R$, with $t(R)$ possibly dependent on $R$ but is uniformly bounded. This is a strong algebraic restriction that implies that there is some small $f(x) \in R$ and $c(x)$ in the centralizer $C_{\mathbf G_2}(\mathbf U_2)$ of $\mathbf U_2$ such that
\begin{equation}\label{eq:psi of a in intro}
\psi(a_{t}^{(1)}.x)=c(x) a_{f(x)}^{(2)}.\psi(x)
\end{equation}
(cf. Lemma~\ref{lem:Geolm1}).

This is still not precise enough for our purposes. For tiny $s$, if one chooses appropriately $h(t)$, then $u^{(1)}_t x$ and $u^{(1)}_{h(t)}\widehat u_s^{(1)}x$ diverge in the direction of $a^{(1)}_{\bullet}$. We use this to show that the $c(x)$ in \eqref{eq:psi of a in intro} has no component in the centralizer of $\mathfrak h$ --- i.e. $c(x)$ is in the intersection of $C_{\mathbf G_2}(\mathbf U_2)$ with the subgroup $\mathbf{G}_2^+<\mathbf G_2$ of those elements $g$ satisfying
\[
a^{(2)}_t g a^{(2)}_{-t} \to 1\qquad\text{as $t \to -\infty$}. 
\]
Essentially this is because if we can get $x$ and $a_t^{(1)}.x$ by adjusting time on trajectories of extremely nearby points the same must hold for $\psi(x)$ and $\psi(a_t^{(1)}.x)$.

The fact that $\psi$ is a monotone equivalence for the action of $u^{(i)}_t$ implies that $f(x)=f(u^{(1)}_r.x)$ and that the image of $c(x)$ in $C_{\mathbf G_2}(\mathbf U_2)/\mathbf U_2$ equals to that of $c(u^{(1)}_r.x)$. 
By ergodicity of the $u^{(1)}_r$ flow this means that $f(x)$ and the image of $c(x)$ in $C_{\mathbf G_2}(\mathbf U_2)/\mathbf U_2$ is constant almost everywhere. If $\psi$ is an even Kakutani equivalence it is not hard to show that $f(x)=t$.
We conclude (cf. Proposition~\ref{prop:CompatibleGeoMain}) that there exists $c\in C_{\mathbf{G}_2}(\mathbf U_2) \cap \mathbf{G}_2^+$ such that if we set $\tilde{\psi}=c\psi$ then for~$m_1$-a.e.$~x\in \mathbf{G}_1/\Gamma_1$
\begin{equation}\label{eq:psi of a in intro 2}
\tilde{\psi}(a_1^{(1)}.x)\in a_1^{(2)}\mathbf{U}_2.\tilde{\psi}(x).
\end{equation}
We may as well assume $\psi$ itself satisfies \eqref{eq:psi of a in intro 2}.

\medskip

Now comes the last, and perhaps most delicate part of the argument --- a renormalization procedure. We consider the sequence of maps
    \[
    \psi_n(x)=a_{-n}^{(2)}\psi(a_n^{(1)}.x).
    \]
    By \eqref{eq:psi of a in intro 2}, $\psi_n(x) \in \mathbf{U}_2.\psi(x)$. If $\psi$ is an even Kakutani equivalence, and if it were known that $\psi_n(x)$ converges to some measurable
    \[
    \phi:\mathbf{G}_1/\Gamma_1\to \mathbf{G}_2/\Gamma_2
    \]
    then it is not hard to show that $\phi$ will necessarily be a \emph{measure preserving isomorphism} between the two systems $(\mathbf{G}_1/\Gamma_1,\mathcal{B}_1,m_1,u_t^{(1)})$ and $(\mathbf{G}_2/\Gamma_2,\mathcal{B}_2,m_2,u_t^{(2)})$ which were completely classified by Ratner. 

\medskip

    Define $t_x\in\R$ so that 
    \[
    \psi(a_1^{(1)}.x)=a_1^{(2)}.u_{t_x}^{(2)}.\psi(x);
    \]
    this can be done by \eqref{eq:psi of a in intro 2}.
    Iterating one gets that for any $n$
    \[
    \psi(a_n^{(1)}.x)=a_n^{(2)}.u_{t_{n,x}}^{(2)}.\psi(x)
    \]
    with $t_{n,x}=\sum_{j=0}^{n-1} e^{-2j} \,t_{a^{(1)}_j\!.x}$. Thus if we could show the $t_{n,x}$ tend to a limit as $n \to \infty$ we would be done. If $t_{x}$ was bounded, then this would be obvious, but unfortunately the only thing we know about $t_x$ is that it is measurable.

    The renormalization strategy was successfully employed by Ratner in \cite{ratner1986rigidity}, where she proved time change rigidity for monotone equivalences $\psi$ satisfies some regularity assumptions. Note that in this case time change rigidity does \emph{not} hold without this regularity assumption; not surprisingly, her argument why $\psi_n$ converge to a limit heavily used the regularity of the time change function. 

    \medskip
    
    We give a different argument, based on an $L^2$-ergodic theory like result we prove in Appendix~\ref{sec:sl2ergodic}. While we cannot guarantee for a given point $x$ that $a^{(1)}_j\!.x$ lies in a good set (say, the set where $t_\bullet$ is bounded) for all $j$, we can guarantee that \emph{for \textbf{every} $j$ there is a point $x'$ close to~$x$} so that $a^{(1)}_j\!.x'$ is in a good set. Thus we obtain some control on the $\psi_n$-value of the aggregate of all points in a small neighbourhood of $x$, and when $a^{(1)}_j\!.x$ lies in a good set this says something about $\psi_n(a^{(1)}_j\!.x)$.
    
    This allows us to show that for almost every $x$, the sequence $\psi_n(x)$ converges to some limit $\psi(x)$ along a subsequence of \emph{full density}, which is sufficient for our purposes. This part of the argument is given in \S\ref{sec:renormalization}.

\subsection{Reader's guide}
The structure of our paper is as follows:
\begin{itemize}[label={---}]
    \item In \S\ref{sec:notationsPre}, we discuss some preliminaries: two sides matching, homogeneous spaces, Kakutani-Bowen balls and some basic matching lemmas.
    \item In \S\ref{sec:convertKakutani}, we prove some easy initial initial reductions, in particular showing we may assume the given monotone equivalence is an even Kakutani equivalence that is well behaved in the flow direction.
    \item In \S\ref{sec:polynomialDivergence}, we analyze the polynomial way in which one-parameter unipotent flows divergence. A combinatorial lemma related to orbit divergence that will be used in \S\ref{sec:mainLemma} is also provided.
    \item In \S\ref{sec:MesureEstimatesExcept}, we use Remez-type estimates (cf.~Theorem~\ref{thm:brudnyiLocal}) and Chevalley represenations (cf.~Theorem~\ref{thm:Chevalley}) to control measures of exceptional sets.
    \item In \S\ref{sec:normalcoreNBHD}, we study neighborhoods of the normal core of a special real semisimple algebraic group $\mathbf{L}$ (cf.~\eqref{eq:defN}). The main result of this section is Lemma~\ref{lem:HNint}, which will help us in \S\ref{sec:longHammingSep} and \S\ref{sec:renormalization} to verify the assumptions of Corollary~\ref{cor:globalRepEst} which will be needed to complete the estimates for the measure of the exceptional sets.
    \item In \S\ref{sec:mainLemma} and \S\ref{sec:longHammingSep}, we prove what we call the Main Lemma (Lemma~\ref{lem:main}), that gives us the initial stepping stone to taming arbitrary Kakutani equivalences. By using the combinatorial Lemma~\ref{lem:longHammingComb} and Algorithm~\ref{alg:cap}, we reduce the Main Lemma to Lemma~\ref{lem:longHammingSep}, which forms the second part of the proof of Lemma~\ref{lem:main}. 
    The proof of Lemma~\ref{lem:longHammingSep} relies on using Chevalley representations, Corollary~\ref{cor:globalRepEst} and Lemma~\ref{lem:HNint}.
    \item In \S\ref{sec:comgeo}, we study the compatibility of the diagonalizable subgroups $a_t^{(i)}=\exp(t\ba_i)$ that correspond to the acting unipotent one parameter groups with the given even Kakutani equivalence. 
    The main tool used in this section is the main lemma (Lemma~\ref{lem:main}).
    \item In \S\ref{sec:renormalization}, we study the limit behavior of $\psi_n(x)=a_{-n}^{(2)}\psi(a_n^{(1)}.x)$ as $n$ goes to infinity. 
    The existence of this limit is the second key ingredient in our proof, and in order to do that we shall need to rely on a new ergodic theorem for $\SL_2(\R)$-actions (Corollary~\ref{cor:sl2ergodic}).
    \item In \S\ref{sec:proofofMain}, we complete proof by collecting all our previous lemmas and applying Ratner's and Witte's isomorphic rigidity theorems for unipotent flows.
    \item In Appendix~\S\ref{sec:appSmoothM}, we provide for completeness a proof of Lemma~\ref{lem:smoothMatching}, which enables us to use smooth matching function when defining two sided matchings. This is a modification of an argument from \cite{kanigowskiPreHighkakutani}
    \item In Appendix~\S\ref{sec:appGoodTime}, we prove Lemma~\ref{lem:goodTimeChange}, which is a general result about Kakutani equivalences, stating that by replacing a given even Kakutani equivalence by  a different one in the same cohomology class, we can have a time change function that has almost constant derivative and is smooth in the flow direction.
    \item In Appendix~\S\ref{sec:sl2ergodic}, we prove a result on unitary representations of $\SL_2(\R)$ with a spectral gap that implies a kind of pointwise ergodic theorem needed for our proof in \S\ref{sec:renormalization}. This new ergodic theorem seems to be of independent interest. 
    \end{itemize}

    \medskip

    The following graph explains the relation between these sections.
    \begin{figure}[H]
 \centering
 \scalebox{0.6}
 {
 \begin{tikzpicture}[scale=5]
	 \tikzstyle{vertex}=[circle,minimum size=20pt,inner sep=0pt]
	 \tikzstyle{selected vertex} = [vertex, fill=red!24]
	 \tikzstyle{edge1} = [draw,line width=5pt,-,red!50]
     \tikzstyle{edge2} = [draw,line width=5pt,-,green!50]
     \tikzstyle{edge3} = [draw,line width=5pt,-,blue!50]
     \tikzstyle{edge4} = [draw,line width=5pt,-,brown!50]

	 \tikzstyle{edge} = [draw,thick,-,black]
      \node[vertex] (v00) at (0,0.5) {\S\ref{sec:notationsPre}};
      \node[vertex] (v10) at (0.05,0.5) {};
      
	 \node[vertex] (v01) at (0.75,1) {\S\ref{sec:convertKakutani}};
      \node[vertex] (v11) at (0.7,1) {};
      \node[vertex] (v21) at (0.8,1) {};
      
	 \node[vertex] (v02) at (0.75,0.66) {\S\ref{sec:polynomialDivergence}};
      \node[vertex] (v12) at (0.7,0.66) {};
      \node[vertex] (v22) at (0.8,0.66) {};
      
	 \node[vertex] (v03) at (0.75,0.33) {\S\ref{sec:MesureEstimatesExcept}};
      \node[vertex] (v13) at (0.7,0.33) {};
      \node[vertex] (v23) at (0.8,0.33) {};
      
      \node[vertex] (v04) at (0.75,0) {\S\ref{sec:normalcoreNBHD}};
      \node[vertex] (v14) at (0.7,0.0) {};
      \node[vertex] (v24) at (0.8,0) {};
      
	 \node[vertex] (v05) at (1.5,0.75) {\S\ref{sec:mainLemma}};
      \node[vertex] (v15) at (1.45,0.75) {};
      \node[vertex] (v25) at (1.55,0.75) {};
      \node[vertex] (v05a) at (1.48,0.75) {};
      \node[vertex] (v05b) at (1.52,0.75) {};
      
      \node[vertex] (v06) at (1.5,0.2) {\S\ref{sec:longHammingSep}};
      \node[vertex] (v16) at (1.45,0.2) {};
      \node[vertex] (v26) at (1.55,0.2) {};
      \node[vertex] (v06a) at (1.48,0.2) {};
      \node[vertex] (v06b) at (1.52,0.2) {};
      
      \node[vertex] (v07) at (2.0,0.4) {\S\ref{sec:comgeo}};
      \node[vertex] (v17) at (1.95,0.4) {};
      \node[vertex] (v27) at (2.05,0.4) {};
      
      \node[vertex] (v08) at (2.5,0.5) {\S\ref{sec:renormalization}};
      \node[vertex] (v18) at (2.45,0.5) {};
      \node[vertex] (v28) at (2.55,0.5) {};
      
      \node[vertex] (v09) at (3.1,0.5) {\S\ref{sec:proofofMain}};
      \node[vertex] (v19) at (3.05,0.5) {};

      \node[vertex] (v010) at (0,0.0) {\S\ref{sec:appSmoothM}};

      \node[vertex] (v011) at (0.2,1) {\S\ref{sec:appGoodTime}};
      \node[vertex] (v211) at (0.25,1) {};

      \node[vertex] (v012) at (2.5,1) {\S\ref{sec:sl2ergodic}};

	 \draw[-stealth] (v10)--(v11);
      \draw[-stealth] (v10)--(v12);
      \draw[-stealth] (v10)--(v13);
      \draw[-stealth] (v10)--(v14);
      \draw[-stealth] (v21)--(v15);
      \draw[-stealth] (v22)--(v15);
      \draw[-stealth] (v23)--(v16);
      \draw[-stealth] (v24)--(v16);
      \draw[-stealth] (v06b)--(v05b);
      \draw[-stealth] (v25)--(v17);
      \draw[-stealth] (v25)--(v18);
      \draw[-stealth] (v27)--(v18);
      \draw[-stealth] (v28)--(v19);
      \draw[-stealth] (v010)--(v00);
      \draw[-stealth] (v211)--(v11);
      \draw[-stealth] (v012)--(v08);

 \end{tikzpicture}
 }
\caption{Structure of paper}\label{fig:StructureMap}
\end{figure}
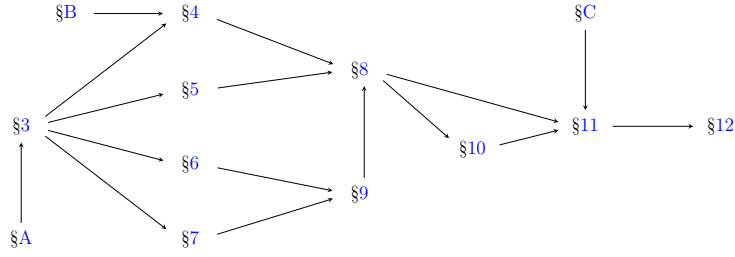

\subsection{Global and local notations}
In order to make our paper more readable, we adopt the following notation policy to define \emph{global} and \emph{local} notations:
\begin{itemize}
    \item The global notations are uniquely defined in the whole paper. 
    \item The local notations are only uniquely defined in each subsubsection (e.g. \S2.2.1) or proof. 
\end{itemize}

\medskip

Here is a table for global and local notations, where $*\in\N$.
\begin{table}[H]
    \centering
    \begin{tabular}{c|c|c}
    \hline
                       & Global notations  & Local notations \\
    \hline
    Full measure sets &  $\mathcal{Z}_*$  & $Z_*$ \\
    \hline
    Open sets & $O_*$ & \\
    \hline
    Compact sets & $\mathcal{K}_*$ & $K_*$ \\
    \hline
    Constants & $C_*$, $c_*$, $w_*$, $R_*$  & $\kappa_*$, $N_*$ \\
    \hline
    Small constants & $\epsilon_*$, $\delta_*$ & $\eta$\\
    \hline
    \end{tabular}
\end{table}

\section{Notations and preliminaries}\label{sec:notationsPre}
\subsection{\texorpdfstring{$(\delta,\epsilon,R)$}--two sides matching}\label{sec:LKmatching}
We recall one basic definition in Kakutani equivalence: $(\epsilon,\eta,R)$-two sides matching. For more details and further background we refer the reader to \cites{feldman1976new,katok1977monotone,ornstein1982equivalence,ratner1981some}.

Let $l(\cdot)$ denotes the Lebesgue measure on $\R$ and consider for $R>1$ the orbit segment
\[I_R(x)=\{T_sx:s\in[-R,R]\}.\]

\begin{Definition}[cf.~{\cite{ratner1981some}*{Def.~1}}]\label{def:twoSidesMatching}
Let $T_t$ be a flow on $(X,\mathcal{B},\mu)$ and $d$ a metric on $X$. Suppose $\delta,\epsilon\in(0,1)$ and $R>1$. We say $x,y\in X$ are \textbf{$(\delta,\epsilon,R)$-two sides matchable} if there exist a closed subset $A=A_{(x,y)}\subset[-R,R]$ with $l(A)>(1-\epsilon)2R$, and an increasing absolutely continuous map $h=h_{(x,y)}:A \to A'=A'_{(x,y)}\subset[-R,R]$ with $l(A')>(1-\epsilon)2R$, such that
\begin{enumerate}
    \item $d(T_{h(t)}x,T_ty)<\delta$ for all $t\in A$;
    \item $h(0)=0$ and $0\in A$;
    \item $\absolute{h'(t)-1}<\epsilon$ for all $t\in A$.
\end{enumerate}
We call $h$ an \textbf{$(\delta,\epsilon,R)$-two sided matching} from $I_R(x)$ to $I_R(y)$. 
\end{Definition}

The notion of $(\delta,\epsilon,R)$-two sided matching is an adaptation to flows of the $\bar{f}$-distance used by J.~Feldman\cite{feldman1976new} and (under a different name) by A.~Katok\cite{katok1977monotone} in their pioneering works.

\medskip

In fact, we can modify the matching function of any two sides matching to a $C^{\infty}$-matching function based on the following lemma, which is a modification of Proposition 4.3.2 in \cite{kanigowskiPreHighkakutani}. 

\begin{Lemma}[Smooth matching function lemma]\label{lem:smoothMatching}
There is an absolute constant $C$ so that the following holds: Let $T_t$ be a flow on $(X,\mathcal{B},\mu,d)$, and let $\epsilon,\delta\in(0,10^{-10}C^{-2})$ and $R>100$. Then for any $x,y\in X$ that are $(\delta,\epsilon,R)$-two sides matchable, there exists an $C^{\infty}$ function $\tilde{h}:[-R,R]\to[-R,R]$ and a closed subset $\tilde{A}\subset[-R,R]$ with 
\[
l(\tilde{A})>(1-C\sqrt{\epsilon})2R
\]
satisfying
\begin{enumerate}
    \item $d(T_{\tilde{h}(t)x},T_{t}y)<2\delta$ for all $t\in \tilde{A}$;
    \item $0\in\tilde{A}$ and $\tilde{h}(0)=0$;
    \item $\absolute{\tilde{h}'(t)-1}<C\sqrt{\epsilon}$ for all $t\in[-R,R]$.
\end{enumerate}
\end{Lemma}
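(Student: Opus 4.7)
The plan is to extend the matching function $h$ from $A$ to all of $[-R,R]$ in a controlled way, identify via a maximal function argument a small set on which the extension misbehaves, then mollify carefully so as to preserve the matching on a subset of large measure while obtaining $C^\infty$ regularity with derivative close to $1$ everywhere. First I would extend $h$ to an absolutely continuous $\bar h:[-R,R]\to[-R,R]$ by linear interpolation across each gap of $A$ (and, if needed, across any initial or terminal gap to the endpoints, with slope $1$ there). On $A$ we have $|\bar h'(t)-1|<\epsilon$, and on each gap $(a_i,b_i)$ of $A$ the slope is the affine slope $(h(b_i)-h(a_i))/(b_i-a_i)$. Since $h$ is increasing with image of measure $>(1-\epsilon)2R$ in $[-R,R]$, the bookkeeping $\int_{-R}^R \bar h'\le 2R$ and $\int_A h'=l(h(A))>(1-\epsilon)2R$ yields
\[
\int_{-R}^{R}|\bar h'(t)-1|\,dt \;\le\; C_0 R\epsilon
\]
for an absolute constant.

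Next I would apply the one-dimensional Hardy--Littlewood maximal inequality to $g(t):=\bar h'(t)-1$. Let $Mg$ be its centered maximal function on $[-R,R]$; then $B:=\{Mg>\sqrt{\epsilon}\}$ satisfies $l(B)\le C_1 R\sqrt{\epsilon}$. Off $B$, every local average of $\bar h'$ lies within $\sqrt{\epsilon}$ of $1$. I would replace $\bar h'$ by $1$ on $B$, obtaining $\phi$; the anti-derivative $\hat h(t):=\int_0^t\phi(s)\,ds$ differs from $\bar h$ (up to a constant that can be absorbed into the normalization $\tilde h(0)=0$) by at most the total excess mass $\int_B|\bar h'-1|\le C_2 R\sqrt{\epsilon}$, but crucially this error is concentrated near $B$: for any $t\notin B$, an elementary estimate using the maximal function bound on neighborhoods of $t$ shows $|\hat h(t)-\bar h(t)|\le C_3\sqrt{\epsilon}\cdot \mathrm{dist}(t,B)$, which will be negligible on the set we keep.

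I would then mollify by convolving with a smooth bump $\rho_\eta$ (with $\eta$ chosen much smaller than $\sqrt{\epsilon}$ but large enough so that $\eta\cdot l(B)/R$ still fits inside $O(\sqrt{\epsilon})$): set $\tilde h'=\rho_\eta*\phi$ and $\tilde h(t)=\int_0^t\tilde h'$, then subtract the constant $\tilde h(0)$ to guarantee $\tilde h(0)=0$. Since $\phi$ is uniformly bounded and $|\phi-1|$ has maximal function $\le\sqrt\epsilon$ off $B$, we get $|\tilde h'(t)-1|<C\sqrt{\epsilon}$ for every $t\in[-R,R]$. I would take $\tilde A$ to be $A$ with the $\eta$-neighborhood of $B$ removed, which still has measure $>(1-C\sqrt{\epsilon})2R$. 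For $t\in\tilde A$, first $|\tilde h(t)-h(t)|$ is of order $\eta$ (because the mollification only averages $\phi$ over a window of size $\eta$ entirely outside $B$, where $\phi=\bar h'$ and $\bar h=h$), and second, by choosing $\eta$ small enough relative to the (pointwise) modulus of continuity of the flow along the orbit through $y$ and the matching point $T_{h(t)}x$, we can force $d(T_{\tilde h(t)}x,T_{h(t)}x)<\delta$. Combined with $d(T_{h(t)}x,T_ty)<\delta$ from the original matching, this delivers $d(T_{\tilde h(t)}x,T_ty)<2\delta$.

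The main obstacle I anticipate is the simultaneous control of the three conclusions: keeping the pointwise bound $|\tilde h'-1|<C\sqrt{\epsilon}$ on the full interval while also keeping $\tilde h$ close to the original $h$ on a set of measure $(1-C\sqrt{\epsilon})2R$ and adjusting so that $\tilde h(0)=0$ and $0\in\tilde A$. The tension is that enforcing slope close to $1$ everywhere forces us to alter $\bar h$ on all of $B$, which can propagate constant shifts across the line; the maximal-function argument is what keeps this propagation quantitatively below $\sqrt{\epsilon}$ off $B$. Handling the normalization $\tilde h(0)=0$ cleanly (without giving up the point $0$ from $\tilde A$) is easily arranged by replacing $\tilde h$ with $\tilde h-\tilde h(0)$, provided $|\tilde h(0)|$ is small enough that the matching at $t=0$ still holds within $2\delta$; since $0\in A$ and $h(0)=0$, this requires verifying that $0\notin B$ up to a small perturbation, which one may arrange by translating the mollifier or, failing that, by a bounded time-change of $h$ at the start.
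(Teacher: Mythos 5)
There is a genuine gap at the heart of your construction: when you replace $\bar h'$ by $1$ on the bad set $B$ and integrate, you change the \emph{total increment} of the matching function, and this cumulative drift is not negligible. Writing $\hat h(t)-\bar h(t)=-\int_{[0,t]\cap B}(\bar h'-1)\,ds$, the maximal function bound only controls averages of $\bar h'-1$ through points \emph{outside} $B$; for $t\notin B$ it gives at best $\absolute{\hat h(t)-\bar h(t)}\lesssim \sqrt{\epsilon}\,\absolute{t}$ (and in any case the drift can genuinely be of size $\asymp\epsilon R$: take a single gap of tiny length across which $h$ jumps by $\sim\epsilon R$, so that the affine slope there is enormous; deleting that excess shifts $\hat h$ by $\sim\epsilon R$ at every later time). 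A time shift of order $\epsilon R$ or $\sqrt{\epsilon}R$ is fatal for conclusion (1): the matching is a pointwise $\delta$-closeness condition along the orbit, and $T_{\tilde h(t)}x$ is then a point displaced along the $x$-orbit by a time $\gg\delta$ from $T_{h(t)}x$, so $d(T_{\tilde h(t)}x,T_ty)<2\delta$ cannot be concluded. Your assertion that on $\tilde A$ one has $\absolute{\tilde h(t)-h(t)}=O(\eta)$ ``because the mollification only averages $\phi$ over a window entirely outside $B$'' conflates local agreement of the \emph{derivative} with closeness of the \emph{values}: $\tilde h(t)$ is an integral from $0$ to $t$ and inherits the full accumulated drift from $[0,t]\cap B$. (Your maximal-function argument does correctly give the everywhere bound $\absolute{\tilde h'-1}<C\sqrt{\epsilon}$, and the $L^1$ bound $\int\absolute{\bar h'-1}\lesssim\epsilon R$ is fine; the failure is only, but crucially, in preserving values on the retained set.)

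The way the paper resolves exactly this tension is value-preserving interpolation rather than derivative surgery: each bad gap is enlarged by a factor $1/\sqrt{\epsilon}$ (the covering lemma in Appendix A merges overlapping enlargements, with total length still $O(\sqrt{\epsilon}R)$), and on each enlarged interval $h$ is replaced by the affine map with the \emph{same endpoint values}. Because the excess jump carried by the gaps inside an enlarged interval $Q_j$ is at most $\sqrt{\epsilon}\,l(Q_j)$ by construction, the interpolation slope is automatically $1+O(\sqrt{\epsilon})$, and since endpoint values are preserved there is no drift at all: the new function equals $h$ exactly off the enlarged intervals. Smoothing is then only required to approximate this piecewise-defined function with sup-norm error $\le\min(\epsilon,\delta)$ (plus a smooth gluing of the two halves near $0$ to force $\tilde h(0)=0$), which is compatible with the $2\delta$ bound. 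If you want to salvage your mollification scheme, you would have to compensate the mass you remove on $B$ locally (e.g., redistribute each excess jump over a surrounding interval of length at least $\text{jump}/\sqrt{\epsilon}$), which is essentially the paper's enlargement construction.
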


For completeness, we provide a proof of this lemma in Appendix \ref{sec:appSmoothM}.

\medskip

As a result of this lemma, we assume from now on that all our $(\delta,\epsilon,R)$-two sided matchings are $C^1$-smooth.

\subsection{Preliminaries of homogeneous space} 
In this section we recall some standard notations and facts about homogeneous spaces. 

\subsubsection{Algebraic groups}
In this paper, an \textbf{affine variety} defined over $\R$ is a subset $Z$ of $\C^n$ that defined by the vanishing of polynomial equations with real coefficients. A variety $\G\subset\SL_N$ defined over $\R$ is a linear algebraic subgroup defined over $\R$ if $\G(\C)\subset\SL_N(\C)$ is a subgroup. For a $d$-dimensional vector space $V$ over $\C$, we use $\SL(V)$ to denote the group of special linear automorphisms of $V$. A real algebraic representation $\rho:\G\to\SL(V)$ is a homomorphism such that the matrix elements $\rho(g)_{ij}$ are polynomials in the $g_{ij}$ with real  coefficients for $1\leq i,j\leq d$. A \textbf{real algebraic group} is the set of real points of $\G$, i.e. the set defined by $\G(\R)=\G(\C)\cap\SL_N(\R)$. Throughout the paper we denote by $\mathbf{G}$ the identity component (in the Archimedean topology) of $\G(\R)$. We recall the following:
\begin{Lemma}\label{lem:centerDiscrete}
Let $\mathbf{G}$ be the identity component of a real semisimple algebraic group. Then $C(\mathbf{G})$ is finite. Moreover, any discrete normal subgroup of $\mathbf{G}$ is contained in $C(\mathbf{G})$ and is also finite.
\end{Lemma}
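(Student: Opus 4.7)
The plan is to prove the two assertions in sequence, using only two standard facts: that $\mathbf{G}$, as the Archimedean identity component of a connected real semisimple algebraic group, is Zariski dense in the algebraic group, and that the scheme-theoretic center of a semisimple algebraic group is a finite group scheme.

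First I would replace $\G$ by its Zariski-connected component (which remains semisimple) so that $\mathbf{G} = \G(\R)^0$ is Zariski dense in $\G$. Then any $z \in C(\mathbf{G})$ commutes with every element of the Zariski-dense set $\mathbf{G}$, so by continuity of the commutator map it commutes with every element of $\G$, i.e.\ $C(\mathbf{G}) \subseteq Z(\G)(\R)$. Since $\G$ is semisimple, the algebraic center $Z(\G)$ is contained in every maximal torus and is cut out there by the vanishing of all roots; this realizes $Z(\G)$ as a finite algebraic group (zero-dimensional and of finite degree). In particular $Z(\G)(\R)$ is finite, and hence so is $C(\mathbf{G})$.

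For the second assertion, let $N \trianglelefteq \mathbf{G}$ be a discrete normal subgroup. For each fixed $n \in N$, the conjugation map
\[
\mathbf{G} \to N, \qquad g \mapsto g n g^{-1},
\]
is continuous with connected domain and discrete target, hence constant. Evaluating at $g = e$ gives $g n g^{-1} = n$ for all $g \in \mathbf{G}$, so $n \in C(\mathbf{G})$. Thus $N \subseteq C(\mathbf{G})$, and combining with the first paragraph gives that $N$ is finite.

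There is really no obstacle here beyond citing the standard structure theory; the only subtlety to be careful about is that $C(\mathbf{G})$ \emph{a priori} only commutes with $\mathbf{G}$, not with all of $\G(\R)$, which is why Zariski density of $\mathbf{G}$ in the Zariski-connected component of $\G$ is the right bridge to the algebraic center. Since this is a brief preliminary lemma, I would write the argument in just a few lines, quoting \cite{Knapp96Lie} or \cite{Morris05Ratner} for the finiteness of the algebraic center and the Zariski density statement.
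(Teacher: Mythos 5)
Your proof is correct: the paper states this lemma as a recalled standard fact and gives no proof of its own, so there is nothing to compare against. Your argument — passing to the Zariski-connected component, using Zariski density of $\mathbf{G}$ to land $C(\mathbf{G})$ inside the finite algebraic center, and the continuity-plus-connectedness argument showing a discrete normal subgroup is central — is exactly the standard justification one would cite here.
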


Another way to define  linear algebraic group is using Chevalley's theorem.

\begin{Theorem}\label{thm:Chevalley}
If $\mathbb{H}$ is a linear algebraic subgroup of $\G$ defined over $\R$, then there exists a real algebraic representation $\rho:\G\to\SL(V)$, and a vector $v\in V\cap\R^{\dim V}$ such that $\mathbb{H}(\R)=\{g\in \G(\R):\rho(g)v\in\R v\}$. Moreover, if $\mathbb{H(\C)}$ is generated by unipotent element, then 
\[
\mathbb{H}(\R)=\{g\in \G(\R):\rho(g).v=v\}.
\]
\end{Theorem}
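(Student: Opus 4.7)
The plan is to reproduce the classical argument of Chevalley, realizing $\mathbb{H}$ as the stabilizer of a line in a suitable exterior power of a finite-dimensional $\mathbf{G}$-invariant subspace of the coordinate ring.

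First, I would set up the algebraic machinery. Let $\R[\G]$ denote the ring of regular (polynomial) functions on the affine variety $\G$, and let $I\subset\R[\G]$ be the ideal of functions vanishing on $\mathbb{H}$; since $\mathbb{H}$ is defined over $\R$, the ideal $I$ is defined over $\R$, and it is finitely generated because $\R[\G]$ is Noetherian. Let $\G$ act on $\R[\G]$ by right translation, $(g\cdot f)(x)=f(xg)$. A standard ``local finiteness'' argument (every regular function on $\G$ lies in a finite-dimensional $\G$-invariant subspace, since the matrix coefficients of the regular representation are polynomials in the coordinates of $\G$) shows that one can find a finite-dimensional $\R$-subspace $W\subset\R[\G]$ which is $\G$-invariant and contains a set of generators of $I$. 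Setting $W_0:=W\cap I$, the subspace $W_0$ still generates $I$ as an ideal, and $W_0$ is defined over~$\R$.

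Next I would verify that the $\R$-points of $\mathbb{H}$ coincide with the $\R$-points of the stabilizer of $W_0$ in $W$. One inclusion is immediate: if $g\in\mathbb{H}$ then $g\cdot I=I$, and since $W_0$ generates $I$ and $g\cdot W=W$, we deduce $g\cdot W_0=W_0$. Conversely, if $g\in\G$ stabilizes $W_0$, then the ideal generated by $W_0$, namely $I$, is preserved by $g$; but the zero locus of $I$ is $\mathbb{H}$, so right translation by $g$ maps $\mathbb{H}$ into itself, and since $e\in\mathbb{H}$ this forces $g\in\mathbb{H}$. Now set $d:=\dim W_0$ and pass to the representation $\rho:\G\to\SL(V)$ on a suitable rescaling of $V:=\Lambda^{d}W$ (dividing out by a suitable power of the determinant character if needed so as to land in $\SL(V)$). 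Let $v\in\Lambda^{d}W_0\subset V$ be a nonzero vector, necessarily defined over $\R$ after choosing a basis of $W_0$ in $\R[\G]\cap\R^{\dim V}$. Then $g$ stabilizes the line $\R v$ iff $g\cdot W_0=W_0$ iff $g\in\mathbb{H}$, which proves the first assertion.

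For the second assertion, suppose $\mathbb{H}(\C)$ is generated by unipotent elements. The action of $\mathbb{H}$ on the $\mathbb{H}$-invariant line $\R v$ is given by an algebraic character $\chi:\mathbb{H}\to\GL_1$. Any unipotent element $u\in\mathbb{H}$ acts on the $1$-dimensional space by $\chi(u)$, which must equal $1$ since $\chi(u)$ is simultaneously unipotent (as the image of a unipotent element under an algebraic homomorphism) and a scalar. As $\mathbb{H}(\C)$ is generated by such elements, $\chi$ is identically trivial, hence $\rho(g).v=v$ for every $g\in\mathbb{H}(\R)$, giving $\mathbb{H}(\R)=\{g\in\G(\R):\rho(g).v=v\}$.

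The only genuinely delicate point is the local-finiteness step producing the $\G$-invariant $W$ together with rationality over $\R$; this is standard (e.g.\ Borel's \emph{Linear Algebraic Groups}, §5). Everything else is formal once that is in place, so I would be comfortable simply citing the finite-dimensionality lemma and focusing the exposition on the stabilizer computation and the character-triviality argument in the unipotent case.
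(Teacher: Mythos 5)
Your argument is correct: it is the classical Chevalley construction (finite-dimensional $\mathbf G$-invariant subspace of the coordinate ring containing generators of the ideal of $\mathbb H$, pass to $\Lambda^{d}$ to get a stabilized line, and kill the character using unipotent generators), which is exactly the proof of the source the paper cites for this statement (Margulis, \emph{Discrete Subgroups of Semisimple Lie Groups}, \S 1), since the paper itself gives no proof. The only point stated loosely is how to land in $\SL(V)$ rather than $\GL(V)$ — e.g.\ embed $\GL(V)\hookrightarrow\SL(V\oplus\R)$ via $g\mapsto(g,\det g^{-1})$, or note that in the paper's semisimple setting $\det\circ\rho$ is automatically trivial — but this is cosmetic.
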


For more details, see \cite{MargulisDiscrete1991}*{\S1, p.17}.

\subsubsection{Metric, measure and adjoint action}\label{sec:basis}
Let $\mathbf{G}$ be the identity component of a real semisimple algebraic group with Lie algebra $\mathfrak{g}$. Fix an inner product $<\cdot,\cdot>$ on $\mathfrak{g}$. This inner product induces a right invariant Riemannian metric $d_\mathbf{G}$. If $\Gamma$ is a lattice in $\mathbf{G}$, the right invariant Riemannian metric on $\mathbf{G}$ induces a Riemannian metric on $\mathbf{G}/\Gamma$, which we will denote by $d_{\mathbf{G}/\Gamma}$. The corresponding Riemannian volume on $\mathbf{G}$ gives a right Haar measure $\tilde{m}$ on $\mathbf{G}$, and also determines a probability measure on $m$ homogeneous space $\mathbf{G}/\Gamma$. Throughout this paper, we use~$\norm{\cdot}$ to denote the Hilbert Schmidt norm\footnote{A.k.a. the Frobenius norm.} of $g\in \mathbf{G}$ by viewing $g$ as an element of $\SL_d(\R)$ for some $d\in\N$. The following is standard:

\begin{Lemma}\label{lem:matrixNorm}
Let $\mathbf{G}<\operatorname{SL}_N(\R)$ be the identity component of a real semisimple algebraic group, then there exist $\denew\label{007mnC1}\in(0,1)$\index{$\deold{007mnC1}$, Lemma~\ref{lem:matrixNorm}} and $\Cnew\label{007mnC2}>1$\index{$\Cold{007mnC2}$, Lemma~\ref{lem:matrixNorm}} such that for any $g\in \mathbf{G}$, $d_\mathbf{G}(g,e)<R$ implies that $\norm{g}<\Cold{007mnC2}e^{R/\deold{007mnC1}}$.
\end{Lemma}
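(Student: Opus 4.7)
The key idea is a Gronwall-type estimate applied to a near-minimizing path from $e$ to $g$, using that the Hilbert--Schmidt norm on $\operatorname{SL}_N(\R)$ is sub-multiplicative while the Riemannian metric on $\mathbf{G}$ is right-invariant.

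First I would fix $R_0>R$ arbitrarily close to $R$ and choose a piecewise smooth path $\gamma:[0,1]\to\mathbf{G}$ with $\gamma(0)=e$, $\gamma(1)=g$, whose Riemannian length is at most $R_0$. Because the metric $d_\mathbf{G}$ is right-invariant, we can write
\[
\gamma'(t)=X(t)\,\gamma(t),\qquad X(t)\in\mathfrak{g},
\]
where $X(t)$ is the pullback of $\gamma'(t)$ along right translation by $\gamma(t)^{-1}$, and
\[
\int_0^1 \norm{X(t)}_{\mathfrak{g}}\,dt \;=\; \text{length}(\gamma) \;\le\; R_0.
\]
Here $\norm{\cdot}_{\mathfrak{g}}$ denotes the fixed inner product norm on $\mathfrak{g}$ that defines the Riemannian metric.

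Next, since $\mathfrak{g}$ is finite-dimensional, there is a constant $\kappa_1>0$ (depending only on $\mathbf{G}$ and the chosen inner product, since both $\mathfrak{g}$ and the Hilbert--Schmidt norm are fixed) such that $\norm{Y}\le \kappa_1 \norm{Y}_{\mathfrak{g}}$ for every $Y\in\mathfrak{g}$, where $\norm{\cdot}$ on the left denotes the Hilbert--Schmidt norm on $M_N(\R)$. Viewing $\gamma(t)$ as a curve in $M_N(\R)$ and using sub-multiplicativity of the Hilbert--Schmidt norm,
\[
\frac{d}{dt}\norm{\gamma(t)} \;\le\; \norm{\gamma'(t)} \;=\; \norm{X(t)\gamma(t)} \;\le\; \kappa_1\norm{X(t)}_{\mathfrak{g}}\norm{\gamma(t)}.
\]
Gronwall's inequality then yields
\[
\norm{\gamma(1)} \;\le\; \norm{\gamma(0)}\exp\!\left(\kappa_1\int_0^1\norm{X(t)}_{\mathfrak{g}}\,dt\right) \;\le\; \sqrt{N}\,e^{\kappa_1 R_0}.
\]
Letting $R_0\downarrow R$ gives $\norm{g}\le \sqrt{N}\,e^{\kappa_1 R}$.

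To put this in the form stated, I would set $\Cold{007mnC2}=\sqrt{N}$ and $\deold{007mnC1}=\min(1/2,\,1/\kappa_1)\in(0,1)$; since $e^{R/\deold{007mnC1}}\ge e^{\kappa_1 R}$, the conclusion $\norm{g}<\Cold{007mnC2}\,e^{R/\deold{007mnC1}}$ follows (with strict inequality absorbed into a slight enlargement of $\Cold{007mnC2}$ if necessary). There is no real obstacle here: the only thing to be careful about is the convention that the norm on $\mathfrak{g}$ used to measure path length is the Riemannian one defining $d_\mathbf{G}$, not the matrix norm --- that is precisely what forces the factor $\kappa_1$, which in turn forces $\deold{007mnC1}$ to be a constant possibly smaller than $1$.
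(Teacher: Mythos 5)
Your argument is correct, and it settles a statement the paper explicitly leaves to the reader, so there is no printed proof to compare against. The Gronwall scheme is sound: right-invariance of $d_\mathbf{G}$ lets you write a near-minimizing path as $\gamma'(t)=X(t)\gamma(t)$ with $\int_0^1\norm{X(t)}_{\mathfrak g}\,dt\le R_0$, the finite-dimensional norm comparison on $\mathfrak g$ and sub-multiplicativity of the Hilbert--Schmidt norm give $\frac{d}{dt}\norm{\gamma(t)}\le\kappa_1\norm{X(t)}_{\mathfrak g}\norm{\gamma(t)}$ almost everywhere (the norm is only Lipschitz along the path, but the integral form of Gronwall covers this), and the choice $\Cold{007mnC2}=2\sqrt N$, $\deold{007mnC1}=\min(1/2,1/\kappa_1)$ yields the stated bound; this is essentially the standard argument (an equivalent alternative being to write $g$ as a product of at most $\lceil R\rceil$ elements of the unit ball and use sub-multiplicativity directly).
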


We leave the proof to the reader.

\medskip

Let $\mathbf{L}$ be a subgroup of $\mathbf{G}$, $\mathfrak{l}$ a subalgebra of $\mathfrak{g}$ and $\delta>0$, we define 
\begin{equation}\label{eq:subgroupBall}
 B_{\delta}^{\mathbf{L}}=\{l\in L:d_G(l,e)<\delta\},\ \ \ \  B_{\delta}^{\mathfrak{l}}=\{x\in\mathfrak{l}:d_{\mathfrak{g}}(x,0)<\delta\},   
\end{equation}
where $d_{\mathfrak{g}}$ is the Euclidean metric on $\mathfrak{g}$ corresponding to our chosen inner product.

Let $C_g$ denote the conjugation map  $C_g(h)=g^{-1}hg$ for $g,h\in \mathbf{G}$. For $X,Y\in\mathfrak{g}$, we let $\operatorname{Ad}_g(Y)=g^{-1}Yg$ denote the derivative of $C_g$ at the identity $e$ and $\operatorname{ad}_X(Y)=-[X,Y]$. Recall that 
\begin{equation}\label{eq:adjoint}
\begin{aligned}
C_{\exp(X)}(\exp(Y))&=\exp(\operatorname{Ad}_{\exp(X)}Y),\\
\exp(\operatorname{ad}_X)&=\operatorname{Ad}_{\exp(X)}
\end{aligned}
\end{equation}
with $\exp$ denoting the exponential map on $\mathfrak g$ and $\operatorname{Hom}(\mathfrak g)$ as appropriate.

 Let  $\mathfrak{g}=\oplus_{i=1}^n\mathfrak{e}_i$ be a vector space decomposition of $\mathfrak{g}$, where $\mathfrak{e}_i$ need not be subalgebras or commute with each other. Then by the implicit function theorem
\begin{Lemma}\label{lem:groupDecom}
There exists $\enew\label{006ecom1}>0$\index{$\eold{006ecom1}$, Lemma~\ref{lem:groupDecom}} 
such that if $g\in \mathbf{G}$ and  $d_{\mathbf{G}}(g,e)<\eold{006ecom1}$, then there exist unique $X_i\in\mathfrak{e}_i$ close to $0$ such that 
\[
g=\exp(A_1)\exp(A_2)\ldots\exp(X_n).
\]
\end{Lemma}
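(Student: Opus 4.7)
The plan is to apply the inverse function theorem to the smooth map
\[
\Phi \colon \mathfrak{e}_1 \times \mathfrak{e}_2 \times \cdots \times \mathfrak{e}_n \longrightarrow \mathbf{G}, \qquad \Phi(X_1,\ldots,X_n) = \exp(X_1)\exp(X_2)\cdots\exp(X_n).
\]
Since $\exp$ is a real-analytic map from $\mathfrak{g}$ to $\mathbf{G}$ and multiplication in $\mathbf{G}$ is analytic, $\Phi$ is analytic. We work in a fixed neighborhood of the origin in the product, so the map is well-defined.

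Next I would compute the derivative of $\Phi$ at the origin. Using the product rule together with the fact that $d(\exp)_0 = \mathrm{id}_\mathfrak{g}$ (when $T_e \mathbf{G}$ is identified with $\mathfrak{g}$ via the right-invariant trivialization that determines $d_\mathbf{G}$), one gets
\[
d\Phi_{(0,\ldots,0)} (Y_1,\ldots,Y_n) = Y_1 + Y_2 + \cdots + Y_n \in \mathfrak{g}.
\]
Because $\mathfrak{g} = \bigoplus_{i=1}^n \mathfrak{e}_i$ is a vector-space direct sum decomposition, the summation map on the right is a linear isomorphism from $\mathfrak{e}_1 \times \cdots \times \mathfrak{e}_n$ to $\mathfrak{g}$. (Note that this is the only place the decomposition hypothesis is used, and it does not require the $\mathfrak{e}_i$ to be subalgebras or to commute.)

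By the inverse function theorem, $\Phi$ is therefore a local analytic diffeomorphism from some open neighborhood $U \subset \mathfrak{e}_1 \times \cdots \times \mathfrak{e}_n$ of the origin onto some open neighborhood $V \subset \mathbf{G}$ of $e$. Choose $\eold{006ecom1} > 0$ small enough that the ball $B^{\mathbf{G}}_{\eold{006ecom1}} \subset V$; then for each $g \in B^{\mathbf{G}}_{\eold{006ecom1}}$ there is a unique $(X_1,\ldots,X_n) \in U$ with
\[
g = \exp(X_1)\exp(X_2)\cdots\exp(X_n),
\]
and each $X_i$ is close to $0$ (with continuous dependence on $g$). This is exactly the assertion of the lemma; there is no real obstacle, just the verification of the derivative calculation, which is where I would be most careful.
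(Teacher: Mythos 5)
Your proof is correct and follows essentially the same route as the paper, which simply invokes the inverse/implicit function theorem for the product map $(X_1,\ldots,X_n)\mapsto\exp(X_1)\cdots\exp(X_n)$, whose differential at the origin is the sum map $\bigoplus_i\mathfrak{e}_i\to\mathfrak{g}$ (and refers to \cite{kanigowski2021kakutani} for details). Nothing further is needed.
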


\noindent
Cf.~[Lemma 3.8 in \cite{kanigowski2021kakutani}] for details.

\medskip

\noindent
Notice $[\mathbf{x},\mathbf{x}]=[\mathbf{y},\mathbf{y}]=0$, then by Baker--Campbell--Hausdorff formula 
\begin{Lemma}\label{lem:comExp}
There exist $\enew\label{008eComExp}\in(0,1)$\index{$\eold{008eComExp}$, Lemma~\ref{lem:comExp}} and $\Cnew\label{008CComExp}>1$\index{$\Cold{008CComExp}$, Lemma~\ref{lem:comExp}}  such that for any $\mathbf{x},\mathbf{y}\in\mathfrak{g}_2$ satisfying  $\norm{\mathbf{x}},\norm{\mathbf{y}}<\eold{008eComExp}$
\[
\norm{\log(\exp(\mathbf{x})\exp(\mathbf{y}))-(\mathbf{x}+\mathbf{y})}<\Cold{008CComExp}\norm{\mathbf{x}}\norm{\mathbf{y}}.
\]
\end{Lemma}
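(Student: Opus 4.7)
The strategy is a direct application of the Baker--Campbell--Hausdorff (BCH) formula in its Dynkin form
\[
\log(\exp(\mathbf{x})\exp(\mathbf{y})) \;=\; \mathbf{x} + \mathbf{y} + \sum_{n \geq 2} P_n(\mathbf{x},\mathbf{y}),
\]
where $P_n(\mathbf{x},\mathbf{y})$ is a universal $\Q$-linear combination (with explicitly bounded Dynkin coefficients) of iterated Lie brackets in $\mathbf{x}$ and $\mathbf{y}$ of total degree $n$. Since $\mathfrak{g}_2$ is finite dimensional, any choice of norm is equivalent to an algebra norm, so there is a constant $M = M(\mathfrak{g}_2)$ with $\norm{[u,v]} \leq M \norm{u}\norm{v}$ for all $u,v \in \mathfrak{g}_2$. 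Standard estimates on Dynkin's series then guarantee that the BCH series converges absolutely in $\mathfrak{g}_2$ whenever $\norm{\mathbf{x}} + \norm{\mathbf{y}} < r_0$ for some $r_0 > 0$ depending only on $M$.

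The key observation, flagged in the preamble of the lemma, is that the trivial identities $[\mathbf{x},\mathbf{x}] = 0 = [\mathbf{y},\mathbf{y}]$ together with the Jacobi identity force every iterated bracket using \emph{only} copies of $\mathbf{x}$ (or only copies of $\mathbf{y}$) of length $\geq 2$ to vanish. Hence every non-vanishing iterated bracket appearing in $P_n(\mathbf{x},\mathbf{y})$ for $n \geq 2$ contains at least one $\mathbf{x}$ \emph{and} at least one $\mathbf{y}$. Estimating each such bracket using $\norm{[u,v]} \leq M\norm{u}\norm{v}$ and factoring out one $\norm{\mathbf{x}}$ and one $\norm{\mathbf{y}}$, one obtains
\[
\norm{P_n(\mathbf{x},\mathbf{y})} \;\leq\; A_n \, \norm{\mathbf{x}}\,\norm{\mathbf{y}}\,(\norm{\mathbf{x}} + \norm{\mathbf{y}})^{n-2},
\]
with constants $A_n$ depending only on $M$ and the Dynkin coefficients. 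This is precisely the reason the conclusion has the sharp factor $\norm{\mathbf{x}}\norm{\mathbf{y}}$ rather than the weaker $(\norm{\mathbf{x}} + \norm{\mathbf{y}})^2$ one would otherwise get.

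To finish, I would choose $\epsilon \in (0, r_0/2)$ small enough that $\sum_{n \geq 2} A_n (2\epsilon)^{n-2}$ converges to some constant $C$. Then for all $\mathbf{x}, \mathbf{y} \in \mathfrak{g}_2$ with $\norm{\mathbf{x}}, \norm{\mathbf{y}} < \epsilon$,
\[
\norm{\log(\exp(\mathbf{x})\exp(\mathbf{y})) - (\mathbf{x}+\mathbf{y})} \;\leq\; \norm{\mathbf{x}}\,\norm{\mathbf{y}} \sum_{n \geq 2} A_n (2\epsilon)^{n-2} \;\leq\; C\,\norm{\mathbf{x}}\,\norm{\mathbf{y}},
\]
which is the desired inequality. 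There is no genuine obstacle in the argument: the only technical point is shrinking $\epsilon$ enough to ensure absolute convergence of the BCH series and to legitimize the term-by-term regrouping that exposes the factor $\norm{\mathbf{x}}\norm{\mathbf{y}}$.
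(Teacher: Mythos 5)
Your proposal is correct and follows exactly the route the paper intends: the paper offers no written proof, only the remark that $[\mathbf{x},\mathbf{x}]=[\mathbf{y},\mathbf{y}]=0$ and an appeal to the Baker--Campbell--Hausdorff formula, and your argument simply fills in that standard estimate (every BCH term of degree $\geq 2$ contains both $\mathbf{x}$ and $\mathbf{y}$, so it is bounded by a constant times $\norm{\mathbf{x}}\norm{\mathbf{y}}$ after summing the convergent Dynkin series for small norms). The only cosmetic remark is that antisymmetry alone, without the Jacobi identity, already kills the pure-$\mathbf{x}$ and pure-$\mathbf{y}$ brackets.
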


\subsubsection{$\mathfrak{sl}_2$-triplet and chain basis}\label{sec:sl2basis}
Given a nonzero nilpotent element $\bu\in\mathfrak{g}$, we obtain from the Jacobson-Morozov theorem \cite{jacobson1979lie} that there exists a homomorphism $\varphi:\mathfrak{sl}_2(\R)\to\mathfrak{g}$ such that 
\[
\varphi\left(\begin{pmatrix}
    0 & 1 \\
    0 & 0 \\
\end{pmatrix}
  \right)=\bu, \ \ \varphi\left(\begin{pmatrix}
    1 & 0 \\
    0 & -1 \\
  \end{pmatrix}\right)=\ba, \ \ \varphi\left(\begin{pmatrix}
    0 & 0 \\
    1 & 0 \\
  \end{pmatrix}\right)=\bou,
\]
where $\ba\in\mathfrak{g}$ is a $\mathbb{R}$-diagonalizable element and $\bou\in\mathfrak{g}$ is a nilpotent element.

The following one-parameter subgroups will be used extensively in this paper:
\begin{itemize}
    \item $\mathbf{U}=\{u_t\}_{t\in \R}$ --- the one-parameter \textbf{unipotent} subgroup induced by $\varphi$ with $u_t=\exp(t\bu)$;
    \item $\mathbf{A}=\{a_t\}_{t \in \R}$  --- the corresponding one-parameter \textbf{diagonalizable} subgroup $a_t=\exp(t\ba)$; 
    \item $\OU=\{\ou_t\}_{t\in \R}$ --- the \textbf{opposite} one-parameter unipotent subgroup with $\ou_t=\exp(t\bou)$.
\end{itemize}
Let $\mathfrak{h}=\operatorname{span}_{\R}\{\bu,\ba,\bou\}$, $\mathbf{H}$ the connected subgroup of $\mathbf{G}$ (in the Hausdorff topology) generated by $\exp(\mathfrak{h})$ and $m_\mathbf{H}$ the Haar measure on $\mathbf{H}$. Note that the homomorphism $\varphi$ from $\mathfrak{sl}_2(\R)$ to $\mathfrak h$ induces a morphism of algebraic groups $\tilde\varphi$ from $\SL_2$ to the algebraic group $\mathbb G$ so that $\mathbf H$ is the image of $\SL_2(\R)$ under $\tilde\varphi$.

Recall that $\mathbf{G}$ is a semisimple Lie group with Lie algebra $\mathfrak{g}$. There exist simple Lie algebras $\mathfrak{g}_i$ for $i=1,\ldots,\ell$ such that $\mathfrak{g}=\oplus_{i=1}^{\ell}\mathfrak{g}_i$. Let $p_i:\mathfrak{g}\to\mathfrak{g}_i$ be the canonical projection, then \[p_i\circ\ad:\mathfrak{sl}_2(\R)\to\operatorname{End}(\mathfrak{g}_i)\] with $X\mapsto p_i\circ\ad_{\varphi(X)}$ gives a representation of $\mathfrak{sl}_2(\R)$. 

Since $\mathfrak{sl}_2(\R)$ is semisimple, $p_i\circ\ad$ splits into sum of finite dimensional irreducible representations, which are determined up to isomorphism by their dimension. Recall that the irreducible $d+1$-dimensional representation of $\mathfrak{sl}_2(\R)$, \ $(\pi_d,V_{d})$ has a basis $\bx^0,\ldots,\bx^{d}$, so that
\begin{equation}\label{eq:sl2relation}
    \begin{aligned}
    \pi_d(\bu)\bx^k&=\bx^{k-1},\\
    \pi_d(\ba)\bx^k&=(2k-d)\bx^k,\\
    \pi_d(\bou)\bx^k&=a_{d,k}\bx^{k+1},
    \end{aligned}
\end{equation}
for all $0\leq k\leq d+1$, where $a_{d,k}$ are nonzero constants; for notational convenience we adopt the convention that $\bx^{-1}=\bx^{d+2}=0$. Since $p_i\circ\ad$ is a direct sum of irreducible representations of $\mathfrak{sl}_2(\R)$, we obtain a basis of $\mathfrak{g}$ by collecting all such bases for $\mathfrak{g}_i$ together\footnote{Pick one $i_0\in\N$ such that $p_{i_0}(\bu)\neq0$, then we replace the basis for $\operatorname{span}_{\R}\{p_{i_0}(\bu),p_{i_0}(\ba),p_{i_0}(\bou)\}$ by $\bou$, $\bx$, $\bu$.}  and call it as the \textbf{chain basis for $\mathfrak{g}$ with respect to $\mathfrak{h}$}:
\begin{equation}\label{eq:chainbasis}
\{\bu,\ba,\bou,\bx^{0,1}\ldots,\bx^{q_1,1},\ldots,\bx^{0,n},\ldots,\bx^{q_n,n}\}.
\end{equation}
The sequence of numbers $(q_1,\ldots,q_n)$ is denoted the \textbf{chain structure indices of $\mathfrak{g}$ with respect to $\mathfrak{h}$}. 

\medskip

Given a subset $W \subset \mathfrak{g}$, we define $C_{\mathfrak{g}}(W)$ as the common centralizer of all $\bw \in W$, i.e. 
\begin{equation*}
C_{\mathfrak{g}}(W)=\{\by\in\mathfrak{g}:\operatorname{ad}_{\bw}(\by)=0 \ \forall\bw \in W\}.
\end{equation*}
In particular, we obtain that $C_{\mathfrak{g}}(\bu)$ is generated by the elements \[\bu,\bx^{0,1},\ldots,\bx^{0,n}\] of the chain basis \eqref{eq:chainbasis}. The complement to $\bu\R$ in $C_{\mathfrak{g}}(\bu)$ is defined as
\begin{equation}\label{eq:wSpace}
    \mathfrak{w}=\bigoplus_{j=1}^{n}\bx^{0,j}\R.
\end{equation}
In general $\mathfrak{w}$ is not a Lie algebra.

\medskip

Thorough the language of chain basis, we decompose Lie algebra $\mathfrak{g}$ as the direct sum of the following three Lie subalgebras:
    \begin{alignat*}{4}
        &\mathfrak{g}^+&&=&&\bu\R&&\oplus\{\bigoplus\bx^{i,j}\R:[\ba,\bx^{i,j}]=\lambda\bx^{i,j}\text{ for some }\lambda>0\},\\
        &\mathfrak{g}^0&&=&&\ba\R&&\oplus\{\bigoplus\bx^{i,j}\R:[\ba,\bx^{i,j}]=0\},\\
        &\mathfrak{g}^\shortminus&&=&&\bou\R&&\oplus\{\bigoplus\bx^{i,j}\R:[\ba,\bx^{i,j}]=\lambda\bx^{i,j}\text{ for some }\lambda<0\}.
    \end{alignat*}
Then define
\begin{equation}\label{eq:G+-0}
    \begin{aligned}
        \mathbf{G}^\shortminus=\exp(\mathfrak{g}^\shortminus),\quad \mathbf{G}^0=\exp(\mathfrak{g}^0),\quad \mathbf{G}^+=\exp(\mathfrak{g}^+).
    \end{aligned}
\end{equation}

\subsubsection{Injective radius and matching function}\label{sec:fDomain}

For any $x\in \mathbf{G}$, the injectivity radius of $x$ w.r.t. a lattice $\Gamma<\mathbf{G}$ is denoted as $\operatorname{inj}(x)$:
\[
\operatorname{inj}(x)=\sup\{r\geq0:B_\mathbf{G}(x,r)\cap B_\mathbf{G}(x,r)\gamma=\emptyset\text{ for all }\Gamma\ni\gamma\neq e\}.
\]
Suppose $K\subset \mathbf{G}$, then we define:
\[
\operatorname{inj}(K)=\inf_{x\in K}\operatorname{inj}(x)=\inf_{\gamma\in\Gamma\setminus\{e\}}\inf_{y\in K}d_\mathbf{G}(y\gamma y^{-1},e).
\]

\medskip

The following lemma will be used to study orbits of nearby point in $\mathbf{G}/\Gamma$ under $\mathbf{U}$. To find the best match between these two orbits one needs to apply a different element of $\mathbf{U}$ to the two points:

\begin{Lemma}\label{lem:matchingFunction}
Suppose
\[
w=\ou_{s}a_{p}\in \mathbf{G}.
\]
Then
\begin{equation}\label{eq: 2by2matrices calulation}
u_{\phi(t)}wu_{-t}=\ou_{s_t}a_{p_t},
\end{equation}
with 
\begin{equation*}
\begin{aligned}
\phi(t)&=\frac{te^{p}}{e^{-p}-tse^{p}},\\
s_t&=se^{p}(e^{-p}-se^{p}t),\\
p_t&=-\log(e^{-p}-se^{p}t),
\end{aligned} 
\end{equation*}
whenever the denominator in the expression for $\phi$ does not vanish.

There exists $\enew\label{014EMa}>0$\index{$\eold{014EMa}$, Lemma~\ref{lem:matchingFunction}} 
such that if
$\absolute{p}<\eold{014EMa}$, then for every $\absolute{t}\leq\eold{014EMa}\absolute{s}^{-1}$, we have that $
\absolute{s_t}<2\absolute{s}$ and $\absolute{p_t}<2(\absolute{p}+\absolute{t}\absolute{s})$.
Moreover, if for $\epsilon\in(0,\eold{014EMa}]$ we have $\absolute{p}<\epsilon$, then for $\absolute{t}\in[0,\epsilon\absolute{s}^{-1}]$
\[
\absolute{\phi'(t)-1}<\sqrt{\epsilon}.
\]
\end{Lemma}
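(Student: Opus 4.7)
\medskip

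\noindent\emph{Proof plan.} The lemma is a calculation inside the $\SL_2(\R)$-subgroup $\mathbf{H} = \tilde\varphi(\SL_2(\R))$, so my strategy is a direct $2\times 2$ matrix computation followed by elementary estimates on $\exp$ and $\log$ near zero and one. Since $u_t$, $a_p$, $\widehat u_s$ are the images under the group homomorphism $\tilde\varphi:\SL_2\to\mathbb G$ of the standard one-parameter subgroups of $\SL_2$, any identity among products of these elements that holds in $\SL_2(\R)$ will automatically hold in $\mathbf G$. I do not expect any genuine obstacle here: the lemma is essentially bookkeeping with explicit constants.

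First, I would derive the formula \eqref{eq: 2by2matrices calulation}. Working with the standard $2\times 2$ representatives I compute
\[
w u_{-t} \;=\; \begin{pmatrix} e^p & 0 \\ se^p & e^{-p}\end{pmatrix}\begin{pmatrix} 1 & -t \\ 0 & 1\end{pmatrix} \;=\; \begin{pmatrix} e^p & -te^p \\ se^p & e^{-p}-tse^p\end{pmatrix},
\]
and observe that the product $u_{\phi(t)} w u_{-t}$ is of the form $\widehat u_{s_t}a_{p_t}$ (i.e.\ lower triangular) precisely when its $(1,2)$-entry vanishes. Solving that linear condition for $\phi(t)$ produces the stated rational expression, and reading off the diagonal and lower-left entries of the resulting matrix gives $e^{-p_t}=e^{-p}-tse^p$ and $s_t e^{p_t}=se^p$, which rearrange to the formulas in the statement.

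For the estimates I would exploit the clean factorization $e^{-p}-se^p t = e^{-p}(1-se^{2p}t)$, which rewrites the conclusions as $s_t = s(1-se^{2p}t)$ and $p_t = p - \log(1-se^{2p}t)$. Under the hypotheses $|p|<\epsilon$ and $|t|\le\epsilon|s|^{-1}$ one has $|se^{2p}t|\le e^{2\epsilon}\epsilon\le 2\epsilon$, so for $\eold{014EMa}$ sufficiently small the elementary inequalities $|\log(1-x)|\le 2|x|$ for $|x|<\tfrac12$ and $e^{2|p|}\le 2$ yield $|s_t|\le|s|(1+2\epsilon)<2|s|$ and $|p_t|\le|p|+2|se^{2p}t|\le 2(|p|+|t||s|)$. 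Finally, a direct differentiation of $\phi$, in which the cancellation $\det a_p=1$ kills the middle terms of the numerator, gives the clean identity
\[
\phi'(t)\;=\;\frac{1}{(e^{-p}-se^p t)^{2}}\;=\;e^{2p_t};
\]
combining this with $|p_t|\le 4\epsilon$ and $|e^{2p_t}-1|\le 4|p_t|\le 16\epsilon$ yields $|\phi'(t)-1|<\sqrt\epsilon$ provided $\eold{014EMa}<1/256$. Choosing $\eold{014EMa}$ small enough to satisfy all the smallness requirements accumulated along the way completes the proof.
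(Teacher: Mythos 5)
Your proposal is correct and follows essentially the same route the paper indicates: the identity \eqref{eq: 2by2matrices calulation} is verified by multiplying the $2\times 2$ matrix representatives (pushed into $\mathbf G$ via $\tilde\varphi$), and the bounds on $s_t$, $p_t$, $\phi'$ follow from elementary estimates on $\log(1-x)$ and $e^x$ once $\eold{014EMa}$ is taken small. Your extra observation that $\phi'(t)=(e^{-p}-se^pt)^{-2}=e^{2p_t}$ is a clean way to get the derivative bound, and all the stated constants check out.
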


This important observation is used in Ratner's work, and can be viewed as a special case of what Ratner calls the H-property (see \cite{ratner1983horocycle}*{Def.~1}). Equation~\eqref{eq: 2by2matrices calulation} can be directly verified by multiplying $2\times2$ matrices, and the subsequent inequalities are straightforward. Cf.~\cite{kanigowski2021kakutani}*{Lemma 5.3} for more details.

\subsubsection{Kakutani-Bowen balls}\label{sec:Kakutanibowen}
Let $\bu$ be a nilpotent element of $\mathfrak{g}$, then there exists a chain basis of $\mathfrak{g}$ by \S\ref{sec:sl2basis}:
\[
\{\bu,\ba,\bou,\bx^{0,1},\ldots,\bx^{q_1,1},\ldots,\bx^{0,n},\ldots,\bx^{q_n,n}\},
\]
where $\{\bu,\ba,\bou\}$ is $\mathfrak{sl}_2$-triplet and $\bx^{i,j}$ satisfying \eqref{eq:sl2relation}. Recall that $\bu,\ba,\bou$ span the Lie algebra $\mathfrak{h}$ of $\mathbf{H}$. Define
\begin{equation}\label{eq:Ic}
I_c=\{1\leq j\leq n:q_j=0\},\qquad \mathbf{Z}=C_{\mathbf{G}}(\mathbf{H});
\end{equation}
then the Lie algebra of Lie group $\mathbf{Z}$ can be characterized  as
\[
\mathfrak{z} = \bigoplus_{j \in I_c}\bx^{0,j} \R.
\]
For simplicity, we define $\mathfrak{l}$ as the direct sum of $\mathfrak{h}$ and $\mathfrak{z}$,
\[
\mathfrak{l}=\mathfrak{h}\oplus\mathfrak{z}.
\]

\medskip

By Lemma~\ref{lem:groupDecom}, for $g\in G$ sufficiently close to $e$
\begin{equation}\label{eq:gDecom}
\begin{aligned}
    g=g^{\mathfrak{h}}g^{\mathfrak{z}}g^{\tr},
\end{aligned}
\end{equation}
where $g^{\mathfrak{h}}$, $g^{\mathfrak{z}}$ and $g^{\tr}$ are defined as following:
\begin{equation}\label{eq:gDecom1}
    \begin{gathered}
    g^{\mathfrak{h}}=u_{\vartheta_{\bu}(g)}\ou_{\vartheta_{\bou}(g)}a_{\vartheta_{\ba}(g)},\\
    g^{\mathfrak{z}}=\exp(\sum_{j\in I_c}\vartheta_{0,j}(g)\bx^{0,j}),\qquad 
     g^{\tr}=\exp(\sum_{j\notin I_c}\sum_{i=0}^{q_j}\vartheta_{i,j}(g)\bx^{i,j}).
    \end{gathered}   
\end{equation}
We also introduce $B_{r_1,r_2}^{\mathfrak l \oplus \tr}(x)$ for $x\in \mathbf{G}/\Gamma$ and $r_1\geq r_2>0$ as following:
\begin{equation}\label{eq:bch}
    \begin{aligned}
        B_{r_1,r_2}^{\mathfrak l \oplus \tr}(x)=\{y\in \mathbf{G}/\Gamma:y=g.x, \ \  g^{\mathfrak{h}},g^{\mathfrak{z}}\in B_{r_1}^{\mathbf{G}},\ \ g^{\tr}\in B_{r_2}^{\mathbf{G}}\}.
    \end{aligned}
\end{equation}

The following definition will be essential for our analysis of Kakutani equivalent map:
\begin{Definition}[Kakutani-Bowen balls]\label{def:Kakball}
For $\epsilon>0$ and $R>1$, let 
\[
\operatorname{Bow}(R,\epsilon)=\{g\in \mathbf{G},d_\mathbf{G}(u_{t}gu_{-t},e)<\epsilon\textup{ for }\forall t\in[-R,R]\}
\]
be the Bowen ball of $e\in \mathbf{G}$ for $u_t$. Suppose $x,y\in \mathbf{G}/\Gamma$, we say $x\in\Kak(R,\epsilon,y)$ if and only if $x$ can be written as $gy$ for $g\in \mathbf{G}$ satisfying the following:
\begin{enumerate}[label=(\roman*)]
    \item $\absolute{\vartheta_{\bou}(g)}<\frac{\epsilon}{R}$,
    \item $\absolute{\vartheta_{\ba}(g)}<\epsilon$,
    \item $\absolute{\vartheta_{\bu}(g)}<\epsilon$,
    \item\label{item:Bowen} $g^{\mathfrak{z}}g^{\tr}\in\operatorname{Bow}(R,\epsilon)$,
\end{enumerate}
where $\vartheta_{\bou}$, $\vartheta_{\ba}$, $\vartheta_{\bu}$ and $g^{\mathfrak{z}}g^{\tr}$ are defined in \eqref{eq:gDecom}. Moreover, for $\tilde{x},\tilde{y}\in \mathbf{G}$, we say $\tilde{x}\in\Kak(R,\epsilon,\tilde{y})$ if and only if $\tilde{x}=g\tilde{y}$ with $g$ satisfying $(1),(2),(3),(4)$ above.

\end{Definition}

Definition \ref{def:Kakball} together with Lemma~\ref{lem:matchingFunction} and the right invariance of~$d_G$ imply for any $g\in\Kak(R,\epsilon)$
\[
\text{$d_\mathbf{G}(u_{\phi_g(t)}.g,u_{t})<10\epsilon$ for every $\absolute{t}\leq R$,}
\]
where $\phi_g(t)$ is the \textbf{best matching function for $g$} defined as
\begin{equation}\label{eq:bestMatchingFun}
\phi_g(t)=te^{\vartheta_{\ba}(g)}/(e^{-\vartheta_{\ba}(g)}-t\vartheta_{\bou}(g)e^{\vartheta_{\ba}(g)}).
\end{equation}

\noindent
The significance of Definition \ref{def:Kakball} is explained by the following propositions:

\begin{Proposition}\label{prop:KakutaniBallStayClose}
There exists constant $\denew\label{012deKakStayClose}>0$\index{$\deold{012deKakStayClose}$, Proposition~\ref{prop:KakutaniBallStayClose}} such that the following holds. Let $\tilde{y},\tilde{x}=g\tilde{y}\in\mathbf{G}$ be two points with $g\in\Kak(R,\delta)$ for $R\geq1$ and $\delta\in(0,\deold{012deKakStayClose}]$, then for every $\absolute{t}\leq R$
\[
d_{\mathbf{G}}(u_{\phi_g(t)}\tilde{x},u_t\tilde{y})\leq10\delta,
\]
where $\phi_g(t)$ is the best matching function defined in \eqref{eq:bestMatchingFun}.
\end{Proposition}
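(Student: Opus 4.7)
The plan is to reduce to an estimate at the identity via right-invariance of $d_\mathbf{G}$, and then split the conjugated expression into an $\mathfrak h$-part and a transverse part, each of which is controlled by a different ingredient of Definition~\ref{def:Kakball}.

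First, by right invariance of $d_\mathbf{G}$, the inequality we want is equivalent to
\[
d_\mathbf{G}\bigl(u_{\phi_g(t)}\, g\, u_{-t},\; e\bigr) \le 10\delta \qquad \text{for all } |t|\le R.
\]
Using the decomposition \eqref{eq:gDecom} and \eqref{eq:gDecom1} (valid once $\deold{012deKakStayClose}\le \eold{006ecom1}$), write $g = g^{\mathfrak h} g^{\mathfrak z} g^{\tr}$, and split
\[
u_{\phi_g(t)}\, g\, u_{-t} \;=\; \bigl(u_{\phi_g(t)}\, g^{\mathfrak h}\, u_{-t}\bigr)\,\bigl(u_{t}\, g^{\mathfrak z} g^{\tr}\, u_{-t}\bigr).
\]

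For the second factor, condition \ref{item:Bowen} in Definition~\ref{def:Kakball} tells us that $g^{\mathfrak z}g^{\tr}\in \operatorname{Bow}(R,\delta)$, hence $d_\mathbf{G}(u_t g^{\mathfrak z}g^{\tr} u_{-t},e)<\delta$ for every $|t|\le R$.

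For the first factor, pull the leftmost $u_{\vartheta_{\bu}(g)}$ out of $g^{\mathfrak h}$, so that
\[
u_{\phi_g(t)}\, g^{\mathfrak h}\, u_{-t} \;=\; u_{\vartheta_{\bu}(g)}\,\Bigl(u_{\phi_g(t)}\,\ou_{\vartheta_{\bou}(g)}\, a_{\vartheta_{\ba}(g)}\, u_{-t}\Bigr).
\]
The expression for $\phi_g(t)$ in \eqref{eq:bestMatchingFun} matches exactly the formula for $\phi$ in Lemma~\ref{lem:matchingFunction} when one sets $s=\vartheta_{\bou}(g)$ and $p=\vartheta_{\ba}(g)$, so
\[
u_{\phi_g(t)}\,\ou_{\vartheta_{\bou}(g)}\, a_{\vartheta_{\ba}(g)}\, u_{-t} \;=\; \ou_{s_t}\, a_{p_t}.
\]
The conditions (i)--(iii) of Definition~\ref{def:Kakball} give $|\vartheta_{\bou}(g)|<\delta/R$, $|\vartheta_{\ba}(g)|<\delta$, $|\vartheta_{\bu}(g)|<\delta$; in particular, choosing $\deold{012deKakStayClose}\le\eold{014EMa}$ ensures both $|\vartheta_{\ba}(g)|<\eold{014EMa}$ and $|t|\le R \le \eold{014EMa}/|\vartheta_{\bou}(g)|$, so Lemma~\ref{lem:matchingFunction} applies and yields $|s_t|<2\delta/R\le 2\delta$ and $|p_t|<2(\delta+\delta)=4\delta$.

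Hence $\ou_{s_t}a_{p_t}$ is within a uniform constant times $\delta$ of $e$, and prepending $u_{\vartheta_{\bu}(g)}$ (also within $\delta$ of $e$) costs at most another $O(\delta)$. Multiplying this first factor by the second factor (within $\delta$ of $e$) and applying a standard Lipschitz-at-the-identity estimate for the right-invariant metric $d_\mathbf{G}$ under products of elements close to $e$, the total distance is bounded by a constant multiple of $\delta$. Choosing $\deold{012deKakStayClose}$ small enough (depending only on $\mathbf{G}$ and the chain basis) makes the absolute constant equal to $10$, completing the proof. The main, and in fact only, technical point is bookkeeping the constants so that Lemma~\ref{lem:matchingFunction} applies uniformly in $t\in[-R,R]$, which is ensured precisely by the $1/R$-scaling built into condition (i).
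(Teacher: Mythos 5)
Your proof is correct and follows essentially the same route as the paper: split $u_{\phi_g(t)}gu_{-t}$ into the $\mathfrak h$-factor $u_{\phi_g(t)}g^{\mathfrak h}u_{-t}$, controlled by Lemma~\ref{lem:matchingFunction} via the bounds (i)--(iii) of Definition~\ref{def:Kakball}, and the conjugated factor $u_t g^{\mathfrak z}g^{\tr}u_{-t}$, controlled by the Bowen-ball condition \ref{item:Bowen}, then add the two contributions by right invariance and the triangle inequality. The only cosmetic difference is your closing remark that shrinking $\deold{012deKakStayClose}$ ``makes the constant equal to $10$'' --- smallness of $\delta$ only absorbs the higher-order error in converting the parameter bounds $|s_t|<2\delta/R$, $|p_t|<4\delta$, $|\vartheta_{\bu}(g)|<\delta$ into metric distances, which is exactly the (equally terse) bookkeeping the paper does when it bounds the $\mathfrak h$-term by $9\delta$ and the transverse term by $\delta$.
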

\begin{proof}[Proof of Proposition~\ref{prop:KakutaniBallStayClose}]
Let $\deold{012deKakStayClose}=\eold{014EMa}$, where $\eold{014EMa}$ is from Lemma~\ref{lem:matchingFunction}.

By right invariance of $d_{\mathbf{G}}$ and the decomposition in  \eqref{eq:gDecom}, for $\absolute{t}\leq R$
\begin{equation*}
    \begin{aligned}
        d_{\mathbf{G}}(u_{\phi_g(t)}\tilde{x},u_t\tilde{y})=&d_{\mathbf{G}}(u_{\phi_g(t)}gu_{-t},e)=d_{\mathbf{G}}(u_{\phi_g(t)}g^{\mathfrak{h}}g^{\mathfrak{z}}g^{\tr}u_{-t},e)\\
        =&d_{\mathbf{G}}(u_{\phi_g(t)}g^{\mathfrak{h}}u_{-t}u_tg^{\mathfrak{z}}g^{\tr}u_{-t},e)\\
        =&d_{\mathbf{G}}(u_{\phi_g(t)}g^{\mathfrak{h}}u_{-t},(u_tg^{\mathfrak{z}}g^{\tr}u_{-t})^{-1})\\
        \leq&d_{\mathbf{G}}(u_{\phi_g(t)}g^{\mathfrak{h}}u_{-t}, e)+d_{\mathbf{G}}(e,(u_tg^{\mathfrak{z}}g^{\tr}u_{-t})^{-1})\\
        =&d_{\mathbf{G}}(u_{\phi_g(t)}g^{\mathfrak{h}}u_{-t}, e)+d_{\mathbf{G}}(u_tg^{\mathfrak{z}}g^{\tr}u_{-t},e).
    \end{aligned}
\end{equation*}
We estimate the two terms in the last line of the above displayed equation as follows:
\begin{itemize}
    \item By Lemma~\ref{lem:matchingFunction}, the first term is bounded by $9\delta$. 
    \item By item~\ref{item:Bowen} of Definition~\ref{def:Kakball}, the second term is bounded by $\delta$. 
\end{itemize}
Together, these imply the estimate given by Proposition~\ref{prop:KakutaniBallStayClose}.
\end{proof}

\medskip

\noindent
The opposite of Proposition~\ref{prop:KakutaniBallStayClose} also holds, here $\eold{014EMa}$ is from Lemma~\ref{lem:matchingFunction}:
\begin{Proposition}\label{prop:matchKak}
There exists a constant $\Cnew\label{012matchKak}>1$\index{$\Cold{012matchKak}$, Proposition~\ref{prop:matchKak}} such that the following holds. Let $\tilde{y},\tilde{x}=g\tilde y\in \mathbf{G}$ be two points with $d_\mathbf{G}(\tilde{x},\tilde{y})<\epsilon$, \  $\epsilon\in(0,\eold{014EMa})$, and let $[a,b]\subset[0,\epsilon\absolute{\vartheta_{\bou}(g)^{-1}}]$ be an interval. Assume
\begin{equation}\label{eq:intervalClose}
    d_\mathbf{G}(u_{\phi_g(t)}\tilde{x},u_{t}\tilde{y})<\epsilon\qquad\text{for all $t\in[a,b]$}
\end{equation}
where $\phi_g(t)$ is the best matching function defined in \eqref{eq:bestMatchingFun} for $g$. Then for every $t\in[a,b]$
\[
u_{\phi_g(t)}\tilde{x}\in\Kak(b-a,\Cold{012matchKak}\epsilon)u_{t}\tilde{y}.
\]
\end{Proposition}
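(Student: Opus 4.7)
Fix $t\in[a,b]$ and set $g_t:=u_{\phi_g(t)}\,g\,u_{-t}$. Right invariance of $d_{\mathbf{G}}$ reformulates the hypothesis \eqref{eq:intervalClose} as $d_{\mathbf{G}}(g_t,e)<\epsilon$, and reduces the conclusion to $g_t\in\Kak(b-a,\Cold{012matchKak}\epsilon)$. The plan is to verify the four clauses of Definition~\ref{def:Kakball} for $g_t$ by tracking the canonical decomposition $g=g^{\mathfrak{h}}g^{\mathfrak{z}}g^{\tr}$ of Lemma~\ref{lem:groupDecom} through the conjugation defining $g_t$.

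For the $\mathfrak{h}$-part I would invoke Lemma~\ref{lem:matchingFunction} directly: the best matching function in \eqref{eq:bestMatchingFun} was defined precisely so that $u_{\phi_g(t)}\,\ou_{\vartheta_{\bou}(g)}a_{\vartheta_{\ba}(g)}\,u_{-t}=\ou_{s_t}a_{p_t}$ with the explicit, controlled bounds of that lemma, and since $u_{\phi_g(t)}$ commutes with $u_{\vartheta_{\bu}(g)}$ this gives $u_{\phi_g(t)}g^{\mathfrak{h}}u_{-t}=u_{\vartheta_{\bu}(g)}\ou_{s_t}a_{p_t}$. Combined with $|\vartheta_{\bou}(g)|\le\epsilon/(b-a)$, forced by $[a,b]\subset[0,\epsilon|\vartheta_{\bou}(g)|^{-1}]$, the estimates in Lemma~\ref{lem:matchingFunction} deliver clauses (i)--(iii) of Definition~\ref{def:Kakball} up to an absolute constant.

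The main technical step is identifying the canonical decomposition of $g_t$ itself. Here the key algebraic facts are that $g^{\mathfrak{z}}\in\mathbf{Z}=C_{\mathbf{G}}(\mathbf{H})\subset C_{\mathbf{G}}(\mathbf{U})$ commutes with every $u_r$, while the transverse subspace $\bigoplus_{j\notin I_c,\,i}\bx^{i,j}\R$ is $\mathrm{Ad}(u_r)$-invariant, being precisely the sum of the non-$\mathfrak{h}$ irreducible $\mathfrak{sl}_2$-summands of $\mathrm{Ad}\circ\varphi$. Hence $u_tg^{\mathfrak{z}}u_{-t}=g^{\mathfrak{z}}$ and $u_tg^{\tr}u_{-t}$ remains in the transverse subgroup. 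Rewriting $g_t=(u_{\vartheta_{\bu}(g)}\ou_{s_t}a_{p_t})\cdot g^{\mathfrak{z}}\cdot(u_tg^{\tr}u_{-t})$ and observing that the last factor is forced to be close to $e$ (the other two factors and $g_t$ itself are), I would apply uniqueness in Lemma~\ref{lem:groupDecom} to conclude that this is the canonical decomposition of $g_t$; in particular $g_t^{\mathfrak{z}}g_t^{\tr}=g^{\mathfrak{z}}\cdot u_tg^{\tr}u_{-t}$.

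Clause (iv) then follows from the identity
\[
u_s\,g_t^{\mathfrak{z}}g_t^{\tr}\,u_{-s}=g^{\mathfrak{z}}\,u_{t+s}g^{\tr}u_{-(t+s)}=g_{t+s}^{\mathfrak{z}}g_{t+s}^{\tr}.
\]
Whenever $t+s\in[a,b]$, the hypothesis and Lemma~\ref{lem:groupDecom} give $d_{\mathbf{G}}(g_{t+s}^{\mathfrak{z}}g_{t+s}^{\tr},e)\le C_1\epsilon$; since the admissible $s$ range is the interval $[a-t,b-t]$ of length $b-a$ containing $0$, this produces the Bowen estimate on the correct scale, with the geometric factor reconciling the asymmetric interval with the symmetric convention of $\Bow$ absorbed into $\Cold{012matchKak}$. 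The principal obstacle is the canonical-decomposition step of paragraph three; once it is in place the Bowen condition reduces mechanically to the hypothesis through the displayed identity.
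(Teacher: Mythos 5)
You follow essentially the same route as the paper: set $g_t=u_{\phi_g(t)}gu_{-t}$, use Lemma~\ref{lem:matchingFunction} together with the fact that $g^{\mathfrak{z}}$ commutes with $\mathbf U$ and the transverse subspace is $\Ad(u_r)$-invariant to identify the canonical decomposition $g_t^{\mathfrak{h}}=u_{\vartheta_{\bu}(g)}\ou_{\vartheta_{\bou,t}(g)}a_{\vartheta_{\ba,t}(g)}$, $g_t^{\mathfrak{z}}=g^{\mathfrak{z}}$, $g_t^{\tr}=u_tg^{\tr}u_{-t}$, which yields clauses (i)--(iii). That part is sound, and the step you single out as the ``principal obstacle'' (the canonical decomposition of $g_t$) is indeed routine via uniqueness in Lemma~\ref{lem:groupDecom}.

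The genuine gap is in clause (iv). The Bowen condition $g_t^{\mathfrak{z}}g_t^{\tr}\in\Bow(b-a,\Cold{012matchKak}\epsilon)$ demands $d_{\mathbf G}(u_s\,g_t^{\mathfrak{z}}g_t^{\tr}\,u_{-s},e)\le \Cold{012matchKak}\epsilon$ for \emph{all} $\absolute{s}\le b-a$, whereas your identity combined with \eqref{eq:intervalClose} only controls the $s$ with $t+s\in[a,b]$, i.e.\ a window of length $b-a$ inside the required symmetric window of length $2(b-a)$ --- always at most half of what is needed. Passing to the full window is not a ``geometric factor'' that can simply be absorbed into the constant: for a general group element $h$, smallness of $u_shu_{-s}$ on an interval containing $0$ gives no control on the reflected part of the interval. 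What makes the extension legitimate here is polynomial divergence: after separating off $g^{\mathfrak{z}}$ (which is unchanged under conjugation by $u_r$) using Lemma~\ref{lem:groupDecom}, the chain-basis coordinates of $\log\bigl(u_rg^{\tr}u_{-r}\bigr)$ are polynomials in $r$ of degree bounded by the chain indices, so the $O(\epsilon)$ bound on $[a,b]$ extrapolates via Lemma~\ref{lem:polCoe} to the enlarged interval $[2a-b,\,2b-a]\supset t+[-(b-a),b-a]$ at the cost of a constant depending only on $\dim\mathfrak g$. This is exactly the ``polynomial divergence'' the paper cites at the end of its proof; without it your closing claim that the Bowen condition ``reduces mechanically to the hypothesis'' is unjustified, and the constant $\Cold{012matchKak}$ is not accounted for.
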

\begin{proof}[Proof of Proposition~\ref{prop:matchKak}]
Since $[a,b]\subset[0,\epsilon\absolute{\vartheta_{\bou}(g)^{-1}}]$, Lemma~\ref{lem:matchingFunction} gives for every $t\in[a,b]$:
\begin{equation*}
    u_{\phi_g(t)}\ou_{\vartheta_{\bou}(g)}a_{\vartheta_{\ba}(g)}u_{-t}
    =\ou_{\vartheta_{\bou,t}(g)}a_{\vartheta_{\ba,t}(g)},\nonumber
\end{equation*}
where 
\[
\absolute{\vartheta_{\bou,t}(g)}<2\absolute{\vartheta_{\bou}(g)}, \ \ \ \ \absolute{\vartheta_{\ba,t}(g)}<4\epsilon.
\]
This and inequality \eqref{eq:intervalClose} imply that for every $t\in[a,b]$
\begin{equation*}
    \begin{aligned}
        d_\mathbf{G}(u_{t}g^{\mathfrak{z}}g^{\tr},u_{t})<\kappa_1\epsilon,
    \end{aligned}
\end{equation*}
where $g^{\mathfrak{z}}g^{\tr}$ is defined in \eqref{eq:gDecom1} and $\kappa_1>0$. Together with the definition of Kakutani-Bowen ball and polynomial divergence, we complete the proof.
\end{proof}

\subsubsection{Auxiliary lemmas for Kakutani-Bowen balls}
The following are some controlling lemmas related to Kakutani-Bowen balls.
\begin{Lemma}\label{lem:diangonalConjugateKak}
For any $R>1,\delta>0$, if $g\in\Kak(R,\delta)$ and $p=\frac{1}{2}\log R$, then for 
\[
\bar{g}=a_{-p}ga_{p},
\]
its coordinates (cf.~\eqref{eq:gDecom} and \eqref{eq:gDecom1}) satisfy $g\in B^{\mathfrak l \oplus \tr}_{4\delta,\:\delta\dim\mathfrak{g}/R^{1/2}}$, i.e.
\begin{equation*}
\begin{alignedat}{3}
\bar{g}^{\mathfrak{h}},\bar{g}^{\mathfrak{z}}\in B_{4\delta}^{\mathbf{G}},\quad 
\bar{g}^{\tr}\in B_{\delta\dim\mathfrak{g}/R^{1/2}}^{\mathbf{G}}.
\end{alignedat}
\end{equation*}
\end{Lemma}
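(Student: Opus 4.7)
The plan is to exploit the uniqueness of the decomposition $g=g^{\mathfrak{h}}g^{\mathfrak{z}}g^{\tr}$ supplied by Lemma~\ref{lem:groupDecom}, together with the fact that each of the three subalgebras $\mathfrak{h}$, $\mathfrak{z}$, $\tr$ is stable under $\Ad_{a_{\bullet}}$: for $\mathfrak{h}$ this is automatic since it contains $\ba$; for $\mathfrak{z}$ it is automatic as $\mathfrak{z}$ centralizes $\ba$, so $\Ad_{a_p}$ acts as the identity on it; and $\tr=\bigoplus_{j\notin I_c}\bigoplus_{i=0}^{q_j}\bx^{i,j}\R$ is stable because each $\bx^{i,j}$ is an $\ad_{\ba}$-eigenvector. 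Uniqueness then forces
\[
\bar{g}^{\mathfrak{h}}=a_{-p}g^{\mathfrak{h}}a_p,\qquad \bar{g}^{\mathfrak{z}}=g^{\mathfrak{z}},\qquad \bar{g}^{\tr}=a_{-p}g^{\tr}a_p,
\]
reducing the problem to bounding each factor separately.

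For $\bar{g}^{\mathfrak{h}}$, I would carry out a direct computation inside $\mathbf{H}$: since $p=\tfrac{1}{2}\log R$, we have $a_{-p}u_{s}a_{p}=u_{s/R}$ and $a_{-p}\ou_{s}a_{p}=\ou_{Rs}$, while $a_{-p}$ commutes with $a_{\vartheta_{\ba}(g)}$. Substituting the expression for $g^{\mathfrak{h}}$ from \eqref{eq:gDecom1} and using the Kakutani--Bowen estimates $|\vartheta_{\bu}(g)|,|\vartheta_{\ba}(g)|<\delta$ and $|\vartheta_{\bou}(g)|<\delta/R$ (items (i)--(iii) of Definition~\ref{def:Kakball}), each of the three parameters of $\bar{g}^{\mathfrak{h}}$ is then bounded by $\delta$, so $\bar{g}^{\mathfrak{h}}\in B^{\mathbf{G}}_{4\delta}$. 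For $\bar{g}^{\mathfrak{z}}=g^{\mathfrak{z}}$, the Bowen condition evaluated at $t=0$ gives $g^{\mathfrak{z}}g^{\tr}\in B^{\mathbf{G}}_{\delta}$; since $\mathfrak{z}$ and $\tr$ are transverse vector subspaces of $\mathfrak{g}$, the local diffeomorphism property inherent in Lemma~\ref{lem:groupDecom} bounds each factor individually and yields $g^{\mathfrak{z}}\in B^{\mathbf{G}}_{4\delta}$ for $\delta$ small.

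The substantive step is the bound on $\bar{g}^{\tr}$. I would write $g^{\tr}=\exp(T)$ with $T=\sum_{j\notin I_c}\sum_{i=0}^{q_j}t_{i,j}\bx^{i,j}$; because $\Ad_{u_t}$ preserves $\tr$ and $g^{\mathfrak{z}}$ centralizes $u_t$, the hypothesis $u_{t}g^{\mathfrak{z}}g^{\tr}u_{-t}\in B^{\mathbf{G}}_{\delta}$ unfolds into $g^{\mathfrak{z}}\exp(\Ad_{u_t}T)\in B^{\mathbf{G}}_{\delta}$ for all $|t|\le R$, which (once more by Lemma~\ref{lem:groupDecom} and transversality) gives $\|\Ad_{u_t}T\|\le C\delta$ uniformly. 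The chain relations \eqref{eq:sl2relation} supply $\Ad_{u_t}(\bx^{i,j})=\sum_{\ell=0}^{i}(t^{\ell}/\ell!)\,\bx^{i-\ell,j}$, so the $\bx^{0,j}$-coefficient of $\Ad_{u_t}T$ is the polynomial $c_{0,j}(t)=\sum_{i=0}^{q_j}(t_{i,j}/i!)\,t^{i}$ of degree $q_j$, bounded in absolute value by $C\delta$ on $[-R,R]$. A standard Chebyshev-type coefficient inequality (a polynomial of degree $\le d$ with sup-norm $M$ on $[-R,R]$ has its $k$th coefficient bounded by $C_{d,k}M/R^{k}$) then yields $|t_{i,j}|\le C_{q_j}\delta/R^{i}$ for every $(i,j)$.

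Finally, from $\pi_{q_j}(\ba)\bx^{i,j}=(2i-q_j)\bx^{i,j}$ and $p=\tfrac{1}{2}\log R$ one gets $a_{-p}\bx^{i,j}a_{p}=R^{\,i-q_j/2}\bx^{i,j}$, so the $\bx^{i,j}$-coefficient of $\log\bar{g}^{\tr}$ is bounded by
\[
R^{\,i-q_j/2}\cdot\frac{C\delta}{R^{i}}\;=\;\frac{C\delta}{R^{q_j/2}}\;\le\;\frac{C\delta}{R^{1/2}},
\]
the last inequality using $q_j\ge 1$ for every $j\notin I_c$ by definition of $I_c$. Summing these bounds over the $\dim\mathfrak{g}$ chain-basis directions produces $\bar{g}^{\tr}\in B^{\mathbf{G}}_{\delta\dim\mathfrak{g}/R^{1/2}}$, which is the required estimate. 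The main obstacle is purely bookkeeping: one must verify that the ``stretched'' direction $R^{q_j/2}$ contributed by $\Ad_{a_{-p}}$ at large $i$ is exactly compensated by the $R^{-i}$ decay forced on the Bowen side, with the $R^{-1/2}$ slack arising precisely because every block indexed by $j\notin I_c$ has $q_j\ge 1$.
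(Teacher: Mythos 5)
Your proof is correct and follows essentially the same route as the paper: conjugate each factor of the decomposition $g=g^{\mathfrak h}g^{\mathfrak z}g^{\tr}$ by $a_{-p}$, use the $\mathfrak{sl}_2$ relations for the $\mathfrak h$-part, the fact that $g^{\mathfrak z}$ centralizes $\mathbf H$ for the $\mathfrak z$-part, and for the transversal part extract $|\vartheta_{i,j}(g)|\lesssim\delta/R^{i}$ from the Bowen condition via polynomial coefficient bounds (the paper's Lemma~\ref{lem:polCoe}) before applying the weights $e^{(2i-q_j)p}=R^{i-q_j/2}$ and using $q_j\ge 1$ for $j\notin I_c$. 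The only (harmless) looseness is in the multiplicative constants from the coefficient inequality and the local coordinate comparison, a looseness the paper's own proof shares.
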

\begin{proof}[Proof of Lemma~\ref{lem:diangonalConjugateKak}]
Since $g\in\Kak(R,\delta)$, we can write 
\begin{equation}\label{eq:gKakDecom}
g=u_{\vartheta_{\bu}(g)}\ou_{\vartheta_{\bou}(g)}a_{\vartheta_{\ba}(g)}g^{\mathfrak{z}}g^{\tr},
\end{equation}
where $\vartheta_{\bu}(g)$, $\vartheta_{\bou}(g)$, $\vartheta_{\ba}(g)$, $g^{\mathfrak{z}}$ and $ g^{\tr}$ satisfying
\begin{equation}\label{eq:middleFormConjuagte}
    \begin{alignedat}{3}
        &\absolute{\vartheta_{\bu}(g)},\absolute{\vartheta_{\ba}(g)}\leq\delta,\quad &&\absolute{\vartheta_{\bou}(g)}\leq\frac{\delta}{R},\quad 
        g^{\mathfrak{z}}g^{\tr}\in\Bow(R,\delta).
    \end{alignedat}
\end{equation}

We obtain from $\mathfrak{sl}_2$-relation, \eqref{eq:gKakDecom} and \eqref{eq:middleFormConjuagte} 
\begin{equation}\label{eq:sl2ConjugateSize}
    a_{-p}g^{\mathfrak{h}}a_p\in B_{3\delta}^{\mathbf{H}}.
\end{equation}
Recall that $g^{\mathfrak{z}}$ commutes $\mathbf{H}$, then
\begin{equation}\label{eq:centerConjugateSize}
a_{-p}g^{\mathfrak{z}}a_{p}=g^{\mathfrak{z}}.    
\end{equation}

On the one hand,  $g^{\tr}=\exp(\sum_{j\notin I_c}\sum_{i=0}^{q_j}\vartheta_{i,j}(g)\bx^{i,j})$. This and   $g^{\mathfrak{z}}g^{\tr}\in\Bow(R,\delta)$ imply that
$\absolute{\vartheta_{i,j}(g)}\leq\frac{\delta}{R^{i}}$. On the other hand, 
\[
\quad a_{-p}g^{\tr}a_{p}=\exp(\sum_{j\notin I_c}\sum_{i=0}^{q_j}e^{(2i-q_j)p}\vartheta_{i,j}(g)\bx^{i,j}).
\]
This and the estimate on $\vartheta_{i,j}(g)$ give
\begin{equation*}
\absolute{\vartheta_{i,j}(\bar{g})}=\absolute{e^{(2i-q_j)p}\vartheta_{i,j}(g)}\leq \frac{\delta}{R^{\frac{1}{2}q_j}}.   
\end{equation*}
We complete the proof by combining above with  \eqref{eq:sl2ConjugateSize} and \eqref{eq:centerConjugateSize}.
\end{proof}

\begin{Lemma}\label{lem:zlConvertDirect}
There exist $\wnew\label{014wExpBound},\denew\label{014dCommuteBound},\Rnew\label{014RCommuteBound}>0$\index{$\wold{014wExpBound}$, Lemma~\ref{lem:zlConvertDirect}}\index{$\deold{014dCommuteBound}$, Lemma~\ref{lem:zlConvertDirect}}\index{$\Rold{014RCommuteBound}$, Lemma~\ref{lem:zlConvertDirect}} depending on $\mathbf{G}$ such that for every $\delta\in(0,\deold{014dCommuteBound})$, $R\geq\Rold{014RCommuteBound}$, $\absolute{r}\leq R^{\wold{014wExpBound}}$ the following holds. Suppose $g\in\mathbf{G}$ is of the form
\[
g=g^{\mathfrak{h}}g^{\mathfrak{z}}g^{\tr}
\]
with $g^{\mathfrak{h}}\in B_{\delta}^{\mathbf{H}}$, $g^{\mathfrak{z}}\in B_{\delta}^{\mathbf{Z}}$ and  $g^{\tr}\in B_{\delta/R}^{\mathbf{G}}$. 

Then we can write $u_{r}.g$ as
\[
u_{r}g = z_1g^{\mathfrak{z}}u_{r}g^{\mathfrak{h}}
\]
with $z_1\in B_{\delta/R^{0.99}}^{\mathbf{G}}$. Similarly
\[
u_{r}g^{-1} = z_2(g^{\mathfrak{z}})^{-1}u_{r}(g^{\mathfrak{h}})^{-1}
\]
with $z_2\in B_{\delta/R^{0.99}}^{\mathbf{G}}$.
\end{Lemma}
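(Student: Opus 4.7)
\medskip

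The plan is to write $z_1$ explicitly and then estimate its size using the $\mathfrak{sl}_2$-representation theory encoded in the chain basis. Starting from the desired identity $u_r g = z_1 g^{\mathfrak{z}} u_r g^{\mathfrak{h}}$, and using the crucial fact that $\mathbf{Z}=C_{\mathbf{G}}(\mathbf{H})$ commutes with both $u_r\in\mathbf{H}$ and $g^{\mathfrak{h}}\in\mathbf{H}$, I would rearrange $u_r g^{\mathfrak{h}} g^{\mathfrak{z}}=g^{\mathfrak{z}} u_r g^{\mathfrak{h}}$ to obtain
\[
z_1 = g^{\mathfrak{z}}\,(u_r g^{\mathfrak{h}})\,g^{\tr}\,(u_r g^{\mathfrak{h}})^{-1}\,(g^{\mathfrak{z}})^{-1}.
\]
Since $g^{\tr}=\exp(X)$ with $X\in\mathfrak{g}^{\tr}:=\bigoplus_{j\notin I_c}\bigoplus_{i=0}^{q_j}\bx^{i,j}\R$ and $\|X\|\lesssim \delta/R$, we have $z_1=\exp\!\bigl(\Ad(g^{\mathfrak{z}})\Ad(u_r g^{\mathfrak{h}})X\bigr)$, so everything reduces to bounding the norm of this adjoint action on $\mathfrak{g}^{\tr}$.

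Next I would record two structural facts. First, because $\mathfrak{g}^{\tr}$ is, by construction, an $\ad_{\mathfrak{h}}$-invariant subspace (it is the sum of the irreducible $\mathfrak{h}$-submodules of dimension $\geq 2$), $\Ad(\mathbf{H})$ preserves $\mathfrak{g}^{\tr}$; and since $\mathbf{Z}$ centralizes $\mathbf{H}$, $\Ad(\mathbf{Z})$ commutes with $\Ad(\mathbf{H})$ and in particular preserves the same isotypical decomposition. Second, for $g^{\mathfrak{h}}\in B_{\delta}^{\mathbf{H}}$ and $g^{\mathfrak{z}}\in B_{\delta}^{\mathbf{Z}}$ with $\delta$ bounded, $\Ad(g^{\mathfrak{h}})$ and $\Ad(g^{\mathfrak{z}})$ have operator norm bounded by some constant $\kappa_1=\kappa_1(\mathbf{G})$. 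Thus the whole estimate comes down to bounding $\|\Ad(u_r)\vert_{\mathfrak{g}^{\tr}}\|$, which by the $\mathfrak{sl}_2$-relations~\eqref{eq:sl2relation} satisfies
\[
\Ad(u_r)\bx^{i,j}=\sum_{k=0}^{i}\tfrac{r^k}{k!}\bx^{i-k,j},
\]
so $\|\Ad(u_r)\vert_{\mathfrak{g}^{\tr}}\|\leq \kappa_2(1+|r|)^{q_{\max}}$ where $q_{\max}=\max_j q_j$ is determined by $\mathbf{G}$ and $\bu$.

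Putting these together,
\[
\|\Ad(g^{\mathfrak{z}})\Ad(u_r g^{\mathfrak{h}})X\|\leq \kappa_1^2\kappa_2(1+|r|)^{q_{\max}}\cdot\tfrac{\delta}{R}\leq \kappa_3 R^{w\cdot q_{\max}}\cdot\tfrac{\delta}{R}
\]
for $|r|\leq R^{w}$. Choosing $\wold{014wExpBound}>0$ so that $w\cdot q_{\max}\leq 0.005$ and then $\Rold{014RCommuteBound}$ large enough that $\kappa_3 R^{-0.005}\leq 1$, we obtain $\|z_1\|\leq \delta/R^{0.99}$ once $\deold{014dCommuteBound}$ is small enough to apply Lemma~\ref{lem:comExp} and the $\exp/\log$ identification in a uniform way. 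The argument for $u_r g^{-1}=z_2(g^{\mathfrak{z}})^{-1}u_r(g^{\mathfrak{h}})^{-1}$ is identical: write $z_2=(g^{\mathfrak{z}})^{-1}(u_r(g^{\mathfrak{h}})^{-1})\,(g^{\tr})^{-1}\,(u_r(g^{\mathfrak{h}})^{-1})^{-1}g^{\mathfrak{z}}$ using the same commutativity, and apply the same adjoint-norm estimate.

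The only real obstacle is bookkeeping: one has to verify that the rearrangement $u_r g = z_1 g^{\mathfrak{z}} u_r g^{\mathfrak{h}}$ really does isolate a conjugate of $g^{\tr}$ (no hidden $\mathbf{H}$-components leak in), which is guaranteed precisely because $g^{\mathfrak{z}}$ and $g^{\mathfrak{h}}$ commute and because $\mathfrak{g}^{\tr}$ is $\Ad(\mathbf{H}\mathbf{Z})$-invariant; and then to pin down $\wold{014wExpBound}$ in terms of the chain structure indices of $\mathfrak{g}$ with respect to $\mathfrak{h}$. Nothing further beyond elementary $\mathfrak{sl}_2$-representation estimates should be needed.
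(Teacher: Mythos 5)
Your treatment of the first identity is correct and is essentially the paper's own proof: the paper also takes $z_1=g^{\mathfrak{z}}\,u_r\tilde g\,u_{-r}\,(g^{\mathfrak{z}})^{-1}$ with $\tilde g=g^{\mathfrak{h}}g^{\tr}(g^{\mathfrak{h}})^{-1}$, which is exactly your $g^{\mathfrak{z}}(u_rg^{\mathfrak{h}})g^{\tr}(u_rg^{\mathfrak{h}})^{-1}(g^{\mathfrak{z}})^{-1}$, and it bounds it by the same three ingredients you use — bounded conjugation by $g^{\mathfrak{h}}$ and $g^{\mathfrak{z}}$ (via Lemma~\ref{lem:comExp}, where you use operator norms of $\Ad$; this is only a difference in bookkeeping) and the polynomial growth of $\Ad(u_r)$ on the chains of length $\leq q_{\max}$, with $w$ chosen so that $R^{wq_{\max}}$ is absorbed by taking $R$ large.

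There is, however, a genuine error in your second identity. You claim the argument is "identical" and set $z_2=(g^{\mathfrak{z}})^{-1}(u_r(g^{\mathfrak{h}})^{-1})(g^{\tr})^{-1}(u_r(g^{\mathfrak{h}})^{-1})^{-1}g^{\mathfrak{z}}$, but this element does not satisfy $u_rg^{-1}=z_2(g^{\mathfrak{z}})^{-1}u_r(g^{\mathfrak{h}})^{-1}$: multiplying out, your $z_2$ produces $u_r\bigl(g^{\tr}g^{\mathfrak{z}}g^{\mathfrak{h}}\bigr)^{-1}$ rather than $u_r\bigl(g^{\mathfrak{h}}g^{\mathfrak{z}}g^{\tr}\bigr)^{-1}=u_rg^{-1}$, because inverting $g$ reverses the order of the factors, so in $g^{-1}=(g^{\tr})^{-1}(g^{\mathfrak{z}})^{-1}(g^{\mathfrak{h}})^{-1}$ the transversal piece now sits directly next to $u_r$. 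The repair is immediate and in fact simpler than the first case: solving $u_rg^{-1}=z_2(g^{\mathfrak{z}})^{-1}u_r(g^{\mathfrak{h}})^{-1}$ and using that $g^{\mathfrak{z}}$ commutes with $u_r$ gives
\[
z_2=u_r\,(g^{\tr})^{-1}\,u_{-r},
\]
so only the $\Ad(u_r)$ estimate is needed, with no conjugation by $g^{\mathfrak{h}}$ or $g^{\mathfrak{z}}$ at all; with this correction your argument goes through and matches the paper (which omits this case as "similar").
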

\begin{proof}[Proof of Lemma~\ref{lem:zlConvertDirect}]
Since the proof of two equations are quite similar, we only prove the first equation.

Notice that $g^{\mathfrak{z}}\in B_{\delta}^{\mathbf{Z}}$ implies that $g^{\mathfrak{z}}$ commutes with $\mathbf{H}$ and thus
\[
u_rg=u_{r}g^{\mathfrak{h}}g^{\mathfrak{z}}g^{\tr}= g^{\mathfrak{z}}u_{r}g^{\mathfrak{h}}g^{\tr}.
\]

By picking $\Rold{014RCommuteBound}$ sufficiently large,  $\wold{014wExpBound}$ and $\deold{014dCommuteBound}$ sufficiently small, \eqref{eq:sl2relation} and Lemma~\ref{lem:comExp} imply 
\[
\tilde{g}=g^{\mathfrak{h}}g^{\tr}(g^{\mathfrak{h}})^{-1}=\exp(\sum_{j\notin I_c}\sum_{i=0}^{q_j}\vartheta_{i,j}(\tilde{g})\bx^{i,j}),\qquad \absolute{\vartheta_{i,j}(\tilde{g})}\leq\delta/R^{0.999i}.
\]

By chain basis construction in \S\ref{sec:sl2basis} and $\wold{014wExpBound}$ is sufficiently small
\begin{equation*}
    \begin{aligned}
      u_{r}\tilde{g}u_{-r}=\exp(\sum_{j\notin I_c}\sum_{i=0}^{q_j}(\sum_{k=0}^{q_j-i}\frac{r^k\vartheta_{i+k,j}(\tilde{g})}{k!})\bx^{i,j})\in B^{\mathbf{G}}_{\delta/R^{0.998}}.
    \end{aligned}
\end{equation*}

Applying Lemma~\ref{lem:comExp} again, 
\[
 g^{\mathfrak{z}}u_{r}\tilde{g}u_{-r}(g^{\mathfrak{z}})^{-1}\in B_{\delta/R^{0.99}}^{\mathbf{G}}.
\]
By picking $z_1=g^{\mathfrak{z}}u_{r}\tilde{g}u_{-r}( g^{\mathfrak{z}})^{-1}$, we finish the proof.
\end{proof}

With the help of chain basis and $I_c$, it is possible to classify when the action of $u_t$ on $\mathbf{G}/\Gamma$ is a loosely Kronecker system:
\begin{Lemma}[\cite{kanigowski2021kakutani}]\label{lem:chainLK}
Let $\mathfrak{l}$ be Lie algebra generated by $\bu,\bou,\ba$ and $\bx^{0,j}$, where $j\in I_c$ as in \eqref{eq:Ic}. Suppose $u_t$ a one-parameter unipotent subgroup acting ergodically on $\mathbf{G}/\Gamma$ induced by $\bu\in\mathfrak{g}$, then $u_t$ is loosely Kronecker if and only if $\mathfrak{l}=\mathfrak{g}$.
\end{Lemma}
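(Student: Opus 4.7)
The plan is to deduce both directions from the explicit computation of Ratner's monotone equivalence invariant carried out in \cite{kanigowski2021kakutani}, supplementing with short structural observations that make the dichotomy transparent.

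For the direction $\mathfrak l = \mathfrak g \Rightarrow$ loosely Kronecker, I would first observe that by construction of the chain basis in \S\ref{sec:sl2basis}, the hypothesis forces every chain basis vector outside $\mathfrak h$ to lie in the centralizer $\mathfrak z = C_{\mathfrak g}(\mathfrak h)$, since the indices $j \notin I_c$ correspond to chains of length $\geq 2$ which $\mathfrak z$ cannot contain. Therefore $\mathfrak g = \mathfrak h \oplus \mathfrak z$ splits as a Lie algebra direct sum of ideals, and at the group level $\mathbf G$ is a finite quotient of $\mathbf H \times \mathbf Z$, with $u_t \in \mathbf H$ acting trivially on the $\mathbf Z$-factor. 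Consequently the flow $(\mathbf G/\Gamma, u_t)$ carries a commuting equicontinuous $\mathbf Z$-action, making it an isometric extension of a horocycle flow on a quotient of $\mathbf H$ by a lattice. Combining Ratner's theorem \cite{ratner1978horocycle} that horocycle flows are loosely Kronecker with the standard fact that isometric (compact group) extensions preserve loose Kronecker-ness --- provable by a direct $\bar f$-matching argument that lifts a matching in the base using equicontinuity of the fibre --- completes this direction.

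For the converse I would argue by contraposition: starting from $\mathfrak l \neq \mathfrak g$, produce a positive Ratner invariant. The hypothesis supplies some $j \notin I_c$, i.e. $q_j \geq 1$, so the adjoint action of $\bu$ on $\mathfrak g$ has a Jordan block of length $\geq 2$ not already inside $\mathfrak h$. Pairs of nearby points separated along such a chain then diverge polynomially at order $\geq q_j + 1 \geq 2$ under $u_t$, and this behaves quite differently from the purely linear divergence one sees in the horocycle case. This is precisely the phenomenon Ratner exploited in \cite{ratner1981some} to distinguish a single horocycle flow from the product of two horocycle flows up to monotone equivalence. The main result of \cite{kanigowski2021kakutani} performs the analogous computation in the generality needed here and shows that Ratner's invariant vanishes if and only if $\mathfrak l = \mathfrak g$; since loosely Kronecker flows have vanishing Ratner invariant, the converse follows.

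The principal obstacle lies in the converse direction: turning the presence of a long $\mathfrak{sl}_2$-chain in $\mathfrak g$ into a quantitative obstruction to loose Kronecker-ness requires a rather intricate estimate on $\bar f$-distances, which is exactly the technical heart of \cite{kanigowski2021kakutani}. My proposal is therefore to cite that theorem for this direction, after supplying the short structural reduction above that makes the ``if'' direction nearly automatic.
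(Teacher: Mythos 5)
Your ``only if'' direction is fine and is essentially what the paper does: the paper simply quotes \cite{kanigowski2021kakutani} (Corollary 1.2 there) to get that loose Kroneckerness forces $\bu=\left(\begin{smallmatrix}0&1\\0&0\end{smallmatrix}\right)\oplus 0$ and $\mathfrak{g}\cong\mathfrak{sl}_2(\R)\oplus\mathfrak{g}'$, hence $\mathfrak{l}=\mathfrak{g}$; your route through the vanishing of Ratner's invariant is the same citation in slightly different packaging.

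The gap is in your ``if'' direction. The reduction $\mathfrak{l}=\mathfrak{g}\Rightarrow\mathfrak{g}=\mathfrak{h}\oplus\mathfrak{z}$ (so $\mathbf G$ is locally $\mathbf H\times\mathbf Z$ with $u_t$ in the $\mathbf H$-factor) is correct, but the next step fails: $\mathbf Z=C_{\mathbf G}(\mathbf H)$ is in general a \emph{noncompact} semisimple group, so the commuting left $\mathbf Z$-action on $\mathbf G/\Gamma$ is not equicontinuous, and the system is not an isometric (compact group) extension of anything. Worse, when $\Gamma$ is irreducible in $\mathbf H\times\mathbf Z$ --- which is exactly the interesting case, since for a product lattice $\Gamma_1\times\Gamma_2$ the flow $u_t\times\mathrm{id}$ would not even be ergodic --- the intersection $\Gamma\cap\mathbf H$ is not a lattice and there is no factor map onto a horocycle flow $\mathbf H/\Lambda$, so there is no ``base'' to match in. Hence the claim that the flow is an isometric extension of a horocycle flow, and the matching-lifting argument built on it, cannot be carried out; the statement that such systems (e.g.\ $u_t\times u_0$ on an irreducible quotient of $\SL_2(\R)\times\SL_2(\R)$... rather, $\bu=e\oplus 0$ on an irreducible quotient) are loosely Kronecker is precisely the nontrivial content of Theorem~1.1 of \cite{kanigowski2021kakutani}, which the paper cites for this direction. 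Separately, the ``standard fact'' you invoke --- that compact isometric extensions preserve loose Kroneckerness --- is not a safe black box either (closure properties of the loosely Bernoulli class under such extensions are delicate, and already products of loosely Kronecker systems can fail to be loosely Kronecker), but even granting it, it would not apply here because the fibre group is noncompact. The fix is simply to quote \cite{kanigowski2021kakutani}*{Theorem 1.1} for this implication, as the paper does, after your (correct) observation that $\mathfrak{l}=\mathfrak{g}$ puts the triple $(\mathfrak g,\bu)$ in the form covered by that theorem.
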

\begin{proof}[Proof of Lemma~\ref{lem:chainLK}]
If $\mathfrak{l}=\mathfrak{g}$, then we have $\sum_{j=1}^n\frac{q_j(q_j+1)}{2}=3$ and \cite{kanigowski2021kakutani}*{Theorem 1.1} gives that the $(\mathbf{G}/\Gamma,m,u_t)$ is loosely Kronecker. 

If the $(\mathbf{G}/\Gamma,m,u_t)$ is loosely Kronecker, then \cite{kanigowski2021kakutani}*{Corollary 1.2} gives that $\bu=\left(\begin{smallmatrix}
     0 & 1 \\
   0 & 0 \\
\end{smallmatrix}\right)\oplus\id$ and $\mathfrak{g}$ is isomorphic to $\mathfrak{sl}_2(\R)\oplus\mathfrak{g}'$, which implies that $\mathfrak{l}=\mathfrak{g}$.
\end{proof}

\subsubsection{An \texorpdfstring{$\SL_2(\mathbb{R})$}{SL2(R)}-ergodic theorem}\label{sec:sl2Ergodiccorollary}
\begin{Corollary}\label{cor:sl2ergodic}
Let $\mathbf{G}_1,\Gamma_1,m_1, \mathbf{H}_1,a^{(1)}_t$ be as in~\S\ref{subsec:assupmtions}.
Given $\epsilon\in(0,1)$, then for any $f\in L^2(\mathbf{G}_1/\Gamma_1,m_1)$ and $m_1$-a.e. $x\in \mathbf{G}_1/\Gamma_1$
\[
\lim_{n\to+\infty}\frac{1}{m_{\mathbf{H}_1}(B_{\epsilon}^{\mathbf{H}_1,\norm{\cdot}})}\int_{B_{\epsilon}^{\mathbf{H}_1,\norm{\cdot}}}f(a^{(1)}_{n}h.x)dm_{\mathbf{H}_1}(h)=\int_{\mathbf{G}_1/\Gamma_1}fdm_1.
\]
\end{Corollary}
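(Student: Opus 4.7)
The plan is to reduce Corollary~\ref{cor:sl2ergodic} to the quantitative $L^2$-decay estimate for $\SL_2(\R)$-representations with spectral gap that is proved in Appendix~\ref{sec:sl2ergodic}, and then upgrade the $L^2$-statement to the claimed pointwise statement by a Borel--Cantelli argument using summability of the decay rate.

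First, writing $f = (f - \int f\,dm_1) + \int f\,dm_1$, I may assume $\int f\,dm_1 = 0$; then I wish to show that the averaging operators
\[
M_n f(x) := \frac{1}{m_{\mathbf{H}_1}(B_{\epsilon}^{\mathbf{H}_1,\norm{\cdot}})}\int_{B_{\epsilon}^{\mathbf{H}_1,\norm{\cdot}}}f(a^{(1)}_{n}h.x)\,dm_{\mathbf{H}_1}(h)
\]
tend to $0$ almost everywhere. Since $u_t^{(1)}\subset \mathbf{H}_1$ and $u_t^{(1)}$ acts ergodically on $(\mathbf{G}_1/\Gamma_1, m_1)$, Mautner's phenomenon gives that $\mathbf{H}_1$ itself acts ergodically on $(\mathbf{G}_1/\Gamma_1, m_1)$, so the induced unitary representation of $\mathbf{H}_1$ on $L^2_0(\mathbf{G}_1/\Gamma_1, m_1)$ has no nonzero $\mathbf{H}_1$-invariant vectors.

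Next, I would invoke the main result of Appendix~\ref{sec:sl2ergodic}, which applies to any unitary $\SL_2(\R)$-representation $(\pi,\mathcal H)$ with a spectral gap and yields an exponential $L^2$-decay estimate of the form
\[
\left\|\frac{1}{m_{\mathbf{H}_1}(B_{\epsilon}^{\mathbf{H}_1,\norm{\cdot}})}\int_{B_{\epsilon}^{\mathbf{H}_1,\norm{\cdot}}}\pi(a^{(1)}_{n}h)v\,dm_{\mathbf{H}_1}(h)\right\|_{\mathcal H}\;\le\;C_{\epsilon}\,\rho^{n}\,\|v\|_{\mathcal H},
\]
for some $\rho=\rho(\pi)<1$ and all $v\in\mathcal H$. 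The spectral gap needed for the representation of $\mathbf{H}_1$ on $L^2_0(\mathbf{G}_1/\Gamma_1, m_1)$ is built in from the assumption that $\mathbf{G}_1$ is semisimple without compact factors acting faithfully on a finite volume quotient. Applied to $v=f$, this gives $\|M_n f\|_{L^2(m_1)}\le C_\epsilon\,\rho^n\,\|f\|_{L^2(m_1)}$.

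Finally, by Fubini--Tonelli
\[
\int_{\mathbf G_1/\Gamma_1}\sum_{n\ge 1}|M_n f(x)|^2\,dm_1(x) \;=\; \sum_{n\ge 1}\|M_n f\|_{L^2(m_1)}^{2}\;\le\; C_{\epsilon}^{2}\sum_{n\ge 1}\rho^{2n}<\infty,
\]
so $\sum_n |M_n f(x)|^2<\infty$ for $m_1$-a.e.\ $x$, whence $M_n f(x)\to 0$ for $m_1$-a.e.\ $x$, which is what was required. To pass from integer $n$ to continuous $n\to\infty$ one notes that for $s\in[0,1]$ the operator $M_{n+s}f(x)=M_n f_s(x)$ where $f_s(x)=f(a_s^{(1)}\!.x)$ is bounded in $L^2$ by $\|f\|$; a standard equicontinuity/approximation argument (combined with the fact that $B_\epsilon^{\mathbf H_1,\norm\cdot}$ and its $a_s^{(1)}$-conjugates differ by sets of relatively small measure) upgrades the a.e.\ convergence to a.e.\ convergence along real $n$.

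The main obstacle, and the content of the appendix, is establishing the summable $L^2$-decay rate; the subtlety is that upon conjugating, the ball $B_\epsilon^{\mathbf{H}_1,\norm{\cdot}}$ becomes stretched along the expanding $\widehat{\mathbf U}_1$-direction and compressed along the contracting $\mathbf U_1$-direction, so one cannot directly apply an off-the-shelf matrix coefficient decay statement; instead one has to decompose along an appropriate parabolic and use the explicit decay exponents coming from the Jacquet module structure of irreducible unitary $\SL_2(\R)$-representations with spectral gap (everything else in the argument above is a formal manipulation from the $L^2$-bound).
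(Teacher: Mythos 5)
Your overall architecture is the same as the paper's: subtract the mean, note that $L^2_0(\mathbf{G}_1/\Gamma_1,m_1)$ restricted to $\mathbf{H}_1$ has no invariant vectors and a spectral gap, prove a summable $L^2$ bound for the averages $M_nf$, and conclude a.e.\ convergence by integrating $\sum_n\absolute{M_nf(x)}^2$ (the paper does exactly this via monotone convergence). The problem is the quantitative input you feed into this scheme. Your claimed estimate
\[
\norm{M_n f}_{L^2}\;=\;\Bigl\|\int_{\mathbf{H}_1}\chi(h)\,\rho(h^{-1}a^{(1)}_{-n})f\,dm_{\mathbf{H}_1}(h)\Bigr\|_2\;\le\;C_\epsilon\,\rho^{\,n}\norm{f}_2\qquad\text{for all }f
\]
is false: writing $\chi^{\sim}(h)=\chi(h^{-1})$, the operator is $M_n=\rho(\chi^{\sim})\,\rho(a^{(1)}_{-n})$, and since $\rho(a^{(1)}_{-n})$ is unitary the operator norm of $M_n$ equals $\norm{\rho(\chi^{\sim})}_{\mathrm{op}}$, a fixed positive constant independent of $n$. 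So no exponential (indeed no) decay of the operator norm can hold, and the Fubini step as you wrote it has no valid input. Exponential decay is available only at the level of matrix coefficients of vectors that have been smoothed in the $k_\theta$-direction (this is Lemma~\ref{appendix:lem:1}, via Cowling--Haagerup--Howe, with constants controlled by $K$-Sobolev norms), and per-vector decay on a dense subspace plus uniform boundedness does not by itself give a.e.\ convergence for every $f\in L^2$ — you would need a maximal or square-function inequality to transfer it.

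That square-function inequality is precisely what the paper's Theorem~\ref{thm:NewNewsl2ergodic} supplies: $\sum_{n\ge1}\norm{\rho(\chi)\rho(a_{-n})v}_2^2\le C(\rho,\chi)\norm{v}_2^2$ for \emph{all} $v$, proved by a $TT^*$/almost-orthogonality argument (Lemma~\ref{lem:newTT*lemma}) in which the exponential decay of Lemma~\ref{appendix:lem:1} is applied to the pairwise inner products $\innproduct{\rho(a_i)p',\rho(a_j)w'}$ of smoothed vectors, with the smoothing coming for free from the operator $\rho(\chi)$ itself. So the correct statement to extract from the appendix is Bessel-type summability of $\norm{M_nf}_2^2$, not per-$n$ operator decay; once you replace your estimate by that one, the rest of your argument coincides with the paper's proof. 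Two smaller points: your appeal to Mautner for ergodicity of $\mathbf{H}_1$ is fine but not needed beyond ``no invariant vectors plus spectral gap,'' which the paper gets from \cite{kleinbock1999log,KelSar09}; and the passage from integer to continuous $n$ is unnecessary, since the corollary is only used (and proved) along integers.
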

\begin{proof}[Proof of Corollary~\ref{cor:sl2ergodic}]
Let $\mathcal{H}_{\rho}=L_0^2(\mathbf{G}_1/\Gamma_1,m_1)$, be the space of square integrable functions with mean zero, equipped with the action $\rho$ given by $\Bigl(\rho(g)(f)\Bigr)(x)=f(g^{-1}x)$.
Then $\tilde{f}=f-\int_{\mathbf{G}_1/\Gamma_1}fdm \in \mathcal{H}_{\rho}$. Note that by e.g.\ \cites{kleinbock1999log,KelSar09} if  $\mathbf{G}_1$ is semisimple without compact factors (as we assume) then the representation $\rho$ on $\mathcal{H}_{\rho}$ has a spectral gap.

Apply Theorem~\ref{thm:NewNewsl2ergodic} with function $\chi:\SL_2(\R)\to \R$ satisfying 
\[
\chi(h)=\frac{1}{m_{\mathbf{H}_1}(B_{\epsilon}^{\mathbf{H_1},\norm{\cdot}})}\chi_{B_{\epsilon}^{\mathbf{H_1},\norm{\cdot}}}(h)\text{ for }h\in\mathbf{H}_1.
\]
We conclude that for every $k\in\N$
\begin{equation}\label{eq:normalConvergence}
\sum_{n=1}^{+\infty}\norm{\int_{\mathbf{H}_1}\chi(h)\tilde{f}(a^{(1)}_{n}h.x)dm_{\mathbf{H}_1}(h)}^{2}_{2}<+\infty.
\end{equation}
Let $g_{n}(x)$ be defined by 
\[
g_{n}(x)=\int_{\mathbf{H}_1}\chi(h)\tilde{f}(a^{(1)}_{n}h.x)dm_{\mathbf{H}_1}(h).
\]
Then by \eqref{eq:normalConvergence} 
\[
\sum_{n=1}^{+\infty}\norm{g_{n}(x)}^{2}_{2}<+\infty.
\]
It follows that by the monotone convergence theorem
\[
\int_{\mathbf{G}_1/\Gamma_1}\sum_{n=1}^{+\infty}\absolute{g_{n}(x)}^{2}dm_1=\sum_{n=1}^{+\infty}\norm{g_{n}(x)}^2_{2}<\infty,
\]
which implies that $\sum_{n=1}^{+\infty}\absolute{g_{n}(x)}^{2}<\infty$ for $m_1$-a.e. $x\in \mathbf{G}_1/\Gamma_1$. Thus $\lim_{n\to+\infty}g_{n}(x)=0$ for $m_1$-a.e. $x\in \mathbf{G}_1/\Gamma_1$.
\end{proof}

\section{Kakutani equivalence in homogeneous spaces}\label{sec:convertKakutani}
The purpose of this section is to show that without loss of generality, we can assume that a given an arbitrary Kakutani equivalence between two actions of one-parameter unipotent subgroups on quotients of a semisimple algebraic group is an even Kakutani equivalence with a time change function that behaves nicely in the flow direction\footnote{Note however that as highlighted in the introduction, it is \textbf{not} permissible to assume the time change behaves nicely in any transverse direction.}.

Generally speaking, even Kakutani equivalence is more special than Kakutani equivalence since the former one requires additional condition on time change function. But for the one-parameter unipotent subgroups we consider, they essentially coincide due to the following lemma:
\begin{Lemma}\label{lem:evenKakutani}
For $i=1,2$, let $\mathbf{G}_i$ be a real semisimple algebraic group, $\Gamma_i$ a lattice in $\mathbf{G}_i$, with $m_i$ the probability measure on $\mathbf{G}_i/\Gamma_i$ induced by the Haar measure on $\mathbf{G}_i$, and let  $u_t^{(i)}$ be a one-parameter unipotent subgroup acting ergodically on $(\mathbf{G}_i/\Gamma_i,m_i)$. If $\psi$ is a Kakutani equivalence between the corresponding actions of $u_t^{(1)}$ and $u_t^{(2)}$, then there is a unique $s\in\R$ such that $a_s^{(2)}.\psi$ is an even Kakutani equivalence, where (for $i=1,2$)  $a_s^{(i)}$ is a diagonalizable group arising from a $\mathfrak{sl}_2$-triplet for the generator of $u_t^{(i)}$ as in \S\ref{sec:sl2basis}.
\end{Lemma}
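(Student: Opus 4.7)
The plan is to use the Jacobson--Morozov relation $[\ba_2,\bu_2]=2\bu_2$ to convert the problem into a trivial one-dimensional rescaling of the time change function. Since conjugating $u^{(2)}_t$ by $a^{(2)}_s$ multiplies $t$ by the scalar $e^{2s}$, composition of $\psi$ with $a^{(2)}_s$ multiplies the cocycle (and the associated time change function) by $e^{2s}$, and there is a unique $s\in\R$ making the $L^1$-integral equal to $1$.

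First I extract the time change function. The characterization of monotone equivalence recalled in the introduction provides $\alpha\in L_+^1(\mathbf{G}_1/\Gamma_1,m_1)$ and a measurable isomorphism between the flow $(\mathbf{G}_1/\Gamma_1,m_1^\alpha,u^{(1)}_{\alpha,t})$ and $(\mathbf{G}_2/\Gamma_2,m_2,u^{(2)}_t)$ representing $\psi$. Unwinding the definition of $u^{(1)}_{\alpha,t}$ shows that the cocycle $\tau(x,t)$ associated to $\psi$ by $\psi(u^{(1)}_t.x)=u^{(2)}_{\tau(x,t)}.\psi(x)$ satisfies $\tau(x,t)=\int_0^t\alpha(u^{(1)}_s.x)\,ds$, so $I:=\int_{\mathbf{G}_1/\Gamma_1}\alpha\,dm_1>0$ is well-defined and records the average infinitesimal time change.

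Next, from the Jacobson--Morozov relations, $\operatorname{Ad}(a^{(2)}_s)(\bu_2)=e^{2s}\bu_2$, hence $a^{(2)}_s u^{(2)}_t a^{(2)}_{-s}=u^{(2)}_{e^{2s}t}$. Setting $\tilde\psi:=a^{(2)}_s\circ\psi$ and using that $a^{(2)}_s$ preserves $m_2$, I get
\[
\tilde\psi(u^{(1)}_t.x)=a^{(2)}_s u^{(2)}_{\tau(x,t)}.\psi(x)=u^{(2)}_{e^{2s}\tau(x,t)}.\tilde\psi(x),
\]
so $\tilde\psi$ is again a Kakutani equivalence, with cocycle $e^{2s}\tau$ and time change function $e^{2s}\alpha$. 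Its integral is $e^{2s}I$, and the evenness condition $e^{2s}I=1$ selects the unique value $s=-\tfrac12\log I$. Uniqueness follows from the strict monotonicity of $s\mapsto e^{2s}$: if $s,s'$ both make $\tilde\psi$ even, then $e^{2s}I=e^{2s'}I=1$ forces $s=s'$.

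I do not anticipate a genuine obstacle here. The entire argument is a one-dimensional normalization that trades on the fact that the only relevant feature of the $\mathfrak{sl}_2$-triplet is the scaling weight of $\bu_2$ under $\operatorname{Ad}(a^{(2)}_s)$. The only mild subtlety is checking that the average $I$ is invariant under the choice of representative of $\psi$ (which follows because $\alpha$ is determined up to a coboundary by an integrable function, whose integral against the invariant measure is zero), but this is automatic from the setup.
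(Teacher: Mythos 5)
Your proposal is correct and follows essentially the same route as the paper: one normalizes using $a^{(2)}_s u^{(2)}_t a^{(2)}_{-s}=u^{(2)}_{e^{2s}t}$, so composing $\psi$ with $a^{(2)}_s$ rescales the time change function to $e^{2s}\alpha$, and the unique $s$ with $e^{2s}\int\alpha\,dm_1=1$ gives the even equivalence (the paper likewise notes $a^{(2)}_s$ preserves $m_2$ and $m_1^{\bar\alpha}=m_1^{\alpha}$).
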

\begin{proof}[Proof of Lemma~\ref{lem:evenKakutani}]
Since $u_t^{(1)}$ is Kakutani equivalent to $u_t^{(2)}$, there exist an $\alpha\in L_1^+(\mathbf{G}_1/\Gamma_1,m_1)$ and an invertible map $\psi:\mathbf{G}_1/\Gamma_1\to \mathbf{G}_2/\Gamma_2$ such that $\psi_*m_1^{\alpha}=m_2$ and for every $t\in\R$ and $m_1$-a.e. $x\in \mathbf{G}_1/\Gamma_1$
\begin{equation}\label{eq:geodesicEven}
    \psi(u_{\rho(x,t)}^{(1)}.x)=u_t^{(2)}.\psi(x),
\end{equation}
where $\rho(x,t)$ satisfies 
\begin{equation}\label{eq:timeChange}
\int_0^{\rho(x,t)}\alpha(u_p^{(1)}.x)dp=t.
\end{equation}

Since $\int_{\mathbf{G}_1/\Gamma_1}\alpha dm_1>0$, there exists $s\in\R$ such that
\[
e^{2s}\int_{\mathbf{G}_1/\Gamma_1}\alpha dm_1=1.
\]
Applying $a_s^{(2)}$ from left on both sides of \eqref{eq:geodesicEven}
\begin{equation}\label{eq:newTimeChange}
a_s^{(2)}.\psi(u_{\rho(x,t)}^{(1)}.x)=u_{e^{2s}t}^{(2)} a_s^{(2)}.\psi(x).
\end{equation}
Let $\bar{\alpha}=e^{2s}\alpha$ and $\bar{\rho}(x,t)$ satisfy $\int_0^{\bar{\rho}(x,t)}\bar{\alpha}(u^{(1)}_p.x)dp=t$. This, together with \eqref{eq:timeChange}, guarantees that $\bar{\rho}(x,e^{2s}t)=\rho(x,t)$, and thus \eqref{eq:newTimeChange} implies that $u_{\bar{\alpha},t}^{(1)}$ and $u_t^{(2)}$ are isomorphic through $a_s^{(2)}.\psi$. By our choice of~$s$, \[\int_{\mathbf{G}_1/\Gamma_1}\bar{\alpha}dm_1=1.\] 
Note also that $a_s^{(2)}$ preserves $m_2$ and $m_1^{\bar{\alpha}}=m_1^{\alpha}$. This implies that $a_s^{(2)}\circ\psi$ is indeed an even Kakutani equivalence.
\end{proof}

\begin{Definition}\label{def:controlKakutani}
Let $T_t$ and $S_t$ be two ergodic flows acting on $(X,\mathcal{B},\mu)$ and $(Y,\mathcal{C},\nu)$ respectively. For any $\epsilon\in(0,10^{-3})$, we say $\psi$ is an \textbf{$\epsilon$-well-behaved Kakutani equivalence} between $T_t$ and $S_t$ if the following holds:
\begin{enumerate}
    \item $\psi$ is an even Kakutani equivalence between $T_t$ and $S_t$ with time change function $\alpha$;
    \item $\operatorname{esssup}\absolute{\alpha-1}<\epsilon$;
    \item there exists a full measure set $Z\subset X$ such that for every $x\in Z$, $\alpha(T_tx)$ is a $C^{\infty}$ function in $t$ and $\absolute{\alpha(T_tx)-1}<\epsilon$ for every $t\in\R$.
\end{enumerate}
\end{Definition}

The following lemma shows that any even Kakutani equivalence is cohomologous to a well-behaved Kakutani equivalence.  

\begin{Lemma}[Improving behaviour of even Kakutani equivalences]\label{lem:goodTimeChange}
Let $T_t$ and $S_t$ be two ergodic flows acting on $(X,\mathcal{B},\mu)$ and $(Y,\mathcal{C},\nu)$ respectively. If $\epsilon\in(0,10^{-3})$ and $\psi$ is an even Kakutani equivalence between $T_t$ and $S_t$, then there exists an $\epsilon$-well-behaved Kakutani equivalence $\tilde{\psi}$ that is cohomologous to $\psi$.
\end{Lemma}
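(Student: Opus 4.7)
The plan is to modify $\psi$ by post-composing along $S_t$-orbits with a measurable transfer $w$, which is exactly the cohomology operation, and to choose $w$ so that the resulting time change $\tilde\alpha$ is smooth along $T$-orbits and essentially $\epsilon$-close to $1$ in supremum. Since the Kakutani cocycle attached to $\psi$ is $\tau(x,r) = \int_0^r \alpha(T_s x)\,ds$, the cohomology relation on time changes reads
\[
\int_0^r \bigl(\tilde\alpha - \alpha\bigr)(T_s x)\,ds \;=\; w(T_r x) - w(x)
\]
for some measurable $w:X\to\R$, so it suffices to produce such a $\tilde\alpha$ meeting the three clauses of Definition~\ref{def:controlKakutani} and then read off $w$, hence $\tilde\psi$, from the construction.

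First I would smooth $\alpha$ along $T$-orbits. Fix a nonnegative $C^\infty$ bump $\Phi$ of compact support with $\int\Phi=1$, and set $\hat\alpha(x) := \int \alpha(T_s x)\,\Phi(s)\,ds$. Using the additive cocycle $A(x,t):=\int_0^t\alpha(T_s x)\,ds$ together with the identity $A(x,t+r)=A(x,t)+A(T_t x,r)$, a direct computation gives
\[
\int_0^t (\hat\alpha - \alpha)(T_s x)\,ds = w_1(T_t x) - w_1(x), \qquad w_1(x) := \int \Phi(r)\,A(x,r)\,dr,
\]
which is the cohomology relation for $\hat\alpha$ against $\alpha$. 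By Fubini $\int\hat\alpha\,d\mu = 1$, and smoothness of $\Phi$ automatically makes $t\mapsto \hat\alpha(T_t x)$ of class $C^\infty$ on the full measure set where $\alpha$ is locally integrable along the $T$-orbit.

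The main obstacle will be the essential supremum bound, since convolution of an $L^1_+$ function with a smooth bump does not in itself produce an $L^\infty$ function. My plan is to precede the smoothing by a Rokhlin--Kakutani tower construction producing a cohomologous $\alpha' \in L^\infty_+$ with $\|\alpha'-1\|_\infty < \epsilon/2$ and $\int \alpha'\,d\mu = 1$. Concretely, using the Ambrose--Kakutani representation of $T_t$ as a special flow over a base map, on a tall Rokhlin tower for the base I would redistribute the excess mass of $\alpha$ on the small set $\{\alpha>K\}$ via fibrewise coboundaries, exploiting that $\int_{\{\alpha>K\}}\alpha\,d\mu\to 0$ as $K\to\infty$ together with Birkhoff's theorem on the base so that on almost every tower fibre the total mass is close to the average height. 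Smoothing the resulting $\alpha'$ then yields $\tilde\alpha$ cohomologous to $\alpha$, of class $C^\infty$ along orbits through a full measure set $Z$, and satisfying $\|\tilde\alpha-1\|_\infty<\epsilon/2$. After possibly shrinking $Z$ by a null set---using Fubini plus the continuity of $t\mapsto\tilde\alpha(T_t x)$ on $Z$---the a.e.\ sup-bound upgrades to $|\tilde\alpha(T_t x)-1|<\epsilon$ for every $t\in\R$ and every $x\in Z$, and the $\tilde\psi$ built from $\psi$ by the $S$-shift prescribed by the combined transfer function (from $w_1$ together with the tower coboundary) is the desired $\epsilon$-well-behaved Kakutani equivalence cohomologous to $\psi$.
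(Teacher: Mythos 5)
Your overall scaffolding is sound and runs parallel to the paper's argument: the coboundary computation for the mollified $\hat\alpha$ is correct, the upgrade from an a.e.\ bound to a bound for every $t$ via continuity along orbits is fine, and passing from the transfer function $w$ to $\tilde\psi=S_{w(\cdot)}\circ\psi$ is the standard correspondence. The genuine gap is in the one step that carries all the content, namely producing a cohomologous $\alpha'$ with $\|\alpha'-1\|_\infty<\epsilon/2$. As you describe it --- ``redistribute the excess mass of $\alpha$ on $\{\alpha>K\}$'' --- the construction cannot work: truncation plus redistribution of the excess leaves the piece $\min(\alpha,K)$ untouched, and that function is in general nowhere near $1$ in sup norm (e.g.\ $\alpha$ may be close to $0$ on a set of large measure), so no choice of $K$ gives the claimed bound. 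What is actually needed, and what the paper does, is to redistribute \emph{all} of $\alpha$ over long blocks: replace the matching on each column of a return-time skyscraper by one whose speed is the ratio of the block sums, so that the new time change on a column is (essentially) the column average of $\alpha$, which is within $\epsilon$ of $1$ by the ergodic theorem. Your phrase ``total mass close to the average height'' gestures at this, but the mechanism you wrote down is the wrong one.

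Two further points where precision matters. First, a finite Rokhlin tower has a residual set of positive measure on which you control nothing, and any positive-measure bad set destroys the essential-sup bound; you must instead use the full Kakutani skyscraper over a base set $A$ of positive measure (every point returns), with return time forced $\geq N$ by the Rokhlin lemma, and --- crucially --- with $A$ chosen \emph{inside} the set where the Birkhoff ratios have already converged to within $0.01\epsilon$ for all times $\geq N$, so that \emph{every} column, not just a.e.\ fibre of one tower, has block sum within $\epsilon$ of its length. This order of quantifiers is exactly how the paper's set $A$ and induced roof functions $\tau_1,\sigma_1$ are set up (via Rokhlin's special-flow representation of both flows over a common base). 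Second, once the block averages are in hand, the paper builds smoothness directly into the interpolation by a bump-function reparametrization $g_z$ with slope within $0.3\epsilon$ of $1$ and slope exactly $1$ at the block endpoints, so concatenation is $C^\infty$ and no separate mollification is needed; your mollification step is a legitimate alternative, but it is only harmless \emph{after} the boundedness step has been carried out correctly.
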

The proof of this lemma follows from combing {\cite{katok1977monotone}*{Proposition 2.3}} and {\cite{ornstein1982equivalence}*{Theorem 1.4}}.
We provide a proof in Appendix \ref{sec:appGoodTime} for completeness.

\medskip

If we have a ``good'' time change function as in Lemma~\ref{lem:goodTimeChange}, the following lemma shows that two sided matchings will be preserved under the corresponding Kakutani equivalence map $\psi$.

Recall that $\psi\circ T_{\rho(x,t)}(x)=S_t\circ\psi(x)$ for every $t\in\R$ and $\mu$-a.e. $x\in X$, where $\rho(x,t)$ is determined by 
\begin{equation}\label{eq:rhoalpha}
\int_0^{\rho(x,t)}\alpha(T_sx)ds=t.
\end{equation}
Since $\psi$ is an $\epsilon$-well-behaved Kakutani equivalence, there exists a full measure set $\Znew\label{017ZfullKak}\subset X$\index{$\Zold{017ZfullKak}$} such that for every $x\in \Zold{017ZfullKak}$,
\begin{enumerate}[label=\textup{(\roman*)}]
    \item\label{item:alphaClose1}  $\absolute{\alpha(T_tx)-1}<\epsilon$ for every $t\in\R$ and  $\alpha(T_tx)$ is $C^{
    \infty}$ in $t$;
    \item there exists $\tau(x,t)$ such that for every $t,s\in\R$ 
    \begin{equation}\label{eq:rhotau}
        \rho(T_sx,\tau(T_sx,t))=t.
    \end{equation}
\end{enumerate} 

\begin{Lemma}\label{lem:matchingPrese}
Let $T_t$ and $S_t$ be two ergodic flows acting on $(X,\mathcal{B},\mu,d_1)$ and $(Y,\mathcal{C},\nu,d_2)$ respectively. Suppose for some $\epsilon\in(0,10^{-3})$, $\psi$ is an $\epsilon$-well-behaved Kakutani equivalence between $T_t$ and $S_t$. Then for any $\eta\in(0,\epsilon)$, there exist a compact set $\Knew\label{017KmP1}\subset X$\index{$\Kold{017KmP1}$, Lemma~\ref{lem:matchingPrese}} 
with $\mu(\Kold{017KmP1})>1-6\eta$ and $\Rnew\label{017RmP1}>1$\index{$\Rold{017RmP1}$, Lemma~\ref{lem:matchingPrese}} such that the following holds: 
\begin{itemize}
    \item for any $\delta\in(0,10^{-3}\epsilon)$, there exists $\enew\label{017emP1}(\delta)\in(0,\delta)$\index{$\eold{017emP1}(\cdot)$, Lemma~\ref{lem:matchingPrese}} such that if 
    \[
    x,y\in \Kold{017KmP1}, \qquad R\geq\Rold{017RmP1}, \qquad \absolute{t_0}\leq\epsilon R,
    \]
    and if $x$ and $T_{t_0}y$ are $(\eold{017emP1}(\delta),\epsilon,R)$-two sides matchable with a $C^1$-matching function $h$, then $\psi(x)$ and $\psi(T_{t_0}y)$ are $(\delta,30\epsilon,R)$-two sides matchable for the $C^1$-matching function $h_1$ defined by
    \[
    h_1(t)=\tau\left(x,h(\rho(T_{t_0}y,t))\right).
    \]
\end{itemize}
Moreover, given $\delta>0$ and $x,y\in \Kold{017KmP1}$ satisfying $d_1(x,y)<\eold{017emP1}(\delta)$, then
\[
d_2(\psi(x),\psi(y))<\delta.
\]
\end{Lemma}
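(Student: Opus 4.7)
The strategy combines three ingredients: Lusin's theorem, which produces a compact set on which $\psi$ is uniformly continuous; the Birkhoff ergodic theorem for the $T_t$-flow, which ensures that typical orbits visit any fixed set of large measure with high density on all sufficiently long intervals; and the intertwining identity $\psi(T_rw)=S_{\tau(w,r)}\psi(w)$ together with $\rho(w,\tau(w,r))=r$ from \eqref{eq:rhoalpha}--\eqref{eq:rhotau}, which translates a matching of $T_t$-orbits into one of $S_t$-orbits via the obvious change of variable. First apply Lusin's theorem inside $\Zold{017ZfullKak}$ to produce a compact $K_0$ of measure greater than $1-\eta$ on which $\psi$ is uniformly continuous with some modulus of continuity $\omega$; set $\eold{017emP1}(\delta):=\tfrac{1}{2}\min(\delta,\omega^{-1}(\delta))$ so that $x,y\in K_0$ with $d_1(x,y)<\eold{017emP1}(\delta)$ forces $d_2(\psi(x),\psi(y))<\delta$, which already settles the ``moreover'' claim. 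Next apply Egorov's theorem to the pointwise Birkhoff averages of $\chi_{K_0}$ along $T_t$, obtaining $\Rold{017RmP1}>1$ and a compact $\Kold{017KmP1}\subset K_0\cap\Zold{017ZfullKak}$ with $\mu(\Kold{017KmP1})>1-6\eta$ on which these averages over any $[-R',R']$ with $R'\geq\Rold{017RmP1}$ are within $\eta$ of $\mu(K_0)$; shrink $\Kold{017KmP1}$ slightly so that additionally a small $d_1$-neighborhood of every point of $\Kold{017KmP1}$ lies in $K_0$.

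Now fix $x,y\in\Kold{017KmP1}$ and a $C^1$-matching $h:A\to A'$ as in the hypothesis, and let $h_1$ be as in the statement. The chain rule, combined with $\tau'(x,r)=\alpha(T_rx)$ and $\partial_s\rho(T_{t_0}y,s)=1/\alpha(T_{\rho(T_{t_0}y,s)+t_0}y)$, yields
\[
h_1'(s)=\frac{\alpha(T_{h(\rho(T_{t_0}y,s))}x)\,h'(\rho(T_{t_0}y,s))}{\alpha(T_{\rho(T_{t_0}y,s)+t_0}y)}.
\]
Since $x,y\in\Zold{017ZfullKak}$, both $\alpha$ factors lie in $(1-\epsilon,1+\epsilon)$ by item~\ref{item:alphaClose1}, and $|h'-1|<\epsilon$ by hypothesis, so $|h_1'-1|<10\epsilon$. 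At $s=0$ each of $\rho(T_{t_0}y,0)$, $h(0)$ and $\tau(x,0)$ vanishes, so $h_1(0)=0$.

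Let $A_0=\{t\in A:T_{h(t)}x,\,T_{t+t_0}y\in K_0\}$. Applying the Birkhoff estimate from the construction of $\Kold{017KmP1}$ to the orbits of $x$ and of $y$ over intervals of length at most $(1+\epsilon)R$ (using $|t_0|\leq\epsilon R$ and $h([-R,R])\subset[-(1+\epsilon)R,(1+\epsilon)R]$) gives $l([-R,R]\setminus A_0)<(2\epsilon+C\eta)R$. Define $\tilde A=\{s\in[-R,R]:\rho(T_{t_0}y,s)\in A_0\text{ and }h_1(s)\in[-R,R]\}$; the bi-Lipschitz distortion of $s\mapsto\rho(T_{t_0}y,s)$ (with constants in $(1-\epsilon,1+\epsilon)$) and of $h_1$ (with constants in $(1-10\epsilon,1+10\epsilon)$) costs a further $O(\epsilon)R$ in density, and bookkeeping with $\eta<\epsilon$ gives $l(\tilde A),l(h_1(\tilde A))>(1-30\epsilon)2R$. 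For $s\in\tilde A$ both $T_{h(\rho(T_{t_0}y,s))}x$ and $T_{\rho(T_{t_0}y,s)+t_0}y$ lie in $K_0$ and are at $d_1$-distance less than $\eold{017emP1}(\delta)$ by the original matching, so uniform continuity of $\psi|_{K_0}$ together with the intertwining identity yields $d_2(S_{h_1(s)}\psi(x),S_s\psi(T_{t_0}y))<\delta$. The condition $0\in\tilde A$ is arranged by the thickening step, which ensures $T_{t_0}y\in K_0$ whenever $d_1(x,T_{t_0}y)<\eold{017emP1}(\delta)$. The main obstacle is the careful bookkeeping of density losses needed to keep the final constant below $30\epsilon$; a secondary subtlety is handling the base point $s=0$, which is why the thickening step is necessary.
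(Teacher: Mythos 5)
Your argument is essentially the paper's own proof: Lusin's theorem to get a compact set where $\psi$ is uniformly continuous (which yields $\eold{017emP1}(\delta)$ and the ``moreover'' clause), the pointwise ergodic theorem upgraded to a uniform statement on a compact set of large measure (the paper's $K_2$, your Egorov step), the same formula $h_1(t)=\tau(x,h(\rho(T_{t_0}y,t)))$ with the same chain-rule computation using $\tau'(x,s)=\alpha(T_sx)$ and $\rho'(T_{t_0}y,s)=\alpha(T_{\rho(T_{t_0}y,s)+t_0}y)^{-1}$ and item (i), and the same density bookkeeping on the set where both $T_{h(\rho(\cdot))}x$ and $T_{\rho(\cdot)+t_0}y$ return to the Lusin set (your constant $10\epsilon$ versus the paper's $5\epsilon$ for $|h_1'-1|$, and your $30\epsilon$ versus the paper's $21\epsilon$ for the density loss, are both within the stated tolerance).

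The one place where you genuinely deviate is the ``thickening'' step: you claim one can shrink $\Kold{017KmP1}$ so that a small $d_1$-neighborhood of \emph{every} point of $\Kold{017KmP1}$ lies inside the Lusin set $K_0$. This is not possible in general: a Lusin set of measure $1-\eta$ can have empty interior (and typically does), in which case no point of any positive-measure compact subset has a neighborhood contained in $K_0$, regardless of how much measure you are willing to discard. So that device, as stated, fails. Fortunately nothing else in your argument uses it; you introduced it only to force the base point into the matching set, i.e.\ to guarantee $0\in\tilde A$ (equivalently $T_{t_0}y\in K_0$, so that $d_2(\psi(x),\psi(T_{t_0}y))<\delta$). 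On this point the paper's own proof does no more than record $h_1(0)=0$ and work with the matching set $A_1$, so you should simply drop the thickening claim rather than lean on it; if you want the base-point closeness in full generality you would need a different mechanism (for instance, it is automatic in the case $t_0=0$ from the ``moreover'' clause, which is how it is used when the flows' orbits are compared at the same good points), not a neighborhood statement about Lusin sets.
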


\begin{proof}[Proof of Lemma~\ref{lem:matchingPrese}]
By the Lusin theorem, for any given $\eta\in(0,\epsilon)$, there exists a compact subset $K_1\subset X$ with $\mu(K)>1-\eta$ such that $\psi$ is uniformly continuous on $K_1$. 

Since $\psi$ is uniformly continuous on $K_1$, for any fixed  $\delta\in(0,10^{-3}\epsilon)$, there exists $\eold{017emP1}(\delta)\in(0,\delta)$ such that 
\begin{equation}\label{eq:lusinConstant}
\text{if $x,y\in K_1$ and $d_1(x,y)<\eold{017emP1}(\delta)$, then $d_2(\psi(x),\psi(y))<\delta$.}
\end{equation}

By the Pointwise Ergodic Theorem,  there exist $N_1>1$ and a compact set $K_2\subset X$ with $\mu(K_2)>1-4\eta$ such that for every $x\in K_2$ and every $R\geq N_1$
\begin{equation}\label{eq:goodTime}
\begin{aligned}
    &\frac{1}{2R}\int_{-R}^R\chi_{\Zold{017ZfullKak}\cap K_1}(T_sx)ds>1-5\eta.
    \end{aligned}
\end{equation}

\medskip

Let $\Kold{017KmP1}=\Zold{017ZfullKak}\cap K_1\cap K_2$ and $\Rold{017RmP1}=10N_1$; in particular $\mu(\Kold{017KmP1})>1-5\eta$. Moreover, the choices of $\Zold{017ZfullKak}$, $K_1$ and $K_2$ imply that $\Kold{017KmP1}$ is a compact subset of $X$ independent of the choice of $\delta$; the choice of $N_1$ implies that $\Rold{017RmP1}$ is independent of the choice of $\delta$.

Assume that $x,T_{t_0}y\in \Kold{017KmP1}$ are $(\eold{017emP1}(\delta),\epsilon,R)$-two sides matchable for some $R\geq \Rold{017RmP1}$. Then there exist a $C^1$-function $h:[-R,R]\to[-R,R]$ and $A\subset[-R,R]$ with $l(A)>(1-\epsilon)2R$ such that
\begin{enumerate}[label=\textup{(\alph*)}]
    \item\label{item:xyclose} $d_1(T_{h(t)}x,T_{t+t_0}y)<\eold{017emP1}(\delta)$ for all $t\in A$,
    \item $h(0)=0$ and $0\in A$,
    \item\label{item:h1deri} $\absolute{h'(t)-1}<\epsilon$ for all $t\in [-R,R]$.
\end{enumerate}
\medskip

Let $h_1$ be defined as
\begin{equation}\label{eq:newMatching}
h_1(t)=\tau(x,h(\rho(T_{t_0}y,t))).
\end{equation}
Since $x,y\in \Zold{017ZfullKak}$, we obtain from \eqref{eq:rhoalpha}, \ref{item:alphaClose1} and \eqref{eq:rhotau} that for $s\in\R$
\begin{equation}\label{eq:rhotauDeri}
    \rho'(T_{t_0}y,s)=(\alpha(T_{\rho(T_{t_0}y,s)+t_0}y))^{-1}\text{ and }  \tau'(x,s)=\alpha(T_{s}x).
\end{equation}
These in particular imply that $h_1$ is a $C^{1}$-function in $t$. It also follows from chain rule that
\begin{equation*}
    \begin{aligned}
    h'_1(t)=&\tau'\left(x,h(\rho(T_{t_0}y,t))\right)\cdot h'(\rho(T_{t_0}y,t))\cdot \rho'(T_{t_0}y,t)\\
    =&h'(\rho(T_{t_0}y,t))\frac{\alpha(T_{h(\rho(T_{t_0}y,t))}x)}{\alpha(T_{\rho(T_{t_0}y,t)+t_0}y)}.
    \end{aligned}
\end{equation*}
This together with $x,y\in \Zold{017ZfullKak}$, \ref{item:alphaClose1} and \ref{item:h1deri} gives for every $t\in[-R,R]$
\begin{equation}\label{eq:h1deri}
    \absolute{h'_1(t)-1}<5\epsilon.
\end{equation}

\medskip

Let $A_0$ and $A_1$ be defined as
\begin{gather}
 A_0=A\cap\{t\in[-R,R]:T_{h(t)}x,T_{t+t_0}y\in K\cap \Zold{017ZfullKak}\}, \nonumber\\
 A_1=\{t\in[-R,R]:\rho(T_{t_0}y,t)\in A_0\}.\label{eq:defA1}
\end{gather}
Combining $\eta\in(0,\epsilon)$, $y\in \Zold{017ZfullKak}$,  \ref{item:alphaClose1}, \eqref{eq:goodTime} and  \eqref{eq:rhotauDeri}, then
\begin{equation}\label{eq: l-bar-A}
l(A_1)>(1-21\epsilon)2R.
\end{equation}
Moreover, combining \eqref{eq:lusinConstant}, \ref{item:xyclose} and \eqref{eq:newMatching},  we have for every $t\in A_1$
\begin{equation}\label{eq:newClose}
    d_2(S_{h_1(t)}\psi(x),S_{t}\psi(T_{t_0}y))<\delta.
\end{equation}

\medskip

Finally, since $\rho(T_{t_0}y,0)=0$, $\tau(x,0)=0$ and $h(0)=0$, we obtain from \eqref{eq:newMatching} that
\begin{equation}\label{eq:h1zero}
    h_1(0)=0.
\end{equation}

\medskip

Combining \eqref{eq:h1deri}, \eqref{eq: l-bar-A}, \eqref{eq:newClose} and \eqref{eq:h1zero}, we obtain that $\psi(x)$ and $\psi(T_{t_0}y)$ are $(\delta,30\epsilon,R)$-two sides matchable with the $C^1$-matching function defined as in \eqref{eq:newMatching}. This finishes the proof of Lemma~\ref{lem:matchingPrese}.

\end{proof}

\section{Some local estimates regarding divergence of one-parameter unipotent subgroups}\label{sec:polynomialDivergence}

A key feature of unipotent one parameter groups is that the entries of the elements of the group are given by polynomials. For this reason, properties of polynomials play a key role in the study of unipotent flows. In particular, we mention the following elementary fact about polynomials:

\begin{Lemma}\label{lem:polCoe}
Let $p(t)=\sum_{k=0}^da_kt^k$ be a polynomial of degree $d$. For every $\epsilon>0$, there exists $\Cnew\label{0191DPol}=\Cold{0191DPol}(d)\geq1$ \index{$\Cold{0191DPol}$, Lemma~\ref{lem:polCoe}}
such that if $\absolute{p(t)}\leq\epsilon$ for all $t\in[0,T]$, then $\absolute{a_k}<\Cold{0191DPol}T^{-k}\epsilon$ for all $k=0,\ldots,d$. Conversely, if $\absolute{a_k}\leq \Cold{0191DPol}^{-1}T^{-k}\epsilon$ for all $k$, then $\absolute{p(t)}<\epsilon$ for all $t\in[0,T]$.
\end{Lemma}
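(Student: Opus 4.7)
The plan is to reduce to the case $T=1$ by a scaling argument, and then appeal to the equivalence of any two norms on the finite-dimensional vector space $\mathcal{P}_d$ of polynomials of degree at most $d$.

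Explicitly, set $q(s) = p(Ts) = \sum_{k=0}^d b_k s^k$ where $b_k = a_k T^k$. Then $\absolute{p(t)} \leq \epsilon$ for all $t \in [0,T]$ is equivalent to $\absolute{q(s)} \leq \epsilon$ for all $s \in [0,1]$, and the conclusion $\absolute{a_k} < \Cold{0191DPol} T^{-k} \epsilon$ becomes $\absolute{b_k} < \Cold{0191DPol} \epsilon$. So the scaled statement is entirely about the relation between the sup norm $\norm{q}_\infty := \sup_{s \in [0,1]} \absolute{q(s)}$ and the coefficient norm $\norm{q}_c := \max_{0 \leq k \leq d} \absolute{b_k}$ on $\mathcal{P}_d$.

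For the forward direction, both $\norm{\cdot}_\infty$ and $\norm{\cdot}_c$ are norms on the finite-dimensional space $\mathcal{P}_d$, so there is a constant $C_1(d)$ with $\norm{q}_c \leq C_1(d) \norm{q}_\infty$ for every $q \in \mathcal{P}_d$. Applied to our $q$ this yields $\absolute{b_k} \leq C_1(d) \epsilon$. For the converse, the trivial bound gives $\absolute{q(s)} \leq \sum_{k=0}^d \absolute{b_k} \leq (d+1) \norm{q}_c$ for $s \in [0,1]$, so if $\absolute{b_k} \leq C^{-1} \epsilon$ for all $k$ with $C \geq d+1$, then $\absolute{q(s)} \leq \epsilon$ on $[0,1]$.

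Setting $\Cold{0191DPol} := \max\bigl(C_1(d),\, d+1\bigr)$ gives a single constant that makes both implications work. There is no real obstacle here; the only thing to note is that one should strengthen the forward inequality slightly (e.g.\ replace $C_1(d)$ by $C_1(d)+1$ or use a strict inequality from the outset, since the statement is stated with a strict ``$<$''), which is harmless. If desired, explicit values of $C_1(d)$ can be extracted from the Vandermonde matrix relating values of $q$ at $d+1$ distinct points in $[0,1]$ to its coefficients, but for our purposes only the existence of such a dimension-dependent constant matters.
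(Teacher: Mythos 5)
Your proposal is correct. The paper does not actually supply an argument here --- it leaves the proof to the reader, with a pointer to an analogous lemma elsewhere --- so there is nothing to deviate from; your scaling to $[0,1]$ followed by equivalence of the sup norm and the coefficient norm on the finite-dimensional space $\mathcal{P}_d$ is a perfectly standard and complete route, and the Vandermonde/Lagrange-interpolation remark is exactly how one would make the constant explicit if needed. One tiny point: the strictness adjustment you flag for the forward direction is needed in the converse direction as well, since with $\Cold{0191DPol}=d+1$ and $|a_k|=\Cold{0191DPol}^{-1}T^{-k}\epsilon$ for all $k$ one can have $|p(T)|=\epsilon$ exactly; taking the constant to be, say, $\max\bigl(C_1(d)+1,\,d+2\bigr)$ disposes of both cases and is of course harmless.
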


The proof is left to the reader. Cf.~ also \cite{kanigowski2019slow}*{Lemma 2.7}.

\medskip

The following lemma gives a quantitative description of matching time of two points in group $\mathbf{G}$:

\begin{Lemma}\label{lem:numConnectedCom}
There exist constants $\Cnew\label{0192Cnum}>0$\index{$\Cold{0192Cnum}$, Lemma~\ref{lem:numConnectedCom}} and such that for any $\epsilon,\delta\in(0,\Cold{0192Cnum}^{-1}\eold{014EMa})$ the following holds.
Suppose that $\tilde{x},\tilde{y}\in \mathbf{G}$ are two points with $d_\mathbf{G}(\tilde{x},\tilde{y})<\delta$, and let $h:\R\to\R$ be a $C^{1}$-function such that $h(0)=0$ and $\absolute{h'(t)-1}<\epsilon$. Then we can cover the set  
\[
I_{\tilde{x},\tilde{y}}=\{t\geq 0:d_{\mathbf{G}}(u_{h(t)}.\tilde{x},u_{t}.\tilde{y})<4\delta\} 
,\] 
by at most $3(\dim\mathfrak{g})^2$ disjoint closed intervals $[b_i,d_i]$, \ $i=1$, \dots, $k$, so that on each interval $[b_i,d_i]$
\begin{equation*}
 d_\mathbf{G}(u_{\phi_g(t)}.\tilde{x},u_{t}.\tilde{y})<\Cold{0192Cnum}\delta\qquad \forall t\in [b_i,d_i],
\end{equation*}
where $\phi_g(t)$ is the best matching function defined in \eqref{eq:bestMatchingFun} for $g$ satisfying $\tilde{x}=g.\tilde{y}$.

\end{Lemma}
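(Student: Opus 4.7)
The plan is to exploit the fact that $d_\mathbf{G}(u_{h(t)}\tilde{x}, u_t\tilde{y})$ being small forces certain polynomial-in-$t$ coordinates of $u_{h(t)} g u_{-t}$ to remain small (where $g = \tilde{x}\tilde{y}^{-1}$ satisfies $d_\mathbf{G}(g,e)<\delta$), and such constraints can only hold on boundedly many intervals. I would proceed as follows.

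First, I would decompose $g=g^{\mathfrak{h}}g^{\mathfrak{z}}g^{\tr}$ according to \eqref{eq:gDecom}--\eqref{eq:gDecom1}. Since $g^{\mathfrak{z}} \in \mathbf{Z}=C_\mathbf{G}(\mathbf{H})$ commutes with every $u_s$, we may rewrite
\[
u_{h(t)} g u_{-t} \;=\; \bigl(u_{h(t)} g^{\mathfrak{h}} u_{-t}\bigr) \cdot g^{\mathfrak{z}} \cdot \bigl(u_t g^{\tr} u_{-t}\bigr),
\]
a product in $\mathbf{H}\cdot \mathbf{Z}\cdot \exp(\mathfrak{tr})$. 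By the uniqueness part of Lemma~\ref{lem:groupDecom} (applied to $u_{h(t)}gu_{-t}$ itself), when the left-hand side has norm $<4\delta$ each of the three factors must be $O(\delta)$. Note that $g^{\mathfrak{z}}$ is constant in $t$ and is already $O(\delta)$ from $d_\mathbf{G}(g,e)<\delta$.

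Second, I would invoke Lemma~\ref{lem:matchingFunction} for the $\mathbf{H}$-part: for $|t|\leq \eold{014EMa}|\vartheta_{\bou}(g)|^{-1}$ one has $u_{\phi_g(t)}g^{\mathfrak{h}}u_{-t} = \ou_{s_t}a_{p_t}$ with $|s_t|\leq 2|\vartheta_{\bou}(g)|$ and $|p_t|\leq 2(|\vartheta_{\ba}(g)|+|t||\vartheta_{\bou}(g)|)$. Since $u_{h(t)}g^{\mathfrak{h}}u_{-t} = u_{h(t)-\phi_g(t)}\cdot \ou_{s_t}a_{p_t}$, comparing $\bu$-components inside the small left-hand side forces $|h(t)-\phi_g(t)|=O(\delta)$ on $I_{\tilde{x},\tilde{y}}$. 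Writing
\[
u_{\phi_g(t)}gu_{-t} \;=\; u_{\phi_g(t)-h(t)}\cdot u_{h(t)}gu_{-t},
\]
the right-invariance of $d_\mathbf{G}$ then gives $d_\mathbf{G}(u_{\phi_g(t)}\tilde{x},u_t\tilde{y})<\Cold{0192Cnum}\delta$ throughout $I_{\tilde{x},\tilde{y}}$, for $\Cold{0192Cnum}$ depending only on $\mathbf{G}$.

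Third, I would count intervals using polynomial structure. The factor $u_tg^{\tr}u_{-t}=\exp(\operatorname{Ad}_{u_t}\log g^{\tr})$, in the chain basis \eqref{eq:chainbasis}, has coordinate in the $\bx^{i,j}$-direction ($j\notin I_c$) equal to the polynomial $\sum_{k\geq 0}\vartheta_{i+k,j}(g)\, t^k/k!$ of degree $\leq q_j\leq \dim\mathfrak{g}$. There are at most $\dim\mathfrak{g}$ such polynomial entries in total. For each polynomial $p$ of degree $d$, the set $\{t\geq 0:|p(t)|<c\}$ consists of at most $2d+1$ connected components (since $p(t)=\pm c$ each have at most $d$ solutions). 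Writing each such set as a union of intervals, the boundary of their common intersection is contained in the union of their boundaries, so the intersection has at most $\sum_j(2q_j+1)\leq \dim\mathfrak{g}\cdot(2\dim\mathfrak{g}+1)\leq 3(\dim\mathfrak{g})^2$ connected components. Since $I_{\tilde{x},\tilde{y}}$ is contained in this intersection (up to the constant $\Cold{0192Cnum}$, which can be absorbed into the choice of threshold), taking the connected components of the latter as our intervals $[b_i,d_i]$ gives the required cover, with the $\phi_g$-matching holding on each interval by step two.

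The main obstacle is step two: relating the arbitrary $C^1$ matching function $h$ to the canonical best matching function $\phi_g$. This is where Lemma~\ref{lem:matchingFunction} is essential — the fact that $\phi_g$ cancels the $\ou$-component of $g^{\mathfrak{h}}$ up to controlled error means that any other matching function producing a small error must agree with $\phi_g$ modulo $O(\delta)$, independent of $h$'s shape. Once this is in place, the polynomial counting is a standard consequence of Lemma~\ref{lem:polCoe}-type reasoning applied to the transverse chain coordinates.
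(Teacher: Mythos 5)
Your overall skeleton (split $g=g^{\mathfrak h}g^{\mathfrak z}g^{\tr}$, control the $\mathbf H$-factor via Lemma~\ref{lem:matchingFunction}, count connected components of polynomial sublevel sets for the transverse coordinates) is the same as the paper's, but there is a genuine gap where you assemble the conclusion. The lemma requires $d_{\mathbf G}(u_{\phi_g(t)}\tilde x,u_t\tilde y)<\Cold{0192Cnum}\delta$ for \emph{every} $t$ in each covering interval $[b_i,d_i]$, not merely for $t\in I_{\tilde x,\tilde y}$; this is precisely what makes the lemma useful later, since the intervals must bridge times at which the $h$-matching fails. Your step two compares $h$ with $\phi_g$ only at times $t\in I_{\tilde x,\tilde y}$, where the left-hand side is already small, so it yields the estimate only on $I_{\tilde x,\tilde y}$ itself. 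Meanwhile your covering intervals are connected components of the sublevel set of the transverse polynomials alone, and nothing confines them to the region where the $\mathbf H$-part under the best matching is controlled. In the extreme case $g=g^{\mathfrak h}$ (so $g^{\mathfrak z}=g^{\tr}=e$) with $\vartheta_{\bou}(g)\neq0$, your polynomial constraints are vacuous and your recipe produces the single interval $[0,\infty)$, on which the claimed estimate is false: by Lemma~\ref{lem:matchingFunction} the $a$-component $p_t=-\log\bigl(e^{-\vartheta_{\ba}(g)}-\vartheta_{\bou}(g)e^{\vartheta_{\ba}(g)}t\bigr)$ leaves any $O(\delta)$-neighbourhood once $t\gg\delta/\absolute{\vartheta_{\bou}(g)}$, and $\phi_g$ even blows up in finite time.

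The missing ingredient is the localization step the paper obtains by writing out the $2\times2$ matrix $u_{h(t)}g^{\mathfrak h}u_{-t}$ (the inequalities \eqref{eq:sl2Parts}): the bottom-right entry forces $\absolute{t\,\vartheta_{\bou}(g)}=O(\delta)$, i.e. $I_{\tilde x,\tilde y}$ is contained in an interval of length $O(\delta/\absolute{\vartheta_{\bou}(g)})$. This containment is needed twice in your own argument. First, to legitimately invoke the formula and bounds of Lemma~\ref{lem:matchingFunction} at the times you use them: you quote them for $\absolute{t}\le\eold{014EMa}\absolute{\vartheta_{\bou}(g)}^{-1}$ but then apply the comparison ``on $I_{\tilde x,\tilde y}$'' without knowing that $I_{\tilde x,\tilde y}$ lies in that range, and also to make $\absolute{p_t}\lesssim\absolute{\vartheta_{\ba}(g)}+\absolute{t}\absolute{\vartheta_{\bou}(g)}$ actually of size $O(\delta)$. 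Second, to intersect the polynomial sublevel components with this window; on each resulting interval the estimate then holds at every point, with no reference to $h$ at all, since the factor $u_tg^{\mathfrak z}g^{\tr}u_{-t}$ is small there by construction and the factor $u_{\phi_g(t)}g^{\mathfrak h}u_{-t}=\ou_{s_t}a_{p_t}$ is small because $t$ stays in the window. With that localization inserted and the intervals cut down accordingly, your proof becomes essentially the paper's; without it, the construction does not satisfy the lemma as stated.
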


\begin{proof}[Proof of Lemma~\ref{lem:numConnectedCom}]
Let $I_{1,\delta}, I_{2,\delta}$ be defined by
\begin{equation*}
    \begin{aligned}
    I_{1,\delta} &= \left \{t\geq 0: d_{\mathbf{G}}(u_{h(t)}.g^{\mathfrak{h}}.u_{-t},e)<\delta\right\},\\
    I_{2,\delta} &= \left\{t \geq 0 : d_{\mathbf{G}}\Bigl(\exp\Bigl(\Ad_{u_{-t}}\Bigl(\sum_{j=1}^n\sum_{i=0}^{q_j}\vartheta_{i,j}(g)\bx^{i,j}\Bigr)\Bigr),e\Bigr)<\delta\right\},
    \end{aligned}
\end{equation*}
in the notations of \eqref{eq:gDecom}, in particular
\[g^{\mathfrak{h}}=u_{\vartheta_{\bu}(g)}.\ou_{\vartheta_{\bou}(g)}.a_{\vartheta_{\ba}(g)}.\]
The decomposition \eqref{eq:gDecom} implies that there exists  $\kappa_1>0$ such that 
\begin{equation*}\label{eq:firstIncl}
I_{x,y}\subset (I_{1,\kappa_1\delta}\cap I_{2,\kappa_1\delta}). 
\end{equation*}

\medskip

Explicitly writing out the $2 \times 2$ matrix $u_{h(t)}.g^{\mathfrak{h}}.u_{-t}$ in terms of $\vartheta_{\bu}(g)$, $\vartheta_{\bou}(g)$ and $\vartheta_{\ba}(g)$, this implies that every $t \in I_{1,\kappa_1\delta}$ 
\begin{equation}\label{eq:sl2Parts}
    \begin{aligned}
    &\absolute{e^{\vartheta_{\ba}(g)} (\vartheta_{\bou}(g) (\vartheta_{\bu}(g)+h(t))+1)-1}<\kappa_2\delta,\\
    &\absolute{e^{-\vartheta_{\ba}(g)} (\vartheta_{\bu}(g)+h(t))-e^{\vartheta_{\ba}(g)} t (\vartheta_{\bou}(g) (\vartheta_{\bu}(g)+h(t))+1)}<\kappa_2\delta,\\
    &\absolute{\vartheta_{\bou}(g) e^{\vartheta_{\ba}(g)}}<\kappa_2\delta,\\
    &\absolute{e^{-\vartheta_{\ba}(g)}-\vartheta_{\bou}(g) e^{\vartheta_{\ba}(g)} t -1}<\kappa_2\delta
    \end{aligned}
\end{equation}
for a suitably chose constant $\kappa_2>0$.
The last equality of \eqref{eq:sl2Parts} together with $d_G(x,y)<\delta$ show that 
\begin{equation*}
    I_{1,\kappa_1\delta}\subset\left[-\frac{\kappa_3\delta}{\absolute{\vartheta_{\bou}(g)}},\frac{\kappa_3\delta}{\absolute{\vartheta_{\bou}(g)}}\right]
\end{equation*} 
for some constant $\kappa_3>0$.

\medskip

For every $i,j$ the coefficient of $\bx^{i,j}$ in 
\[
\Ad_{u_{-t}}\left(\sum_{j=1}^n\sum_{i=0}^{q_j}\vartheta_{i,j}(g)\bx^{i,j}\right)
\] 
is a polynomial  in $t$ of degree less equal than $q_j$, say $p_{i,j}(t)$.
Then for any $\delta'>0$, the solution set of $\absolute{p_{i,j}(t)}=\delta'$ consists of at most $2q_j$ points. Therefore the solutions to
\[\{t \geq 0: \absolute{p_{i,j}(t)}\leq\delta' \text{ for all $1\leq j\leq n$ and $0\leq i\leq q_j$}\}\]
is given by a collection of intervals with at most $2\sum_{j=1}^{n} q_j^2$ endpoints, hence has at most $3\dim (\mathfrak g )^2$ connected components. Thus there exist $k' \leq 3 \dim(\mathfrak g)^2$ disjoint closed intervals $[b_l,d_l]$, $l=1,\ldots,k'$ such that 
\[
\left(I_{2,\kappa_1\delta}\bigcap\left[-\frac{\kappa_3\delta}{\absolute{\vartheta_{\bou}(g)}},\frac{\kappa_3\delta}{\absolute{\vartheta_{\bou}(g)}}\right]\right)\subset\bigcup_{l=1}^{k'}J_l\subset \left[-\frac{\kappa_3\delta}{\absolute{\vartheta_{\bou}(g)}},\frac{\kappa_3\delta}{\absolute{\vartheta_{\bou}(g)}}\right]
\]
and (for suitably chosen $\kappa_4>0$)
\begin{equation}\label{eq:sl2TControl}
    \absolute{p_{i,j}(t)}<\kappa_4\delta, \qquad\text{for all $ t\in J_l$, \  $1\leq j\leq n$, \ $0\leq i\leq q_j$.}
\end{equation}

\medskip

Let $\phi_g(t)$ be the best matching function defined in \eqref{eq:bestMatchingFun} for $g$ satisfying $\tilde{x}=g.\tilde{y}$, then $\absolute{\vartheta_{\bou}(g)}\leq\frac{\kappa_3\delta}{d_k}$, $\absolute{\vartheta_{\ba}(g)}<\delta$, \eqref{eq:sl2TControl} and Lemma~\ref{lem:matchingFunction} give that for some constant $\kappa_5>0$
\[
d_\mathbf{G}(u_{\phi_g(t)}.\tilde{x},u_{t}.\tilde{y})<\kappa_5\delta \qquad  \forall t\in[b_{\ell},d_{\ell}].
\] 
This completes the proof.

\end{proof}

The following lemma is a combinatorial lemma which plays a similar role in our analysis to Ratner's~\cite{ratner1979cartesian}*{Lemma~2}. For two disjoint intervals $J_1, J_2$ let  $d(J_1,J_2)$ denote the distance between them.
\begin{Lemma}\label{lem:longHammingComb}
Let $C,w,n_{\max}>0$ and $\epsilon\in(0,10^{-3})$ be constants. Then there is $\Rnew\label{021RlC1}>0$\index{$\Rold{021RlC1}$, Lemma~\ref{lem:longHammingComb}} depending on $C,w,n_{\max}$ so that if $I$ is an interval of length $R \geq \Rold{021RlC1}$ the following holds. Suppose \[\mathcal{P}=\{J_1,\ldots,J_{n_0},J_1^{(1)},\ldots,J_{n_1}^{(1)},\ldots,J_{1}^{(m)},\ldots,J_{n_m}^{(m)}\}\]
is a partition of $I$ into intervals with disjoint interiors, $n_i \leq n_{\max}$ for all $i\geq 1$ and let
\[
\mathcal{P}_b=\{J_1^{(1)},\ldots,J_{n_1}^{(1)},\ldots,J_{1}^{(m)},\ldots,J_{n_m}^{(m)}\}.
\]
Suppose further that
\begin{enumerate}
    \item $\sum_{J\in\mathcal{P}_b}l(J)\geq(1-\epsilon)R$;
    \item for any $J_i^{(p)},J_j^{(q)}\in\mathcal{P}_b$ and $p\neq q$
    \[
    d(J_i^{(p)},J_j^{(q)})\geq\max\left(\Rold{021RlC1}, C\min\left(l(J_i^{(p)}),l(J_j^{(q)})\right)^{1+w}\right).
    \]
\end{enumerate}

\noindent 
Then there exists $J_{i_0}^{j_0}\in\mathcal{P}_b$ such that
\begin{equation*}
    \begin{aligned}
    l(J_{i_0}^{j_0})\geq\frac{1}{32n_{\max}}R \qquad\text{and}\qquad \sum_{i=1}^{n_{j_0}}l(J_{i}^{j_0})\geq\frac{3}{4}R.
    \end{aligned}
\end{equation*}
\end{Lemma}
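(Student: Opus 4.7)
My plan is to argue by a packing/exclusion estimate centered on the single longest bad interval, using the layer-cake formula to convert the separation hypothesis into a bound on the total mass contributed by all non-dominant groups.

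Let $J^{*}$ be an interval of maximum length in $\mathcal{P}_b$; say $J^{*} = J_{i_0}^{(j_0)}$, and write $L^{*} = l(J^{*})$. For each group $j \neq j_0$ set $M_j = \max_{i} l(J_i^{(j)})$; since $n_j \leq n_{\max}$ we have $S_j := \sum_i l(J_i^{(j)}) \leq n_{\max} M_j$, and maximality of $J^{*}$ gives $M_j \leq L^{*}$. For each $\ell \in (0, L^{*}]$, define
\[
N(\ell) = \bigl| \{\, j \neq j_0 : M_j \geq \ell \,\} \bigr|.
\]
Choose one interval of length $\geq \ell$ from each such group, together with $J^{*}$; this yields $N(\ell)+1$ pairwise-different-group intervals inside $I$. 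By hypothesis~(2), any two of them are at distance at least $\max\bigl(R_0, C\ell^{1+w}\bigr)$, so a straightforward packing inside $I$ gives
\[
N(\ell) \leq \frac{R}{\max\bigl(R_0,\, C\ell^{1+w}\bigr)} = \min\!\left(\frac{R}{R_0},\, \frac{R}{C\ell^{1+w}}\right).
\]

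Now I would apply layer cake. Setting $\ell_c = (R_0/C)^{1/(1+w)}$ (the value where the two bounds agree) and integrating,
\[
\sum_{j \neq j_0} M_j = \int_0^{L^{*}} N(\ell)\, d\ell \leq \int_0^{\ell_c} \frac{R}{R_0}\, d\ell + \int_{\ell_c}^{\infty} \frac{R}{C\ell^{1+w}}\, d\ell \leq \frac{(1+w^{-1})\, R}{C^{1/(1+w)}\, R_0^{w/(1+w)}}.
\]
Therefore
\[
\sum_{j \neq j_0} S_j \leq n_{\max}\sum_{j \neq j_0} M_j \leq \frac{n_{\max}(1+w^{-1})\, R}{C^{1/(1+w)}\, R_0^{w/(1+w)}}.
\]
Choose $\Rold{021RlC1}$ depending on $C, w, n_{\max}$ so that the right-hand side is at most $R/5$; this amounts to demanding $\Rold{021RlC1} \geq \bigl[5(1+w^{-1})\, n_{\max}\bigr]^{(1+w)/w}\big/ C^{1/w}$, which is a fixed constant.

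To finish: since $\mathcal{P}_b$ covers at least $(1-\epsilon)R$ of $I$, we obtain
\[
S_{j_0} \geq (1-\epsilon)R - \sum_{j \neq j_0} S_j \geq (1-\epsilon)R - R/5 > \tfrac{3}{4}R,
\]
using $\epsilon < 10^{-3}$. Then among the at most $n_{\max}$ intervals making up group $j_0$, pigeonhole gives one of length at least $S_{j_0}/n_{\max} \geq 3R/(4n_{\max}) \geq R/(32 n_{\max})$, which together with $\sum_i l(J_i^{(j_0)}) = S_{j_0} \geq 3R/4$ is the desired conclusion.

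The main subtlety in the argument is the packing step: one must include $J^{*}$ itself in the list of representatives so that $N(\ell)$ (and not $N(\ell)+1$) is what the separation bounds, which removes the spurious $+1$ and renders the $\ell \to 0$ part of the layer-cake integral finite via the $R_0$-separation cutoff. Without using both parts of the separation hypothesis $\max(R_0, C\min(\cdot)^{1+w})$, one either fails to integrate near $\ell = 0$ (if $w > 0$ and only the polynomial bound is used) or loses the quantitative dependence on $w$.
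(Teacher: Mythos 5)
Your proof is correct, and it reaches the conclusion by a noticeably cleaner route than the paper's. Both arguments share the same skeleton: reduce to the per-group maximal intervals, use hypothesis (2) as a packing constraint to show that the maxima of all but one group have small total length, and then conclude that the remaining group carries mass $\geq \tfrac34 R$ and contains a single interval of length $\gtrsim R/n_{\max}$. The difference is in how the packing is organized. The paper bins the per-group maxima into dyadic scales $A_n$, bounds the scales containing at least two intervals and the very short ones, and must then handle separately the scales containing exactly one interval (its set $\Delta$), where packing alone says nothing; this forces a somewhat delicate comparison between the largest single-occupancy scale and all the others, and its distinguished group $j_0$ is the group of that interval rather than of the global maximum. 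You sidestep this entirely: by anchoring the packing on the global maximum $J^{*}$, every group $j\neq j_0$ with $M_j\geq\ell$ contributes at least one gap of length $\geq\max\left(\Rold{021RlC1},C\ell^{1+w}\right)$, so $N(\ell)\leq R/\max\left(\Rold{021RlC1},C\ell^{1+w}\right)$ with no additive constant, and the layer-cake identity $\sum_{j\neq j_0}M_j=\int_0^{L^{*}}N(\ell)\,d\ell$ converts this directly into the bound $\sum_{j\neq j_0}S_j\leq R/5$ once $\Rold{021RlC1}$ is chosen large in terms of $C,w,n_{\max}$ (your explicit constant and the integral computation check out, as does the final step $(1-\epsilon)R-R/5>\tfrac34 R$ and the pigeonhole inside group $j_0$). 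What your approach buys is brevity and an explicit, transparent dependence of $\Rold{021RlC1}$ on $C,w,n_{\max}$; what the paper's dyadic bookkeeping buys is essentially nothing extra here, so your argument could serve as a drop-in simplification. One small presentational point: you should state explicitly (as you implicitly use) that the $N(\ell)$ gaps between the left-to-right consecutive representatives are pairwise disjoint subintervals of $I$, which is what makes the packing inequality $N(\ell)\cdot\max\left(\Rold{021RlC1},C\ell^{1+w}\right)\leq R$ legitimate.
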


\begin{proof}[Proof of Lemma~\ref{lem:longHammingComb}]
Let $J_{\max}^{(k)}$ be an interval from the collection 
\[
\{J_{1}^{(k)},\ldots,J_{n_k}^{(k)}\}
\]
of maximal length,  $1\leq k\leq m$. Then assumption (1) implies that
\begin{equation}\label{eq:largePortion}
\sum_{k=1}^ml(J_{\max}^{(k)})\geq\frac{1-\epsilon}{n_{\max}}R.
\end{equation}
Let $\mathcal{P}_{b,\max}=\{J_{\max}^{(1)},\ldots,J_{\max}^{(m)}\}$.
By changing the indices we may assume that these intervals are indexed from left to right.

Fix $\Rold{021RlC1}$ so that
\begin{gather}
\Rold{021RlC1}>\max\left(16n_{\max},\frac{(2^{200}n^2_{\max})^{1+1/w}}{C^{1/w}}\right),\label{eq: R2 def}\\
    \sum_{n\geq\log(\Rold{021RlC1}/16n_{\max})}\frac{2^{n+2}}{C2^{n(1+w)}}<(16n_{\max})^{-1}. \label{eq: R2 def mod}
\end{gather}

Let 
\[
B=\{J\in\mathcal{P}_{b,\max}:l(J)< (16n_{\max})^{-1}\Rold{021RlC1}\}.
\]
Assumption (2) of the lemma implies in particular that successive intervals in $\mathcal{P}_{b,\max}$ are separated by at least $R_2$, hence
\[
(\operatorname{Card}(B)-1)\Rold{021RlC1}+\sum_{J\in B}l(J)\leq R,
\]
which implies that 
\[
\frac{\sum_{J\in B}l(J)}{R}\leq \frac{\sum_{J\in B}l(J)}{(\operatorname{Card}(B)-1)\Rold{021RlC1}}\leq\frac{\operatorname{Card}(B)(16n_{\max})^{-1}}{\operatorname{Card}(B)-1}.
\] 
If $\operatorname{Card}(B)\geq2$, we have $\frac{\sum_{J\in B}l(J)}{R}\leq (8n_{\max})^{-1}$;
 otherwise $\frac{\sum_{J\in B}l(J)}{R}\leq\frac{(16n_{\max})^{-1}\Rold{021RlC1}}{R}\leq(16n_{\max})^{-1}$ by the choice of $R$.
Either way,
\begin{equation}\label{eq:baseBlock}
    \sum_{J\in B}l(J)\leq(8n_{\max})^{-1}R.
\end{equation}

For any $n\geq \log\frac{\Rold{021RlC1}}{16n_{\max}}$, we define 
\[
A_n=\{J\in\mathcal{P}_{b,\max}:2^n\leq l(J)< 2^{n+1}\}.
\]
If $\operatorname{Card}(A_n)\geq2$, then assumption (2) implies that
\[
\operatorname{Card}(A_n)2^n+(\operatorname{Card}(A_n)-1)C\cdot2^{n(1+w)}\leq R,
\]
which gives that 
\begin{equation}\label{eq:smallBlock}
\sum_{J\in A_n}l(J)\leq \operatorname{Card}(A_n)\cdot 2^{n+1}\leq\frac{2^{n+2}}{C2^{n(1+w)}}R.
\end{equation}

Let 
\[\Delta=\{i\geq\log (\Rold{021RlC1}/16n_{\max}):\operatorname{Card}(A_i)=1\}
\]
and for $i \in \Delta$ denote the unique element of $A_i$ by $J_{A_i}$. 
Let
\[
\Phi = \{i\geq\log (\Rold{021RlC1}/16n_{\max}):\operatorname{Card}(A_i)\geq 2\}.
\]
Combining \eqref{eq:largePortion}, \eqref{eq: R2 def mod},  \eqref{eq:baseBlock} and \eqref{eq:smallBlock} we obtain that
\begin{equation}\label{eq:longFamily}
\begin{aligned}
\sum_{n\in\Delta}l(J_{A_n}) &\geq\frac{1-\epsilon}{n_{\max}}R-\sum_{J\in B}l(J)-\sum_{n \in \Phi}\sum_{J\in A_n}l(J)\\
&\geq\frac{1-\epsilon}{n_{\max}}R-(8n_{\max})^{-1}R\ -\!\!\!\!\!\sum_{n\geq\log(\Rold{021RlC1}/16n_{\max})}\frac{2^{n+2}}{C2^{n(1+w)}}R\\
&\geq\frac{1}{8n_{\max}}R.
\end{aligned}
\end{equation}
Define $i_0=\max(i\in\Delta)$; then we have $4l(J_{A_{i_0}})\geq\sum_{n\in\Delta}l(J_{A_n})$ by definition of $A_n$ and $\Delta$. In view of \eqref{eq:longFamily} this implies that
\begin{equation}\label{eq:lhct1}
l(J_{A_{i_0}})\geq\frac{1}{32n_{\max}}R,
\end{equation}
which proves the first assertion of the lemma.

\medskip

Let $i\in\Delta$ and $i\neq i_0$, then assumption (2) implies that
\begin{equation}\label{eq:lhci}
l(J_{A_{i_0}})+l(J_{A_i})+C(l(J_{A_i}))^{(1+w)}\leq R.
\end{equation}
Since the definition of $\Delta$ and $A_n$ imply that 
\begin{equation}\label{eq:lhcd}
2^{i_0-i-1}l(J_{A_i})\leq l(J_{A_{i_0}})\leq 2^{i_0-i+1}l(A_i),    
\end{equation} 
we obtain from \eqref{eq:lhci} that
\begin{equation*}
    l(J_{A_{i_0}})+2^{-i_0+i-1}l(J_{A_{i_0}})+C(2^{-i_0+i-1}l(J_{A_{i_0}}))^{(1+w)}\leq R,
\end{equation*}
which together with \eqref{eq:lhct1} give 
\[
C(2^{-i_0+i-1}\frac{1}{32n_{\max}}R)^{(1+w)}\leq R,
\] 
and thus if $R\geq R_2$, by \eqref{eq: R2 def}
\begin{equation}\label{eq:lhci1}
2^{i_0-i-1}\geq 2^{100}n_{\max}.
\end{equation}
Then \eqref{eq:lhcd} and \eqref{eq:lhci1} give that for any $i\in\Delta$ and $i\neq i_0$, we have
$
    l(J_{A_i})\leq 2^{-100}n_{\max}^{-1}R
$, which together with the definition of $\Delta$ and $A_n$ give
\begin{equation}\label{eq:lhcc3}
    \sum_{i\in\Delta,i\neq i_0}l(J_{A_i})\leq 2^{-50}n_{\max}^{-1}R.
\end{equation}
Then we have from \eqref{eq: R2 def mod},  \eqref{eq:baseBlock}, \eqref{eq:smallBlock} and \eqref{eq:lhcc3} that
\begin{equation}\label{eq:lhcs1}
\begin{aligned}
    \sum_{\substack{J\in\mathcal{P}_{b,\max}\\J\neq J_{A_{i_0}}}}l(J)&\leq (8n_{\max})^{-1}R+2^{-50}n_{\max}^{-1}R+\sum_{n\geq\log(\Rold{021RlC1}/16n_{\max})}\frac{2^{n+2}}{C2^{n(1+w)}}R\\
    &\leq\frac{7}{32n_{\max}}R.
\end{aligned}
\end{equation}

Suppose $J_{A_{i_0}}=J_{l_0}^{j_0}$ for some $1\leq j_0\leq m$ and $1\leq l_0\leq n_{j_0}$, then \eqref{eq:lhcs1} and the definition of $\mathcal{P}_{b,\max}$ give that
\[
    \sum_{j\neq j_0}\sum_{i=1}^{n_j}l(J_{i}^j)\leq\frac{7}{32}R, 
\]
which together with $\sum_{J\in\mathcal{P}_b}l(J)\geq(1-\epsilon)R$ and $\epsilon\in(0,10^{-3})$ give
$
\sum_{i=1}^{n_{j_0}}l(J_i^{j_0})\geq\frac{3}{4}R
$. This completes the proof.
\end{proof}

\section{Measure estimate of exceptional returns}\label{sec:MesureEstimatesExcept}

The purpose of this section is to establish some preliminary estimates that will help us to obtain Lemma~\ref{lem:longHamming}, which roughly states that if two points $x,y\in \mathbf{G}/\Gamma$ are $(\epsilon,\delta,R)$-two sides matchable, then there are lifts $\tilde x$ and $\tilde y$ of $x$ and $y$ respectively so that for some $t_1,t_2 \in [0,R]$, the point $u_{t_1}.\tilde x$ is in $\Kak(\epsilon',R).u_{t_2}\tilde y$ for some $\epsilon'$ that does not depend on $R$.
Our approach uses a combination of Brundnyi inequality on algebraic groups and Lojasiewicz inequality for multivariable polynomials.

\medskip

Let $\mathcal{P}_{k,n}(\R)$ be the space of the real polynomials $p\in\R[x_1,\ldots,x_n]$ of degree at most $k$ and $V\subset\R^n$ a real algebraic variety of pure dimension $m$ ($1\leq m\leq n-1$). We use the following Remez-type estimate from \cite{brudnyi1999local}, which is the extension of the classical Brudnyi-Ganzburg inequality \cite{brudnyui1973extremal}. One may compare it with the $(C,\alpha)$-property in~\cite{kleinbock1998flows}.
\begin{Theorem}[Theorem 1.3 in \cite{brudnyi1999local}]\label{thm:brudnyiLocal}
For every regular point $x\in V$ there is an open neighborhood $\mathcal{N}=\mathcal{N}(V)$ of $x$ such that
\[
\sup_B\absolute{p}\leq\left(\frac{d\cdot\lambda_V(B)}{\lambda_V(\mathcal{W})}\right)^{\alpha k\deg(V)}\sup_{\mathcal{W}}\absolute{p}
\]
for every ball\footnote{With respect to metric induced from Euclidean metric on $\R^n$.} $B\subset \mathcal{N}$, measurable subset $\mathcal{W}\subset B$ and polynomial $p\in\mathcal{P}_{k,n}(\R)$. Here $\lambda_V$ denotes the induced Lebesgue measure in $V$, $d=d(m)$, $\deg(V)$ is the degree of manifold $V$ and $\alpha$ is an absolute constant.
\end{Theorem}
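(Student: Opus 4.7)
The plan is to reduce to a Remez-type inequality for genuine polynomials on a Euclidean ball in $\R^m$ (where $m = \dim V$), by means of a local analytic parametrization of $V$ near the regular point $x$, and then transfer the resulting estimate back to $V$. The classical Brudnyi--Ganzburg inequality will do the work on $\R^m$; the real content is tracking how degrees transform under the parametrization.

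First, because $x$ is a regular point of $V$, I would choose affine coordinates on $\R^n$ so that $T_x V$ equals $\R^m \times \{0\}$, and then take $\mathcal{N} = \mathcal{N}(V)$ to be a neighborhood of $x$ on which the linear projection $\pi:\R^n \to \R^m$ onto the first $m$ coordinates restricts to a real-analytic diffeomorphism between $\mathcal{N}$ and an open subset $U \subset \R^m$, with Jacobian bounded above and below by positive constants. This ensures $\lambda_V$ on $\mathcal{N}$ and $\pi_*\lambda_V$ on $U$ are comparable up to an absolute multiplicative factor.

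Second, I need to understand how a polynomial $p \in \mathcal{P}_{k,n}(\R)$ of degree $k$ pulls back. For a generic choice of projection $\pi$, the map $\pi|_V$ is finite of degree at most $\deg(V)$, and each coordinate of the local inverse $\pi^{-1}: U \to V \subset \R^n$ is algebraic over $\R[y_1,\dots,y_m]$ with minimal polynomial of degree $\leq \deg(V)$. Consequently, $q(y) := p(\pi^{-1}(y))$ satisfies a polynomial equation $P(y, q(y)) = 0$ where $P \in \R[y, t]$ has $\deg_t P \leq \deg(V)$ and $\deg_y P \lesssim k \deg(V)$. I would then either pass to a resolvent polynomial in $q$ (whose coefficients are polynomials in $y$ of degree $\lesssim k \deg(V)$), or use the fact that $\max_S |q|$ on a set $S$ is controlled by the supremum of a polynomial of degree $\lesssim k\deg(V)$ on $S$, up to a bounded factor depending on the discriminant locus (which can be avoided by shrinking $\mathcal{N}$).

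Third, apply the Brudnyi--Ganzburg inequality on $\R^m$: for any polynomial $\tilde{q}$ of degree $d$ on a Euclidean ball $B' \subset \R^m$, $\sup_{B'}|\tilde q| \leq (C_m |B'|/|W'|)^d \sup_{W'} |\tilde q|$ for every measurable $W' \subset B'$. Applying this with $\tilde q$ the resolvent polynomial, $B' = \pi(B)$, $W' = \pi(\mathcal{W})$ and $d \lesssim k \deg(V)$, then translating back via $\pi$ using the comparability of measures, yields the stated inequality with an absolute constant $\alpha$. The main obstacle is the algebraic bookkeeping in step two: producing a resolvent of the right degree so that the exponent comes out as $\alpha k \deg(V)$ rather than something worse like $k(\deg V)^2$, and handling the discriminant locus of $\pi|_V$ so that the local inverse really is single-valued and analytic on all of $\mathcal{N}$.
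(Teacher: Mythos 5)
You should first note that the paper itself offers no proof of this statement: Theorem~\ref{thm:brudnyiLocal} is imported verbatim from Brudnyi's paper (Theorem~1.3 of \cite{brudnyi1999local}) and is used as a black box in Lemma~\ref{lem:modBru}, so there is no internal argument to compare yours against; the comparison has to be with the cited source, whose proof runs through complexification and Remez--Cartan type estimates for analytic functions of bounded valence (Bernstein classes), not through a direct reduction to the polynomial Remez inequality on $\R^m$.

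Judged on its own terms, your proposal has a genuine gap precisely at the step you call ``algebraic bookkeeping''. The pullback $q(y)=p(\pi^{-1}(y))$ is an algebraic function, not a polynomial, and the Brudnyi--Ganzburg inequality you invoke applies only to polynomials; the resolvent device does not convert the one into the other in a way that preserves the ratio structure of the inequality. Concretely, the natural polynomial to attach to $q$ is the norm $N(y)=\prod_{i}p\bigl(\sigma_i(y)\bigr)$ over the $\leq\deg(V)$ branches $\sigma_i$ of $\pi^{-1}$, which indeed has degree $O(k\deg V)$, and Brudnyi--Ganzburg then controls $\sup_{B'}\absolute{N}/\sup_{W'}\absolute{N}$. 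But $\absolute{N}$ cannot be compared with $\absolute{q}$ in either direction with constants of the required multiplicative form: on $W'$ the remaining branch factors are bounded only by $\sup\absolute{p}$ over sets lying far from $\mathcal W$ (not by $\sup_{\mathcal W}\absolute{p}$), and on $B'$ the product $\absolute{N(y)}$ may be far smaller than $\absolute{q(y)}$ because some other branch factor nearly vanishes. Working instead with the minimal polynomial $P(y,t)$ of $q$ runs into the same problem: bounds on its coefficients do not translate into a two-sided sup comparison for $q$ with an exponent $\alpha k\deg(V)$ and an absolute $\alpha$. This is exactly the point where the cited proof needs real input --- a Remez-type inequality for functions whose complexified restrictions to slices have valence at most $k\deg(V)$ --- and your sketch supplies no substitute for it. The first and third steps (choosing $\mathcal N$ so that $\pi|_V$ is a diffeomorphism with measure comparability, and transferring the estimate back) are fine, modulo the minor point that a generic projection's branch/discriminant locus must be shown avoidable near $x$; but without closing the sup-comparison step the reduction does not prove the theorem.
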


Since the measure and metric in the above theorem are not our desired measure and metric, we need the following lemma to compare them in a quantitative way:
\begin{Lemma}\label{lem:modBru}
Let $\mathbf{G}$ be the identity component of a real semisimple algebraic group $\mathbb{G}(\R)$ (considered as a subgroup of some $\SL_N(\R)$). Let $\tilde{m}$ denote the Haar measure on $\mathbf{G}$, and let $d_G$ denote some right invariant Riemannian metric on $\mathbf{G}$. Then for every $z\in \mathbf{G}$ and $r < \enew\label{024demB1}$\index{$\eold{024demB1}$, Lemma~\ref{lem:modBru}},  every measurable subset\footnote{Note that here the balls are with respect to the (right) invariant metric $d_G$ on~$\mathbf{G}$.} $\mathcal{W}\subset B_r^\mathbf{G}(z)$ and any polynomial $p\in\mathcal{P}_{k,N^2}(\R)$ 
\[
\sup_{B_r^\mathbf{G}(z)}\absolute{p}\leq \Cold{024CmB2}\left(\frac{\tilde{m}(B_r^\mathbf{G}(z))}{\tilde{m}(\mathcal{W})}\right)^{\deold{024CmB4}}\sup_{\mathcal{W}}\absolute{p},
\]
where $\eold{024demB1},\Cnew\label{024CmB2},\denew\label{024CmB4}>0$\index{$\Cold{024CmB2}$, Lemma~\ref{lem:modBru}} \index{$\deold{024CmB4}$, Lemma~\ref{lem:modBru}} depend only on $\mathbf{G}$ and $k$.

\end{Lemma}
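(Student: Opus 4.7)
\smallskip

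The plan is to deduce this directly from Theorem~\ref{thm:brudnyiLocal} by passing between the intrinsic Riemannian data $(d_{\mathbf{G}}, \tilde m)$ on $\mathbf G$ and the extrinsic algebraic data $(d_{\mathrm{Euc}}, \lambda_V)$ coming from the embedding $\mathbf G \hookrightarrow \mathrm{SL}_N(\mathbb R) \subset \mathbb R^{N^2}$. First I would reduce to the case $z = e$. Given any $z \in \mathbf G$, the right invariance of $d_{\mathbf G}$ gives $B_r^{\mathbf G}(z) = B_r^{\mathbf G}(e)\, z$, and right invariance of $\tilde m$ gives $\tilde m(\mathcal W) = \tilde m(\mathcal W z^{-1})$. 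Moreover, since the entries of $gz$ are linear combinations (with fixed coefficients depending on $z$) of the entries of $g$, the function $q(g) := p(gz)$ is again a polynomial on $\mathbb R^{N^2}$ of degree at most $k$. So proving the inequality for $z = e$ with $\mathcal W$ replaced by $\mathcal W z^{-1}$ and $p$ replaced by $q$ yields the general case.

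Next, since every point of the algebraic group $\mathbb G \subset \mathrm{SL}_N$ is a regular point of the variety and all irreducible components through $e$ have the same dimension $m = \dim \mathbf G$, Theorem~\ref{thm:brudnyiLocal} applies at $e$ (taking $V$ to be the Zariski closure of $\mathbf G$) and gives an open Euclidean neighborhood $\mathcal N$ of $e$ and constants $\alpha, d(m), \deg(V)$ for which the Brudnyi estimate holds. Because $\mathbf G$ is a smooth submanifold of $\mathbb R^{N^2}$ through $e$ whose tangent space is $\mathfrak g$, on a possibly smaller neighborhood $\mathcal N' \subset \mathcal N$ the Euclidean metric of $\mathbb R^{N^2}$ restricted to $\mathbf G$ and the right invariant Riemannian metric $d_{\mathbf G}$ are bi-Lipschitz equivalent, and likewise the induced surface measure $\lambda_V$ and the Haar measure $\tilde m$ have Radon–Nikodym derivative bounded above and below by positive constants $\kappa_1, \kappa_2$ on $\mathcal N'$.

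I would then choose $\varepsilon_{024demB1}$ small enough that whenever $r < \varepsilon_{024demB1}$ one has $B_r^{\mathbf G}(e) \subset \mathcal N'$ and is contained in some Euclidean ball $B = B_{Cr}^{\mathrm{Euc}}(e) \cap V$ with $B \subset \mathcal N$ and $\lambda_V(B) \leq \kappa_3\, \lambda_V(B_r^{\mathbf G}(e))$ (the last inequality uses that on $\mathcal N'$ the two metrics are bi-Lipschitz so the two balls are comparable in shape, and the volume doubling of $\lambda_V$ inside $\mathcal N'$). Applying Theorem~\ref{thm:brudnyiLocal} to the pair $\mathcal W \subset B$ gives
\[
\sup_{B_r^{\mathbf G}(e)} |p| \;\leq\; \sup_B |p| \;\leq\; \left(\frac{d\cdot \lambda_V(B)}{\lambda_V(\mathcal W)}\right)^{\alpha k \deg(V)} \sup_{\mathcal W} |p|.
\]
Replacing $\lambda_V(B)$ by $\kappa_3 \lambda_V(B_r^{\mathbf G}(e))$ and then $\lambda_V(B_r^{\mathbf G}(e))/\lambda_V(\mathcal W)$ by a constant times $\tilde m(B_r^{\mathbf G}(e))/\tilde m(\mathcal W)$ using the Radon–Nikodym bounds yields exactly the form stated, with $\delta_{024CmB4} = \alpha k \deg(V)$ and a constant $C_{024CmB2}$ absorbing $d$, $\kappa_3$ and the Radon–Nikodym bounds.

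The main technical nuisance, as opposed to a real obstacle, is the bookkeeping in step three: one must be careful that the Euclidean ball $B$ covering $B_r^{\mathbf G}(e)$ still sits inside the neighborhood $\mathcal N$ provided by Brudnyi and that $\lambda_V(B)$ is only a bounded factor larger than $\lambda_V(B_r^{\mathbf G}(e))$. Both follow from working in a small enough neighborhood $\mathcal N'$ where the Euclidean and Riemannian structures on $\mathbf G$ are bi-Lipschitz close, and from volume doubling of $\lambda_V$ on the smooth manifold $V$. Once this is set up the claimed inequality is immediate and the constants depend only on $\mathbf G$ (through $N$, $\dim \mathbf G$, $\deg V$ and the bi-Lipschitz constants near $e$) and on $k$.
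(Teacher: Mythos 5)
Your proposal is correct and follows essentially the same route as the paper: translate by $z^{-1}$ using right invariance (replacing $p$ by the polynomial $p(\cdot\,z)$ of the same degree), apply Theorem~\ref{thm:brudnyiLocal} in the Brudnyi neighborhood of the identity, and then trade the Euclidean metric and induced Lebesgue measure $\lambda_V$ for $d_{\mathbf G}$ and Haar measure via local bi-Lipschitz/density comparisons near $e$. The bookkeeping you flag (the covering Euclidean ball staying inside $\mathcal N$ and its $\lambda_V$-volume being comparable to that of the Riemannian ball) is exactly what the paper handles with its constants $r_0,\kappa_0,\kappa_1,\kappa_2$, so no gap.
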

\begin{proof}[Proof of Lemma~\ref{lem:modBru}]
As $\mathbf{G}$ is the identity component of a real semisimple algebraic group $\mathbb{G}(\R)$, every point of $\mathbf{G}$ is a regular point and $\mathbb{G}(\R)$ has pure dimension $m$, where $1\leq m\leq N^2-1$.

In order to fit in the framework of Theorem~\ref{thm:brudnyiLocal}, let $\lambda_G$ be the induced Lebesgue measure in $\mathbf{G}$ (typically this is \emph{not} the Haar measure) and $\hat{d}_\mathbf{G}$ the induced distance in $\mathbf{G}$ from Euclidean metric on $\R^{N^2}$. Denote by $\hat{B}_r^\mathbf{G}(z)$ and $B_r^\mathbf{G}(z)$ the balls of radius $r$ around $z\in \mathbf{G}$ under the distance functions $\hat{d}_\mathbf{G}$ and $d_\mathbf{G}$ respectively. Moreover, let $\mathcal{N}$ be a neighborhood as in Theorem~\ref{thm:brudnyiLocal} for $V=\mathbb{G}(\R)$ and $x=\operatorname{id}$. Then there exist $r_0\in(0,1)$ and $\kappa_0>1$ depending only on $\mathbf{G}$ such that for every $r\in(0,r_0]$
\begin{gather}\label{eq:br1n}
    \hat{B}_{r}^\mathbf{G}\subset\mathcal{N}, \text{ and }   
    B_{r/\kappa_0}^{\mathbf{G}}\subset \hat{B}_{r}^{\mathbf{G}}\subset B_{r\kappa_0}^\mathbf{G}.
\end{gather} 

\medskip

Notice that the right invariance of $d_\mathbf{G}$ implies that  $B_{r}^\mathbf{G}(z).z^{-1}=B_r^\mathbf{G}$ for every $z\in \mathbf{G}$ and $r>0$. This together with \eqref{eq:br1n} gives for every $r\in(0,r_0/\kappa_0)$
\begin{equation}\label{eq:rightTrasBall}
    B_{r}^\mathbf{G}(z).z^{-1}\subset \hat{B}_{r\kappa_0}^\mathbf{G}\subset\mathcal{N}.
\end{equation}
In particular, for every measurable subset $\mathcal{W}\subset B_r^{\mathbf{G}}(z)$,
\[
\mathcal{W}.z^{-1}\subset\hat{B}_{r\kappa_0}^\mathbf{G}\subset\mathcal{N}.
\]

Suppose $p\in\mathcal{P}_{k,N^2}(\R)$, define $p_{z}(x)=p(xz)$ for $x\in \mathbf{G}$ and thus $p_{z}\in\mathcal{P}_{k,N^2}(\R)$. Apply Theorem~\ref{thm:brudnyiLocal} for $V=\mathbb{G}(\R)$,  $x=\operatorname{id}$, $p=p_{z}$, $B=\hat{B}_{r\kappa_0}^\mathbf{G}$ and $\mathcal{W}=\mathcal{W}.z^{-1}$, where $\mathcal{W}\subset B_r^\mathbf{G}(z)$, then for every $r\in(0,r_0/\kappa_0]$
\begin{equation}\label{eq:idLocalBrudnyi}
\sup_{\hat{B}_{r\kappa_0}^\mathbf{G}}\absolute{p_{z}}\leq\left(\frac{d\cdot\lambda_\mathbf{G}(\hat{B}_{r\kappa_0}^\mathbf{G})}{\lambda_\mathbf{G}(\mathcal{W}.z^{-1})}\right)^{\alpha k\deg(\mathbb{G}(\R))}\sup_{\mathcal{W}.z^{-1}}\absolute{p_{z}}.
\end{equation}
Recall that \eqref{eq:rightTrasBall} and definition of $p_{z}$ give that
\begin{equation}\label{eq:supsame}
\sup_{B_{r}^\mathbf{G}(z)}\absolute{p}\leq\sup_{\hat{B}_{r\kappa_0}^\mathbf{G}}\absolute{p_{z}} \text{ and }\sup_{\mathcal{W}.z^{-1}}\absolute{p_{z}}=\sup_{\mathcal{W}}\absolute{p}.
\end{equation}
Moreover, the local equivalence of metric and \eqref{eq:br1n} imply that  for some $\kappa_1,\kappa_2>1$ depend only on $\mathbf{G}$
\begin{gather}
\lambda_\mathbf{G}(\hat{B}_{r\kappa_0}^{\mathbf{G}})\leq\kappa_1\tilde{m}(\hat{B}_{r\kappa_0}^{\mathbf{G}})\leq\kappa_1\tilde{m}(B_{r\kappa_0^2}^\mathbf{G})\leq\kappa_2\tilde{m}(B_{r}^\mathbf{G}),\label{eq:ballEquiv}\\
\lambda_\mathbf{G}(\mathcal{W}.z^{-1})\geq\kappa_2^{-1}\tilde{m}(\mathcal{W}.z^{-1})=\kappa_2^{-1}\tilde{m}(\mathcal{W})\label{eq:meaEquiv}.
\end{gather}

We complete the proof of Lemma~\ref{lem:modBru} by combining  \eqref{eq:idLocalBrudnyi}, \eqref{eq:supsame}, \eqref{eq:ballEquiv} and  \eqref{eq:meaEquiv}.

\end{proof}

The following lemma helps us to compare matrix norm and right invariance metric quantitatively. 
\begin{Lemma}\label{cor:metricRel}
Let $\mathbf{G}<\operatorname{SL}_N(\R)$ be a connected Lie group equipped with a right invariant metric. Then there exists $\Cnew\label{0261Cl1}>0$\index{$\Cold{0261Cl1}$, Lemma~\ref{cor:metricRel}} and $\denew\label{0261dl1}\in(0,1)$\index{$\deold{0261dl1}$, Lemma~\ref{cor:metricRel}} so that the following holds. Let $z_1,z_2\in \mathbf{G}$, and set $R(z_2) = \max(1,\norm{z_2})^{1/\deold{0261dl1}}$. Then if 
\[
\norm{z_1-z_2}<\Cold{0261Cl1}^{-1}R(z_2)^{-1},
\]
then 
\[
\Cold{0261Cl1}^{-1}R(z_2)^{-1}\norm{z_1-z_2}  < d_{\mathbf{G}}(z_1,z_2)<\Cold{0261Cl1}R(z_2)\norm{z_1-z_2}.
\]
\end{Lemma}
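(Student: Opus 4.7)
\medskip

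My plan for proving Lemma~\ref{cor:metricRel} is to reduce the statement, via right-invariance, to a comparison between the Riemannian distance and the Hilbert–Schmidt norm in a small neighborhood of the identity, and then translate this back using crude polynomial bounds on $\|z_2^{-1}\|$ in terms of $\|z_2\|$.

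\textbf{Step 1: reduction by right-invariance.} Since $d_{\mathbf{G}}$ is right-invariant, we have $d_{\mathbf{G}}(z_1,z_2)=d_{\mathbf{G}}(z_1 z_2^{-1},e)$. Set $g=z_1 z_2^{-1}$. The identities $g-e=(z_1-z_2)z_2^{-1}$ and $z_1-z_2=(g-e)z_2$ together with submultiplicativity of the Hilbert–Schmidt norm give
\[
\frac{\norm{z_1-z_2}}{\norm{z_2}}\;\leq\;\norm{g-e}\;\leq\;\norm{z_1-z_2}\cdot\norm{z_2^{-1}}.
\]
Since $\mathbf G<\SL_N(\R)$, Cramer's rule expresses the entries of $z_2^{-1}$ as signed $(N-1)\times(N-1)$ minors of $z_2$, hence $\norm{z_2^{-1}}\leq \kappa_0\norm{z_2}^{N-1}$ for an absolute constant $\kappa_0$; moreover one checks that $\norm{z_2}\geq\sqrt{N}\geq 1$ for any $z_2\in\SL_N(\R)$, so we may safely replace $\norm{z_2}$ by $\max(1,\norm{z_2})$ throughout. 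Thus
\begin{equation}\label{eq:planNorm}
\frac{\norm{z_1-z_2}}{\max(1,\norm{z_2})}\;\leq\;\norm{g-e}\;\leq\;\kappa_0\,\norm{z_1-z_2}\,\max(1,\norm{z_2})^{N-1}.
\end{equation}

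\textbf{Step 2: local comparison of $d_\mathbf{G}$ and $\norm{\cdot-e\,}$ near $e$.} Since $\mathbf G$ is a connected Lie subgroup of $\SL_N(\R)$ and $d_\mathbf{G}$ comes from a smooth Riemannian metric, the exponential map is a diffeomorphism from a neighborhood of $0$ in $\mathfrak g$ onto a neighborhood of $e$ in $\mathbf G$, and in any such chart the Riemannian length element and the Euclidean length element are mutually Lipschitz. Consequently there exist $\eta_0>0$ and $\kappa_1>1$ (depending only on $\mathbf G$) such that for every $g\in\mathbf G$ with $\norm{g-e}<\eta_0$,
\begin{equation}\label{eq:planLocal}
\kappa_1^{-1}\,\norm{g-e}\;\leq\;d_\mathbf{G}(g,e)\;\leq\;\kappa_1\,\norm{g-e}.
\end{equation}

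\textbf{Step 3: choose $\delta$ and $C$, and combine.} Pick any $\deold{0261dl1}\in(0,1)$ with $\deold{0261dl1}\leq 1/(N-1)$, so that $R(z_2)=\max(1,\norm{z_2})^{1/\deold{0261dl1}}\geq\max(1,\norm{z_2})^{N-1}\geq\max(1,\norm{z_2})$. The smallness hypothesis $\norm{z_1-z_2}<\Cold{0261Cl1}^{-1}R(z_2)^{-1}$, with $\Cold{0261Cl1}$ taken large enough so that $\Cold{0261Cl1}^{-1}\kappa_0<\eta_0$, combined with the right inequality of \eqref{eq:planNorm} forces $\norm{g-e}<\eta_0$, and hence \eqref{eq:planLocal} applies. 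Chaining \eqref{eq:planLocal} with both sides of \eqref{eq:planNorm} yields the desired two-sided bound with $\Cold{0261Cl1}=\max(\kappa_1,\kappa_1\kappa_0,\eta_0^{-1}\kappa_0)$ (or a similar explicit combination).

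The argument is essentially routine; the only point requiring a little care is bookkeeping the exponent in the $\norm{z_2^{-1}}$ bound to ensure the chosen $\deold{0261dl1}$ is small enough that $R(z_2)$ dominates the polynomial factor $\max(1,\norm{z_2})^{N-1}$ arising from Cramer's rule. There is no real obstacle beyond this.
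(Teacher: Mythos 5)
Your proposal is correct and follows essentially the same route as the paper: reduce by right-invariance to a neighborhood of the identity, control $\norm{z_2^{-1}}$ polynomially in $\norm{z_2}$ (you use Cramer/Hadamard where the paper invokes Cayley–Hamilton), and invoke the local two-sided comparison between $d_{\mathbf G}$ and the Hilbert–Schmidt distance near $e$, with $\deold{0261dl1}$ chosen small enough that $R(z_2)$ absorbs the polynomial factor. No substantive difference from the paper's argument.
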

\begin{proof}[Proof of Lemma~\ref{cor:metricRel}]
Denote by $\hat{B}_r^\mathbf{G}(z)$ and $B_r^\mathbf{G}(z)$ as the balls on $\mathbf{G}$ of radius $r$ around $z\in \mathbf{G}$ under the distance functions $\norm{\cdot}$ and the right invariance metric $d_{\mathbf{G}}$ respectively. Then there exist $r_0\in(0,1)$ and $\kappa_0>1$ such that for every $r\in(0,r_0]$
\begin{gather}\label{eq:localSame}
    B_{r/\kappa_0}^\mathbf{G}\subset \hat{B}_{r}^\mathbf{G}\subset B_{r\kappa_0}^\mathbf{G}.
\end{gather} 

By the sub--multiplicativity of Hilbert--Schmidt norm and Cayley--Hamilton theorem\footnote{This in particular guarantees that $\norm{z^{-1}}\leq\zeta_1\max(1,\norm{z})^{\zeta_2}$ for some $\zeta_1,\zeta_2>0$ depending only on $\mathbb{G}(\R)$.}, there exist $\kappa_1>1$ such that for every $z\in\operatorname{SL}_N(\R)$ $r\in(0,r_0/L_1]$ with $L_1=\kappa_1\max(1,\norm{z})^{\kappa_1}$
\begin{equation*}
\hat{B}_{r/L_1}^\mathbf{G}(z)\subset \hat{B}_{r}^\mathbf{G}.z\subset\hat{B}^\mathbf{G}_{rL_1}(z).
\end{equation*} 
This together with \eqref{eq:localSame} guarantees that there exists $\kappa_2>1$ such that if $r\in(0,r_0/L_2]$ for $L_2=\kappa_2\max(1,\norm{z})^{\kappa_2}$ 
\begin{gather}\label{eq:ballVolEq}
    B_{r/L_2}^\mathbf{G}(z)\subset \hat{B}_{r}^\mathbf{G}(z)\subset B_{rL_2}^\mathbf{G}(z).
\end{gather}

Suppose that $\deold{0261dl1}$ is sufficiently small, then 
\begin{equation*}\label{eq:cm1}
    \begin{aligned}
    L_2\leq r_0\deold{0261dl1}^{-1}\max(1,\norm{z_2})^{1/\deold{0261dl1}},
    \end{aligned}
\end{equation*}
where $r_0$ is defined in \eqref{eq:localSame} and $L_2$ is defined in \eqref{eq:ballVolEq} (with $z=z_2$). This together with $\norm{z_1-z_2}<\deold{0261dl1}\max(1,\norm{z_2})^{-1/\deold{0261dl1}}$  and \eqref{eq:ballVolEq} completes the proof.
\end{proof}

\begin{Corollary}\label{cor:slgxbelonging}
Let $\mathbf{G}$ be the identity component of $\mathbb{G}(\R)$. There exist constants $\Cnew\label{0262CMovingNHD}, \denew\label{0262deMovingNHD}>0$\index{$\Cold{0262CMovingNHD}$, Corollary~\ref{cor:slgxbelonging}}\index{$\deold{0262deMovingNHD}$, Corollary~\ref{cor:slgxbelonging}} such that for any $\epsilon\in(0,10^{-3})$,  $\delta\in(0,\deold{0262deMovingNHD}]$, $z\in \mathbf{G}$ satisfying $\norm{z}\leq\epsilon^{-1}$ and $x\in\mathbb{G}(\R)$ satisfying $\norm{z-x}\leq \frac{\Cold{0262CMovingNHD}\epsilon^{1/\delta}}{2}$, we have
\[
x\in \mathbf{G}.
\]
\end{Corollary}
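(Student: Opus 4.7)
The plan is to reduce the question to a fixed neighborhood of the identity in $\mathbb{G}(\R)$ and exploit that $\mathbf{G}$ is open in $\mathbb{G}(\R)$. Since $\mathbb{G}(\R)$ is a real algebraic group it has only finitely many connected components in the Hausdorff topology, with $\mathbf{G}$ being the identity component. In particular $\mathbf{G}$ is open in $\mathbb{G}(\R)$, so there exists a constant $\rho_0>0$ depending only on $\mathbf{G}$ such that
\[
\bigl\{y\in\mathbb{G}(\R):\norm{y-e}<\rho_0\bigr\}\subset\mathbf{G}.
\]

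Next, because $\mathbf{G}$ is normal in $\mathbb{G}(\R)$, whether $x\in\mathbf{G}$ is equivalent to whether $xz^{-1}\in\mathbf{G}$, so the problem reduces to showing $\norm{xz^{-1}-e}<\rho_0$. I would use the sub-multiplicativity of the Hilbert--Schmidt norm to write
\[
\norm{xz^{-1}-e}=\norm{(x-z)z^{-1}}\leq\norm{x-z}\cdot\norm{z^{-1}},
\]
and bound $\norm{z^{-1}}$ via the Cayley--Hamilton theorem applied to $z\in\SL_N(\R)$: since $z^{-1}$ is then a polynomial of degree at most $N-1$ in the entries of $z$, there exists $\kappa>0$ depending only on $N$ with $\norm{z^{-1}}\leq\kappa\max(1,\norm{z})^{N-1}\leq\kappa\epsilon^{-(N-1)}$ under the standing assumption $\norm{z}\leq\epsilon^{-1}$.

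Finally, I would choose $\deold{0262deMovingNHD}$ strictly smaller than $1/(N-1)$ and set $\Cold{0262CMovingNHD}=2\rho_0/\kappa$. Then for any $\delta\in(0,\deold{0262deMovingNHD}]$ and $\epsilon\in(0,10^{-3})$, the hypothesis $\norm{z-x}\leq\Cold{0262CMovingNHD}\epsilon^{1/\delta}/2$ yields
\[
\norm{xz^{-1}-e}\leq(\rho_0/\kappa)\epsilon^{1/\delta}\cdot\kappa\epsilon^{-(N-1)}=\rho_0\,\epsilon^{1/\delta-(N-1)}<\rho_0,
\]
since $\epsilon<1$ and $1/\delta>N-1$, so $xz^{-1}\in\mathbf{G}$ by the first step, and hence $x\in\mathbf{G}$. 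There is essentially no serious obstacle here: the only geometric input is the openness of $\mathbf{G}$ in $\mathbb{G}(\R)$, and the polynomial growth of $\norm{z^{-1}}$ in $\norm{z}$ is precisely what forces the bound to take the $\epsilon^{1/\delta}$ form, with the freedom to shrink $\delta$ absorbing the polynomial degree. (Alternatively, one can get the same conclusion by combining Lemma~\ref{cor:metricRel} with a uniform positive lower bound on the $d_{\mathbf{G}}$-distance between $\mathbf{G}$ and any other component of $\mathbb{G}(\R)$.)
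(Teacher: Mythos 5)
Your argument is correct, and it follows the same overall strategy as the paper --- translate by $z^{-1}$, note that $xz^{-1}\in\mathbb{G}(\R)$ lies in a fixed neighborhood of the identity and hence in the open identity component $\mathbf{G}$, then multiply back by $z\in\mathbf{G}$ --- but the quantitative step is handled differently. The paper routes through Lemma~\ref{cor:metricRel}, comparing the right-invariant metric on $\SL_N(\R)$ with the Hilbert--Schmidt norm, and measures the smallness of $xz^{-1}$ via $d_{\SL_N(\R)}(e,xz^{-1})=d_{\SL_N(\R)}(z,x)$; you stay entirely with the norm, writing $\norm{xz^{-1}-e}\le\norm{x-z}\,\norm{z^{-1}}$ and bounding $\norm{z^{-1}}$ polynomially in $\norm{z}$ by Cayley--Hamilton. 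Since Lemma~\ref{cor:metricRel} is itself proved from exactly these two ingredients, the underlying inputs coincide; your version is a bit more self-contained and even yields an explicit admissible threshold $\deold{0262deMovingNHD}<1/(N-1)$, while the paper's detour buys consistency with the other places where the invariant metric is used --- your parenthetical alternative is essentially the paper's own proof. One small correction: normality of $\mathbf{G}$ in $\mathbb{G}(\R)$ is neither needed nor the actual reason for the reduction; the equivalence $x\in\mathbf{G}\iff xz^{-1}\in\mathbf{G}$ holds simply because $z\in\mathbf{G}$ and $\mathbf{G}$ is a subgroup, which is also how the paper concludes.
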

\begin{proof}
    Since $\mathbf{G}$ is the identity component of $\mathbb{G}(\R)$, there exists $\kappa_1\in(0,1)$ such that if $x\in\mathbb{G}(\R)$ and $d_{\operatorname{SL}_N(\R)}(x,e)<\kappa_1$, then $x\in \mathbf{G}$. Let $\Cold{0261Cl1}$ and $\deold{0261dl1}$ be defined in Lemma~\ref{cor:metricRel} for $\operatorname{SL}_N(\R)$, we pick
\begin{equation*}
\Cold{0262CMovingNHD}=\kappa_1/(2\Cold{0261Cl1}),\qquad \delta\in(0,\deold{0261dl1}).
\end{equation*}

Suppose that $z\in \mathbf{G}$ satisfying $\norm{z}\leq\epsilon^{-1}$ and $x\in\mathbb{G}(\R)$ satisfying $\norm{z-x}\leq \frac{\Cold{0262CMovingNHD}\epsilon^{1/\delta}}{2}$. By Lemma~\ref{cor:metricRel}, 
\begin{equation*}
\begin{aligned}
d_{\SL_N(\R)}(e,xz^{-1})&=d_{\SL_N(\R)}(z,x)\\
&\leq\Cold{0261Cl1}\max(1,\norm{z})^{1/\deold{0261dl1}}\frac{\Cold{0262CMovingNHD}\epsilon^{1/\delta}}{2}\leq\frac{\kappa_1}{4}\epsilon^{1/\delta-1/\deold{0261dl1}}\leq\frac{\kappa_1}{4}.
\end{aligned}
\end{equation*}
As a result, we obtain that $xz^{-1}\in \mathbf{G}$, this together with $z\in \mathbf{G}$ gives that $x\in \mathbf{G}$. We finish the proof by picking $\deold{0262deMovingNHD}=\deold{0261dl1}$.
\end{proof}

Notice that by passing to the identity component in Zariski topology, we can always assume that $\mathbb{G}(\R)$ is Zariski irreducible.
\begin{Lemma}\label{lem:discreteBase}
Let $\mathbf{G}<\operatorname{SL}_N(\R)$ be the identity component of  a Zariski irreducible real semisimple algebraic group $\mathbb{G}(\R)$ and $\mathbb{L}(\R)$ a real algebraic subgroup of $\mathbb{G}(\R)$. Then for every $\delta>0$, there exist $x_1,\ldots,x_n\in B_{\delta}^\mathbf{G}$ such that 
\[
\textstyle{\bigcap_{i=1}^nx_i^{-1}\mathbb{L}(\R)x_i=\bigcap_{x\in \mathbb G(\R)}x^{-1}\mathbb{L}(\R)x.}
\]
\end{Lemma}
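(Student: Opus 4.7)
The plan is to combine Noetherianity of the Zariski topology on $\mathbb{G}(\C)$ with the fact that any Hausdorff-open subset of $\mathbf{G}$ is Zariski dense in $\mathbb{G}(\C)$. First I would pass to the complex algebraic group and set
\[
M_\delta := \bigcap_{x \in B_{\delta}^{\mathbf{G}}} x^{-1}\mathbb{L}(\C) x,
\]
a Zariski closed subset of $\mathbb{G}(\C)$. To extract a finite subfamily realizing this intersection, I would observe that the family of Zariski closed subsets of $\mathbb{G}(\C)$ of the form $\bigcap_{i=1}^k x_i^{-1}\mathbb{L}(\C)x_i$ with $k\in\N$ and each $x_i \in B_\delta^{\mathbf{G}}$ admits a minimal element $M'$ by the descending chain condition. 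Minimality forces $M' \subset x^{-1}\mathbb{L}(\C) x$ for every $x \in B_\delta^{\mathbf{G}}$, for otherwise $M' \cap x^{-1}\mathbb{L}(\C) x$ would be a strictly smaller member of the family. Hence $M' = M_\delta$, and there exist $x_1,\ldots,x_n \in B_\delta^{\mathbf{G}}$ with $M_\delta = \bigcap_{i=1}^n x_i^{-1}\mathbb{L}(\C) x_i$.

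Next I would upgrade this to the full intersection over all of $\mathbb{G}(\R)$ via Zariski density. Consider
\[
S := \bigcap_{m \in M_\delta}\{g \in \mathbb{G}(\C) : g m g^{-1} \in \mathbb{L}(\C)\},
\]
which is Zariski closed (being an intersection of preimages of the Zariski closed set $\mathbb{L}(\C)$ under the regular maps $g \mapsto gmg^{-1}$) and which contains $B_\delta^{\mathbf{G}}$ by the very definition of $M_\delta$. This would give $S = \mathbb{G}(\C)$ provided $B_\delta^{\mathbf{G}}$ is Zariski dense in $\mathbb{G}(\C)$, which is the main technical point of the argument: using Zariski irreducibility of $\mathbb{G}$, any proper Zariski closed subvariety $V \subsetneq \mathbb{G}(\C)$ has complex dimension at most $\dim_\C \mathbb{G} - 1$, hence its real locus $V \cap \R^{N^2}$ has real dimension strictly less than $\dim_{\R} \mathbf{G} = \dim_\C \mathbb{G}$, and thus cannot contain the full-dimensional open set $B_\delta^{\mathbf{G}} \subset \mathbf{G}$.

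Once $S = \mathbb{G}(\C)$ is established, the lemma follows by restricting to real points. For every $g \in \mathbb{G}(\R)$ and every $m \in M_\delta \cap \SL_N(\R)$, one has $gmg^{-1} \in \mathbb{L}(\C) \cap \SL_N(\R) = \mathbb{L}(\R)$, so $M_\delta \cap \SL_N(\R) \subset g^{-1}\mathbb{L}(\R) g$ for every $g\in \mathbb{G}(\R)$. Since the $x_i$ lie in $\mathbb{G}(\R)$, one also has $M_\delta \cap \SL_N(\R) = \bigcap_{i=1}^n x_i^{-1}\mathbb{L}(\R) x_i$, which combined with the obvious reverse inclusion yields the claimed equality. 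The principal obstacle is the Zariski density of $B_\delta^{\mathbf{G}}$; the remaining steps are formal consequences of Noetherianity and regular-map arguments.
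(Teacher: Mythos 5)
Your proof is correct and follows essentially the same route as the paper: Zariski density of the small ball $B_\delta^{\mathbf{G}}$ in $\mathbb{G}$ combined with the descending chain condition to extract finitely many conjugates, then passing from the ball to all of $\mathbb{G}(\R)$ via the closedness of $\{g : gmg^{-1}\in \mathbb{L}\}$. The only differences are cosmetic: you carry out the argument over $\C$ and restrict to real points at the end, and you justify the Zariski density by a dimension count on real loci, whereas the paper simply asserts it from connectedness of $\mathbb{G}$.
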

\begin{proof}[Proof of Lemma~\ref{lem:discreteBase}]
Since $\mathbb G$ is connected, for every $\eta>0$ we have that $B_{\eta}^\mathbf{G}$ is Zariski dense in $\mathbb G$.
It follows that $\bigcap_{x\in B_{\eta}^\mathbf{G}}x^{-1}\mathbb{L}x$ is a normal subgroup of $\mathbb G$,
hence
\[
\textstyle{\bigcap_{x\in B_{\eta}^\mathbf{G}}x^{-1}\mathbb{L}(\R)x=\bigcap_{x\in \mathbb G(\R)}x^{-1}\mathbb{L}(\R)x.}
\]
By the descending chain condition, there is a finite set of elements $x_1,\dots,x_n$ so that 
\[
\textstyle{\bigcap_{i=1}^nx_i^{-1}\mathbb{L}(\R)x_i=\bigcap_{x\in \mathbb G(\R)}x^{-1}\mathbb{L}(\R)x.}
\]
\end{proof}

The following lemma is essential for Lemma~\ref{lem:longHammingSep} (which in turn is the main ingredient in the proof of Lemma~\ref{lem:longHamming} mentioned earlier in this section). The key ingredients of the proof are Brudnyi inequality, algebraic representation theory and properties of semi-algebraic sets.

\begin{Lemma}\label{lem:algebraicRepEst}
Let $\mathbf{G}<\operatorname{SL}_N(\R)$ be the identity component of a real semisimple algebraic group $\mathbb{G}(\R)$, and let $\tilde{m}$ be the Haar measure on $\mathbf{G}$.
Let $\rho:\mathbb{G}(\R)\to\SL(V)$ be a real algebraic representation of $\mathbb{G}(\R)$ on a real vector space $V$, equipped with a vector $v_{\mathbf{L}}\in V$, and define 
\[
\textstyle{\mathbb{L}(\R)=\{g\in \mathbb{G}(\R):\rho(g)v_{\mathbf{L}}=v_{\mathbf{L}}\}.}
\] 
Let $\mathbf{N}$ be the normal core of $\mathbf{L}=\mathbb{L}(\R)\cap \mathbf{G}$, i.e. 
\[
\textstyle{\mathbf{N}=\bigcap_{g \in \mathbf{G}}\, g \mathbf{L} g^{-1}.}
\]

For any $\delta>0$, there exist $\denew\label{027dcoeSmallnew},\denew\label{027dexpSmallnew}\in(0,1)$\index{$\deold{027dcoeSmallnew}$, Lemma~\ref{lem:algebraicRepEst}}\index{$\deold{027dexpSmallnew}$, Lemma~\ref{lem:algebraicRepEst}} and  $\Cnew\label{027CcoeSmall},\Cnew\label{027ClocalCoe}>0$\index{$\Cold{027CcoeSmall}$, Lemma~\ref{lem:algebraicRepEst}}\index{$\Cold{027ClocalCoe}$, Lemma~\ref{lem:algebraicRepEst}} such that for every $\epsilon\in(0,10^{-3})$, if~$z,\gamma \in \mathbf{G}$ satisfy
\[
\norm{\gamma},\norm{z} \leq \epsilon^{-1} \qquad\text{and}\qquad\min_{x\in \mathbf{N}}\norm{x-\gamma}\geq \Cold{027CcoeSmall}\epsilon^{1/\delta},
 \]
then
\begin{multline}
\tilde{m}\left(\{g\in B_{\deold{027dcoeSmallnew}}^G(z):\norm{\rho(\gamma g^{-1}).v_{\mathbf{L}}-\rho(g^{-1}).v_{\mathbf{L}}}_V<\epsilon^{1/\deold{027dexpSmallnew}}\}\right)  \\ \leq
 \Cold{027ClocalCoe}\epsilon^{\deold{027dexpSmallnew}}\tilde{m}(B_{\deold{027dcoeSmallnew}}^G(z)).
\end{multline}
\end{Lemma}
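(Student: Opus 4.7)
My plan is to encode the closeness condition as the smallness of a single polynomial $F(g)$, use the Brudnyi-type estimate of Lemma~\ref{lem:modBru} to propagate smallness from the putative bad set $W$ to the entire ball $B=B_{\deold{027dcoeSmallnew}}^\mathbf{G}(z)$, evaluate at a finite family of test points produced by Lemma~\ref{lem:discreteBase}, and finally read off an algebraic constraint forcing $\gamma$ close to $\mathbf{N}$. Throughout, $\kappa_1,\kappa_2,\ldots$ denote positive constants depending only on $\mathbf{G}$ and $\rho$.

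First, consider
\[
F(g):=\norm{\rho(\gamma g^{-1})v_\mathbf{L}-\rho(g^{-1})v_\mathbf{L}}_V^2.
\]
Since $g\in\mathbf{G}\subset\SL_N(\R)$, Cramer's rule expresses the entries of $g^{-1}$ as polynomials in those of $g$, and $\rho$ is algebraic, so $F$ extends to an element of $\mathcal{P}_{k,N^2}(\R)$ with $k$ depending only on $N$ and $\rho$. Set $W:=\{g\in B:F(g)<\epsilon^{2/\deold{027dexpSmallnew}}\}$ and argue by contradiction: assume $\tilde m(W)>\Cold{027ClocalCoe}\,\epsilon^{\deold{027dexpSmallnew}}\tilde m(B)$. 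Taking $\deold{027dcoeSmallnew}<\eold{024demB1}$ and applying Lemma~\ref{lem:modBru} gives
\[
\sup_B F\leq \Cold{024CmB2}\Bigl(\tfrac{\tilde m(B)}{\tilde m(W)}\Bigr)^{\deold{024CmB4}}\sup_W F\leq \kappa_1\,\epsilon^{\,2/\deold{027dexpSmallnew}-\deold{024CmB4}\deold{027dexpSmallnew}},
\]
so $\sup_B F\leq\epsilon^{\kappa_2/\deold{027dexpSmallnew}}$ with $\kappa_2>1$ once $\deold{027dexpSmallnew}$ is small enough.

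Now feed this into well-chosen test points: using Lemma~\ref{lem:discreteBase}, pick $x_1,\ldots,x_n\in B_{\deold{027dcoeSmallnew}}^\mathbf{G}$ with $\bigcap_{i=1}^n x_i^{-1}\mathbb{L}(\R)x_i=\bigcap_{x\in\mathbb{G}(\R)}x^{-1}\mathbb{L}(\R)x=:\mathbf{N}_\mathbb{G}$. Right-invariance of $d_\mathbf{G}$ places $g_i:=x_iz$ inside $B$, and expanding $F(g_i)$ via $(x_iz)^{-1}=z^{-1}x_i^{-1}$ and applying $\rho(x_iz)$ to the relevant difference gives, with $\gamma_z:=z\gamma z^{-1}$,
\[
\norm{\rho(x_i\gamma_zx_i^{-1})v_\mathbf{L}-v_\mathbf{L}}_V\leq \norm{\rho(x_iz)}_{\mathrm{op}}\sqrt{F(g_i)}\leq \epsilon^{\kappa_3/\deold{027dexpSmallnew}}\qquad(i=1,\ldots,n),
\]
using $\norm{\rho(x_iz)}_{\mathrm{op}}\leq\kappa_4\epsilon^{-\kappa_5}$ from $\norm{z}\leq\epsilon^{-1}$ and absorbing the polynomial loss into the exponent by shrinking $\deold{027dexpSmallnew}$ once more.

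The main obstacle is the quantitative conversion of this approximate joint fixing into a bound on $\mathrm{dist}(\gamma_z,\mathbf{N}_\mathbb{G})$: I would realise $\mathbf{N}_\mathbb{G}$ via Chevalley's theorem (Theorem~\ref{thm:Chevalley}) as the stabiliser of a single line in a suitable real algebraic representation of $\mathbb{G}$, or alternatively work directly with the polynomial map $h\mapsto(\rho(x_ihx_i^{-1})v_\mathbf{L}-v_\mathbf{L})_i$ on $\mathbb{G}(\R)$, whose zero set is exactly $\mathbf{N}_\mathbb{G}$. A global semi-algebraic Łojasiewicz inequality then furnishes $\eta\in\mathbf{N}_\mathbb{G}$ with $\norm{\gamma_z-\eta}\leq\kappa_6(1+\norm{\gamma_z})^{\kappa_7}\epsilon^{\kappa_8/\deold{027dexpSmallnew}}$; since $\norm{\gamma_z}\leq\epsilon^{-\kappa_9}$, conjugating back by $z^{-1}$ (using that $\mathbf{N}_\mathbb{G}$ is normal in $\mathbb{G}(\R)$) yields $\eta'\in\mathbf{N}_\mathbb{G}$ with $\norm{\gamma-\eta'}\leq\epsilon^{\kappa_{10}/\deold{027dexpSmallnew}}$. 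Since $\norm{\gamma}\leq\epsilon^{-1}$, Corollary~\ref{cor:slgxbelonging} now forces $\eta'\in\mathbf{G}$, hence $\eta'\in\mathbf{N}_\mathbb{G}\cap\mathbf{G}=\mathbf{N}$, where the final equality follows from $\mathbf{G}\triangleleft\mathbb{G}(\R)$ (identity component is normal) together with the Zariski density of $\mathbf{G}$ in $\mathbb{G}$. Choosing $\deold{027dexpSmallnew}$ so that $\kappa_{10}/\deold{027dexpSmallnew}>1/\delta$ and $\Cold{027CcoeSmall}$ large enough to dominate implicit constants contradicts the hypothesis $\min_{x\in\mathbf{N}}\norm{x-\gamma}\geq\Cold{027CcoeSmall}\epsilon^{1/\delta}$. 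The truly delicate step is the Łojasiewicz estimate with the correct polynomial dependence on $\norm{\gamma_z}$; everything else is bookkeeping of exponents.
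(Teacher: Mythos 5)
Your proposal is correct and is essentially the paper's own argument: the same polynomial $f_{\gamma}(g)=\norm{\rho(\gamma g^{-1})v_{\mathbf{L}}-\rho(g^{-1})v_{\mathbf{L}}}_V^2$, the same Remez-type propagation via Lemma~\ref{lem:modBru}, the same finite family of conjugating elements from Lemma~\ref{lem:discreteBase}, and Corollary~\ref{cor:slgxbelonging} to force the nearby zero-set point into $\mathbf{G}$ (hence into $\mathbf{N}$), with only immaterial differences — you run the argument contrapositively on the measure, and you conjugate by $z$ to reduce to a one-variable polynomial map, whereas the paper keeps the pair $(z,\gamma)$ and a single joint polynomial $q(z,\gamma)$. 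The ``truly delicate'' global {\L}ojasiewicz estimate with polynomial dependence on the norm that you leave as a black box is precisely what the paper supplies via H\"ormander's division lemma (\cite{Hor58Division}*{Lemma~2}), applied after adjoining the defining equations of $\mathbb{G}(\R)$ to the polynomial system so that the relevant zero set is cut out in affine space rather than only on the variety.
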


\begin{proof}[Proof of Lemma~\ref{lem:algebraicRepEst}]
Without loss of generality, we can assume $\delta$ is as small as we wish as the statement of Lemma~\ref{lem:algebraicRepEst} is less restrictive the smaller $\delta$ is. Apply Lemma~\ref{lem:modBru} to the polynomial map $f_{\gamma}:\mathbf{G}\mapsto\R$ defined by: 
\[
f_{\gamma}(g)=\norm{\rho(\gamma g^{-1}).v_{\mathbf{L}}-\rho(g^{-1}).v_{\mathbf{L}}}_V^2.
\]
Then for every $z\in \mathbf{G}$, every $\delta\in(0,\eold{024demB1}]$ and every measurable subset $\mathcal{W}\subset B_{\delta}^\mathbf{G}(z)$
\begin{equation}\label{eq:rhoCornew}
    \sup_{B_{\delta}^\mathbf{G}(z)}\absolute{f_{\gamma}}\leq \Cold{024CmB2}\left(\frac{\tilde{m}(B_{\delta}^\mathbf{G}(z))}{\tilde{m}(\mathcal{W})}\right)^{\deold{024CmB4}}\sup_{\mathcal{W}}\absolute{f_{\gamma}},
\end{equation}
where $\eold{024demB1},\deold{024CmB4},\Cold{024CmB2}>0$ are constants that depend only on $\mathbf{G},\rho,\mathbf{L}$. Let $\Cold{0262CMovingNHD}$ and $\deold{0262deMovingNHD}$ be defined as in Corollary~\ref{cor:slgxbelonging}. Then we define $\Cold{027CcoeSmall}$, $\deold{027dcoeSmallnew}$ and $\delta$ as
\[
\Cold{027CcoeSmall}=\Cold{0262CMovingNHD},\qquad 
\deold{027dcoeSmallnew}=\min(\eold{024demB1},\deold{0262deMovingNHD},1),\qquad \delta\in(0,\deold{027dcoeSmallnew}).
\]

\medskip

By Lemma~\ref{lem:discreteBase}, there exist $x_1,\ldots,x_n\in B_{\deold{027dcoeSmallnew}}^\mathbf{G}$ such that 
\begin{equation}\label{eq:NtwoForms}
\textstyle{\mathbb{N}(\R)=\bigcap_{i=1}^n x_i^{-1}\mathbb{L}(\R)x_i=\bigcap_{x\in \mathbb{G}(\R)}x^{-1}\mathbb{L}(\R)x.}
\end{equation}
Moreover, $\mathbb{N}(\R)$ is a normal subgroup of $\mathbb{G}(\R)$.

Denote $W=\bigoplus_{i=1}^nV$ and $p(z)=\oplus_{i=1}^n\rho(z^{-1}x_i^{-1}).v_{\mathbf{L}}$ for every $z\in\mathbb{G}(\R)$; let $\rho_W$ be the action of $\mathbb{G}(\R)$ on $W$. We claim that the kernel of map $$g\mapsto\rho_W(g).p(z)-p(z)$$ from $\mathbb{G}(\R)$ to $W$ equals to $\mathbb{N}(\R)$. Indeed:
\begin{enumerate}[label=\textup{(\alph*)}]
\item On the one hand, $\rho_W(g).p(z)=p(z)$ implies for $i=1,\ldots,n$, 
\[
\rho(x_izgz^{-1}x_i^{-1}). v_{\mathbf{L}}= v_{\mathbf{L}}
\]
 which gives that $g\in \mathbb{N}(\R)$. 
\item On the other hand, if $g\in \mathbb{N}(\R)$, then
\[
\rho(g) \rho(z^{-1}x_i^{-1}).v_{\mathbf{L}} \in \rho(z^{-1}x_i^{-1}\mathbb{L}(\R)).v_{\mathbf{L}}=\rho(z^{-1}x_i^{-1}).v_{\mathbf{L}}
\]
hence $\rho_W(g).p(z)=p(z)$.
\end{enumerate}

As $x_i\in \mathbf{G}$, we have from \eqref{eq:NtwoForms}
\begin{equation}\label{eq:NanotherDefnew}
\textstyle{\mathbf{G}\cap\mathbb{N}(\R)=\bigcap_{i=1}^nx_i^{-1}(\mathbb{L}(\R)\cap \mathbf{G})x_i=\mathbf{N}.}
\end{equation}

\medskip

Since $\mathbb{G}(\R)$ is a real semisimple algebraic group, there exists a real polynomial system $\phi_i:\R^{N^2}\to\R$, $i=1,\ldots,k$ such that 
\[
\mathbb{G}(\R)=\{g\in\R^{N^2}:\phi_i(g)=0,\ \ i=1,\ldots,k\}.
\]
Then for $z,\gamma\in\R^{N^2}$, define
\[
q(z,\gamma)=\norm{\rho(\gamma).p(z)-p(z)}_W^2+\sum_{i=1}^k\left(\phi_i^2(\gamma)+\phi_i^2(z)\right).
\]
If $q(z,\gamma)=0$, we have $z,\gamma\in\mathbb{G}(\R)$ and $\rho_W(\gamma).p(z)=p(z)$, which in particular implies that that the set of the real zeros of $q(z,\gamma)$ is
\[
Z(q)=\left\{(a,b)\in\mathbb{G}(\R)\times\mathbb{G}(\R):b\in \mathbb{N}(\R)\right\}.
\]

\medskip

By \cite{Hor58Division}*{Lemma~2}, there exist some constants $\kappa_2,\kappa_3>0$ and $\kappa_4\in\R$ depend only on $\mathbf{G},\rho,\mathbf{L}$ such that
\begin{multline}\label{eq:HorIneqtwo}
\norm{\rho_W(\gamma).p(z)-p(z)}_W^2+\sum_{i=1}^k\left(\phi_i^2(\gamma)+\phi_i^2(z)\right)\\
\geq \kappa_2(1+\norm{\gamma}^2+\norm{z}^2)^{-\kappa_4}s(z,\gamma),
\end{multline}
where 
\[
s(z,\gamma)=\min_{(a,b)\in Z(q)}\left(\norm{a-z}^2+\norm{b-\gamma}^2\right)^{\kappa_3}.
\]

Assume from now on that $z,\gamma\in \mathbf{G}$ and satisfy
\begin{equation*}
    \begin{aligned}
\norm{z},\norm{\gamma}\leq\epsilon^{-1},\qquad\min_{x\in \mathbf{N}}\norm{x-\gamma}\geq \Cold{027CcoeSmall}\epsilon^{1/\delta}.
\end{aligned}
\end{equation*}
Then we claim that 
\begin{equation}\label{eq:AlgRepEstTarger}
\norm{\rho_W(\gamma).p(z)-p(z)}_W^2\geq\eta_1\epsilon^{\kappa_5},
\end{equation}
where $\eta_1=\kappa_2(\Cold{027CcoeSmall}/2)^{2\kappa_3}$ and 
$
\kappa_5=\left\{
  \begin{array}{ll}
    2\kappa_3/\delta,& \hbox{if $\kappa_4\leq0$;} \\
    2(\kappa_3/\delta)+2\kappa_4, & \hbox{other.}
  \end{array}
\right.$

If \eqref{eq:AlgRepEstTarger} is not holding, then the assumptions and \eqref{eq:HorIneqtwo} imply that
\begin{equation}\label{eq:sLowerBound}
s(z,\gamma)\leq\left(\frac{\Cold{027CcoeSmall}\epsilon^{1/\delta}}{2}\right)^{2\kappa_3}.
\end{equation}
This together Corollary~\ref{cor:slgxbelonging} implies that $a,b\in G$. However, if $a,b\in \mathbf{G}$, equations~\eqref{eq:NanotherDefnew} and \eqref{eq:sLowerBound} imply that
\[
\min_{x\in \mathbf{N}}\norm{x-\gamma}\leq \frac{\Cold{027CcoeSmall}\epsilon^{1/\delta}}{2},
\]
which contradicts to our assumption. This proves \eqref{eq:AlgRepEstTarger}.

\medskip

By combining \eqref{eq:AlgRepEstTarger} with the definition of $p(z)$, there exists $i\in\{1,\ldots,n\}$ such that 
\[
\norm{\rho(\gamma z^{-1}x_i^{-1}).v_{\mathbf{L}}-\rho(z^{-1}x_i^{-1}).v_{\mathbf{L}}}^2_V>\frac{1}{n}\eta_1\epsilon^{\kappa_{5}}, 
\]
and thus 
\begin{equation}\label{eq:ballMaxnew}
    \sup_{g\in B_{\deold{027dcoeSmallnew}}^\mathbf{G}(z)}\absolute{f_{\gamma}(g)}>\frac{1}{n}\eta_1\epsilon^{\kappa_{5}}.
\end{equation}

Define 
\[
\mathcal{W}=\{g\in B_{\deold{027dcoeSmallnew}}^\mathbf{G}(z):f_{\gamma}(g)<\epsilon^{2\kappa_{5}}\},\]
then \eqref{eq:rhoCornew}, \eqref{eq:ballMaxnew}, $f_{\gamma}\geq0$ and $\norm{z}\leq\epsilon^{-1}$ imply that for every $\epsilon\in(0,10^{-4})$
\begin{equation*}
\tilde{m}(\{g\in B_{\deold{027dcoeSmallnew}}^{\mathbf{G}}(z):f_{\gamma}(g)<\epsilon^{2\kappa_{5}}\})\leq \kappa_6\epsilon^{\kappa_7}\tilde{m}(B_{\deold{027dcoeSmallnew}}^{\mathbf{G}}(z)),
\end{equation*}
where $\kappa_{6},\kappa_7>0$ depend only on $\mathbf{G},\rho,\mathbf{L}$. We finish the proof by defining $\deold{027dexpSmallnew}=\min(1/\kappa_5,\kappa_7,1)$ and $\Cold{027ClocalCoe}=\kappa_6$. 
\end{proof}

Recall by \cite{GorodnikNevoErgodic2010}*{Theorem~1.5}, if $\Gamma<\mathbf{G}$ is a lattice, then there exist constants $\denew\label{0301Clattice}\in(0,\deold{0261dl1})$\index{$\deold{0301Clattice}$, Equation~\eqref{eq:latticeCount}} and $\Rnew\label{030Rlattice}>1$\index{$\Rold{030Rlattice}$, Equation~\eqref{eq:latticeCount}} such that for $T\geq\Rold{030Rlattice}$
\begin{equation}\label{eq:latticeCount}
\absolute{\gamma\in\Gamma:\norm{\gamma}\leq T}\leq T^{1/\deold{0301Clattice}}.
\end{equation}
The following corollary is a combination of Lemma~\ref{lem:algebraicRepEst} and \eqref{eq:latticeCount}:

\begin{Corollary}\label{cor:globalRepEst}
Let $\delta>0$, $\mathbf{G}$, $\rho$, $\mathbf{L}$, $v_{\mathbf{L}}$ be as in Lemma~\ref{lem:algebraicRepEst}, with corresponding constants $\deold{027dexpSmallnew}$ and $\Cold{027CcoeSmall}$. Let $K\subset \mathbf{G}$ be a compact subset. Let $V_\epsilon$ be the set of $g \in K$ such that
\[
\norm{\rho(\gamma g^{-1}).v_{\mathbf{L}}-\rho(g^{-1}).v_{\mathbf{L}}}_V<\epsilon^{1/\deold{027dexpSmallnew}}
\]
for some $\gamma\in\Gamma$ satisfying $\min_{x\in \mathbf{N}}\norm{x-\gamma}\geq \Cold{027CcoeSmall}\epsilon^{1/\delta}$ and $\norm{\gamma}\leq\epsilon^{-\deold{027dexpSmallnew}\deold{0301Clattice}/2}$.
Then there exists $\denew\label{0302CglobalCom1}>0$\index{$\deold{0302CglobalCom1}$, Corollary~\ref{cor:globalRepEst}} such that if $\epsilon>0$ is small enough
\[
\tilde{m}(V_{\epsilon})\leq   \epsilon^{\deold{0302CglobalCom1}}.
\]
\end{Corollary}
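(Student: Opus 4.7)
The plan is to combine the local measure bound from Lemma~\ref{lem:algebraicRepEst} with the lattice point count \eqref{eq:latticeCount} through a union bound over admissible $\gamma\in\Gamma$, with the norm cutoff $\|\gamma\|\leq\epsilon^{-\deold{027dexpSmallnew}\deold{0301Clattice}/2}$ tuned precisely so that the count of lattice points is cancelled by the local savings, leaving a genuine power of $\epsilon$.

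First, I would cover $K$ by finitely many $d_{\mathbf{G}}$-balls $B^{\mathbf{G}}_{\deold{027dcoeSmallnew}}(z_1),\ldots,B^{\mathbf{G}}_{\deold{027dcoeSmallnew}}(z_M)$, with $M$ depending only on $K$; since $K$ is compact we may take the centers $z_i$ with $\|z_i\|\leq C_K$ for some $C_K=C_K(K)$. For all sufficiently small $\epsilon$ we therefore have $\|z_i\|\leq\epsilon^{-1}$, so the hypothesis of Lemma~\ref{lem:algebraicRepEst} on $\|z\|$ is met uniformly. Likewise, since $\deold{027dexpSmallnew},\deold{0301Clattice}\in(0,1)$, any $\gamma\in\Gamma$ with $\|\gamma\|\leq\epsilon^{-\deold{027dexpSmallnew}\deold{0301Clattice}/2}$ satisfies $\|\gamma\|\leq\epsilon^{-1}$ for $\epsilon$ small.

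Next, for each such $\gamma$ and each $z_i$, the defining condition on $V_{\epsilon}$ supplies the separation $\min_{x\in\mathbf{N}}\|x-\gamma\|\geq\Cold{027CcoeSmall}\epsilon^{1/\delta}$, so Lemma~\ref{lem:algebraicRepEst} yields
\[
\tilde{m}\!\left(\{g\in B^{\mathbf{G}}_{\deold{027dcoeSmallnew}}(z_i):\|\rho(\gamma g^{-1}).v_{\mathbf{L}}-\rho(g^{-1}).v_{\mathbf{L}}\|_V<\epsilon^{1/\deold{027dexpSmallnew}}\}\right)\leq\Cold{027ClocalCoe}\epsilon^{\deold{027dexpSmallnew}}\tilde{m}(B^{\mathbf{G}}_{\deold{027dcoeSmallnew}}(z_i)).
\]
By \eqref{eq:latticeCount}, the number of $\gamma\in\Gamma$ with $\|\gamma\|\leq\epsilon^{-\deold{027dexpSmallnew}\deold{0301Clattice}/2}$ is at most $\bigl(\epsilon^{-\deold{027dexpSmallnew}\deold{0301Clattice}/2}\bigr)^{1/\deold{0301Clattice}}=\epsilon^{-\deold{027dexpSmallnew}/2}$, once $\epsilon$ is small enough that this threshold exceeds $\Rold{030Rlattice}$.

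Finally, a union bound over the at most $\epsilon^{-\deold{027dexpSmallnew}/2}$ admissible lattice elements and the $M$ balls in the cover gives
\[
\tilde{m}(V_{\epsilon})\leq M\cdot\epsilon^{-\deold{027dexpSmallnew}/2}\cdot\Cold{027ClocalCoe}\epsilon^{\deold{027dexpSmallnew}}\cdot\max_{i}\tilde{m}(B^{\mathbf{G}}_{\deold{027dcoeSmallnew}}(z_i))\leq C'\epsilon^{\deold{027dexpSmallnew}/2},
\]
with $C'$ depending only on $K,\mathbf{G},\rho,\mathbf{L}$. Choosing $\deold{0302CglobalCom1}$ strictly smaller than $\deold{027dexpSmallnew}/2$ absorbs the constant $C'$ for all sufficiently small $\epsilon$, yielding the desired bound $\tilde{m}(V_{\epsilon})\leq\epsilon^{\deold{0302CglobalCom1}}$. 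There is no real obstacle here beyond bookkeeping; the only delicate point is the tuning of the norm cutoff $\epsilon^{-\deold{027dexpSmallnew}\deold{0301Clattice}/2}$, chosen exactly so that the Remez-type saving $\epsilon^{\deold{027dexpSmallnew}}$ beats the lattice count $\epsilon^{-\deold{027dexpSmallnew}/2}$ by a full power $\epsilon^{\deold{027dexpSmallnew}/2}$.
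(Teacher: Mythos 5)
Your proof is correct and follows essentially the same route as the paper: a finite cover of $K$ by $\deold{027dcoeSmallnew}$-balls, the local estimate of Lemma~\ref{lem:algebraicRepEst} applied on each ball for each admissible $\gamma$, and a union bound where the lattice count $\epsilon^{-\deold{027dexpSmallnew}/2}$ from \eqref{eq:latticeCount} is beaten by the local saving $\epsilon^{\deold{027dexpSmallnew}}$. The only cosmetic difference is that the paper fixes $\deold{0302CglobalCom1}=\deold{027dexpSmallnew}/4$ and absorbs the constant by taking $\epsilon$ small, while you take any exponent strictly below $\deold{027dexpSmallnew}/2$; both are fine.
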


\begin{proof}[Proof of Corollary~\ref{cor:globalRepEst}]
For any $z\in K$, 
we have that $\{B_{\deold{027dcoeSmallnew}}^{\mathbf{G}}(z)\}_{z\in K}$ is an open cover of $K$. Since $K$ is compact,  there exists a subcover $\{B_{\deold{027dcoeSmallnew}}^{\mathbf{G}}(z_i)\}_{i=1}^p$ such that 
\begin{equation}\label{eq:openCover}
    \begin{aligned}
    \textstyle{K\subset\bigcup_{i=1}^pB_{\deold{027dcoeSmallnew}}^{\mathbf{G}}(z_i).}
    \end{aligned}
\end{equation} 
We define $\kappa_0=\sum_{i=1}^p\tilde{m}(B_{\deold{027dcoeSmallnew}}^{\mathbf{G}}(z_i))$.

For $i=1,\ldots,p$, let $\Delta(z_{i},\gamma)$ be the set of $g\in B_{\deold{027dcoeSmallnew}}^{\mathbf{G}}(z_{i})$ such that
\begin{equation*}
    \norm{\rho(\gamma g).v_{\mathbf{L}}-\rho(g).v_{\mathbf{L}}}_V<\epsilon^{1/\deold{027dexpSmallnew}} 
\end{equation*}
for some $\gamma\in\Gamma$ satisfying $\min_{x\in \mathbf{N}}\norm{x-\gamma}\geq\Cold{027CcoeSmall}\epsilon^{1/\delta}$ and $\norm{\gamma}\leq\epsilon^{-\deold{027dexpSmallnew}\deold{0301Clattice}/2}$.

Since $\deold{027dexpSmallnew}\deold{0301Clattice}\in(0,1)$,
\begin{equation}\label{eq:inverseEst}
\begin{aligned}
\tilde{m}(V_{\epsilon})\leq& \sum_{\gamma\in\Gamma,\norm{\gamma}\leq\epsilon^{-\deold{027dexpSmallnew}\deold{0301Clattice}/2}}\sum_{i=1}^p\tilde{m}_2(\Delta(z_{i},\gamma))\\
\leq& \sum_{\gamma\in\Gamma,\norm{\gamma}\leq\epsilon^{-\deold{027dexpSmallnew}\deold{0301Clattice}/2}}\left(\Cold{027ClocalCoe}\epsilon^{\deold{027dexpSmallnew}}\sum_{i=1}^p\tilde{m}_2\left(B_{\deold{027dcoeSmallnew}}^{\mathbf{G}}(z_i)\right)\right)\\
\leq&\epsilon^{-\deold{027dexpSmallnew}/2}\Cold{027ClocalCoe}\epsilon^{\deold{027dexpSmallnew}}\sum_{i=1}^p\tilde{m}_2\left(B_{\deold{027dcoeSmallnew}}^{\mathbf{G}}(z_i)\right)\\
    \leq&\kappa_0 \Cold{027ClocalCoe}\epsilon^{\deold{027dexpSmallnew}/2},
\end{aligned}
\end{equation}
where the first inequality is due to \eqref{eq:openCover}; the second inequality is due to Lemma~\ref{lem:algebraicRepEst}; the third inequality is due to \eqref{eq:latticeCount} and the last inequality is due to the definition of $\kappa_0$.

Let $\deold{0302CglobalCom1}=\deold{027dexpSmallnew}/4$ and $\epsilon$ so small such that $\epsilon^{-\deold{027dexpSmallnew}/4}>\kappa_0 \Cold{027ClocalCoe}$, we finish the proof.
\end{proof}

\section{Normal core and its neighborhood}\label{sec:normalcoreNBHD}
In this section, we investigate neighborhoods around the normal core of a real semisimple algebraic group. This will be used to guarantee that the assumption on $\min_{g\in \mathbf{N}}\norm{\gamma-g}$ in Lemma~\ref{lem:algebraicRepEst} is satisfied in Lemma~\ref{lem:longHammingMEst}

\medskip

Recall that $\mathbf{G}$ is the identity component of a real semisimple algebraic group with Lie algebra $\mathfrak{g}=\oplus_i^{\ell}\mathfrak{g}_i$, where each  $\mathfrak{g}_i$ is a simple Lie algebra. Let $\operatorname{p}_i:\mathfrak{g}\to\mathfrak{g}_i$ be the canonical projection. For every $i=1,\ldots,\ell$, there exists a normal closed connected simple Lie subgroup $\mathbf{G}_i\lhd \mathbf{G}$ with Lie algebra $\mathfrak{g}_i$ such that 
\begin{equation}\label{eq:almostDirect}
\mathbf{G}=\mathbf{G}_1\ldots \mathbf{G}_{\ell},\ \ \ \ \text{ $\mathbf{G}_i$ commutes with $\mathbf{G}_j$ for $i\neq j$.}
\end{equation}

Let $\Ad:\mathbf{G}\to\operatorname{GL}(\mathfrak{g})$ be the adjoint representation, by replacing $\mathbf{G}$ and $\mathbf{G}_i$ with $\Ad(\mathbf{G})$ and $\Ad(\mathbf{G}_i)$, we obtain the following isomorphism
\begin{gather}
    \Ad(\mathbf{G}_1)\times\ldots\times\Ad(\mathbf{G}_{\ell})\to \Ad(\mathbf{G}_1)\ldots\Ad(\mathbf{G}_{\ell})\nonumber\\
    (g_1,\ldots,g_{\ell})\mapsto g_1\ldots g_{\ell}\label{eq:isomAd}.
\end{gather}
This in particular gives that the canonical projection 
\[
\pi_i:\Ad(\mathbf{G})\to\Ad(\mathbf{G}_i)
\]
is well defined. For every $z\in \Ad(\mathbf{G})$ and $i=1,\ldots,\ell$, we define
\begin{equation*}
    z_i=\pi_i(z).
\end{equation*}

Recall that $\mathbf{H}$ is a connected subgroup of $\mathbf{G}$ with Lie algebra $\mathfrak{h}=\operatorname{span}_{\R}\{\bu,\bou,\ba\}$. Then \eqref{eq:isomAd} in particular implies that $\Ad(\mathbf{H})$ is isomorphic to $\operatorname{PGL}^+_2(\R)$, and moreover for $ i=1,\dots,\ell$,
\begin{equation}\label{eq:AdHIsoPGL}
\text{$\pi_i|_{\Ad(\mathbf{H})}$ is either an isomorphism onto its image or is trivial.}
\end{equation}

\medskip

Let $\mathbf{N}\lhd \mathbf{G}$ be a closed normal Lie subgroup with Lie algebra $\mathfrak{n}$. Then there exists a decomposition 
\[
\mathfrak{n}=\textstyle{\bigoplus_{i=1}^{\ell}\mathfrak{n}_i},
\]
where $\mathfrak{n}_i$ is an ideal of $\mathfrak{g}_i$ for $i=1,\ldots,\ell$. Let $\mathbf{N}_i=\pi_i(\Ad(\mathbf{N}))$, then
\[
\mathbf{N}_i\lhd \Ad(\mathbf{G}_i)\qquad \operatorname{Lie}(\mathbf{N}_i)=\mathfrak{n}_i.
\]

The following lemma describes the structure of normal subgroups of~$\mathbf{G}$.

\begin{Lemma}\label{lem:normalSubgroup}
Let $\mathbf{N}$ be a closed normal subgroup of $\mathbf{G}$, then 
\[
\Ad(\mathbf{N})\leq \mathbf{N}_1\ldots \mathbf{N}_{\ell}.
\]
Moreover, $\mathbf{N}_i=\Ad(\mathbf{G}_i)$ or $\mathbf{N}_i=\{e\}$ for $i=0,\ldots,\ell$. 
\end{Lemma}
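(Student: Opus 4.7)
The plan is to exploit the fact that, via the isomorphism \eqref{eq:isomAd}, $\Ad(\mathbf G)$ splits as a genuine direct product $\Ad(\mathbf G_1)\times\cdots\times\Ad(\mathbf G_\ell)$, and to combine this with the abstract simplicity of each adjoint factor. Under this identification each element $g\in\Ad(\mathbf N)$ decomposes uniquely as $g=\pi_1(g)\cdots\pi_\ell(g)$ with $\pi_i(g)\in\mathbf N_i$ by the very definition $\mathbf N_i=\pi_i(\Ad(\mathbf N))$, which yields the containment $\Ad(\mathbf N)\leq\mathbf N_1\cdots\mathbf N_\ell$ at once.

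For the dichotomy I would first verify that each $\mathbf N_i$ is an \emph{abstract} normal subgroup of $\Ad(\mathbf G_i)$. The normality of $\mathbf N$ in $\mathbf G$ makes $\Ad(\mathbf N)$ normal in $\Ad(\mathbf G)$; conjugation of $\Ad(\mathbf N)$ by an element $h\in\Ad(\mathbf G_i)$, viewed as $(e,\ldots,h,\ldots,e)\in\Ad(\mathbf G)$ under the direct product decomposition, commutes with $\pi_i$ because $h$ leaves the other factors fixed, so $h\mathbf N_ih^{-1}\subseteq\mathbf N_i$. Next I would invoke the classical theorem that $\Ad(\mathbf G_i)$ is abstractly simple as a group. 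This uses two ingredients: the Lie algebra $\mathfrak g_i$ is simple by hypothesis, and $\Ad(\mathbf G_i)\cong\mathbf G_i/Z(\mathbf G_i)$ is adjoint and therefore centerless (with $Z(\mathbf G_i)$ finite by Lemma~\ref{lem:centerDiscrete}). A standard theorem then asserts that a connected Lie group with simple Lie algebra and trivial center has no non-trivial proper abstract normal subgroups, forcing $\mathbf N_i\in\{\{e\},\Ad(\mathbf G_i)\}$.

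The most delicate point to emphasize is that the conclusion requires abstract simplicity rather than merely topological simplicity: the set $\mathbf N_i$ is defined only as the image of a group homomorphism and carries no a priori closedness or Lie subgroup structure, so a dichotomy at the level of closed normal subgroups alone is not sufficient. Once abstract simplicity of the adjoint factors $\Ad(\mathbf G_i)$ is in hand, no further structural analysis of the projections $\mathbf N_i$ is needed and both assertions of the lemma follow.
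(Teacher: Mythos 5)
Your proof is correct, and the first assertion is handled exactly as in the paper (via \eqref{eq:isomAd}); the dichotomy, however, is argued by a different route. The paper relies on the structure recorded just before the lemma, namely that $\mathbf{N}_i=\pi_i(\Ad(\mathbf{N}))$ is a normal subgroup with $\operatorname{Lie}(\mathbf{N}_i)=\mathfrak{n}_i$, an ideal of the simple algebra $\mathfrak{g}_i$: hence $\mathbf{N}_i$ is either discrete normal --- and then central by connectedness of $\Ad(\mathbf{G}_i)$, hence trivial since the adjoint group is centerless --- or it has full Lie algebra and equals the connected group $\Ad(\mathbf{G}_i)$. You instead check directly that $\mathbf{N}_i$ is an abstract normal subgroup of $\Ad(\mathbf{G}_i)$ (your argument via normality of $\Ad(\mathbf{N})$ in $\Ad(\mathbf{G})$ and compatibility of conjugation with $\pi_i$ is fine; equivalently, the image of a normal subgroup under the surjection $\pi_i$ is normal) and then invoke abstract simplicity of the connected, centerless group $\Ad(\mathbf{G}_i)$ with simple Lie algebra. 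What your route buys is precisely the point you flag: you never need $\mathbf{N}_i$ to be closed or to carry a Lie subgroup structure, whereas the paper's argument leans on the unproved assertion $\operatorname{Lie}(\mathbf{N}_i)=\mathfrak{n}_i$ stated before the lemma. The cost is appealing to a stronger (though classical) input --- abstract simplicity of adjoint simple Lie groups, e.g.\ via Ragozin's theorem that normal subgroups of connected semisimple Lie groups with finite center are closed, or the commutator-map argument --- where the paper needs only the elementary facts that a simple Lie algebra has no proper nonzero ideals and that a discrete normal subgroup of a connected group is central (together with Lemma~\ref{lem:centerDiscrete}). Both arguments are complete; the paper's is more elementary and self-contained, yours is more careful about the topological nature of the projections $\mathbf{N}_i$.
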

\begin{proof}[Proof of Lemma~\ref{lem:normalSubgroup}]
The definition of $\mathbf{N}_i$ and \eqref{eq:isomAd} imply
\[
\Ad(\mathbf{N})\leq \mathbf{N}_1\ldots \mathbf{N}_{\ell}.
\]

As $\Ad(\mathbf{G}_i)$ is a simple Lie group, then $\mathbf{N}_i$ either is a discrete normal subgroup of $\Ad(\mathbf{G}_i)$ or $\Ad(\mathbf{G}_i)$ itself. If $\mathbf{N}_i$ is a discrete normal subgroup of $\Ad(\mathbf{G}_i)$, then the connectedness of $\Ad(\mathbf{G}_i)$ in Hausdorff topology shows that $\mathbf{N}_i\leq C(\Ad(\mathbf{G}_i))=\{e\}$.

\end{proof}

Let
\[
\mathfrak{l}=\operatorname{span}_{\R}\{\bu,\bou,\ba\}\oplus\operatorname{span}_{\R}\{\bx^{0,j}:j\in I_c\},
\]
with $I_c=\{1\leq j\leq n:q_j=0\}$ and $\bx^{0,j}$ as defined in \S\ref{sec:sl2basis}. Equivalently, $\mathfrak{l}$ is the normalizer of $\mathfrak{h}$ in $\mathfrak{g}$.

\medskip

Let $\mathbb{L}(\R)=\{g \in \mathbb G(\R): \Ad (g)\mathfrak h=\mathfrak h\}$, then $\mathbb{L}(\R)$ is a real linear algebraic group, and its Lie algebra is $\mathfrak{l}$. We define $\mathbf{L}$ and $\mathbf{N}$ as follows:
\begin{equation}\label{eq:defN}
    \begin{aligned}
    \mathbf{L}=\mathbb{L}(\R)\cap \mathbf{G}, \ \ \ \ \mathbf{N}=\textstyle{\bigcap_{g\in \mathbf{G}}g\mathbf{L}g^{-1}}.
    \end{aligned}
\end{equation}
By Lemma~\ref{lem:chainLK} we may (and will) assume $\mathfrak{l}\neq\mathfrak g$ (hence $\mathbf{L} \lneq \mathbf{G}$). 

\medskip

Define $\bu_i=\operatorname{p}_i(\bu)$, $\ba_i=\operatorname{p}_i(\ba)$ and  $\bou_i=\operatorname{p}_i(\bou)$, \ $\operatorname{p}_i:\mathfrak{g}\to\mathfrak{g}_i$ as above. The following lemma describes the projection of $\mathfrak{h}$ under $p_i$:

\begin{Lemma}\label{lem:goodfactor}
There exists $i_0\in\{1,\ldots,\ell\}$ such that $\ba_{i_0}\neq 0$ and~$\mathbf{N}_{i_0}=\{e\}$, where $\mathbf{N}_{i_0}=\pi_{i_0}(\Ad(\mathbf{N}))$.
\end{Lemma}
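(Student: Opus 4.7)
The plan is to argue by contradiction. Let $I = \{i : \ba_i \neq 0\}$; by the $\mathfrak{sl}_2$-relations $[\ba, \bu] = 2\bu$ and $[\ba, \bou] = -2\bou$ applied componentwise (using that $\mathfrak{g}_j$ and $\mathfrak{g}_i$ commute for $j\neq i$), the index $i$ satisfies $\ba_i = 0$ if and only if $\bu_i = 0$ if and only if $\bou_i = 0$. If the lemma fails, then for every $i \in I$, Lemma~\ref{lem:normalSubgroup} forces $\mathbf{N}_i = \operatorname{Ad}(\mathbf{G}_i)$. I would then derive a contradiction with the assumption $\mathfrak{l} \neq \mathfrak{g}$ by showing $\mathfrak{g}_i \subset \mathfrak{l}$ for every $i$.

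For $i \in I$, I would use that $\mathbf{N}$ is a closed normal subgroup of $\mathbf{G}$ (closed as an intersection of closed conjugates of $\mathbf{L}$, normal by construction), so its Lie algebra $\mathfrak{n}$ is an ideal of $\mathfrak{g}$ and hence $\mathfrak{n} = \bigoplus_{j \in J}\mathfrak{g}_j$ for some $J \subset \{1, \dots, \ell\}$. Comparing Lie algebras on both sides of $\mathbf{N}_i = \pi_i(\operatorname{Ad}(\mathbf{N}))$, the condition $\mathbf{N}_i = \operatorname{Ad}(\mathbf{G}_i)$ forces $\mathfrak{g}_i \subset \mathfrak{n}$, and since $\mathbf{N} \leq \mathbf{L}$ we get $\mathfrak{g}_i \subset \mathfrak{n} \subset \mathfrak{l}$.

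For $i \notin I$, since $\bu_i = \ba_i = \bou_i = 0$ and $\mathfrak{g}_i$ commutes with $\mathfrak{g}_j$ for $j \neq i$, every $X \in \mathfrak{g}_i$ satisfies $[\bu, X] = [\bu_i, X] = 0$ and similarly $[\ba, X] = [\bou, X] = 0$; hence $\mathfrak{g}_i \subset C_{\mathfrak{g}}(\mathfrak{h})$. Among the elements of the chain basis, $C_{\mathfrak{g}}(\mathfrak{h})$ is spanned by those $\bx^{0,j}$ coming from the trivial (one-dimensional) irreducible $\mathfrak{sl}_2$-subrepresentations, i.e.\ $C_{\mathfrak{g}}(\mathfrak{h}) = \operatorname{span}_{\R}\{\bx^{0,j} : j \in I_c\} \subset \mathfrak{l}$. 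So $\mathfrak{g}_i \subset \mathfrak{l}$ in this case as well.

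Combining the two cases yields $\mathfrak{g} = \bigoplus_i \mathfrak{g}_i \subset \mathfrak{l}$, contradicting $\mathfrak{l} \neq \mathfrak{g}$ (which we may assume by Lemma~\ref{lem:chainLK}, having excluded the loosely Kronecker case). I don't anticipate a real obstacle here: the argument is purely structural, and the only slightly delicate point is the identification of $\mathbf{N}_i = \operatorname{Ad}(\mathbf{G}_i)$ with $\mathfrak{g}_i \subset \mathfrak{n}$, which follows from the ideal decomposition of $\mathfrak{n}$ together with the fact that $\operatorname{Ad}(\mathbf{G}_i)$ is connected and coincides with its own Lie algebra's exponential image.
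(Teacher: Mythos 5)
Your proof is correct, but it runs in the opposite direction from the paper's. The paper argues directly: since $\mathfrak{l}\neq\mathfrak{g}$, some chain has $q_j>0$ and then $\bx^{0,j}\notin\mathfrak{l}$; the simple factor $\mathfrak{g}_{i_0}$ containing $\bx^{0,j}$ has $\ba_{i_0}\neq 0$ (because $\bx^{0,j}$ is an $\ad_{\ba}$-eigenvector with nonzero eigenvalue) and $\mathfrak{n}_{i_0}\neq\mathfrak{g}_{i_0}$ (because $\mathfrak{n}\subset\mathfrak{l}$), so Lemma~\ref{lem:normalSubgroup} forces $\mathbf{N}_{i_0}=\{e\}$. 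You instead argue by contradiction and treat all factors at once: those on which the $\mathfrak{sl}_2$-triple projects nontrivially would be swallowed into $\mathfrak{n}\subset\mathfrak{l}$ via Lemma~\ref{lem:normalSubgroup}, while those on which it projects trivially lie in $C_{\mathfrak{g}}(\mathfrak{h})\subset\mathfrak{l}$, giving $\mathfrak{g}\subset\mathfrak{l}$, a contradiction. The ingredients are the same (Lemma~\ref{lem:normalSubgroup}, the ideal decomposition of $\mathfrak{n}$, and $\mathfrak{n}\subset\mathfrak{l}\neq\mathfrak{g}$); the paper's version is a bit more economical, since by exhibiting a single witness vector it never has to handle the factors with $\ba_i=0$, whereas your version avoids singling out a basis vector and makes the dichotomy among the simple factors explicit.

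On the point you flag as delicate: the implication $\mathbf{N}_i=\Ad(\mathbf{G}_i)\Rightarrow\mathfrak{g}_i\subset\mathfrak{n}$ is exactly the identification $\operatorname{Lie}(\mathbf{N}_i)=\mathfrak{n}_i$ that the paper itself asserts just before Lemma~\ref{lem:normalSubgroup}, so you are not assuming more than the paper does; but your proposed justification via surjectivity of the exponential map on $\Ad(\mathbf{G}_i)$ is not the right tool. A clean argument: if $\mathfrak{g}_i\not\subset\mathfrak{n}$ then $\pi_i(\Ad(\mathbf{N}^{\circ}))=\{e\}$ (as $\mathfrak{n}$ is a sum of the other simple factors), and since the closed subgroup $\mathbf{N}$ has only countably many components, $\pi_i(\Ad(\mathbf{N}))$ would be countable and could not equal $\Ad(\mathbf{G}_i)$; alternatively, use that a normal subgroup of a product of centerless simple groups which surjects onto a factor must contain that factor.
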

\begin{proof}[Proof of Lemma~\ref{lem:goodfactor}]
Since $\mathfrak{l}\neq\mathfrak{g}$, there exists $j$ such that $q_j>0$ and $\bx^{0,j}\notin\mathfrak{l}$ (cf.~\eqref{eq:chainbasis}). This in particular implies that $\bx^{0,j}$ does not commute with $\ba$. Let $\mathfrak{g}_{i_0}$ be the simple Lie algebra containing $\bx^{0,j}$, then  $\bx^{0,j}$ does not commute with $\ba_{i_0}$, hence $\ba_{i_0}\neq 0$. 

Since $\bx^{0,j}\notin\mathfrak{l}$, we have $\bx^{0,j}\notin\mathfrak{n}_{i_0}$. This gives $\mathbf{N}_{i_0}\neq \Ad(G_{i_0})$ and thus Lemma~\ref{lem:normalSubgroup} shows that $\mathbf{N}_{i_0}=\{e\}$.
\end{proof}

The main result of this section is the following lemma:
\begin{Lemma}\label{lem:HNint}
Suppose that $\mathfrak{l}\neq\mathfrak{g}$, then there exist $\enew\label{033enbh}>0$\index{$\eold{033enbh}$, Lemma~\ref{lem:HNint}} and $\Cnew\label{033CnormRadius}>1$\index{$\Cold{033CnormRadius}$, Lemma~\ref{lem:HNint}} such that if $h\in \mathbf{H}$ satisfies
\[
hzp\in \mathbf{N}
\]
for some $z\in \mathbf{Z}$, $p\in B_{\epsilon}^{\mathbf{G}}$ and $\epsilon\in(0,\eold{033enbh})$, then $h$ is within distance $\Cold{033CnormRadius}\epsilon$ of an element in the center of $\mathbf{H}$.
\end{Lemma}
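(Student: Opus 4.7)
The plan is to localize the hypothesis $hzp \in \mathbf{N}$ to a single simple factor of $\mathfrak{g}$ where the normal core disappears. By Lemma~\ref{lem:goodfactor}, since $\mathfrak{l} \neq \mathfrak{g}$ there exists an index $i_0$ with $\ba_{i_0} = p_{i_0}(\ba) \neq 0$ and $\mathbf{N}_{i_0} = \pi_{i_0}(\Ad(\mathbf{N})) = \{e\}$. Because $\mathfrak{h}\cong\mathfrak{sl}_2(\R)$ is simple, the Lie algebra homomorphism $p_{i_0}|_{\mathfrak{h}}$ is either zero or an isomorphism onto $\mathfrak{h}_{i_0} := p_{i_0}(\mathfrak{h})$; the condition $\ba_{i_0}\neq 0$ puts us in the latter case, so $\mathfrak{h}_{i_0}\cong\mathfrak{sl}_2(\R)$.

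Applying $\pi_{i_0}\circ\Ad$ to the relation $hzp\in\mathbf{N}$ and using $\mathbf{N}_{i_0}=\{e\}$ gives
\[
\pi_{i_0}(\Ad(h))\,\pi_{i_0}(\Ad(z))\,\pi_{i_0}(\Ad(p)) = e \quad\text{in }\Ad(\mathbf{G}_{i_0}).
\]
I then restrict this identity to $\mathfrak{h}_{i_0}\subset\mathfrak{g}_{i_0}$, which is invariant under $\pi_{i_0}(\Ad(g))$ for $g\in\mathbf{H}$ via the intertwining $p_{i_0}\circ\Ad(g) = \Ad(g)|_{\mathfrak{g}_{i_0}}\circ p_{i_0}$. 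Two observations simplify the restriction: (i) $z\in\mathbf{Z}=C_{\mathbf{G}}(\mathbf{H})$ means $\Ad(z)$ fixes $\mathfrak{h}$ pointwise, and the same intertwining gives $\pi_{i_0}(\Ad(z))|_{\mathfrak{h}_{i_0}}=I$; (ii) $p\in B_{\epsilon}^{\mathbf{G}}$ gives $\|\pi_{i_0}(\Ad(p))|_{\mathfrak{h}_{i_0}}-I\| = O(\epsilon)$ with implicit constant depending only on $\mathbf{G}$. Combining these yields
\[
\|\Psi(h)-I\| = O(\epsilon),\qquad \Psi(h):=\Ad(h)|_{\mathfrak{h}_{i_0}}\in\operatorname{GL}(\mathfrak{h}_{i_0}).
\]

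To finish, I would analyze the Lie group homomorphism $\Psi\colon\mathbf{H}\to\operatorname{GL}(\mathfrak{h}_{i_0})$. Since $p_{i_0}|_{\mathfrak{h}}$ is a bijective intertwiner, $\Psi(h)=I$ is equivalent to $\Ad(h)|_{\mathfrak{h}}=I$, which is equivalent to $h$ commuting with $\exp(\mathfrak{h})=\mathbf{H}$, i.e.\ $h\in Z(\mathbf{H})$. Because $\mathbf{H}\subset\SL_N(\R)$ is linear with Lie algebra $\mathfrak{sl}_2(\R)$, the center $Z(\mathbf{H})$ is finite, and the image $\Psi(\mathbf{H})\cong \mathbf{H}/Z(\mathbf{H})$ is an adjoint semisimple group isomorphic to $\operatorname{PSL}_2(\R)$, which is algebraic and hence a closed subgroup of $\operatorname{GL}(\mathfrak{h}_{i_0})$. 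Therefore $\Psi$ is a smooth finite covering onto its closed image, so the inverse function theorem applied uniformly at each point of $Z(\mathbf{H})$ supplies constants $\eold{033enbh},\Cold{033CnormRadius}$ such that $\|\Psi(h)-I\|<\eold{033enbh}$ forces $d_{\mathbf{G}}(h,Z(\mathbf{H}))\leq\Cold{033CnormRadius}\|\Psi(h)-I\|$. Combined with the $O(\epsilon)$ bound above, this proves the lemma. The main technical point I expect is this final uniform bound; it relies on the closedness of $\Psi(\mathbf{H})$ in $\operatorname{GL}(\mathfrak{h}_{i_0})$, which prevents elements $h$ far from $Z(\mathbf{H})$ from having $\Psi$-image close to $I$.
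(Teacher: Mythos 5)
Your argument is correct and follows essentially the same route as the paper: project the relation $hzp\in\mathbf{N}$ to the factor $i_0$ supplied by Lemma~\ref{lem:goodfactor}, use that $z$ centralizes $\mathbf{H}$ and $p$ is $O(\epsilon)$-small to conclude that $\Ad(h)$ restricted to $\mathfrak{h}_{i_0}$ is $O(\epsilon)$-close to the identity, and then pass back to $\mathbf{H}$ via the finite covering onto the adjoint image. Your closing step with $\Psi$ and its closed image is just a rephrasing of the paper's use of \eqref{eq:AdHIsoPGL} together with the fact that $\Ad:\mathbf{H}\to\Ad(\mathbf{H})$ is a covering with kernel $C(\mathbf{H})$.
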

\begin{proof}[Proof of Lemma~\ref{lem:HNint}]
Let $\eold{006ecom1}$ be as in Lemma~\ref{lem:groupDecom}. Define 
\[
\eold{033enbh}=\frac{1}{100}\eold{006ecom1}.
\]

Since $p\in B_{\epsilon}^\mathbf{G}$ for some $\epsilon\in(0,\eold{033enbh})$ and $\Ad:\mathbf{G}\to\Ad(\mathbf{G})$ is a cover map, for every $p_i=\pi_i(\Ad(p))$ with  $i=1,\ldots,\ell$
\begin{equation}\label{eq:pdecom}
    \begin{aligned}
    p_i\in B_{2\epsilon}^{\Ad(\mathbf{G}_i)}.
    \end{aligned}
\end{equation}

If $h\in \mathbf{H}$ and $z\in \mathbf{Z}$, then for $i=1,\ldots,\ell$, let 
\begin{gather}\label{eq:zdecom}
    h_i=\pi_i(\Ad(h)), \qquad z_i=\pi_i(\Ad(z)).
\end{gather}
Moreover, $z_i\in C_{\Ad(\mathbf{G}_i)}(\pi_i(\Ad(\mathbf{H})))$.

\medskip

Since $\mathfrak{l}\neq\mathfrak{g}$, Lemma~\ref{lem:goodfactor} gives that there exists $1\leq i_0\leq \ell$ such that~$\mathbf{N}_{i_0}=\{e\}$ and $\ba_{i_0}\neq 0$. This together with $\Ad(hzp)\in \Ad(\mathbf{N})$,  \eqref{eq:pdecom}  and \eqref{eq:zdecom}, gives
\begin{equation*}
\begin{aligned}
h_{i_0}z_{i_0}p_{i_0}=e, \qquad z_{i_0}\in C_{\Ad(G_{i_0})}(\pi_{i_0}(\Ad(\mathbf{H}))).
\end{aligned}
\end{equation*}
In particular, we have for every $X\in\mathfrak{h}_{i_0}$
\[
\Ad_{p_{i_0}h_{i_0}}(X)=X.
\]
This together with $p_{i_0}\in B_{2\epsilon}^{\Ad(G_{i_0})}$ and $h_{i_0}\in \pi_{i_0}(\Ad(\mathbf{H}))$ gives  that the restriction of $\Ad_{h_{i_0}}$ to $\mathfrak{h}_{i_0}$ is within $O(\epsilon)$ of the identity.
Thus for a suitable (absolute) $\kappa_0>1$, \eqref{eq:AdHIsoPGL} imply that
\[
d_{\Ad(\mathbf{H})}(\Ad(h),e)<\kappa_0\epsilon.
\]
Combining with $\Ad:\mathbf{H}\to\Ad(\mathbf{H})$ is a cover map whose kernel equals to $C(\mathbf{H})$, the above inequality completes the proof.
\end{proof}

\section{Main Lemma}\label{sec:mainLemma}
In this section, we prove our main lemma, which roughly states that Kakutani-Bowen balls (cf.~Definition \ref{def:Kakball}) are preserved under ``good'' even Kakutani equivalences as in \S\ref{sec:convertKakutani}. An analogous statement (\cite{ratner1986rigidity}*{Lemma~4.1}) is key in Ratner's paper \cite{ratner1986rigidity}, which proves time change rigidity for the horocycle flow on quotients of $\SL_2(\R)$ under a H\"older regularity condition; in Ratner's paper the H\"older regularity condition is used in an essential way. Indeed, the horocycle flow on quotients of $\SL_2(\R)$ is loosely Kronecker \cite{ratner1978horocycle}, so without the strong H\"{o}lder regularity condition nothing of the sort can be said about the images of Kakutani-Bowen balls in this case.

\medskip

\subsection{Assumptions}\label{subsec:assupmtions} From now on, the following assumptions are fixed until the end of the paper:
\begin{enumerate}[label=\textup{(a\arabic*)}]
    \item $\mathbb{G}_i$ is a semisimple algebraic group defined over $\R$ for $i=1,2$;
    \item $\mathbf{G}_i$ is the identity component of the real points of $\mathbb{G}_i$ for $i=1,2$;
    \item $\mathfrak{g}_i=\operatorname{Lie}(\mathbf{G}_i)$ and $\Gamma_i$ is a lattice in $\mathbf{G}_i$ for $i=1,2$;
    \item $m_i$ is the probability measure on $\mathbf{G}_i/\Gamma_i$ induced by the Haar measure on $\mathbf{G}_i$ for $i=1,2$;
    \item $u_t^{(i)}=\exp(t \bu_i)$ is a one-parameter unipotent subgroup of $\mathbf{G}_i$ acting ergodically on $(\mathbf{G}_i/\Gamma_i,m_i)$ for $i=1,2$;
    \item  $(\mathbf{G}_1/\Gamma_1,m_1,u_t^{(1)})$ is Kakutani equivalent to $(\mathbf{G}_2/\Gamma_2,m_2,u_t^{(2)})$ with a Kakutani equivalence~$\psi$;

    \item\label{item:chainBasis} $\{\bu_k,\ba_k,\bou_k,\bx_{k}^{0,1},\ldots,\bx_{k}^{q_1^{(k)},1},\ldots,\bx_{k}^{0,n^{(k)}},\ldots,\bx_{k}^{q_n^{(k)},n^{(k)}}\}$ is a basis as in~\eqref{eq:chainbasis} for $\mathfrak{g}_k$ with respect to $\bu_k$ for $k=1,2$;
    \item $(\mathbf{G}_1/\Gamma_1,m_1,u_t^{(1)})$ is \textbf{not} loosely Kronecker.
\end{enumerate}
\medskip

\noindent
It is worth to point out that under our assumptions, for any $t\neq 0$, the element $a_t^{(i)}=\exp(t\ba_i)$ acts ergodically on $(\mathbf{G}_i/\Gamma_i,m_i)$
for $i=1,2$ (cf. \cite{MargulisDiscrete1991}*{\S~II}).

\medskip

\subsection{Preparation for main lemma}
\subsubsection{Modifications of Kakutani equivalence}\label{sec:modifiedKakutani}
Let $\psi$ be a Kakutani equivalence as above. While apriori $\psi$ has very little regularity, we now show that using the results of \S\ref{sec:convertKakutani} we may (after renormalization and replacing $\psi$ with a Kakutani equivalence cohomologous to it) assume $\psi$ is a ``good'' Kakutani equivalence.

\medskip

The first step is the renormalization. By Lemma~\ref{lem:evenKakutani}, there exists a unique $s_0\in\R$ such that $a_{s_0}^{(2)}.\psi$ is an even Kakutani equivalence between $(\mathbf{G}_1/\Gamma_1,m_1,u_t^{(1)})$ and $(\mathbf{G}_2/\Gamma_2,m_2,u_t^{(2)})$.

\medskip

Then we need to find a ``good'' even Kakutani equivalence in the same cohomology class as $a_{s_0}^{(2)}.\psi$. Applying Lemma~\ref{lem:goodTimeChange} for $(\mathbf{G}_1/\Gamma_1,m_1,u_t^{(1)})$, $(\mathbf{G}_2/\Gamma_2,m_2,u_t^{(2)})$ with $\epsilon=10^{-99}$, there is an $10^{-99}$-well-behaved Kakutani equivalence map 
\begin{equation}\label{eq:evenKakEqui}
\tilde\psi:(\mathbf{G}_1/\Gamma_1,m_1,u_t^{(1)})\mapsto(\mathbf{G}_1/\Gamma_2,m_2,u_t^{(2)})
\end{equation}
that is cohomologous to $a_{s_0}^{(2)}\circ\psi$ (cf.~Definition \ref{def:controlKakutani}). Replacing $\psi$ by $\tilde\psi$ if necessary we may as well assume that $\psi$ itself is an $10^{-99}$-well-behaved Kakutani equivalence.

\subsubsection{Time change good set}\label{sec:timeChagneGoodSet}
As $\psi$ is an $10^{-99}$-well-behaved Kakutani equivalence with time change function $\alpha$, there exists an $u_t^{(1)}$-invariant set $\Znew\label{035KtimeU}\subset \mathbf{G}_1/\Gamma_1$\index{$\Zold{035KtimeU}$, Equation~\eqref{eq:controlKakCon}} with $m_1(\Zold{035KtimeU})=1$ and $\tau:\Zold{035KtimeU}\times\R\to\R$ such that for every $x\in \Zold{035KtimeU}$, $\alpha(u_t^{(1)}.x)$ is a $C^{\infty}$ function in $t$. Moreover for every $t\in\R$:
\begin{equation}\label{eq:controlKakCon}
\begin{aligned}
&\int_{0}^{\rho(x,t)}\alpha(u_s^{(1)}.x)ds=t,\ \ \ \ 
        \operatorname{sup}_{x\in \Zold{035KtimeU}}\absolute{\alpha(x)-1}<10^{-99},\\
 &\psi(u_{\rho(x,t)}^{(1)}.x)=u_{t}^{(2)}.\psi(x), \ \ \ \ \psi(u_{t}^{(1)}.x)=u_{\tau(x,t)}^{(2)}.\psi(x),\\
 &\rho(x,\tau(x,t))=t, \ \ \ \ \ \ \ \ \ \ \ \ \tau(x,\rho(x,t))=t.\\
\end{aligned}
\end{equation}
Until the end of this section, unless stated otherwise, every subset of $\mathbf{G}_1/\Gamma_1$ we consider is assumed to be a subset of $\Zold{035KtimeU}$.

\subsection{Statement of the main lemma and a first reduction}
 
Following is the statement of our main lemma:
\begin{Lemma}[Main Lemma]\label{lem:main} 
For any $\eta\in(0,10^{-3})$, there exist $\denew\label{035delm1}>0$\index{$\deold{035delm1}$, Lemma~\ref{lem:main}}, $\Rnew\label{035Rlm1}>1$\index{$\Rold{035Rlm1}$, Lemma~\ref{lem:main}} and a compact set $\Knew\label{035Klm1}\subset \mathbf{G}_1/\Gamma_1$\index{$\Kold{035Klm1}$, Lemma~\ref{lem:main}} with $m_1(\Kold{035Klm1})>1-10\eta$ so that the following holds. Let $\delta\in(0,\deold{035delm1})$, then there exists $\enew\label{035eMainDel}(\delta)\in(0,\delta)$\index{$\eold{035eMainDel}(\cdot)$, Lemma~\ref{lem:main}} such that for $y\in\Kold{035Klm1}$, $R>\Rold{035Rlm1}$ 
\[
\psi \left(\Kak\left(R,\eold{035eMainDel}(\delta),y\right) \cap \Kold{035Klm1}\right) \subset \Kak\left(R,\delta,\psi(y)\right).
\]

More generally, if $R>\Rold{035Rlm1}$, \ $x,y\in\Kold{035Klm1}$ and $\absolute{t_1}\leq\deold{035delm1}R$ satisfy
\[
x\in\Kak\left(R,\eold{035eMainDel}(\delta),u_{t_1}^{(1)}.y\right),
\]
then there exists $\absolute{t_2}\leq \Rold{035Rlm1}$ such that
\[
\psi(x)\in \Kak\left(R,\delta,u_{t_2}^{(2)}.\psi(u_{t_1}^{(1)}.y)\right).
\]
We may assume that $\psi$ is uniformly continuous on $\Kold{035Klm1}$.

\end{Lemma}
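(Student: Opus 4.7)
The plan is to combine three ingredients that are already in place or previewed in the paper. First, Proposition~\ref{prop:KakutaniBallStayClose} converts membership in a Kakutani-Bowen ball into $(\delta,\epsilon,R)$-two-sides matchability via the best matching function $\phi_g$ of \eqref{eq:bestMatchingFun}. Second, Lemma~\ref{lem:matchingPrese} transfers this matchability through the $10^{-99}$-well-behaved even Kakutani equivalence $\psi$ (arranged in \S\ref{sec:modifiedKakutani}), at the cost of degrading the parameters by a constant factor. Third, a converse statement --- the lemma previewed as Lemma~\ref{lem:longHammingSep} in the overview, to be established in \S\ref{sec:longHammingSep} --- says that on a compact set of large measure in $\mathbf{G}_2/\Gamma_2$, any $(\delta,\epsilon,R)$-two-sides matchable pair must, after a bounded translation in $\mathbf{U}_2$, lie in a Kakutani-Bowen ball of comparable radius.

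Concretely, I would first fix $\Kold{035Klm1}$ as the intersection of the compact set produced by Lemma~\ref{lem:matchingPrese} with the $\psi$-preimage of the good set furnished by Lemma~\ref{lem:longHammingSep} inside $\mathbf{G}_2/\Gamma_2$, both having measure exceeding $1-O(\eta)$; by Lusin I may also arrange that $\psi$ is uniformly continuous on $\Kold{035Klm1}$. Given $x, y \in \Kold{035Klm1}$ with $\absolute{t_1}\leq \deold{035delm1} R$ and $x = g \cdot u^{(1)}_{t_1}.y$ with $g \in \Kak(R,\eold{035eMainDel}(\delta))$, Proposition~\ref{prop:KakutaniBallStayClose} gives
\[
d_{\mathbf{G}_1}\bigl(u^{(1)}_{\phi_g(t)}.x,\, u^{(1)}_{t}. u^{(1)}_{t_1}.y\bigr) \leq 10\,\eold{035eMainDel}(\delta) \qquad \forall\,\absolute{t}\leq R,
\]
so $x$ and $u^{(1)}_{t_1}.y$ are $\bigl(10\,\eold{035eMainDel}(\delta),\,\eta_0,\,R\bigr)$-two-sides matchable (with $\eta_0$ controlled through Lemma~\ref{lem:matchingFunction}). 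Plugging this into Lemma~\ref{lem:matchingPrese} yields a $C^1$ matching between $\psi(x)$ and $\psi(u^{(1)}_{t_1}.y)$ of quality $(\delta',30\eta_0,R)$ where $\delta'$ can be taken arbitrarily small by choosing $\eold{035eMainDel}(\delta)$ small enough. Applying Lemma~\ref{lem:longHammingSep} to this pair then produces an absolute $\absolute{t_2}\leq \Rold{035Rlm1}$ so that $\psi(x) \in \Kak(R,\delta,u^{(2)}_{t_2}.\psi(u^{(1)}_{t_1}.y))$. The first (simpler) assertion of the lemma is the case $t_1=0$, which gives $t_2$ of order $\eold{035eMainDel}(\delta)$, absorbable into the $\delta$-ball by adjusting constants.

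The main obstacle is Lemma~\ref{lem:longHammingSep} itself --- the converse direction --- which is genuinely non-trivial. The difficulty is that $(\delta,\epsilon,R)$-two-sides matchability in $\mathbf{G}_2/\Gamma_2$ is strictly weaker than Kakutani-Bowen containment: by Lemma~\ref{lem:numConnectedCom} even in the group $\mathbf{G}_2$ a matching may consist of up to $3(\dim\mathfrak{g}_2)^2$ disconnected subintervals, and in $\mathbf{G}_2/\Gamma_2$ one must additionally contend with the possibility that the matching is realized by different lattice representatives over different time windows. The combinatorial Lemma~\ref{lem:longHammingComb} (together with the Remez-type measure estimates of \S\ref{sec:MesureEstimatesExcept}, applied through Corollary~\ref{cor:globalRepEst}, and the normal-core analysis of \S\ref{sec:normalcoreNBHD}) is precisely designed to show that outside a small exceptional set, one matching window dominates in length and that the polynomial-divergence penalty prevents the same lift from returning close to different $\Gamma_2$-translates of the reference orbit. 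Once this is established, a suitable shift by an element of $\mathbf{U}_2$ (of size bounded in terms of $\Kold{036Klh1}$) re-aligns the dominant window to the center and places the pair in a single Kakutani-Bowen ball.

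Because Lemma~\ref{lem:longHammingSep} supplies $\Rold{035Rlm1}$ and the good set $\Kold{036Klh1}$ independently of $R$ and $\delta$, the resulting constants $\Rold{035Rlm1}$, $\deold{035delm1}$, and the compact set $\Kold{035Klm1}$ in our Lemma~\ref{lem:main} can be chosen once and for all as functions of $\eta$ (with $\eold{035eMainDel}(\delta)$ depending also on $\delta$ through the quantitative uniform continuity of $\psi$ on $\Kold{035Klm1}$ and the quantitative form of Lemma~\ref{lem:matchingPrese}). This uniformity in $R$ is crucial: it is exactly the feature that is later exploited in \S\ref{sec:comgeo} to promote the estimate \eqref{eq: from Lemma 8.1} into the algebraic identity \eqref{eq:psi of a in intro}.
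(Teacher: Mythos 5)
Your overall route is the paper's route: convert Kakutani--Bowen containment into two-sided matchability via Proposition~\ref{prop:KakutaniBallStayClose}, transport the matching through the well-behaved equivalence $\psi$ by Lemma~\ref{lem:matchingPrese}, and then invoke a converse statement in $\mathbf{G}_2/\Gamma_2$ to land back in a Kakutani--Bowen ball after a bounded $\mathbf{U}_2$-shift. However, the converse you invoke is not Lemma~\ref{lem:longHammingSep} (that is the large-gap/separation lemma used to verify hypothesis (2) of the combinatorial Lemma~\ref{lem:longHammingComb} deep inside the argument); it is Lemma~\ref{lem:longHammingNew}, and --- this is the substantive point --- you state it in a single-scale form that is not what is proved and is not expected to hold. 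At a single scale $R$, the paper only obtains Lemma~\ref{lem:longHamming}: a matched window of length $\geq 3R/2$ located somewhere in $[-R,R]$, up to a lattice element $\gamma_R$ and base-point shifts of size comparable to $R$. To conclude that $x\in\Kak(R,C\delta,u_{t_2}^{(2)}.y)$ with $\absolute{t_2}\leq\Rold{036Rlh1}$ \emph{bounded independently of $R$}, one needs matchability at \emph{every} scale $T\in[\Rold{036Rlh1},R]$ with the \emph{same} matching function (Corollary~\ref{cor:inifinteMatching}); this multi-scale input is what pins down a single $\gamma$ and a shift bounded by the smallest scale. In your setting the multi-scale hypothesis is in fact available --- the best matching function $\phi_g$ works on all of $[-R,R]$, and Lemma~\ref{lem:matchingPrese} produces one matching function valid for every $T$ --- but you must say this and carry it through; as written, the key converse is quoted in a form the paper neither states nor proves.

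The second gap is in your treatment of the first assertion ($t_1=0$). The converse lemma only yields $\absolute{t_2}\leq\Rold{035Rlm1}$, a constant that has nothing to do with $\delta$, so the claim that $t_2$ is ``of order $\eold{035eMainDel}(\delta)$, absorbable into the $\delta$-ball'' does not follow from anything you have established: the target inclusion $\psi(\Kak(R,\eold{035eMainDel}(\delta),y)\cap\Kold{035Klm1})\subset\Kak(R,\delta,\psi(y))$ has no shift at all, and a shift of size up to $\Rold{035Rlm1}$ cannot be absorbed. The paper closes this by an extra argument: since $d_{\mathbf{G}_1/\Gamma_1}(x,y)$ is tiny and $\psi$ is uniformly continuous on the good set, $d_{\mathbf{G}_2/\Gamma_2}(\psi(x),\psi(y))\leq\delta/10$; combined with a compact set built to have a quantitative non-return property (no nontrivial lattice element or mismatched times $\leq 2\Rold{035Rlm1}$ can bring the orbits $\delta$-close again), this forces $\absolute{t_2}\leq\delta/10$, which the $\bu$-coordinate slack in Definition~\ref{def:Kakball} can then absorb. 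You need some version of this step; without it the first inclusion of Lemma~\ref{lem:main} is not proved.
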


The proof of Lemma~\ref{lem:main} follows from Lemma~\ref{lem:matchingPrese} and Lemma~\ref{lem:longHammingNew}, whose proof is postponed to \S\ref{sec:longHammingNew}.

\begin{Lemma}\label{lem:longHammingNew}
For any $\eta\in(0,10^{-3})$, there exist a compact subset $\Knew\label{036Klh1}\subset \mathbf{G}_2/\Gamma_2$\index{$\Kold{036Klh1}$, Lemma~\ref{lem:longHammingNew}} with $m_2(\Kold{036Klh1})\geq1-\eta$, $\deold{035delm1}>0$, and  $\Rnew\label{036Rlh1}>1$\index{$\Rold{036Rlh1}$, Lemma~\ref{lem:longHammingNew}} such that for any $\epsilon\in(0,10^{-40})$, any $\delta\in(0,\deold{035delm1})$ and $R\geq\Rold{036Rlh1}$ the following holds. Let $x \in \mathbf{G}_2/\Gamma_2$ and $y\in \Kold{036Klh1}$ be $(\delta,\epsilon,T)$-two sides matchable for every $T\in[\Rold{036Rlh1}, R]$ with the same matching function, then there exists $\absolute{t_2}\leq\Rold{036Rlh1}$ such that 
\[
x\in\Kak(R,\Cold{036Clm}\delta,u_{t_2}^{(2)}.y).
\]
Here $\Cnew\label{036Clm}>1$\index{$\Cold{036Clm}$, Lemma~\ref{lem:longHammingNew}} is a constant\footnote{The value of $\Cold{036Clm}$ will be given in the proof of Lemma~\ref{lem:longHammingNew}.} depending only on $\mathbf{G}_2$.
\end{Lemma}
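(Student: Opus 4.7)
The plan is to lift the matching from $\mathbf{G}_2/\Gamma_2$ to $\mathbf{G}_2$ and analyze the discrete choice of lattice element used at each matched time. Fix lifts $\tilde x,\tilde y\in\mathbf{G}_2$ realizing $d_{\mathbf{G}_2/\Gamma_2}(x,y)=d_{\mathbf{G}_2}(\tilde x,\tilde y)$. I would choose $\Kold{036Klh1}\subset\mathbf{G}_2/\Gamma_2$ of measure $\geq 1-\eta$ so that every $y\in\Kold{036Klh1}$ has injectivity radius bounded below (so the matching lifts uniquely) and so that $y$ avoids the bad set constructed below from Corollary~\ref{cor:globalRepEst}. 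For each $t$ in the matching set, the condition $d_{\mathbf{G}_2/\Gamma_2}(u_{h(t)}^{(2)}x,u_t^{(2)}y)<\delta$ uniquely lifts to $d_{\mathbf{G}_2}(u_{h(t)}^{(2)}\tilde x,\,u_t^{(2)}\tilde y\,\gamma(t))<\delta$ for some $\gamma(t)\in\Gamma_2$. For each fixed $\gamma$, Lemma~\ref{lem:numConnectedCom} applied to the pair $\tilde x,\ \tilde y\gamma$ covers $\{t:\gamma(t)=\gamma\}$ by at most $n_{\max}:=3(\dim\mathfrak{g}_2)^2$ closed intervals on which the best matching function yields an $O(\delta)$-two-sided matching. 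This partitions $[-R,R]$ (up to an $\epsilon$-density exceptional set) into intervals carrying labels in $\Gamma_2$, with each label appearing in at most $n_{\max}$ intervals.

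\textbf{Separation (the main obstacle).}
The key technical step, which will be formulated as Lemma~\ref{lem:longHammingSep}, is to show that for $y\in\Kold{036Klh1}$, intervals labeled by distinct $\gamma_1\neq\gamma_2\in\Gamma_2$ must be separated in time by at least $\max(\Rold{036Rlh1},\,C\min(|J_1|,|J_2|)^{1+w})$ for some uniform $w,C>0$. The intuition is that a transition from label $\gamma_1$ to label $\gamma_2$ forces the orbit $(u_t^{(2)}\tilde y)$ to approach its own $\Gamma_2$-translate by $\gamma_2\gamma_1^{-1}$. Using a Chevalley representation $\rho$ with stabilized vector $v_{\mathbf{L}_2}$ for the group $\mathbf{L}_2$ from \eqref{eq:defN} (cf.~Theorem~\ref{thm:Chevalley}), this translates into $\rho(\gamma_2\gamma_1^{-1}(u_t^{(2)}\tilde y)^{-1})v_{\mathbf{L}_2}$ being close to $\rho((u_t^{(2)}\tilde y)^{-1})v_{\mathbf{L}_2}$. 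Non-loose-Kronecker enters crucially: by assumption and its preservation under Kakutani equivalence, $u_t^{(2)}$ on $\mathbf{G}_2/\Gamma_2$ is also not loosely Kronecker, hence $\mathfrak{l}_2\neq\mathfrak{g}_2$ by Lemma~\ref{lem:chainLK}, the normal core $\mathbf{N}_2$ is proper, and Lemma~\ref{lem:HNint} guarantees that after absorbing the $\mathbf{H}_2$-component of $\gamma_2\gamma_1^{-1}$ into a bounded time shift $t_2$, the remaining element is at distance $\gtrsim\delta$ from $\mathbf{N}_2$. Corollary~\ref{cor:globalRepEst} then bounds the measure of $y$'s where such a near-fixation of $v_{\mathbf{L}_2}$ occurs for an element of the relevant bounded norm, and this defines the bad set whose complement gives $\Kold{036Klh1}$. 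The hardest part will be tracking the polynomial degrees of divergence through the chain basis of \S\ref{sec:sl2basis} in order to extract the exact exponent $1+w$ compatible with Lemma~\ref{lem:longHammingComb}.

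\textbf{Extraction and conclusion.}
With the separation statement in hand, Lemma~\ref{lem:longHammingComb} applied with the parameter $n_{\max}$ produces a dominant label $\gamma_*\in\Gamma_2$: the union of $\gamma_*$-intervals has total length $\geq 3R/4$ and contains one interval of length $\geq R/(32 n_{\max})$. The hypothesis that matching holds for \emph{every} $T\in[\Rold{036Rlh1},R]$ with the same $h$ is essential: running the combinatorial extraction at every scale $T$, and using that the $\gamma$-label at each $t$ is intrinsic to $t$ and independent of $T$, forces the dominant element $\gamma_*$ to be the same at every scale, so that the $\gamma_*$-intervals form a $(1-O(\epsilon))$-density subset of each $[-T,T]$. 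Writing $\tilde x = g\,u_{t_2}^{(2)}\tilde y\gamma_*$ with $|t_2|\leq\Rold{036Rlh1}$ absorbing the offset in the $\bu_2$-direction (controlled via Lemma~\ref{lem:matchingFunction}), Proposition~\ref{prop:matchKak} converts the accumulated good matching on $[-R,R]$ into the Kakutani--Bowen inclusion $\tilde x\in\Kak(R,\Cold{036Clm}\delta)\,u_{t_2}^{(2)}\tilde y\gamma_*$. Projecting back to $\mathbf{G}_2/\Gamma_2$ absorbs $\gamma_*\in\Gamma_2$ in the quotient and yields $x\in\Kak(R,\Cold{036Clm}\delta,\,u_{t_2}^{(2)}y)$, as required.
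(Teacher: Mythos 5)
Your proposal is correct and follows essentially the same route as the paper: the lattice-labeled interval partition, the polynomial separation of differently-labeled intervals via the Chevalley representation of $\mathbf{L}_2$ together with Lemma~\ref{lem:HNint} and Corollary~\ref{cor:globalRepEst}, the extraction of a dominant label by Lemma~\ref{lem:longHammingComb}, and the use of the same matching function at every scale $T\in[\Rold{036Rlh1},R]$ to pin down a single $\gamma_*$ and a bounded shift $t_2$ are precisely the paper's Algorithm~\ref{alg:cap}, Lemma~\ref{lem:longHammingSep}, Lemma~\ref{lem:longHamming} and Corollary~\ref{cor:inifinteMatching}, with Lemma~\ref{lem:polyDivergence} supplying the final polynomial-divergence upgrade you attribute to Proposition~\ref{prop:matchKak}. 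The only imprecisions --- the dominant family has density $3/4$ rather than $1-O(\epsilon)$, and $\Kold{036Klh1}$ must be built via the pointwise ergodic theorem so that most times along the $u^{(2)}_t$-orbit of $y$ are good for the separation lemma, not merely by asking that $y$ itself avoid a bad set --- do not affect the argument.
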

\begin{Remark}
Lemma~\ref{lem:longHammingNew} is similar to \cite{kanigowski2021kakutani}*{Theorem~6.1}, but significantly more precise. The reader may want to compare the methods for obtaining these two results, which are similar in principle but quite different in detail. 
\end{Remark}

\begin{proof}[Proof of Lemma 
\ref{lem:main} assuming Lemma~\ref{lem:longHammingNew}]
Fix an $\eta\in(0,1)$, the selection of parameters are specified as following:
\begin{enumerate}[label=\textup{(\roman*)}]
    \item Let $\deold{035delm1}$, $\Kold{036Klh1}$, $\Rold{036Rlh1}$ and $\Cold{036Clm}$ be defined as in Lemma~\ref{lem:longHammingNew};
    \item 
    Let $\Kold{017KmP1}$, $\Rold{017RmP1}$ and $\eold{017emP1}(\cdot)$ be defined as Lemma~\ref{lem:matchingPrese} applied to $\psi$ with $\epsilon=10^{-99}$. Note that Lemma~\ref{lem:matchingPrese} also gives that $\psi$ is uniformly continuous on $\Kold{017KmP1}$,
    \item\label{item:compactReturnBound} Let $K_1\subset \mathbf{G}_2/\Gamma_2$ be a compact set with $m_2(K_1)\geq 1-10^{-20}\eta$ and $\kappa_1\in(0,10^{-10})$ such that the following holds
    \begin{itemize}
        \item For any $y\in K_1$, $x \in \mathbf{G}_2/\Gamma_2$ with $d_{\mathbf{G}_2/\Gamma_2}(x,y)<\kappa_1$, if $\tilde{x}$ and $\tilde{y}$ are lifts of $x$ and $y$ satisfying $d_{\mathbf{G}_2}(\tilde{x},\tilde{y})<\kappa_1$ and $s$, $t$ and $e \neq \gamma \in \Gamma_2$ satisfy
    \begin{equation*}
        d_{\mathbf{G}_2}(u_{t}^{(2)}.\tilde{x},u_{s}^{(2)}.\tilde{y}\gamma)<\kappa_1,\qquad u_s^{(2)}.y\in K_1,
    \end{equation*}
    then $\max(\absolute{t},\absolute{s})> 2\max(\Rold{017RmP1},\Rold{036Rlh1})$. 
    \end{itemize}
    
    \item Fix a $\delta\in(0,\min(\deold{035delm1},10{-200},\kappa_1))$, we define 
    \begin{gather*}
        \eold{035eMainDel}(\delta)=\eold{017emP1}\left((10\Cold{036Clm})^{-1}\delta\right)/(10^{100}\dim\mathfrak{g}_2)^3,\qquad 
        \Rold{035Rlm1}=\max(\Rold{017RmP1},\Rold{036Rlh1}), \\ \Kold{035Klm1}= \Kold{017KmP1}\cap\psi^{-1}(\Kold{036Klh1}\cap K_1).
    \end{gather*}
\end{enumerate}

\medskip

Having chosen these parameters, we can now begin the actual proof. For any $x,y\in \Kold{035Klm1}$, $R>\Rold{035Rlm1}$, and $\absolute{t_1}\leq\deold{035delm1}R$ satisfying 
\[
x\in\Kak(R,\eold{035eMainDel}(\delta),u_{t_1}^{(1)}.y),
\] 
we obtain from the local decomposition~\eqref{eq:gDecom} and Definition \ref{def:Kakball} that there exists $g\in \mathbf{G}_1$ such that 
$x=g.u_{t_1}^{(1)}.y$,
with
\[
g\in \Kak(R,\eold{035eMainDel}(\delta)).
\]

Let $\phi_g(t)$ be the best matching function defined in \eqref{eq:bestMatchingFun}, then by Proposition~\ref{prop:KakutaniBallStayClose}, for every $t\in[-R,R]$
\begin{equation}\label{eq:oldLongMatching}
d_{\mathbf{G}_1/\Gamma_1}(u_{\phi_g(t)}^{(1)}.x,u_{t+t_1}^{(1)}.y)<10\eold{035eMainDel}(\delta).
\end{equation}
Recall also that Lemma~\ref{lem:matchingFunction} gives that 
\[
\absolute{\phi_g'(t)-1}<\sqrt{\eold{035eMainDel}(\delta)}\qquad\text{for $t\in[-R,R]$}.
\]
This together with \eqref{eq:oldLongMatching} gives that  $x,u_{t_1}^{(1)}.y$ are $(20\eold{035eMainDel}(\delta),\sqrt{\eold{035eMainDel}(\delta)},T)$-two sides matchable for every $T\in[\Rold{036Rlh1},R]$ with the matching function~$\phi_g$ (note the matching function is independent of $T$). As $\sqrt{\eold{035eMainDel}(\delta)}<10^{-99}$, we can apply Lemma~\ref{lem:matchingPrese} and obtain that 
\begin{itemize}
    \item $\psi(x),\psi(u_{t_1}^{(1)}.y)$ are $(\frac{\delta}{10\Cold{036Clm}},10^{-97},T)$-two sides matchable for every $T\in[\Rold{036Rlh1},R]$ with the same matching function.
\end{itemize}
This together with $\psi(x)\in \Kold{036Klh1}$, $R>\Rold{035Rlm1}\geq\Rold{036Rlh1}$ and Lemma~\ref{lem:longHammingNew} gives that
\begin{equation}\label{eq:t1neq0}
    \psi(x)\in\Kak(R,\delta/10,u_{t_2}^{(2)}.\psi(u_{t_1}^{(1)}.y)),
\end{equation}
for some $\absolute{t_2}\leq\Rold{035Rlm1}$. 

If $t_1=0$, since $d_{\mathbf{G}_1/\Gamma_1}(x,y)\leq(\dim\mathfrak{g}_2)\eold{035eMainDel}(\delta)$ and $x,y\in \Kold{035Klm1}$,
\[
d_{\mathbf{G}_2/\Gamma_2}(\psi(x),\psi(y))\leq \delta/10.
\]
This together with choice of $K_1$ (cf.~\ref{item:compactReturnBound} on p.~\pageref{item:compactReturnBound}), equation \eqref{eq:t1neq0},  $\absolute{t_2}\leq \Rold{035Rlm1}$ and $t_1=0$ gives
\[
\absolute{t_2}\leq\delta/10
\]
hence
\[
\psi(x)\in\Kak(R,\delta,\psi(y)).
\]
\medskip

Finally recall that $\psi_*m_1^{\tilde{\alpha}}=m_2$, where $dm_1^{\tilde{\alpha}}=\tilde{\alpha}dm_1$ and $\tilde{\alpha}$ is the corresponding time change for $\psi$. Since Lemma~\ref{lem:goodTimeChange} guarantees that $\operatorname{esssup}\absolute{\tilde{\alpha}-1}<10^{-99}$
\begin{equation*}
\begin{aligned}
m_1\left(\psi^{-1}((\mathbf{G}_2/\Gamma_2)\setminus (\Kold{036Klh1}\cap K_1))\right) &<  2 m_1^{\tilde{\alpha}}\left(\psi^{-1}((\mathbf{G}_2/\Gamma_2)\setminus (\Kold{036Klh1}\cap K_1))\right)\\
&=  2m_2\left((\mathbf{G}_2/\Gamma_2)\setminus(\Kold{036Klh1}\cap K_1)\right)<3\eta.
\end{aligned}
\end{equation*}
This together with $m_1(\Kold{017KmP1})>1-6\cdot10^{-5}\eta$ gives that 
\[
m_1(\Kold{035Klm1})>1-6\cdot10^{-5}\eta-3\eta>1-10\eta,
\]
and thus completes the proof of Lemma~\ref{lem:main}.
\end{proof}

\subsection{Proof of Lemma~\ref{lem:longHammingNew}}\label{sec:longHammingNew}
\subsubsection{Preparation for the proof}
In order to prove Lemma~\ref{lem:longHammingNew}, we first prove a weaker result, namely Lemma~\ref{lem:longHamming} below, that roughly gives that if 
two ``good'' points of~$\mathbf{G}_2/\Gamma_2$ are $(\delta,\epsilon,R)$-two sides matchable for $R$ sufficiently large, then we can find orbit segments along the orbits of these two points of lengths $R/C$ for some $C>1$ depends only on $\mathbf{G}_2$ so that these orbits segments stay close in the universal cover.
For $\mathbf{G}_2=\SL_2(\R)\times \SL_2(\R)$, such a result was a key ingredient in Ratner's proof the product of horocycle flows is not loosely Kronecker in \cite{ratner1979cartesian}. Later Kanigowski, Vinhage and the second named author extended this to general $\mathbf{G}_2$ in \cite{kanigowski2021kakutani}, but in a weaker form that gives orbit segment that match in the universal cover of size $\gg_\eta R^{1-\eta}$ for any $\eta\in(0,1)$ (instead of $R/C$). However, for our purposed we need the stronger version. We prove Lemma~\ref{lem:longHamming} in \S\ref{sec:longHammingPre} and \S\ref{sec:longHamming}.

\begin{Lemma}\label{lem:longHamming}
For any $\eta\in(0,10^{-3})$, there exist a compact subset $\Kold{036Klh1}\subset~ \mathbf{G}_2/\Gamma_2$ with $m_2(\Kold{036Klh1})\geq1-\eta$ and $\deold{035delm1}>0$, $\Rold{036Rlh1}>1$ such that the following holds.  If $x \in \mathbf{G}_2/\Gamma_2$ and $y\in \Kold{036Klh1}$ are two points that are $(\delta,\epsilon,R)$-two sides matchable for $\delta\in(0,\deold{035delm1})$, $\epsilon \in (0,10^{-40})$, $R\geq\Rold{036Rlh1}$ and with a $C^1$-matching function~$h$,  then there exist lifts $\tilde{x}, \tilde{y} \in \mathbf{G}_2$ with $d_{\mathbf{G}_2}(\tilde{x},\tilde{y})<\delta$, $s_R\leq -R/2$, $L_R\geq3R/2$ and  $\gamma_R\in\Gamma_2$  such that 
\begin{equation*}
    \begin{aligned}
   u_{h(s_R)}^{(2)}.\tilde{x}&\in\Kak(L_R, \Cold{038Clh1}\delta)u_{s_R}^{(2)}\tilde{y}\gamma_R,\\
   u_{h(s_R+L_R)}^{(2)}.\tilde{x}&\in\Kak(L_R, \Cold{038Clh1}\delta)u_{s_R+L_R}^{(2)}\tilde{y}\gamma_R.
    \end{aligned}
\end{equation*}
Here $\Cnew\label{038Clh1}>0$\index{$\Cold{038Clh1}$, Lemma~\ref{lem:longHamming}} is a constant depending only on $\mathbf{G}_2$.

\end{Lemma}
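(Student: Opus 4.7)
The plan is to prove Lemma~\ref{lem:longHamming} by decomposing the matching interval according to the element of $\Gamma_2$ used to lift the $\mathbf{G}_2/\Gamma_2$-matching to the universal cover $\mathbf{G}_2$, and then isolating a single dominant lift on a large sub-interval via a combinatorial argument combined with polynomial separation.

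First I would choose $\Kold{036Klh1}\subset\mathbf{G}_2/\Gamma_2$ using a Lusin/pointwise-ergodic argument so that every $y\in\Kold{036Klh1}$ has an orbit spending a fraction close to~$1$ of its time in a fixed compact set of uniformly bounded injectivity radius. Using $0\in A$ and $h(0)=0$, lift $x,y$ to $\tilde{x},\tilde{y}\in\mathbf{G}_2$ with $d_{\mathbf{G}_2}(\tilde{x},\tilde{y})<\delta$. For each $t\in A$, the injectivity radius control ensures there is an essentially unique $\gamma_t\in\Gamma_2$ with $d_{\mathbf{G}_2}(u_{h(t)}^{(2)}.\tilde{x},u_t^{(2)}.\tilde{y}\gamma_t)<\delta$; set $A_\gamma=\{t\in A:\gamma_t=\gamma\}$. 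For each $\gamma$ that appears, translating time to a point of $A_\gamma$ and applying Lemma~\ref{lem:numConnectedCom} yields a cover of $A_\gamma$ by at most $3(\dim\mathfrak{g}_2)^2$ disjoint closed intervals $[b_i^{(\gamma)},d_i^{(\gamma)}]$, on each of which Proposition~\ref{prop:matchKak} produces a Kakutani-Bowen matching at scale equal to the interval length and constant $O(\delta)$. This yields a partition of $[-R,R]$ into these ``good'' intervals (labelled by $\gamma$) together with ``bad'' gaps (the complement of $A$ together with the sheet-change boundaries), with total good length at least $(1-\epsilon)2R$.

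The key step, and the main obstacle, is a polynomial separation estimate of the type required by Lemma~\ref{lem:longHammingComb}: if $t_1$ and $t_2$ lie in good intervals carrying different labels $\gamma_1\neq\gamma_2$, then $|t_2-t_1|\geq C\min(d_i^{(\gamma_1)}-b_i^{(\gamma_1)},\,d_j^{(\gamma_2)}-b_j^{(\gamma_2)})^{1+w}$ for some $w>0$. This is precisely the role of Lemma~\ref{lem:longHammingSep}, and I would prove it using Chevalley's theorem (Theorem~\ref{thm:Chevalley}) together with the normal-core analysis of~\S\ref{sec:normalcoreNBHD}: for a Chevalley representation $\rho:\mathbb{G}_2\to\SL(V)$ with stabilizer $\mathbb{L}_2$, the element $\gamma_2\gamma_1^{-1}$ cannot lie close to the normal core $\mathbf{N}_2$ of $\mathbf{L}_2$ (as this would force, via Lemma~\ref{lem:HNint}, a centralizing component in the $\mathbf{H}$-direction inconsistent with two genuinely different lifts of the same matching), and Corollary~\ref{cor:globalRepEst} then bounds the measure of base points whose $\rho$-orbit gives $\rho(\gamma_2\gamma_1^{-1}g^{-1}).v_{\mathbf{L}_2}$ close to $\rho(g^{-1}).v_{\mathbf{L}_2}$. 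This closeness is a polynomial condition in $t$, so its failure on time-scales $|t_2-t_1|$ translates through polynomial $u_t$-conjugation into the required $(1+w)$-power separation.

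With this separation in hand, Lemma~\ref{lem:longHammingComb} provides a single dominant label $\gamma_R$ whose associated intervals have total length $\geq 3R/2$ inside $[-R,R]$. Taking $s_R$ to be the leftmost and $s_R+L_R$ the rightmost endpoints of the $\gamma_R$-intervals forces $s_R\leq -R/2$ and $L_R\geq 3R/2$. Finally, to upgrade the piecewise Kakutani-Bowen matchings at the various small sub-interval scales to the single $\Kak(L_R,\Cold{038Clh1}\delta)$ at the large scale $L_R$ demanded by the statement, I would combine Proposition~\ref{prop:matchKak} on the individual $\gamma_R$-intervals with a Remez-type Brudnyi inequality (Theorem~\ref{thm:brudnyiLocal} and Lemma~\ref{lem:modBru}) applied to the polynomial $t\mapsto u_t\,g^{\mathfrak{z}} g^{\tr}\,u_{-t}$: closeness on a subset of $[s_R,s_R+L_R]$ of density $\geq 1-O(\epsilon)$ upgrades polynomially to the coordinate bounds defining membership in $\Kak(L_R,\Cold{038Clh1}\delta)$, at both endpoints $s_R$ and $s_R+L_R$.
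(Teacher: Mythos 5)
Your overall skeleton is the same as the paper's: restrict to matched times whose $y$-orbit lies in a fixed compact set, sort those times by the lattice element used to lift the matching, control each label by at most $3(\dim\mathfrak g_2)^2$ intervals via Lemma~\ref{lem:numConnectedCom} and Proposition~\ref{prop:matchKak}, obtain the $(1+w)$-power separation between different labels from Lemma~\ref{lem:longHammingSep} (proved exactly as you sketch, via Chevalley, the normal-core analysis of Lemma~\ref{lem:HNint}, and Corollary~\ref{cor:globalRepEst}), and then invoke the combinatorial Lemma~\ref{lem:longHammingComb} to isolate a dominant label $\gamma_R$ whose intervals have total length at least $3R/2$.

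The genuine gap is in your final upgrade step. Membership in $\Kak(L_R,C\delta)$ requires, by Definition~\ref{def:Kakball}, not only that the transverse polynomial coordinates of the connecting element $g$ be $O(\delta)$ along $[s_R,s_R+L_R]$, but also the $\mathfrak h$-coordinate bounds, in particular $\absolute{\vartheta_{\bou}(g)}\lesssim\delta/L_R$; a Remez-type estimate applied to $t\mapsto u_t g^{\mathfrak z}g^{\tr}u_{-t}$ says nothing about these, and your write-up never extracts them (they come from the $\SL_2$-computation of Lemma~\ref{lem:matchingFunction}, using that the same $\gamma_R$ matches at two times separated by $\gtrsim R$). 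Moreover, the tools you cite are the wrong Remez statements: Theorem~\ref{thm:brudnyiLocal} and Lemma~\ref{lem:modBru} are local, multivariate inequalities on the group variety (balls of radius at most a fixed small constant), built for the measure estimates of \S\ref{sec:MesureEstimatesExcept}, and do not apply to a time interval of length $\sim R$; you would need the classical univariate Remez/Brudnyi--Ganzburg inequality, and even then the density of matched times inside $[s_R,s_R+L_R]$ is only a fixed constant near $3/4$ (since $L_R$ may be close to $2R$ while the dominant family has guaranteed length only $3R/2$), not $1-O(\epsilon)$ as you assert. The paper sidesteps all of this by also using the \emph{first} conclusion of Lemma~\ref{lem:longHammingComb} --- a single dominant interval of length at least $R/(32(\dim\mathfrak g_2)^2)$ --- on which Proposition~\ref{prop:matchKak} already gives a Kakutani--Bowen matching at a scale comparable to $R$; the elementary interval estimate Lemma~\ref{lem:polCoe} (no Remez needed) then propagates the Bowen-ball bounds and the $\bou$-coordinate bound $\lesssim\delta/R$ to the full window, cf.~\eqref{eq:RKakBall}, after which the closeness at the two extreme endpoints of the dominant family yields the two stated memberships. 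Your argument is repairable along these lines, but as written the passage from ``dominant label of total length $3R/2$'' to ``$\Kak(L_R,C\delta)$ at both endpoints'' is not justified.
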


\medskip

If two points are two sides matchable with a \textbf{same} matching function for many (or even better, infinitely many) time moments, then we obtain more delicate relations between these points as in the following corollary of Lemma~\ref{lem:longHamming}.  This corollary implies Lemma~\ref{lem:longHammingNew} and will be used in \S\ref{sec:comgeo}. The proof of Corollary~\ref{cor:inifinteMatching} assuming Lemma~\ref{lem:longHamming} is given in \S\ref{sec:infiniteMatching}. Here $\mathfrak{w}$ is the subspace of $\mathfrak g_2$ defined in~\eqref{eq:wSpace}. 

\begin{Corollary}\label{cor:inifinteMatching}
    Under the same assumptions as in Lemma~\ref{lem:longHamming}. If $x,y$ are $(\delta,\epsilon,R)$-two sides matchable for every $R$ satisfying $L_1\leq R\leq L_2$ with a fixed $C^1$-matching function $h$ and  $L_1\geq\Rold{036Rlh1}$, then there exists  $t_0\in\R$ and $\gamma\in\Gamma_2$ such that 
    \[\tilde{x}\in\Kak(L_2,\Cold{039infiniteMatch}\delta)u_{t_0}^{(2)}\tilde{y}\gamma \qquad \absolute{t_0}\leq0.01L_1.
    \]   
    Moreover, there are $\tilde s\leq -L_2/2$, $\tilde L\geq3L_2/2$ so that
    \begin{equation*}
    \begin{aligned}
        u^{(2)}_{h(\tilde s)}\tilde{x}\in&\Kak(\tilde L,\Cold{039infiniteMatch}\delta)u_{\tilde s}^{(2)}\tilde{y}\gamma,\\
u_{h(\tilde s + \tilde L)}^{(2)}\tilde{x}\in&\Kak(L, \Cold{039infiniteMatch}\delta)u_{\tilde s + \tilde L}^{(2)}\tilde{y}\gamma.
\end{aligned}
    \end{equation*}
    where $\tilde{x}$ and $\tilde{y}$ are lifts of $x$ and $y$ respectively, and $\Cnew\label{039infiniteMatch}>\Cold{038Clh1}$\index{$\Cold{039infiniteMatch}$, Corollary~\ref{cor:inifinteMatching}} is a constant depends only on $\mathbf{G}_2$.
    
   If $L_2=+\infty$, then there exist unique 
   \begin{equation*}
       \begin{aligned}
           f(x,y)&\in\R\text{ satisfying } \absolute{f(x,y)}\leq\Cold{039infiniteMatch}\delta,\\
           c(x,y)&\in B_{\Cold{039infiniteMatch}\delta}^{\mathbf{G}_2}\cap \exp(\mathfrak{w}),\\ 
           t_0(x,y)&\in[-L_1,L_1]
       \end{aligned}
   \end{equation*}
   such that
   \[
   y=c(x,y)a_{f(x,y)}^{(2)}u_{t_0(x,y)}^{(2)}.x.
   \]
\end{Corollary}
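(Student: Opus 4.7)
The plan is to apply Lemma~\ref{lem:longHamming} iteratively across the scales $R \in [L_1, L_2]$, exploiting that the matching function $h$ is fixed. First I would fix close lifts $\tilde x, \tilde y \in \mathbf G_2$ of $x, y$ with $d_{\mathbf G_2}(\tilde x, \tilde y) < \delta$; given the lower bound on the injectivity radius on $\Kold{036Klh1}$, these lifts are essentially unique. For each finite $R \in [L_1, L_2]$, Lemma~\ref{lem:longHamming} provides $s_R \leq -R/2$, $L_R \geq 3R/2$, and $\gamma_R \in \Gamma_2$ satisfying the Kakutani--Bowen relation at $s_R$ and $s_R + L_R$. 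Comparing the propagated approximate equalities $\tilde x \approx u_{\star}^{(2)} \tilde y \gamma_R$ at a small time scale across different choices of $R$ and invoking the injectivity radius bound on $\Kold{036Klh1}$ forces $\gamma_R$ to be independent of $R$; denote the common value by $\gamma$. The second displayed assertion is then immediate upon applying the lemma at $R = L_2$, with $\tilde s = s_{L_2}$, $\tilde L = L_{L_2}$, and $\Cold{039infiniteMatch} \geq \Cold{038Clh1}$ to absorb the constant.

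Second, to obtain a displacement $t_0$ with $\absolute{t_0} \leq 0.01 L_1$, I would apply the lemma at the \emph{smallest} scale $R = L_1$ and propagate the resulting Kak relation at $s_{L_1}$ to time $0$ via Proposition~\ref{prop:KakutaniBallStayClose}. This yields
\[
d_{\mathbf G_2}\bigl(u_{\phi(t) + h(s_{L_1})}^{(2)} \tilde x, \; u_{t + s_{L_1}}^{(2)} \tilde y \gamma \bigr) < 10 \Cold{038Clh1} \delta
\]
for $\absolute t \leq L_{L_1}$. Evaluating at $t = -s_{L_1}$ (permissible since $\absolute{s_{L_1}} \leq L_1 \leq L_{L_1}$) gives $\tilde x$ within $O(\delta)$ of $u_{t_0}^{(2)} \tilde y \gamma$ for an explicit $t_0 = -h(s_{L_1}) - \phi(-s_{L_1})$. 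The bound $\absolute{h(s)-s} < \epsilon R$ from the matching assumption together with the near-identity estimate on $\phi_g$ in Lemma~\ref{lem:matchingFunction} and the smallness $\epsilon < 10^{-40}$ yield $\absolute{t_0} \leq 0.01 L_1$.

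Third, to promote this approximate equality at time $0$ to a Kakutani--Bowen relation of radius $L_2$, I would apply the same propagation argument to the $R = L_2$ Kak relation: the points $\tilde x$ and $u_{t_0}^{(2)} \tilde y \gamma$ stay $O(\delta)$-close, after adjusting by the relevant matching function, throughout an interval $[s_{L_2} - L_{L_2}, s_{L_2} + L_{L_2}]$ that contains $[-L_2, L_2]$ comfortably, since $L_{L_2} \geq 3L_2/2$ and $\absolute{s_{L_2}} \leq L_2$. Proposition~\ref{prop:matchKak} then converts this sustained proximity into $\tilde x \in \Kak(L_2, \Cold{039infiniteMatch} \delta) u_{t_0}^{(2)} \tilde y \gamma$ for suitably enlarged $\Cold{039infiniteMatch}$.

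Finally, for the $L_2 = +\infty$ case, write $\tilde x = g \cdot u_{t_0}^{(2)} \tilde y \gamma$ and let $R \to \infty$. The Kak-ball conditions $\absolute{\vartheta_{\bou}(g)} < \Cold{039infiniteMatch} \delta / R$ and $g^{\mathfrak z} g^{\tr} \in \Bow(R, \Cold{039infiniteMatch} \delta)$ valid for all $R$ force $\vartheta_{\bou}(g) = 0$ and $g^{\mathfrak z} g^{\tr} \in C_{\mathbf G_2}(\mathbf U_2)$; by the chain basis description of the centralizer (only the $i = 0$ rungs survive), this means $g^{\mathfrak z} g^{\tr} \in \exp(\mathfrak w)$. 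Using that $\exp(\mathfrak w)$ commutes with $\mathbf U_2$ and that $\mathbf A_2$ normalizes $\mathbf U_2$, I would rearrange $y = u_{-t_0}^{(2)} g^{-1}.x$ into the form $y = c(x,y) a_{f(x,y)}^{(2)} u_{t_0(x,y)}^{(2)}.x$ with $c(x,y) \in \exp(\mathfrak w) \cap B^{\mathbf G_2}_{\Cold{039infiniteMatch}\delta}$, $\absolute{f(x,y)} \leq \Cold{039infiniteMatch} \delta$, and $\absolute{t_0(x,y)} \leq L_1$. Uniqueness follows from Lemma~\ref{lem:groupDecom} applied to the decomposition $\mathfrak g_2 = \mathfrak w \oplus \ba_2\R \oplus \bu_2\R \oplus (\text{transverse complement})$.

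I expect the main obstacle to be the consistency across scales: showing that a \emph{single} lattice element $\gamma$ is returned by Lemma~\ref{lem:longHamming} across all scales $R \in [L_1, L_2]$, which requires converting Kak relations at widely separated scales into approximate identities that can be compared using only the (bounded below) injectivity radius on $\Kold{036Klh1}$, together with carefully tracking the cumulative error from $\phi_g$ and $h$ when propagating to the centred relation at time~$0$.
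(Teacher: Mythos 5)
Your overall architecture (apply Lemma~\ref{lem:longHamming} at every scale, identify a single lattice element, recenter the relation at time $0$, and pass to the limit when $L_2=+\infty$) is the same as the paper's: your recentering step with $\absolute{t_0}\leq 0.01L_1$ is essentially the paper's Lemma~\ref{lem:polyDivergence}, and your treatment of the $L_2=+\infty$ case (forcing $\vartheta_{\bou}=0$, landing in $\exp(\mathfrak w)$) also matches, except that for uniqueness Lemma~\ref{lem:groupDecom} alone is not enough since the factor $u^{(2)}_{t_0}$ with $\absolute{t_0}\le L_1$ is not small; the paper first applies $a^{(2)}_{-s}$ for large $s$ and only then invokes Lemma~\ref{lem:groupDecom}. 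The genuine gap is the step you yourself flag as the main obstacle but then dispose of in one sentence: the claim that comparing the propagated relations $\tilde x\approx u^{(2)}_{t_R}\tilde y\gamma_R$ ``at a small time scale'' and invoking the injectivity radius on $\Kold{036Klh1}$ forces $\gamma_R$ to be independent of $R$. This does not work as stated. After recentering, the shifts only satisfy $\absolute{t_R}\leq 0.01R$, so they grow linearly in the scale; comparing scales $R_1\ll R_2$ gives $\tilde y\,\gamma_{R_1}\gamma_{R_2}^{-1}\tilde y^{-1}=u^{(2)}_{-t_{R_1}}g_1^{-1}g_2\,u^{(2)}_{t_{R_2}}$ with $g_1,g_2$ of size $O(\delta)$, and because $\absolute{t_{R_2}-t_{R_1}}$ can be of order $R_2$ (and conjugation by long unipotent times shears the error), this element need not be anywhere near the identity, so a lower bound on the injectivity radius at a single point rules out nothing. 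Even if the error stayed $O(\delta)$, you would be comparing $\gamma_{R_1}\gamma_{R_2}^{-1}$ against a long displacement $u^{(2)}_{t_{R_2}-t_{R_1}}$, and excluding a nontrivial lattice element there needs a ``no short nontrivial return along $\mathbf U_2$'' property of the base point (the compact set $K_1$ of item \ref{eq:returnLar}) \emph{together with} a fixed bound on $\absolute{t_{R_2}-t_{R_1}}$, which is exactly what is not yet available for widely separated scales.

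The paper closes this gap by going back inside the proof of Lemma~\ref{lem:longHamming} rather than using only its statement: it chains through scales $\lambda_0=L_1<\lambda_1<\dots<\lambda_{q_0}=L_2$ with $\lambda_{q+1}<\tfrac98\lambda_q$, and uses that at each scale the selected good family covers at least $\tfrac34$ of the corresponding interval (equation \eqref{eq:3/4RLarge}). Since consecutive scales are comparable and the matching function $h$ is the same for all of them, the good families at scales $\lambda_q$ and $\lambda_{q+1}$ must meet, and at a common matching time the associated deck transformation relating $u^{(2)}_{h(t)}\tilde x$ to $u^{(2)}_{t}\tilde y$ is unique (here the injectivity radius argument does apply, because both relations are taken at the same time with the same $h$); this yields $J_{(q)}\overset{e}{\leadsto}J_{(q+1)}$, hence $\gamma_0=\dots=\gamma_{q_0}$, and then the uniform bound $\absolute{t_p-t_q}\le\kappa\delta$ and the recentered Kakutani--Bowen relation at radius $L_2$ follow. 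As written, your single-$\gamma$ claim is unsupported, and the later steps (including recentering the scale-$L_2$ relation at the scale-$L_1$ shift $t_0$, which implicitly uses $\absolute{t_{L_1}-t_{L_2}}=O(\delta)$) depend on it; to repair the proof you would need to reproduce an argument equivalent to this cross-scale chaining.
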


\subsubsection{Proof of Lemma~\ref{lem:longHammingNew} assuming Lemma~\ref{lem:longHamming}}
Let $L_1=\Rold{036Rlh1}$ and $L_2=R$ in Corollary~\ref{cor:inifinteMatching}, we complete the proof of Lemma~\ref{lem:longHammingNew}.\hfill\qed.

\subsection{Preparations for the proof of Lemma~\ref{lem:longHamming}}\label{sec:longHammingPre}
\subsubsection*{Outline and preparation} The proof of Lemma~\ref{lem:longHamming} depends on a combinatorial lemma, namely Lemma~\ref{lem:longHammingComb}, and a new lemma, Lemma~\ref{lem:longHammingSep} below. Section \ref{sec:longHammingSep} is devoted to the proof of Lemma~\ref{lem:longHammingSep}. The idea of the proof of of Lemma~\ref{lem:longHamming} is to construct a family of intervals satisfying the conditions in Lemma~\ref{lem:longHammingComb} and then obtain at least one long ``good'' interval whose size is comparable to $R$. We remark that our Lemma~\ref{lem:longHammingSep} is a generalization of Lemma~4 in \cite{ratner1979cartesian}. 

\subsubsection{Two relations}\label{sec:twoRelations}
Let $K\subset \mathbf{G}_2/\Gamma_2$ be a compact set, $x\in \mathbf{G}_2/\Gamma_2$, $y\in K$ and $\epsilon<~\operatorname{inj}(K)$. Suppose that for some $t,s\geq0$, we have 
\[
d_{\mathbf{G}_2/\Gamma_2}(x,y)<\epsilon, \ \ d_{\mathbf{G}_2/\Gamma_2}(u^{(2)}_t.x,u^{(2)}_s.y)<\epsilon, \text{ and }u^{(2)}_s.y\in K.
\]
Lift $x$ and $y$ to points $\tilde x$ and $\tilde y$ in $\mathbf{G}_2$, so that $d_{\mathbf{G}_2}(\tilde x,\tilde y)<\epsilon$. Then there exists a unique $\gamma\in\Gamma_2$ such that 
\[
d_{\mathbf{G}_2}(u^{(2)}_t.\tilde x,u^{(2)}_s.\tilde y\gamma)<\epsilon.
\]
Note that the conjugacy class $[\gamma]$ of $\gamma$ does not depend on the lifts $\tilde x,\tilde y$ we choose for $x$ and $y$ (as long as $d_{\mathbf{G}_2}(\tilde x,\tilde y)<\epsilon$).
If $x,y,s,t,[\gamma]$ satisfy the above we shall write
\[(x,y)\overset{[\gamma]}{\leadsto}(u^{(2)}_t.x,u^{(2)}_s.y);
\]
we also write 
\begin{itemize}
    \item $(x,y)\overset{e}{\leadsto}(u^{(2)}_t.x,u^{(2)}_s.y)$ for $(x,y)\overset{[e]}{\leadsto}(u^{(2)}_t.x,u^{(2)}_s.y)$,
    \item $(x,y)\overset{\neq e}{\leadsto}(u^{(2)}_t.x,u^{(2)}_s.y)$ for  $(x,y)\overset{[\gamma]}{\leadsto}(u^{(2)}_t.x,u^{(2)}_s.y)$ for $\gamma\neq e$.
\end{itemize}

\subsubsection{Statement of Lemma~\ref{lem:longHammingSep}}\label{first statement of lemma 8.1}
For convenience, we shall use in this section the notation $x_t=u_t^{(2)}.x$ and $y_t=u_t^{(2)}.y$ for $x,y\in \mathbf{G}_2/\Gamma_2$. 

\medskip

Let $\tilde{R}>0$ such that the compact set $\Knew\label{0401Kbase}\subset \mathbf{G}_2/\Gamma_2$\index{$\Kold{0401Kbase}$} defined as 
    \begin{equation*}
        \Kold{0401Kbase}=\{x\in \mathbf{G}_2/\Gamma_2:d_{\mathbf{G}_2/\Gamma_2}(x,e)\leq\tilde{R}\}
    \end{equation*} 
satisfy $m_2(\Kold{0401Kbase})>1-10^{-90}$. Denote 
\begin{equation*}
r_{\Kold{0401Kbase}}=\min\left(\operatorname{inj}(\Kold{0401Kbase}),10^{-10}\right).
\end{equation*} 
Let $\Onew\label{040KbaseNew}$\index{$\Oold{040KbaseNew}$} be the interior of $\Kold{0401Kbase}$. Since $\Oold{040KbaseNew}$ is measurable, the regularity of measure $m_2$ gives that there exists a compact set 
\begin{equation}\label{eq:kpreHat}
\Kold{0402KbaseNewNew}\subset\Oold{040KbaseNew}
\end{equation}
such that $m_2(\Knew\label{0402KbaseNewNew})>1-10^{-89}$\index{$\Kold{0402KbaseNewNew}$, Equation~\eqref{eq:kpreHat}}.

\begin{lem:longHammingSep}
Given an $\epsilon\in(0,10^{-40})$, there exist a compact set $\Knew\label{0403KlHS2}\subset \mathbf{G}_2/\Gamma_2$\index{$\Kold{0403KlHS2}$, Lemma~\ref{lem:longHammingSep}} with $m_2(\Kold{0403KlHS2})>1-\epsilon$ and constants $\Rnew\label{040RlHS1},\Cnew\label{040ClHS1},\wnew\label{040wlHS1}>0$\index{$\Rold{040RlHS1}$, Lemma~\ref{lem:longHammingSep}}\index{$\Cold{040ClHS1}$, Lemma~\ref{lem:longHammingSep}}\index{$\wold{040wlHS1}$, Lemma~\ref{lem:longHammingSep}} such that for any  sufficiently small $\delta >0$, if the following conditions hold
\begin{enumerate}
    \item $y\in \Kold{0403KlHS2}$ and  $x\in\Kak(R,\delta,y)$ for some $R\geq \Rold{040RlHS1}$, 
    \item $y_s\in \Oold{040KbaseNew}$ and  $x_{t}\in\Kak(R,\delta,y_s)$ for some $t,s\geq R$,
    \item $(x,y)\overset{\neq e}{\leadsto}(x_{t},y_s)$,
\end{enumerate}
then we have
\[
\max(t,s)> \Cold{040ClHS1}R^{1+\wold{040wlHS1}}.
\]
\end{lem:longHammingSep}

\noindent
This lemma will be used to show that the ``good'' intervals are well separated from one another. The proof of Lemma~\ref{lem:longHammingSep} will be given in~\S\ref{sec:longHammingSep}.

\subsection{Proof of Lemma~\ref{lem:longHamming} assuming Lemma~\ref{lem:longHammingSep}}\label{sec:longHamming}
In order to prove Lemma~\ref{lem:longHamming}, we need to construct a collection of families of ``good'' intervals on the time interval $[-R,R]$ of points $y$ in a suitable set of large measure, and then show that all the assumptions of Lemma~\ref{lem:longHammingComb} are satisfied. 

The first assumption of Lemma~\ref{lem:longHammingComb}, namely that for each point $y$ in the good set, the intervals we will construct cover most of $[-R,R]$, will follow from the construction algorithm of the intervals. 

In order to fulfill the second assumption of Lemma~\ref{lem:longHammingComb}, namely that the distance between two good intervals from different families are large, we use Lemma~\ref{lem:longHammingSep}. To verify that Lemma~\ref{lem:longHammingSep} is applicable, additional work is needed.

\subsubsection{Construction of constants and sets}\label{sec:constants and sets}
Fix an $\eta\in(0,10^{-3})$. The choices of constants and sets are specified as following:
\begin{enumerate}[label=\textup{(\roman*)}]
    \item Let $\Kold{0403KlHS2},\Rold{040RlHS1},\Cold{040ClHS1},\wold{040wlHS1}$ be defined as in Lemma~\ref{lem:longHammingSep} for $\epsilon=10^{-50}$.
    \item\label{item:choiceN1longHamming} Let $N_1\geq\Rold{040RlHS1}$ such that for every $R\geq N_1$, we have  
    \begin{equation*}
        \frac{1}{2}R^{1+\wold{040wlHS1}}\geq R.
    \end{equation*}
    \item Let $\Rold{021RlC1}$ be defined as in Lemma~\ref{lem:longHammingComb} for $w=\frac{\wold{040wlHS1}}{2}$, $C=\frac{\Cold{040ClHS1}}{2}$ and $n_{\max}=3(\dim\mathfrak{g}_2)^2$. Then we define   
    \[
    N_2=\max(2\Rold{021RlC1},10^{30}(\dim\mathfrak{g}_2)^2N_1).
    \]
    \item\label{eq:returnLar}  Let $K_1\subset \mathbf{G}_2/\Gamma_2$ be a compact set with $m_2(K_1)\geq 1-10^{-20}$ and $\kappa_1\in(0,10^{-10})$ such that the following holds
    \begin{itemize}
        \item For any $y\in K_1$, $x \in \mathbf{G}_2/\Gamma_2$ with $d_{\mathbf{G}_2/\Gamma_2}(x,y)<\kappa_1$ and $\tilde{x}$ and $\tilde{y}$ are lifts of $x$ and $y$ satisfying $d_{\mathbf{G}_2}(\tilde{x},\tilde{y})<\kappa_1$, if $s$, $t$ and $e \neq \gamma \in \Gamma_2$ satisfy
    \begin{equation*}
        d_{\mathbf{G}_2}(u_{t}^{(2)}.\tilde{x},u_{s}^{(2)}.\tilde{y}\gamma)<\kappa_1,\qquad u_s^{(2)}.y\in\Kold{0401Kbase},
    \end{equation*}
    then $\max(\absolute{t},\absolute{s})> 2N_2$. 
    \end{itemize}

    \item\label{item:ergodicGoodSety} Recall that $\Kold{0402KbaseNewNew}$ is defined in \eqref{eq:kpreHat}. Let 
\[K_2=\Kold{0402KbaseNewNew}\cap\Kold{0403KlHS2}\cap K_1.
    \]
    By the Pointwise Ergodic Theorem for $u_t^{(2)}$ and $\chi_{K_2}$, there exist a compact set $\Kold{036Klh1}\subset \mathbf{G}_2/\Gamma_2$ with $m_2(\Kold{036Klh1})>1-10^{-4}\eta$  and $N_3>0$ such that for every $y\in \Kold{036Klh1}$ and $R\geq N_3$
    \begin{equation*}
    \absolute{\{t\in[-R,R],y_t\in K_2\}}\geq(1-10^{-10})2R.
    \end{equation*}
    \item Finally let
    \begin{gather*}
    \deold{035delm1}=(10^{100}\dim\mathfrak{g}_2)^{-10}\Cold{038Clh1}^{-2}\min\left(\kappa_1,\eold{014EMa}, 10^{-40}\eta\right),\\
    \Rold{036Rlh1}=\max\left(N_2,N_3,10^{30}\right),
    \end{gather*}
    where $\eold{014EMa}$ is defined in Lemma~\ref{lem:matchingFunction}.
\end{enumerate}

\subsubsection{Preparation for the construction algorithm} Fix a $\delta\in(0,\deold{035delm1})$ and an $\epsilon\in(0,10^{-40})$. Suppose $y\in \Kold{036Klh1}$, $x\in \mathbf{G}_2/\Gamma_2$ and $(x,y)$ are $(\delta,\epsilon,R)$-two sides matchable with a $C^1$-matching function $h$ and matching set $A$, where $R\geq \Rold{036Rlh1}$. 
Let $J_y=\{t\in[-R,R]:y_t\in K_2\}$ and set $\tilde{A}={A}\cap J_y$.
It follows from item \ref{item:ergodicGoodSety} on p.~\pageref{item:ergodicGoodSety} that $l(\tilde{A})\geq(1-10^{-9})2R$.

\medskip

For any $s\in\tilde{A}$, let $\tilde{x}_{h(s)},\tilde{y}_s\in \mathbf{G}_2$ be lifts of $x_{h(s)}$ and $y_s$ such that $d_{\mathbf{G}_2}(\tilde{x}_{h(s)},\tilde{y}_s)<\delta$. Let 
\[
I_{\tilde{x}_{h(s)},\tilde{y}_s}=\{t\geq 0:d_{\mathbf{G}_2}(u_{h(t)}.\tilde{x},u_{t}.\tilde{y})<4\delta\}.
\]
By Lemma~\ref{lem:numConnectedCom}, for every $s\in\tilde{A}$, there exist $k(s)$, $b_i(s)$ and $d_i(s)$ satisfying
\begin{gather*}
    1\leq k(s)\leq3(\dim\mathfrak{g}_2)^2,\\
        b_{i}(s)\leq d_{i}(s)<b_{i+1}(s)\leq d_{i+1}(s) \text{ for }i=1,\ldots,k(s)-1,
\end{gather*}
such that
\begin{equation}\label{eq:ksDef}
I_{\tilde{x}_{h(s)},\tilde{y}_s}\subset\bigcup_{i=1}^{k(s)}[b_{i}(s),d_{i}(s)].
\end{equation}
We use the following algorithm,  to construct a family of subintervals of the time interval $[-R,R]$ covering $\tilde{A}$.

\subsubsection{Algorithm to construct a cover of $\tilde A$ by intervals}
Following are the steps of the algorithm.

\begin{algorithm}[H]
\caption{}\label{alg:cap}
\begin{algorithmic}
\State $m \gets 0$, $r_{-1}\gets-R-1$. \Comment{Initialization}

\While{$r_{m-1}< \sup\{t\in \tilde{A}\}$} \Comment{Repeat if $\tilde A$ is not yet covered}

\State $t_{m}\gets\inf\{t\in \tilde{A}:t>r_{m-1}\},$
\If{$m=0$}
\State $E_{m}\gets\emptyset$;
\Else
\State $E_{m}\gets\{i\in\{0,\ldots,m-1\}:(x_{h(t_i)},y_{t_i})\overset{e}{\leadsto}(x_{h(t_{m})},y_{t_{m}})\}.$
\EndIf

\\

\If{$E_{m}=\emptyset$}
\begin{alignat*}{3}
        r_{m}\gets&\sup\{t\in  \tilde{A}\cap&&[t_{m},t_{m}+d_{1}(t_{m})]: \\& &&d_{\mathbf{G}_2}(u_{h(t)-h(t_{m})}^{(2)}.\tilde{x}_{h(t_{m})},u_{t-t_{m}}^{(2)}.\tilde{y}_{t_{m}})\leq\delta\}.
\end{alignat*}\Comment{$d_{1}(t_{m})$ as in \eqref{eq:ksDef}}
\Else
    \State $i_m\gets\min(E_m)$.
    \State Find $q$ in the range $1 \dots k(t_{i_{m}})$ for which 
     \[
     t_{m}\in[t_{i_{m}}+b_{q}(t_{i_{m}}),t_{i_{m}}+d_{q}(t_{i_{m}})].
     \]
     \Comment{Such $q$ exists by \eqref{eq:ksDef}}
     \State Let
     \begin{alignat*}{3}
     r_{m}\gets&\sup\{t\in  \tilde{A}\cap&&[t_{i_{m}}+b_{q}(t_{i_{m}}),t_{i_{m}}+d_{q}(t_{i_{m}})]:\\
     & &&d_{\mathbf{G}_2}(u_{h(t)-h(t_{m})}^{(2)}.\tilde{x}_{h(t_{m})},u_{t-t_{m}}^{(2)}.\tilde{y}_{t_{m}})\leq\delta\}.
     \end{alignat*}
\EndIf
\State $B_{m}\gets[t_{m},r_{m}]$.
\State $m \gets m + 1$
\EndWhile
\State Output the cover $\{B_i:0\leq i \leq m\}$ of $\tilde A$.
\end{algorithmic}
\end{algorithm}

\subsubsection{Tidying the output of Algorithm~\ref{alg:cap}}
By the algorithm above, we obtain a family of pairwise disjoint intervals $\beta=\{B_0,\ldots, B_m\}$, and in our notations for every $B_i\in\beta$, $l(B_{i})=r_{i}-t_{i}$. Extending the definitions of $\overset{e}{\leadsto}$ and $\overset{\neq e}{\leadsto}$ in \S\ref{sec:twoRelations}, we say that for $B_i,B_j\in\beta$
\begin{enumerate}[label=\alph*)]
    \item $B_i\overset{e}{\leadsto}B_j$ if $(x_{h(t_i)},y_{t_i})\overset{e}{\leadsto}(x_{h(t_j)},y_{t_j})$;
    \item $B_i\overset{\neq e}{\leadsto}B_j$ if $(x_{h(t_i)},y_{t_i})\overset{\neq e}{\leadsto}(x_{h(t_j)},y_{t_j})$.
\end{enumerate}

\medskip

Since $k(s) \leq 3(\dim\mathfrak{g}_2)^2$ for any $s \in \tilde A$, the output of Algorithm~\ref{alg:cap} satisfies that for each $B_i\in\beta$, there are at most $3(\dim\mathfrak{g}_2)^2$ different intervals in $\beta$ which are $\overset{e}{\leadsto}$ with $B_i$. 

Define $\beta_1=\{B_i\in\beta:B_i\overset{e}{\leadsto} B_1\}$. Assume that $\beta_1,\ldots,\beta_k$ have already been constructed, let $B_{l_{k+1}}$ be the interval with the smallest index in $\beta\setminus(\cup_{j=1}^k\beta_j)$ and define
\[
\beta_{k+1}=\{B_i\in\beta\setminus(\cup_{j=1}^{k}\beta_j):B_i\overset{e}{\leadsto} B_{l_{k+1}}\}.
\]
This gives a decomposition of $\beta$
\begin{equation*}
    \beta=\cup_k\beta_k.
\end{equation*}
Then for every $B_i\in\beta_p,B_j\in\beta_q$, we have $B_i\overset{e}{\leadsto}B_j$ if and only if $p=q$.

\subsubsection{Constructing the partition for Lemma~\ref{lem:longHammingComb}}
 The main result of this subsection is the following claim:
\begin{Claim}\label{claim:assumption1longHammingComb}
    Let $\mathcal{I}=\{0\leq i\leq m:l(B_i)\geq N_1\}$, then
\begin{equation*}
\sum_{i\in \mathcal{I}}l(B_i)\geq(1-10^{-8})2R.
\end{equation*}
\end{Claim}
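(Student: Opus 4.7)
Algorithm~\ref{alg:cap} only terminates once $r_{m-1}\geq\sup\tilde{A}$, so every $t\in\tilde{A}$ belongs to some $B_i$. This gives
\[
\sum_{i=0}^{m} l(B_i)\;\geq\; l(\tilde{A})\;\geq\;(1-10^{-9})\cdot 2R,
\]
and since trivially $\sum_i l(B_i)\leq 2R$, the claim reduces to showing that the intervals of length $<N_1$ contribute at most $9\cdot 10^{-9}\cdot 2R$ to the total length. I would split the short intervals into \emph{leaders} ($E_i=\emptyset$) and \emph{followers} ($E_i\neq\emptyset$), and control each class separately.

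For followers, every short $B_j$ with $i_j=\min E_j$ is forced by Algorithm~\ref{alg:cap} to lie in one of the at most $3(\dim\mathfrak{g}_2)^2$ subintervals $[t_{i_j}+b_q(t_{i_j}),t_{i_j}+d_q(t_{i_j})]$ from~\eqref{eq:ksDef}. Within each such slot the associated pieces are pairwise disjoint and are separated by gaps on which either $\tilde A$ is avoided or the $\mathbf{G}_2$-distance strictly exceeds $\delta$; polynomial divergence (Lemma~\ref{lem:numConnectedCom} applied inside the slot) together with the density $l(\tilde A)\geq (1-10^{-9})2R$ will bound the aggregate short-follower length by $O\bigl((\dim\mathfrak g_2)^2\cdot N_1\bigr)$ per leader. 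Consequently, if $L$ denotes the number of short leaders in $[-R,R]$, the total short contribution is at most $[1+3(\dim\mathfrak{g}_2)^2]\, N_1\, L$ up to negligible slack.

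The decisive step is a bound on $L$ via Lemma~\ref{lem:longHammingSep}. Let $t_i<t_j$ be two short-leader times. Because $E_j=\emptyset$ we cannot have $(x_{h(t_i)},y_{t_i})\overset{e}{\leadsto}(x_{h(t_j)},y_{t_j})$, while both pairs are $\delta$-close in $\mathbf{G}_2/\Gamma_2$ with $y_{t_i},y_{t_j}\in K_2\subset\Oold{040KbaseNew}$ and $\delta\ll r_{\Kold{0401Kbase}}$; this forces $(x_{h(t_i)},y_{t_i})\overset{\neq e}{\leadsto}(x_{h(t_j)},y_{t_j})$. One then upgrades the mere $\delta$-closeness of the displacement $g=\tilde{x}_{h(t_i)}\tilde{y}_{t_i}^{-1}$ into a Kakutani-Bowen relation $x_{h(t_i)}\in\Kak(N_2,\Cold{0192Cnum}\delta,\,y_{t_i})$ (and likewise at $t_j$) by replacing the matching function $h$ by the best matching function $\phi_g$ from~\eqref{eq:bestMatchingFun} via Lemma~\ref{lem:numConnectedCom} and Proposition~\ref{prop:matchKak}. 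Now Lemma~\ref{lem:longHammingSep} applies and yields
\[
|t_j-t_i|\;\geq\;\Cold{040ClHS1}\, N_2^{1+\wold{040wlHS1}}\;\geq\;2N_2,
\]
by the choice of $N_2$. Hence short leaders are $N_2$-separated and $L\leq 2R/N_2+1$, so
\[
\sum_{\{i\,:\,l(B_i)<N_1\}} l(B_i)\;\leq\;\bigl[1+3(\dim\mathfrak{g}_2)^2\bigr]\,N_1\cdot L\;\leq\;10^{-9}\cdot 2R
\]
thanks to $N_2\geq 10^{30}(\dim\mathfrak{g}_2)^2 N_1$ and $R\geq\Rold{036Rlh1}\geq N_2$, completing the proof.

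The hardest part will be the upgrade from $d_{\mathbf{G}_2}$-closeness along $h$ to a genuine Kakutani-Bowen bracket, as the raw output of the algorithm is only $\delta$-closeness along an arbitrary $C^1$ matching, and one must switch to the algebraically preferred $\phi_g$ while carrying along the $\tilde A$-density and ensuring the conclusions of Lemma~\ref{lem:longHammingSep} (in particular, the hypothesis $y_s\in\Oold{040KbaseNew}$ at the far endpoint) are verified simultaneously for both leader times.
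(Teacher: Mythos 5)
Your opening reduction (the $B_i$ cover $\tilde A$, so it suffices to bound the total length of intervals shorter than $N_1$) is the same as the paper's, but the counting argument you then propose has two genuine gaps. First, the accounting is wrong: you bound the total short contribution by $[1+3(\dim\mathfrak g_2)^2]N_1L$ with $L$ the number of \emph{short leaders}, but a family whose leader is long can contain short followers (a long initial match, then a brief return inside a different slot $[t_{i}+b_q(t_{i}),t_{i}+d_q(t_{i})]$ of the same trivial-$\gamma$ class), and these short followers are not charged to any short leader. The quantity that must be controlled is the number of \emph{families} containing a short interval, not the number of short leaders. Second, Lemma~\ref{lem:longHammingSep} cannot be applied to a pair of short leaders as you intend: its hypotheses require $x_{h(t_i)}\in\Kak(R,\delta,y_{t_i})$ and the analogous condition at the far time for some $R\geq\Rold{040RlHS1}$, and to extract separation $\geq 2N_2$ from the conclusion $\max(t,s)>\Cold{040ClHS1}R^{1+\wold{040wlHS1}}$ you would need $R$ comparable to $N_2$. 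At a short leader the matching persists only for time $l(B_i)<N_1\ll N_2$ --- that is precisely why the interval is short --- so the Kakutani--Bowen hypothesis at scale $N_2$ (and possibly even at scale $\Rold{040RlHS1}$) is simply false, and no upgrade through $\phi_g$, Lemma~\ref{lem:numConnectedCom} or Proposition~\ref{prop:matchKak} can manufacture matching time that is not there; even at the largest available scale the conclusion would give separation far below $N_2\geq 10^{30}(\dim\mathfrak g_2)^2N_1$, so the bound $L\leq 2R/N_2+1$ does not follow.

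The paper does not use Lemma~\ref{lem:longHammingSep} in this claim at all (it is reserved for verifying assumption (2) of Lemma~\ref{lem:longHammingComb} for the \emph{long} intervals). The separation it needs comes from the much softer non-return property \ref{eq:returnLar} of the compact set $K_1$: whenever two $\delta$-close pairs at times of $\tilde A$ are related by some $\gamma\neq e$, the elapsed time exceeds $2N_2$, with no hypothesis on how long the matching lasts. With this, the paper argues: if all short intervals lie in one family, their total length is at most $3(\dim\mathfrak g_2)^2N_1\leq 10^{-30}R$; if $n>1$ families are needed, the short contribution is at most $nN_2$, while each of the at least $n-1$ transitions between consecutive intervals of different families produces a gap of length $\geq N_2$ which, by the algorithm (the next $t_{m}$ is the first point of $\tilde A$ after $r_{m-1}$), lies in $[-R,R]\setminus\tilde A$; hence $(n-1)N_2\leq l([-R,R]\setminus\tilde A)\leq 10^{-9}\cdot 2R$ and the short total is at most $\tfrac{n}{n-1}10^{-9}\cdot 2R$. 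Note that the density of $\tilde A$ is used here to bound the number of families --- a step absent from your outline. If you prefer a pairwise-separation argument, the correct route is again \ref{eq:returnLar} applied to any two leaders (which are necessarily related by a nontrivial $\gamma$, since $E_m=\emptyset$), giving that leader times are roughly $N_2$-separated and hence the number of families is at most $2R/N_2+1$; but this rests on the injectivity-radius/compactness property, not on Lemma~\ref{lem:longHammingSep}.
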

\begin{proof}[Proof of Claim~\ref{claim:assumption1longHammingComb}]
The output of Algorithm~\ref{alg:cap} satisfies that for every $t\in \tilde{A}$, there exists $B_i\in\beta$ such that $t\in [t_i,r_i]$. This together with $l(\tilde{A})\geq(1-10^{-9})2R$ gives 
\begin{equation}\label{eq:totalBlockLength}
    \sum_{i=0}^m l(B_i)\geq(1-10^{-9})2R.
\end{equation}
Let $\mathcal{I}^c=\{0,\ldots,m\}\setminus\mathcal{I}$, then we have following two cases:
\begin{enumerate}[label=\textup{(\alph*)}]
    \item If $\cup_{i\in\mathcal{I}^c}B_i$ can be covered by only one family, e.g. $\beta_k$, then by the assumptions in \S\ref{sec:constants and sets},
    $R\geq 10^{30}(\dim\mathfrak{g}_2)^2N_1$, so as 
    \[
    \operatorname{Card}(\beta_k)\leq 3(\dim\mathfrak{g}_2)^2
    \]
    and then
    \[\sum_{i\in \mathcal{I}^c}l(B_i)\leq 3(\dim\mathfrak{g}_2)^2N_1\leq10^{-30}R,\]
which together with \eqref{eq:totalBlockLength} give the claim.
    \item 
    Suppose that we need $n>1$ different families  $\beta_{i_1},\ldots,\beta_{i_n}$ to cover $\cup_{i\in\mathcal{I}^c}B_i$, then the output of Algorithm \ref{alg:cap}, $\operatorname{Card}(\beta_{i_p})\leq 3(\dim\mathfrak{g}_2)^2$ and the assumptions in~\S\ref{sec:constants and sets} give
    \begin{equation}\label{eq:goodParl1}
    \sum_{i\in \mathcal{I}^c}l(B_i)\leq\sum_{p=1}^n\sum_{\substack
{{B_q\in\beta_{i_p}}\\{l(B_q)\leq N_1}}}l(B_q)\leq 3n(\dim\mathfrak{g}_2)^2N_1\leq nN_2.
    \end{equation}

It follows from \ref{eq:returnLar} in \S\ref{sec:constants and sets} that if $B_i=[t_i,r_i]$ and $B_{i+1}=[t_{i+1},r_{i+1}]$ are two consecutive intervals from the collection $\beta$, then 
    \begin{equation}\label{eq:largeReturnGap}
    t_{i+1}-r_{i}\geq N_2, \qquad (r_{i},t_{i+1})\subset[-R,R]\setminus\tilde A.
    \end{equation}
    Since we need $n$ different families $\beta_{i_1},\ldots,\beta_{i_n}$ to cover $\cup_{i\in\mathcal{I}^c}B_i$, this in particular gives that $\beta$ has at least $n$ different families. Each time we have two consecutive intervals from $\beta$ that belong to different families \eqref{eq:largeReturnGap} applies; since we have $n$ different families this needs to happen at least $n-1$ times. This gives that 
    \begin{equation}\label{eq:goodParl2}
    (n-1)N_2\leq l([-R,R]\setminus\tilde A).
    \end{equation}
    
    Combining $l(\tilde{A})\geq(1-10^{-9})2R$ with \eqref{eq:goodParl1} and \eqref{eq:goodParl2}
    \[
    \sum_{i\in \mathcal{I}^c}l(B_i)\leq nN_2\leq\frac{n}{n-1}10^{-9}\cdot2R,
    \]
    which together with \eqref{eq:totalBlockLength} give the claim.

\end{enumerate}
   
\end{proof}

\medskip

Let $\tilde{\beta}=\{B_i\in\beta:i\in \mathcal{I}\}$, then for each $\beta_k$, write
\begin{equation}\label{eq:betaGood}
\tilde\beta_k=\tilde\beta\cap\beta_k=\left\{J_{1}^{(k)},\ldots,J_{n_k}^{(k)}\right\},
\end{equation}
where $J_{n_p}^{(k)}\in \tilde\beta$ (if $\tilde\beta_k$ is empty, write $n_k=0$). Then define $\mathcal P_b$ as
\begin{equation}\label{eq:beta}
    \mathcal{P}_b=\bigcup_{k : \tilde\beta_k\neq\emptyset}\left\{J_{1}^{(k)},\ldots,J_{n_k}^{(k)}\right\}
\end{equation}

Write $[-R,R]\setminus\cup_{B_i\in\tilde{\beta}}B_i$ as a union of disjoint intervals
\[
[-R,R]\setminus\cup_{B_i\in\tilde{\beta}}B_i=\bigcup_{i=1}^{n_0}J_{i}.
\]
We now define a partition $\mathcal P$ of $[-R,R]$ by
\begin{equation*}
    \mathcal P=\{J_1,\ldots,J_{n_0}\}\cup\mathcal{P}_b.
\end{equation*}

From now on, we will verify the assumptions of Lemma~\ref{lem:longHammingComb} for $\mathcal{P}$ and $\mathcal P_b$. Note that Claim~\ref{claim:assumption1longHammingComb} above verifies the \textbf{first assumption} of Lemma~\ref{lem:longHammingComb} for $\mathcal{P}_b$.

\subsubsection{Assumptions $(1)$ and $(3)$ of Lemma~\ref{lem:longHammingSep}:} If $\tilde \beta_k \neq \emptyset$, we write for any $1\leq p \leq n_k$ \ $J_p^{(k)}=\left[t_p^{k},r_p^{k}\right]$. We also write 
\[
t^{k} = t_1^{k}, \qquad \tau^k = h(t_1^{k})\quad  \text{ and }\quad r^{k}=r_1^{k},\qquad \sigma^k=h(r_1^{k}).
\]
Let $\tilde{x}_{\tau^k},\tilde{y}_{t^{k}}\in \mathbf{G}_2$ be lifts of $x_{\tau^k}, y_{t^{k}}$ such that 
 \[d_{\mathbf{G}_2}\bigl(\tilde{x}_{\tau^k},\tilde{y}_{t^{k}}\bigl)\leq\delta.
 \]
Let $g^{(k)}\in \mathbf{G}_2$ satisfy $\tilde{x}_{\tau^k}=g^{(k)}.\tilde{y}_{t^{k}}$ and $\phi_k$ the best matching function defined in \eqref{eq:bestMatchingFun} for~$g^{(k)}$.

The following claim, which will complete the verification of $(1)$ and $(3)$ of Lemma~\ref{lem:longHammingSep}.
\begin{Claim}\label{claim:assumption 1 and 3 of longHammingSep}
For $s=t_p^{k}$ and $r_p^{k}$, let $\tilde{x}_{h(s)}$, $\tilde{y}_{s}$ and $g\in\mathbf{G}_2$ be defined as
\begin{gather*}
    \tilde{x}_{h(s)}=u_{h(s)-\tau^k}^{(2)}.\tilde{x}_{\tau^k}, \quad \tilde{y}_{s}=u_{s-t^{k}}^{(2)}.\tilde{y}_{t^{k}}, \quad \tilde{x}_{h(t_p^{k})}=g.\tilde{y}_{t_p^{k}}.
\end{gather*}
Then there exists $C>0$ depends only on $\mathbf{G}_2$ such that
\begin{gather*}
\tilde{x}_{h(s)}\in\Kak\left(l\left(J_{p}^{(k)}\right),C\delta\right).\tilde{y}_{s}.
\end{gather*}
\end{Claim}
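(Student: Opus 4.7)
\emph{Step 1 --- lift identification.} The first step is to verify that the transported lifts $\tilde x_{h(t_p^k)}=u_{h(t_p^k)-\tau^k}^{(2)}.\tilde x_{\tau^k}$ and $\tilde y_{t_p^k}=u_{t_p^k-t^k}^{(2)}.\tilde y_{t^k}$ (and their analogues at $s=r_p^k$) are genuinely $\delta$-close in $\mathbf{G}_2$. Because $J_p^{(k)}\in\tilde\beta_k\subset\beta_k$, the construction of $\beta_k$ as an $\overset{e}{\leadsto}$-equivalence class gives $(x_{\tau^k},y_{t^k})\overset{e}{\leadsto}(x_{h(t_p^k)},y_{t_p^k})$; combined with $y_{t_p^k},y_{r_p^k}\in K_1$ (which follows from $J_p^{(k)}\in\mathcal P_b$ and $y\in\Kold{036Klh1}$), the separation clause~\ref{eq:returnLar} of \S\ref{sec:constants and sets} rules out a nontrivial $\gamma\in\Gamma_2$ in that relation, so the transported lifts agree with the algorithm's choice of lifts.

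\emph{Step 2 --- from $h$-matching to best matching.} Algorithm~\ref{alg:cap} builds $[t_p^k,r_p^k]$ as a subinterval of one of the closed intervals produced by Lemma~\ref{lem:numConnectedCom} (either $[t_{i_m}+b_q(t_{i_m}),t_{i_m}+d_q(t_{i_m})]$ or $[t_m,t_m+d_1(t_m)]$), and by the algorithm's stopping rule the $h$-matching closeness $d_{\mathbf{G}_2}(u_{h(t)-h(t_p^k)}^{(2)}.\tilde x_{h(t_p^k)},u_{t-t_p^k}^{(2)}.\tilde y_{t_p^k})\leq\delta$ holds throughout the block. A straightforward change of reference point, together with Lemma~\ref{lem:numConnectedCom}, upgrades this to best-matching closeness for the element $g$ from the claim: writing $L:=l(J_p^{(k)})$ and letting $\phi_g$ denote the best matching function for $g$,
\[
d_{\mathbf{G}_2}(u_{\phi_g(\tau)}^{(2)}.\tilde x_{h(t_p^k)},u_\tau^{(2)}.\tilde y_{t_p^k})\leq\Cold{0192Cnum}\delta,\qquad\tau\in[0,L].
\]

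\emph{Step 3 --- applying Proposition~\ref{prop:matchKak}.} Proposition~\ref{prop:matchKak} now yields the claim at $s=t_p^k$ directly (take $\tau=0$). For $s=r_p^k$, I will write the element $g'$ with $\tilde x_{h(r_p^k)}=g'.\tilde y_{r_p^k}$ as
\[
g'=u_{\eta}^{(2)}\cdot\bigl(u_{\phi_g(L)}^{(2)} g u_{-L}^{(2)}\bigr),\qquad \eta:=h(r_p^k)-h(t_p^k)-\phi_g(L).
\]
Proposition~\ref{prop:matchKak} at $\tau=L$ places the right-hand factor in $\Kak(L,C\delta)$, and comparing the $h$- and $\phi_g$-closeness bounds at $\tau=L$ yields $\absolute{\eta}=O(\delta)$; since the prefactor $u_\eta^{(2)}$ only perturbs the $\vartheta_{\bu}$-coordinate, the product still lies in a Kakutani--Bowen ball of radius $L$ with a slightly enlarged constant.

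The delicate step is Step~1: the separation hypothesis~\ref{eq:returnLar} is precisely what ensures that no spurious $\gamma\in\Gamma_2$ enters the $\overset{e}{\leadsto}$ relation; without it, the passage from $h$- to best-matching (and hence the application of Proposition~\ref{prop:matchKak}) would not be valid on the block $[t_p^k,r_p^k]$.
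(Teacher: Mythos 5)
Your argument is correct and follows essentially the same route as the paper: the algorithm's construction combined with Lemma~\ref{lem:numConnectedCom} and Proposition~\ref{prop:matchKak} gives Kakutani--Bowen membership along the block at the best-matched times, and the $\delta$-closeness at the two endpoints bounds the discrepancy with the $h$-times by $O(\delta)$, a shift in the $\mathbf{U}_2$-direction that only perturbs the $\vartheta_{\bu}$-coordinate (the paper pivots at the family's first long block via $\phi_k$ rather than at the block start via $\phi_g$, an immaterial difference). One small correction to your closing remark: within a family the relation is $\overset{e}{\leadsto}$ by construction, which already forces $\gamma=e$, so the separation clause~\ref{eq:returnLar} is not what secures lift consistency here --- it is used instead to separate blocks of \emph{different} families when verifying assumption (2) of Lemma~\ref{lem:longHammingComb}.
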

\begin{proof}[Proof of Claim~\ref{claim:assumption 1 and 3 of longHammingSep}]
As $J^{(k)}_p \in \beta$, by Proposition~\ref{prop:matchKak} and Lemma~\ref{lem:numConnectedCom}, the way the elements of $\beta$ were constructed in Algorithm \ref{alg:cap} implies that for both $s=t_p^{k}$ and $s=r_p^{k}$ we have
\begin{equation}\label{eq:expCon1}
\begin{aligned}
u_{\phi_{k}(s-t^{k})}^{(2)}.\tilde{x}_{\tau^k}&\in\Kak\left(l\left(J_{p}^{(k)}\right),\kappa\delta\right)u_{s-t^{k}}^{(2)}.\tilde{y}_{t^{k}}
\end{aligned}
\end{equation}
for some constant $\kappa$ that depends only on $\mathbf{G}_2$ (namely $2\Cold{012matchKak}\Cold{0192Cnum}$).

Recall Algorithm \ref{alg:cap} also gives that for $s=t_p^{k}$ and $r_p^{k}$
\begin{equation*}
    \begin{aligned}
    &d_{\mathbf{G}_2}\left(u_{h(s)-\tau^k}^{(2)}.\tilde{x}_{\tau^k},u_{s-t^{k}}^{(2)}.\tilde{y}_{t^{k}}\right)\leq\delta,
    \end{aligned}
\end{equation*}
which together with right invariance of $d_{\mathbf{G}_2}$ and \eqref{eq:expCon1} guarantee that for $s=t_p^{k}$ and $r_p^{k}$ 
\begin{equation}\label{eq:liftClose}
    \begin{aligned}
   &\absolute{h\bigl(s\bigr)-\tau^k-\phi_{k}\left(s-t^{k}\right)}\leq2\kappa\delta.  \end{aligned}
\end{equation} 
This together with \eqref{eq:expCon1} gives the Claim~\ref{claim:assumption 1 and 3 of longHammingSep}.

\end{proof}

\subsubsection{Assumption $(2)$ of Lemma~\ref{lem:longHammingComb}}
Recall that $l\left(J_p^{(k)}\right)\geq N_1\geq \Rold{040RlHS1}$ for every $J_p^{(k)}=\left[t_{p}^{k},r_{p}^{k}\right]\in\mathcal{P}_b$, and the choice of $\tilde{A}$ implies that 
\[
y_{t_{p}^{k}},y_{r_{p}^{k}}\in K_2\subset \Kold{0403KlHS2}.
\]
These in particular guarantee that $y_{t_{p}^{k}}$ and $y_{r_{p}^{k}}$ both stay in the good set where Lemma~\ref{lem:longHammingSep} will hold.

Suppose that
\[
J_{p}^{(k)}=\left[t_{p}^{k},r_{p}^{k}\right],\qquad J_{q}^{(\ell)}=\left[t_{q}^{\ell},r_{q}^{\ell}\right],\qquad r_{p}^{k}<t_{q}^{\ell},\quad k\neq\ell.
\]
Set
\[
\tau_{p}^{k}=h\left(t_{p}^{k}\right),\qquad \tau_{q}^{\ell}=h\left(t_{q}^{\ell}\right),\qquad \sigma_p^k=h\left(r_p^k\right) \qquad  \sigma_q^{\ell}=h\left(r_q^{\ell}\right).
\]
Then $y_{t_{p}^{k}},y_{r_{p}^{k}},y_{t_{q}^{\ell}},y_{r_{q}^{\ell}}\in\Kold{0403KlHS2}$ together with Claim~\ref{claim:assumption 1 and 3 of longHammingSep} guarantee that both pairs
\begin{equation*}
    \begin{aligned}
        \left(x_{\tau_{p}^{k}},y_{t_{p}^{k}}\right) \text{ and } \left(x_{\tau_{q}^{\ell}},y_{t_{q}^{\ell}}\right),\qquad 
        \left(x_{\sigma_{p}^{k}},y_{r_{p}^{k}}\right) \text{ and } \left(x_{\sigma_{q}^{\ell}},y_{r_{q}^{\ell}}\right)
    \end{aligned}
\end{equation*}
satisfy all conditions of Lemma~\ref{lem:longHammingSep} and thus
\begin{equation}\label{eq:expGapIni}
    \begin{aligned}
        &t_{q}^{\ell}-t_{p}^{k}\geq\Cold{040ClHS1}\min\left(\left[l\left(J_{p}^{(k)}\right)\right]^{1+\wold{040wlHS1}},[l(J_{q}^{(\ell)})]^{1+\wold{040wlHS1}}\right),\\
        &r_{q}^{\ell}-r_{p}^{k}\geq\Cold{040ClHS1}\min\left(\left[l\left(J_{p}^{(k)}\right)\right]^{1+\wold{040wlHS1}},\left[l\left(J_{q}^{(\ell)}\right)\right]^{1+\wold{040wlHS1}}\right).
    \end{aligned}
\end{equation}

Recall that
\begin{equation*}
    \begin{aligned}
        r_{p}^{k}-t_{p}^{k}=l\left(J_{p}^{(k)}\right),\qquad r_{q}^{\ell}-t_{q}^{\ell}= l\left(J_{q}^{(\ell)}\right).
    \end{aligned}
\end{equation*}
This together with \eqref{eq:expGapIni} and the choice of $N_1$ in item~\ref{item:choiceN1longHamming} on p.~\pageref{item:choiceN1longHamming} gives
\[
t_{q}^{\ell}-r_{p}^{k}> \frac{1}{2}\Cold{040ClHS1}\min\left(\left[l\left(J_{p}^{(k)}\right)\right]^{1+\wold{040wlHS1}},\left[l\left(J_{q}^{(\ell)}\right)\right]^{1+\wold{040wlHS1}}\right).
\]

Moreover, since $J_{p}^{(k)}\overset{\neq e}{\leadsto}J_{q}^{(\ell)}$ and $y_{r_{i}},y_{t_j}\in K_2$, we obtain from \ref{eq:returnLar}
\[
\max\left(\tau_{q}^{\ell}-\sigma_{p}^{k},t_{q}^{\ell}-r_{p}^{k}\right)> N_2\geq2\Rold{021RlC1}.
\]
Recall $\absolute{h'(t)-1}<10^{-20}$ for every $t\in[-R,R]$, then 
\[
t_{q}^{\ell}-r_{p}^{k}\geq\max\left(\Rold{021RlC1},0.5\Cold{040ClHS1}\min\left(\left[l\left(J_{p}^{(k)}\right)\right]^{1+0.5\wold{040wlHS1}},\left[l\left(J_{q}^{(\ell)}\right)\right]^{1+0.5\wold{040wlHS1}}\right)\right),
\]
which shows that the \textbf{second assumption} of Lemma~\ref{lem:longHammingComb} is satisfied.

\subsubsection{Application of Lemma~\ref{lem:longHammingComb}}

Applying Lemma~\ref{lem:longHammingComb} for $\mathcal{P}$ and $\mathcal{P}_b$ (cf.~\eqref{eq:beta}), we obtain that
\begin{gather}
\text{there exists $J_{p_0}^{(k_0)}\in\mathcal{P}_b$ such that $l\left(J_{p_0}^{(k_0)}\right)\geq\frac{R}{32(\dim\mathfrak{g}_2)^2}$,}\label{eq:longbb}\\
    \sum_{i=1}^{n_{k_0}}l(J_{i}^{k_0})\geq\frac{3}{2}R.\label{eq:3/4RLarge}
\end{gather}
Recall $\tilde{x}$ and $\tilde{y}$ are the lifts of $x$ and $y$ such that $d_{\mathbf{G}_2}(\tilde{x},\tilde{y})<\delta$. If
\[
J_{p_0}^{(k_0)}=\left[t_{p_0}^{k_0},r_{p_0}^{k_0}\right],
\] 
then the Algorithm \ref{alg:cap} and Claim~\ref{claim:assumption 1 and 3 of longHammingSep} give $\gamma_0\in\Gamma_2$ 
such that 
\begin{equation*}
    \begin{aligned}
    u_{h(t_{p_0}^{k_0})}^{(2)}.\tilde{x}=g_1 u_{t_{p_0}^{k_0}}^{(2)}.\tilde{y}\gamma_0,\qquad  u_{h(r_{p_0}^{k_0})}^{(2)}.\tilde{x}=g_2 u_{r_{p_0}^{k_0}}^{(2)}.\tilde{y}\gamma_0,
    \end{aligned}
\end{equation*}
where $g_1,g_2\in \Kak\left(l\left(J_{p_0}^{(k_0)}\right),\kappa\delta\right)$ for some  $\kappa>0$ depends only on $\mathbf{G}_2$. This together with Lemma~\ref{lem:matchingFunction}, Lemma~\ref{lem:numConnectedCom} and \eqref{eq:longbb} gives a constant $\kappa_2>0$ such that for every $\absolute{t}\leq R$
\begin{equation}\label{eq:RKakBall}
u_{\phi_{g_1}(t-t_{p_0}^{k_0})+h(t_{p_0}^{k_0})}^{(2)}.\tilde{x}\in\Kak\left(2R,\kappa_2\delta\right)u_t^{(2)}.\tilde{y}\gamma_0.
\end{equation}
where $\phi_{g_1}$ is the best matching function defined in \eqref{eq:bestMatchingFun}.

Let $\alpha_0=t_1^{k_0}$ and $\beta_0=r_{n_{k_0}}^{k_0}$, then \eqref{eq:3/4RLarge} imply 
\[
\absolute{\alpha_0-\beta_0}\geq3R/2
\]
and
\begin{gather}
d_{\mathbf{G}_2}\left(u_{h(\alpha_0)}^{(2)}.\tilde{x},u_{\alpha_0}^{(2)}.\tilde{y}\gamma_0\right)\leq\delta,\quad d_{\mathbf{G}_2}\left(u_{h(\beta_0)}^{(2)}.\tilde{x},u_{\beta_0}^{(2)}.\tilde{y}\gamma_0\right)\leq\delta.\label{eq:halphabeta}
\end{gather}
These together with \eqref{eq:RKakBall} give for some $\kappa_3>0$
\begin{equation*}
\absolute{\phi_{g_1}(\alpha_0-t_{p_0}^{k_0})+h(t_{p_0}^{k_0})-h(\alpha_0)}\leq\kappa_3\delta,
\end{equation*}
which shows that for some $\kappa_4>0$
\[
u_{h(\alpha_0)}^{(2)}.\tilde{x}=g_3.u_{\alpha_0}^{(2)}.\tilde{y}\gamma_0,\qquad g_3\in\Kak(2R,\kappa_4\delta).
\]
By definition of Kakutani-Bowen ball, the above equation give $\kappa_5>0$ such that for every $\absolute{t}\leq R$
\[
u_{\phi_{g_3}(t-\alpha_0)+h(\alpha_0)}^{(2)}.\tilde{x}\in\Kak(R,\kappa_5\delta)u_t^{(2)}.\tilde{y}\gamma_0,
\]
where $\phi_{g_3}$ is the best matching function defined in \eqref{eq:bestMatchingFun}. This together with \eqref{eq:halphabeta} gives
\begin{gather*}
    \absolute{\phi_{g_3}(\beta_0-\alpha_0)+h(\alpha_0)-h(\beta_0)}\leq2\kappa_5\delta
\end{gather*}
and thus finish the proof of Lemma~\ref{lem:longHamming}.\hfill\qed

\subsection{An infinite matching corollary}\label{sec:infiniteMatching}
\subsubsection{An auxiliary lemma}
The following lemma follows directly from polynomial divergence of $u_t^{(2)}$-flow, which will be used in the proof of Corollary~\ref{cor:inifinteMatching}.
\begin{Lemma}\label{lem:polyDivergence}
Let $\tilde x,\tilde y\in \mathbf{G}_2$, $\gamma_R\in\Gamma_2$, $\epsilon,\delta>0$ sufficiently small and $R \geq \Rold{036Rlh1}$. Let $h:\R\to\R$ be a $C^1$ function satisfying $\absolute{h'(t)-1}<\epsilon$ for all $\absolute{t}\leq R$.  Suppose that for some~$\absolute{s_R}\leq R$ and $L_R\geq R$, 
\begin{equation*}
    \begin{aligned}
   u_{h(s_R)}^{(2)}\tilde{x}\in \Kak(L_R, \Cold{038Clh1}\delta)u_{s_R}^{(2)}\tilde{y}\gamma_R,\\
    \end{aligned}
\end{equation*}
then there exists $\absolute{t_R}\leq0.01R$ such that
\[
\tilde{x}\in\Kak(R,\Cold{036Clm}\delta)u_{t_R}^{(2)}\tilde{y}\gamma_R.
\]
Here $\Cold{036Clm}>0$ is a constant depending only on $\mathbf{G}_2$. 
\end{Lemma}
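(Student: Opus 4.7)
The plan is to \emph{change the basepoint} of the Kakutani--Bowen relation. The hypothesis places a large Kakutani--Bowen ball at $u^{(2)}_{s_R}\tilde y\gamma_R$ as seen from $u^{(2)}_{h(s_R)}\tilde x$; we want to re-anchor this at $u^{(2)}_{t_R}\tilde y\gamma_R$ as seen from $\tilde x$ itself. The tools are exactly Lemma~\ref{lem:matchingFunction} (for the $\mathfrak{h}$-part) and the polynomial-divergence estimate of Lemma~\ref{lem:zlConvertDirect} (for the transverse part).

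First I would write $u^{(2)}_{h(s_R)}\tilde x = g\cdot u^{(2)}_{s_R}\tilde y\gamma_R$ with $g\in\Kak(L_R,\Cold{038Clh1}\delta)$ and decompose $g = g^{\mathfrak h} g^{\mathfrak z} g^{\tr}$ as in \eqref{eq:gDecom}--\eqref{eq:gDecom1}, with the coordinate bounds $\absolute{\vartheta_{\bu}(g)}, \absolute{\vartheta_{\ba}(g)} \leq \Cold{038Clh1}\delta$, $\absolute{\vartheta_{\bou}(g)} \leq \Cold{038Clh1}\delta/L_R$ and $g^{\tr}\in\Bow(L_R,\Cold{038Clh1}\delta)$. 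Rearranging gives
\[
\tilde x \;=\; g'\,u^{(2)}_{t_R}\,\tilde y\gamma_R, \qquad g' = u^{(2)}_{-h(s_R)}\,g\,u^{(2)}_{s_R - t_R}.
\]
Using that $g^{\mathfrak z}$ centralises $\mathbf{H}$ and commutes with $u^{(2)}_\bullet$, I would rewrite
\[
g' \;=\; g^{\mathfrak z}\cdot\bigl[u^{(2)}_{-h(s_R)}\,g^{\mathfrak h}\,u^{(2)}_{s_R - t_R}\bigr]\cdot\bigl[u^{(2)}_{-(s_R-t_R)}\,g^{\tr}\,u^{(2)}_{s_R - t_R}\bigr],
\]
which puts $g'$ exactly in the product form needed to check membership in $\Kak(R,\cdot)$.

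Next I would choose $t_R$ by the rule coming from Lemma~\ref{lem:matchingFunction}: pick $t_R$ so that the $\bu$-coefficient of the middle $\mathbf{H}$-factor vanishes, i.e. solve $\phi_g(t_R - s_R) = -h(s_R)$ using the explicit formula \eqref{eq:bestMatchingFun}. Since (by the matching definition in \S\ref{sec:LKmatching}) $h(0)=0$ and $\absolute{h'-1}<\epsilon$, we have $\absolute{h(s_R) - s_R}\leq\epsilon R$; combined with $\absolute{\vartheta_{\ba}(g)} \leq \Cold{038Clh1}\delta$ and $\absolute{\vartheta_{\bou}(g)}\absolute{s_R}\leq \Cold{038Clh1}\delta$, a direct computation using Lemma~\ref{lem:matchingFunction} gives $\absolute{t_R} = O(\epsilon R + \Cold{038Clh1}\delta\,R)$, which is $\leq 0.01 R$ for $\epsilon,\delta$ sufficiently small. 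With this choice, Lemma~\ref{lem:matchingFunction} also yields that the middle factor has $\bou$-coefficient of size $\leq 2\Cold{038Clh1}\delta/L_R \leq 2\Cold{038Clh1}\delta/R$ and $\ba$-coefficient of size $\leq 4\Cold{038Clh1}\delta$, matching the Kakutani--Bowen requirements (i), (ii), (iii) of Definition~\ref{def:Kakball} at scale $R$.

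It then remains to control the transverse conjugate $u^{(2)}_{-(s_R - t_R)} g^{\tr} u^{(2)}_{s_R - t_R}$, i.e.\ to check condition (iv) of Definition~\ref{def:Kakball}. Since $g^{\tr}\in\Bow(L_R,\Cold{038Clh1}\delta)$, for any $\absolute{t}\leq R$ the element $u^{(2)}_{t}\cdot u^{(2)}_{-(s_R - t_R)} g^{\tr} u^{(2)}_{s_R - t_R}\cdot u^{(2)}_{-t}$ equals $u^{(2)}_{t - (s_R - t_R)} g^{\tr} u^{(2)}_{-(t-(s_R-t_R))}$, so I need $\absolute{t - (s_R - t_R)} \leq L_R$ for $\absolute{t}\leq R$; this is automatic (with room to spare) in all applications of the lemma, where $L_R$ dominates $R+\absolute{s_R}+\absolute{t_R}$. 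Feeding these estimates into the polynomial-divergence bookkeeping of Lemma~\ref{lem:zlConvertDirect} and combining with (a) $g^{\mathfrak z}\in B^{\mathbf{Z}}_{\Cold{038Clh1}\delta}$ and (b) the $\mathbf{H}$-part coefficients above, I conclude $g'\in\Kak(R,\Cold{036Clm}\delta)$ for an absolute constant $\Cold{036Clm}$ depending only on $\mathbf{G}_2$.

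The main obstacle is the transverse bookkeeping in the last step: one must verify that after conjugating $g^{\tr}$ by $u^{(2)}_{s_R - t_R}$ the resulting element still lies in a Bowen ball of the right radius at scale $R$ (and not merely at scale $L_R - \absolute{s_R-t_R}$), and one has to track the polynomial blow-up carefully using the chain-basis description from \S\ref{sec:sl2basis}. This is essentially the content already extracted in Lemma~\ref{lem:zlConvertDirect} and the basic Bowen-ball manipulations of Propositions~\ref{prop:KakutaniBallStayClose}--\ref{prop:matchKak}; the remaining bound for $\absolute{t_R}$ is a clean consequence of Lemma~\ref{lem:matchingFunction} together with $h(0)=0$.
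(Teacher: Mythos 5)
Your overall route is the same as the paper's: re-anchor the Kakutani--Bowen relation at $\tilde x$ by solving $\phi_g(t_R-s_R)=-h(s_R)$ with the best matching function of \eqref{eq:bestMatchingFun}, bound $\absolute{t_R}$ using $h(0)=0$, $\absolute{h'-1}<\epsilon$ and the smallness of $\vartheta_{\ba}(g),\vartheta_{\bou}(g)$, and then read off conditions (i)--(iii) of Definition~\ref{def:Kakball} at scale $R$ from Lemma~\ref{lem:matchingFunction}. All of that matches the paper and is fine.

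The gap is in your verification of condition (iv), the transverse Bowen-ball condition. You reduce it to the containment $\absolute{t-(s_R-t_R)}\leq L_R$ for all $\absolute{t}\leq R$ and dismiss it as ``automatic (with room to spare) in all applications of the lemma.'' First, the lemma is stated with only $L_R\geq R$ and $\absolute{s_R}\leq R$, so the shifted times reach about $2R$, which need not be $\leq L_R$; a stated lemma cannot be proved by appealing to how it will later be invoked. Second, the appeal is not even accurate: in the actual application (Corollary~\ref{cor:inifinteMatching} via Lemma~\ref{lem:longHamming}) one only has $s_R\leq -R/2$ and $L_R\geq 3R/2$, and the window still reaches roughly $2R>3R/2$. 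The paper closes exactly this point by arguing at the level of polynomial coefficients rather than time intervals: by Lemma~\ref{lem:polCoe}, the hypothesis $g^{\mathfrak z}g^{\tr}\in\Bow(L_R,\Cold{038Clh1}\delta)$ bounds the chain-basis coefficients $\vartheta_{i,j}(g^{\tr})$ by $O(\delta/L_R^{\,i})$; conjugating by $u^{(2)}_{\bar t}$ with $\absolute{\bar t}=O(R)=O(L_R)$ changes these coefficients only by factors depending on $\dim\mathfrak g_2$; and the converse direction of Lemma~\ref{lem:polCoe} turns the resulting bounds $O(\delta/R^{\,i})$ back into membership in $\Bow(R,\Cold{036Clm}\delta)$, with $\Cold{036Clm}$ depending only on $\mathbf G_2$. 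Note also that Lemma~\ref{lem:zlConvertDirect}, which you cite for the ``bookkeeping,'' treats conjugation times $\absolute{r}\leq R^{\wold{014wExpBound}}$ with transverse size $\delta/R$ and does not cover conjugation by times of order $R$; the correct tool here is Lemma~\ref{lem:polCoe}. With that substitution your argument becomes the paper's proof.
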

\begin{proof}[Proof of Lemma~\ref{lem:polyDivergence}]

Let $\phi_{g_0}$ be the best matching function defined in~\eqref{eq:bestMatchingFun} for $g_0$. 
Since $L_R\geq R$, $g_0 \in\Kak(L_R,\Cold{038Clh1}\delta)$, Definition~\ref{def:Kakball} (the definition of Kakutani-Bowen balls), Lemma~\ref{lem:matchingFunction}, and choice of $\delta$,  we have that for $t\in[-10R,10R]$
\[
\absolute{\phi'_{g_0}(t)-1}<\sqrt{\kappa_1\delta},
\]
where $\kappa_1>0$ is large enough and depends only on $\mathbf{G}_2$.

Let $\bar{t}\in[-2R,2R]$ be such that $\phi_{g_0}(\bar{t})+h(s_R)=0$, then by assumption $\absolute{h'(t)-1}<\epsilon$, and in view of the above estimate on $\phi'_{g_0}(t)$ we have that 
\begin{equation}\label{eq:trsize}
\absolute{s_R+\bar{t}}<2(\epsilon+\sqrt{\kappa_1\delta})R.
\end{equation}

\medskip

Let $t_R=s_R+\bar{t}$ and $g_1$ satisfy 
\[
\tilde{x}=g_1.u_{t_R}^{(2)}.\tilde{y}\gamma_R.
\]
Then Lemma~\ref{lem:matchingFunction}, Lemma~\ref{lem:polCoe} and $g_0 \in\Kak(L_R,\Cold{038Clh1}\delta)$ give that for some constant $\Cold{036Clm}\geq \kappa_1$  depending only on $\mathbf{G}_2$
\begin{gather*}
    \absolute{\vartheta_{\ba_2}(g_1)}<\Cold{036Clm}\delta,  \qquad g_1^{\mathfrak{z}}g_1^{\tr}\in\Bow(R,\Cold{036Clm}\delta),\\
    \vartheta_{\bou_2}(g_1)=\vartheta_{\bou_2}(g_0)\left(1-\vartheta_{\bou_2}(g_0)e^{2\vartheta_{\ba_2}(g_0)}\bar{t} \right)\in\left[-\frac{\Cold{036Clm}\delta}{R},\frac{\Cold{036Clm}\delta}{R}\right],
    \end{gather*}
where $\vartheta_{\bou_2}$, $\vartheta_{\ba_2}$, $g_1^{\mathfrak{z}}$ and $g_1^{\tr}$ are defined in \eqref{eq:gDecom1}. Notice by definition of Kakutani-Bowen balls (cf.~Definition \ref{def:Kakball}), the above indeed gives
\begin{equation*}
    g_1\in\Kak(R,\Cold{036Clm}\delta),
\end{equation*} 
which together with \eqref{eq:trsize} complete the proof of Lemma~\ref{lem:polyDivergence}.
\end{proof}

\subsubsection{Proof of Corollary~\ref{cor:inifinteMatching}}
\begin{proof}[Proof of Corollary~\ref{cor:inifinteMatching}]
    Assume that $y\in\Kold{036Klh1}$, $x\in \mathbf{G}_2/\Gamma_2$ and $x,y$ are $(\delta,\epsilon,R)$-two sides matchable for every $R\in[L_1,L_2]$ with a same $C^1$-matching function. Let $\{\lambda_q\}_{q\in\N}$ be a sequence of real numbers satisfying 
\begin{gather}
    \lambda_0=L_1, \ \ 
    \lambda_{q_0}=L_2\text{ for some }q_0\in\N,\nonumber\\
    \lambda_{q}<\lambda_{q+1}<\frac{9}{8}\lambda_q\text{ for every }q\in\N,\ \
    \lambda_q\to+\infty\text{ as }q\to+\infty.\label{eq:lambdaRel}
\end{gather}
Let $J_{(q)}$ be the interval obtained as in~\eqref{eq:longbb} for $R=\lambda_q$ (in the notations of that equation it is $J_{p_0}^{(k_0)}$; we omit in our notation here these two indices, but make explicit the dependence of this interval on $q$). Let $\tilde \beta_{(q)}$ be the corresponding good family of intervals with $J \overset{e}{\leadsto} J_{(q)}$ for $J \in \tilde \beta_{(q)}$ as in \eqref{eq:betaGood}. Write $J_{(q)}$ as 
\[
J_{(q)}=\left[t^{(q)},s^{(q)}\right].
\] 
Notice that Lemma~\ref{lem:longHammingComb} gives that 
\begin{equation*}\label{eq:longBblocksize}
    \sum_{J\in\tilde{\beta}_{(q)}}l(J)\geq\frac{3}{4}\lambda_q.
\end{equation*}
This, together with \eqref{eq:lambdaRel} and our assumption that we have one matching function $h(\cdot)$ that works for $\lambda_0$, \dots, $\lambda_{q_0}$, gives that
\begin{equation}\label{eq:longBblock}
    J_{(q)}\overset{e}{\leadsto} J_{(q+1)}.
\end{equation}

Recall Lemma~\ref{lem:longHamming} gives that for some $\gamma_{q}\in\Gamma_2$ and $s_q\leq-\lambda_q/2$
\begin{equation}\label{eq:lambdaMkak}
\begin{aligned}
u^{(2)}_{h(s_q)}\tilde{x}\in&\Kak(\lambda_q,\Cold{038Clh1}\delta)u_{s_q}^{(2)}\tilde{y}\gamma_{q},\\
u_{h(s_q+\lambda_q)}^{(2)}\tilde{x}\in&\Kak(\lambda_q, \Cold{038Clh1}\delta)u_{s_q+\lambda_q}^{(2)}\tilde{y}\gamma_{q}.
\end{aligned}
\end{equation}
Equation \eqref{eq:longBblock} implies that there is a $\gamma \in \Gamma_2$ so that
\begin{equation*}
    \gamma_{0}=\dots=\gamma_{q_0}=\gamma.
\end{equation*}
Together with \eqref{eq:lambdaMkak} and Lemma~\ref{lem:polyDivergence}, this implies that for every $q=0,\ldots,q_0$, there exists $\absolute{t_q}\leq 0.01\lambda_q$ such that
\begin{equation}\label{eq:xyqKak}
\tilde{x}\in\Kak\left(\lambda_q,\Cold{038Clh1}\delta\right)u_{t_q}^{(2)}\tilde{y}\gamma.
\end{equation}
This in particular gives that there is $\kappa>1$ depends only on $\mathbf{G}_2$ such that for every $p,q\in\{0,\ldots,q_0\}$
\[
\absolute{t_{p}-t_q}\leq \kappa\delta.
\]
Combining  with \eqref{eq:lambdaMkak} and~\eqref{eq:xyqKak}, we obtain
    \begin{equation}\label{eq:finiteManyKak}
    \begin{aligned}
        \tilde{x}\in&\Kak(\lambda_{q_0},(\kappa+\Cold{038Clh1})\delta)u_{t_0}^{(2)}\tilde{y}\gamma,\\
u^{(2)}_{h(s_{q_0})}\tilde{x}\in&\Kak(\lambda_{q_0},(\kappa+\Cold{038Clh1})\delta)u_{s_{q_0}}^{(2)}\tilde{y}\gamma,\\
u_{h(s_{q_0}+\lambda_{q_0})}^{(2)}\tilde{x}\in&\Kak(\lambda_{q_0}, (\kappa+\Cold{038Clh1})\delta)u_{s_{q_0}+\lambda_{q_0}}^{(2)}\tilde{y}\gamma.
    \end{aligned}
   \end{equation}
This proves the first part of Corollary~\ref{cor:inifinteMatching}.

\medskip

If $L_2=+\infty$, then for a sequence satisfying \eqref{eq:lambdaRel}, the definition of Kakutani-Bowen ball (cf.~Definition \ref{def:Kakball}),  $\lim_{q\to+\infty}\lambda_{q}=+\infty$ and~\eqref{eq:finiteManyKak} guarantee that (passing to a subsequence if necessary) for
\begin{equation}\label{eq:fctCondition}
    \begin{aligned}
        f(x,y)&\in\R\text{ satisfying }\absolute{f(x,y)}\leq\kappa\Cold{038Clh1}\delta, \\
        c(x,y)&\in B_{\kappa\Cold{038Clh1}\delta}^{\mathbf{G}_2}\cap\exp(\mathfrak{w}),\\
        t_0(x,y)&\in[-L_1,L_1], \quad \text{where $t_0(x,y)$ is the $t_0$ in \eqref{eq:finiteManyKak},}
    \end{aligned}
\end{equation}
we have
\begin{equation}\label{eq:CorInfiniteMiddle}
    \tilde{y}=c(x,y)a_{f(x,y)}^{(2)}u_{t_0(x,y)}^{(2)}\tilde{x}\gamma.
\end{equation}

\medskip

If there exist $\tilde{f}(x,y)$, $\tilde{c}(x,y)$ and $\tilde{t}_0(x,y)$ also satisfying \eqref{eq:fctCondition} and 
\[
\tilde{y}=\tilde{c}(x,y)a_{\tilde{f}(x,y)}^{(2)}u_{\tilde{t}_0(x,y)}^{(2)}\tilde{x}\gamma.
\]
Combining this with with \eqref{eq:CorInfiniteMiddle}
\[
\tilde{c}(x,y)a_{\tilde{f}(x,y)}^{(2)}u_{\tilde{t}_0(x,y)}^{(2)}=c(x,y)a_{f(x,y)}^{(2)}u_{t_0(x,y)}^{(2)}.
\]
Applying $a_{-s}^{(2)}$ from left on the both sides of above equation for some $s$ sufficiently large, then Lemma~\ref{lem:groupDecom} and the definition of $C_{\mathfrak{g}_2}(\bu_2)$ show that $\tilde{c}(x,y)=c(x,y)$, $\tilde{f}(x,y)=f(x,y)$ and $\tilde{t}_0(x,y)=t_0(x,y)$. This completes the proof of \textbf{second part} of Corollary~\ref{cor:inifinteMatching}.
\end{proof}

\section{Large gap between good intervals}\label{sec:longHammingSep}
Under the same assumptions as the beginning of \S\ref{sec:mainLemma}, the main result of this section is the following large gap lemma. Recall from \S\ref{first statement of lemma 8.1} the notations $x_t=u_t^{(2)}.x$ and $y_t=u_t^{(2)}.y$ for $x,y\in \mathbf{G}_2/\Gamma_2$. Also recall from \S\ref{first statement of lemma 8.1} that $\Kold{0401Kbase}\subset \mathbf{G}_2/\Gamma_2$ is compact and $\Oold{040KbaseNew}$ is its interior.
\begin{Lemma}\label{lem:longHammingSep}
Given an $\epsilon\in(0,10^{-40})$, there exist a compact set $\Kold{0403KlHS2}\subset \mathbf{G}_2/\Gamma_2$ with $m_2(\Kold{0403KlHS2})>1-\epsilon$ and constants $\Rold{040RlHS1},\Cold{040ClHS1},\wold{040wlHS1}>0$ such that for any  sufficiently small $\delta >0$, \emph{if} the following conditions hold
\begin{enumerate}
    \item $y\in \Kold{0403KlHS2}$ and  $x\in\Kak(R,\delta,y)$ for some $R\geq \Rold{040RlHS1}$, 
    \item $y_s\in \Oold{040KbaseNew}$ and $x_{t}\in\Kak(R,\delta,y_s)$ for some $s,t\geq R$,
    \item $(x,y)\overset{\neq e}{\leadsto}(x_{t},y_s)$,
\end{enumerate}
then we have
\[
\max(t,s)> \Cold{040ClHS1}R^{1+\wold{040wlHS1}}.
\]
\end{Lemma}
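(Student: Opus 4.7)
The plan is to argue by contradiction: I will assume that $\max(t,s)\leq\Cold{040ClHS1}R^{1+\wold{040wlHS1}}$ (with the constants to be fixed at the end) and show this forces the lattice element $\gamma$ responsible for the return $(x,y)\overset{\gamma}{\leadsto}(x_t,y_s)$ to equal $e$, contradicting the hypothesis. First, Theorem~\ref{thm:Chevalley} applied to the algebraic group $\mathbb{L}$ (cf.\ \eqref{eq:defN}) produces a real algebraic representation $\rho:\mathbf{G}_2\to\SL(V)$ together with a vector $v_{\mathbf{L}}\in V$ with $\mathbf{L}=\{g\in\mathbf{G}_2:\rho(g).v_{\mathbf{L}}=v_{\mathbf{L}}\}$; in particular $\rho(u_r^{(2)}).v_{\mathbf{L}}=v_{\mathbf{L}}$ for every $r\in\R$ and $\rho(z).v_{\mathbf{L}}=v_{\mathbf{L}}$ for every $z\in\mathbf{Z}_2$. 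I fix a bounded compact $\mathcal{K}_0\subset\mathbf{G}_2$ mapping onto a subset of $\mathbf{G}_2/\Gamma_2$ of measure $>1-\epsilon/2$, and construct $\mathcal{K}_{\ref{lem:longHammingSep}}$ as (the image of) $\mathcal{K}_0$ minus the exceptional sets $V_{\epsilon_k}$ of Corollary~\ref{cor:globalRepEst} along a summable sequence $\epsilon_k\downarrow 0$, ensuring $m_2(\mathcal{K}_{\ref{lem:longHammingSep}})>1-\epsilon$.

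Under the hypotheses, I lift to $\tilde y\in\mathcal{K}_0$ and $\tilde x=g_0\tilde y\in\mathbf{G}_2$ with $g_0\in\Kak(R,\delta)$, and the non-trivial return produces a unique $\gamma\in\Gamma_2\setminus\{e\}$ together with $g_1\in\Kak(R,\delta)$ satisfying
\[
u_t^{(2)}\tilde x=g_1u_s^{(2)}\tilde y\gamma,\qquad\text{hence}\qquad\gamma=\tilde y^{-1}u_{-s}^{(2)}g_1^{-1}u_t^{(2)}g_0\tilde y.
\]
Decomposing $g_j=g_j^{\mathfrak{h}}g_j^{\mathfrak{z}}g_j^{\tr}$ as in \eqref{eq:gDecom} and using that each of $g_j^{\mathfrak{h}},g_j^{\mathfrak{z}}$ and $u_r^{(2)}$ lies in $\mathbf{L}$ (and hence fixes $v_{\mathbf{L}}$ under $\rho$), the contribution of $\rho(\gamma\tilde y^{-1}).v_{\mathbf{L}}-\rho(\tilde y^{-1}).v_{\mathbf{L}}$ reduces to that of the transverse parts $g_0^{\tr},g_1^{\tr}$ after conjugation by $u_{\pm t}^{(2)}$ and $u_{\pm s}^{(2)}$. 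Combining Lemma~\ref{lem:polCoe} (the Bowen condition in Definition~\ref{def:Kakball} forces the chain-basis coefficients of $g_j^{\tr}$ to be bounded by $\delta/R^k$ in the $\bx^{k,\cdot}$-direction) with the polynomial bounds on $\Ad_{u_{\pm r}^{(2)}}$ acting on the chain basis yields the key representation-theoretic bound
\[
\|\rho(\gamma\tilde y^{-1}).v_{\mathbf{L}}-\rho(\tilde y^{-1}).v_{\mathbf{L}}\|_V\leq\kappa\,\delta\,F(\max(t,s),R),
\]
with $F$ an explicit polynomial expression derived from the chain structure indices $\{q_j^{(2)}\}$ and $\kappa$ depending only on $\mathbf{G}_2,\rho$ and the size of $\mathcal{K}_0$.

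Next I apply Corollary~\ref{cor:globalRepEst} with $\epsilon$ chosen so that $\epsilon^{1/\deold{027dexpSmallnew}}$ matches the right-hand side. Since $\tilde y\notin V_\epsilon$ by the construction of $\mathcal{K}_{\ref{lem:longHammingSep}}$, at least one of the following must hold for this $\gamma$: (I) $\min_{n\in\mathbf{N}}\|\gamma-n\|<\Cold{027CcoeSmall}\epsilon^{1/\eta}$ for the auxiliary small $\eta>0$ fixed once in Lemma~\ref{lem:algebraicRepEst}, or (II) $\|\gamma\|>\epsilon^{-\deold{027dexpSmallnew}\deold{0301Clattice}/2}$. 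The explicit formula for $\gamma$ together with Lemma~\ref{lem:matrixNorm} gives a polynomial bound $\|\gamma\|\leq C'\max(t,s)^D$ for some integer $D$ depending only on $\mathbf{G}_2$ and $\mathcal{K}_0$; in case (II), substituting the form of $\epsilon$ into this inequality and solving for $\max(t,s)$ produces a lower bound of the form $\max(t,s)>\Cold{040ClHS1}R^{1+\wold{040wlHS1}}$ for explicit positive $\Cold{040ClHS1},\wold{040wlHS1}$, provided the balance between the exponents $q$ (from $F$), $D$, $\deold{027dexpSmallnew}$ and $\deold{0301Clattice}$ comes out favourably, which is arranged by choosing $\deold{027dexpSmallnew}$ (equivalently the auxiliary $\delta$ of Lemma~\ref{lem:algebraicRepEst}) sufficiently small.

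Case (I) is the delicate one and represents the main obstacle. Here $\tilde y\gamma\tilde y^{-1}=u_{-s}^{(2)}g_1^{-1}u_t^{(2)}g_0$ itself lies $O(\epsilon^{1/\eta})$-close to $\mathbf{N}$ because $\mathbf{N}\lhd\mathbf{G}_2$; decomposing this element as $h'z'p'$ via Lemma~\ref{lem:groupDecom} with $h'\in\mathbf{H}_2$, $z'\in\mathbf{Z}_2$ and $p'$ small, Lemma~\ref{lem:HNint} forces $h'$ to lie within $\Cold{033CnormRadius}$ times this distance of the finite centre $C(\mathbf{H}_2)$. A $2\times 2$-matrix calculation in the spirit of Lemma~\ref{lem:matchingFunction}, performed on the $\mathbf{H}_2$-components of $u_{-s}^{(2)}(g_1^{\mathfrak{h}})^{-1}u_t^{(2)}g_0^{\mathfrak{h}}$, identifies $h'$ with $u_{t-s}^{(2)}$ up to a controlled perturbation; since $C(\mathbf{H}_2)\cap\mathbf{U}_2=\{e\}$ by connectedness of $\mathbf{U}_2$, this pins $|t-s|$ (and in turn the whole element $\gamma$) into an arbitrarily small neighbourhood of $e$ in $\mathbf{G}_2$ upon taking $\delta$ small. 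Discreteness of $\Gamma_2$ together with the uniform injectivity radius on $\mathcal{K}_{\ref{lem:longHammingSep}}$ then force $\gamma=e$, contradicting the standing hypothesis. I expect the principal technical difficulty to lie in setting up the representation-theoretic bound cleanly, as the chain-basis bookkeeping must track both how the coefficients of $g_j^{\tr}$ scale under conjugation by large powers of $u^{(2)}$ and how these propagate through $\rho$ so as to extract an exponent $\wold{040wlHS1}>0$.
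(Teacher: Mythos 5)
Your toolkit is the right one---the Chevalley representation for the normalizer $\mathbf{L}_2$ of $\mathfrak h_2$, the exceptional-set estimate of Corollary~\ref{cor:globalRepEst}, Lemma~\ref{lem:HNint} for returns near the normal core, a norm bound on $\gamma$, and a Borel--Cantelli construction of the good set---but the proposal omits the one idea the paper's proof actually turns on: renormalizing the whole configuration by $a^{(2)}_{-p_y}$ with $p_y\approx\tfrac12\log R$ before applying the representation estimate. Your key claim, that $\norm{\rho(\gamma\tilde y^{-1})v_{\mathbf L}-\rho(\tilde y^{-1})v_{\mathbf L}}_V\leq\kappa\,\delta\,F(\max(t,s),R)$ with a right-hand side small enough to feed into Corollary~\ref{cor:globalRepEst}, is not obtainable from the unrenormalized data: for $g\in\Kak(R,\delta)$ the Bowen condition only gives $\absolute{\vartheta_{i,j}(g)}\lesssim\delta/R^{i}$, so the $\bx^{0,j}$-coefficients of $g^{\tr}$ are merely $O(\delta)$, and conjugation by $u^{(2)}_{t}$ with $t\in[R,R^{1+w}]$ amplifies the top coefficients by $(t/R)^{q_j}$; the honest upper bound on the displacement is therefore of size $\delta R^{wq_j}$, not even $o(1)$ as $R\to\infty$, whereas the corollary needs a negative power of $R$ (the paper gets $R^{-0.2}$). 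The same failure breaks your case (I): applying Lemma~\ref{lem:HNint} requires splitting $u^{(2)}_{-s}g_1^{-1}u^{(2)}_{t}g_0$ as an $\mathbf H_2$-part times a $\mathbf Z_2$-part times a \emph{small} factor, and that small factor is exactly the conjugated transverse part, which is not small. Finally the arithmetic in case (II) does not close: matching $\epsilon^{1/\deold{027dexpSmallnew}}$ to the displacement forces $\epsilon$ to be a large negative power of $R$, so the admissible norm threshold $\epsilon^{-\deold{027dexpSmallnew}\deold{0301Clattice}/2}$ in Corollary~\ref{cor:globalRepEst} is a tiny power of $R$, while the true bound $\norm{\gamma}\leq C'\max(t,s)^{D}$ is at least $R^{D}$.

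The paper repairs all three points at once by changing the base point: the good set is built via Lemma~\ref{lem:chainGoodSet} so that some $p_y\in[\tfrac12(1-w)\log R,\tfrac12\log R]$ sends $y$ into a fixed compact set under $a^{(2)}_{-p_y}$, and conjugating the return relation by $a^{(2)}_{-p_y}$ (i) shrinks both transverse components to size $R^{-0.4}$ (Lemma~\ref{lem:diangonalConjugateKak}), (ii) renormalizes the times to $\bar s,\bar t\in[1,R^{2w}]$, so that Lemmas~\ref{lem:zlConvertDirect} and \ref{lem:convertLemma} give the displacement bound $R^{-0.2}$ and the norm bound $\norm{\bar\gamma}\leq R^{8w}$, which beats the threshold because $w$ is chosen tiny compared with $\deold{027dexpSmallnew}^2\deold{0301Clattice}$, and (iii) keeps $\bar s,\bar t\geq1$, which is what makes the $\mathbf H_2$-part $u^{(2)}_{b_0}\oub_{b_1}u^{(2)}_{b_2}\oub_{b_3}a^{(2)}_{b_4}$ have $b_0\leq-\tfrac12$, $b_2\geq\tfrac12$ and hence, via Lemma~\ref{lem:minDis}, incompatible with proximity to the center of $\mathbf H_2$ unless $\bar\gamma=e$, which the injectivity radius excludes. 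Correspondingly, the good set cannot be a fixed compact set minus exceptional sets at the base point as you propose: the exceptional sets of Corollary~\ref{cor:globalRepEst} must be pulled back under $a^{(2)}_{-n}$ over the $\sim w\log R$ admissible renormalization times, which is why the paper first proves the measure estimate for the bad set $V(w,R,\delta)$ (Lemma~\ref{lem:longHammingMEst}), with a $\log R$ loss, and only then runs Borel--Cantelli over dyadic scales to produce $\Kold{0403KlHS2}$, $\Rold{040RlHS1}$, $\Cold{040ClHS1}$, $\wold{040wlHS1}$ uniformly in $R$. Without the renormalization step your scheme produces no positive exponent $\wold{040wlHS1}$ at all.
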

\subsection*{Outline of the proof}
Our strategy to prove Lemma~\ref{lem:longHammingSep} is to estimate the measure of points that do not satisfy the conditions in Lemma~\ref{lem:longHammingSep} and show it is small. A similar result was established by Ratner \cite{ratner1979cartesian} for $\mathbf{G}_2=\SL(2,\R)\times\SL(2,\R)$ and $\Gamma$ a cocompact reducible lattice in~$ \mathbf{G}_2$. A main tool in estimating the measure of the relevant sets is Corollary~\ref{cor:globalRepEst}.

\subsection{Proof of Lemma~\ref{lem:longHammingSep}}
We give a conditional proof of Lemma~\ref{lem:longHammingSep} assuming Lemma~\ref{lem:longHammingMEst} in this section. The proof of Lemma~\ref{lem:longHammingMEst} is postponed to ~\S\ref{sec:setEstimates}. 

\subsubsection{Notations and preliminaries} \label{8.1.1 notations}
The following lemma provides us the good set needed for the proof of Lemma~\ref{lem:longHammingSep}. Proof of Lemma~\ref{lem:chainGoodSet} follows from the Pointwise Ergodic Theorem.

\begin{Lemma}\label{lem:chainGoodSet}
Fix an $\epsilon\in(0,10^{-40})$. Let $K\subset \mathbf{G}_2/\Gamma_2$ be a set with $m_2(K)>1-\frac{\epsilon}{2}$, then for every $c\in(0,1)$, there exist a compact set $W\subset \mathbf{G}_2/\Gamma_2$ with $m_2(W)>1-\frac{\epsilon}{2}$ and $T>1$ such that for every $x\in W$ and $R\geq T$
\[
\{t\in[R(1-c),R]\cap\Z : a_{-t}^{(2)}x\in K\}\neq\emptyset.
\]
\end{Lemma}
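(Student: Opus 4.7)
The plan is to deduce the lemma from a routine application of Birkhoff's pointwise ergodic theorem to the time-one map $a_{-1}^{(2)}$. The key input is the ergodicity of $a_{-1}^{(2)}$ acting on $(\mathbf{G}_2/\Gamma_2, m_2)$, which is recorded immediately after the standing assumptions in~\S\ref{subsec:assupmtions} as a consequence of the ergodicity of $u_t^{(2)}$ together with the semisimplicity of $\mathbf{G}_2$ (cf.~\cite{MargulisDiscrete1991}*{\S II}).

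Applying Birkhoff to the indicator $\chi_K$ yields a set $Z \subset \mathbf{G}_2/\Gamma_2$ of full $m_2$-measure on which the counts
\[
S_N(x) := \#\{0 \leq t \leq N-1 : a_{-t}^{(2)}.x \in K\}
\]
satisfy $S_N(x)/N \to m_2(K) > 1 - \epsilon/2$. By Egorov's theorem this convergence is uniform on a measurable $W_0 \subset Z$ with $m_2(W_0) > 1 - \epsilon/3$; inner regularity of $m_2$ then lets one extract a compact $W \subset W_0$ with $m_2(W) > 1 - \epsilon/2$, on which the convergence remains uniform.

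For the given $c \in (0,1)$, set $\eta = c/10$ and let $N_0$ be large enough that $\absolute{S_N(x)/N - m_2(K)} < \eta$ for every $x \in W$ and every $N \geq N_0$. Put $T = 2N_0/(1-c)$; then for $x \in W$ and $R \geq T$, writing $N = \lfloor R \rfloor + 1$ and $N' = \lfloor R(1-c) \rfloor$ (both $\geq N_0$), one has
\[
\#\{t \in [R(1-c), R] \cap \Z : a_{-t}^{(2)}.x \in K\} \geq S_N(x) - S_{N'}(x) \geq m_2(K)(N-N') - \eta(N+N').
\]
Using $m_2(K) > 1 - \epsilon/2$, $N - N' \geq cR - 1$, $N + N' \leq 2R + 1$, and $\epsilon < 10^{-40}$ together with $\eta = c/10$, this lower bound is at least $cR(4/5 - \epsilon/2) - O(1) > 0$ once $R$ is sufficiently large; hence the set on the left is nonempty. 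There is no substantive obstacle here; the only point of care is tying the Egorov rate $\eta$ to $c$, which is precisely what forces $T$ to depend on $c$.
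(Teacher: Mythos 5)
Your proof is correct and follows essentially the same route the paper intends: the paper explicitly derives this lemma from the Pointwise Ergodic Theorem for $a_1^{(2)}$ (whose ergodicity is noted after the assumptions in \S\ref{subsec:assupmtions}), leaving the Egorov/inner-regularity and window-counting details to the reader, which is exactly what you supply. The only blemishes are trivial bookkeeping: the first inequality in your display can miss the endpoint $\lfloor R(1-c)\rfloor$ by one (absorbed by your $O(1)$), and $T$ should be enlarged to also guarantee $cR(4/5-\epsilon/2)$ exceeds that $O(1)$ term, neither of which affects the argument.
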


\noindent 
We leave the details to the reader. In our case $a_1^{(2)}$ acts ergodically on $ \mathbf{G}_2/\Gamma_2$, so if we want we can even make the measure of $W$ as close to~$1$ as we wish.

\medskip

Recall that $\Kold{0401Kbase}$ is a compact set chosen in \S\ref{first statement of lemma 8.1}. Fix an $\epsilon\in(0,10^{-40})$, we apply Lemma~\ref{lem:chainGoodSet} with $K=\Kold{0401Kbase}$ and $c>0$ a sufficiently small constant (which will be denoted by $\wold{040wlHS1}$) to be specified later, we define $\Knew\label{052KGeo3}$\index{$\Kold{052KGeo3}$, Equation~\eqref{eq:geoErg}} and $\Rnew\label{052RGeo}$\index{$\Rold{052RGeo}$, Equation~\eqref{eq:geoErg}} as
\begin{equation}\label{eq:geoErg}
    \Kold{052KGeo3}=W,\qquad \Rold{052RGeo}=T.
\end{equation}

\medskip

For any $w,R,\delta>0$, define the set $V(w,R,\delta)$ as follows:
\begin{equation*}
    \begin{aligned}
    V(w,R, \delta)&=\{y\in \Kold{052KGeo3}:\exists x\in\Kak(R,\delta,y), \ t,s \in [R, R^{1+w}] \\
    &\text{such that }y_s \in \Oold{040KbaseNew},  x_{t}\in\Kak(R,\delta,y_s),  (x,y)\overset{\neq e}{\leadsto}(x_{t},y_s)\},
    \end{aligned}
\end{equation*}
where $\Oold{040KbaseNew}$ is the interior of the compact set $\Kold{0401Kbase}$ on $\mathbf{G}_2/\Gamma_2$, cf.~\eqref{eq:kpreHat}. With these notations, we are able to state the Lemma~\ref{lem:longHammingMEst}, whose proof is provided in \S\ref{sec:setEstimates}.
\begin{Lemma}\label{lem:longHammingMEst}
There exist positive constants $\wold{040wlHS1}$, $\denew\label{0521deKakSmall}$\index{$\deold{0521deKakSmall}$, Lemma~\ref{lem:longHammingMEst}}, $\denew\label{0522deExpR}$\index{$\deold{0522deExpR}$, Lemma~\ref{lem:longHammingMEst}}, $\Rnew\label{052Rlhe1}$\index{$\Rold{052Rlhe1}$, Lemma~\ref{lem:longHammingMEst}} such that for every $R\geq\Rold{052Rlhe1}$
\[
m_2(V(\wold{040wlHS1},R,\deold{0521deKakSmall}))<R^{-\deold{0522deExpR}}.
\]
\end{Lemma}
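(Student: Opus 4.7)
The plan is to show that for $y \in V(w,R,\delta)$, the non-trivial return element $\gamma$ is forced, by the twin Kakutani-Bowen constraints at the two matching instants, to satisfy algebraic near-fixing conditions which only a polynomially-small-measure set of $y$'s can admit. Fix $y \in V(w,R,\delta)$ and the associated $x, t, s, \gamma$. Using Lemma~\ref{lem:chainGoodSet} (applied with the chosen $c=\wold{040wlHS1}$), find $p$ close to $\tfrac12\log R$ with $a_{-p}^{(2)}y \in \Kold{0401Kbase}$. Lift $y$ to $\tilde y \in \mathbf{G}_2$ so that $\bar{\tilde y}=a_{-p}^{(2)}\tilde y$ lies in a fixed bounded set, and pick lifts $\tilde x = g\tilde y$, $\tilde x_t = g' u_s^{(2)}\tilde y\gamma$ with $g,g'\in\Kak(R,\delta)$. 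Then
\[
\mu := u_{-s}^{(2)} g'^{-1} u_t^{(2)} g = \tilde y\,\gamma\,\tilde y^{-1}.
\]
Passing to $\bar\mu = a_{-p}^{(2)}\mu a_p^{(2)} = u_{-\bar s}^{(2)}\bar g'^{-1} u_{\bar t}^{(2)}\bar g$, with $\bar t = t e^{-2p}$, $\bar s = se^{-2p} \in [R^{-2w},R^{2w}]$, Lemma~\ref{lem:diangonalConjugateKak} tells us $\bar g,\bar g' \in B_{4\delta,\:\delta\dim\mathfrak{g}_2/R^{1/2}}^{\mathfrak l\oplus\mathrm{tr}}$.

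The next step is a chain-basis computation. Writing $\bar g = \bar g^{\mathfrak h}\bar g^{\mathfrak z}\bar g^{\mathrm{tr}}$ (and similarly for $\bar g'$), the transversal part $\bar g^{\mathrm{tr}}$ has coefficients $O(\delta R^{-q_j/2})$ on $\bx^{i,j}$. Conjugating by $u_{\bar t}^{(2)}$ or $u_{\bar s}^{(2)}$ (each of size $O(R^{2w})$) inflates these coefficients by at most $R^{2w q_j}$, so for $w$ sufficiently small (smaller than $1/(4 q_{\max})$, where $q_{\max}$ is the top chain index of $\mathfrak{g}_2$), every coefficient remains $O(\delta R^{-\cnew\label{cdecay}})$ for some $\cold{cdecay}>0$. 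After cancellation between $\bar g'$ and $\bar g$, one obtains a factorization $\bar\mu = \ell\cdot T$ with $\ell\in \mathbf L$ in a bounded region and $T\in B^{\mathbf{G}_2}_{\cnew\label{C1}\delta R^{-\cold{cdecay}}}$ consisting purely of transversal directions.

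Now choose a Chevalley representation $\rho:\mathbf G_2\to \SL(V)$ with vector $v_{\mathbf L}$ whose stabilizer is $\mathbf L$ (Theorem~\ref{thm:Chevalley}); the normal core $\mathbf N$ is as in \eqref{eq:defN}. Since $\rho(\ell)v_{\mathbf L}=v_{\mathbf L}$ and $a_p^{(2)}\in \mathbf L$, unwinding $\mu = a_p^{(2)}\bar\mu a_{-p}^{(2)}$ yields
\[
\bigl\|\rho(\mu) v_{\mathbf L} - v_{\mathbf L}\bigr\| \leq \Cnew\label{Cfinal}\,R^{\lambda_{\max}/2}\cdot\delta R^{-\cold{cdecay}},
\]
where $\lambda_{\max}$ is the top $\Ad(a_p^{(2)})$-weight of $V$. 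Choosing $w$ small enough that $\cold{cdecay} > \lambda_{\max}/2$, we have $\|\rho(\mu)v_{\mathbf L}-v_{\mathbf L}\| \leq R^{-\cnew\label{cdecay2}}$ for some $\cold{cdecay2}>0$. Substituting $\mu=\tilde y\gamma\tilde y^{-1}$ and using that $\bar{\tilde y}$ is bounded gives the Chevalley smallness condition of Corollary~\ref{cor:globalRepEst} for $g=\bar{\tilde y}$ with $\epsilon = R^{-\cold{cdecay2}/2}$.

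It remains to verify that $\gamma$ is not too close to $\mathbf N$, so that the second hypothesis of Corollary~\ref{cor:globalRepEst} applies. Here we use Lemma~\ref{lem:HNint}: if $\gamma$ were within $O(R^{-\cold{cdecay2}/\delta})$ of $\mathbf N$, then $\bar\mu$ would land in a small neighborhood of $\mathbf N$, and writing $\bar\mu = \bar\mu^{\mathfrak h}\bar\mu^{\mathfrak z}\bar\mu^{\mathrm{tr}}$, Lemma~\ref{lem:HNint} would force $\bar\mu^{\mathfrak h}$ to be close to the (finite) center of $\mathbf H$. On the other hand the explicit formula for $\bar\mu = u_{-\bar s}^{(2)}\bar g'^{-1}u_{\bar t}^{(2)}\bar g$, combined with $\bar t,\bar s\ge R^{-2w}\ge 1$ and the fact that both the $\bu_2$-coefficient and the $\ba_2$-coefficient of the $\mathfrak h$-part are bounded by $O(\delta)\ll 1$, forces $\bar\mu^{\mathfrak h}$ to have a unipotent component comparable to $|\bar t-\bar s|$, which is too large to sit near the center of $\mathbf H$. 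This contradiction yields the lower bound $\min_{n\in \mathbf N}\|\gamma-n\|\geq \Cold{027CcoeSmall}\epsilon^{1/\delta}$ of Corollary~\ref{cor:globalRepEst}. Finally, Corollary~\ref{cor:globalRepEst} produces $\tilde m(\{\bar{\tilde y}: \bar{\tilde y}\text{ is bad}\}) \leq R^{-\denew\label{finaldecay}}$ for some $\deold{finaldecay}>0$; pushing down to $\mathbf G_2/\Gamma_2$ gives the bound $m_2(V(\wold{040wlHS1},R,\deold{0521deKakSmall}))<R^{-\deold{0522deExpR}}$ on setting $\deold{0522deExpR}=\deold{finaldecay}/2$.

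The main obstacle is the bookkeeping of scales in the chain basis: one must choose $w$ small enough (in terms of $q_{\max}$ and $\lambda_{\max}$) so that the several sources of polynomial growth under $u_{\bar t}$, $u_{\bar s}$, and $a_p$ are strictly overcome by the $R^{-1/2}$ decay coming from Lemma~\ref{lem:diangonalConjugateKak}. The non-triviality argument via Lemma~\ref{lem:HNint} is also delicate, since one has to separate the $\mathfrak h$-component of $\bar\mu$ from the transversal noise without assuming that $\bar\mu$ itself lies in a small neighborhood of~$e$.
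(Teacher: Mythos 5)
Your overall architecture matches the paper's: conjugate by $a_{-p}^{(2)}$ with $p$ chosen via Lemma~\ref{lem:chainGoodSet} so the base point lands in a fixed compact set, factor the return relation $u_{-\bar s}^{(2)}\bar g_2^{-1}u_{\bar t}^{(2)}\bar g_1=\bar y\bar\gamma\bar y^{-1}$ into an $\mathbf L_2$-part times a polynomially small transversal part, feed this into the Chevalley representation and Corollary~\ref{cor:globalRepEst}, and verify the distance-to-$\mathbf N_2$ hypothesis via Lemma~\ref{lem:HNint}. But there is a genuine gap in the non-triviality step. Your claimed contradiction — that if $\bar\gamma$ were close to $\mathbf N_2$ then the $\mathfrak h$-part of $\bar\mu$ would have ``a unipotent component comparable to $\absolute{\bar t-\bar s}$, too large to sit near the center of $\mathbf H_2$'' — is not a consequence of the structure, and it is vacuous in the main case $\bar t\approx\bar s$ (which is exactly what one expects, since the matching derivative is close to $1$; nothing prevents $t=s$). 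Moreover the identity \emph{is} in the center of $\mathbf H_2$, so Lemma~\ref{lem:HNint} leaves open the possibility that the $\mathfrak h$-part is near $e$; in that case $\bar\mu$ itself is $O(\delta)$-small and the contradiction must come from $\bar\gamma\neq e$ together with the injectivity radius of the compact set $\widetilde{\Kold{0401Kbase}}$ — an argument entirely absent from your sketch. The paper's Lemma~\ref{lem:minDis} handles precisely these two cases: when the central element is nontrivial, the Bruhat-type coordinates of the $\mathfrak h$-part (as produced by Lemma~\ref{lem:convertLemma}, with $b_0\leq-\tfrac12$ and $b_2\geq\tfrac12$) give an incompatible sign relation; when it is trivial, the injectivity-radius argument applies. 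Note also that the opposite-unipotent coefficients of $\bar g_i^{\mathfrak h}$ can be of size $\delta$ after conjugation, so the $\mathfrak h$-part of $\bar\mu$ is a genuinely ``large'' Bruhat element, not approximately $u_{\bar t-\bar s}^{(2)}$.

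A second, quantitative problem is your detour through the un-conjugated element $\mu=a_p^{(2)}\bar\mu a_{-p}^{(2)}$: you pay an operator-norm factor $R^{\lambda_{\max}/2}$ for $\rho(a_p^{(2)})$ and then require the transversal decay exponent to exceed $\lambda_{\max}/2$. That cannot be arranged by shrinking $w$: the slowest transversal decay after conjugation by $a_{-p}^{(2)}$ is of order $R^{-1/2}$ (chains with $q_j=1$), while $\lambda_{\max}$ is a weight of the Chevalley representation $V$ and can be arbitrarily large. Worse, converting back to the hypothesis of Corollary~\ref{cor:globalRepEst} at the compact-set representative costs a second factor of this size. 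The fix is to never leave the conjugated frame: since $\bar\gamma\bar y^{-1}=\bar y^{-1}\bar\mu$ and $\bar y$ ranges in a fixed compact set, $\norm{\rho(\bar\gamma\bar y^{-1})v_{\mathbf L_2}-\rho(\bar y^{-1})v_{\mathbf L_2}}\ll\norm{\rho(\bar\mu)v_{\mathbf L_2}-v_{\mathbf L_2}}\ll\delta R^{-c}$ with no $a_p^{(2)}$-distortion, which is how the paper proceeds (Lemma~\ref{lem:representationConditions}). Finally, two smaller omissions: you never verify the norm cap $\norm{\bar\gamma}\leq\epsilon^{-\deold{027dexpSmallnew}\deold{0301Clattice}/2}$ required in Corollary~\ref{cor:globalRepEst} (easy, since everything is $R^{O(w)}$-bounded, but needed), and the pushdown from the exceptional set in $\widetilde{\Kold{0401Kbase}}$ to $V(\wold{040wlHS1},R,\deold{0521deKakSmall})$ should be made via the inclusion into $\pi_2\bigl(\bigcup_n a_{-n}^{(2)}V_{\bullet}\bigr)$ over the $O(\wold{040wlHS1}\log R)$ admissible values of $p$, using invariance of $m_2$, which costs a harmless logarithmic factor.
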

\index{$\Rold{052Rlhe1}$, Lemma~\ref{lem:longHammingMEst}}%

\subsubsection{Proof of Lemma~\ref{lem:longHammingSep}}
\begin{proof}[Proof of Lemma~\ref{lem:longHammingSep} assuming Lemma~\ref{lem:longHammingMEst}]

Let $\epsilon\in(0,10^{-40})$ be fixed as above. Let $N_1\in\N$ be such that 
\[
2^{N_1}\geq\Rold{052Rlhe1}^{\deold{0522deExpR}} \text{ and }  \sum_{k=N_1}^{\infty}2^{-k}<\frac{\epsilon}{2}.\] 
Then we define 
\[
\Rold{040RlHS1}=2^{N_1/\deold{0522deExpR}}\text{ and }\Kold{0403KlHS2}=\bigcap_{k=N_1}^{\infty}\left(\Kold{052KGeo3}\setminus V(\wold{040wlHS1},2^{k/\deold{0522deExpR}},\deold{0521deKakSmall})\right).
\]
By Lemma~\ref{lem:longHammingMEst}
\begin{equation*}
\begin{aligned}
m_2(\Kold{0403KlHS2})\geq &m_2(\Kold{052KGeo3})-\sum_{k=N_1}^{\infty}m_2(V(\wold{040wlHS1},2^{k/\deold{0522deExpR}},\deold{0521deKakSmall}))\\
\geq&1-\frac{\epsilon}{2}-\sum_{k=N_1}^{\infty}2^{-k}
\geq1-\epsilon.
\end{aligned}
\end{equation*}
Moreover, since each $V(\wold{040wlHS1},2^{k/\deold{0522deExpR}},\deold{0521deKakSmall})$ is an open set and $\Kold{052KGeo3}$ is compact, we obtain that $\Kold{0403KlHS2}$ is a compact set. 

\medskip

Given $R\geq\Rold{040RlHS1}$, there exists an integer  $k_R\geq N_1$ such that 
\[
2^{k_R/\deold{0522deExpR}}\leq R<2^{(k_R+1)/\deold{0522deExpR}}.
\]
If $y\in\Kold{0403KlHS2}$, $x\in \mathbf{G}_2/\Gamma_2$, $\delta\in(0,\deold{0521deKakSmall}]$ and $s,t\geq R\geq2^{k_R/\deold{0522deExpR}}$ satisfy 
\[
x\in\Kak(R,\delta,y),\quad y_s\in \Oold{040KbaseNew},\quad x_t\in\Kak(R,\delta,y_s),\quad (x,y)\overset{\neq e}{\leadsto}(x_{t},y_s),
\]
then these together with the definition of $V(w,R,\delta)$ and 
\[
\Kold{0403KlHS2}\subset \Kold{052KGeo3}\setminus V(\wold{040wlHS1},2^{k_R/\deold{0522deExpR}},\deold{0521deKakSmall})
\]
give that
\begin{equation*}
    \begin{aligned}
    \max(s,t)>2^{(1+\wold{040wlHS1})k_R/\deold{0522deExpR}}= \Cold{040ClHS1}2^{(1+\wold{040wlHS1})(k_R+1)/\deold{0522deExpR}}>\Cold{040ClHS1}R^{1+\wold{040wlHS1}},
    \end{aligned}
\end{equation*}
where $\Cold{040ClHS1}=2^{-(1+\wold{040wlHS1})/\deold{0522deExpR}}$. This proves Lemma~\ref{lem:longHammingSep}.
\end{proof}

Now the only missing piece for Lemma~\ref{lem:longHammingSep} is the proof of Lemma~\ref{lem:longHammingMEst}, which will be provided in the next section.
\subsection{Proof of Lemma~\ref{lem:longHammingMEst}}\label{sec:setEstimates}

\subsubsection{Notations}\label{sec:constructionL2}
We first specify some notations that are used in the proof of Lemma~\ref{lem:longHammingMEst}.

\medskip

Let $\mathfrak{l}_2$ be the Lie algebra generated by $\bu_2,\bou_2,\ba_2$ and $\bx_2^{0,j}$ for $j\in I_c$, where $I_c$ is defined as in~\eqref{eq:Ic}.

 Since $(\mathbf{G}_1/\Gamma_1,m_1,u_t^{(1)})$ is Kakutani equivalent to $(\mathbf{G}_2/\Gamma_2,m_2,u_t^{(2)})$ and $(\mathbf{G}_1/\Gamma_1,m_1,u_t^{(1)})$ is not loosely Kronecker, Lemma~\ref{lem:chainLK} guarantees 
\begin{equation*}
\mathfrak{l}_2\neq\mathfrak{g}_2.
\end{equation*}

 Recall that $\mathfrak{h}_2=\operatorname{span}_{\R}\{\bu_2,\ba_2,\bou_2\}$ and let 
\[
\mathbb{L}_2(\R)=\{g \in \mathbb{G}_2(\R): \Ad (g)\mathfrak{h}_2=\mathfrak{h}_2\},
\]
then we have 
\[
\mathfrak{l}_2=\operatorname{Lie}(\mathbb{L}_2(\R))\subsetneq\mathfrak{g}_2.
\]
Define
    \begin{equation*}
        \mathbf{L}_2=\mathbb{L}_2(\R)\cap \mathbf{G}_2.
    \end{equation*} 
Then $\mathbf{L}_2$ is a subgroup of finite index in $\mathbb{L}_2(\R)$ since $\mathbf{G}_2$ is a subgroup of finite index in $\mathbb{G}_2(\R)$.
Let $\mathbf{N}_2$ be the \textbf{normal core} of $\mathbf{L}_2$, i.e. 
\begin{equation*}
\mathbf{N}_2=\bigcap_{g\in \mathbf{G}_2}g\mathbf{L}_2g^{-1}.
\end{equation*}

Applying Chevalley's Theorem (cf.\ Theorem~\ref{thm:Chevalley}) to $\mathbb{L}_2(\R)$ and $\mathbb{G}_2(\R)$, by restricting to~$\mathbf{G}_2$, there exist an algebraic representation over $\R$
\[
\rho:\mathbf{G}_2\to\SL(V)
\]
and an $\R$-vector 
\[
v_{\mathbf{L}_2}\in V\cap\R^{\dim V}
\]
such that 
    \begin{equation}\label{eq:vLdef}
        \mathbf{L}_2=\{g\in \mathbf{G}_2:\rho(g).v_{\mathbf{L}_2}=v_{\mathbf{L}_2}\}.
    \end{equation} 

By Lemma~\ref{lem:HNint}, there exist $\eold{033enbh}>0$ and $\Cold{033CnormRadius}>1$ such that if $h\in \mathbf{H}_2$ satisfies
\[
hzp\in \mathbf{N}_2
\]
for some $z\in \mathbf{Z}_2=C_{\mathbf{G}_2}(\mathbf{H}_2)$, $p\in B_{\epsilon}^{\mathbf{G}_2}$ and $\epsilon\in(0,\eold{033enbh})$, then $h$ is within distance $\Cold{033CnormRadius}\epsilon$ of an element in the center of $\mathbf{H}_2$ (a group of order $\leq 2$).

\subsubsection{Proof of Lemma~\ref{lem:longHammingMEst}}
We start the proof of Lemma~\ref{lem:longHammingMEst} with the following technical lemma, which aims to check the conditions in Corollary~\ref{cor:globalRepEst}. Let $\widetilde{\Kold{0401Kbase}}$ be a compact subset of $\mathbf{G}_2$ whose projection on $\mathbf{G}_2/\Gamma_2$ contains $\Kold{0401Kbase}$. The proof of Lemma~\ref{lem:representationConditions} is provided in \S\ref{sec:representationConditions}.
\begin{Lemma}\label{lem:representationConditions}
There exist $\wnew\label{054wconditionRep}$\index{$\wold{054wconditionRep}$, Lemma~\ref{lem:representationConditions}}, $\Rnew\label{054RconditionRep}$\index{$\Rold{054RconditionRep}$, Lemma~\ref{lem:representationConditions}} and $\denew\label{054deconditionRep}>0$\index{$\deold{054deconditionRep}$, Lemma~\ref{lem:representationConditions}} depending on $\mathbf{G}_2$ and $\Gamma_2$ such that for every $\delta\in(0,\deold{054deconditionRep}]$ and $R\geq\Rold{054RconditionRep}$ the following hold. 

If there are $e\neq\bar{\gamma}\in\Gamma_2$ and $\bar{y}\in \widetilde{\Kold{0401Kbase}}$ such that
\begin{equation}\label{eq:ugyrExpression}
u_{-\bar{s}}^{(2)}\bar{g}_2^{-1}u_{\bar{t}}^{(2)}\bar{g}_1=\bar{y}\bar{\gamma}\bar{y}^{-1}
\end{equation}
holds for some $\bar{s},\bar{t}\in[1,R^{\wold{054wconditionRep}}]$, $\bar{g}_i^{\mathfrak{h}}, \bar{g}_i^{\mathfrak{z}}\in B_{\delta}^{\mathbf{G}_2}$ and  $\bar{g}_i^{\tr}\in B_{R^{-0.4}}^{\mathbf{G}_2}$ with $i=1,2$. Then
\begin{equation*}
\begin{gathered}
\min_{g\in \mathbf{N}_2}\norm{\bar{\gamma}-g}\geq \Cold{027CcoeSmall}R^{-\deold{027dexpSmallnew}},\qquad \norm{\bar{\gamma}}\leq  R^{\deold{027dexpSmallnew}^2\deold{0301Clattice}/10},\\
\norm{\rho(\bar{\gamma}\bar{y}^{-1})v_{\mathbf{L}_2}-\rho(\bar{y}^{-1})v_{\mathbf{L}_2}}_V<R^{-0.2},
\end{gathered}
\end{equation*}
where $\deold{027dexpSmallnew}$ and $\Cold{027CcoeSmall}$ are constants from Corollary~\ref{cor:globalRepEst} with $\delta=1/5$, and $\deold{0301Clattice}$ is the constant from~\eqref{eq:latticeCount} related to the number of lattice elements up to a given norm.
\end{Lemma}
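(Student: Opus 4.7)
\textbf{Proof plan for Lemma~\ref{lem:representationConditions}.}
The three conclusions will be established separately.  The norm bound on $\bar\gamma$ and the representation-deviation estimate both follow from polynomial growth of $u_t^{(2)}$, while the lower bound on $\min_{g\in \mathbf{N}_2}\norm{\bar\gamma-g}$---which is the main content---will be proved by contradiction using Lemma~\ref{lem:HNint}.

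For the norm bound, sub-multiplicativity of the Hilbert--Schmidt norm applied to \eqref{eq:ugyrExpression} gives
\[
\norm{\bar\gamma}\leq \norm{\bar y^{-1}}\,\norm{u_{-\bar s}^{(2)}}\,\norm{\bar g_2^{-1}}\,\norm{u_{\bar t}^{(2)}}\,\norm{\bar g_1}\,\norm{\bar y}.
\]
The factors $\norm{\bar y^{\pm1}}$ are bounded because $\bar y\in \widetilde{\Kold{0401Kbase}}$, the factors $\norm{\bar g_i^{\pm 1}}$ are bounded because all their decomposition pieces lie in balls of radius at most $1$, and the unipotent factors contribute at most $R^{C\wold{054wconditionRep}}$ with $C=C(\mathbf{G}_2)$; choosing $\wold{054wconditionRep}$ small enough relative to $\deold{027dexpSmallnew}^2\deold{0301Clattice}/10$ then yields the bound.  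For the representation estimate, the key observation is that $\bar g_i^{\mathfrak h}\in \mathbf{H}_2$, $\bar g_i^{\mathfrak z}\in \mathbf{Z}_2$ and $u_t^{(2)}$ all lie in $\mathbf{L}_2$ and thus fix $v_{\mathbf{L}_2}$ under $\rho$; only the transverse factors $\bar g_i^{\tr}\in B_{R^{-0.4}}^{\mathbf{G}_2}$ move $v_{\mathbf{L}_2}$, each by $O(R^{-0.4})$, and each such deviation is subsequently amplified by the surrounding unipotent factors by at most $R^{O(\wold{054wconditionRep})}$ in operator norm.  Telescoping over the factors gives $\norm{\rho(\bar y\bar\gamma\bar y^{-1})v_{\mathbf{L}_2}-v_{\mathbf{L}_2}}_V\leq R^{-0.3}$; applying $\rho(\bar y^{-1})$, which has bounded operator norm since $\bar y$ lies in a compact set, yields the $R^{-0.2}$ bound.

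For the lower bound on the distance, assume for contradiction that $\norm{\bar\gamma-g_0}<\Cold{027CcoeSmall}R^{-\deold{027dexpSmallnew}}$ for some $g_0\in \mathbf{N}_2$.  Combining Lemma~\ref{cor:metricRel} with the norm bound on $\bar\gamma$ just proved, we translate this into $d_{\mathbf{G}_2}(g_0^{-1}\bar\gamma,e)\leq R^{-\eta}$ for some $\eta>0$.  Normality of $\mathbf{N}_2$ and boundedness of $\bar y$ then give $\bar y\bar\gamma\bar y^{-1}=N\cdot p_1$ with $N=\bar y g_0\bar y^{-1}\in \mathbf{N}_2$ and $p_1\in B_{R^{-\eta/2}}^{\mathbf{G}_2}$.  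Applying Lemma~\ref{lem:zlConvertDirect} to each of $u_{-\bar s}^{(2)}\bar g_2^{-1}$ and $u_{\bar t}^{(2)}\bar g_1$---with that lemma's ``$R$''-parameter taken as a suitable power of $R$ so that the condition $\bar g_i^{\tr}\in B_{R^{-0.4}}^{\mathbf{G}_2}$ matches its hypothesis, which is legitimate provided $\wold{054wconditionRep}\leq 0.4\wold{014wExpBound}$---and then using that $\mathbf{Z}_2$ centralizes $\mathbf{H}_2$, I rewrite the LHS of \eqref{eq:ugyrExpression} as
\[
u_{-\bar s}^{(2)}\bar g_2^{-1}u_{\bar t}^{(2)}\bar g_1=p_*\cdot h_*\cdot z_*,
\]
where
\[
h_*=u_{-\bar s}^{(2)}(\bar g_2^{\mathfrak h})^{-1}u_{\bar t}^{(2)}\bar g_1^{\mathfrak h}\in \mathbf{H}_2,\qquad z_*=(\bar g_2^{\mathfrak z})^{-1}\bar g_1^{\mathfrak z}\in \mathbf{Z}_2,
\]
and $p_*\in B_{R^{-0.3}}^{\mathbf{G}_2}$ (the small perturbations accumulated via conjugation through $u_{\pm\bar s}^{(2)}$ and the bounded $\bar g_i^{\mathfrak h}$ are polynomially inflated, but remain small once $\wold{054wconditionRep}$ is chosen small enough).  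Comparing the two factorizations gives $h_*z_*p''\in \mathbf{N}_2$ with $p''$ small, so Lemma~\ref{lem:HNint} applies and forces $h_*$ to lie within $O(R^{-0.3})$ of the finite center $C(\mathbf{H}_2)\subset \mathbf{Z}_2$.  Plugging this back into \eqref{eq:ugyrExpression} shows that $\bar y\bar\gamma\bar y^{-1}$ lies within $O(R^{-\eta'})$ of a bounded subset of $\mathbf{Z}_2$, and hence $\bar\gamma$ lies within $O(R^{-\eta'})$ of a bounded set $\mathcal S\subset \mathbf{G}_2$ depending only on $\bar y$ and on the bounded factors $z_*, \bar g_i^{\mathfrak h}$.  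For $R$ sufficiently large, the discreteness of $\Gamma_2$ together with $\bar\gamma\in \Gamma_2\setminus\{e\}$ and the simultaneous closeness of $\bar\gamma$ to $\bar y^{-1}N\bar y\in \mathbf{N}_2$ yields the desired contradiction.

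The main technical obstacle is the careful tracking of error terms: both the rearrangement via Lemma~\ref{lem:zlConvertDirect} and the subsequent commutations (using the Baker--Campbell--Hausdorff corrections of Lemma~\ref{lem:comExp}) introduce perturbations that are then conjugated through potentially large unipotent elements $u_{\pm\bar s}^{(2)}$, causing polynomial inflation.  These inflated errors must remain below the threshold at which Lemma~\ref{lem:HNint} is applicable and at which the final discreteness argument succeeds; this is what pins down the precise admissible size of $\wold{054wconditionRep}$ relative to $\wold{014wExpBound}$, $\deold{027dexpSmallnew}$, and $\deold{0301Clattice}$.
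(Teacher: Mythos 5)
Your treatment of the second and third conclusions is fine and essentially matches the paper's route: the norm bound on $\bar\gamma$ comes from submultiplicativity and the bounds on $\bar s,\bar t,\bar g_i,\bar y$, and the representation estimate comes from the fact that the $\mathbf{H}_2$-, $\mathbf{Z}_2$- and $u^{(2)}$-factors fix $v_{\mathbf{L}_2}$ while the transverse factors, even after polynomial inflation by the unipotents, only move it by a negative power of $R$ (the paper packages this bookkeeping in Lemma~\ref{lem:convertLemma}).

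The genuine gap is in the contradiction step for the lower bound on $\min_{g\in\mathbf N_2}\norm{\bar\gamma-g}$. After you apply Lemma~\ref{lem:HNint} you only know that $h_*$ is close to \emph{some} central element $h_0$ of $\mathbf{H}_2$, and the two possibilities $h_0=e$ and $h_0\neq e$ require genuinely different arguments; your final sentence handles neither correctly. If $h_0=e$, the right contradiction is that $\bar y\bar\gamma\bar y^{-1}$ is then within $O(\delta)$ of the identity (here you must use that $z_*=(\bar g_2^{\mathfrak z})^{-1}\bar g_1^{\mathfrak z}$ is $O(\delta)$-\emph{small}, not merely bounded), which is impossible for $\bar\gamma\neq e$ once $\delta$ is below a fixed multiple of $\inj(\widetilde{\Kold{0401Kbase}})$; "closeness of $\bar\gamma$ to a bounded set together with closeness to $\mathbf N_2$" is not a contradiction — the second statement is just your hypothesis and the first is vacuous. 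If $h_0\neq e$, discreteness of $\Gamma_2$ gives nothing at all: $\bar y\bar\gamma\bar y^{-1}$ would then sit near $h_0z_*$, a point at definite distance from the identity, and a nontrivial lattice element may perfectly well lie there. This case must be excluded by using the hypothesis $\bar s,\bar t\ge 1$, which your contradiction step never invokes: in the paper the normal form of Lemma~\ref{lem:convertLemma} records an $\mathbf H_2$-word $u^{(2)}_{b_0}\oub_{b_1}u^{(2)}_{b_2}$ with $b_0\le-\tfrac12$, $b_2\ge\tfrac12$ and $b_1=O(\delta)$, and an explicit $2\times2$ computation (Lemma~\ref{lem:minDis}) shows such a word cannot be $O(\delta)$-close to the nontrivial central element — essentially because approximating $-\mathrm{id}$ would force the lower-left entry to be simultaneously $\approx-2/b_2<0$ and $\approx 2/\absolute{b_0}>0$. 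Without this structural exclusion (or an equivalent use of the positivity and lower bounds of $\bar s,\bar t$), your argument does not close, and this exclusion is the main content of the lemma.
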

Now we proceed to the proof of Lemma~\ref{lem:longHammingMEst}.
\begin{proof}[Proof of Lemma~\ref{lem:longHammingMEst}]
Let $\wold{040wlHS1}=\wold{054wconditionRep}$, $\deold{0521deKakSmall}=10^{-9}\deold{054deconditionRep}$ and $\Rold{052Rlhe1}\geq\Rold{054RconditionRep}$ a positive constant to be determined later, where $\wold{054wconditionRep}$, $\deold{054deconditionRep}$ and  $\Rold{054RconditionRep}$ are from Lemma~\ref{lem:representationConditions}.

Suppose that $y\in\Kold{052KGeo3}$ (with $\Kold{052KGeo3}$ depending on $\wold{040wlHS1}$ as in \S\ref{8.1.1 notations}),  
$x\in\Kak(R,\deold{0521deKakSmall},y)$, $y_s\in\Oold{040KbaseNew}$ and $x_{t}\in\Kak(R,\deold{0521deKakSmall},y_s)$ with $s$ and $t$ satisfying  $R\leq s,t\leq R^{1+\wold{040wlHS1}}$ for some $R\geq\Rold{052Rlhe1}$, and $(x,y)\overset{\neq e}{\leadsto}(x_{t},y_s)$ (cf.~\S\ref{sec:twoRelations}).

\medskip

Let $\tilde{y}\in \mathbf{G}_2$ be a lift of $y$, then the above assumptions imply that there exist~$e\neq\gamma\in\Gamma_2$ and  $g_1,g_2\in\Kak(R,\deold{0521deKakSmall})$ satisfying 
\[
x=g_1.y,\qquad x_{t}=g_2.y_s,
\]
such that
\begin{equation}\label{eq:firstFun}
\begin{aligned}
u_{t}^{(2)}g_1.\tilde{y}=g_2u_{s}^{(2)}.\tilde{y}\gamma.
\end{aligned}    
\end{equation}

The choice of $\Kold{052KGeo3}$ (cf.~\eqref{eq:geoErg} and Lemma~\ref{lem:chainGoodSet}) implies that for every $R\geq \Rold{052RGeo}$ there exists $p_y\in\N$ such that
\begin{equation}\label{eq:geoErg1}
    \begin{aligned}
    \frac{1}{2}(1-\wold{040wlHS1})\log R\leq p_y\leq\frac{1}{2}\log R\text{ and }
    a_{-p_y}^{(2)}.y\in\Kold{0401Kbase}.
    \end{aligned}
\end{equation}
This gives $\gamma_1\in\Gamma_2$ such that 
\[
a_{-p_y}^{(2)}.\tilde{y}\gamma_1\in \widetilde{\Kold{0401Kbase}}.
\]
Let  $\bar{y}=a_{-p_y}^{(2)}.\tilde{y}\gamma_1$, then \eqref{eq:firstFun} can be rewritten as:
\begin{equation}\label{eq:expLarTar0}
   u_{-\bar{s}}^{(2)}\bar{g}_2^{-1}u_{\bar{t}}^{(2)}\bar{g}_1=\bar{y}\bar{\gamma}\bar{y}^{-1}, 
\end{equation}
where 
\begin{equation}\label{eq:gammastgi}
\begin{gathered}
\bar{\gamma}=\gamma_1^{-1}\gamma\gamma_1\in\Gamma_2, \qquad   \bar{s}=e^{-2p_y}s,\qquad \bar{t}=e^{-2p_y}t, \\
\bar{g}_i=a_{-p_y}^{(2)}g_ia_{p_y}^{(2)} \text{ for }i=1,2,
\end{gathered}
\end{equation}
so $1\leq \bar{s},\bar{t}\leq R^{2\wold{040wlHS1}}$. Also, by Lemma~\ref{lem:diangonalConjugateKak}, $\bar g _i $ satisfy
\[
\bar{g}_i^{\mathfrak{h}}, \bar{g}_i^{\mathfrak{z}}\in B_{4\deold{0521deKakSmall}}^{\mathbf{G}_2}, \qquad 
\bar{g}_i^{\tr}\in B_{R^{-0.4}}^{\mathbf{G}_2}.
\]

\medskip

\begin{itemize}
\item Let $V_{R^{-\deold{027dexpSmallnew}/5}}$ be the set of $\bar{z}\in \widetilde{\Kold{0401Kbase}}$ such that 
\[
\norm{\rho(\gamma'\bar{z}^{-1})v_{\mathbf{L}_2}-\rho(\bar{z}^{-1})v_{\mathbf{L}_2}}_V<R^{-0.2}
\]
for some $\gamma'\in\Gamma_2$ satisfying 
\begin{gather}
    \min_{g\in \mathbf{N}_2}\norm{\gamma'-g}\geq\Cold{027CcoeSmall}R^{-\deold{027dexpSmallnew}}\quad \text{ and }\quad
    \norm{\gamma'}\leq  R^{\deold{027dexpSmallnew}^2\deold{0301Clattice}/10},\nonumber
\end{gather}
where $\Cold{027CcoeSmall}$ and $\deold{027dexpSmallnew}$ are from Corollary~\ref{cor:globalRepEst} with $\delta=1/5$, and $\deold{0301Clattice}$ is as in~\eqref{eq:latticeCount}.
\end{itemize}
\smallskip
As $p_y\in[\frac{1}{2}(1-\wold{040wlHS1})\log R,\frac{1}{2}\log R]$, equations \eqref{eq:expLarTar0}, \eqref{eq:gammastgi} and Lemma~\ref{lem:representationConditions} give
\begin{equation}\label{eq:diagonalInclusion}
    V(\wold{040wlHS1},R,\deold{0521deKakSmall})\subset\pi_2\left(\bigcup_{n=\lfloor\frac{\log R}{2}(1-\wold{040wlHS1})\rfloor}^{\lfloor\frac{\log R}{2} \rfloor+1}a_{-n}^{(2)}\left(V_{R^{-\deold{027dexpSmallnew}/5}}\right)\right)
\end{equation}
where $\pi_2:\mathbf{G}_2\to\mathbf{G}_2/\Gamma_2$ is the natural projection. All together
\begin{equation*}
\begin{aligned}
m_2(V(\wold{040wlHS1},R,\deold{0521deKakSmall}))\stackrel{\eqref{eq:diagonalInclusion}}{\leq}\quad &\wold{040wlHS1}(\log R)\tilde{m}_2(V_{R^{-\deold{027dexpSmallnew}/5}})\\
\stackrel{\text{Corollary~\ref{cor:globalRepEst}}}{\leq}&\wold{040wlHS1}(\log R)  R^{-\deold{027dexpSmallnew}\deold{0302CglobalCom1}/5}\leq R^{-\deold{027dexpSmallnew}\deold{0302CglobalCom1}/10},
\end{aligned}
\end{equation*}
where the last two inequalities are due to $\Rold{052Rlhe1}\geq\max(\Rold{052RGeo},\Rold{054RconditionRep})$ can be chosen sufficiently large. Taking $\deold{0522deExpR}=\deold{027dexpSmallnew}\deold{0302CglobalCom1}/10$, this completes the proof of Lemma~\ref{lem:longHammingMEst}.
  
\end{proof}

\subsubsection{Proof of Lemma~\ref{lem:representationConditions}}\label{sec:representationConditions}
    
The first step of our proof is to convert $u_{-\bar{s}}^{(2)}.\bar{g}^{-1}_2.u_{\bar{t}}^{(2)}.\bar{g}_1$ to an expression in the form of $zl$, where $z\in B^{\mathbf{G}_2}_{1/R^c}$ for some $c>0$ and $l\in\mathbf{L}_2$. This indeed follows from the Lemma~\ref{lem:comExp} and Lemma~\ref{lem:zlConvertDirect}, which provide the commuting relations between $\mathfrak{sl}_2$-triples and other elements in the chain basis.

\begin{Lemma}\label{lem:convertLemma}
There exist $\wnew\label{056wConvert}$\index{$\wold{056wConvert}$, Lemma~\ref{lem:convertLemma}}, $\Rnew\label{056RConvert}$\index{$\Rold{056RConvert}$, Lemma~\ref{lem:convertLemma}}, $\denew\label{056deConvert}>0$\index{$\deold{056deConvert}$, Lemma~\ref{lem:convertLemma}} depending on $\mathbf{G}_2$ such that for every $\delta\in(0,\deold{056deConvert}]$, $R\geq\Rold{056RConvert}$, $\bar{s},\bar{t}\in[1,R^{\wold{056wConvert}}]$ and $\bar{g}_i$, $i=1,2$
\[
\bar{g}_i^{\mathfrak{h}},\bar{g}_i^{\mathfrak{z}}\in B_{\delta}^{\mathbf{G}_2}, \qquad 
\bar{g}_i^{\tr}\in B_{R^{-0.4}}^{\mathbf{G}_2},
\]
we can write $u_{-\bar{s}}^{(2)}\bar{g}^{-1}_2u
_{\bar{t}}^{(2)}\bar{g}_1$ in the form of
\begin{equation*}
\begin{aligned}
u_{-\bar{s}}^{(2)}\bar{g}^{-1}_2u
_{\bar{t}}^{(2)}\bar{g}_1&=z_1z_2u_{b_0}^{(2)}\oub_{b_1}u_{b_2}^{(2)}\oub_{b_3}a_{b_4}^{(2)}
\end{aligned}
\end{equation*}
with $z_1,z_2\in \mathbf{G}_2$ and $b_1$, $b_2$, $b_3$, $b_4$, $b_5\in\R$ satisfying
\begin{equation*}
\begin{gathered}
    z_1\in B^{\mathbf{G}_2}_{R^{-0.3}}, \qquad z_2\in B^{\mathbf{Z}_2}_{20\delta},\\
    -R^{2\wold{056wConvert}}\leq b_0\leq-\frac{1}{2},\quad  \frac{1}{2}\leq b_2\leq R^{2\wold{056wConvert}},\quad \absolute{b_1},\absolute{b_3},\absolute{b_4}<3\delta.\\
    \end{gathered}
\end{equation*}
\end{Lemma}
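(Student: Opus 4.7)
My plan is to push all the ``junk'' terms (the $\tr$-parts and the centralizer parts) to the left of the expression, leaving behind a pure $\mathbf H_2$-product that can be manipulated via the explicit $\mathfrak{sl}_2$-relations.

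\textbf{Step 1 (extracting the $\tr$-parts via Lemma~\ref{lem:zlConvertDirect}).} I apply Lemma~\ref{lem:zlConvertDirect} twice, rescaling its ``$R$''-parameter to $R_{\mathrm{lem}}=R^{0.4}$ so that its hypothesis $g^{\tr}\in B_{\delta/R_{\mathrm{lem}}}^{\mathbf G}$ matches our $\bar g_i^{\tr}\in B_{R^{-0.4}}^{\mathbf G_2}$. The condition $|r|\le R_{\mathrm{lem}}^{\wold{014wExpBound}}$ then forces $\wold{056wConvert}\le 0.4\,\wold{014wExpBound}$, and the lemma gives
\[
u_{\bar t}^{(2)}\bar g_1=\zeta_1\,\bar g_1^{\mathfrak z}\,u_{\bar t}^{(2)}\,\bar g_1^{\mathfrak h},\qquad u_{-\bar s}^{(2)}\bar g_2^{-1}=\zeta_2\,(\bar g_2^{\mathfrak z})^{-1}\,u_{-\bar s}^{(2)}\,(\bar g_2^{\mathfrak h})^{-1},
\]
with $\zeta_1,\zeta_2\in B_{R^{-0.39}}^{\mathbf G_2}$ (after absorbing a constant factor coming from the rescaling of $\delta$).

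\textbf{Step 2 (consolidating the error and the centralizer parts).} Multiplying the two identities gives
\[
u_{-\bar s}^{(2)}\bar g_2^{-1}u_{\bar t}^{(2)}\bar g_1=\zeta_2(\bar g_2^{\mathfrak z})^{-1}u_{-\bar s}^{(2)}(\bar g_2^{\mathfrak h})^{-1}\zeta_1\bar g_1^{\mathfrak z}u_{\bar t}^{(2)}\bar g_1^{\mathfrak h}.
\]
I commute $\bar g_1^{\mathfrak z}\in\mathbf Z_2$ past $u_{\bar t}^{(2)}$ (which it commutes with) and past $(\bar g_2^{\mathfrak h})^{-1}\in\mathbf H_2$ (which it commutes with). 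I then move $\zeta_1$ to the far left by first conjugating it by $(\bar g_2^{\mathfrak h})^{-1}$ (a small perturbation, via~\eqref{eq:adjoint}) and then by $u_{-\bar s}^{(2)}$. The latter conjugation may amplify each chain-basis component of $\zeta_1$ by at most $R^{\kappa\wold{056wConvert}}$ where $\kappa$ depends only on the maximal chain length in $\mathfrak g_2$; choosing $\wold{056wConvert}$ small enough ensures the amplified error still lies in $B_{R^{-0.3}}^{\mathbf G_2}$. Finally I combine $\zeta_2$, $(\bar g_2^{\mathfrak z})^{-1}$ and $\bar g_1^{\mathfrak z}$ using Lemma~\ref{lem:comExp}, obtaining
\[
u_{-\bar s}^{(2)}\bar g_2^{-1}u_{\bar t}^{(2)}\bar g_1=z_1\,z_2\,u_{-\bar s}^{(2)}(\bar g_2^{\mathfrak h})^{-1}u_{\bar t}^{(2)}\bar g_1^{\mathfrak h},
\]
where $z_1\in B_{R^{-0.3}}^{\mathbf G_2}$ and $z_2=(\bar g_2^{\mathfrak z})^{-1}\bar g_1^{\mathfrak z}\in B_{20\delta}^{\mathbf Z_2}$.

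\textbf{Step 3 ($\mathfrak{sl}_2$-reduction of the $\mathbf H_2$-piece).} Applying Lemma~\ref{lem:groupDecom} inside $\mathbf H_2$ I write $(\bar g_2^{\mathfrak h})^{-1}=\oub_{\beta'}a_{\gamma'}^{(2)}u_{\alpha'}^{(2)}$ and $\bar g_1^{\mathfrak h}=u_{\alpha''}^{(2)}\oub_{\beta''}a_{\gamma''}^{(2)}$ with all parameters of size $O(\delta)$. Substituting and using the standard commutation relations $a_\gamma^{(2)}u_t^{(2)}=u_{te^{2\gamma}}^{(2)}a_\gamma^{(2)}$ and $a_\gamma^{(2)}\oub_s=\oub_{se^{-2\gamma}}a_\gamma^{(2)}$, one computes directly
\[
u_{-\bar s}^{(2)}(\bar g_2^{\mathfrak h})^{-1}u_{\bar t}^{(2)}\bar g_1^{\mathfrak h}=u_{b_0}^{(2)}\oub_{b_1}u_{b_2}^{(2)}\oub_{b_3}a_{b_4}^{(2)}
\]
with $b_0=-\bar s$, $b_1=\beta'$, $b_2=e^{2\gamma'}(\alpha'+\bar t+\alpha'')$, $b_3=e^{-2\gamma'}\beta''$, $b_4=\gamma'+\gamma''$. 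The size bounds then follow: $|b_1|,|b_3|,|b_4|<3\delta$ since all primed parameters are $O(\delta)$; $b_0\in[-R^{\wold{056wConvert}},-1]\subset[-R^{2\wold{056wConvert}},-\tfrac12]$ from $\bar s\in[1,R^{\wold{056wConvert}}]$; and $b_2\in[\tfrac12,R^{2\wold{056wConvert}}]$ from $\bar t\in[1,R^{\wold{056wConvert}}]$ together with $\gamma'=O(\delta)$ small enough that $e^{2\gamma'}\in(\tfrac12,2)$.

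The main obstacle is Step~2: the amplification of $\zeta_1$ under conjugation by $u_{\pm\bar s}^{(2)}$ is polynomial with exponent controlled by the chain indices $(q_1^{(2)},\ldots,q_n^{(2)})$, so to preserve the $R^{-0.3}$ bound I must choose $\wold{056wConvert}$ (and take $\Rold{056RConvert}$ sufficiently large and $\deold{056deConvert}$ sufficiently small) depending on these indices so that the cumulative exponent loss in the argument stays below $0.09$; once this is set up quantitatively everything else is bookkeeping via Lemma~\ref{lem:comExp}, Lemma~\ref{lem:zlConvertDirect}, and the $\mathfrak{sl}_2$-relations.
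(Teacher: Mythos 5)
Your argument follows exactly the route the paper intends: the paper gives no written proof of this lemma beyond the remark that it "is proved by iteratively using Lemma~\ref{lem:comExp} and Lemma~\ref{lem:zlConvertDirect} several times," and your Steps 1--3 (peel off the $\tr$-parts with Lemma~\ref{lem:zlConvertDirect}, push the resulting small error and the $\mathbf Z_2$-parts to the left, then reduce the remaining $\mathbf H_2$-word by the explicit $\mathfrak{sl}_2$ commutation relations) are precisely that, and the $\mathfrak{sl}_2$ computation with $b_0=-\bar s$, $b_2=e^{2\gamma'}(\alpha'+\bar t+\alpha'')$, etc., is correct.

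One bookkeeping point in Step 1 deserves care: taking $R_{\mathrm{lem}}=R^{0.4}$ does \emph{not} satisfy the hypothesis of Lemma~\ref{lem:zlConvertDirect}, since $B_{R^{-0.4}}^{\mathbf G_2}\not\subset B_{\delta/R^{0.4}}^{\mathbf G_2}$ when $\delta<1$; and the natural repair $R_{\mathrm{lem}}=\delta R^{0.4}$ makes the admissible time range $|r|\leq(\delta R^{0.4})^{\wold{014wExpBound}}$ degenerate as $\delta\to0$, which clashes with the required uniformity of $\Rold{056RConvert},\wold{056wConvert}$ over all $\delta\in(0,\deold{056deConvert}]$. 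This is easily fixed --- e.g.\ apply Lemma~\ref{lem:zlConvertDirect} with parameters $\delta'=\max(\delta,R^{-0.01})$ and $R'=R^{0.39}$ (so $\delta'/R'\geq R^{-0.4}$, $|r|\leq R^{0.39\wold{014wExpBound}}$ uniformly, and the output error is still $O(R^{-0.38})$), or simply rerun the two-line chain-basis computation underlying that lemma at the present scales, where the transversal size $R^{-0.4}$ is decoupled from $\delta$ --- but as written the invocation does not literally go through, so you should state the parameter choice explicitly.
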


\noindent
Lemma~\ref{lem:convertLemma} is proved by iteratively using Lemma~\ref{lem:comExp} and Lemma~\ref{lem:zlConvertDirect} several times; we leave the details to the reader. 

\medskip

We now continue with the proof of Lemma~\ref{lem:representationConditions}. Suppose \eqref{eq:ugyrExpression} holds.
If $\wold{054wconditionRep}\leq\wold{056wConvert}/2$, $\Rold{054RconditionRep}\geq\Rold{056RConvert}$ and $\deold{054deconditionRep}\leq\deold{056deConvert}$, then 
\begin{equation*}
    \begin{alignedat}{3}
\rho(\bar{\gamma}\bar{y}^{-1}).v_{\mathbf{L}_2}&\quad\stackrel{\eqref{eq:ugyrExpression}}{=}&&\rho(\bar{y}^{-1}u_{-\bar{s}}^{(2)}\bar{g}^{-1}_2u_{\bar{t}}^{(2)}\bar{g}_1).v_{\mathbf{L}_2}\\
    &\stackrel{\text{Lemma~\ref{lem:convertLemma}}}{=}&&\rho(\bar{y}^{-1}z_1z_2u_{b_0}^{(2)}\oub_{b_1}u_{b_2}^{(2)}\oub_{b_3}a_{b_4}^{(2)}).v_{\mathbf{L}_2}.
 \end{alignedat}
\end{equation*}
As $z_2,u_{b_0}^{(2)},\oub_{b_1},u_{b_2}^{(2)},\oub_{b_3},a_{b_4}^{(2)}\in\mathbf{L_2}$ and $\rho(g)v_{\mathbf{L}_2}=v_{\mathbf{L}_2}$ for every $g\in~\mathbf{L}_2$,
\[
\rho(\bar{\gamma}\bar{y}^{-1}).v_{\mathbf{L}_2}=\rho(\bar{y}^{-1}z_1).v_{\mathbf{L}_2}.
\]
Since $z_1\in B^{\mathbf{G}_2}_{3(\dim\mathfrak{g}_2)^2\delta /R^{0.3}}$, enlarge $\Rold{054RconditionRep}$ if necessary, this together with that $\rho$ is a real algebraic representation gives for $R\geq\Rold{054RconditionRep}$,
\[
\rho(\bar{\gamma}\bar{y}^{-1}).v_{\mathbf{L}_2}=\rho(\bar{y}^{-1}).v_{\mathbf{L}_2}+\tilde{v},\]
where $\tilde{v}\in V\cap\R^{\dim V}$ satisfies $\norm{\tilde{v}}_V\leq R^{-0.2}$. 

\medskip

As for the norm of $\bar{\gamma}\in\Gamma_2$ satisfying assumptions of Lemma~\ref{lem:representationConditions}, shrinking $\wold{040wlHS1}$ if necessary, it follows from $
 \bar{\gamma}=\bar{y}^{-1}u_{-\bar{s}}^{(2)}\bar{g}_2^{-1}u_{\bar{t}}^{(2)}\bar{g}_1\bar{y}$, $\bar{y}\in\widetilde{\Kold{0401Kbase}}$, $\bar{s},\bar{t}\in[1,R^{2\wold{040wlHS1}}]$ and $\bar{g}_i\in B_{3(\dim\mathfrak{g}_2)^2\deold{0521deKakSmall}}^{\mathbf{G}_2}$ for $i=1,2$ that
\[
\norm{\bar{\gamma}}\leq R^{8\wold{040wlHS1}}\leq R^{\deold{027dexpSmallnew}^2\deold{0301Clattice}/10}.
\]

\medskip

The estimate of the lower bound for $\min_{g\in \mathbf{N}_2}\norm{\bar{\gamma}-g}$ directly follows from the following technical lemma, $\bar{y}\in\widetilde{\Kold{0401Kbase}}$, $\mathbf{N}_2\lhd\mathbf{G}_2$ and \eqref{eq:ugyrExpression}.

\begin{Lemma}\label{lem:minDis}
There exist $\wnew\label{057wDistanceMin},\Rnew\label{057RDistanceMin},\denew\label{057deDistanceMin}>0$\index{$\wold{057wDistanceMin}$, Lemma~\ref{lem:minDis}}\index{$\Rold{057RDistanceMin}$, Lemma~\ref{lem:minDis}}\index{$\deold{057deDistanceMin}$, Lemma~\ref{lem:minDis}} such that for every $\delta\in(0,\deold{057deDistanceMin}]$ and $R\geq\Rold{057RDistanceMin}$, if  $\bar{s},\bar{t}\in[1,R^{\wold{057wDistanceMin}}]$, $\bar{g}_i^{\mathfrak{h}}, \bar{g}_i^{\mathfrak{z}}\in B_{\delta}^{\mathbf{G}_2}$ and $\bar{g}_i^{\tr}\in B_{R^{-0.4}}^{\mathbf{G}_2}$ for $i=1,2$,
then 
\[
\min_{g\in \mathbf{N}_2}\norm{u_{-\bar{s}}^{(2)}\bar{g}^{-1}_2u_{\bar{t}}^{(2)}\bar{g}_1-g}\geq R^{-\deold{027dexpSmallnew}/2}.
\]
\end{Lemma}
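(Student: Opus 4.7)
The plan is to argue by contradiction, combining the factorisation of Lemma~\ref{lem:convertLemma} with the normal-core rigidity of Lemma~\ref{lem:HNint}. Suppose, for some $g\in\mathbf N_2$, that
\[
\norm{u_{-\bar s}^{(2)}\bar g_2^{-1}u_{\bar t}^{(2)}\bar g_1-g}<R^{-\deold{027dexpSmallnew}/2}.
\]
Set $\wold{057wDistanceMin}\le \wold{056wConvert}/4$, $\deold{057deDistanceMin}\le\deold{056deConvert}$, and take $\Rold{057RDistanceMin}\ge\Rold{056RConvert}$ large. First invoke Lemma~\ref{lem:convertLemma} to factor
\[
u_{-\bar s}^{(2)}\bar g_2^{-1}u_{\bar t}^{(2)}\bar g_1=z_1 z_2\, h,\qquad h=u_{b_0}^{(2)}\oub_{b_1}u_{b_2}^{(2)}\oub_{b_3}a_{b_4}^{(2)},
\]
with $z_1\in B_{R^{-0.3}}^{\mathbf G_2}$, $z_2\in B_{20\delta}^{\mathbf Z_2}$, $|b_1|,|b_3|,|b_4|<3\delta$, $b_0\in[-R^{2\wold{056wConvert}},-1/2]$, and $b_2\in[1/2,R^{2\wold{056wConvert}}]$.

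Since $\bar s,\bar t\le R^{\wold{057wDistanceMin}}$ and the $\bar g_i$ are small, a direct estimate shows that the Hilbert--Schmidt norms of the product and of $g$ are both bounded by $R^{c_0\wold{057wDistanceMin}}$ for some absolute $c_0>0$. Applying Lemma~\ref{cor:metricRel} — and shrinking $\wold{057wDistanceMin}$ so that $c_0\wold{057wDistanceMin}/\deold{0261dl1}$ is absorbed by $\deold{027dexpSmallnew}/6$ — converts the assumed norm closeness into the metric estimate $d_{\mathbf G_2}(\mathrm{LHS},g)\le R^{-\deold{027dexpSmallnew}/3}$, so we may write $\mathrm{LHS}=p\,g$ with $p\in B_{R^{-\deold{027dexpSmallnew}/3}}^{\mathbf G_2}$. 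Combining this with the factorisation, and using that $\mathbf N_2$ is a group, yields
\[
g^{-1}=h^{-1}\cdot z_2^{-1}\cdot\bigl(z_1^{-1}p\bigr)\in\mathbf N_2,
\]
where $\tilde p:=z_1^{-1}p\in B_{\varepsilon}^{\mathbf G_2}$ for $\varepsilon:=2R^{-\min(0.3,\,\deold{027dexpSmallnew}/3)}$. Since $h^{-1}\in\mathbf H_2$ and $z_2^{-1}\in\mathbf Z_2$, Lemma~\ref{lem:HNint} applies to the triple $(h^{-1},z_2^{-1},\tilde p)$ and forces $h$ to lie within $\Cold{033CnormRadius}\varepsilon$ of the finite centre $C(\mathbf H_2)$.

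The main obstacle is to convert this closeness-to-centre into the desired contradiction with $b_0\le-1/2$ and $b_2\ge 1/2$. Lifting $h$ through the isogeny $\tilde\varphi\colon\SL_2(\R)\to\mathbf H_2$ from \S\ref{sec:sl2basis} and computing its matrix entries explicitly using \eqref{eq: 2by2matrices calulation}, proximity to $\pm I$ successively forces $b_4=O(\varepsilon)$, $b_1 b_2=O(\varepsilon)$, $b_1+b_3=O(\varepsilon)$, and $b_0+b_2+b_0b_1b_2=O(\varepsilon)$. Since $|b_2|\ge 1/2$, the second gives $|b_1|=O(\varepsilon)$; the fourth then forces $b_0+b_2=O(\varepsilon\cdot R^{2\wold{056wConvert}})$. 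Tracking back through the algorithmic proof of Lemma~\ref{lem:convertLemma}, where $b_0$ and $b_2$ arise from $-\bar s$ and $\bar t$ modulo perturbations of size $O(\delta)+O(R^{-0.4+2\wold{056wConvert}})$ coming from $\bar g_i^{\mathfrak h}$ and $\bar g_i^{\tr}$, one obtains $|\bar s-\bar t|=O(\delta)$ up to lower-order terms, reducing the problem to excluding a near-trivial configuration of the $(\bar s,\bar t,\bar g_i)$. I expect the cleanest way to complete this last step is via a quantitative Lojasiewicz-type estimate for the algebraic distance of $\mathbf G_2\setminus \mathbf N_2$ to $\mathbf N_2$, in the spirit of \cite{Hor58Division} and mirroring the argument already used in the proof of Lemma~\ref{lem:algebraicRepEst}; it is this final algebraic-geometric reduction, which calibrates the exponent $\deold{027dexpSmallnew}/2$ appearing in the statement, that I expect to be the technically delicate part of the proof.
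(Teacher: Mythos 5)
Your first half runs parallel to the paper's argument: assume the minimum is small, factor $u_{-\bar s}^{(2)}\bar g_2^{-1}u_{\bar t}^{(2)}\bar g_1=z_1z_2\,u_{b_0}^{(2)}\oub_{b_1}u_{b_2}^{(2)}\oub_{b_3}a_{b_4}^{(2)}$ via Lemma~\ref{lem:convertLemma}, absorb the error into the $hzp$ pattern, and use Lemma~\ref{lem:HNint} to force the $\mathbf H_2$-part to lie near a central element of $\mathbf H_2$. Up to there your proposal is fine (your $R^{-c}$-sized error instead of the paper's $O(\delta)$ is a harmless sharpening). The problem is the endgame, and it is a genuine gap, not a technicality. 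First, your list of relations ``proximity to $\pm I$ forces $b_1b_2=O(\varepsilon)$, \dots'' is only correct near $+I$; near the nontrivial central element the $(2,2)$-entry forces $b_1b_2\approx-2$, not $O(\varepsilon)$. That case is precisely the one that \emph{can} be killed outright: plugging $b_1b_2\approx-2$ into the $(1,2)$-entry gives $b_2-b_0=O(\delta)$, which contradicts $b_0\le-\tfrac12$, $b_2\ge\tfrac12$ — this is exactly what the paper's case (II) computation does. By treating $\pm I$ uniformly you lose this contradiction and instead land in the case you cannot close.

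Second, and more fundamentally, the remaining case ($h$ near the identity, i.e.\ $b_0+b_2$ small after your reduction) cannot be excluded by any intrinsic estimate of the kind you propose (``Lojasiewicz-type distance of $\mathbf G_2\setminus\mathbf N_2$ to $\mathbf N_2$''): with $\bar g_1=\bar g_2$ and $\bar s=\bar t$ the product is exactly $e\in\mathbf N_2$, so the hypotheses as literally stated do admit the ``near-trivial configuration'' you hoped to rule out. The paper closes this case by using information from the context in which the lemma is applied, namely equation~\eqref{eq:ugyrExpression}: the product equals $\bar y\bar\gamma\bar y^{-1}$ with $e\neq\bar\gamma\in\Gamma_2$ and $\bar y\in\widetilde{\Kold{0401Kbase}}$, so by discreteness of $\Gamma_2$ and the injectivity radius of the fixed compact set the product is uniformly bounded away from the identity, contradicting the $O(\delta)$-closeness to $e$ that the identity-central case would give. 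Without importing that input (or an equivalent nondegeneracy hypothesis), no calibration of exponents in the spirit of \cite{Hor58Division} can finish the proof, so the step you flagged as ``technically delicate'' is in fact where the argument breaks.
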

\begin{proof}[Proof of Lemma~\ref{lem:minDis}]
Let $\wold{057wDistanceMin}\leq \wold{056wConvert}$, $\Rold{057RDistanceMin}\geq \Rold{056RConvert}$, $\deold{057deDistanceMin}\leq\deold{056deConvert}$ be constants to be determined later, where $\wold{056wConvert}$, $\Rold{056RConvert}$ and  $\deold{056deConvert}$ are from Lemma~\ref{lem:convertLemma}.

Assume by contradiction that
\begin{equation}\label{eq:contraTar}
    \min_{g\in \mathbf{N}_2}\norm{u_{-\bar{s}}^{(2)}\bar{g}^{-1}_2u_{\bar{t}}^{(2)}\bar{g}_1-g}<R^{-\deold{027dexpSmallnew}/2}.
\end{equation}
Enlarge $\Rold{057RDistanceMin}$ if necessary, there exists $g_3\in B_{\delta}^{\mathbf{G}_2}$ such that 
\[
g_3u_{-\bar{s}}^{(2)}\bar{g}^{-1}_2u_{\bar{t}}^{(2)}\bar{g}_1\in \mathbf{N}_2.
\]
Together with Lemma~\ref{lem:convertLemma} and $\mathbf{N}_2\lhd\mathbf{G}_2$, 
\[
u_{b_0}^{(2)}\oub_{b_1}u_{b_2}^{(2)}\oub_{b_3}a_{b_4}^{(2)}g_3z_1z_2\in\mathbf{N}_2,
\]
where $z_1,z_2,b_0,b_1,b_2,b_3,b_4$ are defined in Lemma~\ref{lem:convertLemma}.

Let $\deold{057deDistanceMin}\leq \eold{033enbh}/\dim\mathfrak{g}_2$,   Lemma~\ref{lem:HNint} gives $h_0\in C_{\mathbf{H}_2}(\mathbf{H}_2)$ satisfying
\begin{equation}\label{eq:hCentralCondition}
   u_{b_0}^{(2)}\oub_{b_1}u_{b_2}^{(2)}\oub_{b_3}a_{b_4}^{(2)}\in B_{100\Cold{033CnormRadius}\delta}^{\mathbf{H}_2}(h_0), 
\end{equation}
where 
$\Cold{033CnormRadius}>1$ is defined in Lemma~\ref{lem:HNint}.

\begin{enumerate}[label=(\Roman*)] 
    \item If $h_0=e$, this together with equation~\eqref{eq:ugyrExpression},  Lemma~\ref{lem:convertLemma} and inclusion~\eqref{eq:hCentralCondition} gives for some $\kappa_0$ depends only on $\mathbf{G}_2$ that
\begin{equation}\label{eq:NminDis}
 \bar{y}\bar{\gamma}\bar{y}^{-1}=u_{-\bar{s}}^{(2)}\bar{g}^{-1}_2u_{\bar{t}}^{(2)}\bar{g}_1\in B^{\mathbf{G}_2}_{\kappa_0\delta},\qquad \kappa_0>100\Cold{033CnormRadius}.
\end{equation}
Recall that $\bar y \in \widetilde{\Kold{0401Kbase}}$; thus, by picking $\deold{057deDistanceMin}\leq\inj(\widetilde{\Kold{0401Kbase}})/(10^2\kappa_0)$, we obtain from $\bar{\gamma}\neq e$ that \eqref{eq:NminDis} cannot hold and thus $h_0\neq e$.

\item If $h_0\neq e$, then \eqref{eq:hCentralCondition}, the bounds of $b_3$ and~$b_4$ in Lemma~\ref{lem:convertLemma}~give for some $\kappa_1>0$ depends only on $\mathbf{G}_2$ that
\begin{equation}\label{eq:hCentralCondition1}
   u_{b_0}^{(2)}\oub_{b_1}u_{b_2}^{(2)}\in B_{\kappa_1\delta}^{\mathbf{H}_2}(h_0).
\end{equation}
By the homomorphism $\tilde{\varphi}$ from \S\ref{sec:sl2basis}, we may write 
\[
\quad u_{b_0}^{(2)}=\tilde{\varphi}\left(\begin{pmatrix}
        1 & b_0 \\
        0 & 1  \\
    \end{pmatrix}\right),\ \ \oub_{b_1}=\tilde{\varphi}\left(\begin{pmatrix}
        1 & 0 \\
        b_1 & 1  \\
    \end{pmatrix}\right),\ \ u_{b_2}^{(2)}=\tilde{\varphi}\left(\begin{pmatrix}
        1 & b_2 \\
        0 & 1  \\
    \end{pmatrix}\right).
    \]
    Then \eqref{eq:hCentralCondition1} and $h_0\neq e$ imply that for some $\kappa_2>0$ depends only on $\mathbf{G}_2$ that
    \[
    \norm{\begin{pmatrix}
        1 & b_0 \\
        0 & 1  \\
    \end{pmatrix}\begin{pmatrix}
        1 & 0 \\
        b_1 & 1  \\
    \end{pmatrix}\begin{pmatrix}
         1 & b_2 \\
        0 & 1  \\
    \end{pmatrix}\begin{pmatrix}
        0 \\ 1
    \end{pmatrix}+\begin{pmatrix}
        0 \\ 1
    \end{pmatrix}}\leq \kappa_2\delta.
    \]
    In particular, above inequality gives for some $\kappa_3>0$ depends only on $\mathbf{G}_2$ that
    \[
\absolute{b_2-(1+\kappa_3\delta)b_0}<2\kappa_2\delta.
\]
However, by picking $\deold{057deDistanceMin}$ less than $10^{-6}\min(\kappa_2^{-1},\kappa_3^{-1})$, above inequality contradicts with $b_0\leq-\frac{1}{2}$, $b_2\geq\frac{1}{2}$ (cf.~Lemma~\ref{lem:convertLemma}) and thus \eqref{eq:hCentralCondition1} cannot hold.
\end{enumerate}

Since \eqref{eq:hCentralCondition} is not true for $h_0=e$ and $h_0\neq e$, we obtain that \eqref{eq:contraTar} cannot hold and this proves the Lemma~\ref{lem:minDis}. 
\end{proof}

Let $\wold{054wconditionRep}=\min(\wold{056wConvert}/2, \wold{057wDistanceMin})$, $\Rold{054RconditionRep}=\max(\Rold{056RConvert},\Rold{057RDistanceMin})$ and   $\deold{054deconditionRep}=\min(\deold{056deConvert},\deold{057deDistanceMin})$,  we finish the proof of Lemma~\ref{lem:representationConditions}.

\section{Compatibility of the diagonalizable subgroup}\label{sec:comgeo}
In this section, we consider how our modified even Kakutani equivalence $\psi$ defined \S\ref{sec:modifiedKakutani} behaves under the diagonalizable subgroup $a_t^{(1)}\!$. The main result of this section is Proposition~\ref{prop:CompatibleGeoMain}, which establishes that without loss of generality we may assume that for $m_1$-a.e.$~x\in \mathbf{G}_1/\Gamma_1$, the difference between $\psi_n(x)=a_{-n}^{(2)}.\psi(a_n^{(1)}.x)$ and $\psi(x)$ is a shift in the one-parameter unipotent subgroup $u_t^{(2)\!}$.

\medskip

Our proof of Proposition~\ref{prop:CompatibleGeoMain} consists of three steps. We first show (cf.~Proposition~\ref{prop:Geo}) that for sufficiently small $\delta$, and $x$ in a good set, the points $\psi(x)$ and $\psi_\delta(x)=a_{-\delta}^{(2)}.\psi(a_\delta^{(1)}.x)$ are in the same orbit of $C_{\mathbf{G}_2}(\mathbf{U}_{2})$. By using our Main Lemma (Lemma~\ref{lem:main}) carefully, we can refine Proposition~\ref{prop:Geo} by excluding  the centralizer of $\mathbf{H}_2$ from the difference between $\psi$ and $\psi_{\delta}(x)$ (cf.~Proposition~\ref{prop:GeoTri}), where $\mathbf{H}_2$ is the subgroup of $\mathbf{G}_2$, locally isomorphic to $\operatorname{SL}_2(\R)$, that was defined in \S\ref{sec:sl2basis}. Finally, we obtain the Proposition~\ref{prop:CompatibleGeoMain} by combining the first two steps.

\medskip

We note that one of these steps (Proposition~\ref{prop:GeoTri}) holds automatically in some cases, specifically if the centralizer of $\mathbf{H}_2$ is trivial, e.g. for 
\[
\mathbf{G}_2=\SL_2(\R)\times\SL_2(\R),\  u_t^{(2)}=\left(\begin{smallmatrix}
    1 & t \\
    0 & 1 \\
\end{smallmatrix}\right)\times\left(\begin{smallmatrix}
    1 & t \\
    0 & 1 \\
\end{smallmatrix}\right),\ \Gamma_2<\mathbf{G}_2 \text{ a lattice} .
\]
This makes the proof in this case somewhat simpler.

\medskip

We recall the definition chain basis for $\ad_{\bu_2}$ as in assumption \ref{item:chainBasis} at the beginning of \S\ref{sec:mainLemma}. Recall $\mathfrak v^+$ as
\begin{equation}\label{eq:Cgn}
    \mathfrak{v}^+=\bu_2\R\oplus\bigoplus_{j\notin I_c}\bx_2^{0,j}\R,
\end{equation}
where $I_c=\{1\leq j\leq n^{(2)}:q_j^{(2)}=0\}$. Note that $\mathfrak v^+$ is the Lie algebra of the unipotent subgroup $\mathbf{V}^+<\mathbf{G}_2$ defined by
\begin{equation}\label{eq:V+}
\mathbf{V}^+=C_{\mathbf{G}_2}(\mathbf U_2) \cap \mathbf{G}_2^+
\end{equation}
with $\mathbf{G}_2^+$ is defined in \eqref{eq:G+-0}. 

\medskip

\begin{Proposition}\label{prop:CompatibleGeoMain}
There exists $c\in\mathbf{V}^+$ such that if we set $\tilde{\psi}=c\psi$ then for~$m_1$-a.e.$~x\in \mathbf{G}_1/\Gamma_1$
\[
\tilde{\psi}(a_1^{(1)}.x)\in a_1^{(2)}\mathbf{U}_2.\tilde{\psi}(x).
\]
\end{Proposition}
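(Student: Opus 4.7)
The plan is to establish the statement via the three-step scheme outlined in the introduction, producing first an almost-everywhere relation
\[
\psi(a_t^{(1)}.x)=c(x,t)\,a_{f(x,t)}^{(2)}.\psi(x)
\]
with $c(x,t)\in C_{\mathbf{G}_2}(\mathbf{U}_2)$, then sharpening to $c(x,t)\in \mathbf{V}^+$, and finally using ergodicity of the $u_t^{(1)}$-action to trade the cocycle $c(x,t)$ for a single element $c$. The key input at every stage is the Main Lemma (Lemma~\ref{lem:main}) together with the observation that if $t\in\R$ is small then $x$ and $a_t^{(1)}.x$ are $(\delta,\epsilon,R)$-two sides matchable for \emph{every} $R$; consequently (for $x,a_t^{(1)}.x\in\Kold{035Klm1}$) Lemma~\ref{lem:main} implies that for each $R\geq\Rold{035Rlm1}$ there is $\absolute{\tau_R}\leq\Rold{035Rlm1}$ with
\[
\psi(a_t^{(1)}.x)\in \Kak\!\Bigl(R,\delta,\,u_{\tau_R}^{(2)}.\psi(x)\Bigr).
\]

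First, taking $R\to\infty$ in the above, Corollary~\ref{cor:inifinteMatching} applied with $L_2=+\infty$ produces unique $f(x,t)$, $c(x,t)\in B^{\mathbf{G}_2}_{\Cold{039infiniteMatch}\delta}\cap\exp(\mathfrak{w})$ and a bounded shift $t_0(x,t)\in[-\Rold{035Rlm1},\Rold{035Rlm1}]$ such that $\psi(a_t^{(1)}.x)=c(x,t)a_{f(x,t)}^{(2)}u_{t_0(x,t)}^{(2)}.\psi(x)$. Since $\exp(\mathfrak w)$ is transverse to $\mathbf{U}_2$ inside $C_{\mathbf{G}_2}(\mathbf{U}_2)$, this yields the first step: for small $t$ and $m_1$-a.e.\ $x$,
\[
\psi(a_t^{(1)}.x)\in C_{\mathbf{G}_2}(\mathbf{U}_2)\,a_{f(x,t)}^{(2)}.\psi(x),
\]
which is the analogue of Proposition~\ref{prop:Geo} stated in the outline. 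Because $\psi$ is an even Kakutani equivalence, averaging $f(x,t)$ using the intertwining with $u_s^{(1)}$ forces $f(x,t)=t$ almost surely, and the cocycle identity
\[
c(x,t+s)\equiv c(a_s^{(1)}.x,t)\,c(x,s)\pmod{\mathbf{U}_2}
\]
holds.

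The second, and most delicate, step is to upgrade $c(x,t)$ from $C_{\mathbf{G}_2}(\mathbf{U}_2)$ to $\mathbf{V}^+=C_{\mathbf{G}_2}(\mathbf{U}_2)\cap\mathbf{G}_2^+$, i.e.\ to eliminate any component in the centralizer $\mathbf{Z}_2=C_{\mathbf{G}_2}(\mathbf{H}_2)$ of the whole $\mathfrak{sl}_2$-triplet. Here the input is that for arbitrarily small $s$, if we compare $x$ with $\widehat u_s^{(1)}.x$ the matching function $h$ produced by Lemma~\ref{lem:matchingFunction} deviates from the identity in a way whose asymptotic is controlled by the $a_\bullet^{(1)}$-direction (the H-property for unipotents). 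Consequently $\psi(a_t^{(1)}.x)$ and $\psi(a_t^{(1)}\widehat u_s^{(1)}.x)$ must be matchable in a way consistent with an $a^{(2)}_\bullet$ translation, which is impossible if $c(x,t)$ carries a non-trivial $\mathbf{Z}_2$-component. Making this precise via Lemma~\ref{lem:main} (applied to $x$ and $\widehat u_s^{(1)}.x$, whose Kakutani-Bowen balls for large $R$ have a prescribed shape) gives the required improvement $c(x,t)\in\mathbf{V}^+$. This is the main obstacle: one must carefully track how the $\mathbf{Z}_2$-component of $c$ would interact with the $a^{(2)}_\bullet$-shift to get a contradiction, and this is where the refined version of the Main Lemma (with the quantitative control on $t_2$) is used in full force.

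Finally, specialize $t=1$ and set $c(x)=c(x,1)\in\mathbf{V}^+$. The cocycle identity together with $u_t^{(1)}$-invariance of $f=1$ implies that the image of $c(x)$ in $\mathbf{V}^+/\mathbf{U}_2$ is $u_r^{(1)}$-invariant modulo $\mathbf{U}_2$ for every $r\in\R$. By the ergodicity of the $u_t^{(1)}$-action on $(\mathbf{G}_1/\Gamma_1,m_1)$ this projection is almost surely a constant coset $c\,\mathbf{U}_2$ for some $c\in\mathbf{V}^+$. Replacing $\psi$ by $\tilde\psi=c^{-1}\psi$ (where we use that $c\in\mathbf{V}^+$ commutes with $\mathbf{U}_2$ and, together with $a_1^{(2)}$, normalizes $\mathbf{U}_2$) absorbs the constant and yields
\[
\tilde\psi(a_1^{(1)}.x)\in a_1^{(2)}\mathbf{U}_2.\tilde\psi(x)\qquad\text{for $m_1$-a.e.\ $x$,}
\]
as required.
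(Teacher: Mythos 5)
Your overall scheme is the same as the paper's (local compatibility via Lemma~\ref{lem:main} and Corollary~\ref{cor:inifinteMatching}, then eliminating the $C_{\mathbf{G}_2}(\mathbf{H}_2)$-component, then ergodicity plus an absorption of the constant), but the two steps you yourself identify as delicate contain genuine gaps. The most serious is the upgrade from $C_{\mathbf{G}_2}(\mathbf{U}_2)$ to $\mathbf{V}^+$. Your stated reason --- that matchability of $\psi(a_t^{(1)}.x)$ with $\psi(a_t^{(1)}\widehat u_s^{(1)}.x)$ ``consistent with an $a^{(2)}$-translation is impossible if $c$ carries a non-trivial $\mathbf{Z}_2$-component'' --- is not valid: a fixed small displacement in $\mathbf{Z}_2\subset C_{\mathbf{G}_2}(\mathbf{U}_2)$ does not grow at all under the matched $u^{(2)}$-flows (it is exactly the kind of displacement permitted in a Kakutani--Bowen ball), so no contradiction with matchability arises. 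The actual mechanism (Proposition~\ref{prop:GeoTri} and \S\ref{sec:proofGeoTri}) is quantitative: one compares the two \emph{ends} of an orbit segment of length $L_k\sim\zeta/\eta_k$ joining $x$ and $\oua_{\eta_k}.x$, uses uniform continuity of $\psi$ on a good compact set to make the $\vartheta_{0,j}$-components of the displacements $g_1,g_2$ of the $\psi$-images at both ends of size $\beta_k\to0$, then writes $u_{-s_2}^{(2)}g_2^{-1}u_{s_1}^{(2)}g_1a_{-\delta}^{(2)}=c(\delta)$ (after killing a possible lattice element by renormalizing with $a^{(2)}_{-\frac12\log(L_k/\sqrt{\deold{0601dePGeo2}})}$ into a compact set), and finally uses that $\mathbf{Z}_2$ commutes with $\mathbf{H}_2$ to squeeze the $\mathbf{Z}_2$-part of $c(\delta)$ by $O(\beta_k+L_k^{-1/2})$. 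None of this is present in your sketch, and the heuristic you offer in its place would not produce it.

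The final absorption step is also incorrect as written. If $\psi(a_\delta^{(1)}.x)\in c\,a_\delta^{(2)}\mathbf{U}_2.\psi(x)$ with a constant $c\in\mathbf{V}^+$, then setting $\tilde\psi=c^{-1}\psi$ gives $\tilde\psi(a_\delta^{(1)}.x)\in a_\delta^{(2)}c\,\mathbf{U}_2.\tilde\psi(x)$, not $a_\delta^{(2)}\mathbf{U}_2.\tilde\psi(x)$: commuting with $\mathbf{U}_2$ and normalizing $\mathbf{U}_2$ does not help because $c$ does not commute with $a_\delta^{(2)}$. One must instead solve the twisted coboundary equation $\bar v c=a_\delta^{(2)}\bar v a_{-\delta}^{(2)}$ in $\mathbf{V}^+$ and set $\tilde\psi=\bar v\psi$; this is what the paper does in \S\ref{sec:CompatibleGeoMain}, where the solution $\bar v$ exists as the fixed point of the contraction $v\mapsto a_{-\delta}^{(2)}vca_\delta^{(2)}$ on $\mathbf{V}^+$ (conjugation by $a_{-\delta}^{(2)}$ contracts $\mathbf{V}^+\subset\mathbf{G}_2^+$). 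This error is fixable, but as stated your argument does not yield the conclusion. By contrast, your first step (Corollary~\ref{cor:inifinteMatching} with $L_2=+\infty$ to get $c(x,t)\in\exp(\mathfrak w)$ and $f(x,t)$, the recurrence argument forcing $f=t$ from evenness, and the ergodicity argument making $c$ and $f$ constant modulo $\mathbf{U}_2$) is essentially the paper's route and is acceptable at sketch level.
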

The proof of Proposition~\ref{prop:CompatibleGeoMain} is given in~\S\ref{sec:CompatibleGeoMain}.

\begin{Remark}\label{rmk:iterateReason}
Applying this proposition iteratively, we obtain that for every $n \in \N$, it holds that $\tilde{\psi}_n(x)=a_{-n}^{(2)}.\tilde{\psi}(a_n^{(1)}.x)$ satisfies 
\[
\tilde{\psi}_n(x)\in \mathbf{U}_2.\tilde{\psi}(x),
\]
and then a key point is to show, as we do in \S\ref{sec:renormalization}, that in the appropriate sense $\tilde{\psi}_n$ converges to a limit as $n\to\infty$. In her paper~\cite{ratner1986rigidity} Ratner used a similar strategy to understand Kakutani isomorphisms between horocycle flows on $\SL_2(\R)/\Gamma_i$ for two lattices~$\Gamma_i$, \textbf{assuming the corresponding time change map satisfies a H\"older regularity condition}. We do not assume any such regularity of the time change map, hence need to employ a significantly more delicate argument than \cite{ratner1986rigidity} to establish the existence of a limiting object.
\end{Remark}

\subsection{Local compatibility}\label{sec:localCompatibility}
We start our analysis from the local compatibility of $a_{-\delta}^{(2)}.\psi(a_{\delta}^{(1)}.x)$. 

\begin{Proposition}\label{prop:Geo}
There exist positive constants $\denew\label{0601dePGeo2},\denew\label{0602dePGeo1} >0$\index{$\deold{0601dePGeo2}$, Proposition~\ref{prop:Geo}}\index{$\deold{0602dePGeo1}$, Proposition~\ref{prop:Geo}} such that for every $\delta\in(0,\deold{0602dePGeo1})$, 
we can find 
a full measure subset  $\Znew\label{060Kpn1}=\Zold{060Kpn1}(\delta)$\index{$\Zold{060Kpn1}$, Proposition~\ref{prop:Geo}},
 $c(\delta)\in B^{\mathbf{G}_2}_{\deold{0601dePGeo2}}\cap C_{\mathbf{G}_2}(\mathbf{U}_2)$,
and a measurable function $w(x,\delta)$
such that for every $x\in \Zold{060Kpn1}$
\[
\psi(a^{(1)}_{\delta}.x)= c(\delta)a_{\delta}^{(2)}u_{w(x,\delta)}^{(2)}.\psi(x). 
\]

\end{Proposition}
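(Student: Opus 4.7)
The plan is to derive the proposition by feeding the pair $(x,\, a^{(1)}_\delta.x)$ into the Main Lemma (Lemma~\ref{lem:main}) together with the infinite-matching Corollary~\ref{cor:inifinteMatching}, and then pinning down the surviving parameters by uniqueness, ergodicity, and evenness. First I would observe that the $\mathfrak{sl}_2$-commutation $a^{(1)}_\delta u^{(1)}_s = u^{(1)}_{e^{2\delta} s} a^{(1)}_\delta$ gives, with the linear matching function $h(t) = e^{-2\delta} t$,
\[
u^{(1)}_{h(t)}.x = a^{(1)}_{-\delta}.\bigl(u^{(1)}_t.(a^{(1)}_\delta.x)\bigr),
\]
so by right-invariance of $d_{\mathbf{G}_1/\Gamma_1}$ these orbit segments stay within distance $O(\delta)$ for \emph{every} $t \in \mathbb R$, and $h'(t) \equiv e^{-2\delta}$ is $O(\delta)$-close to $1$. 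Hence $x$ and $a^{(1)}_\delta.x$ are $(O(\delta), O(\delta), R)$-two sides matchable with the single matching function $h$ for every $R \geq 1$.

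I would then take a sequence of compact good sets $\mathcal K^{(n)}_{\ref{lem:main}}$ from Lemma~\ref{lem:main} and $\mathcal K^{(n)}_{\ref{lem:longHammingNew}}$ from Corollary~\ref{cor:inifinteMatching} of measure tending to $1$, and set
\[
\mathcal Z_{060}(\delta) := \bigl\{ x : \exists\, n \text{ with } x,\, a^{(1)}_\delta.x \in \mathcal K^{(n)}_{\ref{lem:main}} \cap \psi^{-1}\bigl(\mathcal K^{(n)}_{\ref{lem:longHammingNew}}\bigr)\bigr\},
\]
which has $m_1$-full measure by $m_1$-invariance of $a^{(1)}_\delta$. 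For $x \in \mathcal Z_{060}(\delta)$, transferring the matching through $\psi$ via Lemma~\ref{lem:matchingPrese} (or directly via Lemma~\ref{lem:main}) produces a common $C^1$ matching function for $(\psi(x),\, \psi(a^{(1)}_\delta.x))$ valid on $[-R,R]$ for every $R \geq R_0$. Invoking Corollary~\ref{cor:inifinteMatching} with $L_2 = +\infty$ (legitimate once $\delta < \delta_{0602}$ is small enough) yields \emph{unique}
\[
f(x,\delta) \in \mathbb R,\quad c(x,\delta) \in B^{\mathbf{G}_2}_{C\delta}\cap \exp(\mathfrak{w}),\quad t_0(x,\delta) \in \mathbb R
\]
such that
\[
\psi(a^{(1)}_\delta.x) \;=\; c(x,\delta)\, a^{(2)}_{f(x,\delta)}\, u^{(2)}_{t_0(x,\delta)}.\psi(x).
\]
Since $[\bu_2,\bx_2^{0,j}]=0$ in the chain basis, $\mathfrak{w}\subset C_{\mathfrak{g}_2}(\bu_2)$, so $c(x,\delta)$ automatically lies in $C_{\mathbf{G}_2}(\mathbf U_2)$; for $\delta < \delta_{0602}$ it lies in $B^{\mathbf{G}_2}_{\delta_{0601}}$ for any prescribed $\delta_{0601}$.

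To remove the $x$-dependence of $c$ and $f$, I would apply the above decomposition at both $x$ and $u^{(1)}_r.x$ and use the commutation $a^{(1)}_\delta u^{(1)}_r = u^{(1)}_{e^{2\delta} r} a^{(1)}_\delta$ together with the time-change cocycle $\psi(u^{(1)}_s.y) = u^{(2)}_{\tau(y,s)}.\psi(y)$. Factoring out $\psi(x)$ and invoking the uniqueness clause of Corollary~\ref{cor:inifinteMatching} forces $c(u^{(1)}_r.x,\delta) = c(x,\delta)$ and $f(u^{(1)}_r.x,\delta) = f(x,\delta)$ for every $r$; ergodicity of $u^{(1)}_{\bullet}$ then collapses both to constants $c(\delta), f(\delta)$. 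Finally, evenness of $\psi$ gives $\tau(y,T) = T + o_y(T)$ almost surely as $T\to\infty$; comparing the two asymptotic formulas for $\psi(a^{(1)}_\delta.u^{(1)}_T.x)$ --- one obtained by applying the decomposition first and then evenness at $x$, the other by applying evenness at $a^{(1)}_\delta.x$ first and then the decomposition --- yields, in the $u^{(2)}$-slot to leading order in $T$, the identity $e^{-2f(\delta)} = e^{-2\delta}$, hence $f(\delta) = \delta$. Setting $w(x,\delta) := t_0(x,\delta)$ completes the proof.

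The main obstacle will be the last step: the error $o_y(T)$ in the evenness asymptotics is pointwise in $y$ but not uniform, so pinning down $f(\delta)$ requires selecting $x$ in an auxiliary full-measure set on which the asymptotic holds simultaneously at $x$ and at $a^{(1)}_\delta.x$, and carefully controlling the error propagated through the $a^{(2)}_{f(\delta)}$-conjugation (which rescales $u^{(2)}$-parameters by $e^{-2f(\delta)}$). The uniqueness/ergodicity manipulation of Step~3 is also mildly delicate, as it must be performed on a single $u^{(1)}_{\bullet}$-invariant full-measure set, which is a standard Fubini argument but needs care.
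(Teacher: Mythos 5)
Your setup (matchability of $x$ and $a^{(1)}_\delta.x$ for every $R$ with the single function $h(t)=e^{-2\delta}t$, transfer through $\psi$ on a Lusin-type good set via Lemma~\ref{lem:matchingPrese}, and then Corollary~\ref{cor:inifinteMatching} with $L_2=+\infty$ to get the pointwise decomposition $\psi(a^{(1)}_\delta.x)=c(x,\delta)a^{(2)}_{f(x,\delta)}u^{(2)}_{t_0(x,\delta)}.\psi(x)$ with $c(x,\delta)\in\exp(\mathfrak w)$) is essentially the paper's Claim in the proof of Lemma~\ref{lem:Geolm1}. The genuine gap is your Step 3: constancy of $c(\cdot,\delta)$ and $f(\cdot,\delta)$ along the $u^{(1)}$-flow does \emph{not} follow from ``the uniqueness clause of Corollary~\ref{cor:inifinteMatching}''. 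That uniqueness is proved at the level of lifts, for a \emph{fixed} lattice element $\gamma$ and with the unipotent parameter confined to $[-L_1,L_1]$. When you compare the decomposition at $u^{(1)}_r.x$ with the one transported from $x$ (using $\psi(u^{(1)}_s.y)=u^{(2)}_{\tau(y,s)}.\psi(y)$), you only get an identity of points in $\mathbf G_2/\Gamma_2$ of the form $\tilde c\,a^{(2)}_{\eta}u^{(2)}_{\tilde t}.\overline{\psi(x)}=\overline{\psi(x)}\gamma$ with some a priori nontrivial $\gamma\in\Gamma_2$ and with $\tilde t$ completely uncontrolled (it contains $\tau(x,r)$, $\tau(a^{(1)}_\delta.x,e^{2\delta}r)$ and the merely measurable $w$'s), so the parameters lie far outside the range where the corollary's uniqueness applies; such near-returns along the $u^{(2)}$-orbit are perfectly common by recurrence. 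The paper's proof supplies the missing mechanism: it builds into the good set the requirement that $\psi(x)$ satisfies the ergodic theorem for $a^{(2)}_t$ with respect to a fixed compact set $\Kold{061KtempCpt1}$, then applies $a^{(2)}_{-s_0}$ with $s_0$ chosen so that $a^{(2)}_{-s_0}.\psi(x)\in\Kold{061KtempCpt1}$ and $e^{-2s_0}\absolute{\tilde t}$ is small; the conjugation contracts the discrepancy (note $\tilde c\in\exp(\mathfrak g_2^0\oplus\mathfrak g_2^+)$), and the injectivity radius of $\Kold{061KtempCpt1}$ then forces $\gamma=e$, $\tilde c=e$, $\eta=0$. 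Without this contraction-to-a-compact-set argument your constancy step, and hence everything after it, does not go through.

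For the final step you take a different route from the paper, and modulo the above repair it can be made to work, with one more fix you do not address: equating the two expressions for $\psi(a^{(1)}_\delta u^{(1)}_T.x)$ and cancelling along the (a.e. non-periodic) $u^{(2)}$-orbit leaves the term $w(u^{(1)}_T.x,\delta)$, which is only measurable and need not be $o(T)$; you must restrict to return times $T$ in a positive-measure set where $\absolute{w(\cdot,\delta)}\le M$ before dividing by $T$. The paper instead proves $f(\delta)=\delta$ by applying Lemma~\ref{lem:longHamming} to the pair $\psi(x)$, $u^{(2)}_{w(x,\delta)}.\psi(x)$ with the matching function $h(t)=e^{-2f(\delta)}\tau(a^{(1)}_\delta.x,e^{2\delta}\rho(x,t))$, killing $\gamma_R$ by the same contraction trick, and reading off the drift from the two matching times $s_R$ and $s_R+L_R$; your comparison-of-asymptotics argument is simpler but needs the recurrence/boundedness step above, and in both approaches the ineffective evenness asymptotics are handled pointwise at the fixed $x$.
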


We first prove the following weak form of Lemma~\ref{prop:Geo}:

\begin{Lemma}\label{lem:Geolm1}
There exist positive constants  $\deold{0601dePGeo2},\deold{0602dePGeo1}>0$ such that for every $\delta\in(0,\deold{0602dePGeo1})$, we can find 
a full measure subset  $Z=Z(\delta)$,
 $c(\delta)\in B^{\mathbf{G}_2}_{\deold{0601dePGeo2}}\cap C_{\mathbf{G}_2}(\mathbf{U}_2)$,
a measurable function $w(x,\delta)$, and in addition $\absolute{f(\delta)}\leq\deold{0601dePGeo2}$,
so that for every $x\in Z$\begin{equation*}
\psi(a_{\delta}^{(1)}.x)= c(\delta)a_{f(\delta)}^{(2)}u_{w(x,\delta)}^{(2)}.\psi(x).
\end{equation*}

\end{Lemma}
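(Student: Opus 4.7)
The key algebraic observation is that $a^{(1)}_\delta = \exp(\delta\ba_1) \in \Kak(R,\delta)$ for \emph{every} $R>0$: reading off Definition~\ref{def:Kakball}, the decomposition \eqref{eq:gDecom} of $a^{(1)}_\delta$ has $\vartheta_{\bu}=\vartheta_{\bou}=0$, $\vartheta_{\ba}=\delta$, and trivial $g^{\mathfrak{z}}g^{\tr}$, so all four conditions hold with the supremum over $R$ being unbounded. This means $a^{(1)}_\delta.x \in \Kak(R,\delta,x)$ for every $R>0$, and hence (for $\delta$ small enough relative to the Main Lemma) $a^{(1)}_\delta.x \in \Kak(R,\eold{035eMainDel}(\delta_0),x)$ for every $R$, where $\delta_0$ is a small fixed constant to be chosen.

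Fix $\eta$ small and let $\Kold{035Klm1}, \deold{035delm1}, \Rold{035Rlm1}, \eold{035eMainDel}(\cdot)$ be from Lemma~\ref{lem:main}. Pick $\delta$ small enough that $\delta < \eold{035eMainDel}(\delta_0)$ and that $m_1(\{x : x,\ a^{(1)}_\delta.x \in \Kold{035Klm1}\}) > 0$; such $\delta$ exist because $a^{(1)}_\delta$ preserves $m_1$. For $x$ in this set, Lemma~\ref{lem:main} gives $\psi(a^{(1)}_\delta.x) \in \Kak(R,\delta_0,\psi(x))$ for every $R \geq \Rold{035Rlm1}$. Because the unique element $g \in \mathbf{G}_2$ satisfying $\psi(a^{(1)}_\delta.x) = g.\psi(x)$ is determined by the pair of points and is therefore independent of $R$, we conclude that this one element lies in $\bigcap_{R \geq \Rold{035Rlm1}} \Kak(R,\delta_0)$. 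Applying Proposition~\ref{prop:KakutaniBallStayClose}, the single $C^1$ matching function $\phi_g$ of \eqref{eq:bestMatchingFun} exhibits $\psi(x)$ and $\psi(a^{(1)}_\delta.x)$ as $(10\delta_0,\sqrt{\delta_0},R)$-two-sided matchable for every $R\geq \Rold{035Rlm1}$, with matching function independent of $R$.

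We now apply Corollary~\ref{cor:inifinteMatching} (with $L_1 = \Rold{035Rlm1}$ and $L_2 = \infty$), further intersecting the good set of $x$'s with $\psi^{-1}(\Kold{036Klh1})$, which still has positive measure. The corollary produces \emph{uniquely determined}
\[
c(x,\delta) \in \exp(\mathfrak{w}) \cap B^{\mathbf{G}_2}_{\Cold{039infiniteMatch}\delta_0}, \qquad |f(x,\delta)| \leq \Cold{039infiniteMatch}\delta_0, \qquad |t_0(x,\delta)| \leq \Rold{035Rlm1},
\]
such that $\psi(a^{(1)}_\delta.x) = c(x,\delta)\,a^{(2)}_{f(x,\delta)}\,u^{(2)}_{t_0(x,\delta)}.\psi(x)$. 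Since $\mathfrak{w}=\bigoplus_j \bx_2^{0,j}\R$ and $[\bu_2,\bx_2^{0,j}]=0$ for all $j$ by \eqref{eq:sl2relation}, we indeed have $c(x,\delta) \in C_{\mathbf{G}_2}(\mathbf{U}_2)$. This gives the desired algebraic form of the lemma, with the exception that $c$ and $f$ may still depend on $x$.

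To remove this dependence, we use ergodicity of $u_t^{(1)}$. For $r$ small, set $s = \tau(a^{(1)}_\delta.x,\, e^{2\delta}r)$; then $u_s^{(2)}.\psi(a^{(1)}_\delta.x) = \psi(a^{(1)}_\delta.u^{(1)}_r.x)$, and applying the same relation at $u_r^{(1)}.x$ gives two expressions for this point in the normal form $c'\,a^{(2)}_{f'}\,u^{(2)}_{t_0'}.\psi(x)$. Using $u_s^{(2)} c(x,\delta) = c(x,\delta)\,u_s^{(2)}$ and $u_s^{(2)} a^{(2)}_{f(x,\delta)} = a^{(2)}_{f(x,\delta)} u^{(2)}_{e^{-2f(x,\delta)}s}$, both resulting expressions lie within the parameter range where the uniqueness part of Corollary~\ref{cor:inifinteMatching} applies, forcing
\[
c(u_r^{(1)}.x,\delta) = c(x,\delta) \quad \text{and} \quad f(u_r^{(1)}.x,\delta) = f(x,\delta)
\]
for $r$ in a neighborhood of $0$; by measurability this extends to all $r\in\R$. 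The ergodicity of $u_t^{(1)}$ then forces $c(\cdot,\delta)$ and $f(\cdot,\delta)$ to be a.e.\ constant, say equal to $c(\delta)$ and $f(\delta)$. Defining $w(x,\delta)=t_0(x,\delta)$ and taking $\Zold{060Kpn1}(\delta)$ to be the $u^{(1)}$-invariant hull of the positive-measure set where the relation was first established—full measure by ergodicity—completes the proof. The main obstacle is precisely the last step: to legitimately apply the uniqueness clause of Corollary~\ref{cor:inifinteMatching} after flowing by $u_s^{(2)}$, one must verify that the shifted parameters remain within the permitted ranges, which is why one works with a small neighborhood of $r=0$ before invoking ergodicity.
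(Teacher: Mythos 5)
Your first half (observing that $a^{(1)}_\delta$ lies in every Kakutani--Bowen ball, pushing this through the equivalence via the Main Lemma, and invoking Corollary~\ref{cor:inifinteMatching} with $L_2=\infty$ to get a normal form $c(x,\delta)a^{(2)}_{f(x,\delta)}u^{(2)}_{\bullet}$) is essentially the paper's route. The genuine gap is in the constancy step. The uniqueness clause of Corollary~\ref{cor:inifinteMatching} asserts uniqueness of the triple $(c,f,t_0)$ representing a fixed identity of \emph{group elements} on the lifts (i.e.\ with a fixed $\gamma\in\Gamma_2$); it does not say that a point of $\mathbf{G}_2/\Gamma_2$ admits a unique representation of this form over a given base point. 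When you equate your two expressions for $\psi(a^{(1)}_\delta u^{(1)}_r.x)$ over the base $\psi(u^{(1)}_r.x)$, what you actually obtain is that the quotient of the two group elements stabilizes the base point, i.e.\ equals $\widetilde{\psi(u^{(1)}_r.x)}\,\gamma\,\widetilde{\psi(u^{(1)}_r.x)}^{-1}$ for some $\gamma\in\Gamma_2$. That element is small in the $\mathbf{A}_2$, centralizer and transverse directions, but in the $\mathbf{U}_2$-direction its size is only bounded by roughly $2\Rold{035Rlm1}$, since both $u$-parameters are merely known to lie in $[-L_1,L_1]$ with $L_1=\Rold{035Rlm1}$; taking $r$ small does not shrink this discrepancy. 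Hence smallness alone cannot force $\gamma=e$, and without $\gamma=e$ the uniqueness clause is not applicable. Excluding such a $\gamma$ amounts to ruling out a return of the $u^{(2)}_t$-orbit of $\psi(u^{(1)}_r.x)$ to itself within time $\approx 2\Rold{035Rlm1}$ along a nontrivial deck transformation, and this is exactly the point where the paper needs a separate mechanism (Claim~\ref{claim:GeoSecondClaim}): comparing $x$ with $u^{(1)}_1.x$, one gets $\tilde{c}\,a^{(2)}_{\eta}u^{(2)}_{\tilde{t}}.\overline{\psi(x)}=\overline{\psi(x)}\gamma$, and then one conjugates by $a^{(2)}_{-s_0}$ with $s_0$ supplied by the pointwise ergodic theorem for $a^{(2)}_t$, chosen so that $a^{(2)}_{-s_0}.\psi(x)$ lands in a fixed compact set $\Kold{061KtempCpt1}$ while $e^{-2s_0}\tilde{t}$ becomes small; only then does the injectivity radius force $\gamma=e$, $\tilde{c}=e$, $\eta=0$. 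This renormalization/no-quick-return ingredient is missing from your argument and cannot be replaced by the corollary's uniqueness statement.

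There are secondary problems in the same step. The quantities $c(u^{(1)}_r.x,\delta)$, $f(u^{(1)}_r.x,\delta)$ are only defined when $u^{(1)}_r.x$ (and $a^{(1)}_\delta u^{(1)}_r.x$, $\psi(u^{(1)}_r.x)$) lie in the relevant good sets, which is not guaranteed for all $r$ near $0$; ``by measurability this extends to all $r$'' is not an argument; and ergodicity of $u^{(1)}_t$ can only be invoked once the functions are defined and invariant on an invariant full-measure set, whereas yours are defined only on a positive-measure, non-invariant set. The paper sidesteps all of this by first using Poincar\'e recurrence to a good set (the return time $n_{K_3}(x)$, finite a.e.), proving the normal form at the return point, transferring it back to $x$ along the $u^{(1)}$-orbit by the explicit formula \eqref{eq:tux} (which also yields measurability of $w$ and a genuinely full-measure set), and only afterwards proving $u^{(1)}_1$-invariance and applying ergodicity of the time-one map. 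Finally, a small point: the element $g$ with $\psi(a^{(1)}_\delta.x)=g.\psi(x)$ is not literally unique in the quotient, so to claim one element lies in $\bigcap_R\Kak(R,\delta_0)$ you need $\psi(x)$ to lie in a set whose injectivity radius exceeds the size of the Kakutani--Bowen witnesses; this too must be built into the good set.
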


\noindent
The remainder of subsection \S\ref{sec:localCompatibility} is dedicated to the proofs of Lemma~\ref{lem:Geolm1} and Proposition~\ref{prop:Geo}. The structure of this subsection is following:
\begin{enumerate}
    \item We first specify notations in~\S\ref{sec:geoConstants}.
    
    \item We prove Lemma~\ref{lem:Geolm1} in \S\ref{sec:Geolm1}, using the Main Lemma (Lemma~\ref{lem:main}) for small perturbations in the diagonal direction and then Corollary~\ref{cor:inifinteMatching} to eliminate error terms except in the centralizer of $u_t^{(1)}$.
    
    \item Finally we give the proof of Proposition~\ref{prop:Geo} in~\S\ref{sec:Geolm2} by showing that error terms in diagonal direction is in fact precisely equal to~$\delta$. The main idea is to use Lemma~\ref{lem:longHammingNew} for small perturbations in the opposite horocycle direction up to some certain time scale. 
\end{enumerate}

\subsubsection{Choices of constants and sets}\label{sec:geoConstants}
We specify the choices of constants and sets that will be used in~\S\ref{sec:Geolm1} and~\S\ref{sec:Geolm2}. The following is a table of the relevant notations from previous sections that will be used in the proof.
    \begin{table}[H]
    \centering
    \begin{tabular}{c|c}
    \hline
     Notation & Origin and definition\\
    \hline
      $\Kold{017KmP1}$, $\Rold{017RmP1}$, $\eold{017emP1}(\cdot)$  & Lemma~\ref{lem:matchingPrese} applied for $\psi$ and $\epsilon=10^{-99}$ \\
    \hline
     $\Zold{035KtimeU}$ & a full measure set defined in~\eqref{eq:controlKakCon} \\
    \hline
     $\Kold{035Klm1}$, $\Rold{035Rlm1}$, $\deold{035delm1}$, $\eold{035eMainDel}(\cdot)$  & Lemma~\ref{lem:main} with $\eta=10^{-8}$\\
     \hline
    $\deold{035delm1}$, $\Rold{036Rlh1}$, $\Kold{036Klh1}$,  $\Cold{039infiniteMatch}$ & Corollary~\ref{cor:inifinteMatching} (or Lemma~\ref{lem:longHamming}) with $\eta=10^{-8}$ \\
    \hline
    $\alpha$ &  the time change function corresponding \\
    & to $\psi$ as in \S\ref{sec:timeChagneGoodSet}\\
    \hline
    
    \end{tabular}
    \caption{\label{table:elementaryGeoCompatible}}
    \end{table}

\medskip

We will also make use of the following sets and constants.
\begin{enumerate}[label=\textup{(c:\roman*)}]   
    \item\label{item:constant1} Let $\denew\label{0611delog}>0$\index{$\deold{0611delog}$, \ref{item:constant1}} be a constant such that
    $\log:\mathbf{G}_2\to\mathfrak{g}_2$ is a diffeomorphism on~$B^{\mathbf{G}_2}_{\deold{0611delog}}$ and
    \[
    \log(B^{\mathbf{G}_2}_{\deold{0611delog}/10})\subset B^{\mathfrak{g}_2}_{\deold{0611delog}/5},\qquad  B^{\mathfrak{g}_2}_{\deold{0611delog}/2}\subset\log(B^{\mathbf{G}_2}_{\deold{0611delog}}).
    \]
    \item\label{item:KtempCpt1} Let $\Knew\label{061KtempCpt1}\subset \mathbf{G}_2/\Gamma_2$\index{$\Kold{061KtempCpt1}$, \ref{item:KtempCpt1}} be a compact subset satisfying 
    \[
    m_2(\Kold{061KtempCpt1})\geq 1-10^{-100}.
    \]
    \item\label{item:constant3} Define $\denew\label{0612dePGeo}$\index{$\deold{0612dePGeo}$, \ref{item:constant3}} and $\deold{0601dePGeo2}$  as 
    \[
    \deold{0612dePGeo}=\min(\kappa_1,\inj(\Kold{061KtempCpt1})^2,\deold{035delm1},\deold{0611delog})/(10^8\dim(\mathfrak{g}_2)^2\Cold{039infiniteMatch}^3),\qquad \deold{0601dePGeo2}=\Cold{039infiniteMatch}\deold{0612dePGeo}.
    \]
    \item\label{item:deltaFixed} Let $\deold{0602dePGeo1}=\min(\eold{017emP1}(\deold{0612dePGeo}),\eold{035eMainDel}(\deold{0612dePGeo}))/2$ and fix  
    \[
    \delta\in(0,\deold{0602dePGeo1}).
    \]
\end{enumerate}
Note that the choice of $\deold{0602dePGeo1}$ can not be made explicit since in Lemma~\ref{lem:matchingPrese} the function $\eold{017emP1}(\cdot)$ is determined using Lusin's theorem applied to the Kakutani equivalence $\psi$.

\medskip

We then specify the good sets used in our proof.
\begin{enumerate}[label=\textup{(s:\roman*)}]   
    \item\label{item:ergodicUnipotentGoodSetConstant} By the Pointwise Ergodic Theorem for flow $u_t^{(1)}$ and function $\chi_{\Kold{017KmP1}}$, there exist a compact set $K_1$ with $m_1(K_1)>1-10^{-9}$ and a positive constant \index{$\Rold{061Rwbound}$, \ref{item:ergodicUnipotentGoodSetConstant}}
    \[    \Rnew\label{061Rwbound}\geq2\max(\Rold{017RmP1},\Rold{036Rlh1},10^{1000})
    \]
    such that for every $x\in K_1$ and $R\geq \Rold{061Rwbound}$ 
    \[
    \absolute{\{t\in[-R,R]:u_t^{(1)}.x\in \Kold{017KmP1}\}}>(1-10^{-90})R.
    \]
    \item  Let $Z_1$ be the set of full measure on which the Pointwise Ergodic Theorem holds for the function $\alpha$ and the flow $u_t^{(1)}$. 
    \item  Let $Z_2$ be the set of full measure on which the Pointwise Ergodic Theorem holds for the function $\chi_{\Kold{061KtempCpt1}}$ and the flow $a_t^{(2)}$.
    \item Let $Z_3\subset \mathbf{G}_1/\Gamma_1$ be the set of full measure
    \begin{equation*}
        Z_3=\bigcap_{k=-\infty}^{\infty}u_k^{(1)}(\psi^{-1}(Z_2)).
    \end{equation*}

    \item\label{item:geoConstant6}
    Recall $\delta$ is fixed in \ref{item:deltaFixed}. Define $Z_4$, $K_2$ and $K_3$ by
    \begin{equation*}
        \begin{aligned}
        &Z_4= \Zold{035KtimeU}\cap Z_1\cap a_{-\delta}^{(1)}\left(\Zold{035KtimeU}\cap Z_1\right)\cap Z_3,\\
        &K_2=\Kold{017KmP1}\cap\psi^{-1}(\Kold{036Klh1})\cap\psi^{-1}(\Kold{061KtempCpt1})\cap K_1\cap Z_3,\\
        &K_3=K_2\cap a_{-\delta}^{(1)}(K_2).
        \end{aligned}
    \end{equation*}
     Then $m_1(K_2)>0.99$, $m_1(Z_4)=1$ and $Z_4$ is $u_1^{(1)}$-invariant. 

\item 
For every $x\in \mathbf{G}_1/\Gamma_1$, let $n_{K_3}(x)$ be
\[
n_{K_3}(x)=\min\{n\in\N:u_n^{(1)}.x\in K_3\}.
\]
By Poincar\'{e} recurrence and ergodicity of $u_t^{(1)}$, there exists a full measure $u_1^{(1)}$-invariant set 
\[
Z_5\subset \mathbf{G}_1/\Gamma_1
\]
such that $n_{K_3}(x)$ is finite for $x\in Z_5$.
\end{enumerate}

\subsubsection{Proof of Lemma~\ref{lem:Geolm1}}\label{sec:Geolm1}

Let $\mathfrak{w}<\mathfrak{g}_2$ be a complement to $\R\bu_2$ in $C_{\mathfrak{g}_2}(\bu_2)$ as in \eqref{eq:wSpace}. 
\begin{Claim}\label{claim:GeoFirstClaim}
Given $x\in Z_4\cap Z_5$, there exist  $w(x,\delta)\in\R$,   $\absolute{f(x,\delta)}\leq\deold{0601dePGeo2}$ and $c(x,\delta)\in B_{\deold{0601dePGeo2}}^{\mathbf{G}_2}\cap \exp(\mathfrak{w})$ such that 
\begin{equation*}
   \psi(a_{\delta}^{(1)}.x)=c(x,\delta)a_{f(x,\delta)}^{(2)}u_{w(x,\delta)}^{(2)}.\psi(x).
\end{equation*}
\end{Claim}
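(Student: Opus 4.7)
The plan is to exploit the fact that $a_\delta^{(1)}$ normalizes $u_t^{(1)}$ via the identity
\[
u_s^{(1)}.a_\delta^{(1)}.x = a_\delta^{(1)}.u_{e^{-2\delta}s}^{(1)}.x,
\]
so that by right-invariance of $d_{\mathbf{G}_1}$ the two $u_t^{(1)}$-orbits through $x$ and $a_\delta^{(1)}.x$ remain at the uniform distance $d(e, a_\delta^{(1)}) \asymp \delta$ under the rescaling $h(s) = e^{-2\delta} s$. Since $|h'(s) - 1| = |e^{-2\delta} - 1| \leq 3\delta$ and $h$ is independent of both $R$ and $x$, this shows that $x$ and $a_\delta^{(1)}.x$ are $(C\delta, 3\delta, R)$-two sided matchable for \emph{every} $R > 0$ with the \emph{same} $C^1$-matching function $h$. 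Maintaining an $R$-independent matching function throughout the argument is the central point and the main obstacle, because Corollary~\ref{cor:inifinteMatching} requires a \emph{fixed} matching function in order to be applied with $L_2 = +\infty$.

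First I would use $x \in Z_5$ together with Poincar\'e recurrence to pick $n = n_{K_3}(x) < \infty$, so that $x' := u_n^{(1)}.x \in K_3$. By definition of $K_3$, both $x'$ and $a_\delta^{(1)}.x' = u_{e^{2\delta}n}^{(1)}.a_\delta^{(1)}.x$ lie in $K_2 \subset \Kold{017KmP1} \cap \psi^{-1}(\Kold{036Klh1})$. The pair $(x', a_\delta^{(1)}.x')$ is $(C\delta, 3\delta, R)$-two sided matchable for every $R$ with the same function $h$, so Lemma~\ref{lem:matchingPrese} (applied with the parameters fixed in \S\ref{sec:geoConstants}) transfers this to the $\psi$-images: $\psi(x')$ and $\psi(a_\delta^{(1)}.x')$ are $(\deold{0612dePGeo}, 10^{-90}, R)$-two sided matchable for every $R \geq \Rold{017RmP1}$, with the single matching function $h_1(t) = \tau(x', h(\rho(a_\delta^{(1)}.x', t)))$, which is again independent of $R$ because $h$ is.

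Next I would invoke Corollary~\ref{cor:inifinteMatching} with $L_1 = \Rold{036Rlh1}$ and $L_2 = +\infty$. Since $\psi(a_\delta^{(1)}.x') \in \Kold{036Klh1}$, the hypotheses are met, and we obtain uniquely determined $f \in \R$, $c \in B_{\Cold{039infiniteMatch}\deold{0612dePGeo}}^{\mathbf{G}_2} \cap \exp(\mathfrak{w})$, and $t_1 \in \R$ with $|f| \leq \Cold{039infiniteMatch}\deold{0612dePGeo} \leq \deold{0601dePGeo2}$ satisfying
\[
\psi(a_\delta^{(1)}.x') = c \cdot a_{f}^{(2)} u_{t_1}^{(2)}.\psi(x').
\]

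Finally I would transfer this identity back from $x'$ to $x$. Because $x \in Z_4 \subset \Zold{035KtimeU} \cap a_{-\delta}^{(1)} \Zold{035KtimeU}$, the cocycle identities $\psi(x') = u_{\tau(x,n)}^{(2)}.\psi(x)$ and $\psi(a_\delta^{(1)}.x') = u_{\tau(a_\delta^{(1)}.x,\, e^{2\delta}n)}^{(2)}.\psi(a_\delta^{(1)}.x)$ apply. Substituting them and using that $c \in \exp(\mathfrak{w}) \subset C_{\mathbf{G}_2}(\mathbf{U}_2)$ commutes with every $u_s^{(2)}$ while $a_f^{(2)} u_s^{(2)} = u_{e^{2f}s}^{(2)} a_f^{(2)}$, the three $u^{(2)}$-factors that appear on the right can be consolidated into a single $u_{w(x,\delta)}^{(2)}$ with
\[
w(x,\delta) = -e^{-2f}\,\tau\bigl(a_\delta^{(1)}.x,\, e^{2\delta}n\bigr) + t_1 + \tau(x,n).
\]
Setting $c(x,\delta) = c$ and $f(x,\delta) = f$ then yields the required identity
\[
\psi(a_\delta^{(1)}.x) = c(x,\delta)\, a_{f(x,\delta)}^{(2)}\, u_{w(x,\delta)}^{(2)}.\psi(x),
\]
completing the proof of the claim. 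The subsequent step, promoting the $x$-dependent $c(x,\delta)$ and $f(x,\delta)$ to the $x$-independent $c(\delta)$ and $f(\delta) = \delta$ of Proposition~\ref{prop:Geo}, will be carried out using $u_t^{(1)}$-ergodicity and an additional perturbation argument in the $\bou$-direction.
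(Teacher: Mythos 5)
Your proposal is correct and follows essentially the same route as the paper's own proof: pass to the recurrence point $u_{n_{K_3}(x)}^{(1)}.x\in K_3$ so that both it and its $a_{\delta}^{(1)}$-shift lie in the good set $K_2$, use the $R$-independent matching between the two $u^{(1)}$-orbits together with Lemma~\ref{lem:matchingPrese}, apply Corollary~\ref{cor:inifinteMatching} with $L_2=+\infty$, and then transport the resulting identity back to $x$ via the time-change cocycle and the commutation relations, arriving at the same formula for $w(x,\delta)$. No substantive differences.
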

\begin{proof}[Proof of Claim~\ref{claim:GeoFirstClaim}]
For every $x\in Z_4\cap Z_5$, the definitions of $K_3$ and~$Z_5$ imply that
\[
u_{n_{K_3}(x)}^{(1)}.x,\quad a_{\delta}^{(1)}u_{n_{K_3}(x)}^{(1)}.x\in K_2.
\]
As $u_{n_{K_3}(x)}^{(1)}.x$ and $a_{\delta}^{(1)}u_{n_{K_3}(x)}^{(1)}.x$ are $(\deold{0602dePGeo1},10^{-99},R)$-two sides matchable for \emph{every} $R$, if we take $R\geq \Rold{061Rwbound}/2$ we obtain from the fact that $K_2\subset \Kold{017KmP1}$ and Lemma~\ref{lem:matchingPrese} the following:
\begin{itemize}
\item
    $\psi(u_{n_{K_3}(x)}^{(1)}.x)$ and $\psi(a_{\delta}^{(1)}u_{n_{K_3}(x)}^{(1)}.x)$ are $(\deold{0612dePGeo},10^{-97},R)$-two sides matchable for every $R\geq \Rold{061Rwbound}/2$, with a $C^1$-matching function $h(t)$ that does not depend on $R$.
\end{itemize} 

\medskip
\noindent
Then by Corollary~\ref{cor:inifinteMatching} and choice of $\deold{0601dePGeo2}=\Cold{039infiniteMatch}\deold{0612dePGeo}$ give that there exist 
\begin{align*}
c(x,\delta)&\in B_{\deold{0601dePGeo2}}^{\mathbf{G}_2}\cap \exp(\mathfrak{w})\\
f(x,\delta)&\in\R \quad\text{satisfying $\absolute{f(x,\delta)}\leq\deold{0601dePGeo2}$},\\
t_0(x,\delta)&\in[-\Rold{061Rwbound}/2,\Rold{061Rwbound}/2],
\end{align*}
such that
\begin{equation*}
   \psi(a_{\delta}^{(1)}u_{n_{K_3}(x)}^{(1)}.x)=c(x,\delta)a_{f(x,\delta)}^{(2)}u_{t_0(x,\delta)}^{(2)}.\psi(u_{n_{K_3}(x)}^{(1)}.x).
\end{equation*}
This together with \eqref{eq:controlKakCon} and $x,a_{\delta}^{(1)}.x\in Z_4\subset\Zold{035KtimeU}$ implies that 
\begin{equation*}
\psi(a_{\delta}^{(1)}x)= c(x,\delta)a_{f(x,\delta)}^{(2)}u^{(2)}_{w(x,\delta)}.\psi(x),
\end{equation*}
where 
\begin{equation}\label{eq:tux}
    w(x,\delta)=t_0(x,\delta)+\tau(x,n_{K_3}(x))-e^{-2f(x,\delta)}\tau(a_{\delta}^{(1)}. x,e^{2\delta}n_{K_3}(x)) 
\end{equation} 
and $\tau(x,\cdot)$ is defined in \eqref{eq:controlKakCon}. This completes the proof of Claim~\ref{claim:GeoFirstClaim}.
\end{proof}

\begin{Claim}\label{claim:GeoSecondClaim}
    The functions $c(x,\delta)$ and $f(x,\delta)$ defined in Claim~\ref{claim:GeoFirstClaim} are $u_1^{(1)}$-invariant.
\end{Claim}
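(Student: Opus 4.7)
The plan is to produce two expressions for $\psi(a_\delta^{(1)}u_1^{(1)}.x)$ in terms of $\psi(x)$ and then invoke uniqueness of a certain local decomposition. First, applying Claim~\ref{claim:GeoFirstClaim} at the point $y:=u_1^{(1)}.x$ and using the time-change identity $\psi(u_1^{(1)}.x)=u_{\tau(x,1)}^{(2)}.\psi(x)$ from \eqref{eq:controlKakCon}, we obtain
\[
\psi(a_\delta^{(1)}u_1^{(1)}.x) \;=\; c(y,\delta)\,a_{f(y,\delta)}^{(2)}\,u^{(2)}_{\,w(y,\delta)+\tau(x,1)}.\psi(x).
\]
On the other hand, using the algebraic identity $a_\delta^{(1)}u_1^{(1)}=u_{e^{2\delta}}^{(1)}a_\delta^{(1)}$ inside $\mathbf G_1$, followed by the time-change identity again and Claim~\ref{claim:GeoFirstClaim} at $x$, we get
\[
\psi(a_\delta^{(1)}u_1^{(1)}.x) \;=\; u^{(2)}_{\tau(a_\delta^{(1)}.x,\,e^{2\delta})}\,c(x,\delta)\,a_{f(x,\delta)}^{(2)}\,u^{(2)}_{w(x,\delta)}.\psi(x).
\]
Since $c(x,\delta)\in\exp(\mathfrak w)\subset C_{\mathbf G_2}(\mathbf U_2)$ it commutes with every $u^{(2)}_s$, and the relation $u^{(2)}_s a^{(2)}_f=a^{(2)}_f u^{(2)}_{e^{-2f}s}$ lets us push the leading $u^{(2)}$-factor to the right of $a^{(2)}_{f(x,\delta)}$, yielding
\[
\psi(a_\delta^{(1)}u_1^{(1)}.x) \;=\; c(x,\delta)\,a_{f(x,\delta)}^{(2)}\,u^{(2)}_{W(x,\delta)}.\psi(x),
\qquad W(x,\delta):=w(x,\delta)+e^{-2f(x,\delta)}\tau(a_\delta^{(1)}.x,e^{2\delta}).
\]

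Equating the two expressions and restricting to a generic $x$ for which $\psi(x)$ has trivial stabilizer in $\mathbf G_2$ (a full-measure condition that can be added into the definition of our good set), we obtain the group identity
\[
c(y,\delta)\,a_{f(y,\delta)}^{(2)}\,u^{(2)}_{w(y,\delta)+\tau(x,1)} \;=\; c(x,\delta)\,a_{f(x,\delta)}^{(2)}\,u^{(2)}_{W(x,\delta)} \quad\text{in }\mathbf G_2.
\]
Both $c(x,\delta)$ and $c(y,\delta)$ lie in $B_{\deold{0601dePGeo2}}^{\mathbf G_2}\cap\exp(\mathfrak w)$ and both $|f(x,\delta)|,|f(y,\delta)|\le \deold{0601dePGeo2}$; by making $\deold{0602dePGeo1}$ small enough we may assume the local decomposition provided by Lemma~\ref{lem:groupDecom} applies to the product $(c,a_f)\mapsto c\,a_f$, so that $c\,a_f$ determines $c$ and $f$ uniquely. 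It therefore suffices to show that the $\mathbf U_2$-cosets of $c(x,\delta)a^{(2)}_{f(x,\delta)}$ and $c(y,\delta)a^{(2)}_{f(y,\delta)}$ coincide, which is exactly the content of the displayed equation (both sides have the form ``small element of $\exp(\mathfrak w)\cdot\mathbf A_2$ multiplied on the right by an element of $\mathbf U_2$''). Since $\exp(\mathfrak w)\cdot\mathbf A_2$ is locally a smooth submanifold transverse to the $\mathbf U_2$-orbits at the identity, two sufficiently small elements of it lying in the same $\mathbf U_2$-coset must coincide. This gives $c(y,\delta)=c(x,\delta)$ and $f(y,\delta)=f(x,\delta)$, and simultaneously forces $w(y,\delta)+\tau(x,1)=W(x,\delta)$ (a cocycle relation for $w$ which is not needed here but is consistent with the statement).

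The only delicate point is the uniqueness of the factorization in the last step, because the $u$-shifts $w(y,\delta)+\tau(x,1)$ and $W(x,\delta)$ are not a priori small—only the $(c,a_f)$ parts are. This will be handled by multiplying the identity on the right by $u^{(2)}_{-W(x,\delta)}$, which leaves us comparing $c(y,\delta)a^{(2)}_{f(y,\delta)}u^{(2)}_s$ with $c(x,\delta)a^{(2)}_{f(x,\delta)}$ for some $s\in\R$; since the right-hand side lies in a small neighborhood of the identity transverse to $\mathbf U_2$, and the $\mathbf U_2$-direction is exhausted only by the $u^{(2)}_s$ factor, we can first force $s=0$ and then apply Lemma~\ref{lem:groupDecom} to conclude. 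With the invariance in hand, the same displayed identity also shows that $w$ satisfies the expected cocycle relation, which will be useful later in \S\ref{sec:comgeo}.
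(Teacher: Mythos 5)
Your first two displayed identities and the commutation manipulations are fine and parallel the paper's start, but the step where you pass from an identity of two points in $\mathbf{G}_2/\Gamma_2$ to an identity of group elements in $\mathbf{G}_2$ is a genuine gap. You justify it by ``restricting to a generic $x$ for which $\psi(x)$ has trivial stabilizer in $\mathbf{G}_2$''; no such $x$ exists: the stabilizer of any point $g\Gamma_2\in\mathbf{G}_2/\Gamma_2$ is the conjugate $g\Gamma_2g^{-1}$ of the lattice, which is infinite. So equating your two expressions only yields, after lifting, an identity of the form
\begin{equation*}
\tilde{c}\,a_{\eta}^{(2)}u_{\tilde{t}}^{(2)}.\overline{\psi(x)}=\overline{\psi(x)}\,\gamma
\end{equation*}
with some $\gamma\in\Gamma_2$, where $\tilde c=c(x,\delta)c(u_1^{(1)}.x,\delta)^{-1}$ and $\eta=f(x,\delta)-f(u_1^{(1)}.x,\delta)$ are small but $\tilde t$ is \emph{not} bounded. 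Discreteness of the stabilizer does not let you discard $\gamma$, precisely because the $\mathbf{U}_2$-component of the discrepancy is unbounded; ruling out a nontrivial $\gamma$ is the actual content of the claim, and your later ``force $s=0$ then apply Lemma~\ref{lem:groupDecom}'' step presupposes it has already been ruled out.

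The paper closes exactly this gap with a renormalization that your argument is missing: since the good set was built so that $x\in\psi^{-1}(Z_2)$, the pointwise ergodic theorem for the $a_t^{(2)}$-flow and $\chi_{\Kold{061KtempCpt1}}$ provides an $s_0>0$ with $a_{-s_0}^{(2)}\psi(x)\in\Kold{061KtempCpt1}$ and $\absolute{e^{-2s_0}\tilde t}\leq\deold{0601dePGeo2}$. Conjugating the identity by $a_{-s_0}^{(2)}$ contracts the $u_{\tilde t}^{(2)}$ factor while keeping $\tilde c\,a_\eta^{(2)}$ small (here one uses $\mathfrak w\subset\mathfrak g_2^0\oplus\mathfrak g_2^+$), so the total deviation at the recurrent point $a_{-s_0}^{(2)}\psi(x)$ falls below the injectivity radius of the fixed compact set $\Kold{061KtempCpt1}$; only then does discreteness force $\gamma=e$, and subsequently $\tilde c=e$, $\eta=0$, by the unique local decomposition. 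To repair your proof you would need to incorporate this (or an equivalent) recurrence-plus-contraction argument; as written, the conclusion does not follow.
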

\begin{proof}[Proof of Claim~\ref{claim:GeoSecondClaim}]
Suppose $x\in Z_4\cap Z_5$, note that $Z_4\cap Z_5$ is an $u_1^{(1)}$-invariant set and thus $u_1^{(1)}.x\in Z_4\cap Z_5$. Then apply  Claim~\ref{claim:GeoFirstClaim} for both $x$ and $u_1^{(1)}.x$
\begin{gather*}
\psi(a_{\delta}^{(1)}.x)= c(x,\delta)a_{f(x,\delta)}^{(2)}u^{(2)}_{w(x,\delta)}.\psi(x),\\
\psi(a_{\delta}^{(1)}u_1^{(1)}.x)=c(u_1^{(1)}.x,\delta)a_{f(u_1^{(1)}.x,\delta)}^{(2)}u^{(2)}_{w(u_1^{(1)}.x,\delta)}.\psi(u_1^{(1)}.x).
\end{gather*}
Together with $\psi(\mathbf{U}_1.x)=\mathbf{U}_2.\psi(x)$ for every $x\in Z_4\cap Z_5\subset \Zold{035KtimeU}$, then there exists $\bar{t}\in\R$ such that
\begin{equation*}
c(x,\delta)a_{f(x,\delta)}^{(2)}.\psi(x)=u_{\bar{t}}^{(2)}c(u_1^{(1)}.x,\delta)a_{f(u_1^{(1)}.x,\delta)}^{(2)}.\psi(x).
\end{equation*}
Let $\overline{\psi(x)}$ be a lift of $\psi(x)$ in $\mathbf{G}_2$, then the above equation imply there exist $\gamma\in\Gamma_2$ and $\tilde{t}\in\R$ satisfying
\begin{equation}\label{eq:geolsemMod}
\tilde{c}a_{\eta}^{(2)}u_{\tilde{t}}^{(2)}.\overline{\psi(x)}=\overline{\psi(x)}\gamma,
\end{equation}
where
\[
\tilde{c}=c(x,\delta)(c(u_1^{(1)}.x,\delta))^{-1},\qquad \eta=f(x,\delta)-f(u_1^{(1)}.x,\delta).
\]
It follows from Claim~\ref{claim:GeoFirstClaim} that
\[
\tilde{c}\in B_{2\deold{0601dePGeo2}}^{\mathbf{G}_2}\cap\exp(\mathfrak{w})\subset\exp(\mathfrak{g}_2^0\oplus\mathfrak{g}_2^{+})\qquad\text{and}\qquad \absolute{\eta}\leq2\deold{0601dePGeo2}.
\]

Since $x\in Z_4\subset Z_3\subset\psi^{-1}(Z_2)$, by the Pointwise Ergodic Theorem for the function $\chi_{\Kold{061KtempCpt1}}$ and flow $a_t^{(2)}$, there exists $s_0>0$ such that 
\[
a^{(2)}_{-s_0}\psi(x)\in\Kold{061KtempCpt1}\quad\text{and}\quad \absolute{e^{-2s_0}\tilde{t}}\leq\deold{0601dePGeo2}.
\]
Applying $a_{-s_0}^{(2)}$ from the left to the both sides of \eqref{eq:geolsemMod}, this together with the $\deold{0601dePGeo2}\leq\frac{\inj(\Kold{061KtempCpt1})}{100}$ gives
\[
\gamma=e,\qquad \tilde{c}=e,\qquad \eta=0,
\]
which prove the $u_1^{(1)}$-invariance of $c(x,\delta)$ and $f(x,\delta)$.
\end{proof}

We can now conclude the proof of Lemma~\ref{lem:Geolm1}. 
By Claim~\ref{claim:GeoSecondClaim} and ergodicity of $u_1^{(1)}$, there exists a full measure $u_1^{(1)}$-invariant set \[
Z\subset Z_4\cap Z_5
\]
such that for every $x\in Z$, 
\[
c(x,\delta)=c(\delta) \text{ and } f(x,\delta)=f(\delta).
\]
for some $c(\delta)\in \exp(\mathfrak w)$ and $\absolute{f(\delta)}< \deold{0601dePGeo2}$, establishing Lemma~\ref{lem:Geolm1}.\hfill$\Box$

\subsubsection{Proof of Proposition~\ref{prop:Geo}}\label{sec:Geolm2} 
\paragraph{\emph{Sketch of the proof: }} By Lemma~\ref{lem:Geolm1},  
\begin{equation}\label{eq:matching in 2}
u^{(2)}_{t} u_{w(x,\delta)}^{(2)}.\psi(x) \quad \text{and} \quad u^{(2)}_{e^{2f(\delta)}t }.\psi(a_{\delta}^{(1)}.x)
\end{equation}
stay very close for all $t$.
On the other hand, 
\begin{equation}\label{eq:matching in 1}
u^{(1)}_{t} .x \quad \text{and} \quad u^{(1)}_{e^{2\delta} t} a_{\delta}^{(1)}. x
\end{equation}
also stay very close for all $t$.
Using Lusin and the fact that $\psi$ is a Kakutani equivalence, \eqref{eq:matching in 1} implies that
\begin{equation}\label{eq:matching in 2'}
u^{(2)}_{\tau(x,t)} .\psi(x) \quad \text{and} \quad u^{(2)}_{\tau(a_{\delta}^{(1)}.x,e^{
2\delta}t) }.\psi(a_{\delta}^{(1)}.x)
\end{equation}
stay close for most $t$, where $\tau$ is the time change cocycle corresponding to $\psi$. Since $\psi$ is an even Kakutani equivalence, $\tau(x,t)/t \to 1$ a.s. Now employing Lemma~\ref{lem:longHamming}, we see that the only way to reconcile \eqref{eq:matching in 2} and \eqref{eq:matching in 2'} is if $f(\delta)=\delta$.
We now proceed with the details of the proof.
\medskip

\noindent Let $\delta\in(0,\deold{0602dePGeo1})$ be fixed as in item~\ref{item:deltaFixed} on~p.~\pageref{item:deltaFixed}. Then Lemma~\ref{lem:Geolm1}~gives
\begin{itemize}
    \item a full measure set $Z=Z(\delta)\subset\mathbf{G}_1/\Gamma_1$,
    \item a measurable function $w(x,\delta):Z\to\R$,
    \item $c(\delta)\in B_{\deold{0601dePGeo2}}^{\mathbf{G}_2}\cap C_{\mathbf{G}_2}(\mathbf{U}_2)$,
    \item $\absolute{f(\delta)}\leq\deold{0601dePGeo2}$,
\end{itemize} 
such that for every $x\in Z$
\begin{equation}\label{eq:geoKakeq}
\psi(a_{\delta}^{(1)}.x)= c(\delta).a_{f(\delta)}^{(2)}.u_{w(x,\delta)}^{(2)}.\psi(x).
\end{equation}

\medskip

We fix a $x$ and an $\epsilon$ as following 
\[
x\in K_2\cap Z,\qquad \epsilon\in(0,10^{-10}\deold{0612dePGeo}),
\]
where $K_2\subset \mathbf{G}_1/\Gamma_1$ is defined in item~\ref{item:geoConstant6} on p.~\pageref{item:geoConstant6}.

\medskip

Some additional constants are needed for this proof:
\begin{enumerate}[label=\textup{(\roman*)}]
    \item Since $\psi$ is an even Kakutani equivalence, if we set 
    \[
    \int_0^{\rho(x,t)}\alpha(u_s^{(1)}.x)ds=t,\qquad \int_0^{t}\alpha(u_s^{(1)}.x)ds=\tau(x,t),
    \]
    with $\alpha$ the time change function corresponding to $\psi$, 
    we obtain from the Pointwise Ergodic Theorem that there exists $N_2=N_2(\epsilon,x,\delta)>N_1$ such that for every $t\geq N_2$ 
\begin{equation}\label{eq:geotime}
\begin{aligned}
     \absolute{\rho(x,t)-t}\leq\epsilon\absolute{t}, \qquad \absolute{\tau(a_{\delta}^{(1)}.x,t)-t}\leq\epsilon\absolute{t},
\end{aligned}
\end{equation}
where $N_1$ is defined in item~\ref{item:ergodicUnipotentGoodSetConstant} on p.~\pageref{item:ergodicUnipotentGoodSetConstant}.

\item Let $N_3=N_3(\epsilon,x,\delta)$ be a large constant defined as
\begin{equation}\label{eq:geoChoiceR}
    N_3=100\left(N_2+\frac{100\absolute{w(x,\delta)}+100}{\epsilon}\right).
\end{equation}

\end{enumerate}

\begin{proof}[Proof of Proposition~\ref{prop:Geo}]
    On the one hand, for every $t \in \R$, we have the points~$u^{(1)}_{t}.x$ and $u^{(1)}_{e^{2\delta}t}a^{(1)}_{\delta}.x$ are $2\delta$-close. Lemma~\ref{lem:matchingPrese} gives for sufficiently large $R$, there exists $A_R\subset[-R,R]$ with $0\in A_R$ and $l(A_R)>(1-10^{-97})2R$ such that
    \[
    \emph{for every $t\in A_R$, $\psi (u^{(1)}_{t}.x)$ and $\psi(u^{(1)}_{e^{2\delta}t}a^{(1)}_{\delta}.x)$ are $\deold{0612dePGeo}$-close.}
    \]
    On the other hand, it follows from \eqref{eq:geoKakeq} that 
    \[
    \emph{for every $t\in\R$, $u_{e^{-2f(\delta)}t+w(x,\delta)}^{(2)}.\psi(x)$ and  $u_{t}^{(2)}.\psi(a_{\delta}^{(1)}.x)$ are $2\deold{0612dePGeo}$-close.}
    \]
    Note that the definitions of $\rho$ and $\tau$ give
\[\psi(u_{\rho(x,t)}^{(1)}x)=u_{t}^{(2)}\psi(x), \qquad \psi(u_{t}^{(1)}x)=u_{\tau(x,t)}^{(2)}\psi(x).
\]
    Then for sufficiently large $R$, $\psi(x)$ and $u_{w(x,\delta)}^{(2)}.\psi(x)$ are $(4\deold{0612dePGeo},10^{-97},R)$-two sides matchable with matching function $h$:
    \begin{equation}\label{eq:hDef}
    h(t)=e^{-2f(\delta)}\tau(a_{\delta}^{(1)}.x,e^{2\delta}\rho(x,t)).
    \end{equation}

    \medskip

    Recall $\psi(x)\in\Kold{036Klh1}$ and let $\widetilde{\psi(x)}\in\mathbf{G}_2$ be a lift of $\psi(x)$. By  Lemma~\ref{lem:longHamming}, there exist $s_R\in[-R,R]$, $L_R\geq3R/2$ and  $\gamma_R\in\Gamma_2$ such that for $p=s_R$ and $s_R+L_R$
\begin{equation}\label{eq:geolong}
    \begin{aligned}
    u_{h(p)+w(x,\delta)}^{(2)}.\widetilde{\psi(x)}&\in\Kak(R,4\deold{0601dePGeo2})u_{p}^{(2)}.\widetilde{\psi(x)}\gamma_R.
    \end{aligned}
\end{equation}
Recall the choice of $x$ implies for some $R\geq N_3$
\begin{equation}\label{eq:geoNormalCompact}
a_{-\frac{1}{2}\log(R/\sqrt{\deold{0601dePGeo2}})}^{(2)}.\psi(x)\in\Kold{061KtempCpt1}.
\end{equation}
Apply $a_{-\frac{1}{2}\log(R/\sqrt{\deold{0601dePGeo2}})}^{(2)}$ from the right to both sides of~\eqref{eq:geolong}, which together with Lemma~\ref{lem:diangonalConjugateKak} and the choice of $\deold{0601dePGeo2}$ give 
\begin{equation*}\label{eq:rtri}
    (a_{-\frac{1}{2}\log(R/\sqrt{\deold{0601dePGeo2}})}^{(2)}.\widetilde{\psi(x)}).\gamma_R^{-1}.(a_{-\frac{1}{2}\log(R/\sqrt{\deold{0601dePGeo2}})}^{(2)}.\widetilde{\psi(x)})^{-1}\in B_{\inj(\Kold{061KtempCpt1})/5)}^{\mathbf{G}_2}.
\end{equation*}
Combining this inclusion with \eqref{eq:geoNormalCompact} and definition of $\inj(\Kold{061KtempCpt1})$, we must have that
$\gamma_R=e$.

\medskip

Then this together with \eqref{eq:geolong} gives that there is a $C>0$ depending only on $\mathbf{G}_2$ so that
\begin{equation}\label{eq:srlr inequality}
\begin{aligned}
    &\absolute{h(s_R)-s_R+w(x,\delta)}<C\deold{0601dePGeo2},\\
    &\absolute{h(L_R+s_R)-L_R-s_R+w(x,\delta)}<C\deold{0601dePGeo2}.
\end{aligned}
\end{equation}
Since $L_R\geq3R/2$ and $R\geq N_3$,
\begin{equation*}
\begin{aligned}
    \max(\absolute{s_R},\absolute{s_R+L_R})\geq &\max\left(N_2,\frac{100\absolute{w(x,\delta)}+100}{\epsilon}\right).
    \end{aligned}
\end{equation*}
Together with $h(0)=0$, \eqref{eq:geotime}, the definition of $h$ in \eqref{eq:hDef} and \eqref{eq:srlr inequality},
 \[
 \absolute{-f(\delta)+\delta}<100\epsilon.
 \]
Since  $\epsilon>0$ is arbitrary this completes the proof.
\end{proof}

\subsection{Controlling deviation in the centralizer of \texorpdfstring{$\mathbf{H}_2$}{H2}}\label{sec:trivialGeo}
In this section, we provide an improved version of Proposition~\ref{prop:Geo}, by showing that the deviation $c(\delta)$  in the direction $C_{\mathbf{G}_2}(\mathbf{U}_2)$ between $\psi(x)$ and $\psi(a_\delta^{(1)}.x)$ is in fact in $C_{\mathbf{G}_2}(\mathbf{U}_2)\cap \mathbf{G}_2^{+}$.
 This will allow us to cancel the term $c(\delta)$ altogether by replacing $\psi$ with $c\psi$ for a suitable $c \in C_{\mathbf{G}_2}(\mathbf{U}_2)\cap \mathbf{G}_2^{+}$.
 
\begin{Proposition}\label{prop:GeoTri}
The group element $c(\delta)$ in Proposition~\ref{prop:Geo} satisfies  
\[
\log(c(\delta))\in \mathfrak{v}^+,
\]
where $\mathfrak{v}^+$ is defined in \eqref{eq:Cgn}.
\end{Proposition}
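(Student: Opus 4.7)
Write $c(\delta) = v(\delta) z(\delta)$ with $v(\delta) \in \mathbf{V}^+$ and $z(\delta) \in \mathbf{Z}_2 := C_{\mathbf{G}_2}(\mathbf{H}_2) = \exp(\mathfrak{z}_2)$; this decomposition is unique for $c(\delta)$ close to $e$ by Lemma~\ref{lem:groupDecom} applied to $C_{\mathfrak{g}_2}(\bu_2) = \mathfrak{v}^+ \oplus \mathfrak{z}_2$. The target $\log c(\delta)\in\mathfrak{v}^+$ is equivalent to $z(\delta)=e$, which in turn is equivalent to $a^{(2)}_t c(\delta) a^{(2)}_{-t}\to e$ as $t\to-\infty$ (since $v\in\mathbf{V}^+\subset \mathbf{G}_2^+$ is contracted by $a^{(2)}_{-t}$ as $t\to-\infty$ while $z\in\mathbf{Z}_2$ is $a^{(2)}_\bullet$-fixed). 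Thus the proof reduces to showing that $c(\delta)\,a^{(2)}_\delta$ becomes close to an element of $\mathbf{H}_2$ after renormalization by $a^{(2)}_{-p}$ for large $p$.

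Following the sketch in the introduction, we exploit that the diagonal displacement $a^{(1)}_\delta$ can be produced, up to $O(|s|)$ error, by a long $u^{(1)}$-shift of the opposite-horocycle perturbation $\widehat{u}^{(1)}_s.x$: by Lemma~\ref{lem:matchingFunction} with $T^* = (1-e^{-\delta})/s$,
\[
u^{(1)}_{\phi(T^*)}.\widehat{u}^{(1)}_s.x \;=\; \widehat{u}^{(1)}_{se^{-\delta}}.a^{(1)}_\delta.u^{(1)}_{T^*}.x.
\]
Apply $\psi$. The left-hand side becomes $u^{(2)}_{\tau(\widehat{u}^{(1)}_s.x,\,\phi(T^*))}.\psi(\widehat{u}^{(1)}_s.x)$, and Proposition~\ref{prop:Geo} evaluated at $u^{(1)}_{T^*}.x$---valid for almost every $x$ by Poincar\'e recurrence along $u^{(1)}_t$---rewrites the right-hand side, up to a small error, as $c(\delta)\,a^{(2)}_\delta\,u^{(2)}_W.\psi(x)$ for a suitable $W$. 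Applying the Main Lemma (Lemma~\ref{lem:main}) simultaneously to the pair $(x,\widehat{u}^{(1)}_s.x)$ and to the pair $(a^{(1)}_\delta.u^{(1)}_{T^*}.x,\widehat{u}^{(1)}_{se^{-\delta}}.a^{(1)}_\delta.u^{(1)}_{T^*}.x)$---both being Kakutani-Bowen close at scale $R\sim 1/|s|$ with parameter $\delta'$ that we can drive to $0$ as $|s|\to 0$---and then lifting to $\mathbf{G}_2$ (choosing a lift $\widetilde{\psi(x)}$ with trivial stabilizer, so the lattice correction $\gamma_s$ is uniquely pinned down), we obtain an identity
\[
c(\delta)\,a^{(2)}_\delta \;=\; \eta_s\,u^{(2)}_{\alpha_s}\,\epsilon_s\,u^{(2)}_{-\beta_s}\,\gamma_s^{-1}
\]
in $\mathbf{G}_2$, where $\eta_s,\epsilon_s \in \Kak(R,\delta')$ and $\gamma_s$ lies in a fixed conjugate of $\Gamma_2$.

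Now conjugate both sides by $a^{(2)}_{-p}$ with $p = \tfrac12\log R$. On the left, $a^{(2)}_{-p} c(\delta) a^{(2)}_p \to z(\delta)$ as $p\to +\infty$ (the $\mathbf{V}^+$-part is contracted), so the left-hand side tends to $z(\delta)\,a^{(2)}_\delta$. On the right, Lemma~\ref{lem:diangonalConjugateKak} shows that $a^{(2)}_{-p}\,\eta_s\,a^{(2)}_p$ and $a^{(2)}_{-p}\,\epsilon_s\,a^{(2)}_p$ have \emph{all} their chain-basis components---and in particular their $\mathfrak{z}_2$-components---bounded by $O(\delta')\to 0$; the unipotent factors conjugate to $u^{(2)}_{\alpha_s/R}$ and $u^{(2)}_{-\beta_s/R}$ which remain bounded (since $\alpha_s,\beta_s=O(R)$); and $\gamma_s$ stabilizes along a subsequence by discreteness of $\Gamma_2$ together with boundedness of the other factors. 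Projecting the limiting identity onto the $\mathfrak{z}_2$-coordinate via the decomposition \eqref{eq:gDecom}--\eqref{eq:gDecom1}: the $u^{(2)}$-factors and $a^{(2)}_\delta$ lie in $\mathbf{H}_2$ and contribute trivially; the contributions from (the renormalized) $\eta_s,\epsilon_s$ vanish; and $\gamma_\infty^{-1}$ can be arranged to have trivial $\mathfrak{z}_2$-contribution by choosing $x$ generic. Hence the $\mathfrak{z}_2$-component of $z(\delta)\,a^{(2)}_\delta$, which equals $\log z(\delta)$, is zero, so $z(\delta)=e$.

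The main technical obstacle is carrying out the coupled limit $s\to 0$, $R\sim 1/|s|\to\infty$, $p = \tfrac12\log R\to\infty$ coherently: we must verify that the Main Lemma parameter $\delta'$ goes to $0$ fast enough so that, after diagonal renormalization, \emph{both} the Lusin-type error $\eta_s$ and the Kakutani-Bowen error $\epsilon_s$ have vanishing $\mathfrak{z}_2$-component, \emph{and} the lattice element $\gamma_s$ stabilizes. The stabilization of $\gamma_s$ uses discreteness of $\Gamma_2$ together with the fact that, after conjugation by $a^{(2)}_{-p}$, the rest of the right-hand side is bounded in $\mathbf{G}_2$ uniformly in $s$. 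Finally, ensuring the simultaneous membership of $x,\widehat{u}^{(1)}_s.x,u^{(1)}_{T^*}.x,a^{(1)}_\delta.u^{(1)}_{T^*}.x$ in the Lusin- and Main-Lemma-good compact sets is arranged by intersecting the relevant full-measure sets and invoking the Pointwise Ergodic Theorem along both $u^{(1)}_t$ and $a^{(1)}_\delta$.
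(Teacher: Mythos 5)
Your geometric setup is the same as the paper's: perturb $x$ by the opposite horocycle $\oua_s$, flow for time $T^*\approx(1-e^{-\delta})/s$ so that the far ends of the two orbit segments differ by $a^{(1)}_{\delta}$ times a small $\oua$-factor, push both pairs through $\psi$ using Proposition~\ref{prop:Geo} and the Main Lemma, lift to $\mathbf{G}_2$, renormalize by $a^{(2)}_{-p}$ with $p\approx\tfrac12\log R$, and compare $\mathfrak{z}_2$-components. The gap is in the step your conclusion actually rests on: you claim the pairs are Kakutani--Bowen matched at scale $R\sim 1/\absolute{s}$ with a parameter $\delta'$ that can be driven to $0$ as $s\to 0$, and hence (via Lemma~\ref{lem:diangonalConjugateKak}) that \emph{all} chain-basis components of $\eta_s,\epsilon_s$, in particular their $\mathfrak{z}_2$-components, are $O(\delta')\to 0$. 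This is false. At scale $R\approx(1-e^{-\delta})/\absolute{s}$ the connecting element $\oua_s$ has $\vartheta_{\bou}=s\approx(1-e^{-\delta})/R$, so membership in $\Kak(R,\delta')$ forces $\delta'\gtrsim\delta$; this is unavoidable, since by construction the matched segments develop a relative $a_{\delta}$-displacement over the window $[0,R]$ (that displacement is the whole point, and its image is what produces $c(\delta)a^{(2)}_{\delta}$). Indeed, if $\delta'\to 0$ were possible, your identity $c(\delta)a^{(2)}_{\delta}=\eta_s u^{(2)}_{\alpha_s}\epsilon_s u^{(2)}_{-\beta_s}\gamma_s^{-1}$ would exhibit the fixed diagonal element $a^{(2)}_{\delta}$ as a limit of products whose $\ba_2$-components tend to $0$, which is absurd. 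With the correct, fixed parameter you only obtain $\absolute{c_j(\delta)}=O(\delta)$, which is no better than Proposition~\ref{prop:Geo} itself.

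The missing ingredient --- and the way the paper closes exactly this gap --- is a bound on the $\mathfrak{z}_2$-components that is \emph{independent} of the Kakutani--Bowen parameter: one first fixes a target $\beta_k\to0$ and then chooses the perturbation size $\eta_k$ so small that, by uniform continuity (Lusin) of $\psi$ on the good compact set, $d_{\mathbf{G}_2/\Gamma_2}(\psi(x),\psi(\oua_{\eta_k}.x))<\beta_k$, and similarly for the second pair. Hence the group elements $g_1,g_2$ joining the image points have all coordinates, in particular the centralizer coordinates $\vartheta_{0,j}$, bounded by $\beta_k$, while the Main Lemma supplies only the $\Kak(L_k,\cdot)$ structure needed so that conjugation by $a^{(2)}_{-\frac12\log L_k}$ contracts the transverse components like $L_k^{-1/2}$; the $\mathfrak{z}_2$-components are untouched by this conjugation and are controlled solely by $\beta_k$. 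Comparing $\mathfrak{z}_2$-components then yields $\absolute{c_j(\delta)}\le C\beta_k+CL_k^{-1/2}\to0$ for $j\in I_c$. (You do mention a ``Lusin-type error,'' but you still bound it by $O(\delta')$, which is insufficient.) Secondarily, your treatment of the lattice element --- $\gamma_s$ ``stabilizes along a subsequence'' and its contribution ``can be arranged trivial for generic $x$'' --- is too loose: the paper shows $\gamma=e$ outright by arranging that $\psi(x)$ returns to a fixed compact set under $a^{(2)}_{-\frac12\log(\cdot)}$, using the injectivity radius together with the comparison $s_1\approx s_2$ of the two $u^{(2)}$-times; but this is a repairable issue, unlike the parameter claim above.
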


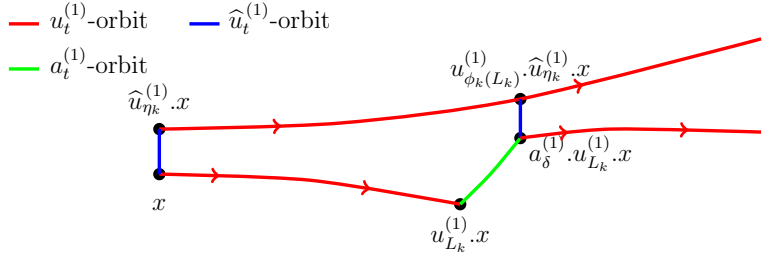
\begin{figure}[H]
    \scalebox{0.8}
  {
  \begin{tikzpicture}[scale=5]
  \tikzstyle{vertex}=[circle,minimum size=20pt,inner sep=0pt]
 	 \tikzstyle{selected vertex} = [vertex, fill=red!24]
 	 \begin{scope}[thick,decoration={
    markings,
    mark=at position 0.2 with {\arrow{>}},
    mark=at position 0.7 with {\arrow{>}}}
] 

      \draw[ultra thick,red,-] plot[smooth] coordinates{(-0.5,0.75)(-0.4,0.75)};
      \node[vertex] (v10) at (-0.2,0.77) {$u^{(1)}_t$-orbit};

      \draw[ultra thick,blue,-] plot[smooth] coordinates{(0.1,0.75)(0.2,0.75)};
      \node[vertex] (v20) at (0.4,0.77) {$\oua_t$-orbit};
      
      \draw[ultra thick,green,-] plot[smooth] coordinates{(-0.5,0.6)(-0.4,0.6)};
      \node[vertex] (v30) at (-0.2,0.62) {$a^{(1)}_t$-orbit};
      
      \node[vertex] (v00) at (0,0.15) {$x$};
      \draw[fill=black] (0,0.25) circle (0.5pt);
      
      \node[vertex] (v01) at (0,0.5) {$\oua_{\eta_k}.x$};
      \draw[fill=black] (0,0.4) circle (0.5pt);
      
      \node[vertex] (v02) at (1,0.05) {$u_{L_k}^{(1)}.x$};
      \draw[fill=black] (1,0.15) circle (0.5pt);
      
      \node[vertex] (v03) at (1.2,0.6) {$u_{\phi_k(L_k)}^{(1)}.\oua_{\eta_k}.x$};
      \draw[fill=black] (1.2,0.5) circle (0.5pt);

      \draw[ultra thick,postaction={decorate},red,-] plot[smooth] coordinates{(0,0.25)(0.5,0.23)(1,0.15)};

      \draw[ultra thick,postaction={decorate},red,-] plot[smooth] coordinates{(0,0.4)(0.6,0.42)(1.2,0.5)(2,0.7)};

      \draw[ultra thick,blue,-] plot[smooth] coordinates{(0,0.25)(0,0.4)};

      \node[vertex] (v05) at (1.4,0.32) {$a_{\delta}^{(1)}.u_{L_k}^{(1)}.x$};
      \draw[fill=black] (1.2,0.37) circle (0.5pt);
      \draw[ultra thick,green,-] plot[smooth] coordinates{(1,0.15)(1.1,0.25)(1.2,0.37)};
      \draw[ultra thick,blue,-] plot[smooth] coordinates{(1.2,0.37)(1.2,0.5)};

      \draw[ultra thick,postaction={decorate},red,-] plot[smooth] coordinates{(1.2,0.37)(1.5,0.4)(2,0.39)};
   \end{scope}    
  \end{tikzpicture}
  }
    \caption{Sketch of proof of Proposition~\ref{prop:GeoTri}}
    \label{fig:proofIdeas}
\end{figure}
Roughly speaking, the proof of Proposition~\ref{prop:GeoTri} can be explained through Figure~\ref{fig:proofIdeas}. Recall that $\oua_t=\exp(t\bou_1)$, cf.~\S\ref{sec:sl2basis}.
\begin{enumerate}[label=\textup{(\roman*)}] 
    
    \item Let $\eta_k\to 0$  be a given sequence. Using this sequence, we construct a sequence $L_k\to\infty$ and a sequence of ``good'' matching functions $\phi_k : \R \to \R$. For suitable choice of such sequences, we can find for every $k$ a ``good'' point\footnote{$x$ implicitly depends on $k$.} $x$ such that 
    \smallskip
    \begin{itemize}
        \item the following points are also ``good''
    \[
    u_{L_k}^{(1)}.x,\quad a_{\delta}^{(1)}u_{L_k}^{(1)}.x,\quad  \oua_{\eta_k}.x,\quad u_{\phi_k(L_k)}^{(1)}\oua_{\eta_k}.x,
    \]
    \item there exists $c>0$ such that
    \begin{equation}
    \label{eq: at L_k}
u_{\phi_k(L_k)}^{(1)}\oua_{\eta_k}.x=\oua_{c\eta_k}a_{\delta}^{(1)}u_{L_k}^{(1)}.x.
    \end{equation}
    \end{itemize}
    
    \item Since $\psi$ is uniformly continuous on  ``good'' points, we obtain
    \[
    d_{\mathbf{G}_2/\Gamma_2}(\psi(\oua_{\eta_k}.y),\psi(y))\leq\beta_k,
    \]
    for some sequence $\beta_k\to 0$ (depending in an ineffective way on~$\eta_k$)\footnote{In the proof we actually first chose $\beta_k$, then take $\eta_k$ to be compatible with that choice.}.

    \item\label{item:KakLarge2} Our choice of parameters imply that  $u_t^{(1)}.x$ and $u_{\phi_k(t)}^{(1)}.\oua_{\eta_k}.x$ stay $2\delta$ close for every $t\in[0,L_k]$, hence
    \[
    x\in\Kak(L_k,2\delta,\oua_{\eta_k}.x).
    \]
    Using the above property of Kakutani isomorphism, and as long as $\delta$ is small enough (depending on $\epsilon>0$) this implies that
    \begin{equation}
    \label{eq: long Kakutani}
\psi(x)\in\Kak\left(L_k,\epsilon,\psi(\oua_{\eta_k}.x)\right).
    \end{equation}
    
    \item\label{item:auxLemma2} In view of \eqref{eq: at L_k}, we can apply the above also to the two points $a_{\delta}^{(1)}.u_{L_k}^{(1)}.x$ and  $u_{\phi_k(L_k)}^{(1)}.\oua_{\eta_k}.x$
    \begin{gather*}
\psi(a_{\delta}^{(1)}u_{L_k}^{(1)}.x)\in\Kak\left(L_k,\epsilon,\psi(u_{\phi_k(L_k)}^{(1)}\oua_{\eta_k}.x)\right),\\
  d_{\mathbf{G}_2/\Gamma_2}\left(\psi(a_{\delta}^{(1)}u_{L_k}^{(1)}.x), \ \ \psi(u_{\phi_k(L_k)}^{(1)}\oua_{\eta_k}.x)\right)\leq \beta_k.
    \end{gather*}

    \item Equation \eqref{eq: long Kakutani} together with \ref{item:auxLemma2} imply that the difference in the $\mathbf{Z}_2=C_{\mathbf{G}_2}(\mathbf{H}_2)$ direction between $\psi(u^{(1)}_{L_k}.x)$ and $\psi(u_{\phi_k(L_k)}^{(1)}.\oua_{\eta_k}.x)$ is of size $O (\beta_k)$.
    Applying Proposition~\ref{prop:Geo} to  $\psi(a_{\delta}^{(1)}.u_{L_k}^{(1)}.x)$ and $\psi(u_{L_k}^{(1)}.x)$, and using also the estimate from \ref{item:auxLemma2}, shows that the $\mathbf{Z}_2$ component of $c(\delta)$ is of size bounded by $O(\beta_k)$.

\end{enumerate}

\medskip
\noindent
By letting $k\to\infty$, we complete the proof.

\medskip

We start with the choices of constants and sets (cf.~\S\ref{sec:nonTrivialConsSet} and \S\ref{sec:kxmatching}). The proof of Proposition~\ref{prop:GeoTri} will be presented in \S\ref{sec:proofGeoTri}. 

\subsubsection{Notation}\label{sec:nonTrivialConsSet}
Here is a table of notations from previous sections. 

    \begin{table}[H]
    \centering
    \begin{tabular}{c|c}
    \hline
      Notation   & Origin and definition \\
    \hline
      $\deold{0601dePGeo2}$, $\deold{0602dePGeo1}$, $\Rold{061Rwbound}$, $S(\cdot)$&  Proposition~\ref{prop:Geo}  \\
    \hline
    $\Kold{035Klm1}$, $\Rold{035Rlm1}$, $\deold{035delm1}$, $\eold{035eMainDel}(\cdot)$  & Lemma~\ref{lem:main} with $\eta=10^{-8}$\\
    \hline
    $\Kold{061KtempCpt1}$ & item \ref{item:KtempCpt1} on p.~\pageref{item:KtempCpt1}\\
    \hline
    \end{tabular}
    \caption{\label{table:SmallCenter}}
    \end{table}

    In addition we will also need the following:
    \begin{enumerate}[label=\textup{(n:\roman*)}] 
        \item Fixed positive constants $\delta$ and $\zeta$ with
        \[
        \delta\in(0,\deold{0602dePGeo1}) \quad \text{ and }\quad \zeta=1-\exp(-\delta).
        \]
        \item\label{n:beta-k} For every $k\in\N$, define $\beta_k$ by 
        \[
        \beta_k=\deold{0601dePGeo2}/(k\Cold{039infiniteMatch}).
        \]
        \item\label{n:n-k} Let $n_k$ be the smallest positive integer such that 
        \[
        2\sqrt{\delta/n_k}<\min(\eold{035eMainDel}(\beta_k),\delta).
        \]
        \item\label{item:sigmak} Let $\eta_k$, $\zeta_k$, $\sigma_k$, $L_k$ and $L'_k$ be defined as
        \begin{alignat*}{2}
           \eta_k=\delta/n_k,\qquad  &\sigma_k=\eta_k(1-\zeta), \qquad  &&L_k=\zeta/\eta_k.
        \end{alignat*}
        \item\label{item:WM1} Let $w(x,\delta)$ be defined as in Proposition~\ref{prop:Geo} with above fixed $\delta$. Given $M>0$, let $W(M)\subset\mathbf{G}_1/\Gamma_1$ be defined as
        \[
        W(M)=\{x\in \Zold{060Kpn1}:\absolute{w(x,\delta)}<M\}.
        \]
        Since $w(x,\delta)$ is measurable, there exists $M_1>0$ such that 
        \[
        m_1(W(M_1))>0.9.
        \]
        \item \label{eq:LkW2} For $k\in\N$, let $K_1,S_k\subset \mathbf{G}_1/\Gamma_1$ and $K_2\subset \mathbf{G}_2/\Gamma_2$ be defined as
\begin{gather*}
K_1=\Kold{035Klm1}\cap W(M_1),\qquad 
K_2=a_{\frac{1}{2}\log(L_k/\sqrt{\deold{0601dePGeo2}})}^{(2)}\Kold{061KtempCpt1},\\
 S_k=K_1\cap \oua_{-\eta_k}(K_1)\cap u_{-L_k}^{(1)}.a_{-\delta}^{(1)}\left(K_1\cap \oua_{-\sigma_k}(K_1) \right)\cap\psi^{-1}(K_2).
\end{gather*}

 \item  Let $N_1$ be a positive constant defined as 
        \[
        N_1=10^{1000}M_1^2\Rold{035Rlm1}^2.
        \]
\end{enumerate}

\medskip
    
Here are some simple observations from our choices of notation:
\begin{enumerate}[label=\textup{(o:\roman*)}] 
    \item The choice of $n_k$ guarantees that: 
    \begin{equation*}
        \text{if $x,y\in \Kold{017KmP1}$ and $d_{\mathbf{G}_1/\Gamma_1}(x,y)<2\sqrt{\eta_k}$, then $
d_{\mathbf{G}_2/\Gamma_2}(\psi(x),\psi(y))<\beta_k$.}
    \end{equation*}
    \item The choices of $\delta$ and $\zeta$ guarantee that 
    \[
    0<\zeta\leq\delta\leq\frac{\eold{014EMa}}{2},
    \]
    where $\eold{014EMa}$ is a constant from Lemma~\ref{lem:matchingFunction}.

\end{enumerate}

\subsubsection{Selection of $k$, $x$ and matching function}\label{sec:kxmatching}
We fix $k$ and $x$ as following.
\begin{itemize}
    \item First fix  $k\geq N_1$.
    \item It follows from the choice of $N_1$ and \ref{eq:LkW2} that 
$    m_1(S_k)>0$.
Fix $x$ such that
\begin{equation}\label{eq:choiceX}
    x\in S_k.
\end{equation} 
\end{itemize}

\medskip

 For every $\eta\in(0,\eta_k]$, let~$\phi_{\eta}$ be defined as 
\begin{equation}\label{eq:phiEta}
\phi_{\eta}(t)=\frac{t}{1-t\eta}. 
\end{equation}
The function $\phi_\eta(t)$ is the best matching function (in the sense explicated in Lemma~\ref{lem:matchingFunction}) between $x$ and $\oua_{\eta}.x$. Then
Lemma~\ref{lem:matchingFunction} gives that for every $\absolute{t}\in[0,\eold{014EMa}\eta^{-1}]$
\begin{equation}\label{eq:pvm1}
    \begin{aligned}
    u_{\phi_{\eta}(t)}^{(1)}\oua_{\eta}u_{-t}^{(1)}&=\oua_{s_t}a_{p_t}^{(1)},
    \end{aligned}
\end{equation}
where 
\begin{equation}\label{eq:pvm2}
    \begin{aligned}
    &s_t=\eta(1-t\eta), \qquad p_t=-\log(1-t\eta).
    \end{aligned}
\end{equation}
Moreover, for every $\epsilon\in(0,\eold{014EMa}]$, Lemma~\ref{lem:matchingFunction} gives that 
\begin{equation}\label{eq:pvm3}
\begin{aligned}
    &\absolute{\phi'_{\eta}(t)-1}<\sqrt{\epsilon}\text{ for } \absolute{t}\leq\epsilon\eta^{-1}.
    \end{aligned}
\end{equation}

\subsubsection{Proof of Proposition~\ref{prop:GeoTri}}\label{sec:proofGeoTri}
Our choice of $x$ in particular guarantees that $x,\oua_{\eta_k}.x \in \Kold{035Klm1}$. Applying Lemma~\ref{lem:main} for $x$ and $\oua_{\eta_k}.x$ with $R=L_k$ there is $g_1\in\mathbf{G}_2$ satisfying 
\[
\psi(\oua_{\eta_k}.x)=g_1.\psi(x)
\]
so that
\[
    g_1 \in \Kak(L_k,\deold{0601dePGeo2}).
\]
Moreover, since $\psi$ is uniformly continuous on $\Kold{035Klm1}$, by choice of $\eta_k$,
\begin{equation}\label{eq:centsmall2}
\begin{gathered}
\absolute{\vartheta_{0,j}(g_1)}<\beta_k\text{ for }1\leq j\leq n^{(2)}
\end{gathered}
\end{equation}
(other components of $g_1$ are also smaller than $\beta_k$, but we need this only for $\vartheta_{0,j}(g_1)$).

Apply \eqref{eq:pvm1} and \eqref{eq:pvm2} with $t=L_k$ and $\eta=\eta_k$,
\[
\oua_{\sigma_k}a_{\delta}^{(1)}u_{L_k}^{(1)}.x= u_{\phi_{\eta_k}(L_k)}^{(1)}\oua_{\eta_k}.x.
\]
Here $\sigma_k$ is defined in item~\ref{item:sigmak} on p.~\pageref{item:sigmak}. Apply Lemma~\ref{lem:main} for  $a_{\delta}^{(1)}u_{L_k}^{(1)}.x$, $u_{\phi_{\eta_k}(L_k)}^{(1)}\oua_{\eta_k}.x$ and $R=L_k$, there exists $g_2\in\mathbf{G}_2$ satisfying 
\[
\psi(u_{\phi_{\eta_k}(L_k)}^{(1)}\oua_{\eta_k}.x)=g_2.\psi(a_{\delta}^{(1)}u_{L_k}^{(1)}.x)
\]
so that
\begin{equation}\label{eq:centsmall4}
\begin{gathered}
g_2\in\Kak(L_k,\deold{0601dePGeo2}), \\ \absolute{\vartheta_{0,j}(g_2)}<\beta_k\text{ for }1\leq j\leq n^{(2)}.
\end{gathered}
\end{equation}

\medskip

Since $\psi(\mathbf{U}_1.x)=\mathbf{U}_2.\psi(x)$ (cf.~\eqref{eq:controlKakCon}), Proposition~\ref{prop:Geo} gives
\begin{equation*}
    \begin{aligned}
    \psi(a_{\delta}^{(1)}u_{L_k}^{(1)}.x)&=\psi(u_{e^{2\delta}L_k}^{(1)}a_{\delta}^{(1)}.x)=u^{(2)}_{\tau(a_{\delta}^{(1)}.x,e^{2\delta}L_k)}.\psi(a_{\delta}^{(1)}.x)\\
    &=u^{(2)}_{\tau(a_{\delta}^{(1)}.x,e^{2\delta}L_k)}c(\delta)a_{\delta}^{(2)}u_{w(x,\delta)}^{(2)}.\psi(x),\\
    \psi(u_{\phi_{\eta_k}(L_k)}^{(1)}\oua_{\eta_k}.x)&=u^{(2)}_{\tau(\oua_{\eta_k}.x,\phi_{\eta_k}(L_k))}.\psi(\oua_{\eta_k}.x).
    \end{aligned}
\end{equation*}
It follows from the above equations and the definitions of $g_1,g_2$ that for some $\gamma_3\in\Gamma_2$
\begin{equation}\label{eq:geoTriFin}
    \begin{aligned}
u_{s_1}^{(2)}g_1.&\widetilde{\psi(x)}=g_2u_{s_2}^{(2)}c(\delta)a_{\delta}^{(2)}.\widetilde{\psi(x)}\gamma_3,
    \end{aligned}
\end{equation}
where $\widetilde{\psi(x)}$ is a lift of $\psi(x)$ to $\mathbf G_2$, and $s_1$, $s_2$ are defined as
\begin{gather*}
    s_1=\tau(\oua_{\eta_k}.x,\phi_1(L_k)),\quad
    s_2=\tau(a_{\delta}^{(1)}.x,e^{2\delta}L_k)+e^{2\delta}w(x,\delta).
\end{gather*}
Combining \eqref{eq:controlKakCon}, the definition of $W(M_1)$ (cf.~\ref{item:WM1} on p.~\pageref{item:WM1}) and the choice of $\delta$ and $N_1$, we have that
\begin{multline}\label{eq:timeControlPsi}
\absolute{\tau(a_{\delta}^{(1)}.x,e^{2\delta}L_k)+e^{2\delta}w(x,\delta)}<(1+10^{-98})\absolute{\tau(\oua_{\eta_k}.x,\phi_{\eta_k}(L_k))}.
\end{multline}
Apply $a_{-\frac{1}{2}\log(L_k/\sqrt{\deold{0601dePGeo2}})}^{(2)}$ from left to the both sides of \eqref{eq:geoTriFin}, then since $x$ is in the ``good'' set $S_k$, and using $g_1,g_2\in\Kak(L_k,\deold{0601dePGeo2})$, Lemma~\ref{lem:diangonalConjugateKak} and~\eqref{eq:timeControlPsi} gives $\gamma_3=e$, which together with \eqref{eq:geoTriFin} guarantee
\begin{equation}\label{eq:trivalLa}
u_{-s_2}^{(2)}g_2^{-1}u_{s_1}^{(2)}g_1a_{-\delta}^{(2)}=c(\delta).
\end{equation}

\medskip

Recall that $c(\delta)$ can be uniquely written as
\begin{equation*}
   c(\delta)=\prod_{j=1}^{n^{(2)}}\exp(c_j(\delta)\bx_2^{0,j}).
\end{equation*}
Conjugate both sides of \eqref{eq:trivalLa} by $a_{-\frac{1}{2}\log(L_k/\sqrt{\deold{0601dePGeo2}})}^{(2)}$, then by \eqref{eq:centsmall2},  \eqref{eq:centsmall4}, the fact that $g_1,g_2\in\Kak(L_k,\deold{0601dePGeo2})$ and Lemma~\ref{lem:diangonalConjugateKak}
\begin{equation}\label{eq:tempCtrivail}
    u_{-\bar{s}_2}^{(2)}\bar{g}_2^{-1}u_{\bar{s}_1}^{(2)}\bar{g}_1a_{-\delta}^{(2)}=\prod_{j=1}^{n^{(2)}}\exp(L_k^{-q_j/2}c_j(\delta)\bx_2^{0,j}),
\end{equation}
where $\absolute{\bar{s}_1},\absolute{\bar{s}_2}\leq 2$,
\begin{equation}\label{eq:many bounds on g1g2}
\begin{gathered}
   \bar{g}_1^{\mathfrak{z}}\in B_{\Cold{038Clh1}\beta_k\dim\mathfrak{g}_2}^{\mathbf{G}_2}, \quad \bar{g}_1^{\mathfrak{h}}\in B_{\sqrt{\deold{0601dePGeo2}}\dim\mathfrak{g}_2}^{\mathbf{G}_2},\quad 
\bar{g}_1^{\tr}\in B_{\sqrt{\deold{0601dePGeo2}}\dim\mathfrak{g}_2/\sqrt{L_k}}^{\mathbf{G}_2},\\
\bar{g}_2^{\mathfrak{z}}\in B_{\Cold{038Clh1}\beta_k\dim\mathfrak{g}_2}^{\mathbf{G}_2}, \quad \bar{g}_2^{\mathfrak{h}}\in B_{\sqrt{\deold{0601dePGeo2}}\dim\mathfrak{g}_2}^{\mathbf{G}_2},\quad 
\bar{g}_2^{\tr}\in B_{\sqrt{\deold{0601dePGeo2}}\dim\mathfrak{g}_2/\sqrt{L_k}}^{\mathbf{G}_2},
\end{gathered}
\end{equation}
and moreover by \eqref{eq:timeControlPsi}, \ $\absolute{\bar{s}_1-\bar{s}_2}\leq 10^{-98}\sqrt{\deold{0601dePGeo2}}$.
It follows from \eqref{eq:tempCtrivail} and~\eqref{eq:many bounds on g1g2} that there are $g_3\in \mathbf{H}_2$ and $g_4\in\mathbf{G}_2$ so that
\[
u_{-\bar{s}_2+\bar{s}_1}^{(2)}g_3(\bar{g}_2^{\mathfrak{z}})^{-1}\bar{g}_1^{\mathfrak{z}}g_4=\prod_{j=1}^{n^{(2)}}\exp(L_k^{-q_j/2}c_j(\delta)\bx_2^{0,j})
\]
with
\begin{gather*}
    \norm{g_3-\id}\leq100\sqrt{\deold{0601dePGeo2}}(\dim\mathfrak{g}_2)^2,\quad
    \norm{g_4-\id}\leq100\sqrt{\deold{0601dePGeo2}}(\dim\mathfrak{g}_2)^2/\sqrt{L_k}.
\end{gather*}
Then it follows from Lemma~\ref{lem:groupDecom},
\[
\absolute{c_j(\delta)}<C\beta_k+\frac{C}{\sqrt{L_k}} \text{ for every }j\in I_c,
\]
where $I_c=\{1\leq j\leq n^{(2)}:q_j=0\}$ and $C>0$ is a constant that depends only on $\mathbf{G}_2$. 

Since $k$ is arbitrary, $\beta_k\to0$ and $L_k\to+\infty$ as $k\to+\infty$, we obtain that $c_j(\delta)=0$ for every $j\in I_c$, finishing the proof of Proposition~\ref{prop:GeoTri}.\hfill$\Box$

\subsection{Proof of Proposition~\ref{prop:CompatibleGeoMain}}\label{sec:CompatibleGeoMain}
Using Proposition~\ref{prop:Geo} and Proposition~\ref{prop:GeoTri} it is now easy to complete the proof of Proposition~\ref{prop:CompatibleGeoMain}.

Combining Proposition~\ref{prop:Geo} and Proposition~\ref{prop:GeoTri} we see that for $\delta$ small enough there is a a full measure subset  $\Zold{060Kpn1}$ and 
 \[c= c(\delta)\in \mathbf V^+
 \]
 so that
\[
\psi(a^{(1)}_{\delta}.x)\in  c a_{\delta}^{(2)}\mathbf{U}_2.\psi(x). 
\]
Consider the map 
\[
\mathcal T: v \to a_{-\delta}^{(2)}vca_{\delta}^{(2)}.
\]
Since the metric on $\mathbf{V^+}$ is right invariant, this is a contraction on $\mathbf{V^+}$, and given $c$ one can find a bounded open set $\mathcal U \subset \mathbf V^{+}$ so that $\mathcal T$ maps $ \mathcal U$  to a subset of itself (indeed, take any neighbourhood of identity $\tilde{\mathcal U} \subset \mathbf V^{+}$ that is mapped under conjugation by $a_{-\delta}^{(2)}$ to a subset of itself, and take $\mathcal U$ to be $\tilde{\mathcal U}$ conjugated by $a_t^{(2)}$ for $t$ large).

It follows that $\mathcal T$ has a fixed point in $ \overline{\mathcal U}$, say $\bar v$. Then
\[
\bar{v} c=a_{\delta}^{(2)}\bar{v}a_{-\delta}^{(2)}
\]
and hence if $\tilde\psi(x) = \bar v \psi(x)$, 
\begin{align*}
\tilde\psi(a^{(1)}_{\delta}.x) &= \bar v \psi(a^{(1)}_{\delta}.x) \in \mathbf{U}_2 \bar v c a^{(2)}_{\delta}. \psi(x)\\
&\qquad= \mathbf{U}_2 a_{\delta}^{(2)}\bar v.\psi(x) =\mathbf{U}_2 a_{\delta}^{(2)}.\tilde{\psi}(x). 
\end{align*}
It follows that for any $n \in \N$
\[
\tilde\psi(a^{(1)}_{n\delta}.x)\in a_{n\delta}^{(2)}\mathbf{U}_2.\tilde\psi(x).
\]
Since we may as well take $\delta$ so that $\delta^{-1}$ is an integer this completes the proof of Proposition~\ref{prop:CompatibleGeoMain}.

\begin{Remark}\label{rmk:CPsiKak} In view of Proposition~\ref{prop:CompatibleGeoMain}, we may as well (as we will do in the remainder of this paper) assume $\psi$ itself satisfies 
\begin{equation*}
\psi(a^{(1)}_{n\delta}.x)\in a_{n\delta}^{(2)}\mathbf{U}_2.\psi(x).
\end{equation*}
\end{Remark}

\section{Renormalization}\label{sec:renormalization}
In this section, we study the limit behavior of $\psi_{n}(x)=a_{-n}^{(2)}\psi(a_{n}^{(1)}.x)$ as $n\to+\infty$. We show that for $m_1$-a.e. $x\in \mathbf{G}_1/\Gamma_1$, there exists a subsequence $\{n_i(x)\}_{i\in\N}$ such that $\phi(x)=\lim_{i\to\infty}\psi_{n_i(x)}(x)$ exists. Moreover, we show that $\phi:\mathbf{G}_1/\Gamma_1\to \mathbf{G}_2/\Gamma_2$ is an invertible measurable isomorphism between the $u^{(1)}_t$-flow  on $\mathbf{G}_1/\Gamma_1$ and the $u^{(2)}_t$-flow  on $\mathbf{G}_2/\Gamma_2$. Such isomorphisms are well understood by \cites{ratner1982rigidity, witte1985rigidity,witte1989rigidity,ratner1990measure}, and in particular are induced by algebraic isomorphisms between the corresponding groups.  

This strategy of renormalization was used by Ratner in \cite{ratner1986rigidity} when she studied the time change rigidity for horocycle flows \emph{with a H\"older regularity assumption}. Without this regularity assumption establishing convergence of the $\psi_n$ is more delicate. To overcome this we will employ an $L^2$-ergodic theorem from Appendix \ref{sec:sl2ergodic} to find ``good'' points to control the behaviour of $\psi(a^{(1)}_n.x)$ (or at least $\psi(a^{(1)}_n.x')$ for $x'$ close to $x$) for all $n$. The main part of the proof is carried out in~\S\ref{sec:pfCau}. We begin that subsection by given an overview of the argument.

\medskip

Recall that $\tau(x,s)$ is the time change function defined by 
$\psi(u_t^{(1)}.x)=u_{\tau(x,t)}^{(2)}.\psi(x)$, cf.~\S\ref{sec:timeChagneGoodSet}. The map $\psi_n(x)$ is also an even Kakutani equivalence between the corresponding unipotent flows on $\mathbf{G}_1/\Gamma_1$ and $\mathbf{G}_2/\Gamma_2$ and it has a corresponding time change function. Explicitly, let
\[
\tau_n(x,t)=e^{-2n}\tau(a_{n}^{(1)}.x,e^{2n}t),
\]
then for $m_1$-a.e. $x\in\mathbf{G}_1/\Gamma_1$ and for all $t\in\R$
\begin{equation}\label{eq:psinTimeChange}
    \psi_n(u_t^{(1)}.x)=u_{\tau_{n}(x,t)}^{(2)}.\psi_{n}(x).
\end{equation}

Let $\{n_i\}_{i\in\N}$ be a sequence of natural numbers. For any $n\in\N$, we define
\[
d_n(\{n_i\}_{i\in\N})=\operatorname{Card}(\{n_i\}_{i\in\N}\cap[1,n]).
\]

\begin{Definition}[Full density]
Given a sequence of integer valued functions $\{n_i(x)\}_{i\in\N}$, with $n_i(x)$ strictly increasing in $i$ for every $x$, we say it is of \textbf{full density} if for $m_1$-a.e. $x\in\mathbf{G}_1/\Gamma_1$
\[
\lim_{k\to+\infty}\frac{1}{k}d_{k}(\{n_i(x)\}_{i\in\N})=1.
\]
\end{Definition}

The following is the main theorem of this section:
\begin{Theorem}\label{thm:reno}
For $m_1$-a.e. $x\in \mathbf{G}_1/\Gamma_1$, there exists a subsequence $\{n_i(x)\}_{i\in\N}$ of natural numbers with full density such that 
\[
\phi(x)=\lim_{i\to+\infty} \psi_{n_i(x)}(x)=\lim_{i\to+\infty}a_{-n_i(x)}^{(2)}\psi(a_{n_i(x)}^{(1)}.x)
\]
exists and the following hold
\begin{enumerate}[label=(\alph*)]
    \item\label{item:measurable} $\phi:\mathbf{G}_1/\Gamma_1\to \mathbf{G}_2/\Gamma_2$ is a measurable map,
    \item\label{item:stayUorbit} $\phi(x)\in \mathbf{U}_2.\psi(x)$,
    \item\label{item:uCommute} $\phi(u_t^{(1)}.x)=u_t^{(2)}.\phi(x)$ for $m_1$-a.e. $x\in \mathbf{G}_1/\Gamma_1$ and every $t\in\mathbb{Z}$.
\end{enumerate}
\end{Theorem}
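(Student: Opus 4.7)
The plan is to reduce the theorem to a Cauchy-in-measure statement for a sequence of real-valued functions and then extract a full-density subsequence by a Koopman--von Neumann argument. By Proposition~\ref{prop:CompatibleGeoMain} and Remark~\ref{rmk:CPsiKak} we may assume $\psi(a_1^{(1)}.x)\in a_1^{(2)}\mathbf{U}_2.\psi(x)$ for $m_1$-a.e.\ $x$, so a measurable function $t(x)\in\mathbb{R}$ is defined by
\[
\psi(a_1^{(1)}.x)=a_1^{(2)}u_{t(x)}^{(2)}.\psi(x).
\]
Iterating (as in the overview of the argument) gives $\psi_n(x)=u_{T_n(x)}^{(2)}.\psi(x)$ with
\[
T_n(x)=\sum_{j=0}^{n-1}e^{-2j}\,t(a_j^{(1)}.x).
\]
The theorem therefore reduces to showing that for $m_1$-a.e.\ $x$, the real sequence $T_n(x)$ converges along a density-one subsequence to some finite $L(x)$. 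I will show that $\{T_n\}$ is Cauchy in measure on $(\mathbf{G}_1/\Gamma_1,m_1)$; then $T_n\to L$ in measure for some measurable $L$, and Koopman--von Neumann produces the density-one a.e.\ subsequence.

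The central technical step is the Cauchy-in-measure estimate. Given $\eta>0$, I would choose $M$ large so that $K_M=\{|t|\le M\}$ has $m_1(K_M^c)<\eta$, and decompose
\[
T_n(x)-T_m(x)=\sum_{j=m}^{n-1}e^{-2j}t(a_j^{(1)}.x)\chi_{K_M}(a_j^{(1)}.x)+\sum_{j=m}^{n-1}e^{-2j}t(a_j^{(1)}.x)\chi_{K_M^c}(a_j^{(1)}.x).
\]
The first sum is bounded pointwise by $Me^{-2m}/(1-e^{-2})$, uniformly in $x$ and $n>m$. For the tail contribution from $K_M^c$ -- which is the decisive part -- Corollary~\ref{cor:sl2ergodic} is the key input: applied with $f=\chi_{K_M^c}\in L^2$, it asserts that for $m_1$-a.e.\ $x$ the proportion of $h\in B_\epsilon^{\mathbf{H}_1,\norm{\cdot}}$ with $a_n^{(1)}h.x\in K_M^c$ tends to $m_1(K_M^c)<\eta$. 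Thus, although $a_n^{(1)}.x$ itself may well lie in $K_M^c$, in a small $\mathbf{H}_1$-ball around it most points land in $K_M$. I then intend to combine this spatial information with the Main Lemma (Lemma~\ref{lem:main}), which controls how $\psi$ transforms Kakutani--Bowen neighbourhoods, and with the uniform continuity of $\psi$ on a large compact set (Lemma~\ref{lem:matchingPrese}), in order to transfer the bound $|t(a_n^{(1)}h.x)|\le M$ at the neighbouring good points back to estimates for $T_n(x)$ after integrating out the $h$-average. Exchanging the $h$-spatial average with the $j$-sum via Fubini/Fatou should deliver $m_1(\{|T_n-T_m|>\eta\})\to 0$ as $m\to\infty$ uniformly in $n>m$.

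From $T_n\to L$ in measure, Koopman--von Neumann yields, for $m_1$-a.e.\ $x$, a density-one subsequence $\{n_i(x)\}$ with $T_{n_i(x)}(x)\to L(x)$. Setting $\phi(x)=u_{L(x)}^{(2)}.\psi(x)$ produces a measurable map -- establishing \ref{item:measurable} -- with $\phi(x)\in\mathbf{U}_2.\psi(x)$, establishing \ref{item:stayUorbit}. For the intertwining \ref{item:uCommute}, the identity \eqref{eq:psinTimeChange} combined with the change of variable $p=e^{2n}q$ in \eqref{eq:controlKakCon} gives
\[
\psi_n(u_t^{(1)}.x)=u_{\tau_n(x,t)}^{(2)}.\psi_n(x),\qquad \tau_n(x,t)=\int_0^t\alpha(a_n^{(1)}u_q^{(1)}.x)\,dq.
\]
Since $\alpha\in L^\infty\subset L^2$ and $\int\alpha\,dm_1=1$ (even Kakutani equivalence), a variant of Corollary~\ref{cor:sl2ergodic} -- or the spectral-gap argument in Appendix~\ref{sec:sl2ergodic} applied directly to the one-parameter family $u_q^{(1)}$, $q\in[0,t]$ -- gives $\tau_n(x,t)\to t$ for $m_1$-a.e.\ $x$ and each integer $t$. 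Passing to the limit along the intersection of the density-one subsequences for $x$ and $u_t^{(1)}.x$ (itself density one) then yields $\phi(u_t^{(1)}.x)=u_t^{(2)}.\phi(x)$.

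The main obstacle is exactly the Cauchy-in-measure step: Corollary~\ref{cor:sl2ergodic} supplies only \emph{spatial} control via averages of $\chi_{K_M^c}\circ a_n^{(1)}$ over $\mathbf{H}_1$-balls, whereas what is needed is control of the specific partial sum $T_n(x)$ at the single point $x$. Bridging this gap -- turning ``most neighbouring fibre points are good'' into ``$\sum_{j\ge m}e^{-2j}t(a_j^{(1)}.x)$ is small in measure'' -- requires combining the SL$_2$-ergodic theorem with the Kakutani--Bowen ball transfer provided by the Main Lemma, and exploiting the $10^{-99}$-well-behaved structure of $\psi$ (Lemma~\ref{lem:goodTimeChange}) so that shifts at $h.x$ and at $x$ are comparable at the scale $B_\epsilon^{\mathbf{H}_1,\norm{\cdot}}$.
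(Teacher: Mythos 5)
Your reduction to the convergence of $T_n(x)=\sum_{j<n}e^{-2j}t(a_j^{(1)}.x)$ via Proposition~\ref{prop:CompatibleGeoMain} is the same starting point as the paper, and your use of Corollary~\ref{cor:sl2ergodic} to find good points $a_n^{(1)}h.x$ in a small $\mathbf{H}_1$-ball is also the right ingredient. But the decisive step --- your Cauchy-in-measure claim --- is not established, and the route you sketch would fail. First, the tail over $K_M^c$ cannot be handled by a Fubini/Fatou argument: $t(\cdot)$ is only measurable, with no integrability, so $\int_{K_M^c}\absolute{t}\,dm_1$ may be infinite; the set of $x$ whose $a^{(1)}$-orbit visits $K_M^c$ at some time $j\in[m,n]$ has measure of order $(n-m)\,m_1(K_M^c)$ (and a.e.\ orbit visits $K_M^c$ infinitely often), and on that set a single term $e^{-2j}t(a_j^{(1)}.x)$ can be arbitrarily large, e.g.\ larger than $e^{2j}$. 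Second, the transfer you propose from the good neighbours back to $x$ does not control what you need: applying Lemma~\ref{lem:main} (via Lemma~\ref{lem:seqeunceXin}) to $a_n^{(1)}h.x$ versus $a_n^{(1)}.x$ only places one point in a Kakutani--Bowen ball of the other \emph{up to a $\mathbf{U}_2$-shift of size up to $\epsilon e^{2n}$}; uniform continuity of $\psi$ therefore bounds the $\mathbf{H}_2$, $\mathbf{Z}_2$ and transverse components of the displacement between $\psi_{n_i}(x)$ and $\psi_{n_{i+1}}(x)$, but gives no bound at all on its $\mathbf{U}_2$-component, which is exactly the quantity $t_i$ you must sum.

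This is precisely where the paper's proof of Proposition~\ref{prop:cauchySeq} does its real work, and your proposal has no substitute for it. The paper first proves only the ``mild'' control of Lemma~\ref{lem:mostControlWeak} (the non-$\mathbf{U}_2$ part of $\psi_{n_{i+1}}(x)=h_{(i)}c_{(i)}tr_{(i)}\psi_{n_i}(x)$ is of size $O(n\delta)$, with exponentially small transverse part), and then bounds the $\mathbf{U}_2$-shift $t_i$ by contradiction in Lemma~\ref{lem:convergenceAlongU0Pre}: if $\absolute{t_i}>e^{-n_i}$, one changes the observation base point to $a_{-k}^{(1)}.x$ in a good set so that the shift $t_ie^{-2k}$ appears at an intermediate scale, and the two incompatible descriptions of the same relative position are ruled out using the Chevalley representation estimates (Corollary~\ref{cor:globalRepEst}) together with the normal-core lemma (Lemma~\ref{lem:HNint}); this yields $\absolute{t_i}\le e^{-n_i}$ along the good return times, whence absolute convergence, with no Cauchy-in-measure argument anywhere. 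Without this (or some equivalent mechanism), your tail estimate collapses, and as a secondary point, even granting $T_n\to L$ in measure, convergence in measure alone does not produce a full-density subsequence along which $T_{n}(x)\to L(x)$ a.e.\ (one would need a.e.\ Ces\`aro convergence or a summability/maximal-inequality input, which the paper sidesteps by its exponential bound and by gluing well-behaved triplets in Lemmas~\ref{lem:sequenceGluing} and~\ref{lem:invarianceUni}). Your treatment of item (c) via a spectral-gap variant for $u$-segments is also unnecessary: the paper gets $\tau_{n}(x,t)\to t$ simply by Egorov plus restriction to return times where $\tau(y,s)/s\to1$ uniformly.
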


In the next subsection \S\ref{sec:thmreno} we state Lemma~\ref{lem:fullDensitySeq}, which is essentially identical to Theorem~\ref{thm:reno} but without explicitly stating \ref{item:measurable} above. As we explain in that subsection, the measurability of $\phi$ follows from a standard argument. 

\subsection{Measurability of $\phi$}\label{sec:thmreno}

The main result in the remainder of \S\ref{sec:renormalization} is the following lemma:

\begin{Lemma}\label{lem:fullDensitySeq}
There exist a full measure Borel set  $\Znew\label{073KsCauF}\subset \mathbf{G}_1/\Gamma_1$\index{$\Zold{073KsCauF}$, Lemma~\ref{lem:fullDensitySeq}} and a collection of measurable integer values functions $\{n_i(x)\}_{i\in\N}$ on $\Zold{073KsCauF}$ such that
\begin{enumerate}[label=\textup{(\alph*)}] 
    \item\label{item:ptlimit} for every $x\in\Zold{073KsCauF}$, $\phi(x)=\lim_{i\to+\infty}\psi_{n_i(x)}(x)$ exists and lies on the $\mathbf{U}_2$-orbit of $\psi(x)$,
    \item\label{item:interU} for every $t\in\Z$ and every  $x\in \Zold{073KsCauF}$, $\phi(u_t^{(1)}.x)=u_t^{(2)}.\phi(x)$,
    \item\label{item:ptupperlimit} $\lim_{k\to+\infty}\frac{1}{k}d_{k}(\{n_i(x)\}_{i\in\N})=1$ for every $x\in \Zold{073KsCauF}$.
\end{enumerate}
\end{Lemma}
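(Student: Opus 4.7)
The plan is to exploit Proposition~\ref{prop:CompatibleGeoMain}: after replacing $\psi$ by $c\psi$ as in that proposition, we may assume $\psi(a_1^{(1)}.x)\in a_1^{(2)}\mathbf{U}_2.\psi(x)$ for $m_1$-a.e.~$x$. Iterating yields a measurable function $t_\bullet:\mathbf{G}_1/\Gamma_1\to\R$ with
\[
\psi(a_n^{(1)}.x)=a_n^{(2)}u_{t_{n,x}}^{(2)}.\psi(x),\qquad t_{n,x}=\sum_{j=0}^{n-1}e^{-2j}\,t_{a_j^{(1)}.x},
\]
so that $\psi_n(x)=a_{-n}^{(2)}.\psi(a_n^{(1)}.x)=u_{t_{n,x}}^{(2)}.\psi(x)$ lies on the $\mathbf{U}_2$-orbit of $\psi(x)$. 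This reduces the existence of $\phi(x)$ and property~\ref{item:ptlimit} to producing a measurable full-density subsequence $\{n_i(x)\}_{i\in\N}$ along which the real sequence $t_{n_i(x),x}$ converges in~$\R$.

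\medskip

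The main difficulty, and the delicate core of the argument, is the convergence of $t_{n_i(x),x}$: while the weights $e^{-2j}$ decay rapidly, $t_\bullet$ is only measurable, not bounded, so a positive-density collection of \textbf{bad} indices $j$ with $|t_{a_j^{(1)}.x}|$ very large may in principle dominate the sum and obstruct Cauchy behaviour even along sparse subsequences. To overcome this, I would exhaust $\mathbf{G}_1/\Gamma_1$ by compact good sets $K_k\nearrow\mathbf{G}_1/\Gamma_1$ with $m_1(K_k)\geq 1-2^{-k}$ and $|t_\bullet|\leq M_k$ on $K_k$, and argue as follows. By the Pointwise Ergodic Theorem for the ergodic map $a_1^{(1)}$ (\S\ref{subsec:assupmtions}), for $m_1$-a.e.~$x$ the return set $J_k(x)=\{j\in\N:a_j^{(1)}.x\in K_k\}$ has lower density $\geq 1-2^{-k}$. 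The new $\SL_2(\R)$-ergodic theorem (Corollary~\ref{cor:sl2ergodic}) applied to $\chi_{K_k}$ then guarantees that for $m_1$-a.e.~$x$ and every sufficiently large $j$, most $h\in B^{\mathbf{H}_1}_{\epsilon}$ satisfy $a_j^{(1)}h.x\in K_k$, so that $|t_{a_j^{(1)}h.x}|\leq M_k$; comparing $\psi(a_j^{(1)}.x)$ to $\psi(a_j^{(1)}h.x)$ via the Main Lemma (Lemma~\ref{lem:main}), which transports Kakutani-Bowen proximity in $\mathbf{G}_1/\Gamma_1$ to Kakutani-Bowen proximity in $\mathbf{G}_2/\Gamma_2$, and translating the resulting $\mathbf{U}_2$-proximity into a bound on the discrepancy between $t_{a_j^{(1)}.x}$ and $t_{a_j^{(1)}h.x}$, lets us estimate the \emph{bad} contribution $e^{-2j}t_{a_j^{(1)}.x}$ by $M_k$ up to an error decaying with~$j$. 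A Borel--Cantelli/Chebyshev computation over the resulting measurable events, followed by a diagonal argument over $k$, then produces a single measurable full-density subsequence $\{n_i(x)\}_{i\in\N}$ along which $t_{n_i(x),x}$ is Cauchy in~$\R$; this bookkeeping step is where essentially all of the work of the proof lies.

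\medskip

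Once $\phi(x):=\lim_{i\to\infty}u_{t_{n_i(x),x}}^{(2)}.\psi(x)$ has been constructed, the remaining claims are essentially soft. The second half of \ref{item:ptlimit} and property \ref{item:ptupperlimit} are built into the construction. For \ref{item:interU}, recall from \eqref{eq:psinTimeChange} that each $\psi_n$ is an even Kakutani equivalence satisfying
\[
\psi_n(u_t^{(1)}.x)=u_{\tau_n(x,t)}^{(2)}.\psi_n(x),\qquad \tau_n(x,t)=e^{-2n}\tau(a_n^{(1)}.x,e^{2n}t).
\]
Since $\int\alpha\,dm_1=1$ by evenness (cf.~\S\ref{sec:timeChagneGoodSet}), the Pointwise Ergodic Theorem applied to $\alpha$ gives $\tau_n(x,t)\to t$ as $n\to\infty$ for every $t\in\Z$ and $m_1$-a.e.~$x$; evaluating along $n_i(x)$ and passing to the limit yields $\phi(u_t^{(1)}.x)=u_t^{(2)}.\phi(x)$ on an a.e.-set, and intersecting with the countably many $u_t^{(1)}$-translates ($t\in\Z$) of the a.e.-set produced in the Cauchy construction delivers the single Borel set $\Zold{073KsCauF}$. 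Measurability of $\{n_i(x)\}_{i\in\N}$, and hence of $\phi$, is automatic because the selection procedure depends only on countably many measurable conditions on the orbit $(a_j^{(1)}.x)_{j\geq 0}$ and on the $\mathbf{H}_1$-averages appearing in Corollary~\ref{cor:sl2ergodic}.
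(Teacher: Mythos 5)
Your reduction to the convergence of $t_{n,x}=\sum_{j=0}^{n-1}e^{-2j}t_{a_j^{(1)}.x}$, and the soft parts at the end (the diagonalization over the exhaustion $K_k$, the use of $\tau_n(x,t)\to t$ for the intertwining, saturating by countably many $u^{(1)}$-translates, and measurability via countably many measurable conditions), all match the paper's Lemmas~\ref{lem:sequenceGluing} and~\ref{lem:invarianceUni} and the argument in \S\ref{sec:fullProof}. The genuine gap is in the central step, where you claim that comparing $\psi(a_j^{(1)}.x)$ with $\psi(a_j^{(1)}h.x)$ via the Main Lemma yields ``$\mathbf{U}_2$-proximity'' and hence a bound on $\absolute{t_{a_j^{(1)}.x}-t_{a_j^{(1)}h.x}}$ up to an error decaying in $j$. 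The Main Lemma (and its consequence Lemma~\ref{lem:seqeunceXin}, which is what applies here) controls exactly the \emph{complementary} directions: it places $\psi(a_j^{(1)}h.x)$ in $\Kak\bigl(e^{2j},\delta,u_{s}^{(2)}.\psi(a_j^{(1)}.x)\bigr)$ with a flow shift $\absolute{s}$ as large as $\epsilon e^{2j}$, because $a_j^{(1)}h.x$ and $a_j^{(1)}.x$ are themselves separated by a $\mathbf{U}_1$-displacement of order $\epsilon e^{2j}$ and the time change is only measurable. After dividing by $e^{2j}$ this is a constant-size (order $\epsilon$) ambiguity in the $\mathbf{U}_2$-coordinate per bad step, not a decaying one; since there are infinitely many bad $j$, the resulting bound $e^{-2j}\absolute{t_{a_j^{(1)}.x}}\leq e^{-2j}M_k+O(\epsilon)$ is not summable and no Borel--Cantelli/Chebyshev bookkeeping can make $t_{n_i(x),x}$ Cauchy from it, even along a full-density subsequence. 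Even the optimistic refinement using evenness ($\tau(x,t)/t\to1$) only improves the per-step error to $o(1)$, which still does not sum.

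What the paper does at this point is essentially different and is where the real work lies (Proposition~\ref{prop:cauchySeq}, \S\ref{sec:pfCau}). The auxiliary points furnished by Corollary~\ref{cor:sl2ergodic} are used only to control the $\mathbf{H}_2\cdot\mathbf{Z}_2$ and transverse components of the discrepancy between successive renormalizations (Lemma~\ref{lem:mostControlWeak}: $\psi_{n_{i+1}}(x)=h_{(i)}c_{(i)}tr_{(i)}.\psi_{n_i}(x)$ with $h_{(i)},c_{(i)}$ of size $O(n_i\delta)$ and $tr_{(i)}$ exponentially small), while Proposition~\ref{prop:CompatibleGeoMain} says the same discrepancy is a pure shift $u_{t_i}^{(2)}$. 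The needed summable bound $\absolute{t_i}\leq e^{-n_i}$ (Lemma~\ref{lem:convergenceAlongU0Pre}) is then forced by contradiction: one changes the observation base point by $a_{-k}^{(1)}$ so that a putatively large $t_i$ becomes of intermediate size $e^{\eta n_i}$, and reconciling the two descriptions modulo $\Gamma_2$ produces a lattice element that must be abnormally close to the normal core of the normalizer of $\mathfrak h_2$; this is excluded using the Chevalley representation, the exceptional-set estimate Corollary~\ref{cor:globalRepEst}, and Lemma~\ref{lem:HNint}. Your proposal contains no substitute for this algebraic input, so the convergence step does not close as written.
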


In this section we explain how it implies Theorem~\ref{thm:reno}. The only issue that needs to be explained is the measurability of $\phi$, i.e. \ref{item:measurable} of that theorem.

\medskip

\begin{proof}[Proof of Theorem~\ref{thm:reno} assuming Lemma~\ref{lem:fullDensitySeq}]

Let $\Zold{073KsCauF}$, $\phi$ and $\{n_i(x)\}_{i\in\N}$ be defined as in Lemma~\ref{lem:fullDensitySeq}. The \ref{item:stayUorbit} and \ref{item:uCommute} of Theorem~\ref{thm:reno} follows directly from \ref{item:ptlimit} and \ref{item:interU} of Lemma~\ref{lem:fullDensitySeq}.

Let $\mathcal{M}(\mathbf{G}_2/\Gamma_2)$ be the space of Borel probability measures on $\mathbf{G}_2/\Gamma_2$ and $\delta_x$ be the Dirac measure on point $x\in\mathbf{G}_2/\Gamma_2$. We then define $\zeta_N:\mathbf{G}_1/\Gamma_1\to\mathcal{M}(\mathbf{G}_2/\Gamma_2)$ as
\begin{equation*}
    \zeta_N(x)=\frac{1}{N}\sum_{i=0}^{N-1}\delta_{\psi_{n_i(x)}(x)}.
\end{equation*}

Recall by \ref{item:ptlimit} of Lemma~\ref{lem:fullDensitySeq}, for every $x\in \Zold{073KsCauF}$,
\begin{gather*}
    \phi(x)=\lim_{i\to+\infty}\psi_{n_i(x)}(x),\qquad 
    \lim_{k\to+\infty}\frac{1}{k}d_{k}(\{n_i(x)\}_{i\in\N})=1.
\end{gather*}
Thus we obtain that for any $x \in \Zold{073KsCauF}$, the sequence of probability measures $\zeta_{k}(x)$ on $\mathbf{G}_2/\Gamma_2$ converges to $\delta_{\phi(x)}$ in the weak$^{*}$ topology. 
It follows that \[
x\mapsto\delta_{\phi(x)}
\]
is Borel measurable on the full measure Borel set $\Zold{073KsCauF}$, hence so is \[
x\mapsto\phi(x).
\]
This proves \ref{item:measurable} of Theorem~\ref{thm:reno}.
\end{proof}

\subsection{Proof of Lemma~\ref{lem:fullDensitySeq}}\label{sec:pfFullDensitySeq}
The heart of the Lemma~\ref{lem:fullDensitySeq} is Proposition~\ref{prop:cauchySeq}, which establishes the convergence of $\psi_{n_i(x)}(x)$ on a large measure subset of $\mathbf{G}_1/\Gamma_1$ for a large density sequence $\{n_i(x)\}_{i\in\N}$. The proof of Proposition~\ref{prop:cauchySeq} is provided in \S\ref{sec:pfCau}. Once we have Proposition~\ref{prop:cauchySeq}, we will be able to improve Proposition~\ref{prop:cauchySeq} to a full measure subset of $\mathbf{G}_1/\Gamma_1$ with a full density subsequence, and thus complete the proof of Lemma~\ref{lem:fullDensitySeq}.

The structure of this subsection is following: In \S\ref{sec:fullPrep}, we define the notion of a well behaved triplet (Definition~\ref{def:wellBehaved}), state Proposition~\ref{prop:cauchySeq}, and prove two preparatory lemmas, namely Lemma~\ref{lem:sequenceGluing} and Lemma~\ref{lem:invarianceUni}. In \S\ref{sec:fullProof}, we complete the proof of Lemma~\ref{lem:fullDensitySeq} assuming Proposition~\ref{prop:cauchySeq}.

\subsubsection{Preparation for the proof of Lemma~\ref{lem:fullDensitySeq}}\label{sec:fullPrep}

\begin{Definition}\label{def:wellBehaved}
Let $W\subset \mathbf{G}_1/\Gamma_1$ be a measurable subset, $\{n_i\}_{i\in\N}$ a sequence of measurable functions from $W$ to $\N$, and  $\epsilon\in[0,1)$. We say the triplet
\[
(W,\{n_i(x)\}_{i\in\N},\epsilon)
\]
is \textbf{well behaved} if the following hold:
\begin{enumerate}[label=(\alph*)]
    \item\label{wellBehaved:Measurability} For every $x \in W$ the sequence $n_i(x)$ is monotone increasing.
    \item\label{wellBehaved:uniformDensity} For every $x\in W$,
    \[
    \liminf_{k\to+\infty}\frac{1}{k}d_{k}(\{n_i(x)\}_{i\in\N})\geq 1-\epsilon.
    \]
    
    \item\label{wellBehaved:uniformLimit} The sequence $\{\psi_{n_i(x)}(x)\}_{i\in\N}$ converges, uniformly in $x\in W$. Moreover, for any $x \in W$ it holds that $\lim_{i\to\infty}\psi_{n_i(x)}(x)$ is on the $\mathbf{U}_2$-orbit of $\psi(x)$.
\end{enumerate}
\end{Definition}

With the notion of well behaved triplets, we are able to state Proposition~\ref{prop:cauchySeq}:

\begin{Proposition}\label{prop:cauchySeq}

For every $\epsilon\in(0,1)$, we can find a compact set $\Knew\label{075KsCau}=\Kold{075KsCau}(\epsilon)\subset \mathbf{G}_1/\Gamma_1$\index{$\Kold{075KsCau}$, Proposition~\ref{prop:cauchySeq}} with $m_1(\Kold{075KsCau})\geq1-\epsilon$ and a collection of measurable functions $n_i:\Kold{075KsCau}\to\N$ such that the triplet
\[
(\Kold{075KsCau},\{n_i(x)\}_{i\in\N},\epsilon)
\]
is well behaved.
\end{Proposition}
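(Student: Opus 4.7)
First, using Remark~\ref{rmk:CPsiKak} I would write, for $m_1$-a.e.\ $x\in \mathbf{G}_1/\Gamma_1$,
\[
\psi_n(x)=u^{(2)}_{t_{n,x}}.\psi(x), \qquad t_{n,x}=\sum_{j=0}^{n-1} e^{-2j}\,t_{a^{(1)}_j.x},
\]
where $t_\bullet$ is the measurable function defined by $\psi(a^{(1)}_1.x)=a^{(2)}_1 u^{(2)}_{t_x}.\psi(x)$ as in the overview. Since $u^{(2)}_t$ is ergodic on $\mathbf{G}_2/\Gamma_2$, for $m_2$-a.e.\ $\psi(x)$ the $\mathbf{U}_2$-orbit map is an injection of $\R$ into $\mathbf{G}_2/\Gamma_2$, so convergence of $\psi_{n_i(x)}(x)$ is equivalent to convergence of the real-valued partial sums $t_{n_i(x),x}$. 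The proposition thus reduces to producing, for each $\epsilon>0$, a compact $\Kold{075KsCau}$ of measure $\geq 1-\epsilon$ and measurable sequences $\{n_i(x)\}$ of uniform density $\geq 1-\epsilon$ along which $t_{n_i(x),x}$ converges uniformly on $\Kold{075KsCau}$.

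The natural candidate for $\{n_i(x)\}$ is constructed as follows. By Lusin I would select a compact $K_0\subset\mathbf{G}_1/\Gamma_1$ with $m_1(K_0)\geq 1-\epsilon/4$ on which $|t_\bullet|\leq M$ and on which $\psi$ is uniformly continuous, and take $n_i(x)$ to be the $i$-th $n\geq 0$ with $a^{(1)}_n.x\in K_0$. Since $a^{(1)}_1$ acts ergodically on $\mathbf{G}_1/\Gamma_1$, Birkhoff gives $d_k(\{n_i(x)\})/k\to m_1(K_0)\geq 1-\epsilon$ for $m_1$-a.e.\ $x$, and Egorov upgrades this to uniform convergence on a compact $\Kold{075KsCau}\subset K_0$ with $m_1(\Kold{075KsCau})\geq 1-\epsilon$, giving conditions (a) and (b). The contributions to $t_{n,x}$ from the ``good'' indices $j\in\{n_i(x)\}$ are bounded by $Me^{-2j}$ and are therefore uniformly absolutely summable on $\Kold{075KsCau}$.

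The main obstacle is controlling the contribution of the ``bad'' indices $j\notin \{n_i(x)\}$, for which $a^{(1)}_j.x\notin K_0$ and $t_{a^{(1)}_j.x}$ may be arbitrarily large. This is exactly where Corollary~\ref{cor:sl2ergodic} plays its essential role; it is the replacement, in the measurable Kakutani setting lacking any H\"older regularity, for the smoothness argument used by Ratner in \cite{ratner1986rigidity}. Applied with $f=\chi_{K_0}$ and any small $\eta>0$, it gives for $m_1$-a.e.\ $x$ that
\[
\frac{1}{m_{\mathbf{H}_1}(B_\eta^{\mathbf{H}_1,\norm{\cdot}})}\int_{B_\eta^{\mathbf{H}_1,\norm{\cdot}}}\chi_{K_0}(a^{(1)}_j h.x)\,dm_{\mathbf{H}_1}(h)\;\longrightarrow\;m_1(K_0)
\]
as $j\to\infty$, so even at a bad time $j$ there is a positive-density set of $h\in B_\eta^{\mathbf{H}_1,\norm{\cdot}}$ with $a^{(1)}_j h.x\in K_0$. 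The plan is then to couple $\psi_n(x)$ with $\psi_n(h.x)$ for such nearby $h$ --- where the latter is term-by-term controlled --- through the uniform continuity of $\psi$ on large sets together with the Main Lemma~\ref{lem:main}. More importantly, one exploits not merely the a.e.\ limit in Corollary~\ref{cor:sl2ergodic} but the $L^2$-summability of the deviations established in its proof: together with a Borel--Cantelli argument along the series of bad indices, this yields for $m_1$-a.e.\ $x$ that the aggregate bad contribution to $t_{n,x}$ converges as $n\to\infty$. The hard part is making this coupling between the $L^2$-averaged behaviour on $\mathbf{H}_1$-balls and the pointwise behaviour at $x$ fully rigorous; once pointwise convergence of $t_{n_i(x),x}$ is obtained, a final Egorov step on the rate gives the required uniform convergence on $\Kold{075KsCau}$, and the limit lies on the $\mathbf{U}_2$-orbit of $\psi(x)$ by construction, verifying (c).
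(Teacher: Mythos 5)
Your setup matches the paper's: the reduction to convergence of the partial sums $t_{n,x}$ along return times to a good set, the Birkhoff/Egorov step for the density condition, and the use of Corollary~\ref{cor:sl2ergodic} to find, at every time $j$, a small $h\in B^{\mathbf{H}_1}_{\eta}$ with $a^{(1)}_{j}h.x$ in the good set are all faithful to \S\ref{sec:pfCau}. But the heart of the proof is missing. The paper never bounds the individual bad increments $t_{a^{(1)}_j.x}$, and your proposed substitute --- coupling $\psi_n(x)$ to $\psi_n(h.x)$ by continuity plus ``$L^2$-summability of deviations and Borel--Cantelli along the bad indices'' --- does not produce such a bound: the summable quantities coming from Corollary~\ref{cor:sl2ergodic} are deviations of averages of $\chi_{K_0}$ over $\mathbf{H}_1$-balls, which only assert the \emph{existence} of good nearby points and carry no information about the size of the merely measurable, possibly unbounded cocycle $t_{\bullet}$ at bad times. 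Moreover, since $a^{(1)}_{j}ha^{(1)}_{-j}$ expands the $u^{(1)}$-component of $h$ by $e^{2j}$, comparing $x$ with $h.x$ at time $j$ only yields a relation up to a $u^{(2)}$-shift of size up to $\eta e^{2j}$ (this is exactly Lemma~\ref{lem:seqeunceXin}), so uniform continuity of $\psi$ on a Lusin set cannot by itself convert this into control of the $\mathbf{U}_2$-displacement.

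What the paper actually does at this point is structurally different: it controls only the displacement between \emph{consecutive good times}, not the full sum. First, chaining the auxiliary points $a^{(1)}_{n}h_{n-1}.x$ through Lemma~\ref{lem:seqeunceXin} and Lemma~\ref{lem:main} gives the weak multiplicative control of Lemma~\ref{lem:mostControlWeak}: $\psi_{n_{i+1}}(x)=h_{(i)}c_{(i)}tr_{(i)}.\psi_{n_i}(x)$ with the $\mathbf{H}_2$- and $\mathbf{Z}_2$-components only of size $O(n_i\delta)$ and the transverse part exponentially small --- nothing summable yet. The decisive step is Lemma~\ref{lem:convergenceAlongU0Pre}: writing $\psi_{n_{i+1}}(x)=u^{(2)}_{t_i}.\psi_{n_i}(x)$ via Proposition~\ref{prop:CompatibleGeoMain}, one assumes $\absolute{t_i}>e^{-n_i}$, changes the observation base point to $a^{(1)}_{-k}.x$ so that $t_ie^{-2k}$ acquires an intermediate size, and rules this out by playing the two descriptions of the same displacement against each other using the Chevalley representation, the Remez-type measure estimate and lattice point count of Corollary~\ref{cor:globalRepEst}, and the normal-core geometry of Lemma~\ref{lem:HNint}. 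This algebraic input, which forces $\absolute{t_i}\leq e^{-n_i}$ and hence absolute summability of the increments along the good subsequence, has no counterpart in your proposal; without it the argument does not close.
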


The proof of Proposition~\ref{prop:cauchySeq} is postponed to \S\ref{sec:pfCau}.

\begin{Lemma}\label{lem:sequenceGluing}
Suppose that the triplets $(W^{(k)},\{n_i^{(k)}(x)\}_{i\in\N},\delta_k)$ 
are well behaved for every $k\in\N$ with $\delta_k\to0$. Given $\epsilon>0$, there exists a compact set $\Knew\label{081KIntersection}\subset\cap_{k=1}^{\infty}W^{(k)}$\index{$\Kold{081KIntersection}$, Lemma~\ref{lem:sequenceGluing}} with $m_1(\Kold{081KIntersection})>m_1(\cap_{k=1}^{\infty}W^{(k)})-\epsilon$ and a collection of measurable functions $n_i:\Kold{081KIntersection}\to\N$ so that the triplet
\[
(\Kold{081KIntersection},\{n_i(x)\}_{i\in\N},0)
\]
is well behaved.
\end{Lemma}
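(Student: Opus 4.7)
I would glue the sequences by a diagonal construction: choose thresholds $N_1 < N_2 < \cdots$ growing at least geometrically, and let $\{n_i(x)\}_{i\in\N}$ be the increasing enumeration of
\[
\bigcup_{k\geq 1}\bigl(\{n_j^{(k)}(x)\}_{j\in\N}\cap[N_k,N_{k+1})\bigr).
\]
Each $N_k$ is chosen so that by time $N_k$ the $k$th sequence is within $2^{-k}$ of its limit and satisfies the density bound $1-2\delta_k$, both uniformly on a compact subset of $W:=\bigcap_k W^{(k)}$ of almost full measure. Since $\delta_k\to 0$, the combined sequence will then converge uniformly to a well-defined limit $\phi$ and have full density.

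\smallskip

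\noindent\textbf{Consistency of the limits.} For each $k$ with $\delta_k<1/2$, condition~\ref{wellBehaved:uniformLimit} gives a limit $\phi^{(k)}(x):=\lim_i\psi_{n_i^{(k)}(x)}(x)\in\mathbf U_2.\psi(x)$ on $W^{(k)}$. On $W$, whenever $\delta_k+\delta_{k'}<1$ condition~\ref{wellBehaved:uniformDensity} forces $\{n_i^{(k)}(x)\}\cap\{n_i^{(k')}(x)\}$ to be infinite for every $x\in W$ (their lower densities sum to $>1$), and along this common subsequence $\psi_n(x)$ converges to both $\phi^{(k)}(x)$ and $\phi^{(k')}(x)$; since $\delta_k\to 0$, this defines a single limit $\phi(x)$ on $W$ lying on $\mathbf U_2.\psi(x)$.

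\smallskip

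\noindent\textbf{Egorov reduction and construction.} Applying Egorov's theorem to uniformize both \ref{wellBehaved:uniformLimit} and \ref{wellBehaved:uniformDensity} (e.g., to the monotone sequence $\inf_{M\geq N}M^{-1}d_M(\{n_i^{(k)}(\cdot)\})$, whose pointwise limit is $\geq 1-\delta_k$), together with inner regularity of $m_1$, I would extract a compact $\Kold{081KIntersection}\subset W$ with $m_1(\Kold{081KIntersection})>m_1(W)-\epsilon$ on which, for every $k$ simultaneously, $\psi_{n_i^{(k)}(x)}(x)\to\phi(x)$ uniformly in $x$ and some constant $N_k^\ast$ satisfies $\tfrac{1}{N}d_N(\{n_i^{(k)}(x)\})\geq 1-2\delta_k$ for all $x\in\Kold{081KIntersection}$ and $N\geq N_k^\ast$; allocating $\epsilon\cdot 2^{-k}$ of the measure budget to the $k$th uniformization keeps the total loss $\leq\epsilon$. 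I would then inductively pick $N_{k+1}\geq\max(2N_k,N_{k+1}^\ast)$ large enough that $d_{\mathbf G_2/\Gamma_2}(\psi_n(x),\phi(x))<2^{-k-1}$ for every $n\in\{n_j^{(k+1)}(x)\}$ with $n\geq N_{k+1}$ and every $x\in\Kold{081KIntersection}$. Monotonicity and measurability of $\{n_i(x)\}$ are immediate, and the proximity condition gives $\psi_{n_i(x)}(x)\to\phi(x)$ uniformly on $\Kold{081KIntersection}$, so in particular $\phi(x)\in\mathbf U_2.\psi(x)$.

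\smallskip

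\noindent\textbf{Density and main obstacle.} A direct count shows that the waste in each block $[N_\ell,N_{\ell+1})$ is at most $2\delta_\ell N_{\ell+1}$: subtracting $d_{N_{\ell+1}-1}^{(\ell)}-d_{N_\ell-1}^{(\ell)}\geq(1-2\delta_\ell)N_{\ell+1}-N_\ell$ from the block length $N_{\ell+1}-N_\ell$ leaves at most $2\delta_\ell N_{\ell+1}$. Given $\eta>0$, pick $\ell_0$ with $\delta_\ell\leq\eta$ for $\ell\geq\ell_0$; for $N\in[N_k,N_{k+1})$ with $k\geq\ell_0$, the geometric growth $N_{\ell+1}\geq 2N_\ell$ gives
\[
N-d_N(\{n_i(x)\})\leq 2\!\!\sum_{\ell<\ell_0}\!\!N_{\ell+1}+2\eta\!\!\sum_{\ell_0\leq\ell<k}\!\!N_{\ell+1}+2\delta_k N\leq C_{\ell_0}+6\eta N,
\]
using $\sum_{\ell_0\leq\ell<k}N_{\ell+1}\leq 2N_k\leq 2N$. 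Letting $N\to\infty$ and then $\eta\to 0$ gives full density at every $x\in\Kold{081KIntersection}$, so $(\Kold{081KIntersection},\{n_i(x)\}_{i\in\N},0)$ is well behaved. The main delicacy is the simultaneous Egorov extraction across countably many $k$ on a single compact set of measure $\geq m_1(W)-\epsilon$; once the uniform constants $N_k^\ast$ are in hand (via the measure-budget allocation), the diagonal bookkeeping is routine thanks to $\delta_k\to 0$.
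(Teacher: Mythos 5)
Your proposal is correct and follows essentially the same route as the paper: exhaust by compact sets with measure budget $\epsilon 2^{-k}$ to get uniform density thresholds for each $k$, identify a single common limit $\phi$ via the observation that sequences of large lower density intersect infinitely often, and then concatenate the sequences over rapidly growing blocks $[N_k,N_{k+1})$ so that uniform convergence and full density follow. The only differences are cosmetic (the paper takes $\delta_k<1/4$ and $N_{k+1}\geq 10^kN_k$ and leaves the block-counting implicit, whereas you use $N_{k+1}\geq 2N_k$ with an explicit count, plus a redundant but harmless Egorov step for the already-uniform condition \ref{wellBehaved:uniformLimit}).
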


\begin{proof}[Proof of Lemma~\ref{lem:sequenceGluing}]
Without loss of generality, $\delta_k<1/4$ for all $k$.
Since for every $k\in\N$, \ $x\in W^{(k)}$ we have that $\frac{1}{\ell}d_{\ell}(\{n_i^{(k)}(x)\}_{i\in\N}) > 1-\delta_k$ for $\ell$ large enough there exists a compact set $K_k\subset W^{(k)}$ with 
\[
m_1(K_k)\geq m_1(W^{(k)})-\frac{\epsilon}{2^{k+2}}
\]
and $\ell_k$ such that $\frac{1}{\ell}d_{\ell}(\{n_i^{(k)}(x)\}_{i\in\N}) > 1-2\delta_k$ for all $x \in X_k$ and $\ell>\ell_k$.

Let
\[
\Kold{081KIntersection}=\cap_{k=1}^{\infty}K_k.
\]
It is clear that $\Kold{081KIntersection}\subset\cap_{k=1}^{\infty}W^{(k)}$ is a compact set whose measure is greater equal than $m_1(\cap_{k=1}^{\infty}W^{(k)})-\epsilon$.

\medskip

Since the triplet
\[
(W^{(k)},\{n_i^{(k)}(x)\}_{i\in\N},\delta_k)
\]
is well behaved for every $k\in\N$, there exists $\phi_k(x)\in \mathbf{G}_2/\Gamma_2$ such that for every $k\in\N$ and every $x\in\cap_{k=1}^{\infty}W^{(k)}$
\begin{equation*}
    \phi_k(x)=\lim_{i\to\infty}\psi_{n_i^{(k)}(x)}(x).
\end{equation*}
Since for every $x\in\cap_{k=1}^{\infty}W^{(k)}$ and every $k$ the sequence $n_i^{(k)}(x)$ has density $>1/2$, there is a (measurable) function $\phi:\cap_{k=1}^{\infty}W^{(k)} \to \mathbf{G}_2/\Gamma_2$ so that $\phi_k(x)=\phi(x)$ for all $k$.

This implies that there exists a sequence of positive integers $N_k$ from $k=1$ such that for every $x\in\Kold{081KIntersection}$
\begin{enumerate}[label=(\roman*)]
    \item\label{item:Nk1} if $n_i^{(j)}(x)\geq N_k$ for $j=1,\ldots,k$, then there exists 
    $\absolute{t_{i,j}}\leq 2^{-k}$
    so that 
    \[\psi_{n_i^{(j)}(x)}(x)=u_{t_{i,j}}^{(2)}.\phi(x).
    \]
    
    \item\label{item:Nk2} $N_{k+1}\geq  10^kN_{k}$.
    \item\label{item:Nk3} $d_{\ell}(\{n_i^{(k)}(x)\}_{i\in\N})\geq (1-2\delta_{k})\ell$ for every $\ell\geq N_{k}$.
\end{enumerate}

\medskip

Let $n_i(x)$ be the monotone increasing sequence obtained by concatenating $\{n_i^{(1)}(x)\}_{i\in\N}\cap [1,N_{2})$ and the finite sequences $\{n_i^{(k)}(x)\}_{i\in\N}\cap [N_k,N_{k+1})$ for $k=2,3,\ldots$. It follows from \ref{item:Nk2} and \ref{item:Nk3} that for every $x\in\Kold{081KIntersection}$,
\begin{equation*}
    \lim_{k\to\infty}\frac{1}{k}d_{k}(\{n_i(x)\}_{i\in\N})=1,
\end{equation*}
and from \ref{item:Nk1} that $\psi_{n_i(x)}(x) \to \phi(x)$ uniformly on $\Kold{081KIntersection}$.
\end{proof}

\medskip

If the triplet $(W,\{n_i(x)\}_{i\in\N},0)$ is well behaved, we are able to find a new triplet $(W',\{\tilde{n}_j(x)\}_{j\in\N},\epsilon)$ such that the limiting map $\phi=\lim\psi_{\tilde{n}_j(x)}$ intertwines the action of $u_t^{(1)}$ with $u_t^{(2)}$.

\begin{Lemma}\label{lem:invarianceUni}
Let $\epsilon\in(0,1)$ and $(\Knew\label{0821KextendFull},\{n_i(x)\}_{i\in\N},0)$\index{$\Kold{0821KextendFull}$, Lemma~\ref{lem:invarianceUni}} be a well behaved triplet. Then there exist a measurable subset $\Knew\label{0822KextendFullM}\subset\Kold{0821KextendFull}$\index{$\Kold{0822KextendFullM}$. Lemma~\ref{lem:invarianceUni}} with $m_1(\Kold{0822KextendFullM})\geq m_1(\Kold{0821KextendFull})-\epsilon$ and a collection of measurable integer valued functions $\{\tilde n_j(x)\}_{j\in\N}$ on $\Kold{0822KextendFullM}$ so that the following holds:
\begin{enumerate}[label=(\alph*)]
    \item\label{item:invUwellBehave} The triplet $(\Kold{0822KextendFullM},\{\tilde{n}_{j}(x)\}_{j\in\N},\epsilon)$ is well behaved.
    \item\label{item:invUextendSeq} For every $t\in\R$, we have that $\lim_{j\to\infty}\psi_{\tilde{n}_{j}(x)}(u_t^{(1)}.x)$ converges uniformly in $x\in\Kold{0822KextendFullM}$ to  $u_t^{(2)}.\lim_{j\to\infty}\psi_{\tilde{n}_{j}(x)}(x)$.
\end{enumerate}
\end{Lemma}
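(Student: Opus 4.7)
The plan is to reduce part \ref{item:invUextendSeq} of the lemma to uniform convergence of the rescaled time-change cocycle $\tau_{\tilde n_j(x)}(x,t) \to t$ for each fixed $t$, and then to establish this by combining two applications of the pointwise ergodic theorem with Egorov's theorem. The cocycle identity \eqref{eq:psinTimeChange} together with the already-given uniform convergence $\psi_{n_i(x)}(x) \to \phi(x)$ and the continuity of $u_t^{(2)}$ makes this reduction clear: if $\tau_{\tilde n_j(x)}(x,t) \to t$ uniformly in $x$, then $\psi_{\tilde n_j(x)}(u_t^{(1)}.x) = u_{\tau_{\tilde n_j(x)}(x,t)}^{(2)}.\psi_{\tilde n_j(x)}(x) \to u_t^{(2)}.\phi(x)$ uniformly.

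Writing $\tilde\alpha = \alpha - 1$ (a bounded function with $\int\tilde\alpha\,dm_1 = 0$, since $\psi$ is an even Kakutani equivalence), a direct substitution $r = e^{2n}s$ in $\tau_n(x,t) = e^{-2n}\tau(a_n^{(1)}.x, e^{2n}t)$ gives the rescaling identity
\[
\tau_n(x,t) - t = e^{-2n}\int_0^{e^{2n}t}\tilde\alpha(u_r^{(1)}.a_n^{(1)}.x)\,dr,
\]
so that $|\tau_n(x,t) - t| \leq |t|\cdot f(a_n^{(1)}.x,\ e^{2n}|t|)$, where $f(y,N) := \sup_{|T| \geq N}\bigl|T^{-1}\int_0^T \tilde\alpha(u_r^{(1)}.y)\,dr\bigr|$. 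The pointwise ergodic theorem applied to $\tilde\alpha$ under the ergodic $u_r^{(1)}$-flow yields $f(y,N) \to 0$ as $N \to \infty$ for $m_1$-a.e.~$y$; by Egorov's theorem, for each $\eta > 0$ there is a compact set $L_\eta \subset \mathbf{G}_1/\Gamma_1$ with $m_1(L_\eta) > 1 - \eta$ on which $\sup_{y \in L_\eta}f(y,N) \to 0$.

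Next, I would apply the pointwise ergodic theorem to $\chi_{L_\eta}$ under the ergodic $a_n^{(1)}$-action: for $m_1$-a.e.~$x$, the set $T(x) := \{n \in \mathbb{N} : a_n^{(1)}.x \in L_\eta\}$ has density $m_1(L_\eta) > 1-\eta$ in $\mathbb{N}$, and Egorov again provides a compact set $K_\eta$ of measure $> 1-\eta$ on which this density convergence is uniform in $x$. I then take $\tilde n_j(x)$ to be the increasing enumeration of $\{n_i(x)\} \cap T(x)$; since $\{n_i(x)\}$ has density $1$ by hypothesis, the set $\{\tilde n_j(x)\}$ inherits density $m_1(L_\eta) > 1-\eta$ in $\mathbb{N}$, uniformly on $\Kold{0821KextendFull} \cap K_\eta$. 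For $x$ in this intersection one has $a_{\tilde n_j(x)}^{(1)}.x \in L_\eta$ by construction, giving $|\tau_{\tilde n_j(x)}(x,t) - t| \leq |t|\sup_{y \in L_\eta}f(y,\ e^{2\tilde n_j(x)}|t|) \to 0$ uniformly in $x$ as $j \to \infty$. A final Egorov/Lusin extraction (with $\eta < \epsilon$) produces a compact $\Kold{0822KextendFullM} \subset \Kold{0821KextendFull} \cap K_\eta$ of measure at least $m_1(\Kold{0821KextendFull}) - \epsilon$; property \ref{item:invUwellBehave} is then verified by inheriting uniform convergence of $\psi_{\tilde n_j(x)}(x) \to \phi(x)$ from the original sequence, and noting that the limit still lies on the $\mathbf{U}_2$-orbit of $\psi(x)$.

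The main obstacle I anticipate is the simultaneity requirement: a single sequence $\tilde n_j(x)$ must yield uniform convergence $\tau_{\tilde n_j(x)}(x,t) \to t$ for every $t \in \mathbb{R}$ at once. This turns out not to be a genuine obstacle precisely because the Egorov set $L_\eta$ and the sequence $\tilde n_j(x)$ are constructed independently of $t$, and the controlling bound $|t|\sup_{y \in L_\eta}f(y, e^{2n}|t|)$ decays in $n$ for each fixed $t$ without any $t$-dependence in the Egorov extraction.
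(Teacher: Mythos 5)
Your proposal is correct and follows essentially the same route as the paper's proof: an Egorov set on which the time-change Birkhoff averages converge uniformly (your $\sup_{y\in L_\eta}f(y,N)\to 0$ is exactly the paper's ``$\tau(y,s)/s\to 1$ uniformly on $K_1$''), a second ergodic-theorem/Egorov step for the $a_1^{(1)}$-visit times to that set, intersection of the given full-density sequence with those visit times, and the cocycle identity \eqref{eq:psinTimeChange} to convert uniform convergence $\tau_{\tilde n_j(x)}(x,t)\to t$ into part \ref{item:invUextendSeq}. The only cosmetic differences are that you treat both signs of $t$ explicitly via $|T|\ge N$ and skip the paper's extra Egorov extraction for the density of $\{n_i(x)\}$, which indeed is not needed since condition (b) of Definition~\ref{def:wellBehaved} is pointwise (so your claim of \emph{uniform} density on the good set is an unnecessary, though harmless, overstatement).
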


\begin{proof}[Proof of Lemma~\ref{lem:invarianceUni}]

Let $K_1\subset \mathbf{G}_1/\Gamma_1$ be a compact subset with $m_1( K_1)>1-\epsilon/4$ so that $\lim_{s\to+\infty}\frac{1}{s}\tau(x,s)=1$ uniformly in  $x\in K_1$ where $\tau(x,s)$ is the time change function defined by 
\[
\psi(u_{t}^{(1)}.x)=u_{\tau(x,t)}^{(2)}.\psi(x);
\]
cf.~\S\ref{sec:timeChagneGoodSet}. Applying Egorov's Theorem and the Pointwise Ergodic Theorem for $\chi_{ K_1}$ and $a_1^{(1)}$, there exists $K_2\subset \Kold{0821KextendFull}\cap K_1$ with $m_1(K_2)>m_1(\Kold{0821KextendFull})-\epsilon/2$ such that 
\[
\frac{1}{n}\sum_{k=1}^{n}\chi_{ K_1}(a_k^{(1)}.x)\to m_1( K_1) \qquad\text{as $n \to\infty$,}
\]
uniformly for $x\in K_2$.

\medskip

The definition of well-behaved triplets implies that $\{\frac{1}{\ell}d_{\ell}(\{n_i(x)\}_{i\in\N})\}_{\ell\in\N}$ is a sequence of measurable functions from $\Kold{0821KextendFull}$ to $\N$ that converges to $1$ as $\ell\to\infty$. By Egorov's Theorem, there exists a compact set $\Kold{0822KextendFullM}\subset K_2$ with 
\[
m_1(\Kold{0822KextendFullM})\geq m_1(\Kold{0821KextendFull})-\epsilon
\]
such that 
\[
\text{$\frac{1}{\ell}d_{\ell}(\{n_i(x)\}_{i\in\N})$ converges uniformly for every $x\in\Kold{0822KextendFullM}$ as $\ell\to\infty$.}
\]

\medskip

Recall that $\tau_n(x,t)=e^{-2n}\tau(a_{n}^{(1)}.x,e^{2n}t)
$. Equation \eqref{eq:psinTimeChange} implies for every~$t\in\R$ and every $x\in \Kold{0822KextendFullM}$
\begin{equation}\label{eq:psinut}
\begin{aligned}
    \psi_{n_i(x)}(u_t^{(1)}.x)=u^{(2)}_{\tau_{n_i(x)}(x,t)}.\psi_{n_i(x)}(x).
\end{aligned}
\end{equation}

Let $J(x)$ be the set
\begin{equation}\label{eq:defJx}
    J(x)=\{i\in\N:a_{i}^{(1)}.x\in  K_1\}.
\end{equation}
The choice of $\Kold{0822KextendFullM}$ implies that for $N_1$ large,
\begin{equation}\label{eq:adergodic}
    \frac{1}{n}\operatorname{Card}(J(x)\cap[1,n])>1 - 0.75\epsilon \qquad\text{for all $x\in\Kold{0822KextendFullM}$.}
\end{equation}
Since $\{n_i(x)\}_{i\in\N}$ has full density, uniformly on $x \in \Kold{0822KextendFullM}$,
there exists $N_{2}>0$ such that if $n\geq N_{2}$
\begin{equation}\label{eq:uniNewSeq}
    \frac{1}{n}\operatorname{Card}(J(x)\cap\{n_i(x)\}_{i\in\N}\cap[1,n])\geq 1 - \epsilon.
\end{equation}

\medskip

For any $x\in \Kold{0822KextendFullM}$, we define
\begin{equation*}
  \{\tilde{n}_{j}(x)\}_{j\in\N}=J(x)\cap\{n_i(x)\}_{i\in\N}.  
\end{equation*}
It is clear that  $\tilde{n}_j:\Kold{0822KextendFullM}\to\N$ is measurable for every $j\in\N$.

\medskip

Notice that \eqref{eq:uniNewSeq} gives \ref{wellBehaved:uniformDensity} of Definition~\ref{def:wellBehaved}. Parts \ref{wellBehaved:Measurability} and \ref{wellBehaved:uniformLimit} of Definition~\ref{def:wellBehaved} are also immediate, hence the triplet
\[
(\Kold{0822KextendFullM},\{\tilde{n}_{j}(x)\}_{j\in\N},\epsilon)
\]
is well behaved. This proves \ref{item:invUwellBehave} of Lemma~\ref{lem:invarianceUni}.

\medskip

If $t=0$, then item \ref{item:invUextendSeq} of Lemma~\ref{lem:invarianceUni} is obvious and thus we suppose $t\neq0$. Fix a $t\neq0$. Since $a_{\tilde{n}_{j}(x)}^{(1)}.x\in  K_1$ (cf.~\eqref{eq:defJx}) for every $x\in \Kold{0822KextendFullM}$ and $\lim_{s\to+\infty}\frac{\tau(x,s)}{s}=1$ uniformly in $x\in K_1$, 
\begin{equation}\label{eq:convergenceSpeedUni}
 \absolute{ \tau_{\tilde{n}_j(x)}(x,t)-t}\to0\text{ uniformly in $x\in \Kold{0822KextendFullM}$ as }j\to+\infty.
\end{equation}
This together with \eqref{eq:psinut} proves \ref{item:invUextendSeq} of Lemma~\ref{lem:invarianceUni}.
\end{proof}

\subsubsection{Proof of Lemma~\ref{lem:fullDensitySeq} assuming Proposition~\ref{prop:cauchySeq}}\label{sec:fullProof}
The proof of Lemma~\ref{lem:fullDensitySeq} is a combination of Proposition~\ref{prop:cauchySeq}, Lemma~\ref{lem:sequenceGluing} and Lemma~\ref{lem:invarianceUni}.

\medskip

By Proposition~\ref{prop:cauchySeq}, for every $j,k\in\N$, there exist a family of compact sets $K_1^{(j,k)}\subset \mathbf{G}_1/\Gamma_1$ with $m_1(K_1^{(j,k)})\geq 1-10^{-100-k-j}$ and a collection of measurable functions $n_i^{(j,k)}:K_1^{(j,k)}\to\N$ so that the triplets
\begin{equation}\label{eq:doubleSeqWellbehaved}
(K_1^{(j,k)},\{n_i^{(j,k)}(x)\}_{i\in\N},10^{-100-k-j})
\end{equation}
are well behaved (cf.~Definition \ref{def:wellBehaved}).  

Applying Lemma~\ref{lem:sequenceGluing} for the triplets in \eqref{eq:doubleSeqWellbehaved} with fixed $j\in\N$, then there exist a compact set $K_2^{(j)}\subset\cap_{k=1}^{\infty}K_1^{(j,k)}$ with 
\[
m_1(K_2^{(j)})\geq m_1(\cap_{k=1}^{\infty}K_1^{(j,k)})-10^{-50-j}
\]
and a collection of measurable functions $n_i^{(j)}:K_2^{(j)}\to\N$ such that the triplets
\begin{equation*}
    (K_2^{(j)},\{n_i^{(j)}(x)\}_{i\in\N},0)
\end{equation*}
are well behaved (cf.~Definition \ref{def:wellBehaved}).

\medskip

 By Lemma~\ref{lem:invarianceUni}, for every $j\in\N$, there exist  $K_3^{(j)}\subset K_2^{(j)}$ with $m_1(K_3^{(j)})\geq m_1(K_2^{(j)})-10^{-100-j}$ and a collection of measurable integer values functions $\{\tilde{n}_{i}^{(j)}(x)\}$ on $K_3^{(j)}$ such that
 \begin{enumerate}[label=(\Alph*)]
     \item the triplet $(K_3^{(j)},\{\tilde{n}_{i}^{(j)}(x)\}_{i\in\N},10^{-100-j})$ is well behaved,
     
     \item\label{eq:transferUconver} given $t\in\R$, then 
 $\psi_{\tilde{n}_i^{(j)}(x)}(u_t^{(1)}.x)$ converges to  $u_t^{(2)}.\lim_{i\to\infty}\psi_{\tilde{n}_i^{(j)}(x)}(x)$ uniformly in $x\in K_3^{(j)}$ as $i\to\infty$.
 \end{enumerate}

\medskip

Combining Lemma~\ref{lem:sequenceGluing} and \ref{eq:transferUconver}, there exists a compact set $K_4\subset \cap_{j=1}^{\infty}K_3^{(j)}$ with $m_1(K_4)\geq 0.99$ guarantees that there exists a collection of measurable functions $n_i:K_4\to\N$ so that 
\begin{enumerate}[label=(\Roman*)]
    \item\label{item:tempGood1} for every $x\in K_4$,
    \[
    \lim_{k\to+\infty}\frac{1}{k}d_{k}(\{n_i(x)\}_{i\in\N})=1;
    \]
    \item\label{item:tempGood4} $\phi(x)=\lim_{i\to+\infty}\psi_{n_i(x)}(x)$ is well defined on $K_4$ and $\phi(x)$ stays on $\mathbf{U}_2$-orbit of $\psi(x)$;
    \item\label{item:tempGood3} for every $x\in K_4$ and every $t\in\R$, 
    \[\lim_{i\to\infty}\psi_{n_{i}(x)}(u_t^{(1)}.x)=u_t^{(2)}.\phi(x).\] 
\end{enumerate}

\medskip

Notice that the ergodicity of $(u_1^{(1)},m_1,\mathbf{G}_1/\Gamma_1)$ implies that 
\[
\Zold{073KsCauF}=\bigcup_{k=-\infty}^{\infty}u_k^{(1)}(K_4)
\]
is a full measure Borel subset of $\mathbf{G}_1/\Gamma_1$. For every $x\in \Zold{073KsCauF}$, let $t_x$ be defined as
\begin{equation*}
    t_x=\min\{k\in\Z_{\geq0}:x\in u_k^{(1)}(K_4)\}.
\end{equation*}
Here $t_x$ is finite for every $x\in \Zold{073KsCauF}$. We then extend $n_i(x)$ as following, 
\begin{gather*}
    n_{i}(x)=n_{i}(u_{-t_x}^{(1)}.x)\qquad\text{for every  $x\in\Zold{073KsCauF}$.}
\end{gather*}
By \ref{item:tempGood4} and \ref{item:tempGood3}, we obtain that for every $x\in\Zold{073KsCauF}$
\[
\phi(x):=\lim_{i\to+\infty}\psi_{n_i(x)}(x) \text{ exists},\quad \text{$\phi(x)$ stays on $\mathbf{U}_2$-orbit of $\psi(x)$.}
\]
Our extension also guarantees that \ref{item:tempGood1} holds for every $x\in\Zold{073KsCauF}$. These together give \ref{item:ptlimit} and \ref{item:ptupperlimit} of Lemma~\ref{lem:fullDensitySeq}.

\medskip

By \ref{item:tempGood1} and \ref{item:tempGood3} above imply that for every $x\in K_4$ and every $t\in\Z$
\[
\phi(u_t^{(1)}.x)=u_t^{(2)}.\phi(x).
\]
Recall that for every $x\in \Zold{073KsCauF}$, we have $x_0=u_{-t_x}^{(1)}.x\in K_4$. Thus by the way we extended $\phi$ to $\Zold{073KsCauF}$, for every $x\in \Zold{073KsCauF}$ and  $t\in\Z$
\[
\phi(u_t^{(1)}.x)=\phi(u_{t+t_x}^{(1)}.x_0)=u_{t+t_x}^{(2)}.\phi(x_0)=u_t^{(2)}.\phi(x).
\]
This proves \ref{item:interU} of Lemma~\ref{lem:fullDensitySeq}.\qed

\subsection{Proof of Proposition~\ref{prop:cauchySeq}}\label{sec:pfCau}
Recall the definition of well behaved triples given in Definition~\ref{def:wellBehaved}.
\begin{prop:cauchySeq}
For every $\epsilon\in(0,1)$, we can find a compact set $\Kold{075KsCau}\subset \mathbf{G}_1/\Gamma_1$ with $m_1(\Kold{075KsCau})\geq1-\epsilon$ and a collection of measurable functions $n_i:\Kold{075KsCau} \to \mathbb N$ such that the triplet
\[
(\Kold{075KsCau},\{n_i(x)\}_{i\in\N},\epsilon)
\]
is well behaved.
\end{prop:cauchySeq}

Loosely, the proof of Proposition~\ref{prop:cauchySeq} involves comparing for each $n$ the difference between $\psi_n(x)$ and $\psi_{n+1}(x)$. In contrast to the setting in \cite{ratner1986rigidity}, we do not have any apriori regularity for the time change function and thus do not have the control of $\mathbf{U}_2$-direction in Proposition~\ref{prop:CompatibleGeoMain}. 

In fact, Proposition~\ref{prop:CompatibleGeoMain} shows that for a.e.\ $x$ there is a $t_x\in\R$ so that 
    \[
    \psi(a_1^{(1)}.x)=a_1^{(2)}.u_{t_x}^{(2)}.\psi(x).
    \]
    It follows that for any $\epsilon>0$ we can find a ``good'' set $W$ of measure $>1-\epsilon/10$ and $M>0$ so that if $x\in W$, we have that $\absolute{t_x}\leq M$.
    Iterating one gets that for any $n$
    \[
    \psi(a_n^{(1)}.x)=a_n^{(2)}.u_{t_{n,x}}^{(2)}.\psi(x)
    \]
    with $t_{n,x}=\sum_{j=0}^{n-1} e^{-2j} \,t_{a^{(1)}_j\!.x}$.

    The ergodic theorem for $a_1^{(1)}$ only guarantees that $a_{n}^{(1)}.x\in W$ for \emph{most} of $n\in\N$. 
    Whenever $a_n^{(1)}.x\notin W$, we do not have a direct bound on $t_{a^{(1)}_n.x}$, hence we lose any control on the size of $t_{n',x}$ for every $n' > n$. 
    
    The key idea in the proof of Proposition~\ref{prop:cauchySeq} is to overcome this serious problem with the help of an appropriate $L^2$-ergodic theorem (Corollary~\ref{cor:sl2ergodic}), that allows us to find some $h\in B_{\epsilon}^{\mathbf{H}_1}$ so that $a_n^{(1)}h.x\in W$. 
    Our proof strategy can be summarized as follows:

    \begin{figure}[H]
    \scalebox{0.8}
  {
  \begin{tikzpicture}[scale=5]
  \tikzstyle{vertex}=[circle, minimum size=20pt,inner sep=0pt]
 	 \tikzstyle{selected vertex} = [vertex, fill=red!24]
 	 \begin{scope}[thick,decoration={
    markings,
    mark=at position 0.2 with {\arrow{>}},
    mark=at position 0.7 with {\arrow{>}}}
] 

      \draw[ultra thick,red,-] plot[smooth] coordinates{(-0.5,0.65)(-0.4,0.65)};
      \node[vertex] (v10) at (-0.2,0.67) {$a_t^{(1)}$-orbits};

      \draw[fill=green]  (0.15,0.65) circle (0.5pt);
      \node[vertex] (v20) at (0.4,0.67) {good point};
      
      \draw[fill=black]  (0.8,0.65) circle (0.5pt);
      \node[vertex] (v30) at (1.05,0.67) {bad point};

      \fill[blue!50] (1.45,0.65) ellipse (0.02 and 0.08);
      \node[vertex] (v30) at (1.85,0.67) {$B_{\epsilon}^{\mathbf{H}_1}$-ball under $a_n^{(1)}$};

      \fill[blue!50] (1,-0.1) ellipse (0.05 and 0.2);
      \draw[ultra thick,postaction={decorate},red,-] plot[smooth] coordinates{(-0.5,-0.1)(0,0.1)(0.5,0.2)(1,-0.1)(1.5,0.3)(2,0.1)};

      \draw[ultra thick,postaction={decorate},yellow,dotted] plot[smooth] coordinates{(0.5,0.2)(1,0.1)(1.5,0.3)};

      \node[vertex] (v01) at (-0.5,0) {$a_{n}^{(1)}.x$};
      \draw[fill=green] (-0.5,-0.1) circle (0.5pt);

      \node[vertex] (v02) at (0,0.2) {$a_{n+1}^{(1)}.x$};
      \draw[fill=green] (0,0.1) circle (0.5pt);

      \node[vertex] (v03) at (0.5,0.3) {$a_{n+2}^{(1)}.x$};
      \draw[fill=green] (0.5,0.2) circle (0.5pt);

      \node[vertex] (v04) at (1.16,-0.15) {$a_{n+3}^{(1)}.x$};
      \draw[fill=black] (1,-0.1) circle (0.5pt);

      \node[vertex] (v05) at (1,0.2) {$a_{n+3}^{(1)}.hx$};
      \draw[fill=green] (1,0.1) circle (0.5pt);

      \node[vertex] (v06) at (1.5,0.4) {$a_{n+4}^{(1)}.x$};
      \draw[fill=green] (1.5,0.3) circle (0.5pt);

      \node[vertex] (v07) at (2,0.2) {$a_{n+5}^{(1)}.x$};
      \draw[fill=green] (2,0.1) circle (0.5pt);

   \end{scope}    
  \end{tikzpicture}
  }
\end{figure}
\begin{enumerate}[label=(\roman*)]
    \item\label{prop-outline-step1} Let $W'\subset \mathbf{G}_1/\Gamma_1$ be a compact set with large measure such that for any $x\in W'$, the sequence of return times $n_i(x)\in\N$ for which $a^{(1)}_{n_i(x)}.x\in W$
    has large density.  

    \medskip
    
    We then use Lemma~\ref{lem:main} and Corollary~\ref{cor:sl2ergodic} to estimate the error terms between the pair $\psi_{n_i(x)}(x)$ and $\psi_{n_{i+1}(x)}(x)$. More precisely, if $x\in W'$ and $i$ is sufficiently large,
    \[
    \psi_{n_{i+1}(x)}(x)=h_{(i)}c_{(i)}tr_{(i)}.\psi_{n_i(x)}(x),
    \]
    where
    \[
    \quad h_{(i)}\in B_{10(n_{i+1}(x)-n_i(x))\delta}^{\mathbf{H}_2},\quad c_{(i)}\in B^{\mathbf Z_2}_{2(n_{i+1}(x)-n_i(x))\delta},\quad tr_{(i)}\in \Btr_{e^{-0.9n_i(x)}}.
    \]
    Here $\mathbf{H}_2$ is the connected Lie subgroup of $\mathbf{G}_2$ whose Lie algebra equals to $\operatorname{span}_{\R}\{\bu_2,\ba_2,\bou_2\}$, $\mathbf{Z}_2=C_{\mathbf{G}_2}(\mathbf{H}_2)$ and $\Btr_{\epsilon}$ is the $\epsilon$-ball in the transversal direction of $\mathbf{H}_2\oplus\mathbf{Z}_2$.

\item\label{prop-outline-step3}

Take $\Kold{075KsCau}\subset W'$ to be a compact set with large measure so that for every $x\in \Kold{075KsCau}$, most of its iterations under the transformation $a_{1}^{(1)}$ will stay in $W'$. 

Fix $x\in \Kold{075KsCau}$ and $n_i(x)$ sufficiently large. By Proposition~\ref{prop:CompatibleGeoMain}, there exists $t_i=t_i(x)\in\R$ so that
\begin{equation}\label{eq:UoriginConnect}
\psi_{n_{i+1}(x)}(x)=u_{t_i}^{(2)}.\psi_{n_i(x)}(x).
\end{equation}
We want to show that $\absolute{t_i}\leq e^{-n_i(x)}$, hence $\sum_{i\geq 1} t_i$ converges absolutely, which is enough to establish what is claimed in Proposition~\ref{prop:cauchySeq}. Suppose in contradiction that $\absolute{t_i}> e^{-n_i(x)}$; we will show this is not possible. 

\medskip

The key idea to obtain a contradiction is to change the ``observation base point''. Indeed, the choice of $x$ and $n_i(x)$ guarantee that there exists $k\in\Z$ such that 
    \begin{equation}\label{eq:intermediateStep}
    a_{-k}^{(1)}.x\in W',\quad k\geq -2n_i(x)/3,\quad e^{\eta n_i(x)}\leq\absolute{t_ie^{-2k}}\leq e^{10\eta(n_i(x)+k)},
    \end{equation}
where $\eta$ is a small constant independent of $x$ and $i$. According to the definition of the return time function $n_i({\bullet})$, there exists $j\in\N$ such that
\[
n_j(a_{-k}^{(1)}.x)=n_i(x)+k,\qquad n_{j+1}(a_{-k}^{(1)}.x)=n_{i+1}(x)+k.
\]
Then we can rewrite \eqref{eq:UoriginConnect} as
\begin{equation}\label{eq:-kMommentDiff}
\psi_{n_{i+1}(x)+k}(a_{-k}^{(1)}.x)=u_{t_ie^{-2k}}^{(2)}.\psi_{n_i(x)+k}(a_{-k}^{(1)}.x).
\end{equation}

\medskip

However, in step \ref{prop-outline-step1} we got a different relation between the points
\[
\psi_{n_{i+1}(x)+k}(a_{-k}^{(1)}.x),\qquad \psi_{n_i(x)+k}(a_{-k}^{(1)}.x).
\]
Since $a_{-k}^{(1)}.x$ is in the good set $W'$ we can use~Corollary~\ref{cor:globalRepEst}
to combine the information from these two different sources of information about the relative position of the two points above to
obtain
\[
\absolute{t_ie^{-2k}}\leq e^{10^{-6}\eta^2 n_i(x)}.
\]
However, this contradicts \eqref{eq:intermediateStep}. Hence the assumption in contradiction was false, i.e. 
\[
\absolute{t_i}\leq e^{-n_i(x)}.
\]

\end{enumerate}

\noindent
 The structure of this subsection is following: 
\begin{enumerate}[label=(\Roman*)]
    \item In \S\ref{sec:plem}, we provide preliminaries for Proposition~\ref{prop:cauchySeq}. 
    \item In \S\ref{sec:goodsets}, we specify some constants and sets that will be used in the proof of Proposition~\ref{prop:cauchySeq}.
    \item In \S\ref{sec:inclusion1}, we carry out step~\ref{prop-outline-step1} in the above outline. The main result will be Lemma~\ref{lem:mostControlWeak}, which is obtained by using $L^2$-ergodic theorem for $\SL_2(\R)$ (Corollary~\ref{cor:earlySl2Ergodic}) and Main lemma (Lemma~\ref{lem:main}) to get a ``rough'' estimates on the difference in $\mathbf{U}_2$ direction between $\psi_{n_i}(x)$ and $\psi_{n_{i+1}}(x)$
    \item In \S\ref{sec:expDecayUConnect}, we realize step~\ref{prop-outline-step3} in the above outline. The main result will be Lemma~\ref{lem:convergenceAlongU0Pre}, which is obtained by renormalization compatible proposition (Proposition~\ref{prop:CompatibleGeoMain}) and Lemma~\ref{lem:mostControlWeak}. 
    \item In \S\ref{sec:wrapUpProCauchy}, we finish the proof of Proposition~\ref{prop:cauchySeq}.
\end{enumerate}

\subsubsection{Preliminaries for Proposition~\ref{prop:cauchySeq}}\label{sec:plem}
Here we provide some preliminaries that are needed for the proof of Proposition~\ref{prop:cauchySeq}.

The following lemma describes how $\psi$ changes under small perturbations in $\mathbf{H}_1$. 
\begin{Lemma}\label{lem:seqeunceXin}
Given $\epsilon\in(0,10^{-3})$, there exist $\denew\label{081desx},\Rnew\label{081Rsx}>0$\index{$\deold{081desx}$, Lemma~\ref{lem:seqeunceXin}}\index{$\Rold{081Rsx}$, Lemma~\ref{lem:seqeunceXin}} and a compact set $\Knew\label{088Ksx}\subset \mathbf{G}_1/\Gamma_1$\index{$\Kold{088Ksx}$, Lemma~\ref{lem:seqeunceXin}} with $m_1(\Kold{088Ksx})\geq 1-\epsilon$ such that the following holds. Let $\delta\in(0,\deold{081desx})$, then there exist $\enew\label{081eSequenceXin}=\eold{081eSequenceXin}(\delta)\in(0,\delta)$ \index{$\eold{081eSequenceXin}(\cdot)$, Lemma~\ref{lem:seqeunceXin}}so that if $h_1,h_2\in B_{\eold{081eSequenceXin}}^{\mathbf{H}_1}$,  $x\in \mathbf{G}_1/\Gamma_1$ and $t\geq\Rold{081Rsx}$ satisfying 
\[
a_{t}^{(1)}h_1.x,\quad a_{t}^{(1)}h_2.x\in \Kold{088Ksx},
\]
then for some $\absolute{s}<2\eold{081eSequenceXin} e^{2t}$,
\begin{equation*}
\psi(a_{t}^{(1)}.h_2.x)\in\Kak\left(e^{2t},\delta,u_{s}^{(2)}.\psi(a_{t}^{(1)}h_1.x)\right).
\end{equation*}

\end{Lemma}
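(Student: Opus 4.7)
The plan is to reduce the statement to the Main Lemma (Lemma~\ref{lem:main}) by showing that, modulo a controlled shift in $\mathbf{U}_1$, the two points $a^{(1)}_t h_1.x$ and $a^{(1)}_t h_2.x$ differ by an element of a small Kakutani--Bowen ball at scale $e^{2t}$. The key algebraic input is that conjugation by $a^{(1)}_t$ acts on $\mathfrak{h}_1$ by scaling $\bu_1$ by $e^{2t}$, scaling $\bou_1$ by $e^{-2t}$, and fixing $\ba_1$, which is exactly the asymmetric thickness tolerated by $\Kak(e^{2t},\cdot)$. First I would take $\Kold{088Ksx}\subset\Kold{035Klm1}\cap\Zold{035KtimeU}$ compact of measure at least $1-\epsilon$, where $\Kold{035Klm1}$ is furnished by applying Lemma~\ref{lem:main} with $\eta=\epsilon/20$ and $\Zold{035KtimeU}$ is the full-measure set from \S\ref{sec:timeChagneGoodSet} on which $|\alpha-1|<10^{-99}$ and the cocycle $\tau$ is well defined.

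For the main computation, apply Lemma~\ref{lem:groupDecom} with $\mathfrak{h}_1=\R\bu_1\oplus\R\bou_1\oplus\R\ba_1$ to write $h_2h_1^{-1}=u^{(1)}_{\alpha_\bu}\widehat{u}^{(1)}_{\alpha_\bou}a^{(1)}_{\alpha_\ba}$ with $\max(|\alpha_\bu|,|\alpha_\bou|,|\alpha_\ba|)\le C\eold{081eSequenceXin}$ for an absolute constant $C$. Conjugation by $a^{(1)}_t$ via $a^{(1)}_tu^{(1)}_ra^{(1)}_{-t}=u^{(1)}_{e^{2t}r}$ and $a^{(1)}_t\widehat{u}^{(1)}_ra^{(1)}_{-t}=\widehat{u}^{(1)}_{e^{-2t}r}$ yields
\[
a^{(1)}_t h_2h_1^{-1}a^{(1)}_{-t}=u^{(1)}_{e^{2t}\alpha_\bu}\,\widehat{u}^{(1)}_{e^{-2t}\alpha_\bou}\,a^{(1)}_{\alpha_\ba}.
\]
A direct $2\times 2$ computation in the $\mathbf{H}_1$-realization (in the spirit of Lemma~\ref{lem:matchingFunction}) lets me rewrite this product as $g'\cdot u^{(1)}_{s_1}$, where $|s_1|\le 1.01\,\eold{081eSequenceXin}\,e^{2t}$ and $g'\in\mathbf{H}_1$ satisfies $\vartheta_{\bu_1}(g')=0$, $|\vartheta_{\bou_1}(g')|\le 2\eold{081eSequenceXin}/e^{2t}$, $|\vartheta_{\ba_1}(g')|\le 2\eold{081eSequenceXin}$, and has vanishing transversal coordinates; hence $g'\in\Kak(e^{2t},2\eold{081eSequenceXin})$. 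Translating by $a^{(1)}_t h_1.x$ gives $a^{(1)}_t h_2.x\in\Kak(e^{2t},2\eold{081eSequenceXin},u^{(1)}_{s_1}.a^{(1)}_t h_1.x)$. I then apply Lemma~\ref{lem:main} with $R=e^{2t}$, $y=a^{(1)}_t h_1.x$, $x=a^{(1)}_t h_2.x$, $t_1=s_1$, after choosing $\eold{081eSequenceXin}(\delta)$ small enough that $2\eold{081eSequenceXin}\le\eold{035eMainDel}(\delta)$ and $1.01\,\eold{081eSequenceXin}\le\deold{035delm1}$, and taking $\Rold{081Rsx}\ge\tfrac12\log\Rold{035Rlm1}$. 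The Main Lemma yields $|t_2|\le\Rold{035Rlm1}$ with $\psi(a^{(1)}_t h_2.x)\in\Kak(e^{2t},\delta,u^{(2)}_{t_2}.\psi(u^{(1)}_{s_1}.a^{(1)}_t h_1.x))$, and since $a^{(1)}_t h_1.x\in\Zold{035KtimeU}$ one has $\psi(u^{(1)}_{s_1}.a^{(1)}_t h_1.x)=u^{(2)}_{\tau(a^{(1)}_t h_1.x,\,s_1)}.\psi(a^{(1)}_t h_1.x)$ with $|\tau(a^{(1)}_t h_1.x,s_1)-s_1|\le 10^{-99}|s_1|$; setting $s:=t_2+\tau(a^{(1)}_t h_1.x,s_1)$ finishes the proof.

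The main obstacle is the quantitative bookkeeping needed to secure $|s|<2\eold{081eSequenceXin}e^{2t}$: the raw estimate reads $|s|\le\Rold{035Rlm1}+(1+10^{-99})\cdot 1.01\,\eold{081eSequenceXin}e^{2t}$, so the additive error $\Rold{035Rlm1}$ coming from the Main Lemma must be absorbed into the leading $\eold{081eSequenceXin}e^{2t}$ term. This forces a coordinated choice of $\deold{081desx}$, $\Rold{081Rsx}$, and the function $\eold{081eSequenceXin}(\cdot)$: one first fixes $\deold{081desx}$ from the constraints above, then fixes $\Rold{081Rsx}$ sufficiently large that $e^{2\Rold{081Rsx}}\eold{081eSequenceXin}(\delta)\ge 100\,\Rold{035Rlm1}$ on the admissible range of $\delta$, which pins down the mutual dependence between $\Rold{081Rsx}$ and the function $\eold{081eSequenceXin}(\cdot)$. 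The last assertion of the lemma (that nearby points in $\Kold{088Ksx}$ map to nearby images) follows from the uniform continuity of $\psi$ on $\Kold{017KmP1}\supset\Kold{088Ksx}$ guaranteed by Lemma~\ref{lem:matchingPrese}.
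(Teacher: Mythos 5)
Your argument is essentially the paper's proof: the paper likewise decomposes $h_1h_2^{-1}$ into its $\mathfrak{sl}_2$-coordinates, conjugates by $a_t^{(1)}$ so that only the $\mathbf{U}_1$-component becomes large, concludes that $a_t^{(1)}h_2.x\in\Kak\bigl(e^{2t},\cdot\,,u_{-\bar t_1}^{(1)}a_t^{(1)}h_1.x\bigr)$ with $\absolute{\bar t_1}\leq 2\eold{081eSequenceXin}(\delta)e^{2t}$, and then finishes by invoking Lemma~\ref{lem:main} with the choices $\Kold{088Ksx}=\Kold{035Klm1}$, $\deold{081desx}=\deold{035delm1}/100$, $\Rold{081Rsx}=\tfrac12\log\Rold{035Rlm1}$, $\eold{081eSequenceXin}(\delta)=\eold{035eMainDel}(\delta)/100$, converting the $\mathbf{U}_1$-shift into a $\mathbf{U}_2$-shift via the cocycle $\tau$ exactly as you do. The one caveat is your scheme for absorbing the additive $\Rold{035Rlm1}$-shift by demanding $e^{2\Rold{081Rsx}}\eold{081eSequenceXin}(\delta)\geq100\,\Rold{035Rlm1}$ over the whole admissible range of $\delta$, which inverts the lemma's quantifier order ($\Rold{081Rsx}$ is fixed before $\delta$, and $\eold{081eSequenceXin}(\delta)$ may tend to $0$); this is a cosmetic issue rather than a substantive one, since the paper's own proof glosses over the same constant and the precise bound on $s$ is never used sharply in later applications.
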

\begin{proof}
Fix an $\eta\in(0,10^{-3})$. Let $\Kold{035Klm1}$, $\Rold{035Rlm1}$,  $\deold{035delm1}$ and $\eold{035eMainDel}(\cdot)$ be defined as in Lemma~\ref{lem:main} for this $\eta$. Set
\[
\deold{081desx}=\deold{035delm1}/100, \qquad \Rold{081Rsx}=(\log\Rold{035Rlm1})/2,\qquad \Kold{088Ksx}=\Kold{035Klm1}.
\]
Let $\delta\in(0,\deold{081desx})$ be fixed, then denote $\eold{081eSequenceXin}(\delta)=\eold{035eMainDel}(\delta)/100$. 

\medskip

Since $h_1,h_2\in B_{\epsilon}^{\mathbf{H}_1}$, there exist $\absolute{t_1},\absolute{t_2},\absolute{t_3}<2\epsilon$ such that
\begin{equation*}
    \begin{aligned}
    h_1h_2^{-1}=u_{t_1}^{(1)}a_{t_2}^{(1)}\oua_{t_3},
    \end{aligned}
\end{equation*}
hence
\begin{equation*}
    \begin{aligned}
    a_{t}^{(1)}h_1h_2^{-1}a_{-t}^{(1)}=u_{\bar{t}_1}^{(1)}a_{t_2}^{(1)}\oua_{\bar{t}_3},
    \end{aligned}
\end{equation*}
where $\bar{t}_1=e^{2t}t_1$ and $\bar{t}_{3}=e^{-2t}t_3$. This in particular gives 
\begin{equation*}
\begin{aligned}
    a_{t}^{(1)}h_1.x=u_{\bar{t}_1}^{(1)}a_{t_2}^{(1)}\oua_{\bar{t}_3}a_{t}^{(1)}h_2.x,
\end{aligned}
\end{equation*}
hence by the definition of Kakutani-Bowen balls (cf.~Definition~\ref{def:Kakball}),
\begin{equation}\label{eq:uslidesest}
    a_{t}^{(1)}h_2.x\in\Kak\left(e^{2t},\epsilon,u_{-\bar{t}_1}^{(1)}a_{t}^{(1)}h_1.x\right).
\end{equation}
Since $a_{t}^{(1)}h_1.x,a_{t}^{(1)}h_2.x\in \Kold{088Ksx}$ and $\absolute{\bar{t}_1}\leq2\eold{081eSequenceXin}(\delta) e^{2t}$, Lemma~\ref{lem:seqeunceXin} follows by applying Lemma~\ref{lem:main} to \eqref{eq:uslidesest}.
\end{proof}

In \S\ref{sec:sl2Ergodiccorollary} we establish the following:

\begin{cor:sl2ergodic}\label{cor:earlySl2Ergodic}
Let $\mathbf{G}_2,\Gamma_2,m_2, \mathbf{H}_2,a^{(2)}_t$ be as in \S\ref{subsec:assupmtions}.
Given $\epsilon\in(0,1)$, then for any $f\in L^2(\mathbf{G}_2/\Gamma_2,m_2)$ and $m_2$-a.e. $x\in \mathbf{G}_2/\Gamma_2$, we have
\[
\lim_{n\to+\infty}\frac{1}{m_{\mathbf{H}_2}(B_{\epsilon}^{\mathbf{H}_2,\norm{\cdot}})}\int_{B_{\epsilon}^{\mathbf{H}_2,\norm{\cdot}}}f(a^{(2)}_{n}h.x)dm_{\mathbf{H}_2}(h)=\int_{\mathbf{G}_2/\Gamma_2}fdm_2.
\]
\end{cor:sl2ergodic}
This corollary is crucial for our construction of the good sets in the next section, cf. item~\ref{item:L2uniformErg} on p.~\pageref{item:L2uniformErg} for more details.

\subsubsection{The choices of constants and sets}\label{sec:goodsets}
In this subsection, we specify some notations that will be used in the proof of Proposition~\ref{prop:cauchySeq}. We start with some algebraic constructions, and choices of good sets and constants.
\begin{enumerate}[label=(\Roman*)]
\item\label{item:rhoInvariantVector1} Recall that 
\begin{gather*}
    \qquad\quad \mathbb{L}_2(\R)=\{g\in\mathbb{G}_2(\R):\Ad(g)\mathfrak{h}_2=\mathfrak{h}_2\},\qquad
    \mathbf{L}_2=\mathbb{L}_2(\R)\cap\mathbf{G}_2.
\end{gather*}
Let $\mathbf{N}_2$ be the normal core of $\mathbf{L}_2$, that is $\mathbf{N}_2=\bigcap_{g\in\mathbf{G}_2}g\mathbf{L}_2g^{-1}$. By  Chevalley's Theorem (cf.\ Theorem~\ref{thm:Chevalley}), there exist an algebraic representation $\rho:\mathbb{G}_2\to\operatorname{SL}(V)$ over $\R$ and an $\R$-vector $v_{\mathbf{L}_2}\in V$ so that
\[
\mathbf{L}_2=\{g\in\mathbf{G}_2:\rho(g).v_{\mathbf{L}_2}=v_{\mathbf{L}_2}\}.
\]
\item\label{item:091phiConstant} Recall that $\tilde{\phi}$ is the morphism of algebraic groups from $\SL_2$ to the algebraic group $\mathbb{G}_2$ that sends $\SL_2(\R)$ to $\mathbf{H}_2$ defined in \S\ref{sec:sl2basis}. Let $\Cnew\label{091phiConstant}>1$\index{$\Cold{091phiConstant}$} be a constant depending only on $\tilde{\phi}$ such that for every $t>0$ and $T$ sufficiently large, 
\[
\norm{u_t^{(2)}}<e^{T}\text{ implies that } \absolute{t}\leq e^{\Cold{091phiConstant}T}.
\]
    \item\label{eq:eta0} Let $\deold{027dexpSmallnew}$, $\Cold{027CcoeSmall}$ be as in Corollary~\ref{cor:globalRepEst} with $\delta=1$ and $\deold{0301Clattice}$ the lower bound on the growth exponent for the lattice $\Gamma_2$ as in~\eqref{eq:latticeCount}. Fix $\neta$ to be small (depending on $\deold{027dexpSmallnew}$ and  $\deold{0301Clattice}$). Specifically we may take
\begin{equation*}
    \neta=\deold{027dexpSmallnew}^2\deold{0301Clattice}/C,
\end{equation*}
with $C$ a sufficiently big constant depending only  on $\mathbf{G}_2$ and $\rho$, for instance we may take 
\[
C=(9\dim(\mathfrak{g}_2)\deg(\rho))^{99},
\]
where  $\deg(\rho)=\max_{1\leq i,j\leq \dim(V)}(\deg(\rho_{ij}))$.
    \item\label{item:epsilonFixed} 
    Let $\enew\label{089esmallChoice}$\index{$\eold{089esmallChoice}$, Proposition~\ref{prop:cauchySeq}} be defined as
    \[
    \eold{089esmallChoice}=c\neta^2.
    \]
    with $c$ a sufficiently small constant depending only on $\mathbf{G}_2$ (for instance we may take $c=\deold{007mnC1}\,\eold{008eComExp}\,(9\Cold{091phiConstant}\dim(\mathfrak{g}_2))^{-99}$ with $\deold{007mnC1}$ as in Lemma~\ref{lem:matrixNorm} and $\eold{008eComExp}$ as in Lemma~\ref{lem:comExp}). 
    
    Without loss of generality, we can assume $\epsilon$ is as small as we wish as the statement of Proposition~\ref{prop:cauchySeq} is less restrictive the smaller $\epsilon$ is. From now on, we fix an $\epsilon$ such that
    \[
    \epsilon\in(0,\eold{089esmallChoice}).
    \]
    \item\label{item:deltaFixed10} Let $\deold{081desx}$, $\Rold{081Rsx}$, $\eold{081eSequenceXin}(\cdot)$ and $\Kold{088Ksx}$ be defined as in Lemma~\ref{lem:seqeunceXin} applied with $\epsilon'=10^{-100}\epsilon$. Let $\kappa_1\in(0,1)$ be a constant sufficiently small such that for every $r\in(0,\kappa_1)$
    \[
    m_{\mathbf{H}_1}\left(B_{r}^{\mathbf{H}_1}\cap B_{r}^{\mathbf{H}_1,\norm{\cdot}}\right)\geq (1-10^{-9})m_{\mathbf{H}_1}\left(B_{r}^{\mathbf{H}_1}\right).
    \]
    We then define $\denew\label{085deltaFinal}$\index{$\deold{085deltaFinal}$, \ref{item:deltaFixed10}} as
    \begin{equation*}
        \deold{085deltaFinal}=10^{-100}\deold{081desx}\epsilon\kappa_1.
    \end{equation*}

\item\label{item:KtildeCpt} Let $\Knew\label{089KpreRepCpt}\subset\mathbf{G}_2/\Gamma_2$\index{$\Kold{089KpreRepCpt}$} with $m_2(\Kold{089KpreRepCpt})>1-10^{-100}\epsilon$ and $\widetilde{\Kold{089KpreRepCpt}}\subset\mathbf{G}_2$ be compact sets, so that the projection of $\widetilde{\Kold{089KpreRepCpt}}$ on $\mathbf{G}_2/\Gamma_2$ contains~$\Kold{089KpreRepCpt}$.

\item\label{item:repreEstFinal} For any $\sigma>0$, let $V_{\sigma}$ be the set of $\bar{z}\in\widetilde{\Kold{089KpreRepCpt}}$ such that
\[
\norm{\rho(\gamma\bar{z}^{-1}).v_{\mathbf{L}_2}-\rho(\bar{z}^{-1}).v_{\mathbf{L}_2}}_V<\sigma^{1/\deold{027dexpSmallnew}}
\]
for some $\gamma\in\Gamma_2$ satisfying 
\begin{gather}
    \min_{g\in \mathbf{N}_2}\norm{\gamma-g}\geq\Cold{027CcoeSmall}\sigma \quad \text{ and }\quad
    \norm{\gamma}\leq  \sigma^{-\deold{027dexpSmallnew}\deold{0301Clattice}/2}.\nonumber
\end{gather}
(Cf.~\ref{eq:eta0} for definition of $\Cold{027CcoeSmall}, \deold{027dexpSmallnew},\deold{0301Clattice}$.)

 \item\label{eq:CoeN} Recall that $\neta$ is defined in item \ref{eq:eta0}. We define $\Cnew\label{092CoeN}$\index{$\Cold{092CoeN}$, \ref{eq:CoeN}} as
\begin{equation*}
    \Cold{092CoeN}=10^3\neta(\dim\mathfrak{g}_2)^2/(\deold{027dexpSmallnew}\deold{0301Clattice}).
\end{equation*}
For every $n\in\N$, let $E_n$ be defined as
\[
E_n=\pi_2(V_{e^{-n\Cold{092CoeN}}}).
\]
Notice Corollary~\ref{cor:globalRepEst} (with $\delta=1$) guarantees that for $n$ large
\[
m_2(E_n)\leq e^{-n\deold{0302CglobalCom1}\Cold{092CoeN}},
\]
where $\deold{0302CglobalCom1}$ is as in Corollary~\ref{cor:globalRepEst}. Then there is a $\Rnew\label{083RMeasEstExcepReturn}\in\N$\index{$\Rold{083RMeasEstExcepReturn}$} so that
\[
\qquad\quad \Rold{083RMeasEstExcepReturn}\geq\max(\Rold{081Rsx},-\log(\deold{085deltaFinal}))\quad \text{and} \quad  \sum_{n=\Rold{083RMeasEstExcepReturn}}^{+\infty}m_2\left(E_n\right)<10^{-9}\epsilon.
\]
As a result, we can find a compact $\Knew\label{0900KMeasEstExcepReturn}\subset \mathbf{G}_2/\Gamma_2$\index{$\Kold{0900KMeasEstExcepReturn}$} satisfying 
\begin{equation*}
\Kold{0900KMeasEstExcepReturn}\subset\Kold{089KpreRepCpt}\setminus\left(\bigcup_{n=\Rold{083RMeasEstExcepReturn}}^{+\infty}E_n\right)\quad\text{and}\quad m_2(\Kold{0900KMeasEstExcepReturn})\geq1-10^{-8}\epsilon.
\end{equation*}

\end{enumerate}

\medskip

\noindent We proceed to construct a few additional good sets that will be used later.
\begin{enumerate}[label=(\roman*)]
    \item\label{item:boundedConjugation} By Proposition~\ref{prop:CompatibleGeoMain}, there exist $\Rnew\label{0841Rboundset}>\Rold{083RMeasEstExcepReturn}$\index{$\Rold{0841Rboundset}$, \ref{item:boundedConjugation}} and $\Knew\label{0901Kboundset}\subset \mathbf{G}_1/\Gamma_1$\index{$\Kold{0901Kboundset}$, \ref{item:boundedConjugation}} with $m_1(\Kold{0901Kboundset})>1-10^{-100}\epsilon$ such that for every $x\in \Kold{0901Kboundset}$
    \begin{equation*}
        \begin{aligned}
        \psi(a_{1}^{(1)}.x)=a_{1}^{(2)}u_{t_x}^{(2)}.\psi(x),\qquad\absolute{t_x}<\Rold{0841Rboundset};
        \end{aligned}
    \end{equation*}
    (in the outline at the beginning of \S\ref{sec:pfCau}, we used $M$ to instead of $\Rold{0841Rboundset}$).
    \item\label{item:L2uniformErg} Recall that $\eold{081eSequenceXin}(\cdot)$ and $\Kold{088Ksx}$ are defined as in Lemma~\ref{lem:seqeunceXin} with $\epsilon'=10^{-100}\epsilon$, \ and $\deold{085deltaFinal}$ is fixed in item~\ref{item:deltaFixed10}. Define $\enew\label{093eLusin}$\index{$\eold{093eLusin}$, \ref{item:L2uniformErg}} and $\Knew\label{0902Kerg1}$\index{$\Kold{0902Kerg1}$, \ref{item:L2uniformErg}} as
    \[
    \eold{093eLusin}=\eold{081eSequenceXin}(10^{-100}\deold{085deltaFinal}), \qquad \Kold{0902Kerg1}=\Kold{088Ksx}\cap \Kold{0901Kboundset}.
    \]
    Let
   \begin{gather*}
      s_n(x)=\frac{1}{m_{\mathbf{H}_1}\left(B_{\eold{093eLusin}}^{\mathbf{H}_1,\norm{\cdot}}\right)}\int_{B_{\eold{093eLusin}}^{\mathbf{H}_1,\norm{\cdot}}}\chi_{\Kold{0902Kerg1}}\left(a^{(1)}_{n}h.x\right)dm_{\mathbf{H}_1}(h).
    \end{gather*}
    By Corollary~\ref{cor:sl2ergodic} (cf.~p.~\pageref{cor:earlySl2Ergodic}), there exists a compact set $\Knew\label{0903Kserg1}\subset \Kold{0902Kerg1}$\index{$\Kold{0903Kserg1}$, \ref{item:L2uniformErg}} with $m_1(\Kold{0903Kserg1})\geq 1-10^{-10}\epsilon$ such that for every $x\in\Kold{0903Kserg1}$, 
    \[
    \text{$s_n(x)$ converges uniformly to $m_1(\Kold{0902Kerg1})$ as $n\to\infty$.}
    \]
    \item\label{item:stayCompactErgodic} 
    
   Applying the Pointwise Ergodic Theorem for the map~$a_{1}^{(2)}$ and function $\chi_{\Kold{0900KMeasEstExcepReturn}}$, there exist a compact set $\Knew\label{0910KcomUniformErg}\subset\mathbf{G}_2/\Gamma_2$\index{$\Kold{0910KcomUniformErg}$, \ref{item:stayCompactErgodic}} with $m_2(\Kold{0910KcomUniformErg})\geq1-10^{-9}\epsilon$ and an integer $\Rnew\label{0842RN_1}\geq \Rold{0841Rboundset}$\index{$\Rold{0842RN_1}$, \ref{item:stayCompactErgodic}} such that for every $x\in \Kold{0910KcomUniformErg}$ and $n\geq \Rold{0842RN_1}$
    \[
    \absolute{\left\{k\in[-n,n]:a_k^{(2)}.x\in\Kold{0900KMeasEstExcepReturn}\right\}}\geq(1-10^{-7}\epsilon)(2n+1).
    \]
    \item\label{item:uniformConvergeSeqErg} 
  
    Applying the Pointwise Ergodic Theorem for the map~$a_1^{(1)}$ and function $\chi_{\Kold{0903Kserg1}\cap \psi^{-1}(\Kold{0910KcomUniformErg})}$, there exists a compact subset $\Knew\label{0911KdenErg1}\subset \Kold{0903Kserg1}\cap\psi^{-1}(\Kold{0910KcomUniformErg})$\index{$\Kold{0911KdenErg1}$, \ref{item:uniformConvergeSeqErg}} with $m_1(\Kold{0911KdenErg1})>1-10^{-8}\epsilon$ such that for every~$x\in\Kold{0911KdenErg1}$, the expression
    \[
    \frac{1}{2n+1}\absolute{\left\{k\in[-n,n]:a_k^{(1)}.x\in \Kold{0903Kserg1}\cap \psi^{-1}(\Kold{0910KcomUniformErg})\right\}}
    \]
    converges uniformly to $m_1(\Kold{0903Kserg1}\cap \psi^{-1}(\Kold{0910KcomUniformErg}))$ as $n\to\infty$.

    \item\label{item:stayGoodErgodic} 
    
   Applying the Pointwise Ergodic Theorem for the map $a_{1}^{(1)}$ and function $\chi_{\Kold{0911KdenErg1}}$, there exist a compact set  $\Knew\label{093KTarPropDecay}\subset\Kold{0911KdenErg1}$\index{$\Kold{093KTarPropDecay}$} with $m_1(\Kold{093KTarPropDecay})>1-\epsilon/2$ and an integer $\Rnew\label{0843RN_211}\geq\Rold{0842RN_1}$\index{$\Rold{0843RN_211}$, \ref{item:stayGoodErgodic}} such that for every $x\in \Kold{093KTarPropDecay}$ and $n\geq \Rold{0843RN_211}$
    \[
    \absolute{\left\{k\in[-n,n]:a_{k}^{(1)}.x\in \Kold{0911KdenErg1}\right\}}\geq (1-10^{-5}\epsilon)(2n+1).
    \]
   
\end{enumerate}

\medskip

Given a $x\in\Kold{0911KdenErg1}$, let $\{n_i(x)\}_{i\in\N}$ be the sequence of positive integers for which 
\begin{equation}\label{eq:goodSeq1}
    a_{n_i(x)}^{(1)}.x \in\Kold{0903Kserg1}\cap \psi^{-1}(\Kold{0910KcomUniformErg}), \qquad \forall i\in\N.
    \end{equation}
Note that by \ref{item:uniformConvergeSeqErg} we know the density of this sequence exists and is greater than $1-10^{-8}\epsilon$.

\begin{Remark}\label{rmk:simplifyNotation}
    In order to simplify the notation, we will write $\{n_i\}_{i\in\N}$ instead of $\{n_i(x)\}_{i\in\N}$ unless the dependence on $x$ needs to be specified. For notational simplicity, in this subsection we shall also use the following notations: 
    \[
    \delta=\deold{085deltaFinal}.
    \]
Here $\deold{085deltaFinal}$ is fixed as in \ref{item:deltaFixed10} on p.~\pageref{item:deltaFixed10}.
\end{Remark}

\subsubsection{Relation between successive renormalizations}\label{sec:inclusion1}

In this section, we show that if $x$ stays in a ``good'' set and $n_i=n_i(x)$ is sufficiently large, then we have a mild control for the deviation between $\psi_{n_i}(x)$ and $\psi_{n_{i+1}}(x)$ by our $L^2$-ergodic theorem (Corollary~\ref{cor:sl2ergodic}).  

Recall that $\mathbf{H}_2$ is the connected Lie subgroup of $\mathbf{G}_2$ whose Lie algebra equals to $\operatorname{span}_{\R}\{\bu_2,\ba_2,\bou_2\}$, $\mathbf{Z}_2=C_{\mathbf{G}_2}(\mathbf{H}_2)$ and 
\[
\Btr_{\epsilon}=\{g\in B_{\epsilon}^{\mathbf{G}_2}:g^{\mathfrak{h}}=g^{\mathfrak{z}}=e\},
\]
where $g^{\mathfrak{h}}$ and $g^{\mathfrak{z}}$ are defined in \eqref{eq:gDecom}.

\begin{Lemma}\label{lem:mostControlWeak}
There exist $\Rnew\label{RinductionLemma1}>0$\index{$\Rold{RinductionLemma1}$, Lemma~\ref{lem:mostControlWeak}} such that for every $x\in\Kold{0911KdenErg1}$ and $i$ such that $n_i\geq \Rold{RinductionLemma1}$, we have 
\begin{equation*}
    \begin{aligned}
    \psi_{n_{i+1}}(x)=h_{(i)}c_{(i)}tr_{(i)}.\psi_{n_i}(x),
    \end{aligned}
\end{equation*}
where
    \begin{alignat*}{3}
    &h_{(i)}\in B_{10(n_{i+1}-n_i)\delta}^{\mathbf{H}_2},\qquad 
   &c_{(i)}\in B^{\mathbf Z_2}_{2(n_{i+1}-n_i)\delta},\qquad 
   &&tr_{(i)}\in \Btr_{e^{-0.9n_i}}.
    \end{alignat*}
\end{Lemma}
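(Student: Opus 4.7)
The plan is to move from $\psi_{n_i}(x)$ to $\psi_{n_{i+1}}(x)$ one step at a time, across $j=0,\ldots,m-1$ with $m:=n_{i+1}-n_i$, producing at each step a decomposition
\[
\psi_{n_i+j+1}(x)=\alpha_j^{(\mathbf{H}_2)}\alpha_j^{(\mathbf{Z}_2)}\alpha_j^{(\mathrm{tr})}.\psi_{n_i+j}(x)
\]
in the local decomposition of \eqref{eq:gDecom}. Two regimes can occur at each step: a \emph{free step}, when $a_{n_i+j}^{(1)}.x\in\Kold{0901Kboundset}$ and Proposition~\ref{prop:CompatibleGeoMain} applies directly, and a \emph{detour step}, when it does not and we have to pass through a nearby orbit $h_j.x$ with $h_j\in B_{\eold{093eLusin}}^{\mathbf{H}_1,\norm{\cdot}}$ chosen using the uniform $L^2$-ergodic theorem.

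On a free step, item~\ref{item:boundedConjugation} gives $\psi(a_{n_i+j+1}^{(1)}.x)=a_1^{(2)}u_{t_j}^{(2)}.\psi(a_{n_i+j}^{(1)}.x)$ with $|t_j|<\Rold{0841Rboundset}$, hence
\[
\psi_{n_i+j+1}(x)=u_{t_j e^{-2(n_i+j)}}^{(2)}.\psi_{n_i+j}(x),
\]
so the incremental error is a $\mathbf{U}_2$-shift of size $\leq\Rold{0841Rboundset}\,e^{-2(n_i+j)}$. On a detour step, the uniform convergence of $s_n$ on $\Kold{0903Kserg1}$ provided by Corollary~\ref{cor:sl2ergodic} guarantees, for every $n_i+j\geq\Rold{RinductionLemma1}$, that the set $\{h\in B_{\eold{093eLusin}}^{\mathbf{H}_1,\norm{\cdot}}:a_{n_i+j}^{(1)}h.x\in\Kold{0902Kerg1}\}$ has near-full relative measure in $B_{\eold{093eLusin}}^{\mathbf{H}_1,\norm{\cdot}}$. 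Intersecting the corresponding sets at times $n_i+j$ and $n_i+j+1$ yields an $h_j$ such that both $a_{n_i+j}^{(1)}h_j.x$ and $a_{n_i+j+1}^{(1)}h_j.x$ lie in $\Kold{0902Kerg1}\subset\Kold{0901Kboundset}\cap\Kold{088Ksx}$. Applying Lemma~\ref{lem:seqeunceXin} twice (with $h_1=e,\,h_2=h_j$, and $t=n_i+j$ resp.\ $t=n_i+j+1$) gives
\[
\psi(a_{n_i+j}^{(1)}h_j.x)\in\Kak\!\bigl(e^{2(n_i+j)},\deold{085deltaFinal},u_{s_j}^{(2)}.\psi(a_{n_i+j}^{(1)}.x)\bigr)
\]
with $|s_j|\leq 2\eold{093eLusin}\,e^{2(n_i+j)}$, together with an analogous inclusion at time $n_i+j+1$. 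Conjugating by $a_{-(n_i+j)}^{(2)}$ via Lemma~\ref{lem:diangonalConjugateKak} converts these Kakutani--Bowen balls at scale $e^{2(n_i+j)}$ into ordinary $\mathbf{G}_2$-balls whose $\mathbf{H}_2\oplus\mathbf{Z}_2$-part has size $O(\deold{085deltaFinal})$ and transversal part has size $O(\deold{085deltaFinal}\,e^{-(n_i+j)})$, together with a $\mathbf{U}_2$-shift of size at most $2\eold{093eLusin}$. Applying Proposition~\ref{prop:CompatibleGeoMain} to $a_{n_i+j}^{(1)}h_j.x\in\Kold{0901Kboundset}$ and decomposing the resulting composite via~\eqref{eq:gDecom} and Lemma~\ref{lem:comExp} gives $\alpha_j^{(\mathbf{H}_2)}\in B_{10\deold{085deltaFinal}}^{\mathbf{H}_2}$, $\alpha_j^{(\mathbf{Z}_2)}\in B_{2\deold{085deltaFinal}}^{\mathbf{Z}_2}$, and $\alpha_j^{(\mathrm{tr})}\in\Btr_{C_0\deold{085deltaFinal}\,e^{-(n_i+j)}}$ for some $C_0$ depending only on $\mathbf{G}_2$.

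To finish, composing the incremental moves for $j=0,\ldots,m-1$ and renormalising back into the local decomposition~\eqref{eq:gDecom} produces $\mathbf{H}_2$- and $\mathbf{Z}_2$-parts bounded by $10m\deold{085deltaFinal}$ and $2m\deold{085deltaFinal}$ respectively, while the transversal contributions form a geometric sum bounded by $2C_0\deold{085deltaFinal}\,e^{-n_i}\leq e^{-0.9n_i}$ once $n_i\geq\Rold{RinductionLemma1}$ is chosen large enough. The main obstacle will be the bookkeeping at the moment of composition: each $\mathbf{U}_2$-shift pushed through a later transversal factor acquires polynomial spread under $\Ad_{u^{(2)}_t}$, so Lemma~\ref{lem:zlConvertDirect} is needed to show that the transversal part stays within $\Btr_{e^{-0.9 n_i}}$, while the $\mathbf{Z}_2$-components are combined using the fact that $\mathbf{Z}_2$ commutes with $\mathbf{H}_2$ to keep track of the total $\mathbf{Z}_2$-displacement.
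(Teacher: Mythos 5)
Your overall toolkit is the right one (the $L^2$-ergodic theorem to find perturbations $h\in B^{\mathbf{H}_1}_{\eold{093eLusin}}$ landing the orbit in the good set, the overlap of consecutive good sets of perturbations, Lemma~\ref{lem:seqeunceXin}, Lemma~\ref{lem:diangonalConjugateKak}, Proposition~\ref{prop:CompatibleGeoMain}, and a final combination via Lemma~\ref{lem:comExp}), but your step-by-step scheme has a genuine gap at the ``detour steps''. There you apply Lemma~\ref{lem:seqeunceXin} with $h_1=e$, $h_2=h_j$ at time $t=n_i+j$; that lemma requires \emph{both} points $a^{(1)}_t h_1.x$ and $a^{(1)}_t h_2.x$ to lie in $\Kold{088Ksx}$, so with $h_1=e$ you need $a^{(1)}_{n_i+j}.x\in\Kold{088Ksx}$. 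But the intermediate times $n_i<n<n_{i+1}$ are exactly the times at which the true orbit point is not known to be in any good set (that is how the sequence $n_i(x)$ was defined), and in the detour regime you have explicitly assumed $a^{(1)}_{n_i+j}.x\notin\Kold{0901Kboundset}$, with no information about membership in $\Kold{088Ksx}$ either. The same problem recurs at the other end of the detour (time $n_i+j+1$). More fundamentally, your scheme needs to control the increment between $\psi_{n_i+j}(x)$ and $\psi_{n_i+j+1}(x)$ through the \emph{actual} orbit point at every intermediate time; when $a^{(1)}_{n_i+j}.x$ is bad, Proposition~\ref{prop:CompatibleGeoMain} still gives $\psi_{n_i+j+1}(x)=u^{(2)}_{s}.\psi_{n_i+j}(x)$ for some $s$, but with no bound whatsoever on $s$, and no detour through a nearby good point can recover such a bound without comparing $\psi$ at the bad point itself --- which is precisely what is unavailable.

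The way out (and the route the paper takes) is to never evaluate or compare $\psi$ along the unperturbed orbit at intermediate times. Set $S_n(x)=\{h\in B^{\mathbf{H}_1}_{\eold{093eLusin}}:a^{(1)}_n h.x\in\Kold{0902Kerg1}\}$; by the ergodic estimate $S_n(x)\cap S_{n+1}(x)\neq\emptyset$, so one can choose $h_n$ good at times $n$ \emph{and} $n+1$, and form the bridge points $y_n=a^{(1)}_n h_{n-1}.x$, $z_n=a^{(1)}_n h_n.x$, which satisfy $y_{n+1}=a^{(1)}_1 z_n$. The switch from $y_n$ to $z_n$ is handled by Lemma~\ref{lem:seqeunceXin} applied to the two \emph{perturbed} points (both in $\Kold{0902Kerg1}\subset\Kold{088Ksx}$), and the unit time step is handled by Proposition~\ref{prop:CompatibleGeoMain} at $z_n\in\Kold{0901Kboundset}$, giving after conjugation by $a^{(2)}_{-(n+1)}$ an increment $\sigma_n\in B^{\mathfrak l\oplus\tr}_{5\delta,\,\delta e^{-0.99n}}$ between $a^{(2)}_{-(n+1)}.\psi(y_{n+1})$ and $a^{(2)}_{-n}.\psi(y_n)$. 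Only at the two endpoints $n_i$ and $n_{i+1}$ --- which are good times for the true orbit by construction of the sequence, since $a^{(1)}_{n_i}.x\in\Kold{0903Kserg1}\subset\Kold{088Ksx}$ --- does one apply Lemma~\ref{lem:seqeunceXin} with $h_1=e$ to compare $a^{(2)}_{-n_i}.\psi(y_{n_i})$ with $\psi_{n_i}(x)$, and similarly at $n_{i+1}$. Your final bookkeeping (the linear-in-$(n_{i+1}-n_i)$ bounds on the $\mathbf{H}_2$- and $\mathbf{Z}_2$-parts, the geometric summation of the transversal parts using $n_{i+1}\leq 2n_i$, and the use of Lemma~\ref{lem:comExp}) is fine once the chain is built out of the bridge points rather than the true intermediate orbit.
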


\begin{proof}
We first specify the choice of $\Rold{RinductionLemma1}$ as in the following:
\begin{enumerate}[label=(\alph*)]
   
    \item As $\Kold{0911KdenErg1}\subset \Kold{0903Kserg1}$, item \ref{item:L2uniformErg} on p.~\pageref{item:L2uniformErg} implies that there exists  $N_{1}>\Rold{0843RN_211}$ such that for every $x\in\Kold{0911KdenErg1}$ and every $n\geq N_{1}$
     \[
     s_{n}(x)\geq 0.99.
     \]

    \item\label{eq:boundedGrowth1} By item \ref{item:uniformConvergeSeqErg} on p.~\pageref{item:uniformConvergeSeqErg}, there exists $N_{2}\in\N$ such that for every~$x\in\Kold{0911KdenErg1}$ and every~$n_i(x)\geq N_{2}$
    \begin{equation*}
     n_{i+1}(x)\leq 2n_i(x).
    \end{equation*}
    \item\label{item:constantChoice95} We define $\Rold{RinductionLemma1}$ as
    \begin{equation*}
        \begin{aligned}
\Rold{RinductionLemma1}=10^9\max(N_{1},N_{2}).   
        \end{aligned}
    \end{equation*}
    
\end{enumerate}

Let $S_n(x)$ be defined as following:
\[
S_n(x)=\{h\in B_{\eold{093eLusin}}^{\mathbf{H}_1}:a^{(1)}_{n}h.x\in\Kold{0902Kerg1}\}\subset\bigcup_{\absolute{s}\leq e^{2n}\eold{093eLusin}}\Kak\left(e^{2 n},\eold{093eLusin}, u_s^{(1)}.a^{(1)}_{n}.x\right).
\]
Here $\eold{093eLusin}$ is defined as in item \ref{item:L2uniformErg} on p.~\pageref{item:L2uniformErg}. Recall that if $n\geq \Rold{RinductionLemma1}/2$, then $s_n(x)\geq0.99$. Thus the definition of $s_n(x)$ and \ref{item:deltaFixed10} on p.~\pageref{item:deltaFixed10} give
\begin{equation*}
    m_{\mathbf{H}_1}\left(S_n(x)\right)\geq 0.95 m_{\mathbf{H}_1}\left(B_{\eold{093eLusin}}^{\mathbf{H}_1}\right).
\end{equation*}
This gives that
\[
S_{n+1}(x)\cap S_n(x)\neq\emptyset;
\]
let 
\[
h_n\in S_{n+1}(x)\cap S_n(x).
\]

\medskip

For any $n\geq \Rold{RinductionLemma1}$, we then define
\begin{equation*}
    \begin{aligned}
    y_n=a^{(1)}_{n}h_{n-1}.x,\qquad
    z_n=a^{(1)}_{n} h_n.x,
    \end{aligned}
\end{equation*} 
so $y_{n+1}=a_{1}^{(1)}z_{n}$. Since both $h_{n-1},h_n\in S_{n}(x)$, Lemma~\ref{lem:seqeunceXin} gives that
\begin{equation}\label{eq:CauchyInd11}
    \psi(z_{n})\in\Kak\left(e^{2 n},\delta,u_{t_{n,1}}^{(2)}.\psi(y_{n})\right),
\end{equation}
for some $\absolute{t_{n,1}}<2e^{2 n}\eold{093eLusin}$.

On the other hand, as $z_{n}\in\Kold{0902Kerg1}\subset \Kold{0901Kboundset}$ (cf.~\ref{item:L2uniformErg} on p.~\pageref{item:L2uniformErg}), there exists $\absolute{t_{n,2}}<\Rold{0841Rboundset}$ so that
\begin{equation}\label{eq:CauchyInd21}
    \psi(y_{n+1})=a_{1}^{(2)}u_{t_{n,2}}^{(2)}.\psi(z_{n}).
\end{equation}

Combining \eqref{eq:CauchyInd11} and \eqref{eq:CauchyInd21}, we obtain that
\[
\psi(y_{n+1})\in a_{1}^{(2)}u_{t_{n,2}}^{(2)}\Kak\left(e^{2n},\delta,u_{t_{n,1}}^{(2)}.\psi(y_{n})\right),
\]
hence applying Lemma~\ref{lem:diangonalConjugateKak} on $\Kak\left(e^{2n},\delta,u_{t_{n,1}}^{(2)}.\psi(y_{n})\right)$, we have
\begin{equation}\label{eq:iterationKak1}
a^{(2)}_{-(n+1)}.\psi(y_{n+1})=\sigma_n a^{(2)}_{- n}.\psi(y_{n}).
\end{equation}
with $\sigma_n \in B^{\mathfrak l \oplus \tr}_{5\delta,\:\delta e^{-0.99 n}}$. Here $B^{\mathfrak l \oplus \tr}_{r_1,r_2}$ is defined as in \eqref{eq:bch}, where $r_1$ specifies the size of this ball in the directions of $\mathbf {L}_2=\mathbf{H}_2 \cdot \mathbf{Z}_2$ while $r_2$ bounds the size in all the transverse directions.

\medskip

It follows that
\begin{equation*}
a^{(2)}_{-n_{i+1} }\psi(y_{n_{i+1}})=\left(\prod_{k=n_i}^{n_{i+1}-1}\sigma_k\right)a^{(2)}_{-n_{i} }.\psi(y_{n_{i}})
\end{equation*}
where $\sigma_k$ is defined as in \eqref{eq:iterationKak1}  for $k=n_i,\ldots,n_{i+1}-1$.
By equation~\eqref{eq:goodSeq1} for both $j=i,i+1$
\[
a_{n_j}^{(1)}.x\in \Kold{0903Kserg1}\cap \psi^{-1}(\Kold{0910KcomUniformErg})\subset \Kold{088Ksx}.
\]
As $y_{n_i} \in \Kak\left(e^{2n_{i}},\eold{093eLusin}, a_{ n_i}^{(1)}.x\right)$ and $y_{n_{i+1}} \in \Kak \left(e^{2n_{i+1}},\eold{093eLusin},a_{ n_{i+1}}^{(1)}.x\right)$,
applying Lemma~\ref{lem:seqeunceXin} twice we may now conclude that
\begin{equation}\label{eq:inductionij11}
    \begin{aligned}
    \psi_{n_{i+1}}(x) &= a^{(2)}_{-n_{i+1}}.\psi(a^{(2)}_{n_{i+1}}.x)\\
    &= \tilde{\sigma}_{n_{i+1}} a^{(2)}_{-n_{i+1}}. \psi(y_{n_{i+1}})\\
    &=\tilde{\sigma}_{n_{i+1}} \left(\prod_{k=n_i}^{n_{i+1}-1}\sigma_k\right) a^{(2)}_{- n_{i} }.\psi(y_{n_{i}})\\
    &= \tilde{\sigma}_{n_{i+1}} \left(\prod_{k=n_i}^{n_{i+1}-1}\sigma_k\right)\tilde{\sigma}_{n_i}.\psi_{n_i}(x),
    \end{aligned}
\end{equation}
where $\tilde{\sigma}_{n_{i+1}},\tilde{\sigma}_{n_i}\in B^{\mathfrak l \oplus \tr}_{5\delta,\:\delta e^{-0.99n_i}}$. As $\delta<10^{-5}$ and $n_{i+1}\leq 2n_i$, by Lemma~\ref{lem:comExp} we complete the proof.
\end{proof}

\subsubsection{Exponential decay of difference between successive renormalizations}\label{sec:expDecayUConnect}

By Proposition~\ref{prop:CompatibleGeoMain}, for a.e.\ $x$ and every $i$ there is a $t_i=t_i(x) \in \R$ such that $\psi_{n_{i+1}}(x)=u_{t_i}^{(2)}.\psi_{n_i}(x)$. By combining Lemma~\ref{lem:mostControlWeak} with Corollary~\ref{cor:globalRepEst}, we will show in Lemma~\ref{lem:convergenceAlongU0Pre} below that for $x$ in a good set of arbitrarily large measure we bound $t_i$ from above, showing it decays exponentially. This lemma will be an important step to complete the proof of Proposition~\ref{prop:cauchySeq}.
\begin{Lemma}\label{lem:convergenceAlongU0Pre}
There exist constants $\Rnew\label{RnewComparePre}>0$\index{$\Rold{RnewComparePre}$, Lemma~\ref{lem:convergenceAlongU0Pre}} such that for every $n_i\geq \Rold{RnewComparePre}$ and $x\in\Kold{093KTarPropDecay}$ the following hold:
    \begin{equation}\label{eq:convergenceAlongU0Pre}
         \psi_{n_{i+1}}(x)=u_{t_i}^{(2)}.\psi_{n_{i}}(x)\quad\text{with}\quad \absolute{t_i}\leq e^{-n_i}.   
    \end{equation}
\end{Lemma}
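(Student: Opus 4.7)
\emph{Approach.} By Remark~\ref{rmk:CPsiKak}, iterating $\psi(a^{(1)}_1 x)\in a^{(2)}_1 \mathbf{U}_2\psi(x)$ gives $\psi_n(x)=u^{(2)}_{T_n}\psi(x)$ for some measurable $T_n\in\R$, so the existence of $t_i\in\R$ in~\eqref{eq:convergenceAlongU0Pre} is automatic; only the exponential bound $|t_i|\leq e^{-n_i}$ remains. I plan to argue by contradiction, along the lines of the overview at the start of~\S\ref{sec:pfCau}: assume $|t_i|>e^{-n_i}$, fix a small auxiliary constant $\eta'$ of order $\neta$ (cf.~item~\ref{eq:eta0}), and change base point to $y=a^{(1)}_{-k}x$ for a carefully chosen $k$, so as to confront Lemma~\ref{lem:mostControlWeak} applied at $y$ with a direct application of Corollary~\ref{cor:globalRepEst} and Lemma~\ref{lem:HNint}.

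\emph{Change of base point and representation reduction.} Set $m_j:=n_j(x)+k$ and $\tau:=t_i e^{-2k}$. A short calculation using $\psi_m(z)=a^{(2)}_{-m}\psi(a^{(1)}_m z)$ gives $\psi_{m_j}(y)=a^{(2)}_{-k}\psi_{n_j}(x)$, hence $\psi_{m_{i+1}}(y)=u^{(2)}_\tau\psi_{m_i}(y)$, and $\{m_j\}$ is a sub-sequence of the return-time sequence for $y$. The ergodic controls in items~\ref{item:stayGoodErgodic} and~\ref{item:stayCompactErgodic}, applied to $x\in\Kold{093KTarPropDecay}$ and to $\psi(a^{(1)}_{n_i}x)\in\Kold{0910KcomUniformErg}$ respectively, ensure that all but a density $O(\epsilon)$ of $k\in[-n_i,n_i]$ satisfy both (a) $a^{(1)}_{-k}x\in\Kold{0911KdenErg1}$ and (b) $a^{(2)}_{-k}\psi_{n_i}(x)\in\Kold{0900KMeasEstExcepReturn}$; together with a crude a priori control on $|t_i|$ read off by applying Lemma~\ref{lem:mostControlWeak} at $x$, this lets me pick $k$ satisfying in addition (c) $\log|\tau|\in[\eta' n_i,\,2\eta' n_i]$, with $m_i=n_i+k$ still large, say $m_i\geq n_i/3$. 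For this $k$, Lemma~\ref{lem:mostControlWeak} applied at $y\in\Kold{0911KdenErg1}$ produces
\[
\psi_{m_{i+1}}(y)=hctr\cdot\psi_{m_i}(y),\qquad h\in B^{\mathbf{H}_2}_{10(n_{i+1}-n_i)\delta},\ c\in B^{\mathbf{Z}_2}_{2(n_{i+1}-n_i)\delta},\ tr\in\Btr_{e^{-0.9m_i}},
\]
and lifting to $\mathbf{G}_2$ yields $\gamma\in\Gamma_2$ with $u^{(2)}_\tau\tilde y_0=hctr\cdot\tilde y_0\gamma$, where $\tilde y_0:=\widetilde{\psi_{m_i}(y)}$ may be taken in $\widetilde{\Kold{089KpreRepCpt}}$ by (b). Since $u^{(2)}_\tau$ and $hc$ both lie in $\mathbf{L}_2$ and hence fix $v_{\mathbf{L}_2}$ under $\rho$, passing $tr^{-1}c^{-1}h^{-1}u^{(2)}_\tau=\tilde y_0\gamma\tilde y_0^{-1}$ through $\rho$ and acting by $\rho(\tilde y_0^{-1})$ gives
\[
\bigl\|\rho(\gamma)\rho(\tilde y_0^{-1})v_{\mathbf{L}_2}-\rho(\tilde y_0^{-1})v_{\mathbf{L}_2}\bigr\|_V\leq C e^{-0.9 m_i}.
\]

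\emph{Extracting the contradiction.} Set $\sigma\asymp e^{-0.9 m_i\deold{027dexpSmallnew}}$. A crude norm estimate on $\gamma$ from the displayed equation, together with the hierarchy $\eold{089esmallChoice}\ll\neta^2\ll\deold{027dexpSmallnew}^2\deold{0301Clattice}/\Cold{092CoeN}$ arranged in items~\ref{eq:eta0}--\ref{item:epsilonFixed}, gives $\|\gamma\|<\sigma^{-\deold{027dexpSmallnew}\deold{0301Clattice}/2}$. Since $\pi_2(\tilde y_0)\in\Kold{0900KMeasEstExcepReturn}$ avoids every $E_n$ for $n\geq\Rold{083RMeasEstExcepReturn}$ (item~\ref{eq:CoeN}), the definition of $V_\sigma$ in Corollary~\ref{cor:globalRepEst} (via item~\ref{item:repreEstFinal}) forces
\[
\min_{g\in\mathbf{N}_2}\|\gamma-g\|\leq\Cold{027CcoeSmall}\sigma.
\]
Writing $\gamma=gp$ with $g\in\mathbf{N}_2$ and $p$ exponentially close to the identity, substituting back and using the normality of $\mathbf{N}_2$ exhibits $tr^{-1}c^{-1}h^{-1}u^{(2)}_\tau\cdot(\text{small})$ as an element of $\mathbf{N}_2$. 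Decomposing via~\eqref{eq:gDecom} and invoking Lemma~\ref{lem:HNint} forces the $\mathbf{H}_2$-factor $h^{-1}u^{(2)}_\tau$ within Riemannian distance $O(n_i\delta)$ of the finite center of $\mathbf{H}_2$. Combined with Lemma~\ref{lem:matrixNorm} and item~\ref{item:091phiConstant}, this translates into $|\tau|\leq e^{\Cold{091phiConstant}\cdot O(n_i\delta)}$, contradicting $|\tau|\geq e^{\eta' n_i}$ from (c) since $\delta=\deold{085deltaFinal}\ll\eta'$ by items~\ref{item:deltaFixed10} and~\ref{item:epsilonFixed}.

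\emph{Main obstacle.} The sharpest point is securing the two ergodic conditions together with the quantitative $|\tau|$-window in a single choice of $k$; this requires an a priori control on $|t_i|$ extracted from Lemma~\ref{lem:mostControlWeak} at the original base point, and is enabled by the delicate hierarchy of small constants arranged in items~\ref{eq:eta0}--\ref{item:deltaFixed10}, which simultaneously rules out the norm branch of Corollary~\ref{cor:globalRepEst} and makes the final contradiction via Lemma~\ref{lem:HNint} effective.
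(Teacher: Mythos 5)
Your overall architecture is the same as the paper's: get existence of $t_i$ from Proposition~\ref{prop:CompatibleGeoMain}, assume $\absolute{t_i}>e^{-n_i}$, move the base point by $a^{(1)}_{-k}$ so that the renormalized displacement $\tau=t_ie^{-2k}$ sits in a prescribed exponential window, confront Lemma~\ref{lem:mostControlWeak} at the new base point with the Chevalley representation, rule out the norm/distance-to-$\mathbf{N}_2$ branches so that Corollary~\ref{cor:globalRepEst} applies, and finish with Lemma~\ref{lem:HNint} plus Lemma~\ref{lem:matrixNorm} to force $\absolute{\tau}\leq e^{O(n_i\delta)}$, contradicting the window. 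The endgame of your argument is essentially the paper's Claims~\ref{claim:convergenceAlongU11} and~\ref{claim:convergenceAlongU21} (your variant that asks for $a^{(2)}_{-k}\psi_{n_i}(x)\in\Kold{0900KMeasEstExcepReturn}$ at the same $k$, instead of the paper's extra shift $a^{(2)}_\ell$ with $\ell\in[0,\eta(n_i+k)]$, is a benign repackaging).

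The genuine gap is in your selection of $k$, specifically the step ``a crude a priori control on $\absolute{t_i}$ read off by applying Lemma~\ref{lem:mostControlWeak} at the original base point.'' No such control is available: Lemma~\ref{lem:mostControlWeak} is a statement in $\mathbf{G}_2/\Gamma_2$, so comparing it with $\psi_{n_{i+1}}(x)=u^{(2)}_{t_i}\psi_{n_i}(x)$ only yields $u^{(2)}_{-t_i}h_{(i)}c_{(i)}tr_{(i)}\in \tilde z\,\Gamma_2\,\tilde z^{-1}$ for a lift $\tilde z$ of $\psi_{n_i}(x)$, and since $\psi_{n_i}(x)=a^{(2)}_{-n_i}\psi(a^{(1)}_{n_i}x)$ may have arbitrarily small injectivity radius, the lattice element need not be trivial and $\absolute{t_i}$ can be arbitrarily large; eliminating that lattice element is exactly the hard part you are trying to prove, so the argument becomes circular (indeed the absence of any a priori bound on the cocycle is the difficulty the paper highlights at the start of \S\ref{sec:renormalization} and \S\ref{sec:pfCau}). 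Without a bound of the form $\absolute{t_i}\leq e^{Cn_i}$, your condition (c) confines $k$ to a window of fixed length $\asymp\eta' n_i$ that may be centered at $k\approx\tfrac12\log\absolute{t_i}\gg n_i$, where the exceptional sets of size $O(\epsilon\max(n_i,k))$ coming from items~\ref{item:stayGoodErgodic} and~\ref{item:stayCompactErgodic} can swallow the whole window, so no admissible $k$ need exist. The paper avoids this by taking the asymmetric window $e^{\eta n_i}\leq\absolute{t_ie^{-2k}}\leq e^{10\eta(n_i+k)}$ in \eqref{eq:tiInitialSmall}, whose length grows linearly in $n_i+k$; then a good $k$ exists no matter how large $t_i$ is, and all subsequent estimates (the $tr$-size $e^{-0.9(n_i+k)}$, the bound $\norm{\gamma}\leq e^{10^2(\dim\mathfrak g_2)^2\eta(n_i+k)}$, the threshold $e^{-(n_i+k)\Cold{092CoeN}/2}$) are run at the scale $n_i+k$ rather than $n_i$. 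To repair your write-up you should replace (c) by such a $k$-dependent window (or otherwise remove the reliance on an a priori bound for $t_i$); the rest of your argument then goes through as in the paper.
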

In \eqref{eq:convergenceAlongU0Pre} and below in the proof we shall sometimes write $n_i$ and $t_i$ (without explicit mention of a point) for $n_i(x)$ or $t_i(x)$ respectively.

\begin{proof}
    Fix a $x\in\Kold{093KTarPropDecay}$, here $\Kold{093KTarPropDecay}$ is the good set obtained in \ref{item:stayGoodErgodic} on p.~\pageref{item:stayGoodErgodic}.
    \begin{itemize}
        \item Let $\Rold{RinductionLemma1}$ be defined as in Lemma~\ref{lem:mostControlWeak}.
        \item Recall that $\Kold{0911KdenErg1}$ is defined in \ref{item:uniformConvergeSeqErg} on p.~\pageref{item:uniformConvergeSeqErg}. By items~\ref{item:stayCompactErgodic} and \ref{item:uniformConvergeSeqErg} on p.~\pageref{item:stayCompactErgodic}  and the construction of $\{n_i(x)\}_{i\in\N}$ (cf.~\eqref{eq:goodSeq1}), there exists $N_{1}\in\N$ such that for every $x\in\Kold{0911KdenErg1}$ and every $n_i(x)\geq N_{1}$, there exists $\ell=\ell_i(x)\in[0,\eta n_i(x)]$ such that
    \begin{equation*}
    a_{\ell}^{(2)}.\psi_{n_i(x)}(x)\in \Kold{0900KMeasEstExcepReturn}.
    \end{equation*}
    Here $\Kold{0900KMeasEstExcepReturn}$ is defined in \ref{eq:CoeN} on p.~\pageref{eq:CoeN}.
    \item We then define
\begin{equation}\label{eq:RinterBound}
\Rold{RnewComparePre}=10^9\eta^{-1}\max(\Rold{RinductionLemma1},N_1).
\end{equation}
From now on we assume $n_i(x)\geq \Rold{RnewComparePre}$.
    \end{itemize}

    \medskip
    
    By Proposition~\ref{prop:CompatibleGeoMain}, there exists $t_i=t_{i}(x)\in\R$ such that
\begin{equation}\label{eq:Uconnect0}
    \psi_{n_{i+1}}(x)=u_{t_{i}}^{(2)}.\psi_{n_i}(x).
\end{equation}

We proceed proof by contradiction. Suppose that
\begin{equation}\label{eq:contradictionTarget}
    \absolute{t_i}> e^{-n_i(x)}.
\end{equation}

\medskip

Since $1\gg\eta\gg\epsilon$ (cf.~\ref{item:epsilonFixed} on p.~\pageref{item:deltaFixed10}), by definition of $\Kold{093KTarPropDecay}$ there exists $k=k(x)\in\Z$ so that
    \begin{equation}\label{eq:tiInitialSmall}
     a_{-k}^{(1)}.x\in \Kold{0911KdenErg1},\quad k\geq -2n_i(x)/3,\quad  e^{\eta n_i(x)}\leq\absolute{t_ie^{-2k}}\leq e^{10\eta (n_i(x)+k)},
    \end{equation}
    where $\neta>0$ is defined in item \ref{eq:eta0} on p.~\pageref{eq:eta0}. Notice that the definition of $n_i(x)$ in \eqref{eq:goodSeq1} implies that there exists $j\in\N$ such that
\begin{equation*}
    n_j(a_{-k}^{(1)}.x)=n_i(x)+k,\qquad n_{j+1}(a_{-k}^{(1)}.x)=n_{i+1}(x)+k.
\end{equation*}
In particular, the choices of $k$ and $n_i(x)$ guarantee that 
\[
n_j(a_{k}^{(1)}.x)=n_i(x)+k\geq n_i(x)/3 \geq\Rold{RnewComparePre}/3.
\]

\medskip

    By item~\ref{eq:boundedGrowth1} on p.~\pageref{eq:boundedGrowth1} we have $n_{i+1}(x)<2n_i(x)$. Applying Lemma~\ref{lem:mostControlWeak} for $a_{-k}^{(1)}.x$ and $n_{j}(a_{-k}^{(1)}.x)$, then 
    \begin{equation}\label{eq:inductiveControl1}
        \psi_{n_{i+1}+k}(a_{-k}^{(1)}.x)= h_{(j)}c_{(j)}tr_{(j)}.\psi_{n_{i}+k}(a_{-k}^{(1)}.x),
\end{equation}
where
\begin{equation}\label{eq:coeSmallShrinking1}
    \begin{alignedat}{3}
    h_{(j)}\in B^{\mathbf{H}_2}_{10n_i(x)\delta},\qquad c_{(j)}\in B_{2n_i(x)\delta}^{\mathbf{Z}_2},\qquad 
   tr_{(j)}\in \Btr_{e^{-0.9(n_i(x)+k)}}. 
    \end{alignedat}
\end{equation}
Equation \eqref{eq:Uconnect0} implies (directly from the definition of $\psi_n$) that
\[
\psi_{n_{i+1}+k}(a_{-k}^{(1)}.x)= u_{t_{i}e^{-2k}}^{(2)}.\psi_{n_{i}+k}(a_{-k}^{(1)}.x).
\]
Comparing this with \eqref{eq:inductiveControl1} we get that
\begin{equation}\label{eq:firstShrinkingEquation11}
    u_{t_{i}e^{-2k}}^{(2)}.\psi_{n_{i}+k}(a_{-k}^{(1)}.x)=h_{(j)}c_{(j)}tr_{(j)}.\psi_{n_{i}+k}(a_{-k}^{(1)}.x).
\end{equation}

\medskip
We want to conclude from \eqref{eq:firstShrinkingEquation11} and \eqref{eq:coeSmallShrinking1} a bound for the size of $t_{i}e^{-2k}$, namely  $O(\exp(10n_i(x)\delta))$. To do this, we first need two additional claims.
 
\begin{Claim}\label{claim:convergenceAlongU11}
Let $x$, $n_i(x)$, $j$, $k$, $t_i$, $h_{(j)}$, $c_{(j)}$ and $tr_{(j)}$ be as in \eqref{eq:firstShrinkingEquation11}, then there exist 
\[\bar{y}\in \widetilde{\Kold{089KpreRepCpt}},\qquad \ell=\ell_j(a_{-k}^{(1)}.x)\in[0,\eta (n_i(x)+k)], \qquad\gamma\in\Gamma_2\]
such that 
\begin{equation*}
    u_{t_ie^{-2k}}^{(2)}a_{-\ell}^{(2)}\bar{y}\gamma=h_{(j)}c_{(j)}tr_{(j)}a_{-\ell}^{(2)}\bar{y}
\end{equation*}
and moreover
\[
\min_{g\in \mathbf{N}_2}\norm{\gamma-g}< e^{-(n_i(x)+k)\Cold{092CoeN}/2}.
\]
\end{Claim}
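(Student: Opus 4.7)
The plan is to lift equation~\eqref{eq:firstShrinkingEquation11} from $\mathbf{G}_2/\Gamma_2$ to $\mathbf{G}_2$, write the resulting $\gamma\in\Gamma_2$ as $\bar y^{-1}\cdot m\cdot m'\cdot\bar y$ with $m\in\mathbf{L}_2$ and $m'$ an exponentially small transverse correction, and then read off the claim from the fact that $\bar y$ was chosen to avoid the exceptional sets $V_\sigma$ of Corollary~\ref{cor:globalRepEst}. For the good return time: since $a_{-k}^{(1)}.x\in\Kold{0911KdenErg1}\subset\psi^{-1}(\Kold{0910KcomUniformErg})$ and $n_j(a_{-k}^{(1)}.x)=n_i+k\geq \Rold{RnewComparePre}/3$, the construction of $N_1$ at the start of the proof of Lemma~\ref{lem:convergenceAlongU0Pre} furnishes $\ell=\ell_j(a_{-k}^{(1)}.x)\in[0,\neta(n_i+k)]$ with $a_\ell^{(2)}.\psi_{n_i+k}(a_{-k}^{(1)}.x)\in\Kold{0900KMeasEstExcepReturn}\subset\Kold{089KpreRepCpt}$. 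I would fix a lift $\bar y\in\widetilde{\Kold{089KpreRepCpt}}$ of this point, so that $a_{-\ell}^{(2)}\bar y$ is a lift of $\psi_{n_i+k}(a_{-k}^{(1)}.x)$, and then~\eqref{eq:firstShrinkingEquation11} lifts uniquely to the claimed identity for a unique $\gamma\in\Gamma_2$.

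Solving for $\gamma$ gives $\gamma=\bar y^{-1}\,m\,m'\,\bar y$ with
\[
m:=a_\ell^{(2)}u_{-t_ie^{-2k}}^{(2)}h_{(j)}c_{(j)}a_{-\ell}^{(2)},\qquad m':=a_\ell^{(2)}tr_{(j)}a_{-\ell}^{(2)}.
\]
Because $\mathbf{H}_2,\mathbf{Z}_2\subset\mathbf{L}_2$ and $\mathbf{L}_2$ is normalized by $a_\ell^{(2)}\in\mathbf{H}_2$, one has $m\in\mathbf{L}_2$ (hence $\rho(m)v_{\mathbf{L}_2}=v_{\mathbf{L}_2}$), while $m'$ remains purely transverse. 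Expanding $tr_{(j)}$ in the chain basis of~\S\ref{sec:sl2basis} and using~\eqref{eq:sl2relation} together with Lemma~\ref{lem:comExp}, the $a_\ell^{(2)}$-conjugation multiplies the $\bx_2^{i,j}$-coefficient of $tr_{(j)}$ by $e^{\ell(2i-q_j^{(2)})}$, so from $tr_{(j)}\in\Btr_{e^{-0.9(n_i+k)}}$, $\ell\leq\neta(n_i+k)$, the smallness of $\neta$ from~\ref{eq:eta0}, and Lemma~\ref{lem:matrixNorm} I would deduce $\|m'-e\|\leq e^{-0.8(n_i+k)}$. A parallel estimate using~\eqref{eq:tiInitialSmall} to bound $|t_ie^{-2k+2\ell}|\leq e^{12\neta(n_i+k)}$, the commutation of $c_{(j)}$ with $a_\ell^{(2)}$, $\|h_{(j)}\|\leq e^{\neta(n_i+k)}$ from $\delta=\deold{085deltaFinal}$ being tiny, and Lemma~\ref{lem:matrixNorm} again, yields $\|m\|\leq e^{C_0\neta(n_i+k)}$ and therefore $\|\gamma\|\leq e^{2C_0\neta(n_i+k)}$, with $C_0=C_0(\mathbf{G}_2)$.

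Set $\sigma:=e^{-(n_i+k)\Cold{092CoeN}}$. Using $\rho(m)v_{\mathbf{L}_2}=v_{\mathbf{L}_2}$ and $\|\rho(g)\|\leq C\|g\|^{\deg\rho}$,
\[
\|\rho(\gamma\bar y^{-1})v_{\mathbf{L}_2}-\rho(\bar y^{-1})v_{\mathbf{L}_2}\|=\|\rho(\bar y^{-1})\rho(m)\bigl(\rho(m')v_{\mathbf{L}_2}-v_{\mathbf{L}_2}\bigr)\|\leq e^{-0.7(n_i+k)},
\]
the last inequality being enforced by the choice of $\neta$ in~\ref{eq:eta0} (which absorbs $C_0\deg(\rho)\cdot\neta$ well inside $0.1$); by direct comparison with the explicit value of $\Cold{092CoeN}$ one also has $\|\gamma\|\leq\sigma^{-\deold{027dexpSmallnew}\deold{0301Clattice}/2}$ and $e^{-0.7(n_i+k)}<\sigma^{1/\deold{027dexpSmallnew}}$. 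Because $n_i+k\geq\Rold{083RMeasEstExcepReturn}$ and $\pi_2(\bar y)\in\Kold{0900KMeasEstExcepReturn}\subset\Kold{089KpreRepCpt}\setminus E_{n_i+k}$, we have $\bar y\notin V_\sigma$ by the construction of $\Kold{0900KMeasEstExcepReturn}$ in~\ref{item:repreEstFinal} and~\ref{eq:CoeN}. The contrapositive of the defining property of $V_\sigma$ then rules out $\min_{g\in\mathbf{N}_2}\|\gamma-g\|\geq\Cold{027CcoeSmall}\sigma$, whence
\[
\min_{g\in\mathbf{N}_2}\|\gamma-g\|<\Cold{027CcoeSmall}\,e^{-(n_i+k)\Cold{092CoeN}}<e^{-(n_i+k)\Cold{092CoeN}/2}
\]
for $n_i+k$ large (absorbing $\Cold{027CcoeSmall}$ into the exponent), which is ensured by taking $\Rold{RnewComparePre}$ sufficiently large.

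The main technical obstacle lies in the middle step: the $a_\ell^{(2)}$-conjugation can inflate the norms of both $m$ and $m'$ by factors up to $e^{Q_2\neta(n_i+k)}$, and applying $\rho$ costs another $\deg(\rho)$-th power, so one must show that the bare exponential gain $e^{-0.9(n_i+k)}$ from $tr_{(j)}\in\Btr_{e^{-0.9(n_i+k)}}$ still strictly dominates, with enough slack to sit beneath the threshold $\sigma^{1/\deold{027dexpSmallnew}}$ and simultaneously keep $\|\gamma\|$ beneath $\sigma^{-\deold{027dexpSmallnew}\deold{0301Clattice}/2}$. This is precisely why the constants $\neta$ in~\ref{eq:eta0} and $\Cold{092CoeN}$ in~\ref{eq:CoeN} were chosen so restrictively, with $\neta\propto\deold{027dexpSmallnew}^{2}\deold{0301Clattice}/(\dim\mathfrak{g}_2\cdot\deg\rho)^{99}$: this lets both amplifications be swallowed while still leaving a usable gain in the final exponent.
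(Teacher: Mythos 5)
Your proposal is correct and follows essentially the same route as the paper's proof: choose $\ell$ from the return of $\psi_{n_i+k}(a_{-k}^{(1)}.x)$ to $\Kold{0900KMeasEstExcepReturn}$, lift \eqref{eq:firstShrinkingEquation11} at the lift $a_{-\ell}^{(2)}\bar y$, write the conjugating element as an $\mathbf{L}_2$-part times an $a_\ell^{(2)}$-contracted (hence exponentially small) transverse part to get the bounds $\norm{\gamma}\leq e^{O((\dim\mathfrak g_2)^2\neta(n_i+k))}$ and $\norm{\rho(\gamma\bar y^{-1})v_{\mathbf{L}_2}-\rho(\bar y^{-1})v_{\mathbf{L}_2}}\leq e^{-0.7(n_i+k)}$, and conclude from $\bar y\notin V_{e^{-(n_i+k)\Cold{092CoeN}}}$ via the construction of $\Kold{0900KMeasEstExcepReturn}$. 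The only differences are cosmetic (your explicit $m\,m'$ decomposition versus the paper's direct manipulation of $\rho(\gamma\bar y^{-1}a_\ell^{(2)}tr_{(j)}^{-1}c_{(j)}^{-1}h_{(j)}^{-1})v_{\mathbf{L}_2}$), and your constant bookkeeping is consistent with the choices of $\neta$ and $\Cold{092CoeN}$.
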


\medskip
\noindent
Recall that $\widetilde{\Kold{089KpreRepCpt}}$ is a compact subset of $\mathbf{G}_2$  fixed in \ref{item:KtildeCpt} on  p.~\pageref{089KpreRepCpt}.

\medskip

\begin{proof}[Proof of Claim~\ref{claim:convergenceAlongU11}]

Since $x\in\Kold{093KTarPropDecay}$, we obtain that 
\[
\psi(a_{n_{j}(a_{-k}^{(1)}.x)-k}^{(1)}.x)=\psi(a_{n_i}^{(1)}.x)\in \Kold{0910KcomUniformErg}.
\]
This together with $n_i(x)+k\geq \Rold{RnewComparePre}/3$, choice of $\Rold{RnewComparePre}$ (cf.~\eqref{eq:RinterBound}) and definition of $\Kold{0910KcomUniformErg}$ (cf.~\ref{item:stayCompactErgodic} on p.~\pageref{item:stayCompactErgodic}) implies that
 there exists   
\[
\ell\in[0,\eta(n_i(x)+k)]
\]
such that
    \begin{equation*}
    a_{\ell}^{(2)}.\psi_{n_i+k}(a_{-k}^{(1)}.x)=a_{\ell-n_i-k}^{(2)}.\psi(a_{n_i}^{(1)}.x)\in \Kold{0900KMeasEstExcepReturn}.
    \end{equation*}
Since $\Kold{0900KMeasEstExcepReturn}\subset\Kold{089KpreRepCpt}\subset\pi_2(\widetilde{\Kold{089KpreRepCpt}})$, there exists $\bar{y}\in \widetilde{\Kold{089KpreRepCpt}}$ such that
\[\pi_{2}(\bar{y})=a_{\ell}^{(2)}.\psi_{n_i+k}(a_{-k}^{(1)}.x),\] 
and there exists $\gamma\in\Gamma_2$ such that 
\begin{equation}\label{eq:inductionNi}
u_{t_ie^{-2k}}^{(2)}a_{-\ell}^{(2)}\bar{y}\gamma=h_{(j)}c_{(j)}tr_{(j)}a_{-\ell}^{(2)}\bar{y}.
\end{equation}
The above equation gives that
\begin{equation}\label{eq:gammaEst1}
    \norm{\gamma}\leq e^{10^2(\dim\mathfrak{g}_2)^2\eta (n_i(x)+k)}.
\end{equation}
Since $v_{\mathbf{L}_2}$ is fixed by $\rho(\mathbf{L}_2)$
(cf.~\ref{item:rhoInvariantVector1} on p.~\pageref{item:rhoInvariantVector1})
\begin{equation}\label{eq:inducRep01}
    \begin{aligned}
        \rho(\gamma \bar{y}^{-1}a_{\ell}^{(2)}&tr_{(j)}^{-1}c_{(j)}^{-1}h_{(j)}^{-1}).v_{\mathbf{L}_2}\\
        =&\rho(\gamma \bar{y}^{-1}a_{\ell}^{(2)}tr_{(j)}^{-1}).v_{\mathbf{L}_2}=\rho(\gamma \bar{y}^{-1}a_{\ell}^{(2)}tr_{(j)}^{-1}a_{-\ell}^{(2)}).v_{\mathbf{L}_2}\\
        =&\rho(\gamma \bar{y}^{-1}).(v_{\mathbf{L}_2}+v_1),
    \end{aligned}
\end{equation}
where (in view of our choice of $\neta$, cf.~\ref{eq:eta0} on p.~\pageref{eq:eta0}) the vector $v_1$ satisfies $\norm{v_1}_{V}\leq e^{-0.8(n_i(x)+k)}$.
Equations~\eqref{eq:gammaEst1} and \eqref{eq:inducRep01} imply
\begin{equation}\label{eq:inducRep11}
    \begin{aligned}
        \rho(\gamma \bar{y}^{-1}a_{\ell}^{(2)}tr_{(j)}^{-1}c_{(j)}^{-1}h_{(j)}^{-1}).v_{\mathbf{L}_2}=\rho(\gamma \bar{y}^{-1}).v_{\mathbf{L}_2}+v_2,
    \end{aligned}
\end{equation}
where $v_2=\rho(\gamma \bar{y}^{-1}).v_1$ satisfies $\norm{v_2}_V\leq e^{-0.75(n_i(x)+k)}$.
Note also that
\begin{equation}\label{eq:inducRep21}
    \begin{aligned}
&\rho(\bar{y}^{-1}a_{\ell}^{(2)}u_{-t_ie^{-2k}}^{(2)}).v_{\mathbf{L}_2}=\rho(\bar{y}^{-1}).v_{\mathbf{L}_2}.
    \end{aligned}
\end{equation}

Combining \eqref{eq:inducRep11} and \eqref{eq:inducRep21}, we obtain that
\[
\norm{\rho(\gamma \bar{y}^{-1}).v_{\mathbf{L}_2}-\rho(\bar{y}^{-1}).v_{\mathbf{L}_2}}\leq e^{-0.7(n_i(x)+k)}.
\]
This together with \eqref{eq:gammaEst1}, the fact that $\pi_2(\bar{y})\in\Kold{0900KMeasEstExcepReturn}$, and items~\ref{item:repreEstFinal} and \ref{eq:CoeN} on p.~\pageref{item:repreEstFinal} (essentially, by Corollary~\ref{cor:globalRepEst}) guarantees that 
\[
\min_{g\in \mathbf{N}_2}\norm{\gamma-g}< \Cold{027CcoeSmall}e^{-(n_i(x)+k)\Cold{092CoeN}},
\]
where $\Cold{092CoeN}$ is defined in \ref{eq:CoeN} on p.~\pageref{eq:CoeN}. This completes the proof of Claim~\ref{claim:convergenceAlongU11}.
\end{proof}

\medskip

Recall that $\deold{007mnC1}$ is defined as in Lemma~\ref{lem:matrixNorm} with $\mathbf{G}=\mathbf{G}_2$. Then we have the following claim:
\begin{Claim}\label{claim:convergenceAlongU21}
For $t_i$ and $k$ defined as in Claim~\ref{claim:convergenceAlongU11}, we have
\begin{equation*}
\absolute{e^{-2k}t_i}\leq e^{30n_i(x)\delta/\deold{007mnC1}}.
\end{equation*}
\end{Claim}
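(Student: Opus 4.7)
The plan is to upper-bound $\norm{u^{(2)}_{t_i e^{-2k}}}$ by $e^{O(n_i\delta/\deold{007mnC1})}$ and then invoke the polynomial bound $\Cold{091phiConstant}$ from \ref{item:091phiConstant}. Setting $\bar A=a^{(2)}_{-\ell}\bar y$, the equation from Claim~\ref{claim:convergenceAlongU11} rearranges to
\[
u^{(2)}_{t_i e^{-2k}}=h_{(j)}c_{(j)}tr_{(j)}\,\bar A\gamma^{-1}\bar A^{-1}.
\]
By \eqref{eq:coeSmallShrinking1} and Lemma~\ref{lem:matrixNorm}, the factors $h_{(j)},c_{(j)},tr_{(j)}$ already have norm bounded by $e^{O(n_i\delta/\deold{007mnC1})}$. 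The entire task is therefore to estimate $\bar A\gamma^{-1}\bar A^{-1}$, where the crude bound using only $\norm{\gamma}\leq e^{O(\eta n_i)}$ from the proof of Claim~\ref{claim:convergenceAlongU11} would give the useless bound $e^{O(\eta n_i)}$.

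First I exploit the sharper conclusion of Claim~\ref{claim:convergenceAlongU11}: choose $g_0\in\mathbf{N}_2$ with $\norm{\gamma-g_0}<e^{-(n_i+k)\Cold{092CoeN}/2}$ and write $\gamma=p\,g_0$. The choice of $\Cold{092CoeN}$ in \ref{eq:CoeN} (which dominates $\eta$ by a large factor) combined with the Cayley--Hamilton bound $\norm{g_0^{-1}}\lesssim\norm{g_0}^{N-1}$ forces $\norm{p-e}\leq e^{-c_1(n_i+k)}$ for some $c_1>0$; conjugation by $\bar A$ (whose norm is at most $e^{\eta(n_i+k)/\deold{007mnC1}}$) then gives
\[
\bar A\gamma^{-1}\bar A^{-1}=m_A\,\xi,\qquad m_A:=\bar A g_0^{-1}\bar A^{-1}\in\mathbf{N}_2,\qquad\norm{\xi-e}\leq e^{-c_2(n_i+k)}.
\]
The inclusion $m_A\in\mathbf{N}_2$ uses that $\mathbf{N}_2$ is normal in $\mathbf{G}_2$. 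Setting $h_1:=h_{(j)}^{-1}u^{(2)}_{t_ie^{-2k}}\in\mathbf{H}_2$, commuting $c_{(j)}^{-1}\in\mathbf{Z}_2=C(\mathbf{H}_2)$ past $h_1$, and using BCH to move $(tr_{(j)})^{-1}$ and $\xi^{-1}$ to the right yields
\[
m_A=h_1\cdot c_{(j)}^{-1}\cdot p_1
\]
with $p_1:=c_{(j)}\bigl(h_1^{-1}(tr_{(j)})^{-1}h_1\bigr)c_{(j)}^{-1}\,\xi^{-1}$. The size of $p_1$ is controlled by $\norm{c_{(j)}}^2\norm{h_1}^2\norm{(tr_{(j)})^{-1}-e}+\norm{\xi^{-1}-e}$; using the a priori bound $\norm{h_1}\leq e^{O(\eta n_i)}$ from \eqref{eq:tiInitialSmall} via \ref{item:091phiConstant}, the estimate $tr_{(j)}\in\Btr_{e^{-0.9(n_i+k)}}$ from \eqref{eq:coeSmallShrinking1}, and the calibration $\eta\ll c_2,0.9$, one gets $\norm{p_1-e}\leq e^{-c_3 n_i}$ and hence $d_{\mathbf{G}_2}(p_1,e)<\eold{033enbh}$ for $n_i$ sufficiently large (by Lemma~\ref{cor:metricRel}).

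Now Lemma~\ref{lem:HNint} applies to $m_A=h_1\cdot c_{(j)}^{-1}\cdot p_1\in\mathbf{N}_2$, yielding that $h_1$ lies within distance $\Cold{033CnormRadius}\,d_{\mathbf{G}_2}(p_1,e)$ of the center of $\mathbf{H}_2$, which is finite. Thus $\norm{h_1}$ is bounded by an absolute constant, and
\[
\norm{u^{(2)}_{t_ie^{-2k}}}=\norm{h_{(j)}h_1}\leq\Cold{007mnC2}e^{10n_i\delta/\deold{007mnC1}}\cdot O(1)\leq e^{(10+o(1))n_i\delta/\deold{007mnC1}}
\]
for $n_i\geq\Rold{RnewComparePre}$ suitably large. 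Applying \ref{item:091phiConstant} gives $|t_ie^{-2k}|\leq e^{\Cold{091phiConstant}\cdot(10+o(1))n_i\delta/\deold{007mnC1}}\leq e^{30n_i\delta/\deold{007mnC1}}$, after absorbing all bounded multiplicative constants using the fact that $n_i$ is large. The main obstacle in carrying out this plan is verifying the smallness of $p_1$: the BCH step introduces a factor of $\norm{h_1c_{(j)}^{-1}}^2$ which can only be controlled through the crude a priori bound $|t_ie^{-2k}|\leq e^{10\eta(n_i+k)}$ from \eqref{eq:tiInitialSmall}, and the competition between this polynomial blow-up, the super-exponential smallness $e^{-0.9(n_i+k)}$ of $tr_{(j)}$, and the $\Cold{092CoeN}$-driven smallness of $\xi$ is precisely what the calibration of constants in \ref{eq:eta0}--\ref{eq:CoeN} was designed to secure.
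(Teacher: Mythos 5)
Your proposal is correct in substance and follows essentially the same route as the paper: use the $\Cold{092CoeN}$-closeness of $\gamma$ to the normal core $\mathbf{N}_2$ together with normality to transfer near-membership in $\mathbf{N}_2$ to the product $u^{(2)}_{t_ie^{-2k}}h_{(j)}c_{(j)}tr_{(j)}$ (the paper does this by conjugating the relation and perturbing by a metrically small $g_1$ via Lemma~\ref{cor:metricRel}, you do it by factoring $\gamma=pg_0$ and conjugating by $\bar A$ — a cosmetic difference), then apply Lemma~\ref{lem:HNint} to force the $\mathbf{H}_2$-component near the finite center, and finish with Lemma~\ref{lem:matrixNorm} and \ref{item:091phiConstant}.

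One small point: your last step, absorbing $\Cold{091phiConstant}$ ``using the fact that $n_i$ is large,'' is not legitimate, since $\Cold{091phiConstant}$ multiplies the exponent $n_i\delta/\deold{007mnC1}$ rather than the whole quantity; what your argument (and the paper's own proof) actually yields is $\absolute{t_ie^{-2k}}\leq e^{30\Cold{091phiConstant}n_i(x)\delta/\deold{007mnC1}}$, which is also the form used downstream in the proof of Lemma~\ref{lem:convergenceAlongU0Pre}, so you should simply keep the factor $\Cold{091phiConstant}$ (the claim as displayed omits it, apparently a slip in the paper).
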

\begin{proof}[Proof of Claim~\ref{claim:convergenceAlongU21}]
By Claim~\ref{claim:convergenceAlongU11}, there exist  
\[
\ell\in[0,\eta (n_i(x)+k)],\qquad \bar{y}\in \widetilde{\Kold{089KpreRepCpt}},\qquad \gamma\in\Gamma_2
\]
such that for $h_{(j)}$, $c_{(j)}$ and $tr_{(j)}$ as in \eqref{eq:firstShrinkingEquation11}, we have that
\begin{equation*}
    u_{t_ie^{-2k}}^{(2)}a_{-\ell}^{(2)}\bar{y}\gamma=h_{(j)}c_{(j)}tr_{(j)}a_{-\ell}^{(2)}\bar{y}
\end{equation*}
and moreover
\begin{equation}\label{eq:gammaNik0}
    \min_{g\in \mathbf{N}_2}\norm{\gamma-g}< e^{-(n_i(x)+k)\Cold{092CoeN}/2}.
\end{equation}

Recall that $\mathbf{N}_2$ is a normal subgroup of $\mathbf{G}_2$ (cf.~\ref{item:rhoInvariantVector1} on p.~\pageref{item:rhoInvariantVector1}). Since $\bar{y}\in\widetilde{\Kold{089KpreRepCpt}}$, equation \eqref{eq:gammaNik0} implies that there is a constant $\kappa_1>0$, independent of $i$ and $x$, so that
\[
\min_{g\in \mathbf{N}_2}\norm{a_{\ell}^{(2)} u_{-t_ie^{-2k}}^{(2)}h_{(j)}c_{(j)}tr_{(j)}a_{-\ell}^{(2)}-g}\leq\kappa_1e^{-(n_i(x)+k)\Cold{092CoeN}/2}.
\]
By choice of $\Cold{092CoeN}$ (cf.~\ref{eq:CoeN} on p.~\pageref{eq:CoeN}) and the bound we have on $\ell$ we may conclude that
\[
\min_{g\in \mathbf{N}_2}\norm{u_{-t_ie^{-2k}}^{(2)}h_{(j)}c_{(j)}tr_{(j)}-g}\leq\kappa_1e^{-(n_i(x)+k)\Cold{092CoeN}/4}.
\]
This together with Lemma~\ref{cor:metricRel} implies that there is a $g \in \mathbf{N}$ so that 
\[
d_{\mathbf{G}_2}(u_{-t_ie^{-2k}}^{(2)}h_{(j)}c_{(j)}tr_{(j)},g)\leq e^{-\kappa_2(n_i(x)+k)}
\]
with $\kappa_2\in(0,1/2)$ some constant (independent of $i$ and $x$). Since $d_{\mathbf{G}_2}$ is right invariant, it follows that
there is some $g_1\in B^{\mathbf{G}_2}_{e^{-\kappa_2(n_i(x)+k)}}$ so that 
\[
g_1u_{-t_ie^{-2k}}^{(2)}h_{(j)}c_{(j)}tr_{(j)}\in \mathbf{N}_2.
\]

Since $\mathbf{N}_2$ is a normal subgroup of~$\mathbf{G}_2$, we obtain that
\begin{equation*}
\begin{aligned}
    u_{-t_ie^{-2k}}^{(2)}h_{(j)}c_{(j)}tr_{(j)}g_1\in\mathbf{N}_2.
\end{aligned}
\end{equation*}
Note that $tr_{(j)}g_1\in B_{e^{-\kappa_2(n_i(x)+k)/2}}^{\mathbf{G}_2}$.

\medskip

By Lemma~\ref{lem:HNint} and again using the bounds on the respective sizes from \eqref{eq:coeSmallShrinking1}, there exist $\kappa_3>0$, which is independent of $i$ and $x$,  and  $h_0$ in the center of~$\mathbf{H}_2$, so that
\begin{equation*}
    u_{-t_ie^{-2k}}^{(2)}h_{(j)}\in B_{e^{-\kappa_3(n_i(x)+k)}}^{\mathbf{H}_2}(h_0).
\end{equation*}
This together with $h_{(j)}\in B^{\mathbf{H}_2}_{10n_i(x)\delta}$ gives 
\begin{equation}\label{eq:wReducedInclusion1}
    u_{-t_ie^{-2k}}^{(2)}\in B_{20n_i(x)\delta}^{\mathbf{H}_2}.
\end{equation}

Equation \eqref{eq:wReducedInclusion1} and Lemma~\ref{lem:matrixNorm} imply that
\[
\norm{u_{-t_ie^{-2k}}^{(2)}}\leq e^{30n_i(x)\delta/\deold{007mnC1}}.
\]
It follows from \ref{item:091phiConstant} on p.~\pageref{item:091phiConstant} that 
\[
\absolute{t_ie^{-2k}}\leq e^{30n_i(x)\Cold{091phiConstant}\delta/\deold{007mnC1}},
\]
which finishes the proof of Claim~\ref{claim:convergenceAlongU21}.
\end{proof}

\medskip

We now continue the proof of Lemma~\ref{lem:convergenceAlongU0Pre}. On the one hand, notice by Claim~\ref{claim:convergenceAlongU21}
\[
\absolute{t_ie^{-2k}}\leq e^{30n_i(x)\Cold{091phiConstant}\delta/\deold{007mnC1}}.
\]
Notice that $\eta^2\geq10^9\Cold{091phiConstant}\delta/\deold{007mnC1}$ (cf.~\ref{item:epsilonFixed} and \ref{item:deltaFixed10} on p.~\pageref{item:epsilonFixed} and Remark~\ref{rmk:simplifyNotation}), then $30n_i(x)\Cold{091phiConstant}\delta/\deold{007mnC1}<10^{-6}\eta^2n_i(x)$ and thus
\begin{equation}\label{eq:contradictionTar1}
    \absolute{t_ie^{-2k}}\leq e^{10^{-6}\eta^2n_i(x) }.
\end{equation}

On the other hand, recall that \eqref{eq:tiInitialSmall} gives that
\begin{equation*}
    \absolute{t_ie^{-2k}}\geq e^{\eta n_i(x)}.
\end{equation*}
However, this contradicts \eqref{eq:contradictionTar1} and hence \eqref{eq:contradictionTarget} does not hold, which in particular gives
\[
\absolute{t_i}\leq e^{-n_i(x)}.
\]
This finishes the proof of Lemma~\ref{lem:convergenceAlongU0Pre}.
\end{proof}

\subsubsection{Wrapping up the proof}\label{sec:wrapUpProCauchy}
In this subsection, we use Lemma~\ref{lem:convergenceAlongU0Pre} prove Proposition~\ref{prop:cauchySeq}. 

\begin{proof}[Proof of Proposition~\ref{prop:cauchySeq}]
  Let $\epsilon$ be fixed as in item~\ref{item:epsilonFixed} on p.~\pageref{item:epsilonFixed} and take $\Kold{075KsCau}=\Kold{093KTarPropDecay}$. By Lemma~\ref{lem:convergenceAlongU0Pre},  
 for every $x\in\Kold{075KsCau}$ and every $n_j>n_i\geq \Rold{RnewComparePre}$, we have
    \[
    \psi_{n_j}(x)=u_{t_{i,j}}^{(2)}.\psi_{n_i}(x),\qquad \absolute{t_{i,j}}\leq \sum_{k=i}^{j-1}e^{-n_k}\leq \sum_{k=i}^{j-1}e^{-k}.
    \]
    Then for any $\kappa>0$, there exists $N_{\kappa}\in\N$ so that for every $n_j>n_i\geq N_{\kappa}$, we can guarantee that $\absolute{t_{i,j}}\leq \kappa$ and $\lim_{i\to\infty}\psi_{n_i}(x)$ stays on the $\mathbf{U}_2$-orbit of $\psi(x)$, which gives  item \ref{wellBehaved:uniformLimit} of Definition~\ref{def:wellBehaved}.  As for items~\ref{wellBehaved:uniformDensity} and \ref{wellBehaved:Measurability} of Definition~\ref{def:wellBehaved}, they follow from \eqref{eq:goodSeq1}. These together finish the proof of Proposition~\ref{prop:cauchySeq}.
\end{proof}

\section{Proof of Theorem~\ref{thm:main}}\label{sec:proofofMain}
By Lemma~\ref{lem:chainLK}, $u_t^{(1)}$ is loosely Kronecker if and only if $\mathfrak{l}_1=\mathfrak{g}_1$. Suppose that $u_t^{(1)}$ is not loosely Kronecker, then $\mathfrak{l}_1\neq\mathfrak{g}_1$ and by Theorem~\ref{thm:reno},  for $m_1$-a.e. $x\in \mathbf{G}_1/\Gamma_1$, there exists a subsequence $\{n_i(x)\}_{i\in\N}$ of natural numbers of full density such that 
\[
\phi(x)=\lim_{i\to+\infty} \psi_{n_i(x)}(x)=\lim_{i\to+\infty}a_{-n_i(x)}^{(2)}\psi(a_{n_i(x)}^{(1)}.x)
\]
exists and the following hold
\begin{enumerate}[label=(\roman*)]
    \item\label{item:mpFinal} $\phi:\mathbf{G}_1/\Gamma_1\to \mathbf{G}_2/\Gamma_2$ is a measurable map,
    \item\label{item:stayUFinal} $\phi(x)\in \mathbf{U}_2.\psi(x)$,
    \item\label{item:mUInvFinal} $\phi(u_t^{(1)}.x)=u_t^{(2)}.\phi(x)$ for $m_1$-a.e. $x\in \mathbf{G}_1/\Gamma_1$ and every $t\in\mathbb{Z}$.
\end{enumerate}

Recall that $m_1$ is the volume measure on $\mathbf{G}_1/\Gamma_1$ induced from Haar measure on $\mathbf{G}_1$. Then items \ref{item:mpFinal} and \ref{item:mUInvFinal} imply that $\phi_*m_2$ is an $u_t^{(1)}$-invariant measure on $\mathbf{G}_2/\Gamma_2$. By Ratner's measure classification theorem \cite{ratner1991measure}*{Theorem 1}, $\phi_*m_2$ is the volume measure induced from Haar measure of a subgroup $\mathbf{G}'_1<\mathbf{G}_1$. By \cite{ratner1990measure}*{Corollary 6}, $\mathbf{G}_2$ is isomorphic to a subgroup of $\mathbf{G}_1$. Since $\psi$ is invertible, we can repeat the above for $\psi^{-1}$ and then obtain that $\mathbf{G}_1$ is isomorphic to a subgroup of $\mathbf{G}_2$. As a result, we obtain that
\[
\phi_*m_2=m_1.
\]

Since $\phi:(\mathbf{G}_1/\Gamma_1,m_1)\to(\mathbf{G}_2/\Gamma_2,m_2)$ is a measurable isomorphism between the transformations $u_1^{(1)}$ and $u_1^{(2)}$, it follows from \cite{ratner1990measure}*{Corollary 6} that there exist $c\in\mathbf{G}_2$ and a group isomorphism~$\varphi:\mathbf{G}_1\to\mathbf{G}_2$ such that
\begin{gather}
    \varphi(\Gamma_1)=c\Gamma_2c^{-1},\nonumber\\
    \phi(g\Gamma_1)=\varphi(g)c\Gamma_2, \qquad \text{for $m_1$-a.e. $g\Gamma_1\in\mathbf{G}_1/\Gamma_1$.}\label{eq:algebraicMap}
\end{gather}

By \ref{item:stayUFinal}, there exists a measurable function $\sigma:\mathbf{G}_1/\Gamma_1\to\R$ such that 
\[
\phi(x)=u_{\sigma(x)}^{(2)}.\psi(x)\qquad \text{for $m_1$-a.e. $x\in\mathbf{G}_1/\Gamma_1$.}
\]
This together with \eqref{eq:algebraicMap} completes the proof.\hfill\qed

\appendix
\section{Smooth matching function lemma}\label{sec:appSmoothM}
The proof of Lemma~\ref{lem:smoothMatching} depends on the following abstract lemma:
\begin{Lemma}\label{lem:goodCover}
Fix $\epsilon>0$ sufficiently small and $R>10$. Let $\{O_i\}_{i\in I}$ be  a collection of disjoint open subsets of $[0,R]$ such that $\sum_{i\in I}l_i<20\epsilon R$, where $l_i\geq l(O_i)$ for every $i\in I$. Then there exists another collection $\{Q_j\}_{j\in J}$ of disjoint open subsets and a function $f:I\to J$ such that
\begin{enumerate}
    \item $O_{i}\subset Q_{f(i)}$;
    \item $l(Q_j)=\sum_{f(i)=j}\frac{l_i}{\sqrt{\epsilon}}$.
\end{enumerate}
\end{Lemma}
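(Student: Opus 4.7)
My plan is to build the $Q_j$'s explicitly by a greedy left-to-right sweep on the real line, merging whenever expansion would collide with the previously built interval. First I would reduce to the case where each $O_i$ is a single open interval. Any open $O_i \subset [0,R]$ decomposes as a countable disjoint union of open intervals $O_i = \bigsqcup_k O_{i,k}$, and we may refine the collection $\{O_i\}$ to this union of components, distributing the weight $l_i$ among them (for instance $l_{i,k} := l(O_{i,k})$, absorbing the nonnegative slack $l_i - l(O_i)$ into any one component). A solution $(f,\{Q_j\})$ for the refinement yields a solution for the original collection by composing $f$ with the map sending a component back to its parent index, and the required length identity is additive across components.

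Assume now each $O_i = (a_i, b_i)$ is an open interval, sorted so that $a_1 \leq a_2 \leq \ldots \leq a_N$. Disjointness forces $b_i \leq a_{i+1}$. Write $\tilde l_i := l_i/\sqrt{\epsilon}$, and note $\tilde l_i \geq l_i \geq l(O_i) = b_i - a_i$ since $\epsilon \leq 1$. I would process the $O_i$'s in order while maintaining a current open interval $Q = (\ell,r)$, initialized from $O_1$ by $\ell := a_1$, $r := a_1 + \tilde l_1$: if $a_i \geq r$ I finalize the current $Q$ (appending it to the output collection) and open a fresh $Q := (a_i, a_i + \tilde l_i)$, assigning $f(i)$ to its new index; if $a_i < r$ I extend $Q$ rightward to $(\ell, r + \tilde l_i)$ and let $f(i)$ be the existing index.

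The required properties then fall out by direct verification. The $Q_j$'s are pairwise disjoint by construction, since a new one is opened only after the previous right endpoint is passed. For the inclusion $O_i \subset Q_{f(i)}$: in the ``open new'' case $b_i - a_i \leq \tilde l_i$ gives $(a_i,b_i) \subset (a_i, a_i + \tilde l_i)$; in the ``extend'' case the sort puts $\ell = a_{i'} \leq a_i$ for some $i' < i$, while $b_i \leq a_i + l_i \leq r + \tilde l_i$. The length identity $l(Q_j) = \sum_{f(i) = j} \tilde l_i$ is immediate, since every index assigned to the $j$-th group contributes exactly $\tilde l_i$ to its length (once at initialization, then once per extension).

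The main technical nuisance I expect is arranging for the $Q_j$'s to lie inside $[0,R]$ (if required). The total length bound $\sum \tilde l_i < 20\sqrt{\epsilon}\, R$ is much smaller than $R$ for $\epsilon$ sufficiently small, so the rightmost $Q$ can overshoot $R$ by at most $20\sqrt{\epsilon}\, R$. Since the cumulative length of all constructed $Q_j$'s is likewise $O(\sqrt{\epsilon}\, R)$, a mild post-processing step repairs this: either rigidly shift any $Q$ that overshoots to the left (the leftward gap to the preceding $Q$, or to $0$, is guaranteed by the total-length budget to be large enough to accommodate the shift while preserving the inclusions $O_i \subset Q_{f(i)}$), or equivalently replace the rightward sweep by a symmetric version that extends $Q$ equally on both sides of the contained $O_i$'s and merges on collision. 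This bookkeeping is the only substantive point; the combinatorics of the sweep itself is routine.
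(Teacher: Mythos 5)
Your left-to-right sweep is correct for a \emph{finite} family of intervals, and in that regime it is a clean alternative to the paper's argument (the paper inserts the $O_i$ one at a time in \emph{index} order, enlarging and merging the previously built intervals whenever they meet, and then passes to a limit). But the two points you set aside as routine are where the actual content of the lemma lies. First, the lemma is used with $I=\N$: in the application the $O_i$ are the connected components of an arbitrary open subset of $[0,R]$, so after your reduction to intervals you must handle countably many of them. A countable disjoint family of intervals need not admit an enumeration that is increasing in position --- e.g.\ $O_i=(1/(i+1),1/i)$ has left endpoints with no minimum --- so ``process the $O_i$'s in order'' is not defined, and the outputs of your sweep on finite truncations are not nested in any way that lets you pass to a limit (re-sorting changes the grouping). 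The paper's index-order insert-and-merge scheme is monotone across stages precisely so that the countable case follows by taking increasing unions; your position-ordered sweep has no analogue of this, so the infinite case needs a genuinely different argument, not extra bookkeeping.

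Second, the $Q_j$ do need to lie in $[0,R]$ (in the application they are subintervals of the domain on which the new matching function is defined linearly), and the rigid-shift repair does not work: the gap to the preceding finalized $Q$ is \emph{not} controlled by the total-length budget. The preceding group can be a short interval ending just to the left of the last $O_i$, close to $R$, while the last group's $Q$ has length $l_i/\sqrt{\epsilon}$ much larger than $R-a_i$; shifting it so that it ends at $R$ then swallows the preceding $Q$, and that $Q$ cannot in turn be shifted left, since the $O$'s it must contain may lie arbitrarily close to its right endpoint (later members of a group are not left-aligned in your construction). One is therefore forced to merge colliding groups into a single interval whose length is again exactly the sum of the associated $l_i/\sqrt{\epsilon}$ and which still contains all their $O_i$'s, and to check that the cascade terminates and fits inside $[0,R]$ because $\sum_i l_i/\sqrt{\epsilon}<20\sqrt{\epsilon}\,R<R$; this merge-with-prescribed-length mechanism is exactly the paper's proof, and it is the part your proposal leaves unwritten (your ``symmetric sweep that merges on collision'' is precisely it, stated in one clause). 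A smaller remark: the preliminary reduction to intervals is not valid as stated, since distinct components of a single $O_i$ may be assigned to different $Q_j$'s, leaving no admissible single value of $f(i)$; those $Q_j$'s would have to be merged as well (harmless in the application, where the $O_i$ are already intervals, but it needs to be said).
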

\begin{proof}
Since $\{O_i\}_{i\in I}$ is a collection of disjoint open sets, we have either $I=\N$ or $I=\{1,\ldots,n\}$ for some $n\in\N$. We start our construction in both cases from $O_1$ by choosing any open interval $Q_1^{(1)}$ such that $O_1\subset Q_1^{(1)}$ and $l(Q_1^{(1)})=\frac{l_1}{\sqrt{\epsilon}}$, which is possible since $\frac{l_1}{\sqrt{\epsilon}}<\sum_{i\in I} \frac{l_i}{\sqrt{\epsilon}}<20\sqrt{\epsilon}R$. 

Assume the family of disjoint open sets $\{Q_j^{(i-1)}\}_{j\in J_{i-1}}$ have already been chosen for the sets $O_1,\ldots, O_{i-1}$ such that (1) and (2) hold. Let $\tilde{J}_{i-1}=\{j\in J_{i-1}:Q_j^{(i-1)}\cap O_i\neq\emptyset\}$, then the interval topology implies that $\operatorname{Card}(\tilde{J}_{i-1})\leq 2$. Denote $Q=O_i\cup\bigcup_{j\in\tilde{J}_{i-1}}Q^{(i-1)}_j$ and let $\bar{Q}$ be a new interval such that $\bar{Q}$ covers $Q$ and has length $\frac{l_i}{\sqrt{\epsilon}}+\sum_{j\in\tilde{J}_{i-1}}l(Q_j^{(i-1)})$. If $\bar{Q}$ overlaps with any other elements in $\{Q_j^{(i-1)}\}_{j\in J_{i-1}}$, then we combine them together and repeat the construction of $\bar{Q}$ for their union. Since there are only finite many intervals and $\sum_{i\in I}l_i<20\epsilon R$, this process will stop in finite time and does not exhaust the interval $[0,R]$. Let $\{Q_j^{(i)}\}_{j\in J_i}$ be the cover constructed by the this process, it is clear that  this cover satisfies (1) and (2) for $O_1,\ldots,O_i$.

If $I\neq\N$, the above argument has already completed the proof. If $I=\N$, recall $Q_j^{(i)}\subset Q_k^{(i+1)}$ for a unique value $k=k(j)$, then for $O_1$, we chose the sequence $Q_{j_1}^{(1)}\subset Q_{j_2}^{(2)}\subset\ldots$ such that $Q_{j_k}^{k}$ is the unique open set in $\{Q_j^{(k)}\}_{j\in J_k}$ containing $O_1$. Then we define $Q_1=\cup_{k=1}^{\infty}Q_{j_k}^{(k)}$ and $A_1=\{i\in\N:O_i\in Q_1\}$. Repeat this construction for $O_{i_2}$ to obtain a new interval $Q_2$ and $A_2$, where $i_2=\inf\{\N\setminus A_1\}$. Notice that the construction implies that $Q_2$ is disjoint with $Q_1$ and $A_2$ is also disjoint with $A_1$. Repeat this process to define set $Q_j$ and $A_j$. It is also clear that that $Q_j$ with function $f(i)=j$ if  $i\in A_j$ is our desired function and hence we complete the proof of the Lemma~\ref{lem:goodCover}.
\end{proof}

We now back to the proof of Lemma~\ref{lem:smoothMatching}. Since $x,y$ are $(\delta,\epsilon,R)$-two sides matchable, there exist $A,A'\subset[-R,R]$ with $l(A),l(A')\geq(1-\epsilon)2R$ and an increasing  absolutely continuous function $h:A\to A'$ such that
\begin{enumerate}
    \item $d(T_{h(t)}x,T_{t}y)<\delta$;
    \item $0\in A$ and $h(0)=0$;
    \item $\absolute{h'(t)-1}<\epsilon$ for $t\in A$.
\end{enumerate}

Let $h_+=h|_{[0,R]\cap A}$ and $\bar{A}_+$ be a closed subset of $A\cap[0,R]$, whose measure is $(1-4\epsilon)R$. Then the set $E=[0,R]\setminus\bar{A}_+$ is an open set with $l(E)\leq 4\epsilon R$. Moreover, since $E$ is open, there exists a family of disjoint countable union of open intervals such that $E=\cup_{i\in I}O_i$, where $I\subset\N$ and $O_i$ is an open interval of $[0,R]$. Since 
\[
l(A'\cap[0,R])\geq(1-2\epsilon)R,\quad  l(\bar{A}_+)\geq(1-4\epsilon)R, \quad h_+(\bar{A}_+)\subset A'\cap[0,R]
\]
and $\absolute{h'_+(t)-1}<\epsilon$ for $t\in A$, then
\[
\sum_{i\in I}(\sup_{t\in O_i}h_+(t)-\inf_{t\in O_i}h_+(t))\leq 2\epsilon R+(1+\epsilon)4\epsilon R\leq 10\epsilon R.
\]
Combining above estimates with $l(E)\leq 4\epsilon R$, we can apply Lemma~\ref{lem:goodCover} with $l_i=\max\left(l(O_i),\sup_{t\in O_i}h(t)-\inf_{t\in O_i}h(t)\right)$ to obtain a family of open intervals $\{Q_j\}_J$ and $f:I\to J$. Let $\bar{h}_+:[0,R]\to[0,R]$ be:
\[
\bar{h}_+(t)=\left\{
  \begin{array}{ll}
    h(t),& \hbox{if $t\notin\bigcup_{j\in J}Q_j$;} \\
    \text{linear function maps $Q_j$ to } & \hbox{if $t\in Q_j$.}\\
    \text{$[\inf_{t\in Q_j}h(t),\sup_{t\in Q_j}h(t)]$, }& \\
  \end{array}
\right.
\]
Then we have the following estimates for the $\bar{h}_+$'s derivative on $Q_j$, i.e. for every $t\in Q_j$:
\begin{equation}\label{eq:derivativeEst}
    \begin{aligned}
    &\frac{(1-\epsilon)(l(Q_j)-l(\cup_{f(i)=j}O_i))}{l(Q_j)}\leq \bar{h}'_+(t)\\
    &\frac{(1+\epsilon)(l(Q_i)-l(\cup_{f(i)=j}O_i))+\sum_{f(i)=j}(\sup_{t\in O_i}h_+(t)-\inf_{t\in O_i}h_+(t))}{l(Q_j)}.
    \end{aligned}
\end{equation}
Since 
\begin{equation*}
    \begin{aligned}
    &\frac{(1+\epsilon)(l(Q_j)-l(\cup_{f(i)=j}O_i))+\sum_{f(i)=j}(\sup_{t\in O_i}h_+(t)-\inf_{t\in O_i}h_+(t))}{l(Q_j)}\\
    \leq&\frac{(1+\epsilon)l(Q_j)+\sum_{f(i)=j}(\sup_{t\in O_i}h_+(t)-\inf_{t\in O_i}h_+(t))}{l(Q_j)}\leq1+\epsilon+\sqrt{\epsilon},
    \end{aligned}
\end{equation*}
and
\[
\frac{(1-\epsilon)(l(Q_j)-l(\cup_{f(i)=j}O_i))}{l(Q_j)}\geq(1-\epsilon)(1-\sqrt{\epsilon}),
\]
hence \eqref{eq:derivativeEst} and $\absolute{\bar{h}'_+(t)-1}<\epsilon$ for $t\in \bar{A}_+$ give that 
\begin{equation}\label{eq:extendMatching}
    \begin{gathered}
    \absolute{\bar{h}'_+(t)-1}<\epsilon+\sqrt{\epsilon}\text{ for a.e. }t\in[0,R],\\
    d(T_{\bar{h}_+(t)}x,T_{t}y)<\delta\text{ for }t\in[0,R]\setminus(\bigcup_{j\in J}Q_j),\\
    l(\bigcup_{j\in J}Q_j)\leq10\sqrt{\epsilon}R.
    \end{gathered}
\end{equation}

As $\bar{h}_+$ is an increasing absolutely continuous function from $[0,R]$ to $[0,R]$, we obtain that $\bar{h}_+\in W^{1,1}([0,R])$, where $W^{1,1}$ is Sobolev space. Then by Meyers-Serrin theorem, we know that $C^{\infty}([0,R])\cap W^{1,1}([0,R])$ is dense in $W^{1,1}([0,R])$ and hence there exists 
\[
\tilde{h}_+\in C^{\infty}([0,R])\cap W^{1,1}([0,R])
\]
such that
\begin{equation}\label{eq:MSDense}
\absolute{\tilde{h}_+-\bar{h}_+}_{W^{1,1}([0,R])}\leq \min(\epsilon,\delta).
\end{equation}
Let $\tilde{A}_+=\bar{A}_+\setminus(\bigcup_{j\in J}Q_j)$, then \eqref{eq:extendMatching} and \eqref{eq:MSDense} give that
\begin{gather*}
    l(\tilde{A}_+)\geq(1-20\sqrt{\epsilon})R,\\
    d(T_{\tilde{h}_+(t)}x,T_{t}y)<2\delta, \text{ for }t\in\tilde{A}_+,\\
    \absolute{\tilde{h}'_+(t)-1}<3\sqrt{\epsilon}, \text{ for }t\in[0,R] \text{ and }\absolute{\tilde{h}'_+(0)}\leq\epsilon.
\end{gather*}

Repeat above process for $h_-=h|_{[-R,0]\cap A}$, we obtain $\tilde{h}_-\in C^{\infty}([0,R])$ and $\tilde{A}_-\subset[-R,0]$ such that
\begin{gather*}
    l(\tilde{A}_-)\geq(1-20\sqrt{\epsilon})R,\\
    d(T_{\tilde{h}_-(t)}x,T_{t}y)<2\delta \text{ for }t\in\tilde{A}_-,\\
    \absolute{\tilde{h}'_-(t)-1}<3\sqrt{\epsilon} \text{ for }t\in[-R,0] \text{ and }\absolute{\tilde{h}'_-(0)}\leq\epsilon.
\end{gather*}

Let $\delta(t)=\left\{
  \begin{array}{ll}
    e^{-1/(1-(\frac{t}{10\epsilon})^2)},& \hbox{$\absolute{t}<10\epsilon$;} \\
    0, & \hbox{other.}
  \end{array}
\right.$ and $F(t)=\frac{\int_{-\infty}^t\delta(s)ds}{\int_{-\infty}^{\infty}\delta(s)ds}$, then $F(t)$ is a smooth function that equals $0$ for $t\leq-10\epsilon$ and $1$ for $t\geq 10\epsilon$. We define $\tilde{h}:[-R,R]\to[-R,R]$ as following:
\[
\tilde{h}(t)=(1-f_1(t))\tilde{h}_-(t)+(f_1(t)-f_2(t))t+f_2(t)\tilde{h}_+(t),
\]
where $f_1(t)=F(t+20\epsilon)$ and $f_2(t)=F(t-20\epsilon)$. Let 
\[
\tilde{A}=(\tilde{A}_-\cap[-R,-20\epsilon])\cup\{0\}\cup(\tilde{A}_+\cap[20\epsilon,R]),
\]
since $R\geq100$, we have
\begin{gather*}
    l(\tilde{A})\geq(1-50\sqrt{\epsilon})2R,\qquad 0\in\tilde{A}\text{ and } \tilde{h}(0)=0,\\
    d(T_{\tilde{h}(t)}x,T_{t}y)<2\delta \text{ for }t\in\tilde{A},\\
    \absolute{\tilde{h}'(t)-1}<9\sqrt{\epsilon}(\kappa_1+1) \text{ for }t\in[-R,R].
\end{gather*}
By setting $C=\max(100,\frac{10e^{-1}}{\int_{-\infty}^{\infty}\delta(s)ds}+9)$,  we complete the proof of Lemma~\ref{lem:smoothMatching}.

\section{Improving behavior of even Kakutani equivalences}\label{sec:appGoodTime}
In this section we prove Lemma~\ref{lem:goodTimeChange}. While this result is well known, we provide the details for completeness.

\medskip

We recall the flow under a function construction of flows (i.e. $\R$-actions) from $\Z$ actions. Let $\phi$ be an ergodic (invertible) transformation on $(X,\mathcal{B},\mu)$, $f\in L_+^1(X,\mathcal{B},\mu)$ and $\lambda$ the Lebesgue measure on $\R$. The special flow of $\phi$ over $f$ is defined as $\phi_t^f$ and acts on space $(X^f,\mathcal{B}^f,\mu^f)$, where 
\[
X^f=\{(x,s):x\in X,0\leq s<f(x)\},\quad  \mathcal{B}^f=\mathcal{B}\otimes\mathcal{B}(\R),\quad \mu^f=\mu\times\lambda.
\]
The action of $\phi_t^f$ moves point in $X^f$ vertically with unit speed identifying $(x,f(x))$ with $(\phi(x),0)$.

\medskip

Given an ergodic measure preserving flow $T_t$, by Rokhlin's special flow representation theorem \cite{rokhlin1967lectures}, there exists an ergodic automorphism $\phi$ on $(X_0,\mathcal{B}_0,\mu_0)$ and $\tau\in L_+^1(X_0,\mathcal{B}_0,\mu_0)$ such that $\phi_t^{\tau}$ is isomorphic to $T_t$, i.e. there exists an invertible (modulo null sets) map  $\psi_{\tau}: X_0^{\tau}\to X$ such that $(\psi_{\tau})_*\mu_0^{\tau}=\mu$ and 
\[
\psi_{\tau}\circ \phi_t^{\tau} (\bar{x})= T_t\circ \psi_{\tau}(\bar{x}) \text{ for $\mu_0^{\tau}$-a.e. $\bar{x}\in X_0^{\tau}$ and all $t\in\R$.}
\]
 Moreover, by restricting to a subset of $X_0$, we may assume that 
\begin{equation}\label{eq:taularge}
    \int_{X_0}\tau d\mu_0\geq 10.
\end{equation} 
We may (and will) assume from now on that $X$ itself is of this form.

Now suppose that $S_t$ is another measure preserving flow evenly Kakutani equivalent to $T_t$. Then by definition this means that there exist a $\alpha\in L_+^1(X,\mathcal{B},\mu)$ with $\int_X\alpha d\mu=1$ and an invertible map $\psi:X\to Y$ such that $(\psi)_*\mu^{\alpha}=\nu$ and 
\begin{equation}\label{eq:R2Kakutani}
\psi\circ T_{\rho(x,t)}(x)=S_t\circ \psi(x) \text{ for $\mu$-a.e. $x\in X$ and all $t\in\R$,}
\end{equation}
 where $\rho(x,t)$ satisfies
\begin{equation}\label{eq:R2u}
\int_0^{\rho(x,t)}\alpha(T_sx)ds=t.
\end{equation}

Let $\xi^{\tau}$ be a countably generated $\sigma$-algebra of subsets of $X=X_0^{\tau}$ so that for $\mu_0^{\tau}$-almost every $(z,t)$ ($z\in X_0$, $0\leq t <\tau(z)$) the atom of $[(z,t)]_{\xi^{\tau}}$ is of the form $c_z^{\tau}=\{z\}\times[0,\tau(z)]$. Then \eqref{eq:R2Kakutani} implies that for almost every $z$ the segment $c_z^{\tau}$ is mapped under $\psi$ to a segment of a trajectory of $S_t$ in $Y$ and that the segment $\psi(c_{\phi(z)}^{\tau})$ immediately follows the segment $\psi(c_z^{\tau})$ with respect to the $S_t$-flow. Define $\sigma(z) : X_0 \to \R^+$ by 
\begin{equation}\label{eq:newHeight}
\rho(\psi_{\tau}(z,0),\sigma(z))=\tau(z);
\end{equation}
thus $\sigma(z)$ is the length of segment (in the flow $S_t$) of  $\psi (c_z^{\tau})$.

Let $\psnew\label{0962psA3}:X_0^{\tau}\to X_0^{\sigma}$\index{$\psold{0962psA3}$} be  $\psold{0962psA3}(z,s)=(z,\theta(z,s))$, where $\theta(z,s)$ satisfies
\begin{equation}\label{eq:isometryCocy}
\rho(\psi_{\tau}(z,0),\theta(z,s))=s \text{ for all $s\in[0,\tau(z))$.}
\end{equation}
Then $\psi_1 \circ \psi^{-1}$ gives a measurable isomorphism between $(Y,S_t,\nu)$ and $(X_0^{\sigma},\phi_t^{\sigma},\mu_0^{\sigma})$. 

\medskip

We now construct some ``good'' sets:
\begin{enumerate}[label=(\roman*)]
    \item Let $Z_1\subset X_0$ be the set of points such that the pointwise ergodic theorem holds at $(z,t)$ for the flow $(X,T_t,\mu)$ and function $\alpha$ for a.e.\ $t\in [0,\tau(z))$.
    \item Let $Z_2$ be the set of points such that the ergodic theorem holds for transformation $(X_0,\phi,\mu_0)$, and functions $\tau$ and $\sigma$. 
    \item Let $Z_3\subset X_0$ be the set of points satisfying \eqref{eq:newHeight}.
    \item Let  $Z_4\subset X_0$ be the set of points such that $\tau$ and $\sigma$ is finite. 
    \item Define
\[
Z_5=\bigcap_{i=-\infty}^{+\infty}\phi^{-i}\left(Z_1\cap Z_2\cap Z_3\cap Z_4\right).
\]
It follows that $\mu_0(Z_5)=1$. Notice for any $z_0\in Z_5$,  there exists $s_0\in[0,\tau(z_0))$ such that $(z_0,s_0)\in \psi_{\tau}^{-1}(Z_1)$. 
\end{enumerate}

\medskip

Combining \eqref{eq:R2u}, \eqref{eq:newHeight} and \eqref{eq:isometryCocy}, then for $z_0\in Z_5$
\[
\int_{s_0}^{\tau(z_0)}\alpha(T_s(\psi_{\tau}(z_0,s_0)))ds=\sigma(z_0)-\theta(z_0,s_0).
\]
This together with $\phi^iz_0 \in Z_1\cap Z_2\cap Z_3\cap Z_4$ gives that
\begin{equation}\label{eq:ergodicTarget}
\int_{s_0}^{\sum_{i=0}^n\tau(\phi^iz_0)}\alpha(T_s(\psi_{\tau}(z_0,s_0)))ds=-\theta(z_0,s_0)+\sum_{i=0}^n\sigma(\phi^iz_0).
\end{equation}
Since $z_0\in Z_5$ and $z_0\in Z_1$, we have
\begin{equation*}
    \begin{gathered}
    \lim_{n\to\infty}\frac{1}{n}\left(-s_0+\sum_{i=0}^n\tau(\phi^iz_0)\right)=\int_{X_0}\tau d\mu_0,\\
    \lim_{n\to\infty}\frac{1}{n}\left(-\theta(z_0,s_0)+\sum_{i=0}^n\sigma(\phi^iz_0)\right)=\int_{X_0}\sigma d\mu_0,\\
    \lim_{n\to\infty}\frac{\int_{s_0}^{\sum_{i=0}^n\tau(\phi^iz_0)}\alpha(T_s(\psi_{\tau}(z_0,s_0)))ds}{-s_0+\sum_{i=0}^n\tau(\phi^iz_0)}=\int_X\alpha d\mu,
    \end{gathered}
\end{equation*}
which together with \eqref{eq:ergodicTarget} and $\int_X\alpha d\mu=1$ imply that 
\[
\int_{X_0}\tau d\mu_0=\int_{X_0}\sigma d\mu_0.
\]

\medskip

By Rokhlin lemma and pointwise ergodic theorem, there exist a set $A\subset Z_5$ with $\mu_0(A)>0$ and an positive integer $N>10$ such that if we set 
\[r_A(z)=\min\{i\in\Z_+:\phi^i(z)\in A\}
\]
then for every $z\in A$, $r_A(z)> N$, and moreover for every $z\in A$ and $n\geq N$
    \begin{equation*}
        \begin{gathered}
            \absolute{\frac{\sum_{i=0}^{n-1}\tau(\phi^iz)}{\sum_{i=0}^{n-1}\sigma(\phi^iz)}-1}<0.01\epsilon,\\ \sum_{i=0}^{n-1}\tau(\phi^iz)\geq 10.
        \end{gathered}
    \end{equation*}

Let $\tau_1(z)=\sum_{i=0}^{r_A(z)-1}\tau(\phi^iz)$ and $\sigma_1(z)=\sum_{i=0}^{r_A(z)-1}\sigma(\phi^iz)$. Then 
\begin{enumerate}[label=\textit{\roman*}.]
    \item the flow $\left((\phi_A)_t^{\tau_1},A^{\tau_1},(\mu_0|_A)^{\tau_1}\right)$ is isomorphic to $(\phi_t^{\tau},X,\mu_0^{\tau})$ through the measurable isomorphism $\psi_{\tau_1}:A^{\tau_1}\to X$ satisfying:
    \[
    \psi_{\tau_1}(z,0)=(z,0),\qquad \psi_{\tau_1}(z,\tau_1(z))=(\phi_Az,0);
    \]
    \item the flow $\left((\phi_A)_t^{\sigma_1},A^{\sigma_1},(\mu_0|_A)^{\sigma_1}\right)$ is isomorphic to $(\phi_t^{\sigma},X_0^{\sigma},\mu_0^{\sigma})$ through the measurable isomorphism $\psi_{\sigma_1}:A^{\sigma_1}\to X_0^{\sigma}$ satisfying:
    \[
    \psi_{\sigma_1}(z,0)=(z,0),\qquad \psi_{\sigma_1}(z,\sigma_1(z))=(\phi_Az,0);
    \]
    \item for every $z\in A$ 
    \begin{alignat*}{3}
        &\psold{0962psA3}\circ\psi_{\tau_1}(z,0)=\psi_{\sigma_1}(z,0),\qquad &&\psold{0962psA3}\circ\psi_{\tau_1}(z,\tau_1(z))=\psi_{\sigma_1}(z,\sigma_1(z));
    \end{alignat*}
    
    \item for every $z\in A$, 
    \[
    \absolute{\tau_1(z)/\sigma_1(z)-1}<0.02\epsilon \qquad\text{and} \qquad\tau_1(z)\geq 10.
    \]
\end{enumerate}

\medskip

We then construct a new Kakutani equivalence between \[
\left((\phi_A)_t^{\tau_1},A^{\tau_1},(\mu_0|_A)^{\tau_1}\right)\text{ and }\left((\phi_A)_t^{\sigma_1},A^{\sigma_1},(\mu_0|_A)^{\sigma_1}\right).
\]
Let 
\begin{equation*}
    \begin{aligned}
        \delta(x)&=\left\{
  \begin{array}{ll}
    e^{-1/(1-(x/\epsilon^2)^2)},& \hbox{$\absolute{x}<\epsilon^2$,} \\
    0, & \hbox{otherwise;}
  \end{array}\right.\\
  F(t)&=\frac{\int_{-\infty}^t\delta(y)dy}{\int_{-\infty}^{\infty}\delta(y)dy}.
    \end{aligned}
\end{equation*}
Then $F(t)$ is a smooth function that equals $0$ for $t\leq-\epsilon^2$ and $1$ for $t\geq \epsilon^2$. Let $\kappa_1=\frac{e^{-1}}{\int_{-\infty}^{\infty}\delta(y)dy}$, 
then we define $g_z(x)$ as following: 
\[
g_z(t)=\left\{
  \begin{array}{ll}
    t(1-f_1(t))+(f_1(t)-f_2(t))\frac{\sigma_1(z)}{\tau_1(z)}t& \hbox{$z\in A$,} \\
     \ \ \ \ \ \ +(t-\tau_1(z)+\sigma_1(z))f_2(t),&\\
    0, & \hbox{otherwise,}
  \end{array}
\right.
\]
where $f_1(t)$ and $f_2(t)$ are defined as
\[
f_1(t)=F(t-2\epsilon^2),\qquad f_2(t)=F(t-\tau_1(z)+2\epsilon^2).
\]
These together with $\epsilon\in(0,10^{-3})$ and $\tau_1(z)\geq 10$ give for every $z\in A$
\begin{enumerate}[label=(\Roman*)]
    \item $g_z\in C^{\infty}([0,\tau_1(z)])$, 
    \item $g'_z(0)=g'_z(\tau_1(z))=1$, $g_z(0)=0$ and $g_z(\tau_1(z))=\sigma_1(z)$,
    \item $\absolute{g'_z(s)-1}<0.3\epsilon$ for $z\in A$ and $s\in[0,\tau_1(z)]$. 
\end{enumerate}
Then let $\psnew\label{098psA4}:A^{\tau_1}\to A^{\sigma_1}$\index{$\psold{098psA4}$} be defined as
\[
\psold{098psA4}(z,s)=(z,g_z(s))\text{ for }(z,s)\in A^{\tau_1}.
\]
Let $\bar{\alpha}(z,s)=g'_z(s)$ and $\bar{\rho}((z,s),t)$ satisfy 
\begin{equation*}
    \int_0^{\bar{\rho}((z,s),t)}\bar{\alpha}((\phi_A)_l^{\tau_1}(z,s))dl=t.
\end{equation*} 
It follows that $(\psold{098psA4})_*((\mu_0|_A)^{\tau_1})^{\bar{\alpha}}=(\mu_0|_A)^{\sigma_1}$ and 
\begin{equation*}
    \begin{aligned}
    \psold{098psA4}\circ (\phi_A)^{\tau_1}_{\bar{\rho}((z,s),t)}(z,s)&=(\phi_A)_t^{\sigma_1}\circ  \psold{098psA4}(z,s)
    \end{aligned}
\end{equation*}
for $(\mu_0|_A)^{\tau_1}$-a.e. $(z,s)\in A^{\tau_1}$ and $t\in\R$. Hence $\psold{098psA4}$ is an $\epsilon$-well-behaved Kakutani equivalence. Since $(\phi_t^{\tau_1},X_0^{\tau_1},\mu_0^{\tau_1})$ is isomorphic to $(T_t,X,\mu)$ and $(\phi_t^{\sigma_1},X_0^{\sigma_1},\mu_0^{\sigma_1})$ is isomorphic to $(S_t,Y,\nu)$, we obtain that $\psold{098psA4}$ and $\bar{\alpha}$ induce an $\epsilon$-well-behaved Kakutani equivalence map $\tilde{\psi}:X\to Y$ between $T_t$ and $S_t$.

\medskip

It follows from the definition of $\tau_1$, $\sigma_1$ and special flow that Kakutani equivalence $\psi_{\sigma_1}^{-1}\circ\psold{0962psA3}\circ\psi_{\tau_1}$ is cohomologous to $\psold{098psA4}$. Recall that $\psold{0962psA3}$ is induced by Kakutani equivalence $\psi$, hence $\psi$ is cohomologous to $\tilde{\psi}$. We complete our proof by setting  $Z=\psi_{\tau}\circ\psi_{\tau}(A^{\tau_1})$.

\section{Spectral gap and an \texorpdfstring{$\SL_2(\mathbb{R})$}{SL2(R)}-square function estimate}\label{sec:sl2ergodic}
In this appendix we prove a square function estimate for $\SL_2(\R)$-actions. This result is used in \S\ref{sec:sl2Ergodiccorollary} to prove a certain ergodic result for $\SL_2(\R)$-actions. 

Let $\mathbf{H}=\SL_2(\R)$ and $a_t=\exp(t\ba)$ with $\ba=\left(\begin{smallmatrix}
    1 & 0\\ 0 & -1\\ 
\end{smallmatrix}\right)$. For $\epsilon>0$, define
\begin{equation*}
B_{\epsilon}^{\mathbf{H},\norm{\cdot}}=\{h\in \mathbf{H}:\norm{h-\id}<\epsilon\},
\end{equation*}
where $\norm{\cdot}$ is the Hilbert-Schmidt norm of matrix. We denote by $m_{\mathbf{H}}$ the Haar measure on $\mathbf{H}$.

\medskip

We define $L^2$-Sobolev norms in various settings:
\begin{itemize}
    \item Recall that the $L^2$-Sobolev space $H_s(\R/\Z)$ with $s$-derivatives is defined as the completion of $C^{\infty}(\R/\Z)$ with respect to the norm defined~by
    \[
    \norm{f}^2_{H_s}\doteq\sum_{n\in\Z}(1+\absolute{n}^{2s})\absolute{c_n}^2\text{  for $f=\sum_{n\in\Z}c_ne^{2\pi in\theta}$.}
    \]
    \item More generally, given a unitary action $\rho$ of $\R/\Z$ on a Hilbert space $\mathcal{H}_{\rho}$ we say that 
$f \in\mathcal{H}_{\rho}$ is in $L^2$-Sobolev space $H_{s}(\mathcal{H}_{\rho})$ with respect to this action if when we decompose $f$ into $f=\sum_{n\in\Z} f_n$ with 
\[
\rho(\theta)f_n = e^{2\pi in\theta}f_n,
\]
then 
\[
\norm{f}_{H_{s}}^2 \doteq\sum_{n\in\Z} (1+\absolute{n}^{2s}) \norm{f_n}_2^2 < \infty.
\]
\item If $(\rho, \mathcal{H}_{\rho})$ is a unitary representation of $\mathbf{H}$ we say that 
\[
f \in H_s (\mathcal{H}_{\rho}; k_{\theta})
\]
if it in $H_{s}(\mathcal{H}_{\rho})$ considered as a $\R/\Z$ representation via the action of $k_{\theta}$. This in particular applies to $\mathcal{H}_{\rho}= L^2(\mathbf{H})$.
\end{itemize}

\medskip

The main result of this section is the following theorem:
\begin{Theorem}\label{thm:NewNewsl2ergodic}
   Let $\rho$ be a unitary representation of $\mathbf{H}=\SL_2(\R)$ on a Hilbert space $\mathcal H_{\rho}$ with no fixed vectors and a spectral gap. Let $\chi : \mathbf{H} \to \R$ be of compact support in Sobolev space $H_{s}(L^2(\mathbf{H});k_{\theta})$ for some $s>0$, $\chi\geq 0$ and $\int \chi(h) dm_{\mathbf{H}}(h)=1$. Then there is a $C(\rho,\chi)>0$ depending on the spectral gap and $\chi$ so that for any $v \in \mathcal{H}_{\rho}$
\begin{equation}
  \sum_{n=1}^{\infty}\norm{\int_{\mathbf{H}}\chi(h)\rho( ha_{-n})v \,dm_{\mathbf{H}}(h)}_2^{2}<C(\rho,\chi)\norm{v}_2^2. \label{appendix:eq:1}  
\end{equation}
\end{Theorem}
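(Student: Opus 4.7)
The proof proceeds via a Schur-test reduction. Set $T_n v := A_\chi \rho(a_{-n}) v$, where $A_\chi v := \int_{\mathbf{H}} \chi(h) \rho(h) v \, dm_{\mathbf{H}}(h)$, and consider the operator $T:\mathcal{H}_\rho \to \ell^2(\mathbb{N}; \mathcal{H}_\rho)$ given by $(Tv)_n = T_n v$. The left-hand side of \eqref{appendix:eq:1} is exactly $\|Tv\|_{\ell^2}^2$, so our task is to bound $\|T\|^2 = \|TT^*\|_{op}$. A direct computation gives $T_n T_m^* = A_\chi \rho(a_{m-n}) A_\chi^*$, so the block matrix $TT^*$ is ``Toeplitz'' in its $(n,m)$-blocks. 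Schur's test then yields
\[
\|T\|^2 \;\leq\; \sum_{k \in \mathbb{Z}}\|A_\chi\,\rho(a_k)\,A_\chi^*\|_{op},
\]
reducing the problem to proving summable (ideally, exponentially decaying) bounds on the individual operator norms.

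To bound one such term, unfold
\[
\langle A_\chi \rho(a_k) A_\chi^* w,\, v\rangle \;=\; \iint \chi(h)\chi(g)\,\langle \rho(g\, a_k\, h^{-1}) w,\,v\rangle\,dh\,dg.
\]
For $g,h$ in the (compact) support of $\chi$ and $|k|$ large, the Cartan parameter of $g\,a_k\,h^{-1}$ is $|k| + O(1)$. The quantitative Howe--Moore estimate for spectral-gapped unitary representations of $\SL_2(\mathbb R)$ -- obtained via direct integral decomposition into irreducibles and the Cowling--Haagerup--Howe bound on matrix coefficients -- produces constants $\alpha, \sigma, s_0>0$ depending only on the spectral gap of $\rho$ such that
\[
|\langle \rho(g\, a_k\, h^{-1}) w, v\rangle| \;\leq\; C\,\Xi(g\, a_k\, h^{-1})^{\sigma}\,\|w\|_{K,s_0}\,\|v\|_{K,s_0} \;\leq\; C'(1+|k|)^{\sigma}e^{-\alpha|k|}\,\|w\|_{K,s_0}\,\|v\|_{K,s_0},
\]
where $\Xi$ is the Harish--Chandra spherical function and $\|\cdot\|_{K,s_0}$ denotes the $K$-Sobolev norm of order $s_0$.

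The crucial step is to absorb these $K$-Sobolev norms into $\chi$ rather than requiring them of $w$ and $v$; this is where the hypothesis $\chi \in H_s(L^2(\mathbf H); k_\theta)$ intervenes. Decomposing $\chi = \sum_m \chi_m$ into left-$K$-types, so that $\chi_m(k_\theta h) = e^{im\theta}\chi_m(h)$, one verifies that $A_{\chi_m}$ maps $\mathcal H_\rho$ into the $K$-type $-m$ subspace of $\mathcal{H}_\rho$, whence
\[
\|A_\chi v\|_{K,s_0}^2 \;=\; \sum_m (1+|m|)^{2s_0}\|A_{\chi_m}v\|^2 \;\leq\; \Bigl(\sum_m (1+|m|)^{2s_0}\|\chi_m\|_1^2\Bigr)\|v\|^2.
\]
The right-hand side is finite for $s_0 \leq s$: combining the compact support of $\chi$ (which gives $\|\chi_m\|_1 \lesssim \|\chi_m\|_2$) with the Sobolev hypothesis $\sum_m (1+|m|)^{2s}\|\chi_m\|_2^2 < \infty$. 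A parallel statement for $A_\chi^*$ uses the right-$K$-regularity of $\chi$; in the application to Corollary~\ref{cor:sl2ergodic} the function $\chi$ is the normalized indicator of a Hilbert--Schmidt ball and is therefore bi-$K$-invariant, so both sides are automatically smooth. Applying the matrix coefficient estimate to $u = A_\chi^* w$ and $u' = A_\chi^* v$ (both of which now satisfy $\|u\|_{K,s_0},\|u'\|_{K,s_0}\leq C_\chi\|w\|,C_\chi\|v\|$) yields $\|A_\chi\,\rho(a_k)\,A_\chi^*\|_{op} \leq C_\chi^2 (1+|k|)^\sigma e^{-\alpha|k|}$, and summing a convergent geometric series over $k \in \mathbb{Z}$ furnishes the claimed $C(\rho,\chi)$.

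The hard part is producing the matrix coefficient estimate in the quantitative form relating the spectral gap (controlling $\alpha$) to the Sobolev order $s_0$ required to make the bound effective on non-$K$-finite vectors: this is the technical content of the Cowling--Haagerup--Howe estimate applied to each irreducible piece in the Plancherel decomposition of $\rho$. Once this estimate is in hand, the combination of the Schur reduction with the $K$-smoothing provided by $\chi$ collapses the whole problem to the geometric sum above.
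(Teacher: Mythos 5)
Your reduction is essentially the paper's: writing the left-hand side as $\|Tv\|^2$ and bounding $\|TT^*\|$ by $\sum_k\|A_\chi\rho(a_k)A_\chi^*\|_{op}$ via Schur's test is the operator-norm form of the almost-orthogonality ($TT^*$) lemma the paper uses, and absorbing the $K$-regularity into $\chi$ (via the decomposition of $\chi$ into $K$-types and $\|A_{\chi_m}v\|\le\|\chi_m\|_1\|v\|$) is exactly the paper's Lemma~\ref{lem:rhoWellDef}. The gap is in how you produce the exponential decay of $\|A_\chi\rho(a_k)A_\chi^*\|$. The Cowling--Haagerup--Howe estimate in the Sobolev form you invoke, $\absolute{\innproduct{\rho(g)w,v}}\le C\,\Xi(g)^{\sigma}\norm{w}_{K,s_0}\norm{v}_{K,s_0}$, requires the $K$-Sobolev order $s_0$ to exceed a fixed threshold ($s_0>1/2$ for $\SL_2(\R)$, since one must sum $\sum_m(1+\absolute{m})^{-2s_0}$ over $K$-types); this threshold does not shrink with the spectral gap. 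Your absorption step then needs $s_0\le s$, which is impossible for the theorem as stated, where $s>0$ is arbitrary --- and in the intended application $\chi$ is a normalized indicator function, which lies in $H_s$ only for $s<1/2$. Your fallback, that the indicator of the Hilbert--Schmidt ball is bi-$K$-invariant and hence automatically smooth in the $K$-direction, is false: the condition $\norm{h-\id}<\epsilon$ is invariant under conjugation by $K$, not under separate left or right translation (e.g.\ $k_{\pi/2}a_t$ has $\norm{k_{\pi/2}a_t-\id}\approx 2$ while $a_t\in B^{\mathbf H,\norm{\cdot}}_\epsilon$ for small $t$), so its left-$K$-type decomposition is genuinely rough.

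What is missing is the truncation-and-optimization step that is the technical heart of the paper's Lemma~\ref{appendix:lem:1}: write $p'=\rho(\chi^{\sim})p$, truncate to the $K$-types $\absolute{m}\le N$, apply Cowling--Haagerup--Howe to the $K$-finite truncations (this costs only a factor $N$ from the number of $K$-types, with exponential decay $e^{-\lambda(\rho)t}$), bound the discarded tail by $N^{-s}\norm{p'}_{H_s}\ll_\chi N^{-s}\norm{p}_2$ using only the $H_s$ regularity of $\chi$, and then optimize $N=e^{\lambda(\rho)t/(s+1)}$. This yields $\absolute{\innproduct{\rho(a_k)p',w'}}\ll_{\rho,\chi}e^{-\lambda(\rho)s\absolute{k}/(s+1)}\norm{p}_2\norm{w}_2$, i.e.\ exponential decay at a reduced rate valid for every $s>0$, which is exactly the bound your Schur-test reduction needs. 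With that substitution your argument closes; without it, the decay estimate is only justified for $s$ above the Sobolev threshold and the theorem's stated generality (and its application to indicator functions of balls) is not covered.
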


\medskip

We recall the following standard fact:
\begin{Lemma}
    If $\rho$ a unitary action of $\R/\Z$ on a Hilbert space $\mathcal{H}$ and $\norm{\rho(\theta)f -f }_2\leq C\absolute{\theta}^s$ for every  $\theta\in(0,1]$, then for every $s'<s$ 
\[
\norm{f}_{H_{s'}} \ll C(s-s')^{-1} +\norm{f}_2.
\]
\end{Lemma}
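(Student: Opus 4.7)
The plan is to relate the hypothesized modulus-of-continuity bound to the $H_{s'}$-norm through a single weighted integral identity. First I would decompose $f = \sum_{n \in \Z} f_n$ into isotypic components for $\rho$, so that $\rho(\theta)f_n = e^{2\pi i n \theta} f_n$ and, by orthogonality,
\[
\norm{\rho(\theta) f - f}_2^2 = \sum_{n \in \Z} \absolute{e^{2\pi i n \theta} - 1}^2 \norm{f_n}_2^2 = \sum_{n \neq 0} 4\sin^2(\pi n \theta)\,\norm{f_n}_2^2.
\]
This representation is the natural bridge between the Hölder-type hypothesis (phrased in terms of $\theta$) and the Sobolev norm (phrased in terms of $n$).

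The key manoeuvre is to integrate against the weight $\theta^{-1-2s'}$ over $(0,1]$. Using the hypothesis gives the upper bound
\[
\int_0^1 \norm{\rho(\theta) f - f}_2^2\, \frac{d\theta}{\theta^{1+2s'}} \leq C^2 \int_0^1 \theta^{2(s-s')-1}\, d\theta = \frac{C^2}{2(s-s')}.
\]
Expanding the left-hand side in the Fourier basis and substituting $u = \absolute{n}\theta$ yields
\[
\int_0^1 \norm{\rho(\theta) f - f}_2^2\, \frac{d\theta}{\theta^{1+2s'}} = \sum_{n \neq 0} \absolute{n}^{2s'} \norm{f_n}_2^2 \cdot \int_0^{\absolute{n}} \frac{4\sin^2(\pi u)}{u^{1+2s'}}\, du.
\]
A short estimate (for instance, restricting the $u$-integral to $[1/4,1/2]$, where $\sin^2(\pi u) \geq 1/2$ and $u^{1+2s'} \leq 1/2$) shows that the constant $c(s') := \int_0^1 \frac{4\sin^2(\pi u)}{u^{1+2s'}} du$ is bounded below uniformly in $s' \in (0,1)$, say by $1$. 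Combining with the upper bound, I obtain
\[
\sum_{n \neq 0} \absolute{n}^{2s'} \norm{f_n}_2^2 \leq \frac{C^2}{2(s-s')},
\]
and since $\norm{f_0}_2 \leq \norm{f}_2$ this gives $\norm{f}_{H_{s'}} \leq C/\sqrt{2(s-s')} + \norm{f}_2$, which is stronger than the claimed $C(s-s')^{-1} + \norm{f}_2$ when $s-s' \leq 1$ and of the same order otherwise.

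The only subtlety is that the displayed integral only converges when $s' < 1$, which suffices whenever $s \leq 1$. When $s > 1$, I handle the remaining range of $s'$ via a telescoping / cocycle argument: since $\rho$ is unitary,
\[
\rho(\theta) f - f = \sum_{j=0}^{n-1} \rho(j\theta/n)\bigl(\rho(\theta/n) f - f\bigr),
\]
so $\norm{\rho(\theta) f - f}_2 \leq n \cdot C(\theta/n)^s = Cn^{1-s}\theta^s \to 0$ as $n \to \infty$, forcing $\rho(\theta) f = f$ for all $\theta$. Hence $f = f_0$, and $\norm{f}_{H_{s'}} = \norm{f_0}_2 \leq \norm{f}_2$ trivially. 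This is really a routine Plancherel calculation; the only thing requiring attention is that uniformly lower bound for $c(s')$ and the separate treatment of $s > 1$.
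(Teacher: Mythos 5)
Your proof is correct and takes essentially the same route as the paper: decompose $f$ into $\R/\Z$-isotypic components and integrate $\norm{\rho(\theta)f-f}_2^2$ against the weight $\absolute{\theta}^{-1-2s'}$, bounding the resulting multiplier below by $\absolute{n}^{2s'}$ and the whole integral above by $C^2/(2(s-s'))$ via the H\"older hypothesis. Your separate treatment of the range $s'\geq 1$ (telescoping to show $f$ is $\rho$-invariant when $s>1$) is a detail the paper's proof leaves implicit, but the core argument is identical.
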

\begin{proof}

    Suppose that $\norm{\rho(\theta)f -f }_2\leq C \absolute{\theta}^s$ for every  $\theta\in(0,1]$. 
    Let $f=\sum_{n\in\Z}f_n$ with $\rho(\theta)f_n = e^{2\pi in\theta}f_n$, then for every $s'\in(0,s)$
    \begin{equation*}
        \begin{aligned}
            \int_{\R/\Z}\frac{\norm{\rho(\theta)f-f}^2_2}{\absolute{\theta}^{1+2s'}}d\theta =&\sum_{n\in\Z\setminus\{0\}}\left(\int_{\R/\Z}\frac{\absolute{e^{ 2\pi in\theta}-1}^2}{\absolute{\theta}^{1+2s'}}d\theta\right)\norm{f_n}_2^2\\
            =&\sum_{n\in\Z\setminus\{0\}}\left(\int_{\R/\Z}\frac{\absolute{e^{ 2\pi in\theta}-1}^2}{\absolute{n}^{2s'}\absolute{\theta}^{1+2s'}}d\theta\right)\absolute{n}^{2s'}\norm{f_n}_2^2.
        \end{aligned}
    \end{equation*}
    Notice that $
    \int_{\R/\Z}\absolute{e^{2\pi  in\theta}-1}^2\absolute{n}^{-2s'}\absolute{\theta}^{-1-2s'}d\theta\geq1$,     
    hence 
    \[
    \sum_{n\in\Z}\absolute{n}^{2s'}\norm{f_n}_2^2\leq \int_{\R/\Z}\frac{\norm{\rho(\theta)f-f}_2^2}{\absolute{\theta}^{1+2s'}}d\theta.
    \]
    This together with our assumption gives
    \[
     \sum_{n\in\Z}\absolute{n}^{2s'}\norm{f_n}_2^2\leq \int_{\R/\Z}\frac{C^2}{\absolute{\theta}^{1+2(s'-s)}}d\theta,
    \]
    and the lemma follows.
\end{proof}

In particular if $\chi = \frac 1{m_{\mathbf{H}}(B)}1_B$ for an open $B \subset \SL_2(\R)$  with 
\[
m_{\mathbf{H}}(B \bigtriangleup k_{\theta}B) \ll \absolute{\theta},
\]
a condition that holds for nice ball like sets, then 
\[
\norm{\rho(\theta)\chi -\chi }_2\ll \absolute{\theta}^{1/2}.
\]
Hence Theorem \ref{thm:NewNewsl2ergodic} can be applied for any $s'<1/2$. This holds in particular for the balls $B_{\epsilon}^{\mathbf{H},\norm{\cdot}}$ defined above.

Let $\rho$ be a unitary representation of $\mathbf{H}=\SL_2(\R)$ on a Hilbert space $\mathcal H_{\rho}$. Set for any function $\chi:\mathbf{H}\to\R$
\[
{\rho}(\chi)v = \int_{\mathbf{H}} \chi(h) \rho(h)v \,dm_{\mathbf{H}}(h).
\]
\begin{Lemma}\label{lem:rhoWellDef}
     If $\chi \in H_s(L^2(\mathbf{H}); k_{\theta})$ and $v \in \mathcal H_{\rho}$, then 
     \[
   {\rho}(\chi)v \in H_s(\mathcal{H}_{\rho}; k_{\theta}),\qquad \norm{ {\rho}(\chi)v}_{H_s} \ll_{\chi} \norm{v}_2.
     \]
\end{Lemma}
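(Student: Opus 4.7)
The plan is to translate the smoothness of $\chi$ under the $k_\theta$-action on $L^2(\mathbf{H})$ into smoothness of $\rho(\chi)v$ under the $k_\theta$-action on $\mathcal{H}_\rho$ by decomposing $\chi$ into its Fourier series. Write $\chi = \sum_{n \in \Z} \chi_n$ with
\[
\chi_n(h) = \int_0^1 \chi(k_{-\theta} h)\, e^{-2\pi i n \theta}\, d\theta,
\]
so that the left-translate $L_\theta \chi_n(h) := \chi_n(k_{-\theta} h)$ equals $e^{2\pi i n \theta} \chi_n$ and $\norm{\chi}_{H_s}^2 = \sum_n (1+|n|^{2s}) \norm{\chi_n}_2^2$.

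The key computation is that, by left-invariance of $m_{\mathbf{H}}$ and the substitution $g = k_\theta h$,
\[
\rho(k_\theta)\rho(\chi_n)v = \int_{\mathbf{H}} \chi_n(h)\, \rho(k_\theta h)v\, dm_{\mathbf{H}}(h) = \int_{\mathbf{H}} \chi_n(k_{-\theta} g)\, \rho(g)v\, dm_{\mathbf{H}}(g) = e^{2\pi i n \theta} \rho(\chi_n)v.
\]
Hence $\rho(\chi)v = \sum_n \rho(\chi_n)v$ is precisely the Fourier decomposition of $\rho(\chi)v$ under the $k_\theta$-action on $\mathcal{H}_\rho$, and so
\[
\norm{\rho(\chi)v}_{H_s}^2 = \sum_n (1+|n|^{2s})\, \norm{\rho(\chi_n)v}_2^2.
\]

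For each summand, unitarity of $\rho$ gives $\norm{\rho(\chi_n)v}_2 \leq \norm{\chi_n}_1 \norm{v}_2$. Since $\chi$ has compact support, all $\chi_n$ are supported in the fixed compact set $K \cdot \operatorname{supp}(\chi)$ where $K = \{k_\theta : \theta \in \R/\Z\}$, so Cauchy–Schwarz yields $\norm{\chi_n}_1 \leq C_\chi \norm{\chi_n}_2$ with $C_\chi = m_{\mathbf{H}}(K \cdot \operatorname{supp}(\chi))^{1/2}$. Combining these:
\[
\norm{\rho(\chi)v}_{H_s}^2 \leq C_\chi^2 \norm{v}_2^2 \sum_n (1+|n|^{2s}) \norm{\chi_n}_2^2 = C_\chi^2 \norm{\chi}_{H_s}^2 \norm{v}_2^2.
\]
This proves both the membership and the bound, with implicit constant $C_\chi \norm{\chi}_{H_s}$ depending only on $\chi$.

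This is a routine argument; the only point requiring care is matching conventions — the $k_\theta$-action on $L^2(\mathbf{H})$ must be left-translation (as forced by left-invariance of the Haar measure in the substitution above) so that the Fourier decomposition of $\chi$ on $L^2(\mathbf{H})$ intertwines with the Fourier decomposition of $\rho(\chi)v$ on $\mathcal{H}_\rho$ via the operator-valued map $\chi_n \mapsto \rho(\chi_n)$. Once this is set up, the inequality reduces to Young's convolution inequality (for the $L^1 \to B(\mathcal{H}_\rho)$ bound) combined with Cauchy–Schwarz on the compact support.
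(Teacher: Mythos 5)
Your proof is correct and follows essentially the same route as the paper: decompose $\chi$ into its $k_\theta$-Fourier components $\chi_n$, observe that $\rho(k_\theta)\rho(\chi_n)v=e^{2\pi in\theta}\rho(\chi_n)v$, and bound each piece by $\norm{\rho(\chi_n)v}_2\leq\norm{\chi_n}_1\norm{v}_2\ll_\chi\norm{\chi_n}_2\norm{v}_2$ using the compact support and Cauchy--Schwarz. Your explicit verification of the intertwining via left-invariance of the Haar measure is a detail the paper leaves implicit, but the argument is the same.
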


\begin{proof}
Let $\Omega \subset \mathbf H$ be a compact $k_\theta$-invariant set supporting $\chi$, and write $\chi=\sum_{n\in\Z}\chi_n$ with $\chi_n(k_{\theta}^{-1}h)=e^{2\pi in\theta}\chi_n(h)$ for every $h\in\mathbf{H}$. Note that $\rho(k_{\theta}) {\rho}(\chi_n)v = e^{2\pi in\theta} {\rho}(\chi_n)v$ hence
    \begin{equation}\label{eq:eigenDecomRhoChi}
        \begin{aligned}
           \norm{{\rho}(\chi)v}_{H_s}^2=\sum_{n\in\Z}(1+\absolute{n}^{2s})\norm{ {\rho}(\chi_n)v}_2^2.
        \end{aligned}
    \end{equation}
    By the triangle inequality
    \begin{equation*}
    \begin{aligned}
    \norm{ {\rho}(\chi_n)v}_2 &= \norm{\int_{\mathbf{H}} \chi_n (h) \rho(h)v\,  dm_{\mathbf{H}}(h)}_2\\
    &\leq \norm{\chi_n}_1\norm{v}_2 \leq m_{\mathbf{H}}(\Omega)^{1/2}\norm{\chi_n}_2\norm{v}_2,
    \end{aligned}
    \end{equation*}
    which together with \eqref{eq:eigenDecomRhoChi} completes the proof of Lemma~\ref{lem:rhoWellDef}.
\end{proof}

For the proof of Theorem~\ref{thm:NewNewsl2ergodic},
it is enough to consider the case of irreducible $\rho$ (as long as the constant depends only on how far $\rho$ is from the trivial representation). 

\begin{Lemma}\label{appendix:lem:1}
 There are $C'(\rho,\chi), \lambda'(\rho)>0$ such that for arbitrary $p,w \in \mathcal H_{\rho}$ if we set $p'=\rho(\chi^{\sim}) p$ and $w'=\rho(\chi^{\sim}) w$ then 
\[
\absolute{\innproduct{\rho(a_n)p', \rho(a_m)w'}}\leq C'(\rho,\chi) e^{-\lambda'(\rho) |n-m|}\norm{p}_2\norm{\vphantom{p}w}_2.
\]
Here $\chi^{\sim}(h)=\chi(h^{-1})$ for $h\in\mathbf{H}$.
\end{Lemma}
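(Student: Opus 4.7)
Plan. My strategy is to recast the bilinear inequality as an operator norm bound on a smoothed version of $\rho(a_k)$ and then invoke the classical exponential decay of matrix coefficients for $\SL_2(\R)$-representations with spectral gap.

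First I would reduce to a one-parameter estimate. Using the unitarity of $\rho$ together with the identity $\rho(\chi^{\sim})=\rho(\chi)^{*}$ (which holds because $\chi$ is real valued and $\mathbf{H}$ is unimodular), one directly computes
\[
\langle \rho(a_n)p',\rho(a_m)w'\rangle
=\langle \rho(\chi)\rho(a_{n-m})\rho(\chi^{\sim})p,w\rangle,
\]
so the bilinear bound is equivalent to
\[
\|\rho(\chi)\rho(a_k)\rho(\chi^{\sim})\|_{\mathrm{op}}\le C'(\rho,\chi)e^{-\lambda'(\rho)|k|}\qquad(k\in\Z).
\]
This reformulation makes the smoothing effect of $\chi$ on both sides of $\rho(a_k)$ explicit and isolates the question of matrix coefficient decay for $\rho$.

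Next I would decompose $\chi$ into its $K$-isotypes: write $\chi=\sum_{j\in\Z}\chi_j$ with $\chi_j(k_\theta^{-1}h)=e^{2\pi ij\theta}\chi_j(h)$, so that the hypothesis gives $\sum_j(1+|j|)^{2s}\|\chi_j\|_2^{2}\le\|\chi\|_{H_s}^{2}$. A direct computation shows that $\rho(\chi_j)$ maps $\mathcal{H}_\rho$ into its $j$-th $K$-isotype, and the image of $\rho((\chi^{\sim})_i)$ lies in the $i$-th $K$-isotype; the compact support of $\chi$ converts these $L^2$ bounds into $L^1$ bounds on each isotypic piece. Expanding
\[
\rho(\chi)\rho(a_k)\rho(\chi^{\sim})
=\sum_{i,j}\rho(\chi_j)\rho(a_k)\rho((\chi^{\sim})_i)
\]
reduces the operator-norm bound to a weighted sum of matrix coefficients $\langle \rho(a_k)v_i,w_j\rangle$ between $K$-isotypic vectors.

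The decisive ingredient is the exponential decay of matrix coefficients for $\SL_2(\R)$-representations with spectral gap. Passing to a direct integral decomposition of $\rho$ into irreducibles $\tau$ (each inheriting the spectral gap), the unitary dual of $\SL_2(\R)$ consists of the principal and discrete series (which are tempered) together with complementary series bounded away from the trivial representation; for all such $\tau$, matrix coefficients between $K$-isotypic vectors are explicit Jacobi / hypergeometric functions satisfying
\[
|\langle \tau(a_t)v_i,w_j\rangle|
\le C(\rho)(1+|i|)^{A}(1+|j|)^{A}e^{-\lambda(\rho)|t|}\|v_i\|\|w_j\|
\]
with $\lambda(\rho)>0$ and $A\ge 0$ depending only on the spectral gap (and on the tempered part Harish--Chandra's $\Xi$-bound gives this with $A=0$, since the $K$-isotypes of $\SL_2(\R)$ are one-dimensional). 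Summing over $i,j$ via Cauchy--Schwarz against the $H_s$-weights on $\chi$ yields the stated operator-norm bound with $\lambda'(\rho)=\lambda(\rho)$ and $C'(\rho,\chi)$ proportional to $\|\chi\|_{H_s}^{2}$. The main obstacle is precisely the matrix coefficient estimate: one must extract a uniform exponential rate $\lambda(\rho)>0$ across all irreducibles appearing in the spectral decomposition --- this is where the spectral gap is essential, excluding representations too close to trivial --- while keeping the polynomial $K$-type dependence mild enough that the resulting double sum converges under the given $H_s$-regularity of $\chi$.
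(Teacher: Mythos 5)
Your reduction to the operator bound $\norm{\rho(\chi)\rho(a_k)\rho(\chi^{\sim})}_{\mathrm{op}}\le C e^{-\lambda' \absolute{k}}$ via $\rho(\chi^{\sim})=\rho(\chi)^*$ is correct, and the decay input you invoke (uniform exponential decay of matrix coefficients of $K$-eigenvectors across the irreducibles of a representation with spectral gap) is exactly the Cowling--Haagerup--Howe bound that the paper also uses. The problem is in the final summation step. After bounding each block by $\absolute{\innproduct{\rho(a_k)v_i,w_j}}\ll e^{-\lambda\absolute{k}}\norm{\chi_i}_1\norm{\chi_j}_1\norm{p}_2\norm{w}_2$ (even in the best case $A=0$), your ``Cauchy--Schwarz against the $H_s$-weights'' needs $\sum_j\norm{\chi_j}_1<\infty$, and by Cauchy--Schwarz this requires $\sum_j(1+\absolute{j})^{-2s}<\infty$, i.e.\ $s>1/2$ (and $s>A+\tfrac12$ if $A>0$). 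The lemma only assumes $s>0$, and in the intended application $\chi$ is a normalized indicator of a ball, which lies in $H_s$ only for $s<1/2$; so the double sum you write down genuinely diverges in the relevant regime, and the conclusion you claim (the full rate $\lambda'(\rho)=\lambda(\rho)$ with $C'\propto\norm{\chi}_{H_s}^2$) is stronger than such an absolute-convergence argument can deliver.

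The missing idea is a truncation-and-optimization trade-off: one must sacrifice part of the exponent to compensate for the limited $K$-regularity. This is how the paper proceeds: it decomposes the smoothed vectors $p'=\rho(\chi^{\sim})p$, $w'=\rho(\chi^{\sim})w$ into $K$-isotypes (using Lemma~\ref{lem:rhoWellDef} to transfer the $H_s$-regularity of $\chi$ to $p',w'$), truncates at $K$-type $N$, applies the Cowling--Haagerup--Howe bound to the truncated parts — which are $K$-finite, at the cost of a factor $N$ — estimates the tails by $\norm{p'-p'_{(N)}}_2\ll_\chi N^{-s}\norm{p}_2$, and then optimizes $N=e^{\lambda(\rho)t/(s+1)}$, obtaining the reduced rate $\lambda'(\rho)=\lambda(\rho)s/(s+1)$, valid for every $s>0$. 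Your scheme can be repaired the same way (truncate the $K$-sum of $\chi$ at $N$, bound the tail operators trivially by $\norm{\rho(a_k)}_{\mathrm{op}}=1$ times the $H_s$ tail of $\chi$, and optimize in $N$), but as written the proof has a genuine gap, and for $s\le 1/2$ one should not expect to retain the full exponent $\lambda(\rho)$.
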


\begin{proof}[Proof of Lemma~\ref{appendix:lem:1}]
Write
\[      {\rho}(\chi^{\sim}) p =p'= \sum_{n\in\Z} p'_n \text{ with } \rho(k_{\theta})p'_n=e^{2\pi in\theta}p'_n.
\]
Let $p'_{(N)}=\sum_{\absolute{n}\leq N}p'_n$; then by definition $\|p'-p'_{(N)}\|\leq \|p'\|_{H_s} N^{-s}$. It follows from  Lemma~\ref{lem:rhoWellDef} that
\[
        \norm {p'-p'_{(N)}}_2\ll_\chi N^{-s}\norm{p}_2.
\]

From a result by Cowling, Haagerup and Howe \cite{Cowling1988Almost}*{Applications}, there is a positive constant $\lambda(\rho)$ depending only on the distance of $\rho$ from the trivial representation so that
\begin{equation*}
\begin{aligned}
\absolute{\innproduct{\rho(a_{t})p'_{(N)},w'_{(N)}}} &\ll N e^{-\lambda(\rho) t}\norm{p'_{(N)}}_2\norm{w'_{(N)}}_2 \\
&\ll_\chi N e^{-\lambda(\rho) t}\norm{p}_2\norm{w}_2.
\end{aligned}
\end{equation*}

Hence for some $C''(\rho,\chi)>0$
\[
\absolute{\innproduct{\rho(a_t)p', w'}} \leq C''(\rho,\chi)\left (N e^{-\lambda(\rho) t} + N^{-s}\right)\norm{p}_2\norm {\vphantom p w}_2.
\]
Optimizing by choosing $N=e^{\lambda(\rho) t/(s+1)}$ we get
\[
\absolute{\innproduct{\rho(a_t)p', w'}}\leq 2C''(\rho,\chi) e^{-\lambda(\rho) s t /(1+s)}\norm{p}_2\norm {\vphantom p w}_2,
\]
establishing the lemma for $\lambda'(\rho)=\lambda(\rho) s / (s+1)$. 
\end{proof}

\begin{Lemma}\label{lem:newTT*lemma}
    Let $f$, $g_i$ be elements in a Hilbert space $\mathcal H$. Then there is some sequence $\{c_i\}_{i\in\N}\in \ell^2(\N)$ with $\sum_{i=0}^{\infty} |c_i|^2=1$ so that
$$
\sum_{i=0}^{\infty} \absolute{\innproduct{f,g_i}}^2 \leq \norm{f}_2^2 \left(\sum_{i=0}^{\infty} \sum_{j=0}^{\infty} c_i \overline
 {c_j} \innproduct{g_i,g_j}\right).
$$
\end{Lemma}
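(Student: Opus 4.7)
The plan is to apply the standard $TT^{*}$ duality trick. Setting $\alpha_{i}=\innproduct{f,g_{i}}$, the elementary identity $\innproduct{f,\alpha_{i}g_{i}}=\overline{\alpha_{i}}\alpha_{i}=\absolute{\alpha_{i}}^{2}$ gives, upon summation,
\[
\sum_{i=0}^{\infty}\absolute{\alpha_{i}}^{2}=\innproduct{f,\textstyle\sum_{i}\alpha_{i}g_{i}}.
\]
Applying Cauchy--Schwarz in $\mathcal{H}$ and squaring then yields
\[
\Bigl(\sum_{i}\absolute{\alpha_{i}}^{2}\Bigr)^{2}\leq \norm{f}_{2}^{2}\,\Bigl\|\sum_{i}\alpha_{i}g_{i}\Bigr\|_{2}^{2}=\norm{f}_{2}^{2}\sum_{i,j}\alpha_{i}\overline{\alpha_{j}}\innproduct{g_{i},g_{j}}.
\]

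Dividing both sides by $S:=\sum_{i}\absolute{\alpha_{i}}^{2}$ (assumed positive; otherwise the left-hand side is zero and any unit-norm sequence $\{c_{i}\}$ works), we set
\[
c_{i}=\frac{\alpha_{i}}{\sqrt{S}}=\frac{\innproduct{f,g_{i}}}{\bigl(\sum_{k}\absolute{\innproduct{f,g_{k}}}^{2}\bigr)^{1/2}},
\]
so that $\sum_{i}\absolute{c_{i}}^{2}=1$ and the desired inequality
\[
\sum_{i}\absolute{\innproduct{f,g_{i}}}^{2}\leq \norm{f}_{2}^{2}\sum_{i,j}c_{i}\overline{c_{j}}\innproduct{g_{i},g_{j}}
\]
follows directly from the computation above.

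The only technicality is to ensure that the infinite sums are meaningful. I would first carry out the argument above for every partial sum $\sum_{i\leq N}$, producing a unit vector $c^{(N)}\in\ell^{2}$ supported on $[0,N]$, and then pass to the limit. If $\sum_{i}\absolute{\innproduct{f,g_{i}}}^{2}=+\infty$ there is nothing to prove (since in the application of this lemma the right-hand side will be shown to be finite by separate estimates, giving a contradiction unless the left-hand side is finite); otherwise the weights $c^{(N)}$ converge in $\ell^{2}$ to the $c$ defined above and one invokes dominated convergence on the double sum together with Fatou on the quadratic form $\sum_{i,j}c_{i}\overline{c_{j}}\innproduct{g_{i},g_{j}}=\|\sum_{i}c_{i}g_{i}\|^{2}\geq 0$. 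No step here should present a genuine obstacle; the whole lemma is a packaging of Cauchy--Schwarz in the form convenient for the square-function argument of Theorem~\ref{thm:NewNewsl2ergodic}, where the quadratic form on the right will be controlled by the off-diagonal decay estimate of Lemma~\ref{appendix:lem:1}.
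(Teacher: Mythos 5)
Your argument is correct and is exactly the standard $TT^*$/generalised Bessel duality proof that the paper itself does not spell out but delegates to the cited references (Stein, \S VII, and Tao): writing $\alpha_i=\innproduct{f,g_i}$, applying Cauchy--Schwarz to $\sum_i\absolute{\alpha_i}^2=\innproduct{f,\sum_i\alpha_i g_i}$, and normalising $c_i=\alpha_i/\bigl(\sum_k\absolute{\alpha_k}^2\bigr)^{1/2}$ is precisely that argument. The convergence technicalities you flag are harmless here, since in the application the double sum converges absolutely thanks to the uniform bound $\norm{g_i}_2\leq 1$ and the exponential off-diagonal decay of Lemma~\ref{appendix:lem:1}, so the passage from the finite partial-sum inequality to the limit is immediate.
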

This type of estimate is known as the $TT^*$-method (also the method of almost orthogonality). For proof see~\cite{SteinHarmonicAnalysis}*{\S VII} or \cite{TerrenceTaoTT*method}*{Proposition~2}.

\medskip

\begin{proof}[Proof of Theorem~\ref{thm:NewNewsl2ergodic}]
Let 
\[
g_n = \frac{\rho(a_{n}){\rho}(\chi^{\sim}){\rho}(\chi)\rho(a_{-n})v}{\norm{{\rho}(\chi)\rho(a_{-n})v}_2},\qquad \chi^{\sim}(h)=\chi(h^{-1}),
\]
so that the left hand side of \eqref{appendix:eq:1} can be rewritten as 
\begin{equation}\label{eq:newnewSL2firstConvert}
    \sum_{n=0}^{\infty}\norm{ {\rho}(\chi)\rho(a_{-n})v}_2^2=\sum_{n=0}^{\infty}\absolute{\innproduct{v,g_n}}^2.
\end{equation}
By Lemma~\ref{lem:newTT*lemma}, there exists $\{c_i\}_{i\in\N}\in\ell^2(\N)$ with $\sum_{i=0}^{\infty}\absolute{c_i}^2=1$ such that
\begin{equation}\label{eq:newTT*lemmaApp}
   \sum_{n=0}^{\infty}\absolute{\innproduct{v,g_n}}^2\leq\norm{v}^2_2\left(\sum_{i=0}^{\infty}\sum_{j=0}^{\infty}c_i\bar{c}_j\innproduct{g_i,g_j}\right).
\end{equation}
The $g_n$ in \eqref{eq:newnewSL2firstConvert} satisfy
\[
g_n=\rho(a_n){\rho}(\chi^{\sim})v_n\text{ with }v_n=\frac{{\rho}(\chi)\rho(a_{-n})v}{\norm{{\rho}(\chi)\rho(a_{-n})v}_2}.
\]
Combining \eqref{eq:newnewSL2firstConvert}, \eqref{eq:newTT*lemmaApp} and Lemma~\ref{appendix:lem:1} (with $p=v_i$ and $w=v_j$)
\begin{equation*}
    \begin{aligned}
        \sum_{n=0}^{\infty}\norm{\bar{\rho}(\chi)\rho(a_{-n})v}_2^2& \leq \norm{v}_2^2\left(\sum_{i=0}^{\infty}\sum_{j=0}^{\infty}c_i\bar{c}_j\innproduct{g_i,g_j}\right)\\
        & \leq C'(\rho,\chi)\norm{v}_2^2\left(\sum_{i=0}^{\infty}\sum_{j=0}^{\infty}e^{-\lambda'(\rho)\absolute{i-j}}c_i\bar{c}_j\norm{v_i}_2\norm{v_j}_2\right)\\
        &\leq C'(\rho,\chi)\norm{v}_2^2\left(\sum_{i=0}^{\infty}\absolute{c_i}^2+2\sum_{i=1}^{\infty}\sum_{j=0}^{\infty}e^{-\lambda'(\rho)i}\absolute{c_j}\absolute{c_{j+i}}\right)\\
        &\leq C(\rho,\chi)\norm{v}_2^2,
    \end{aligned}
\end{equation*}
where in the last inequality we used the Cauchy-Schwartz inequality and the fact that $\sum_{i=0}^{\infty}\absolute{c_i}^2=1$.
Note that the constant $C(\rho,\chi)>0$ does indeed depend only on $\chi$ and the spectral gap. This completes the proof of Theorem~\ref{thm:NewNewsl2ergodic}.
\end{proof}

\bibliographystyle{plain} 
\bibliography{refs}

\end{document}